\theoremstyle{plain}
\newtheorem{thm}{Theorem}[section]
\newtheorem{cor}[thm]{Corollary}
\newtheorem{lem}[thm]{Lemma}
\newtheorem{prop}[thm]{Proposition}
\newtheorem{defn}[thm]{Definition}
\theoremstyle{remark}
\newcommand{\U}{\mathcal{U}}
\newcommand{\N}{\mathbb{N}}
\newcommand{\Z}{\mathbb{Z}}
\newcommand{\Q}{\mathbb{Q}}
\newcommand{\R}{\mathbb{R}}
\newcommand{\V}{\mathcal{V}}
\newcommand{\uZ}{\mathcal{Z}}
\newcommand{\W}{\mathcal{W}}
\newcommand{\h}{\heartsuit}
\newcommand{\dia}{\diamondsuit}
\newcommand{\bN}{\beta\mathbb{N}}
\newcommand{\fN}{\mathtt{Fun}(\N,\N)}
\begin{document}
\begin{titlepage}

\begin{center}
{\LARGE {\scshape UNIVERSIT\`{A} DEGLI STUDI DI SIENA}}\\\vspace{0.3cm}

{\scshape  Scuola di Dottorato di Ricerca in Logica Matematica, Informatica e Bioinformatica\\
Sezione di Logica Matematica ed Informatica\\
Ciclo XXIV}
\end{center}

{\Large \begin{center} {\scshape Dissertazione di Dottorato}
\end{center}}

{\LARGE \begin{center}
\textbf{HYPERINTEGERS AND NONSTANDARD TECHNIQUES IN COMBINATORICS OF NUMBERS}
\end{center}} \vspace{0.5cm}

\begin{center}
\begin{tabular}{lcl}
{\Large {\scshape Candidato}} & {\Large{\scshape Relatori}}&\\
{\Large{\itshape Lorenzo Luperi Baglini}} & {\Large{\itshape Prof. Mauro Di Nasso}}\\
 & {\Large{\itshape Prof. Franco Montagna}}\\
\end{tabular}
\end{center}
\vspace{1.2cm}
\begin{center}
{\Large{\scshape Anno Accademico 2011/2012}}
\end{center}
\end{titlepage}

\tableofcontents
\newpage

\chapter*{Introduction}
\addcontentsline{toc}{chapter}{Introduction} 

In literature, many important combinatorial properties of subsets of $\N$ have been studied both with nonstandard techniques (see e.g. [JI01], [Hir88]) and from the point of view of $\bN$ (see e.g. [HS98], [Hin11]). The idea behind the researches presented in this thesis is to mix these two different approaches in a technique that, at the same time, incorporates nonstandard tools and ultrafilters. The "monads" of ultrafilters are the basis of this technique: given an hyperextension $^{*}\N$ of $\N$ with the $\mathfrak{c}^{+}$-enlarging property, and an ultrafilter $\U$ on $\N$, the monad of $\U$ is the set

\begin{center} $\mu(\U)=\{\alpha\in$$^{*}\N\mid \U=\mathfrak{U}_{\alpha}\}$, \end{center}

where 

\begin{center} $\mathfrak{U}_{\alpha}=\{A\in\wp(\N)\mid \alpha\in$$^{*}A\}$. \end{center}

Theorem 2.2.9 is the result that generated our researches: it states that particular combinatorial properties of ultrafilters can be seen as "generated" by the elements in their monads (that, for this reason, we call "generators"). So, in a way, the mix between nonstandard tools and ultrafilters is realized by watching the ultrafilters as nonstandard points of specifical hyperextensions of $\N$.\\
Tensor products of ultrafilters are a central concept in our studies. While the problem of how the monad of $\U\otimes\V$ and the monads of $\U,\V$ are related was answered by Puritz in [Pu72, Theorem 3.4], an effective precedure to produce a generator of the tensor product $\U\otimes\V$ starting with a generator of $\U$ and a generator of $\V$ is, at least as far as we know, unknown. By this problem we are led to consider a particular hyperextension of $\N$, that we call $\omega$-hyperextension and denote by $^{\bullet}\N$. Its particularity is that in $^{\bullet}\N$ we can iterate the star map. This gives rise to the desired procedure to construct generators of tensor products: as we prove in Theorem 2.5.16, if $\alpha$ is a generator of $\U$ and $\beta$ is a generator of $\V$ then the pair $(\alpha,$$^{*}\beta)$ is a generator of $\U\otimes\V$.\\
The combination of Theorem 2.2.9 and Theorem 2.5.16 gives a nonstandard technique to study particular combinatorial properties on $\N$. To investigate the potentialities of this technique we re-prove some well-known results in Ramsey Theory, e.g. Schur's Theorem and Folkman's Theorem. One of the results that we discuss is a particular case of Rado's Theorem, concerning with linear polynomials in $\Z[\mathbf{X}]$ with sum of coefficients zero. In this case we show that the problem of the partition regularity of such polynomials can be solved considering particular linear combinations of idempotent ultrafilters.\\
This suggests to face the problem of the partition regularity of nonlinear polynomials. While we have not a complete characterization (in the style of Rado's Theorem), we prove some general result that ensures the partition regularity of suitably constructed polynomials. The most general result we prove in this context is Theorem 3.5.13: given a natural number $n$ and distinct variables $y_{1},...,y_{n}$, for every $F\subseteq\{1,...,n\}$ we pose

\begin{center} $Q_{F}(y_{1},...,y_{n})=\prod_{j\in F} y_{j}$ \end{center}

($Q_{F}(y_{1},...,y_{n})=1$ if $F=\emptyset$). Theorem 3.5.13 states the following:

\begin{thm} Let $k\geq 3$ be a natural number, $P(x_{1},...,x_{k})=\sum_{i=1}^{k} a_{i}x_{i}$ an injectively partition regular polynomial, and $n$ a positive natural number. Then, for every $F_{1},...,F_{k}\subseteq\{1,..,n\}$, the polynomial

\begin{center} $R(x_{1},...,x_{k},y_{1},...,y_{n})=\sum_{i=1}^{k} a_{i}x_{i}Q_{F_{i}}(y_{1},...,y_{n})$ \end{center}

is injectively partition regular. \end{thm}

We also study some general properties of the set of partition regular polynomials. An interesting result is that, as a consequence of Theorem 3.6.3, the research on partition regular polynomials can be restricted to consider only irreducible polynomials.\\
The last topic we face are particular (pre--)orders defined on $\bN$. This research is motivated by the notion of "finite embeddability" (see [DN12]): given two subsets $A,B$ of $\N$, we say that $A$ is finitely embeddable in $B$ (notation $A\leq_{fe} B$) if for every finite subset $F$ of $A$ there is a natural number $n$ such that $n+F\subseteq B$. This notion can be extended to ultrafilters: we say that an ultrafilter $\U$ is finitely embeddable in $\V$ (notation $\U\trianglelefteq_{fe}\V$) if for every set $B$ in $\V$ there is a set $A\in\U$ such that $A\leq_{fe} B$.\\
This notion has some interesting combinatorial properties. In particular, we prove that the relation $\trianglelefteq_{fe}$ is a filtered pre--order with maximal elements. An interesting result is the following: let $\mathcal{M}_{fe}$ denotes the set of maximal ultrafilters in $(\bN,\trianglelefteq_{fe})$, and consider the minimal bilateral ideal $K(\bN,\oplus)$ of $(\bN,\oplus)$. Then

\begin{center} $\mathcal{M}_{fe}=\overline{K(\bN,\oplus)}.$ \end{center} 

Motivated by these nice features of the finite embeddability, we consider a generalization that is motivated by the following observation: the finite embeddability is related with the class $\mathbb{T}$ of translations on $\N$. In fact, $A\leq_{fe} B$ if and only if for every finite subset $F$ of $A$ there is a translation $t_{n}\in\mathbb{T}$ such that $t_{n}(F)\subseteq B$. This leads to consider the following notion: given a set $\mathcal{F}$ of functions in $\N^{\N}$, and two subsets $A,B$ of $\N$, we say that $A$ is $\mathcal{F}$-finitely mappable in $B$ (notation $A\leq_{\mathcal{F}} B$) if for every finite subset $F$ of $A$ there is a function $f$ in $\mathcal{F}$ such that $f(F)\subseteq B$. Similarly, given two ultrafilters $\U,\V$, we say that $\U$ is $\mathcal{F}$-finitely mappable in $\V$ (notation $\U\trianglelefteq_{\mathcal{F}}\V$) if for every set $B$ in $\V$ there is a set $A$ in $\U$ such that $A\leq_{\mathcal{F}} B$. The properties of these notions are, under particular assumptions on the family $\mathcal{F}$, similar to that of finite embeddability. In particular, when the pre-orders $\trianglelefteq_{\mathcal{F}}$ have maximal elements, the maximal elements have important combinatorial properties.\\

The thesis is structured in four chapters.\\
Chapter One contains a short introduction to the "non-nonstandard" background of the thesis: we shortly recall the definitions and the basic properties of ultrafilters and of the Stone-\v{C}ech compactification $\bN$ of $\N$, we present the relations between ultrafilters on a set $S$ and partition regularity of families of subsets of $S$, and we recall (and prove) some of the most known results in Ramsey Theory on $\N$.\\
In Chapter Two we present, and construct, the nonstandard tools that we need. We recall the definition of nonstandard model, as well as some important properties of the hyperextensions. Then we concentrate on the sets of generators of ultrafilters and, led by considerations on tensor products of ultrafilter, we introduce the $\omega$-hyperextension $^{\bullet}\N$ of $\N$.\\
In Chapter Three we test our nonstandard technique by re-proving some well-known result in Ramsey Theorem; then we present our research on the partition regularity of polynomials, indicating some possible future developments.\\
In Chapter Four we start presenting the properties of the finite embeddability for sets and ultrafilters. Then we concentrate on the generalization to arbitrary families of functions, showing under what conditions the properties of finite embeddability can be generalized in this more general context, and which combinatorial properties are satisfied by the maximal elements in the pre-ordered structures $(\bN,\trianglelefteq_{\mathcal{F}})$ for appropriate classes of functions $\mathcal{F}$.\\

\newpage

\chapter*{Acknowledgements}
\addcontentsline{toc}{chapter}{Acknowledgements} 

I would like to express my gratitude to Professor Mauro Di Nasso, that introduced me to Nonstandard Analysis and Ramsey Theory, not only for countless discussions and continuous advices regarding the topics of this thesis, but also for guiding me throughout all my studies at university, both as an undergraduate student and as a PhD student.\\
I would like also to thank Professor Franco Montagna for his high permissiveness with respect to all my requests, and for letting me organize the cycle of seminars of PhD students.\\
Finally, I would like to thank my high school Professor Pietro Ribas, who taught me that mathematics can be elegant, and to my elementary school teacher Piero Franceschi, who taught me that mathematics can be fun.

\newpage
\chapter{Ultrafilters, Partition Regularity and some Infinite Combinatorics}

In this chapter we introduce the "non-nonstandard" background of the thesis. In the first part of the chapter we recall some known facts about the space $\bN$ of ultrafilters on $\N$ (for a comprehensive introduction to ultrafilters see $\cite{rif21}$). Then we recall the notion of partition regularity for a family of subsets of a given set $S$, and we show how this notion is strictly related to ultrafilters. In the last section we present some well-known results in Ramsey Theory (in particular Schur's, Folkman's, Van der Waerden's and Rado's Theorems), and we show proofs of these results done from the point of view of combinatorics and from the point of view of ultrafilters.

\section{Ultrafilters: a Short Introduction}

\subsection{Basic definitions and properties of ultrafilters}

A central notion in this thesis is that of ultrafilter on a set:

\begin{defn} If $S$ is a nonempty set, a filter is a collection $\mathcal{F}$ of subsets of $S$ such that:

\begin{itemize}
	\item $S\in\mathcal{F}$, $\emptyset\notin\mathcal{F}$;
	\item If $A,B\in\mathcal{F}$ then $A\cap B\in\mathcal{F}$;
	\item If $A\in\mathcal{F}$ and $A\subseteq B\subseteq S$ then $B\in\mathcal{F}$.
\end{itemize}

An \itshape{ultrafilter} $\U$ is a filter with the following additional property:

\begin{itemize}
  \item For every subset $A$ of $S$, if $A\notin \U$ then $A^{c}\in\U$. 
\end{itemize}

\end{defn}

An important fact is that every family that is closed by finite intersection is included in a filter:

\begin{defn} A nonempty family $\mathcal{G}$ of subsets of a set $S$ has the {\bfseries finite intersection property} if, for every natural number $n$, for every sets $G_{1},...,G_{n}$ in $\mathcal{G}$, the intersection 

\begin{center} $G=\bigcap_{i=1}^{n} G_{i}$ \end{center}

is nonempty. \end{defn}

By definition, every filter has the finite intersection property. Conversely, every family with the finite intersection property is included in a filter:

\begin{prop} Let $S$ be a set. For every nonempty family $\mathcal{G}$ of subsets of $S$ with the finite intersection property there is a filter $\mathcal{F}$ on $S$ that extends $\mathcal{G}$: $\mathcal{G}\subseteq\mathcal{F}$. \end{prop}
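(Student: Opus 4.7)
The plan is to construct $\mathcal{F}$ explicitly as the smallest filter containing $\mathcal{G}$, namely the set of all supersets of finite intersections of members of $\mathcal{G}$, and then verify the three filter axioms.

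First I would define
\begin{center}
$\mathcal{F}=\{B\subseteq S\mid \exists\, n\in\N,\ \exists\, G_{1},\dots,G_{n}\in\mathcal{G}\text{ such that }\bigcap_{i=1}^{n}G_{i}\subseteq B\}.$
\end{center}
By taking $B=G\in\mathcal{G}$ with the trivial intersection of one element, we immediately get $\mathcal{G}\subseteq\mathcal{F}$. In particular $S\in\mathcal{F}$ since $\mathcal{G}$ is nonempty and any $G\in\mathcal{G}$ satisfies $G\subseteq S$.

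Next I would check the filter axioms. The upward closure axiom is built into the definition: if $B\in\mathcal{F}$ via the witness $\bigcap_{i=1}^{n}G_{i}\subseteq B$ and $B\subseteq C\subseteq S$, then the same witness works for $C$. For closure under finite intersection, if $B_{1},B_{2}\in\mathcal{F}$ are witnessed by $\bigcap_{i=1}^{n}G_{i}\subseteq B_{1}$ and $\bigcap_{j=1}^{m}H_{j}\subseteq B_{2}$, then the concatenated family $G_{1},\dots,G_{n},H_{1},\dots,H_{m}\in\mathcal{G}$ has intersection contained in $B_{1}\cap B_{2}$, so $B_{1}\cap B_{2}\in\mathcal{F}$. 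Finally, $\emptyset\notin\mathcal{F}$: if $\emptyset\in\mathcal{F}$ then some finite intersection $\bigcap_{i=1}^{n}G_{i}$ of members of $\mathcal{G}$ would be contained in $\emptyset$, hence empty, contradicting the finite intersection property.

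This is essentially routine, and there is no real obstacle. The only point that requires the hypothesis of finite intersection property is the verification that $\emptyset\notin\mathcal{F}$; the other two axioms follow from the construction itself. The proof does not require Zorn's lemma, since we are only asked to find some filter extending $\mathcal{G}$, not an ultrafilter; the latter stronger statement would require the usual Zorn argument on the poset of filters extending $\mathcal{G}$.
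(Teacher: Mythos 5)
Your proof is correct and uses exactly the same construction as the paper: the filter of all supersets of finite intersections of members of $\mathcal{G}$. The paper merely states that the verification of the filter axioms is straightforward, whereas you carry out the details explicitly; both hinge on the finite intersection property only for showing $\emptyset\notin\mathcal{F}$.
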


\begin{proof} Given the family $\mathcal{G}$ with the finite intersection property, consider

\begin{center} $\mathcal{F}=\{F\subseteq S\mid$ there is a natural number $n$ and elements $G_{1},..,G_{n}$ in $\mathcal{G}$ with $G_{1}\cap..\cap G_{n}\subseteq F\}$. \end{center}

{\bfseries Claim:} $\mathcal{F}$ is a filter that extends $\mathcal{G}$.\\

The proof of the claim is straightforward; that $\mathcal{G}$ has the finite intersection property is used to prove that the empty set is not an element of $\mathcal{F}$ and that $\mathcal{F}$ is closed by intersection.\\ \end{proof}

\begin{defn} A filter $\mathcal{F}$ on $S$ is {\bfseries maximal} if, for every filter $\mathcal{F}^{\prime}$ on $S$ such that $\mathcal{F}\subseteq\mathcal{F}^{\prime}$, $\mathcal{F}^{\prime}=\mathcal{F}$. \end{defn}

\begin{prop} Let $S$ be a set and $\mathcal{F}$ a filter on $S$. The following two conditions are equivalent:
\begin{enumerate}
	\item $\mathcal{F}$ is an ultrafilter;
	\item $\mathcal{F}$ is maximal.
\end{enumerate}
\end{prop}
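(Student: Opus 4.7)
The plan is to prove the two implications separately, both by short contradiction arguments.

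For the direction $(1) \Rightarrow (2)$, I would suppose $\mathcal{F}$ is an ultrafilter and let $\mathcal{F}'$ be any filter with $\mathcal{F} \subseteq \mathcal{F}'$. To show equality it suffices to prove $\mathcal{F}' \subseteq \mathcal{F}$. Take $A \in \mathcal{F}'$; if $A \notin \mathcal{F}$, then the ultrafilter property forces $A^{c} \in \mathcal{F} \subseteq \mathcal{F}'$, so both $A$ and $A^{c}$ lie in $\mathcal{F}'$ and their intersection $\emptyset$ would be in $\mathcal{F}'$, contradicting the filter axioms. Hence $A \in \mathcal{F}$.

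For the direction $(2) \Rightarrow (1)$, I would argue by contraposition: assume $\mathcal{F}$ is not an ultrafilter and produce a proper extension. So pick $A \subseteq S$ with $A \notin \mathcal{F}$ and $A^{c} \notin \mathcal{F}$. The natural candidate for the extension is the filter generated by $\mathcal{G} := \mathcal{F} \cup \{A\}$ via the previous proposition. The key technical step is to verify that $\mathcal{G}$ has the finite intersection property. Since $\mathcal{F}$ is closed under finite intersections, any finite subfamily of $\mathcal{G}$ reduces to a set of the form $F \cap A$ with $F \in \mathcal{F}$ (or just $F$, which is already nonempty); if $F \cap A = \emptyset$ for some $F \in \mathcal{F}$, then $F \subseteq A^{c}$, and upward closure of $\mathcal{F}$ forces $A^{c} \in \mathcal{F}$, contradicting our choice. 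So $\mathcal{G}$ has the finite intersection property, the previous proposition yields a filter $\mathcal{F}' \supseteq \mathcal{G} \supsetneq \mathcal{F}$ (strictly, because $A \in \mathcal{F}' \setminus \mathcal{F}$), and $\mathcal{F}$ is not maximal.

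Neither direction should present any real obstacle; the only point that requires a little care is the finite-intersection-property verification in the second direction, which is exactly where the hypothesis $A^{c} \notin \mathcal{F}$ is used. The previous proposition on extending families with the finite intersection property to filters is invoked precisely once, at the end of that verification.
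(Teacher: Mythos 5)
Your proof is correct and follows essentially the same route as the paper: the first direction is the same two-element contradiction ($A$ and $A^{c}$ both landing in $\mathcal{F}'$), and the second direction is the same verification that $\mathcal{F}\cup\{A\}$ has the finite intersection property (via upward closure and $A^{c}\notin\mathcal{F}$) followed by an appeal to the extension proposition. The only difference is that you phrase the second direction as a contraposition where the paper argues by contradiction, which is immaterial.
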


\begin{proof} $(1)\Rightarrow (2)$ Let $\mathcal{F}$ be an ultrafilter. If $\mathcal{F}$ is properly contained in a filter $\mathcal{F}^{\prime}$, and $I\subseteq S$ is an element in $\mathcal{F}^{\prime}\setminus\mathcal{F}$, by definifition of ultrafilter the set $S\setminus I$ is in $\mathcal{F}$. Since $\mathcal{F}\subseteq\mathcal{F}^{\prime}$, in $\mathcal{F}^{\prime}$ there are $I$ and $S\setminus I$, and this is absurd as $I\cap (S\setminus I)=\emptyset$.\\
$(2)\Rightarrow (1)$ Let $\mathcal{F}$ be a maximal filter, and suppose that $\mathcal{F}$ is not an ultrafilter. Then there is a subset $I$ of $S$ such that $I, S\setminus I\notin\mathcal{F}$.\\

{\bfseries Claim:} $\mathcal{F}\cup \{I\}$ has the finite intersection property.\\

Since $\mathcal{F}$ has the finite intersection property, to prove the claim it is enough to prove that, given any element $F$ of $\mathcal{F}$, $F\cap I\neq\emptyset$. And this holds: in fact, if $F\cap I=\emptyset$ then $F\subseteq S\setminus I$ and, as $\mathcal{F}$ is closed under supersets, from this it follows that $S\setminus I\in\mathcal{F}$, absurd.\\
By Proposition 1.1.3 it follows that there is a filter $\mathcal{F}'$ on $S$ that includes $\mathcal{F}\cup I$, and this is in contrast with the maximality of $\mathcal{F}$. So $\mathcal{F}$ is an ultrafilter.\\ \end{proof}

A question that naturally arises is if every filter can be extended to an ultrafilter: the answer is affirmative. The following result is due to Alfred Tarski (see $\cite{rif60}$):

\begin{thm}[Tarski] Given a set $S$, every filter $\mathcal{F}$ on $S$ can be extended to an ultrafilter on $S$. \end{thm}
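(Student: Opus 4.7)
The plan is to invoke Zorn's Lemma on the poset of all filters on $S$ extending $\mathcal{F}$, ordered by inclusion, and then to use Proposition 1.1.6 to conclude that a maximal such filter is an ultrafilter.

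More precisely, I would first introduce the collection
\[
\mathcal{P}=\{\mathcal{G}\subseteq\wp(S)\mid \mathcal{G}\text{ is a filter on }S\text{ with }\mathcal{F}\subseteq\mathcal{G}\},
\]
partially ordered by set inclusion. This set is nonempty since $\mathcal{F}\in\mathcal{P}$. To apply Zorn's Lemma I need to verify that every nonempty chain $\mathcal{C}=\{\mathcal{F}_{i}\}_{i\in I}$ in $(\mathcal{P},\subseteq)$ admits an upper bound, and the natural candidate is the union $\mathcal{F}^{*}=\bigcup_{i\in I}\mathcal{F}_{i}$.

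The main verification, and essentially the only nontrivial step, is to check that $\mathcal{F}^{*}$ is itself a filter extending $\mathcal{F}$. The conditions $S\in\mathcal{F}^{*}$, $\emptyset\notin\mathcal{F}^{*}$ and closure under supersets are immediate from the fact that each $\mathcal{F}_{i}$ is a filter. The one place where the chain hypothesis is genuinely used is closure under finite intersection: given $A,B\in\mathcal{F}^{*}$, pick indices $i,j\in I$ with $A\in\mathcal{F}_{i}$ and $B\in\mathcal{F}_{j}$; since $\mathcal{C}$ is a chain, one of $\mathcal{F}_{i}\subseteq\mathcal{F}_{j}$ or $\mathcal{F}_{j}\subseteq\mathcal{F}_{i}$ holds, so both $A$ and $B$ belong to a common element of the chain, and $A\cap B$ belongs to that element, hence to $\mathcal{F}^{*}$. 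Clearly $\mathcal{F}\subseteq\mathcal{F}_{i}\subseteq\mathcal{F}^{*}$ for any $i$, so $\mathcal{F}^{*}\in\mathcal{P}$ and it is an upper bound for $\mathcal{C}$.

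Having established the hypothesis of Zorn's Lemma, I would conclude that $(\mathcal{P},\subseteq)$ has a maximal element $\mathcal{U}$. By construction $\mathcal{F}\subseteq\mathcal{U}$, and $\mathcal{U}$ is a maximal filter on $S$ (any filter properly extending $\mathcal{U}$ would a fortiori extend $\mathcal{F}$ and thus lie in $\mathcal{P}$, contradicting maximality of $\mathcal{U}$ in $\mathcal{P}$). By Proposition 1.1.6, $\mathcal{U}$ is an ultrafilter, which proves the theorem. I expect no real obstacle beyond the chain-union verification above; the conceptual content of the result is entirely concentrated in the use of the Axiom of Choice via Zorn's Lemma.
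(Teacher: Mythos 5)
Your proof is correct and follows essentially the same route as the paper: Zorn's Lemma applied to the inclusion-ordered poset of filters extending $\mathcal{F}$, with the union of a chain as upper bound, concluding via the earlier equivalence between maximal filters and ultrafilters (which in the paper's numbering is Proposition 1.1.5, not 1.1.6 — the latter is the theorem itself). Your chain-union verification is in fact more detailed than the paper's, which simply asserts that the union is a filter.
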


\begin{proof} Let $\mathcal{F}$ be a filter on $S$, and consider the set $Fil(\mathcal{F})_{S}$ of filters on $S$ extending $\mathcal{F}$, ordered by inclusion. We observe that every chain in $(Fil(\mathcal{F})_{S},\subseteq)$ has an upper bound: in fact, if $\langle \mathcal{F}_{i}\mid  i\in I\rangle$ is a chain of filters containing $\mathcal{F}$ (this means that $I$ is a linearly ordered set and, for every $i,j\in I$ with $i<j$, $\mathcal{F}_{i}\subseteq\mathcal{F}_{j}$), then 

\begin{center} $\mathcal{F}=\bigcup_{i\in I}\mathcal{F}_{i}$ \end{center} 

is a filter containing $\mathcal{F}$.\\
Since every chain in $(Fil(\mathcal{F})_{S},\subseteq)$ has an upper bound, by Zorn's Lemma it follows that there are maximal elements, and in Proposition 1.1.5 we proved that every maximal filter is an ultrafilter: this proves that there are ultrafilters on $S$ that extend $\mathcal{F}$.\\\end{proof}

For every set $S$, the simplest ultrafilters on $S$ are the principals:

\begin{defn} An ultrafilter $\U$ on a set $S$ is {\bfseries principal} if there is an element $s$ in $S$ such that 

\begin{center} $\U=\{I\subseteq S\mid s\in I\}$. \end{center}

An ultrafilter is {\bfseries nonprincipal} if it is not a principal ultrafilter.

\end{defn}

If $S$ is finite every ultrafilter on $S$ is principal: in fact, if $S=\{s_{1},...,s_{n}\}$, since $S=\{s_{1}\}\cup....\cup\{s_{n}\}$ then every ultrafilter on $S$ contains exactly one of the sets $\{s_{1}\}, \{s_{2}\},...,\{s_{n}\}$, so it is principal.\\
Talking of filters on $S$, the simplest is $\mathcal{F}=\{S\}$. An important filter, in case $S$ is infinite, is:

\begin{center} $Fr=\{I\subseteq S\mid |S\setminus I|<\aleph_{0}\}.$ \end{center}

The above filter is called Fréchet's filter. This is not an ultrafilter, as every infinite set $S$ contains many infinite subsets $I$ with both $I$ and $S\setminus I$ infinite.\\
Nevertheless, this filter is related with nonprincipal ultrafilters:

\begin{prop} Let $S$ be an infinite set, and $\U$ an ultrafilter on $S$. The following two conditions are equivalent:
\begin{enumerate}
	\item $\U$ is nonprincipal;
	\item $\U$ extends the Fréchet's filter on $S$.
\end{enumerate}
\end{prop}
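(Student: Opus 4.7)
The plan is to prove the two implications separately, using the ultrafilter property (every set or its complement is in $\U$) as the main tool.

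For $(2) \Rightarrow (1)$, I would argue by contradiction. Suppose $\U$ extends Fr\'echet's filter and is principal, say $\U = \{I \subseteq S \mid s \in I\}$ for some $s \in S$. Since $S$ is infinite, the complement $S \setminus \{s\}$ is cofinite, hence belongs to Fr\'echet's filter and therefore to $\U$. But $\{s\} \in \U$ as well, and the intersection $\{s\} \cap (S \setminus \{s\}) = \emptyset$ would violate the filter axiom $\emptyset \notin \U$. This is the routine direction.

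For $(1) \Rightarrow (2)$, the plan is to show that every cofinite subset of $S$ lies in $\U$. First I would establish the auxiliary fact that if $A_1 \cup \cdots \cup A_n \in \U$, then some $A_i \in \U$. This follows by induction from the binary case: if neither $A$ nor $B$ lies in $\U$, then by the ultrafilter property both $A^c$ and $B^c$ are in $\U$, hence so is $A^c \cap B^c = (A \cup B)^c$, contradicting $A \cup B \in \U$. Now let $C \subseteq S$ be cofinite, so $S \setminus C = \{s_1, \ldots, s_n\}$ is finite. If $C \notin \U$, then $S \setminus C = \{s_1\} \cup \cdots \cup \{s_n\} \in \U$, and by the auxiliary fact some singleton $\{s_i\}$ belongs to $\U$. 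But then $\U = \{I \subseteq S \mid s_i \in I\}$ (the filter generated by $\{s_i\}$ is already an ultrafilter, so by maximality this must be all of $\U$), contradicting nonprincipality. Hence $C \in \U$, proving that $\U$ extends Fr\'echet's filter.

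The main obstacle, if any, is the auxiliary finite-union step, which is standard but needs to be isolated cleanly; once it is in hand, the rest reduces to unpacking the definitions of \emph{cofinite} and \emph{principal}.
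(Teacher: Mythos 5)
Your proof is correct. The direction $(2)\Rightarrow(1)$ is identical to the paper's: both derive the contradiction $\{s\}\cap(S\setminus\{s\})=\emptyset\in\U$. For $(1)\Rightarrow(2)$ you take a mildly different route. The paper argues directly: since $\U$ is nonprincipal, no singleton $\{s\}$ is in $\U$, hence each co-singleton $S\setminus\{s\}$ is in $\U$, and every cofinite set is a finite intersection of co-singletons, so closure under finite intersection (a filter axiom) finishes the job. You instead pass to complements: you isolate the pigeonhole lemma that a finite union in $\U$ forces one of the pieces into $\U$, apply it to the finite complement of a cofinite set, and conclude that some singleton would lie in $\U$, contradicting nonprincipality. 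The two arguments are dual to one another; the paper's version is marginally shorter because closure under intersection comes for free from the filter axioms, whereas your finite-union lemma needs the small induction you describe. On the other hand, your auxiliary lemma is exactly the partition-regularity property of ultrafilters that the paper uses repeatedly later (e.g.\ in Theorem 1.2.3), so isolating it here is not wasted effort. One small point worth making explicit in your write-up: the final step ``$\{s_i\}\in\U$ implies $\U$ is the principal ultrafilter at $s_i$'' deserves a one-line justification (every superset of $\{s_i\}$ is in $\U$, and no set missing $s_i$ can be, since it would meet $\{s_i\}$ emptily); you gesture at this via maximality, which is fine, but the paper's $(1)\Rightarrow(2)$ quietly relies on the same observation.
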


\begin{proof} $(1)\Rightarrow (2)$ Suppose that the ultrafilter $\U$ is nonprincipal. Then, for every element $s$ in $S$, the set $S\setminus\{s\}$ is in $\U$. Since every element of the Fréchet's filter is obtained as a finite intersection of sets in the form $S\setminus\{s\}$, and $\U$ is closed under finite intersection, the Fréchet's filter is a subset of $\U$.\\
$(2)\Rightarrow (1)$ Suppose that $\U$ is a principal ultrafilter, and let $s$ be the element of $S$ such that $\{s\}\in\U$. Since $\U$ extends the Fréchet's filter, $S\setminus\{s\}\in\U$, so $\{s\}\cap (S\setminus\{s\})\in\U$, and this is absurd.\\\end{proof}

As a corollary, if $S$ is an infinite set then there are both principal and nonprincipal ultrafilters on $S$. Observe that, since every ultrafilter on a set $S$ is an element of $\wp(\wp(S))$ then, if $\kappa$ is the cardinality of $S$, there are at most $2^{2^{\kappa}}$ ultrafilters on $S$ (while there are exactly $\kappa$ principal ultrafilters).\\
In 1937, B. Posp\'i\v{s}il proved that $2^{2^{\kappa}}$ is exactly the number of ultrafilters on $S$ whenever $S$ is infinite (see $\cite{rif27})$. His result is even stronger: call an ultrafilter $\U$ on $S$ regular if, whenever $A\in\U$, $|A|=|S|$.

\begin{thm}[Posp\'i\v{s}il] If $S$ is an infinite set of cardinality $\kappa$, there are $2^{2^{\kappa}}$ regular ultrafilters on $S$. \end{thm}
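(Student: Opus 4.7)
The plan is to prove the theorem by the classical independent family method. Call a family $\mathcal{A}\subseteq\wp(S)$ \emph{strongly independent} if, for every pair of finite disjoint subfamilies $\mathcal{A}_{1},\mathcal{A}_{2}\subseteq\mathcal{A}$, the set
\[
B(\mathcal{A}_{1},\mathcal{A}_{2})=\bigcap_{A\in\mathcal{A}_{1}}A\cap\bigcap_{A\in\mathcal{A}_{2}}(S\setminus A)
\]
has cardinality $\kappa$. The core step of the argument will be the construction, for every infinite $S$ of cardinality $\kappa$, of a strongly independent family $\mathcal{A}\subseteq\wp(S)$ with $|\mathcal{A}|=2^{\kappa}$. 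This is the Hausdorff--Fichtenholz--Kantorovich theorem. I would carry it out by relabelling $S$ as the set $T=\{(F,\mathcal{G})\mid F\in[\kappa]^{<\omega},\ \mathcal{G}\subseteq\wp(F)\}$ (which has cardinality $\kappa$) and defining, for each $X\subseteq\kappa$, the set $A_{X}=\{(F,\mathcal{G})\in T\mid X\cap F\in\mathcal{G}\}$. Strong independence of the family $\{A_{X}\mid X\subseteq\kappa\}$ is checked by choosing $F$ large enough to separate all the $X$'s appearing in a given Boolean combination, and then varying $\mathcal{G}$; this exhibits $\kappa$-many witnesses in $B(\mathcal{A}_{1},\mathcal{A}_{2})$.

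Having fixed such an $\mathcal{A}$, I would encode regular ultrafilters using the filter
\[
\mathcal{F}^{*}=\{B\subseteq S\mid |S\setminus B|<\kappa\},
\]
noting the elementary fact that an ultrafilter $\mathcal{U}$ on $S$ is regular (in the sense of the statement) if and only if $\mathcal{F}^{*}\subseteq\mathcal{U}$: indeed, for infinite cardinals the equality $|S|=|A|+|S\setminus A|$ forces $|A|<\kappa\Rightarrow|S\setminus A|=\kappa$, so $\mathcal{F}^{*}\subseteq\mathcal{U}$ immediately rules out elements of $\mathcal{U}$ of cardinality $<\kappa$, and conversely.

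Next, to each function $f\colon\mathcal{A}\to\{0,1\}$ I would associate the family
\[
\mathcal{G}_{f}=\{A\in\mathcal{A}\mid f(A)=1\}\cup\{S\setminus A\mid A\in\mathcal{A},\ f(A)=0\},
\]
and verify that $\mathcal{G}_{f}\cup\mathcal{F}^{*}$ has the finite intersection property. Any finite sub-intersection from $\mathcal{G}_{f}$ is of the form $B(\mathcal{A}_{1},\mathcal{A}_{2})$ with $\mathcal{A}_{1},\mathcal{A}_{2}$ disjoint finite subfamilies of $\mathcal{A}$, hence has cardinality $\kappa$ by strong independence; intersecting further with finitely many sets from $\mathcal{F}^{*}$ removes at most $<\kappa$ points, still leaving a set of cardinality $\kappa$, in particular nonempty. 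By Proposition~1.1.3 and Theorem~1.1.6 this family extends to an ultrafilter $\mathcal{U}_{f}$, which is regular because $\mathcal{F}^{*}\subseteq\mathcal{U}_{f}$.

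Finally, I would show $f\mapsto\mathcal{U}_{f}$ is injective: if $f\neq g$, pick $A\in\mathcal{A}$ with, say, $f(A)=1$ and $g(A)=0$; then $A\in\mathcal{U}_{f}$ while $S\setminus A\in\mathcal{U}_{g}$, so $\mathcal{U}_{f}\neq\mathcal{U}_{g}$. Since $|\mathcal{A}|=2^{\kappa}$, this produces $2^{2^{\kappa}}$ distinct regular ultrafilters, and the reverse bound $2^{2^{\kappa}}$ is immediate because $\wp(\wp(S))$ has that cardinality. The main obstacle is entirely in the independent family construction: once $\mathcal{A}$ is in hand, the rest of the argument is a routine combination of Tarski's theorem with the strong independence condition. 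The cardinal-arithmetic step ensuring that intersections with elements of $\mathcal{F}^{*}$ do not shrink below cardinality $\kappa$ also deserves care, but it follows cleanly from $|A|+|B|=\max(|A|,|B|)$ for infinite cardinals.
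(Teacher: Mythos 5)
Your proposal is correct, and it is the standard independent-family (Hausdorff--Fichtenholz--Kantorovich) argument: the strengthened independence condition (each Boolean combination $B(\mathcal{A}_{1},\mathcal{A}_{2})$ of cardinality $\kappa$, not merely nonempty) is exactly what is needed to absorb the co-small filter $\mathcal{F}^{*}$ and hence get \emph{regular} ultrafilters, and the remaining steps (finite intersection property, extension via Proposition 1.1.3 and Theorem 1.1.6, injectivity of $f\mapsto\mathcal{U}_{f}$, and the trivial upper bound from $|\wp(\wp(S))|=2^{2^{\kappa}}$) are all sound. Note that the paper itself states this theorem without proof, only citing Posp\'i\v{s}il's 1937 article, so there is no in-paper argument to compare against; your write-up supplies the proof the text omits.
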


\subsection{The space $\bN$}

From now forwards, except when explicitally stated otherwise, we concentrate on the case $S=\N$. As a corollary of Posp\'i\v{s}il's Theorem, in the space of ultrafilters on $\N$ there are $\aleph_{0}$ principal and $2^{2^{\aleph_{0}}}$ nonprincipal ultrafilters: our aim in this section is to present some important properties of this space.

\begin{defn} The {\bfseries set of ultrafilters on $\N$} $($denoted by $\bN)$ is the set 

\begin{center} $\bN=\{\U\subseteq \wp(\wp(\N))\mid \U$ is an ultrafilter$\}$. \end{center}

$\bN$ is endowed with the {\bfseries Stone topology}, that is the topology generated by the family of open sets $\{\Theta_{A}\}_{A\in\wp(\N)}$, where

\begin{center} $\Theta_{A}=\{\U\in\bN\mid A\in\U\}$. \end{center}

\end{defn}

Some observations: first of all since, for every subset $A$ of $\N$,

\begin{center} $\Theta_{A}^{c}=\Theta_{A^{c}}$ \end{center}

(because, for every ultrafilter $\U$, exactly one between $A$ and $A^{c}$ is in $\U$), every element $\Theta_{A}$ of the topological base is both closed and open: this entails that the space $\bN$ is totally disconnected.\\
The second observation is that, via the identification of every natural number $n$ with the principal ultrafilter $\mathfrak{U}_{n}$,

\begin{center}$(\dagger)$  $n\leftrightarrow \mathfrak{U}_{n}=\{A\in\N\mid n\in A\}$, \end{center}

$\N$ can be identified with a subset of $\bN$ and, most importantly, via this identification:

\begin{prop} $\N$ is a dense subset of $\bN$. \end{prop}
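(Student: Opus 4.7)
The plan is to show that every nonempty open set in $\bN$ contains a principal ultrafilter, which is exactly the statement that $\N$ (identified with the principal ultrafilters via $(\dagger)$) is dense in $\bN$.

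First I would reduce the problem to basic open sets. Since every open set in $\bN$ is a union of sets of the form $\Theta_A$ with $A\subseteq\N$, it suffices to show that every nonempty $\Theta_A$ contains some $\mathfrak{U}_n$. (One does not even need to consider finite intersections here, since $\Theta_A\cap\Theta_B=\Theta_{A\cap B}$ by the filter axioms, so the family $\{\Theta_A\}_{A\in\wp(\N)}$ is already closed under finite intersections and is a basis.)

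Next I would check when $\Theta_A$ is nonempty. If $A=\emptyset$, then $\Theta_A=\emptyset$ because no ultrafilter contains the empty set. If $A\neq\emptyset$, then I pick any $n\in A$. By the definition of the principal ultrafilter, $\mathfrak{U}_n=\{B\subseteq\N\mid n\in B\}$, and since $n\in A$ we have $A\in\mathfrak{U}_n$, i.e.\ $\mathfrak{U}_n\in\Theta_A$. Thus every nonempty basic open set contains a principal ultrafilter.

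There is no real obstacle here: the argument is an immediate consequence of the definition of the Stone topology and of principal ultrafilters. The only mild subtlety worth stating explicitly is that the sets $\Theta_A$ already form a basis (not merely a subbasis), so that verifying the density condition on this family suffices.
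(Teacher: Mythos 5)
Your argument is correct and is essentially the same as the paper's: both proofs take a nonempty basic open set $\Theta_A$, pick $n\in A$, and observe that $A\in\mathfrak{U}_n$ so $\mathfrak{U}_n\in\Theta_A$. Your extra remark that $\Theta_A\cap\Theta_B=\Theta_{A\cap B}$ (so the $\Theta_A$ form a genuine basis) is a harmless clarification the paper leaves implicit.
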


\begin{proof} Let $A$ be a nonempty subset of $\N$, and consider the base open set $\Theta_{A}$. If $n$ is an element of $A$, the principal ultrafilter $\mathfrak{U}_{n}$ is in $\Theta_{A}$, as $A$ is in $\mathfrak{U}_{n}$. This proves that in every base open set there is a principal ultrafilter, so the set of principal ultrafilters, that via the identification $(\dagger)$ is $\N$, is a dense subset of $\bN$.\\ \end{proof}

The Stone topology has the following two important features:

\begin{prop} $\bN$, endowed with the Stone topology, is a compact Hausdorff space. \end{prop}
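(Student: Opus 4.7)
The plan is to prove the two properties separately, both using tools already developed in the chapter.

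For the Hausdorff property, I would start with two distinct ultrafilters $\U, \V \in \bN$. Since $\U \neq \V$ as collections of subsets of $\N$, there is some $A \subseteq \N$ with (without loss of generality) $A \in \U$ but $A \notin \V$; by the ultrafilter axiom applied to $\V$, this forces $A^{c} \in \V$. Then $\U \in \Theta_{A}$ and $\V \in \Theta_{A^{c}}$, and these two basic open sets are disjoint because no ultrafilter can simultaneously contain $A$ and $A^{c}$ (their intersection would be $\emptyset$, violating the definition of filter). This separates $\U$ and $\V$ by disjoint open neighborhoods.

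For compactness, I would first observe that the generating family $\{\Theta_{A}\}_{A \in \wp(\N)}$ is already closed under finite intersections since $\Theta_{A} \cap \Theta_{B} = \Theta_{A \cap B}$, so it is a base (not just a subbase) of the Stone topology. Consequently, to verify compactness it suffices to show that every cover of $\bN$ by basic open sets admits a finite subcover. Suppose, toward a contradiction, that $\{\Theta_{A_{i}}\}_{i \in I}$ covers $\bN$ but no finite subfamily does. Then for every finite $I_{0} \subseteq I$ we have $\bigcup_{i \in I_{0}} \Theta_{A_{i}} \subsetneq \bN$, and taking complements (using $\Theta_{A}^{c} = \Theta_{A^{c}}$) gives $\bigcap_{i \in I_{0}} \Theta_{A_{i}^{c}} \neq \emptyset$. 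Any ultrafilter witnessing this nonemptiness contains all the sets $A_{i}^{c}$ for $i \in I_{0}$, and in particular $\bigcap_{i \in I_{0}} A_{i}^{c} \neq \emptyset$ (as it lies in a filter).

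This shows the family $\{A_{i}^{c}\}_{i \in I}$ of subsets of $\N$ has the finite intersection property. By Proposition 1.1.3 it is contained in a filter on $\N$, and by Tarski's Theorem (Theorem 1.1.6) that filter extends to an ultrafilter $\W \in \bN$. Then $A_{i}^{c} \in \W$, hence $A_{i} \notin \W$, for every $i \in I$, which means $\W \notin \Theta_{A_{i}}$ for any $i \in I$, contradicting the assumption that the $\Theta_{A_{i}}$ covered $\bN$.

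I do not expect any real obstacle here: the key technical inputs (the identity $\Theta_{A}^{c} = \Theta_{A^{c}}$, the correspondence between the finite intersection property and nonemptiness of $\bigcap \Theta_{A_{i}}$, and the existence of ultrafilter extensions given by Tarski's Theorem) are all already in place. The only mild subtlety is the reduction of compactness to basic open covers, which is justified once one notes that the given generating family is stable under finite intersections and hence a genuine base.
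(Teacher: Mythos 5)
Your proof is correct and is essentially the paper's argument: the Hausdorff part is identical, and the compactness part is the contrapositive (open-cover) formulation of the paper's closed-set finite-intersection-property argument, with the same core mechanism of passing from $\bigcap\Theta_{A_{i}}$ to the finite intersection property of the underlying subsets of $\N$ and then extending to an ultrafilter via Tarski's Theorem. The reduction to basic open covers and the use of $\Theta_{A}^{c}=\Theta_{A^{c}}$ are both sound, so no gaps.
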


\begin{proof} $\bN$ is compact: we use this equivalent formulation for compactness: a topological space is compact if and only if for every family $\mathcal{F}$ of closed subsets with the finite intersection property the intersection $\bigcap_{F\in\mathcal{F}} F$ is nonemtpy.\\
Let 

\begin{center} $\mathcal{F}=\{\Theta_{A_{i}}\mid A_{i}\subseteq\N, i\in I\}$ \end{center}

be a family of (base) closed subsets of $\bN$, and assume that $\mathcal{F}$ has the finite intersection property. From this it follows that 

\begin{center} $\mathcal{G}=\{A_{i}\mid i\in I\}$ \end{center}

has the finite intersection property. In fact, let $A_{i_{1}},...,A_{i_{k}}$ be sets in $\mathcal{G}$ and consider $A_{i_{1}}\cap...\cap A_{i_{k}}$. Since $\Theta_{A_{i_{1}}}\cap...\cap\Theta_{A_{i_{k}}}\neq\emptyset$, there is an ultrafilter $\U$ in this intersection; in particular, for every index $1\leq j\leq k$, since $A_{i_{j}}\in\U$, the intersection $\bigcap_{j=1}^{k} A_{i_{j}}\in \U$; this entails that $\bigcap_{j=1}^{k} A_{i_{j}}\neq \emptyset$.\\
As $\mathcal{G}$ has the finite intersection property, in can be extended to a filter, so by Theorem 1.1.6 it follows that there is an ultrafilter $\U$ on $\N$ such that $\mathcal{G}\subseteq\U$. Observe that, for every index $i$, $A_{i}\in\mathcal{G}\subseteq\U$, so $\U\in\Theta_{A_{i}}$ for every closed set $\Theta_{A_{i}}$ in $\mathcal{F}$. In particular, $\U\in\bigcap\mathcal{F}$.\\
$\bN$ is an Hausdorff space: Let $\U\neq\V$ be ultrafilters on $\N$. As $\U\neq\V$, there is a set $A$ in $\wp(\N)$ such that $A\in\U$ and $A^{c}\in\V$. So $\U\in\Theta_{A}$ and $\V\in\Theta_{A^{c}}$, and these are two disjoint open sets in the Stone topology. \\ \end{proof}

Observe that, as $\N=\bigcup_{n\in\N} \Theta_{\{n\}}$, $\N$ is an open subset of $\bN$, so $\bN\setminus\N$ is closed (and, since it is a closed subset of a compact space, it is also compact).\\
Another key aspect of $\bN$ is the following universal property:

\begin{thm} Every continuous map $f:\N\rightarrow K$, where $K$ is a compact Hausdorff space, admits exactly one continuous extension $\overline{f}:\bN\rightarrow K$. \end{thm}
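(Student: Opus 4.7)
The plan is to define $\overline{f}$ pointwise via ultrafilter convergence, then verify extension, continuity, and uniqueness separately. Throughout, I rely on the fact that on $\N$, identified with the subspace of principal ultrafilters, every map into a Hausdorff space is automatically continuous (since $\N$ inherits the discrete topology from $\bN$), so no hypothesis on $f$ is actually needed beyond being a function.

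First I would define, for each $\U\in\bN$, the value $\overline{f}(\U)$ as the unique $p\in K$ such that $f^{-1}(V)\in\U$ for every open neighborhood $V$ of $p$. Existence: if no such $p$ existed, then for every $p\in K$ I could pick an open $V_{p}\ni p$ with $f^{-1}(V_{p})\notin\U$; by compactness of $K$, finitely many $V_{p_{1}},\ldots,V_{p_{k}}$ cover $K$, so $\N=\bigcup_{j}f^{-1}(V_{p_{j}})$, and since $\U$ is an ultrafilter one of the $f^{-1}(V_{p_{j}})$ must belong to $\U$, a contradiction. Uniqueness: if $p\neq q$ both satisfied the property, disjoint open neighborhoods $V_{p},V_{q}$ would give $f^{-1}(V_{p}),f^{-1}(V_{q})\in\U$ with empty intersection, impossible.

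Next I would check that $\overline{f}$ extends $f$ under the identification $n\leftrightarrow\mathfrak{U}_{n}$ of $(\dagger)$: for any open $V\ni f(n)$, one has $n\in f^{-1}(V)$ so $f^{-1}(V)\in\mathfrak{U}_{n}$, whence $\overline{f}(\mathfrak{U}_{n})=f(n)$. For continuity, let $V\subseteq K$ be open and $\U\in\overline{f}^{-1}(V)$, setting $p=\overline{f}(\U)$. Since $K$ is compact Hausdorff, it is regular, so I can choose an open $W$ with $p\in W\subseteq\overline{W}\subseteq V$. Then $f^{-1}(W)\in\U$, so $\U\in\Theta_{f^{-1}(W)}$. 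I claim $\Theta_{f^{-1}(W)}\subseteq\overline{f}^{-1}(V)$: for $\V\in\Theta_{f^{-1}(W)}$, if $q=\overline{f}(\V)\notin\overline{W}$ then $K\setminus\overline{W}$ is an open neighborhood of $q$, so $f^{-1}(K\setminus\overline{W})\in\V$; but this set is disjoint from $f^{-1}(W)\in\V$, a contradiction. Hence $\overline{f}(\V)\in\overline{W}\subseteq V$, proving that $\overline{f}^{-1}(V)$ is a union of basic open sets, hence open.

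Finally, uniqueness follows from Proposition 1.1.11 together with the standard fact that two continuous maps into a Hausdorff space that agree on a dense subset agree everywhere: if $g_{1},g_{2}:\bN\to K$ both extend $f$, then $\{\U\in\bN\mid g_{1}(\U)=g_{2}(\U)\}$ is closed (by Hausdorffness of $K$) and contains $\N$, hence equals $\bN$. The main obstacle I foresee is the continuity step, since one must translate the openness of $V\subseteq K$ into a basic open condition of the form $\Theta_{A}$ on $\bN$; the trick of shrinking to $W\subseteq\overline{W}\subseteq V$ via regularity is exactly what bridges the limit-style definition of $\overline{f}(\U)$ with the basis of the Stone topology.
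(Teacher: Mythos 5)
Your proof is correct and follows the same overall architecture as the paper's: define $\overline{f}(\U)$ as the $\U$-limit of $f$, prove existence of the limit by compactness (finite subcover plus the ultrafilter property), uniqueness of the limit by Hausdorffness of $K$, then check extension, continuity, and uniqueness of the extension via density of $\N$. The existence, uniqueness-of-limit, and extension steps are essentially identical to the paper's.

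The one place you genuinely diverge is the continuity step, and your version is the more careful one. The paper asserts the equality $\overline{f}^{-1}(I)=\Theta_{\{n\in\N\mid f(n)\in I\}}$ for an arbitrary open $I\subseteq K$, but only the inclusion $\overline{f}^{-1}(I)\subseteq\Theta_{f^{-1}(I)}$ holds in general: if $f^{-1}(I)\in\U$ the limit $\overline{f}(\U)$ may still lie on the boundary of $I$ rather than in $I$ (take $K=[0,1]$, $f(n)=1/(n+1)$, $\U$ nonprincipal, $I=(0,1]$; then $f^{-1}(I)=\N\in\U$ but $\overline{f}(\U)=0\notin I$). Your fix --- using regularity of the compact Hausdorff space $K$ to shrink $V$ to $W$ with $p\in W\subseteq\overline{W}\subseteq V$, and then showing $\Theta_{f^{-1}(W)}\subseteq\overline{f}^{-1}(V)$ by the disjointness argument with $f^{-1}(K\setminus\overline{W})$ --- is exactly what is needed to turn the one-sided inclusion into a proof that $\overline{f}^{-1}(V)$ is open. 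Your final uniqueness argument also correctly attributes the Hausdorffness to $K$ (the paper's wording points at $\bN$ instead, though the density argument it invokes is the same). In short: same route, but your handling of continuity repairs a real gap in the paper's version.
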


\begin{proof} Given a function $f:\N\rightarrow K$, where $K$ is a topological compact Hausdorff space, and an ultrafilter $\U$ in $\bN$, we define the $\U$ limit of $f$ as

\begin{center} $\U-\lim_{n\in\N} f(n)= k$ if and only if for every neighborhood $I$ of $k$ the set $f^{-1}(I)=\{n\in\N\mid f(n)\in I\}\in\U$. \end{center}

This definition is well-posed: this limit always exists and it is unique. The existence follows by the compactness of $K$: by converse, suppose that the $\U$ limit of $f$ does not exist. Then, for every element $k$ in $K$ there is an open neighborhood $O_{k}$ of $k$ such that $f^{-1}(O_{k})\notin\U$. $\{O_{k}\}_{k\in K}$ is an open cover of $K$, which is compact, so we can extract a finite subcover $K=\bigcup_{i=1}^{n} O_{k_{i}}$. But 

\begin{center} $\N=f^{-1}(K)=\bigcup_{i=1}^{n} f^{-1}(O_{k_{i}})$ \end{center} 

so, since $\U$ is an ultrafilter, for some index $i\leq n$ the set $f^{-1}(k_{i})$ is in $\U$, and this is absurd: the $\U$-limit of $f$ always exists.\\
The unicity of the $\U$-limit follows since $K$ is Hausdorff because, if $k_{1}\neq k_{2}$ are two different $\U$ limits of $f$, there are neighboroods $I_{1}$ of $k_{1}$, $I_{2}$ of $k_{2}$ with $I_{1}\cap I_{2}=\emptyset$, so $f^{-1}(I_{1})\cap f^{-1}(I_{2})=\emptyset$ while, as $f^{-1}(I_{1})$ and $f^{-1}(I_{2})$ are sets in $\U$, their intersection should be nonempty.\\
This ensures that $\overline{f}:\bN\rightarrow K$, defined as

\begin{center} $\overline{f}(\U)=\U-\lim_{n\in\N} f(n)$ for every $\U\in\bN$ \end{center}

is a function in $K^{\bN}$.\\

{\bfseries Claim:} $\overline{f}$ is the unique continuous extenction of $f$ to $\bN$.\\

1) $\overline{f}$ is an extension of $f$: via the identification of every natural number $m$ with the principal ultrafilter $\mathfrak{U}_{m}$, 
\begin{center} $\overline{f}(m)=\overline{f}(\mathfrak{U}_{m})=\mathfrak{U}_{m}-\lim_{n\in\N} f(n)$,\end{center}

and $\mathfrak{U}_{m}-\lim_{n\in\N} f(n)= f(m)$: by definition, $k=\mathfrak{U}_{m}-\lim_{n\in\N} f(n)$ if and only if for every neighborood $I$ of $k$, $f^{-1}(I)\in\mathfrak{U}_{m}$ if and only if for every neighborood $I$ of $k$, $m\in f^{-1}(I)$, if and only if for every neighborood $I$ of $k$, $f(m)\in I$, and as $K$ is Hausdorff it follows that $k=f(m)$.\\
2) $\overline{f}$ is continuous: if $I$ is any open set in $K$,

\begin{center} $\overline{f}^{-1}(I)=\{\U\in \bN\mid \overline{f}(\U)\in I\}=$\\\vspace{0.3cm}$=\{\U\in\bN\mid \{n\in\N\mid f(n)\in I\}\in\U\}=\Theta_{\{n\in\N\mid f(n)\in I\}}$,\end{center}

and $\Theta_{\{n\in\N\mid f(n)\in I\}}$ is an open set.\\
3) $\overline{f}$ is the unique continuous extension of $f$: if $g$ is any other continuous extension of $f$, since $\overline{f}(n)=g(n)$ for every natural number $n$, and $\N$ is a dense subset of $\bN$, which is Hausdorff, then $\overline{f}(\U)=g(\U)$ for every ultrafilter $\U\in\bN$, so $\overline{f}=g$. \\ \end{proof}

$\bN$, endowed with the Stone topology, is a compact, Hausdorff space that satisfies the universal property expressed in Theorem 1.1.13: in the literature, this is called Stone-\v{C}ech compactification:

\begin{defn} The {\bfseries Stone-\v{C}ech compactification} of a discrete space $X$ is a compact Hausdorff space $\beta X$ in which the original space forms a dense subset, such that any continuous function from $X$ to a compact Hausdorff space $K$ has a unique continuous extension to $\beta X$.\end{defn}

It can be proved that the Stone-\v{C}ech compactification of a discrete space is unique up to homeomorphism; so $\bN$ is (up to homeomorphism) the Stone-\v{C}ech compactification of $\N$.\\
Sometimes, instead of ultrafilters on $\N$ we  need ultrafilters on $\N^{k}$:

\begin{defn} Let $k$ be a natural number $\geq 1$. The {\bfseries set of ultrafilters on $\N^{k}$} $($denoted by $\beta(\N^{k}))$ is the set 

\begin{center} $\beta(\N^{k})=\{\U\subseteq \wp(\wp(\N^{k}))\mid \U$ is an ultrafilter$\}$. \end{center}

$\beta(\N^{k})$ is endowed with the {\bfseries Stone topology}, that is the topology generated by the family of open sets $\{\Theta_{A}\}_{A\in\wp(\N^{k})}$, where

\begin{center} $\Theta_{A}=\{\U\in\beta(\N^{k})\mid A\in\U\}$. \end{center}

\end{defn}

A particularly important subset of $\beta(\N^{k})$ is the set of ultrafilters that are tensor products of elements in $\bN$:

\begin{defn} Given ultrafilters $\U_{1},...,\U_{k}$ on $\N$, the {\bfseries tensor product} of $\U_{1},...,\U_{k}$ $($denoted by $\U_{1}\otimes\U_{1}\otimes...\otimes\U_{k})$ is the ultrafilter on $\N^{k}$ defined by this condition: for every subset $A$ of $\N^{k}$,

\begin{center} $A\in\U_{1}\otimes\U_{2}\otimes...\otimes\U_{k}\Leftrightarrow$\\\vspace{0.3cm}$\Leftrightarrow\{n_{1}\in \N\mid\{n_{2}\in\N\mid...\{n_{k}\in\N\mid (n_{1},...,n_{k})\in A\}\in \U_{k}\}...\}\in\U_{2}\}\in\U_{1}$. \end{center}

\end{defn}

We present some basic properties of tensor products in the form $\U\otimes\V$, where $\U,\V$ are ultrafilters on $\N$. In this proposition, with $\Delta^{+}$ we denote this subset of $\N^{2}$:

\begin{center} $\Delta^{+}=\{(n,m)\in\N^{2}\mid n<m\}$, \end{center}

which is also called the upper diagonal of $\N^{2}$, by obvious geometrical considerations.

\begin{prop} For every ultrafilters $\U,\V$ in $\bN$, $\W$ in $\beta\N^{2}$, the following properties hold:
\begin{enumerate}
	\item if $\V$ is nonprincipal then $\Delta^{+}\in\U\otimes\V$;
	\item if $\U$ is nonprincipal and $A$ is an set in $\U\otimes\U$ then there is an infinite subset $B$ of $\N$ such that $\{(b_{1},b_{2})\mid b_{1}<b_{2}, b_{1}, b_{2}\in B\}\subseteq A$;
	\item $\W$ is the principal ultrafilter on $(n,m)$ if and only if $\W=\mathfrak{U}_{n}\otimes\mathfrak{U}_{m}$;
	\item if $\U\neq\V$ then $\U\otimes\V\neq\V\otimes\U$.
\end{enumerate}
\end{prop}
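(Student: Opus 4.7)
For part (1), my plan is to unfold the definition of the tensor product directly. We need $\{n_{1}\in\N : \{n_{2}\in\N : (n_{1},n_{2})\in\Delta^{+}\}\in\V\}\in\U$. The inner set equals $\{n_{2}\in\N : n_{2}>n_{1}\}$, which is cofinite in $\N$, and therefore belongs to $\V$ by Proposition 1.1.9 (every nonprincipal ultrafilter on $\N$ extends the Fr\'echet filter). Hence the outer set is all of $\N$, which lies in $\U$. Part (3) is similarly a direct unfolding: for any principal ultrafilter $\mathfrak{U}_{k}$ and any set $X$, one has $X\in\mathfrak{U}_{k}$ iff $k\in X$. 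Iterating this twice in the definition of $\mathfrak{U}_{n}\otimes\mathfrak{U}_{m}$ gives $A\in\mathfrak{U}_{n}\otimes\mathfrak{U}_{m}$ iff $(n,m)\in A$, which is exactly the characterization of the principal ultrafilter on $(n,m)$.

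For part (2), I would perform a Ramsey-style inductive construction. Set $A_{n}:=\{m\in\N : (n,m)\in A\}$. By the definition of $\U\otimes\U$, the set $C:=\{n\in\N : A_{n}\in\U\}$ belongs to $\U$. Pick any $b_{1}\in C$. Having chosen $b_{1}<\cdots<b_{k}$ in $C$ with $(b_{i},b_{j})\in A$ for all $i<j\leq k$, I want $b_{k+1}$ in the intersection
\[
C\cap A_{b_{1}}\cap\cdots\cap A_{b_{k}}\cap\{n\in\N : n>b_{k}\}.
\]
All the sets $C,A_{b_{1}},\ldots,A_{b_{k}}$ lie in $\U$ by construction, and since $\U$ is nonprincipal the cofinite tail $\{n>b_{k}\}$ also lies in $\U$, so this finite intersection is in $\U$ and hence nonempty. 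Any choice gives the desired $b_{k+1}$, and the resulting $B=\{b_{k} : k\geq 1\}$ is infinite with the property that $b_{j}\in A_{b_{i}}$, i.e.\ $(b_{i},b_{j})\in A$, whenever $i<j$.

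For part (4), I would use a rectangle argument. Since $\U\neq\V$, there exists a set $A\subseteq\N$ belonging to exactly one of the two ultrafilters; without loss of generality $A\in\U\setminus\V$. Consider $A\times\N\subseteq\N^{2}$. The inner slice $\{n_{2}\in\N : (n_{1},n_{2})\in A\times\N\}$ equals $\N$ when $n_{1}\in A$ and $\emptyset$ otherwise, so it belongs to any ultrafilter iff $n_{1}\in A$. Therefore $A\times\N\in\U\otimes\V$ iff $A\in\U$ (true), and $A\times\N\in\V\otimes\U$ iff $A\in\V$ (false). Hence $\U\otimes\V\neq\V\otimes\U$.

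The only nontrivial step is part (2): the main obstacle is guaranteeing that the inductive construction never gets stuck while simultaneously forcing strict monotonicity. The key point is precisely the nonprincipality assumption on $\U$, which allows each tail $\{n>b_{k}\}$ to be combined with the finitely many $\U$-large sets $C,A_{b_{1}},\ldots,A_{b_{k}}$ without leaving $\U$. The other three parts are essentially bookkeeping once the definition of tensor product is unwound.
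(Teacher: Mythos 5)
Your proof is correct and follows essentially the same route as the paper's: unfolding the tensor-product definition for (1) and (3), an inductive Ramsey-style selection of the $b_k$'s for (2), and a rectangle argument for (4) (the paper uses $A\times A^{c}$ versus $A^{c}\times A$ where you use $A\times\N$, a cosmetic difference). Your explicit intersection with the tail $\{n>b_{k}\}$ in (2) is a slight refinement that makes the strict monotonicity of the $b_{k}$'s, and hence the infinitude of $B$, cleaner than in the paper's minimum-based construction.
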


\begin{proof} 1) By definition, $\Delta^{+}\in\U\otimes\V$ if and only if $\{n\in\N\mid\{m\in\N\mid (n,m)\in\Delta^{+}\}\in\V\}\in\U$.\\
We observe that, for every natural number $n$, 

\begin{center}$\{m\in\N\mid (n,m)\in\Delta^{+}\}=\{m\in\N\mid n<m\}\in\V$\end{center}

since this set is cofinite and $\V$ is nonprincipal.\\ 
This entails that 

\begin{center} $\{n\in\N\mid \{m\in\N\mid (n,m)\in\Delta^{+}\}\in\V\}=\N$,\end{center}

and $\N$ in an element of every ultrafilter $\U$ in $\bN$. This proves that $\Delta^{+}\in\U\otimes\V$.\\
2) Given a set $A$ in $\U\otimes\U$, consider

\begin{center} $\pi_{1}(A)=\{n\in\N\mid \{m\in\N\mid (n,m)\in A\}\in\U\}$ \end{center} 

and, for every $n\in\pi_{1}(A)$, let $A_{n}$ be the set 

\begin{center} $A_{n}=\{m\in\N\mid (n,m)\in A\}$; \end{center}

by definition, for every natural number $n$ in $\pi_{1}(A)$, $A_{n}\in\U$.\\
We inductively construct the set $B$: pose 

\begin{center} $b_{0}=\min \pi_{1}(A)$, $B_{0}=\pi_{1}(A)\cap A_{b_{0}}$. \end{center}

For $n=m+1$, pose 

\begin{center} $b_{m+1}=\min B_{m}$, $B_{m+1}=B_{m}\cap A_{b_{m+1}}$.\end{center}

Observe that, for every natural number $n$, $B_{n}$ is a set in $\U$: by induction, $B_{0}\in\U$ since both $\pi_{1}(A)$ and $A_{b_{0}}$ are in $\U$, and $B_{m+1}\in\U$ since $B_{n}\in\U$ by inductive hypothesis and $A_{b_{n+1}}$ is in $\U$ by construction; this entails, in particular, that every set $B_{n}$ is nonempty, so it is always possible to define $b_{n}$. Also, for every $y\in B_{n}$, $(b_{n},y)\in A$ since $\{b_{n}\}\times B_{n}\subseteq A_{n}$.\\
Consider

\begin{center} $B=\{b_{n}\mid n\in\N\}$. \end{center}

If $b_{i}<b_{j}$ are two elements of $B$, by construction $i<j$, so in particular $b_{j}\in B_{i}$ and $(b_{i},b_{j})\in A$.\\
3) $A\in\mathfrak{U}_{n}\otimes\mathfrak{U}_{m}$ if and only if $\{a\in\N\mid \{b\in\N\mid (a,b)\in A\}\in\mathfrak{U}_{m}\}\in\mathfrak{U}_{n}$ if and only if $\{a\in\N\mid (a,m)\in A\}\in\mathfrak{U}_{n}$ if and only if $(n,m)\in A$ if and only if $A\in \mathfrak{U}_{(n,m)}$.\\
4) Let $A\subseteq\N$ be a set in $\U$ such that $A^{c}\in\V$. Then $(A\times A^{c})\in\U\otimes\V$ and $(A^{c}\times A)\in\V\otimes\U$ and, since $(A\times A^{c})\cap (A^{c}\times A)=\emptyset$, it follows that $\U\otimes\V\neq\V\otimes\U$.\\ \end{proof}

\subsection{Semigroup structure and idempotents of $\bN$}

$\bN$, similarly to $\N$, is endowed with two operations: a sum and a product.

\begin{defn} Let $\U,\V$ be ultrafilters $\bN$. The {\bfseries sum} of $\U$ and $\V$ $($notation $\U\oplus\V)$ is the ultrafilter:

\begin{center} $\U\oplus\V=\{A\subseteq\N\mid\{n\in\N\mid\{m\in\N\mid m+n\in A\}\in\V\}\in\U\}$; \end{center}

and the {\bfseries product} of $\U$ and $\V$ $($notation $\U\odot\V)$ is the ultrafilter:

\begin{center} $\U\odot\V=\{A\subseteq\N\mid\{n\in\N\mid\{m\in\N\mid m\cdot n\in A\}\in\V\}\in\U\}$. \end{center}

\end{defn}

The operations of sum and product can also be seen as the extensions to $\beta\N^{2}$ of the functions $S: \N^{2}\rightarrow \bN$ such that, for every pair of natural numbers $(n,m)$ in $\N^{2}$, 

\begin{center} $S(n,m)=\U_{n+m}$ \end{center} 

and $P:\N^{2}\rightarrow\bN$ such that, for every pair of natural numbers $(n,m)$ in $\N^{2}$,

\begin{center}  $P(n,m)=\U_{n\cdot m}$ \end{center}

restricted to the subspace of $\bN^{2}$ consisting of tensor products:

\begin{center} $\U\oplus\V=\overline{S}(\U\otimes\V)$; $\U\odot\V=\overline{P}(\U\otimes\V)$. \end{center}

A fact that has many important consequences both from an algebraical, a topological and a combinatorial point of view is that ($\bN,\oplus$) and ($\bN,\odot$), endowed with the Stone topology, are right topological semigroups:

\begin{defn} A {\bfseries right topological semigroup} is a pair $(G;\star )$ where 
\begin{enumerate}
	\item $G$ is topological space;
	\item $\star$ is a binary associative operation on $G$;
	\item for every element $x$ in $G$, the function $\star_{x}:G\rightarrow G$ that maps every element $y$ of $G$ in $y\star x$ is continuous.
\end{enumerate}
\end{defn}

Condition number three is usually rephrased saying that the operation $\star$ is right continuous.

\begin{prop} $(\bN,\oplus)$ and $(\bN,\odot)$ are right topological semigroups. \end{prop}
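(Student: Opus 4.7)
The plan is to verify, for each of $\oplus$ and $\odot$, the three conditions in Definition~1.1.20: $\beta\N$ is a topological space (granted by Proposition~1.1.12), the operation is associative, and right translation by any fixed ultrafilter is continuous. I would treat $\oplus$ in detail and then remark that the argument for $\odot$ is identical, replacing $n+m$ by $n \cdot m$ everywhere.

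For associativity, I would fix $\U,\V,\W\in\beta\N$ and an arbitrary $A\subseteq\N$, and compute membership of $A$ in both $(\U\oplus\V)\oplus\W$ and $\U\oplus(\V\oplus\W)$ by iteratively unwinding Definition~1.1.19. On the left, setting $B=\{n:\{m:n+m\in A\}\in\W\}$, one gets $A\in(\U\oplus\V)\oplus\W$ iff $B\in\U\oplus\V$ iff $\{p:\{q:p+q\in B\}\in\V\}\in\U$, which, after expanding $p+q\in B$, becomes
\[
\{p:\{q:\{m:p+q+m\in A\}\in\W\}\in\V\}\in\U.
\]
Running the analogous expansion on the right, using $\{r:p+r\in A\}\in\V\oplus\W$ iff $\{q:\{m:p+q+m\in A\}\in\W\}\in\V$, yields exactly the same condition. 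Thus both ultrafilters contain the same subsets of $\N$, so they are equal, i.e.\ $\oplus$ is associative.

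For right continuity, I would fix $\V\in\beta\N$ and consider the map $\rho_\V:\U\mapsto\U\oplus\V$. Since the sets $\Theta_A$ with $A\subseteq\N$ form a base for the Stone topology, it suffices to show that $\rho_\V^{-1}(\Theta_A)$ is open for every $A$. By Definition~1.1.19,
\[
\rho_\V^{-1}(\Theta_A)=\{\U\in\beta\N:A\in\U\oplus\V\}=\{\U\in\beta\N:B\in\U\}=\Theta_B,
\]
where $B=\{n\in\N:\{m\in\N:n+m\in A\}\in\V\}\subseteq\N$. Hence the preimage of a basic open set is a basic open set, and $\rho_\V$ is continuous. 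The case of $\odot$ is verbatim the same, with $B=\{n:\{m:n\cdot m\in A\}\in\V\}$.

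I do not anticipate a serious obstacle: both verifications reduce to bookkeeping with the defining quantifiers of $\oplus$ and $\odot$. The only point that deserves care is keeping the order of the iterated conditions straight when checking associativity, and noticing that the argument is fundamentally asymmetric: it works precisely because in $\rho_\V^{-1}(\Theta_A)$ the inner set $\{m:n+m\in A\}$ depends only on $n$ (with $\V$ fixed), so the outer membership condition becomes a single clopen condition on $\U$; attempting left continuity with $\U$ fixed would require handling an $\U$-indexed family of conditions on $\V$ simultaneously, which in general fails to be open.
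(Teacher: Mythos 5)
Your proposal is correct and follows essentially the same route as the paper: unwind the definition of $\oplus$ to verify associativity by a chain of equivalences, and show that the preimage of a basic clopen set $\Theta_A$ under right translation is itself a basic clopen set $\Theta_B$ with $B=\{n\in\N\mid \{m\in\N\mid n+m\in A\}\in\V\}$ (the paper writes this as $B=\{n\mid A-n\in\U\}$ with the letters for the fixed and varying ultrafilters swapped, but the computation is identical). Your closing remark on why the argument is one-sided is accurate, though not needed for the statement itself.
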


\begin{proof} We prove that $(\bN,\oplus)$ is a right topological semigroup; the proof for $(\bN,\odot)$ is analogue.\\
$\bN$ is a topological space, as it is endowed with the Stone topology.\\
That the operation $\oplus$ is associative follows by this chain of equivalences: for every subset $A$ of $\N$,

\begin{center}

$A\in (\U\oplus\V)\oplus\W\Leftrightarrow\{n\in\N\mid\{m\in\N\mid n+m\in A\}\in\W\}\in (\U\oplus\V)\Leftrightarrow$\\\vspace{0.3cm}$\{a\in\N\mid\{b\in\N\mid\{m\in\N\mid a+b+m\in A\}\in \W\}\in\V\}\in\U\Leftrightarrow$\\\vspace{0.3cm}$\{a\in\N\mid\{x\in\N\mid a+x\in A\}\in (\V\oplus\W)\}\in\U\Leftrightarrow A\in\U\oplus(\V\oplus\W)$.

\end{center}

The operation $\oplus$ is right continuous: let $\U$ be an ultrafilter in $\bN$, and denote with $\varphi_{\U}$ the function such that, for every ultrafilter $\V$ in $\bN$, 

\begin{center} $\varphi_{\U}(\V)=\V\oplus\U$. \end{center}

To prove that $\varphi_{\U}$ is continuous we observe that, for every subset $A$ of $\N$:

\begin{center} $\varphi_{\U}^{-1}(\Theta_{A})=\{\V\in\bN\mid A\in \V\oplus\U\}=$\\\vspace{0.3cm}$=\{\V\in\bN\mid \{n\in\N\mid A-n \in\U\}\in\V\}=\Theta_{B}$, \end{center}

where $B=\{n\in\N\mid A-n\in\U\}$ and $A-n=\{m\in\N\mid n+m\in A\}$. This proves that $\varphi_{\U}$ is continuous.\\ \end{proof}

\begin{defn} Given a right topological semigroup $(G,\star)$, an {\bfseries idempotent} of $(G\star)$ is an element $x$ such that $x\star x=x$. \end{defn}

The key result, when talking about idempotent in right topological semigroups, is due to Robert Ellis (see $\cite{rif17}$):

\begin{thm}[Ellis] Every compact right topological semigroup $(G,\star)$ contains an idempotent element. \end{thm}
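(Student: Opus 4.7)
The plan is to apply Zorn's lemma to the collection of closed subsemigroups of $G$, ordered by reverse inclusion, to extract a minimal closed subsemigroup and then extract an idempotent from it using only the right continuity of $\star$.

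First I would let $\mathcal{S}$ denote the family of non-empty closed subsemigroups of $G$, which is non-empty since $G\in\mathcal{S}$, and order it by reverse inclusion. To apply Zorn's lemma, I need every chain $\{M_{i}\}_{i\in I}\subseteq\mathcal{S}$ to have an upper bound, i.e.\ to have non-empty intersection. Non-emptiness of $\bigcap_{i\in I}M_{i}$ follows from the finite intersection property (the chain is totally ordered, so any finite subcollection has a smallest element) together with compactness of $G$; closedness under $\star$ and topological closedness are preserved by arbitrary intersections. Thus $\bigcap_{i\in I}M_{i}\in\mathcal{S}$ and Zorn yields a minimal element $M\in\mathcal{S}$.

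Next I would fix any $x\in M$ and consider $M\star x=\{y\star x\mid y\in M\}$. Using right continuity of $\star$, the map $\star_{x}:G\to G$ is continuous, so $M\star x=\star_{x}(M)$ is the continuous image of the compact set $M$, hence compact, hence closed in the Hausdorff space $G$. It is a subsemigroup because $(y_{1}\star x)\star(y_{2}\star x)=(y_{1}\star x\star y_{2})\star x\in M\star x$, and it is contained in $M$ since $M$ is a subsemigroup. Therefore $M\star x\in\mathcal{S}$ and $M\star x\subseteq M$, so by minimality $M\star x=M$. In particular, the set
\begin{center}
$T=\{y\in M\mid y\star x=x\}$
\end{center}
is non-empty. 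Moreover $T$ is closed, being the preimage of $\{x\}$ under the continuous map $\star_{x}$ restricted to $M$, and it is a subsemigroup since $y_{1},y_{2}\in T$ give $(y_{1}\star y_{2})\star x=y_{1}\star(y_{2}\star x)=y_{1}\star x=x$. Hence $T\in\mathcal{S}$ with $T\subseteq M$, and minimality forces $T=M$. In particular $x\in T$, so $x\star x=x$ and $x$ is the desired idempotent.

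The main obstacle is the delicate interplay between the two uses of minimality: one has to first move from the semigroup $M$ to the subsemigroup $M\star x$ in order to produce \emph{some} $y$ with $y\star x=x$, and then use minimality again on the fibre $T$ to promote this to $x\star x=x$. In both steps the right continuity of $\star$ is essential (left continuity is not assumed and would in general fail in $(\bN,\oplus)$): it is what guarantees that $M\star x$ is closed and that $T$ is closed, so that both belong to $\mathcal{S}$ and minimality can be invoked.
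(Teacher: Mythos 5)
Your proof is correct and follows essentially the same route as the paper: Zorn's lemma on closed subsemigroups ordered by reverse inclusion, then two applications of minimality, first to $M\star x$ and then to the fibre $\{y\in M\mid y\star x=x\}$. You are in fact slightly more careful than the paper in checking that chains have non-empty intersection via compactness.
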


\begin{proof} First of all, consider the set 

\begin{center}$\mathcal{C}=\{C\subseteq G\mid$ $C$ is a closed subset of $G$ and $C\star C\subseteq C\}$. \end{center}

$\mathcal{C}$ is nonempty (since $G\in\mathcal{C}$). We can order this set by reverse inclusion: 

\begin{center} $C\leq C'$ if and only if $C'\subseteq C$. \end{center}

Given a chain $\{C_{i}\}$ of elements in $\mathcal{C}$, an upper bound for this chain is $\bigcap_{i\in I} C_{i}$. By Zorn's Lemma it follows that there are maximal elements in $(\mathcal{C},\leq)$ so, as $\leq$ is the reverse inclusion, there are minimal elements in $(\mathcal{C},\subseteq)$.\\
Let $C$ be a minimal element in $(\mathcal{C},\subseteq)$, $x$ an element in $C$, and consider 

\begin{center} $\varphi_{x}(C)=\{c\star x\mid c\in C\}$.\end{center}

Since $\varphi_{x}$ is continuos and $C$ is closed (so, compact), $\varphi_{x}(C)$ is a compact subset of $G$. So it is closed, and $\varphi_{x}(C)\star \varphi_{x}(C)\subseteq C$. By minimality of $C$ it follows that $\varphi_{x}(C)=C$.\\
Then $\varphi_{x}^{-1}(x)\neq\emptyset$ is a closed subset of $C$ and, most importantly, $\varphi_{x}^{-1}(x)\star \varphi_{x}^{-1}(x)\subseteq\varphi_{x}^{-1}(x)$ since, given any $y,z$ in $\varphi_{x}^{-1}(x)$, $x\star (y\star z)=(x\star y)\star z=x\star z=x$. So $\varphi_{x}^{-1}(x)=C$ (again by minimality); in particular

\begin{center} $x\star x=x$: \end{center}

$x$ is an idempotent element in $(G,\star)$.\\ \end{proof}

Applying Ellis's Theorem to $\bN$, we get a very important corollary:

\begin{cor}[Galvin] There are nonprincipal idempotents both in $(\bN,\oplus)$ and in $(\bN,\odot)$. \end{cor}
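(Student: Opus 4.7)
The plan is to apply Ellis's Theorem not to $\bN$ itself (which would only yield some idempotent, potentially principal) but to the closed subspace of nonprincipal ultrafilters, so that any idempotent produced is automatically nonprincipal. So I first want to verify that $\bN\setminus\N$ is a compact right topological subsemigroup under each of $\oplus$ and $\odot$, and then invoke Theorem 1.1.25.

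The first step is topological. Since $\N=\bigcup_{n\in\N}\Theta_{\{n\}}$ is open in $\bN$, its complement $\bN\setminus\N$ is closed, and hence compact as a closed subset of the compact Hausdorff space $\bN$ (Proposition 1.1.12). Right continuity of $\oplus$ and $\odot$ on $\bN$ (Proposition 1.1.21) immediately restricts to right continuity on $\bN\setminus\N$, so once closure under the operation is established we have a compact right topological semigroup.

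The second step is the algebraic core: I must show that the sum and product of two nonprincipal ultrafilters are nonprincipal. Suppose $\U,\V$ are nonprincipal and $\U\oplus\V=\mathfrak{U}_k$ for some $k\in\N$; then $\{k\}\in\U\oplus\V$, i.e.\ $B:=\{n\in\N\mid\{m\in\N\mid n+m=k\}\in\V\}\in\U$. But for each $n$ the inner set is either empty or the singleton $\{k-n\}$, which cannot belong to the nonprincipal ultrafilter $\V$; hence $B=\emptyset$, contradicting $B\in\U$. An entirely parallel argument works for $\odot$: $\{k\}\in\U\odot\V$ forces $\{n\mid\{m\mid nm=k\}\in\V\}\in\U$, and for $n\neq 0$ the inner set is again empty or a singleton, while for $n=0$ it is either $\emptyset$ (if $k\neq 0$) or $\N$ (if $k=0$), in which case the outer set is $\{0\}\notin\U$. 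Either way we reach a contradiction, so $\U\odot\V$ is also nonprincipal.

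The final step is to apply Theorem 1.1.25 to the compact right topological semigroups $(\bN\setminus\N,\oplus)$ and $(\bN\setminus\N,\odot)$ separately; each yields an idempotent, which by construction is a nonprincipal ultrafilter. I do not anticipate any real obstacle: the only substantive content is the closure-under-operation check, which amounts to the elementary observation that inner sets of the form $\{m:n+m=k\}$ or $\{m:nm=k\}$ are finite and so cannot witness membership in a nonprincipal ultrafilter.
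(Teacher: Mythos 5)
Your proposal is correct and takes essentially the same route as the paper, which simply applies Ellis's Theorem to the compact right topological semigroups $(\bN\setminus\N,\oplus)$ and $(\bN\setminus\N,\odot)$. The only difference is that you explicitly verify the closure of $\bN\setminus\N$ under $\oplus$ and $\odot$ (a detail the paper leaves implicit), and your verification is sound.
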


\begin{proof} We have just to apply Ellis Theorem to the compact Hausdorff semigroups $(\bN\setminus\N,\oplus)$ and $(\bN\setminus\N,\odot)$. \\ \end{proof}

We just stretch that, for every ultrafilter $\U$ in $\bN$, 

\begin{center} if $\U\oplus\U = \U\odot\U$ then $\U=\mathfrak{U}_{0}$ or $\U=\mathfrak{U}_{2}$, \end{center}

so the only ultrafilter which is both additively and multiplicatively idempotent is the principal ultrafilter $\mathfrak{U}_{0}$. For the principal ultrafilters, the assertion is trivial; for the nonprincipal, it is proved in $\cite[\mbox{Theorem 10.25}]{rif39}$.

\subsection{$K(\bN,\oplus), K(\bN,\odot)$}

In this section we present the concepts of right, left, bilateral and minimal ideal in $(\bN,\oplus)$ (resp. $(\bN,\odot)$). These are important concepts for the applications of the algebra of $\bN$ to combinatorics.

\begin{defn} A subset $I$ of $\bN$ is a {\bfseries right ideal} $($resp. left ideal$)$ in $(\bN,\oplus)$ if, for every ultrafilters $\U$ in $I$, $\V$ in $\bN$, $\U\oplus\V$ $($resp. $\V\oplus\U)$ is in $I$.\\
$I$ is a {\bfseries bilateral ideal} in $(\bN,\oplus)$ if it is both a left and a right ideal.\\
$I$ is a {\bfseries minimal right ideal} $($resp. minimal left ideal$)$ in $(\bN,\oplus)$ if, whenever $J\subseteq I$ is a right $($resp. left$)$ ideal in $(\bN,\oplus)$, $J=I$.\\
The notions of right, left, bilateral and minimal ideal in $(\bN,\odot)$ are defined similarly. \end{defn}

In this context, a very important result on compact topological semigroups is the following theorem:

\begin{thm} In every compact right topological semigroup $(G,\star )$ there is a smallest bilateral ideal $K(G,\star)$, which can be described as the union of all the minimal left ideals or, also, as the union of all the minimal right ideals of $(G,\star)$. \end{thm}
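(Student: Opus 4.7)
The plan is to construct $K(G,\star)$ as the union of minimal left ideals, verify it is the smallest bilateral ideal, and then show that the same set arises as the union of minimal right ideals.

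First I would establish the existence of minimal left ideals. For any $x\in G$, the set $Gx=\{g\star x:g\in G\}$ is a left ideal, and it is closed: right continuity makes the map $y\mapsto y\star x$ continuous, so $Gx$ is the continuous image of the compact space $G$. Thus closed left ideals exist. Order the family of closed left ideals by reverse inclusion; every chain has an upper bound, namely its intersection, which is nonempty by the finite intersection property in the compact space $G$. Zorn's Lemma yields a minimal closed left ideal $L$. Next I would observe that $L$ is in fact a minimal left ideal (not merely minimal among the closed ones): given any left ideal $L'\subseteq L$, pick $x\in L'$; then $Gx\subseteq L'$, but $Gx$ is a closed left ideal contained in $L$, hence $Gx=L$ by minimality, giving $L\subseteq L'$.

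Next I would set
\[
K(G,\star)=\bigcup\{L:L\text{ is a minimal left ideal of }G\}
\]
and verify this is the smallest bilateral ideal. As a union of left ideals, $K$ is a left ideal. For the right ideal property it suffices to show that if $L$ is a minimal left ideal and $g\in G$ then $Lg$ is again a minimal left ideal: it is a left ideal because $G(Lg)=(GL)g\subseteq Lg$, and its minimality follows from that of $L$ by pulling any nonempty left ideal $L''\subseteq Lg$ back along $y\mapsto y\star g$ to a nonempty left ideal of $L$, which must equal $L$. Hence $x\star g\in Lg\subseteq K$ whenever $x\in L\subseteq K$. For minimality among bilateral ideals: if $I$ is any bilateral ideal and $L$ any minimal left ideal, pick $\ell\in L$ and $i\in I$; then $i\star\ell\in L\cap I$ (using that $L$ is a left ideal and $I$ a right ideal), so $L\cap I$ is a nonempty left ideal inside $L$, forcing $L\subseteq I$; thus $K\subseteq I$.

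Finally I would turn to the right-ideal description. Applying Ellis's Theorem to the (closed, hence compact) right topological semigroup $L$, each minimal left ideal contains an idempotent $e$. The principal right ideal $eG$ is contained in $K$, and the identity $e\star e=e$ shows that $eG$ is a minimal right ideal: if $R\subseteq eG$ is a right ideal and $x=e\star y\in R$, then $e\star x=x$, and expanding $e\star y'$ for arbitrary $y'\in G$ via elements of $R$ forces $R=eG$. So every element of $K$ belongs to some minimal left ideal, hence to a set of the form $eG$ contained in $K$. Conversely any minimal right ideal $R$ meets every bilateral ideal by the symmetric argument (take $\ell\star r\in R\cap I$ for $r\in R$, $\ell\in I$), and therefore lies inside the smallest one, namely $K$. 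Thus $K$ coincides with the union of all minimal right ideals.

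The main obstacle I anticipate is the asymmetry imposed by working in a \emph{right} topological semigroup: left multiplications need not be continuous, so one cannot mimic the Zorn-with-closed-ideals argument on the right side, and the existence and structure of minimal right ideals must instead be extracted from the idempotents provided by Ellis's Theorem inside the minimal left ideals already constructed.
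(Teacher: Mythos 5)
The paper itself does not prove this theorem; it only cites [HS98], so your argument has to stand on its own. The first two thirds of it do: the existence of minimal left ideals via the closed left ideals $Gx$ and Zorn's Lemma, the observation that a minimal \emph{closed} left ideal is minimal among \emph{all} left ideals, the fact that $Lg$ is again a minimal left ideal, and the verification that the union $K$ of all minimal left ideals is a bilateral ideal contained in every bilateral ideal are all correct and essentially complete.

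The gap is in the right-ideal half. First, your argument that $eG$ is a minimal right ideal does not go through as written: from $e\star x=x$ for $x\in R\subseteq eG$ one cannot "expand $e\star y'$ via elements of $R$". To conclude $eG\subseteq R$ you must produce, for a given $x\in R$, some $z\in G$ with $x\star z=e$; only then does $e\star y'=x\star z\star y'\in xG\subseteq R$ follow. Producing such a $z$ is the real content of the lemma: one first shows $e$ is a right identity on $L$ (the set $\{y\in L\mid y\star e=y\}$ is a left ideal containing $e$, hence equals $L$), then that $eGe$ is a group with identity $e$ (left inverses exist because $Gx=L\ni e$ for every $x\in L$), and finally inverts $x\star e\in eGe$ there. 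Second, the step "every element of $K$ belongs to some minimal left ideal, hence to a set of the form $eG$" is unjustified: for the single idempotent $e$ you extracted from $L$ it is false in general that $L\subseteq eG$ (one only has $L\cap eG=eGe$, one block of the decomposition of $L$ into groups). To place an arbitrary $x\in L$ inside a minimal right ideal you need an idempotent $f$ \emph{depending on} $x$ with $f\star x=x$; this is obtained by applying Ellis's Theorem a second time, to the compact subsemigroup $\{y\in L\mid y\star x=x\}$, which is nonempty because $L\star x$ is a left ideal contained in $L$ and hence equals $L$. (A small slip in the last containment: the element of $R\cap I$ should be $r\star\ell$, not $\ell\star r$, since $R$ is a right ideal and $I$ is a left ideal.)
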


A proof of this result can be found in $\cite{rif21}$. As a consequence, since $(\bN,\oplus)$ and $(\bN,\odot)$ are compact right topological semigroups, we have:

\begin{cor} $(\bN,\oplus)$ $($resp. $(\bN,\odot))$ has a minimal bilateral ideal $K(\bN,\oplus)$ $($resp. $K(\bN,\odot))$ which is the union of all its minimal left ideals and, also, the union of all its minimal right ideals:

\begin{center} $K(\bN,\oplus)=\bigcup\{ R\mid R$ minimal right ideal in $(\bN,\oplus)\}=\bigcup\{L\mid L$ minimal left ideal in $(\bN,\oplus)\}$;\\ 
$K(\bN,\odot)=\bigcup\{ R\mid R$ minimal right ideal in $(\bN,\odot)\}=\bigcup\{L\mid L$ minimal left ideal in $(\bN,\odot)\}$. \end{center}

\end{cor}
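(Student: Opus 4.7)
The statement is a direct corollary of the general Theorem 1.1.22, so the plan is essentially a verification that the hypotheses are met in our specific setting. First I would observe that by Proposition 1.1.12 the space $\bN$, endowed with the Stone topology, is compact and Hausdorff. Second, Proposition 1.1.20 establishes that both $(\bN,\oplus)$ and $(\bN,\odot)$ are right topological semigroups: associativity of $\oplus$ and $\odot$ was verified via the iterated ultrafilter condition defining the operations, and right continuity was checked by showing that for each $\U\in\bN$, the preimage under $\varphi_{\U}$ of a subbasic open set $\Theta_{A}$ is again a basic open set of the form $\Theta_{B}$, where $B=\{n\in\N\mid A-n\in\U\}$.

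Given these two ingredients, the hypotheses of Theorem 1.1.22 are satisfied by both $(\bN,\oplus)$ and $(\bN,\odot)$. Applying that theorem directly yields that each of these semigroups admits a smallest bilateral ideal, which we may denote $K(\bN,\oplus)$ and $K(\bN,\odot)$ respectively, and which admits the stated descriptions as the union of all minimal left ideals and as the union of all minimal right ideals.

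Since Theorem 1.1.22 is invoked as a black box (with its proof credited to the reference), there is no real obstacle to overcome here; the only thing to be careful about is to make sure that the semigroup structures to which the theorem is applied are exactly the ones for which all three hypotheses (compactness, associativity, right continuity) have been established in the preceding propositions. In particular, I would not need to re-examine the structure of minimal left or right ideals: their existence and the union formula are part of the statement of Theorem 1.1.22 itself. The proof therefore reduces to a single invocation, essentially of the form: \emph{Since $(\bN,\oplus)$ and $(\bN,\odot)$ are compact right topological semigroups by Propositions 1.1.12 and 1.1.20, Theorem 1.1.22 applies and gives the desired conclusion.}
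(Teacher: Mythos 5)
Your proposal is correct and follows exactly the paper's route: the corollary is obtained by noting that $(\bN,\oplus)$ and $(\bN,\odot)$ are compact right topological semigroups (Propositions 1.1.12 and 1.1.20) and then invoking Theorem 1.1.22, whose proof is deferred to the cited reference. Nothing further is needed.
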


Observe that, if $I_{1}$ and $I_{2}$ are two minimal right (resp. left) ideals in $(\bN,\oplus)$, $I_{1}=I_{2}$ or $I=I_{1}\cap I_{2}=\emptyset$, otherwise $I$ would be a right (resp. left) ideal strictly included in $I_{1}$ and $I_{2}$, which is absurd.\\

\begin{prop} Every right and every left topological ideal in $(\bN,\oplus)$ $($respect. $(\bN,\odot))$ contains an idempotent. \end{prop}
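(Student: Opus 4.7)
The plan is to reduce the statement, in both the additive and multiplicative cases, to a direct application of Ellis's Theorem (Theorem 1.1.22). The key observation is that ``topological ideal'' must be read as ``closed ideal'', and that any closed right (resp.\ left) ideal of a compact right topological semigroup is itself a compact right topological semigroup under the restricted operation.

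First I would verify the subsemigroup property. If $I$ is a right ideal of $(\bN,\oplus)$, then $I\oplus I\subseteq I\oplus\bN\subseteq I$; if $I$ is a left ideal, then $I\oplus I\subseteq \bN\oplus I\subseteq I$. In either case $I$ is closed under $\oplus$. Next, since $I$ is a closed subset of the compact Hausdorff space $\bN$ (Proposition 1.1.12), $I$ is compact; and since $\oplus$ is right continuous on $\bN$ (Proposition 1.1.20), its restriction to $I$ is still right continuous. Therefore $(I,\oplus)$ is a compact right topological semigroup in its own right.

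At this point Ellis's Theorem applies to $(I,\oplus)$ and yields an element $\U\in I$ with $\U\oplus\U=\U$, which is exactly an idempotent of $(\bN,\oplus)$ lying in $I$. The argument for $(\bN,\odot)$ is literally identical: replace $\oplus$ with $\odot$ throughout and use again that $(\bN,\odot)$ is a compact right topological semigroup (Proposition 1.1.20).

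There is no real obstacle here beyond clarifying the intended meaning of ``topological ideal''. If one wanted to drop the closedness hypothesis in the left-ideal case, the argument would still go through by replacing a given left ideal $L$ with the closed left ideal $\bN\oplus\U\subseteq L$ for some $\U\in L$ (which is compact as the continuous image of $\bN$ under the right-continuous map $\V\mapsto\V\oplus\U$). For right ideals without closedness, however, $\U\oplus\bN$ need not be closed, and a finer argument via minimal right ideals inside $K(\bN,\oplus)$ would be needed; for the proposition as stated, the closed hypothesis makes this unnecessary.
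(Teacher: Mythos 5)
Your argument is correct, but note that the paper does not actually prove this proposition: it simply cites \cite[Corollary 2.6 and Theorem 2.7]{rif21}, so your self-contained reduction to Ellis's Theorem is a genuine addition rather than a variant of the paper's proof. The core of your argument is sound: a closed ideal $I$ of $(\bN,\oplus)$ satisfies $I\oplus I\subseteq I$, is compact as a closed subset of the compact Hausdorff space $\bN$, and inherits right continuity of $\oplus$, so Ellis's Theorem (Theorem 1.1.22) applies directly; and your remark that an arbitrary left ideal $L$ can be replaced by the compact left ideal $\bN\oplus\U=\varphi_{\U}(\bN)\subseteq L$ is exactly the standard way to remove the closedness hypothesis on that side. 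The one point worth flagging is that the result cited from Hindman--Strauss holds for arbitrary (not necessarily closed) right ideals as well, and as you correctly observe your argument does not reach that case, since $\U\oplus\bN$ is the image of $\bN$ under the map $\V\mapsto\U\oplus\V$, which is not continuous in general. The usual repair goes through the smallest ideal: given a right ideal $R$ and $\U\in R$, one has $\U\oplus K(\bN,\oplus)\subseteq R\cap K(\bN,\oplus)$, so $R$ meets $K(\bN,\oplus)$ and hence contains a minimal right ideal, and minimal right ideals contain idempotents because their intersection with any minimal left ideal is a group. Whether that extra work is needed here depends on whether ``topological ideal'' in the statement is read as ``closed ideal''; under that reading your proof is complete as written.
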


\begin{proof} This result is proven in $\cite[\mbox{Corollary 2.6 and Theorem 2.7}]{rif21}$.\\ \end{proof}

In $\cite{rif36}$, Yevhen Zelenyuk proved that there is plenty of minimal right ideals whenever we consider the Stone-\v{C}ech compactification of infinite discrete abelian groups:

\begin{thm}[Zelenyuk] Let $G$ be an infinite discrete abelian group with $|G|=\kappa$. Then $\beta G$ contains $2^{2^{\kappa}}$ minimal right ideals.\end{thm}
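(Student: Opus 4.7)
The upper bound $\leq 2^{2^{\kappa}}$ is immediate: by the natural extension of Posp\'i\v{s}il's theorem (Theorem 1.1.10) to an arbitrary infinite discrete set of cardinality $\kappa$ one has $|\beta G| = 2^{2^{\kappa}}$, and distinct minimal right ideals are pairwise disjoint nonempty subsets of $\beta G$, so there can be at most $2^{2^{\kappa}}$ of them. All the work is in the lower bound.

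First, I would reduce the problem to counting idempotents in a single minimal left ideal. The space $\beta G$ is a compact right topological semigroup (the proof of Proposition 1.1.20 goes through for $G$ in place of $\N$), so by Corollary 1.1.22 its minimal two-sided ideal $K(\beta G, \oplus)$ is partitioned into minimal right ideals $R$ and also into minimal left ideals $L$. A classical structural result for compact right topological semigroups says that each intersection $R \cap L$ is a group, whose identity is the unique idempotent contained in $R \cap L$. Fixing a single minimal left ideal $L$, the minimal right ideals are therefore in bijection with the idempotents of $L$, so it suffices to exhibit $2^{2^{\kappa}}$ distinct idempotents inside $L$.

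To produce so many idempotents, I would exploit the abelian structure of $G$ via a Posp\'i\v{s}il-style coding. The plan is to construct a family $\{A_{\alpha} : \alpha < \kappa\}$ of pairwise disjoint infinite subsets of $G$ which is ``independent'' in the sense that the sumsets $A_{\alpha} + A_{\beta}$ for $\alpha \neq \beta$ lie in a controlled region disjoint from the $A_{\gamma}$, so that the indexing is respected under the operation $\oplus$. Then for each ultrafilter $\mathcal{W}$ on $\kappa$, let $C_{\mathcal{W}} = L \cap \bigcap_{S \in \mathcal{W}} \overline{\bigcup_{\alpha \in S} A_{\alpha}}$, a closed subset of $L$; the independence condition is engineered precisely so that $C_{\mathcal{W}}$ is a closed subsemigroup of $(L, \oplus)$. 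By Ellis's theorem (Theorem 1.1.24), each $C_{\mathcal{W}}$ contains an idempotent $e_{\mathcal{W}}$, and for $\mathcal{W} \neq \mathcal{W}'$ a partition $S, \kappa \setminus S$ witnessing the difference separates $C_{\mathcal{W}}$ from $C_{\mathcal{W}'}$, so $e_{\mathcal{W}} \neq e_{\mathcal{W}'}$. Applying Posp\'i\v{s}il's theorem to the index set $\kappa$ produces $2^{2^{\kappa}}$ such ultrafilters $\mathcal{W}$, and hence $2^{2^{\kappa}}$ distinct idempotents in $L$, as required.

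The main obstacle is the construction of the independent family $\{A_{\alpha}\}$: it must be rich enough to encode the $2^{2^{\kappa}}$ ultrafilters produced by Posp\'i\v{s}il, yet compatible with the operation $\oplus$ in the strong sense that each $C_{\mathcal{W}}$ is genuinely closed under $\oplus$. The abelian hypothesis is essential here, since commutativity gives the symmetric control of sumsets $A_{\alpha} + A_{\beta}$ needed for the subsemigroup property; in the noncommutative setting this step breaks down and the count of minimal right ideals can in fact be smaller.
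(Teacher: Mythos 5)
The paper never proves this statement: it is quoted from Zelenyuk's article \cite{rif36} and used as a black box, so there is no internal proof to measure yours against. Judged on its own terms, your opening reduction is sound. The upper bound follows from $|\beta G|=2^{2^{\kappa}}$ and pairwise disjointness of minimal right ideals, and the structure theorem for compact right topological semigroups (for $R$ a minimal right ideal and $L$ a minimal left ideal, $R\cap L$ is a group, hence contains a unique idempotent) does put the minimal right ideals in bijection with the idempotents of any fixed minimal left ideal $L$. The difficulty is that everything after this reduction --- which is the entire content of the theorem --- is left as an unexecuted construction, and the one concrete property you do specify points in the wrong direction.

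Two gaps. First, the independence condition you impose, namely that the sumsets $A_{\alpha}+A_{\beta}$ lie in a controlled region disjoint from all the $A_{\gamma}$, is incompatible with $C_{\mathcal{W}}$ being a subsemigroup: if $p,q$ both concentrate on $\bigcup_{\alpha\in S}A_{\alpha}$, then $p\oplus q$ concentrates on the corresponding sumset, which by your own hypothesis misses $\bigcup_{\gamma}A_{\gamma}$ entirely, so $p\oplus q\notin\overline{\bigcup_{\alpha\in S}A_{\alpha}}$. What is actually required is the opposite: each union $\bigcup_{\alpha\in S}A_{\alpha}$ must be a member of every product $p\oplus q$ with $p,q\in C_{\mathcal{W}}$, i.e.\ the unions must reproduce themselves under sums in an IP-set-like fashion, and manufacturing $\kappa$ pairwise disjoint sets with this self-reproducing property simultaneously for all $2^{2^{\kappa}}$ ultrafilters on $\kappa$ is precisely the hard combinatorial core of Zelenyuk's argument, not a detail to be engineered afterwards. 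Second, even granting the subsemigroup property, Ellis's Theorem requires a \emph{nonempty} compact subsemigroup, and you never show that the fixed minimal left ideal $L$ meets $\bigcap_{S\in\mathcal{W}}\overline{\bigcup_{\alpha\in S}A_{\alpha}}$; compactness gives the intersection of the clopen sets nonempty, but nothing forces it to intersect $L$. This is not a pedantic point: the analogous separation argument does yield $2^{2^{\kappa}}$ minimal \emph{left} ideals rather easily, and the right-ideal count resisted solution for years exactly because right ideals $p\oplus\beta G$ are not closed and the operation is only right continuous, so the naive coding breaks at these two steps. In short, the proposal correctly identifies the target (exhibiting $2^{2^{\kappa}}$ idempotents in one minimal left ideal) but does not reach it.
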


As a consequence, both in $(\bN,\oplus)$ and $(\bN,\odot)$ there are $2^{2^{\aleph_{0}}}$ idempotent ultrafilters.

\subsection{Piecewise syndetic sets and $K(\bN,\oplus)$}

The ultrafilters in the closure of $K(\bN,\oplus)$ have an interesting characterization in terms of a notion called "piecewise syndeticity":

\begin{defn} A subset $A$ of $\N$ is {\bfseries thick} if it contains arbitrarily long intervals; it is {\bfseries piecewise syndetic} if there is a natural number $n$ such that 

\begin{center} $A-[0,n]=\{m\in\N\mid\exists i\leq n$ with $m+i\in A$\}\end{center}

is thick.
\end{defn}

By definition every thick set is piecewise syndetic. In this section we want to prove a well-known result: there is a close connection between piecewise syndetic sets and $\overline{K(\bN,\oplus)}$.

\begin{lem} A subset $A$ of $\N$ is thick if and only if there is a minimal left ideal $L$ included in $\Theta_{A}$. \end{lem}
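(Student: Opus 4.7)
The plan is to prove the equivalence by translating thickness of $A$ into the finite intersection property of the family of shifts $\mathcal{G} = \{A - n : n \in \N\}$, where $A - n = \{m \in \N \mid m + n \in A\}$, and then exploiting the ideal structure of $(\bN, \oplus)$. For the forward implication, I would assume $A$ is thick: for any finite $F = \{n_{1}, \dots, n_{j}\} \subseteq \N$ pick an interval $[m, m + \max F] \subseteq A$, whence $m \in \bigcap_{i=1}^{j}(A - n_{i})$; so $\mathcal{G}$ has the finite intersection property and extends, by Theorem 1.1.6, to some ultrafilter $\V \in \bN$. I would then check that $\bN \oplus \V \subseteq \Theta_{A}$: for any $\U \in \bN$ one has $\{n \in \N : A - n \in \V\} = \N \in \U$, so $A \in \U \oplus \V$ by the definition of $\oplus$. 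The set $\bN \oplus \V$ is the image of the compact space $\bN$ under the continuous map $\U \mapsto \U \oplus \V$ (right continuity from Proposition 1.1.24), so it is a closed left ideal. Invoking the standard fact that every closed left ideal of a compact right topological semigroup contains a minimal left ideal (see \cite{rif21}) yields a minimal left ideal $L \subseteq \bN \oplus \V \subseteq \Theta_{A}$.

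For the reverse implication, suppose $L \subseteq \Theta_{A}$ is a minimal left ideal and pick any $\U \in L$; since $L$ is a left ideal, $\V \oplus \U \in L \subseteq \Theta_{A}$ for every $\V \in \bN$, so $A \in \V \oplus \U$. Specializing to $\V = \mathfrak{U}_{n}$ (the principal ultrafilter at $n$) gives $A \in \mathfrak{U}_{n} \oplus \U$, which unfolds directly to $A - n \in \U$, and this holds for every $n \in \N$. Consequently, for each $k$ the intersection $\bigcap_{i=0}^{k}(A - i)$ lies in $\U$, hence is nonempty, and any element $m$ of it satisfies $[m, m + k] \subseteq A$; as $k$ was arbitrary, $A$ is thick.

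The one substantive step is the existence of a minimal left ideal inside the specified closed left ideal $\bN \oplus \V$. This follows by a routine Zorn argument on the family of closed left sub-ideals of $\bN \oplus \V$ (nonempty intersections of decreasing chains being guaranteed by compactness) together with the observation that $\bN \oplus \W$ is itself closed for every $\W$, so any minimal closed left ideal is automatically minimal among all left ideals. Beyond this black-boxed semigroup fact, everything reduces to the clean translation ``$[m, m+k] \subseteq A$'' $\Leftrightarrow$ ``$m \in \bigcap_{i=0}^{k}(A-i)$'', together with the computation $A \in \mathfrak{U}_{n} \oplus \U \Leftrightarrow A - n \in \U$, so I do not anticipate any further obstacles.
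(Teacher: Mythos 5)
Your proof is correct. Note that the paper itself does not prove this lemma at all: it simply cites \cite{rif40} (Theorem 2.9(c)), so you have supplied a complete argument where the text gives only a reference. Both directions check out against the paper's conventions: the computation $A\in\mathfrak{U}_{n}\oplus\U\Leftrightarrow A-n\in\U$ matches the definition of $\oplus$ used in Section 1.1.3 (where $A-n=\{m\in\N\mid n+m\in A\}$), the finite intersection property of $\{A-n\mid n\in\N\}$ is exactly the translation of thickness, and $\bN\oplus\V\subseteq\Theta_{A}$ follows since the relevant outer set is all of $\N$. The only ingredient you black-box --- that every left ideal of a compact right topological semigroup contains a minimal left ideal --- is standard, and your sketch of it is sound: $\bN\oplus\W$ is compact (hence closed) as the continuous image of $\bN$ under $\U\mapsto\U\oplus\W$, Zorn applies to the family of closed left ideals it contains, and a minimal closed left ideal $L$ is minimal among all left ideals because any left ideal $J\subseteq L$ with $\W\in J$ satisfies $L=\bN\oplus\W\subseteq J$. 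One small observation: your reverse implication uses only that $L$ is a left ideal, not that it is minimal, so your argument in fact proves the slightly stronger statement that $A$ is thick if and only if $\Theta_{A}$ contains \emph{some} left ideal.
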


\begin{proof} This result is $\cite[\mbox{Theorem 2.9(c)}]{rif40}$. \end{proof}

\begin{thm} A set $A$ is piecewise syndetic if and only if $K(\bN,\oplus)\cap \Theta_{A}\neq\emptyset$. \end{thm}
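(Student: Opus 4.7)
The plan is to use Lemma 1.1.28 as the bridge in both directions, pushing information back and forth between thickness of $A-[0,n]$ and the existence of a minimal left ideal sitting inside $\Theta_{A-[0,n]}$.

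For the forward direction, suppose $A$ is piecewise syndetic and fix $n$ with $A-[0,n] = \bigcup_{i=0}^n (A-i)$ thick. By Lemma 1.1.28 pick a minimal left ideal $L\subseteq \Theta_{A-[0,n]}$ and any $\U \in L$. Then $A-[0,n]\in\U$ and, being an ultrafilter on a finite union, $\U$ contains some $A-i$ with $i\le n$. A direct unpacking of Definition 1.1.20 shows $A\in\mathfrak{U}_i\oplus\U$, and since $L$ is a left ideal we get $\mathfrak{U}_i\oplus\U\in L\subseteq K(\bN,\oplus)$, so this ultrafilter lies in $K(\bN,\oplus)\cap\Theta_A$.

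The reverse direction is the main obstacle. Suppose $\U\in K(\bN,\oplus)\cap\Theta_A$ and let $L$ be a minimal left ideal containing $\U$. The goal is to find $k$ with $L\subseteq\Theta_{A-[0,k]}$; Lemma 1.1.28 would then declare $A-[0,k]$ thick and hence $A$ piecewise syndetic. Suppose for contradiction that no such $k$ exists. Then for every $k$ the closed set $L\cap\bigcap_{i=0}^k\Theta_{(A-i)^c}$ is nonempty. Since the right-multiplication map $\V\mapsto\V\oplus\U$ is continuous (Proposition 1.1.22) and $\bN$ is compact, $L=\bN\oplus\U$ is itself compact, so the finite intersection property yields $\V\in L$ with $A-i\notin\V$ for every $i\in\N$.

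Here is where the minimality must do its work, and this is the step I expect to be the crux. Since $\V\in L$ and $\bN\oplus\V$ is a closed left ideal contained in $L$, minimality of $L$ forces $L=\bN\oplus\V$. In particular $\U\in\bN\oplus\V$, so $\U=\W\oplus\V$ for some $\W\in\bN$. Unpacking Definition 1.1.20, the condition $A\in\U=\W\oplus\V$ reads $\{n\in\N\mid A-n\in\V\}\in\W$; this set is in particular nonempty, so there is $n$ with $A-n\in\V$, contradicting the choice of $\V$. The conceptual point I would emphasize is the symmetry that $L=\bN\oplus\V$ for \emph{every} $\V\in L$: it is precisely this homogeneity of the minimal left ideal that allows the ``bad'' ultrafilter $\V$ to inherit membership information from $\U$, producing the contradiction and completing the proof.
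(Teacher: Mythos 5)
Your proof is correct and follows essentially the same route as the paper: the forward direction is identical, and the reverse direction rests on the same two pillars (compactness of the minimal left ideal $L$ and the homogeneity $L=\bN\oplus\V$ for every $\V\in L$), merely reorganized as a contradiction via the finite intersection property instead of the paper's direct finite-subcover argument, with the step ``some $A-n\in\V$'' extracted algebraically from $\U=\W\oplus\V$ rather than topologically from $\U\in\overline{\{n\oplus\V\mid n\in\N\}}$. Both versions then close with the same lemma identifying thick sets with those whose $\Theta$-set contains a minimal left ideal.
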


\begin{proof} Suppose that $A$ is piecewise syndetic, and let $n$ be a natural number such that $T=A-[0,n]=\bigcup_{i\leq n}(A-i)$ is thick.\\
Let $L$ be a minimal left ideal included in $\Theta_{T}$ and $\U$ an ultrafilter in $L$. By construction, $\U\in\Theta_{T}$, so 

\begin{center} $T=\bigcup_{i\leq n}(A-i)\in \U$. \end{center}

In particular, there is an index $i\leq n$ such that $(A-i)\in\U$. This means that $A\in\U\oplus i=i\oplus\U$, and $i\oplus\U\in L$, so $\Theta_{A}\cap L\neq\emptyset$, and the thesis follows since $L\subseteq K(\bN,\oplus)$.\\
Conversely, let $\U$ be an ultrafilter in $K(\bN,\oplus)$ with $A\in\U$. Let $L=\bN\oplus \U$ be the minimal left ideal containing $\U$. Pose

\begin{center} $T_{A}=\bigcup_{n\in\N}(A-n)$. \end{center}

{\bfseries Claim:} $L\subseteq\Theta_{T_{A}}$. \\

We prove the claim: let $\V$ be an element of $L$; $L=\bN\oplus\V=\overline{\{n\oplus \V\mid n\in\N\}}$, $\U\in L$ and $\Theta_{A}$ is a neighbourhood of $\U$ since $\U\in\Theta_{A}$: this entails that there is a natural number $n$ such that $A\in n\oplus\V$, so $\V\in\Theta_{A-n}$, and this proves that $L\subseteq\Theta_{T_{A}}$.\\
In particular

\begin{center} $\{\Theta_{A-n}\mid n\in\N\}$ \end{center}

is an open cover of $L$; but $L$ is compact (since $L$ is the image of $\bN$, which is compact, respect to the function $\V\rightarrow\V\oplus\U$, which is continuous), so there exists a natural number $n$ such that $\{\Theta_{A-i}\mid i\leq n\}$ covers $L$, and this is equivalent to say that, denoting with $T$ the set $\bigcup_{i\leq n}(A-i)$, $L\subseteq\bigcup_{i\leq n}\Theta_{A-i}=\Theta_{T}$.\\
By Lemma 1.1.33 it follows that $\bigcup_{i\leq n}(A-i)$ is a thick set, so $A$ is piecewise syndetic. \\\end{proof}

\begin{cor} An ultrafilter $\U$ is in $\overline{K(\bN,\oplus)}$ if and only if every element $A$ of $\U$ is piecewise syndetic. \end{cor}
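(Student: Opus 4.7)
The plan is to deduce this corollary as a direct consequence of Theorem 1.1.34 by unpacking the definition of topological closure in $\bN$. Recall that the Stone topology has the $\Theta_B$ (for $B\subseteq\N$) as a clopen base, and since $\Theta_{B_1}\cap\cdots\cap\Theta_{B_k}=\Theta_{B_1\cap\cdots\cap B_k}$, the family $\{\Theta_A\mid A\in\U\}$ is a neighborhood base at any ultrafilter $\U$. Hence $\U\in\overline{K(\bN,\oplus)}$ if and only if $\Theta_A\cap K(\bN,\oplus)\neq\emptyset$ for every $A\in\U$.

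For the forward implication, I would assume $\U\in\overline{K(\bN,\oplus)}$ and pick an arbitrary $A\in\U$; then $\Theta_A$ is a basic neighborhood of $\U$, so by the closure characterization above it meets $K(\bN,\oplus)$, and Theorem 1.1.34 immediately yields that $A$ is piecewise syndetic.

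For the converse, I would assume every $A\in\U$ is piecewise syndetic. Fix a basic neighborhood $\Theta_A$ of $\U$, i.e.\ $A\in\U$; by hypothesis $A$ is piecewise syndetic, so Theorem 1.1.34 gives $\Theta_A\cap K(\bN,\oplus)\neq\emptyset$. Since this holds for every basic neighborhood of $\U$, we conclude $\U\in\overline{K(\bN,\oplus)}$.

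There is essentially no obstacle here: the whole content is packaged in Theorem 1.1.34, and the only observation needed is that the $\Theta_A$ with $A\in\U$ form a neighborhood base at $\U$, which follows from the definition of the Stone topology and the fact that ultrafilters are closed under finite intersection. The proof should therefore be short, amounting to little more than a biconditional unpacking.
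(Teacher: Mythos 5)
Your proof is correct and follows exactly the route the paper takes: characterize membership in $\overline{K(\bN,\oplus)}$ via the basic neighborhoods $\Theta_A$ for $A\in\U$, then apply Theorem 1.1.34 in each direction. No gaps.
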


\begin{proof} We use this property of Stone topology: for every subset $S$ of $\bN$, its topological closure satisfies this condition:

\begin{center} For every ultrafilter $\U$, $\U\in\overline{S}$ if and only if, for every $A\in\U$, $\Theta_{A}\cap S\neq\emptyset$. \end{center}

So, if $\U\in\overline{K(\bN,\oplus)}$ and $A\in\U$, there is an ultrafilter $\V$ in $K(\bN,\oplus)$ with $A\in\V$, so by Theorem 1.1.34 $A$ is piecewise syndetic. Conversely, if $\U$ is an ultrafilter such that every element $A$ of $\U$ is piecewise syndetic, since for every piecewise syndetic set $A$ there is some ultrafilter $\V_{A}$ in $K(\bN,\oplus)\cap \Theta_{A}$, it follows that $\U\in\overline{K(\bN,\oplus)}$. \\ \end{proof}

\subsection{$\U$-limits}

In this section we introduce the operation of limit in $\bN$, and we show that, for a subset of $\bN$, to be closed in the Stone topology is equivalent to be closed under $\U$-limits.

\begin{defn} Given a nonempty set $I$, a sequence $\mathcal{F}=\langle \U_{i}\mid i\in I\rangle$ of elements in $\bN$ and an ultrafilter $\V$ on $I$, the {\bfseries $\V$-limit of the sequence $\mathcal{F}$} $($notation $\V-\lim_{I}\U_{i})$ is the ultrafilter:

\begin{center} $\V-\lim_{I} \U_{i}=\{A\subseteq\N\mid \{i\in I\mid A\in \U_{i}\}\in \V\}$. \end{center}

\end{defn}

Let us verify that $\U=\V-\lim_{I}\U_{i}$ is actually an ultrafilter on $\N$. First of all, $\U$ is a filter: $\N$ is in $\U$ since the set of indexes $i$ such that $\N$ is in $\U_{i}$ is $I$, which is in $\V$; $\U$ is closed under intersection since, if $A\in\U$ and $B\in \U$, if $I_{A}=\{i\in I\mid A\in\U_{i}\}$ and $I_{B}=\{i\in I\mid B\in\U_{i}\}$, then both $I_{A}$ and $I_{B}$ are in $\V$, so $I_{A}\cap I_{B}$ is in $\V$ and $I_{A\cap B}=\{i\in I\mid A\cap B\in \U_{i}\}=I_{A}\cap I_{B}$.\\
$\U$ is an ultrafilter: for every subset $A$ of $\N$, for every index $i\in I$, $A\in\U_{i}$ or $A^{c}\in\U_{i}$, so $I=I_{A}\cup I_{A^{c}}$, and this entails that exactly one between $I_{A}$ and $I_{A^{c}}$ is in $\V$, so exactly one between $A$ and $A^{c}$ is in $\U$.\\
The operation of limit-ultrafilter is important from a topological point of view. In fact, the closed subsets of $\bN$ can be characterized in terms of $\U$-limits:

\begin{thm} Let $X$ be a subset of $\bN$. The following two conditions are equivalent:
\begin{enumerate}
	\item $X$ is closed in the Stone topology;
	\item for every set $I$, for every sequence $\langle\U_{i}\mid i\in I\rangle$ of elements in $X$, for every ultrafilter $\V$ on $I$, the ultrafilter $\U=\V-\lim_{i\in I} \U_{i}$ is in $X$.
\end{enumerate} \end{thm}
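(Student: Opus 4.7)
The plan is to prove both directions using the base of Stone topology. Recall the general principle already invoked in the proof of Corollary 1.1.35: for any $S \subseteq \bN$, an ultrafilter $\U$ belongs to $\overline{S}$ iff $\Theta_A \cap S \neq \emptyset$ for every $A \in \U$. I will use this twice.

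For $(1) \Rightarrow (2)$, I would fix a sequence $\langle \U_i \mid i\in I\rangle$ in $X$, an ultrafilter $\V$ on $I$, and set $\U=\V$-$\lim_{i\in I}\U_i$. To show $\U\in X=\overline{X}$, take an arbitrary $A\in\U$. By definition of $\V$-limit, $\{i\in I\mid A\in\U_i\}\in\V$, in particular this set is nonempty; pick any index $i_0$ in it, and then $\U_{i_0}\in \Theta_A\cap X$. Hence every basic neighborhood of $\U$ meets $X$, so $\U\in\overline{X}=X$.

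For $(2) \Rightarrow (1)$, the strategy is to realize every $\U\in\overline{X}$ as a $\V$-limit of a sequence of elements of $X$. Take as index set $I=\U$ itself (viewed as the family of its members). For each $A\in \U$, the hypothesis $\U\in\overline{X}$ gives some $\U_A\in \Theta_A\cap X$; fix one such choice. For every $A\in\U$ let
\begin{center}
$\widehat{A}=\{B\in \U\mid A\in \U_B\}\subseteq I$.
\end{center}
I would then show that the family $\{\widehat{A}\mid A\in\U\}$ has the finite intersection property: given $A_1,\dots,A_n\in\U$, the set $A=A_1\cap\cdots\cap A_n$ is again in $\U$, and since $A\subseteq A_j$ and $A\in\U_A$ by the choice of $\U_A$, we have $A_j\in\U_A$ for every $j$, i.e.\ $A\in\widehat{A_1}\cap\cdots\cap\widehat{A_n}$. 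By Proposition 1.1.3 and Theorem 1.1.6 this family extends to an ultrafilter $\V$ on $I$. It remains to verify that $\V$-$\lim_{A\in I}\U_A=\U$: for any $B\in\U$ we have $\widehat{B}\subseteq\{A\in I\mid B\in\U_A\}$, and since $\widehat{B}\in\V$ we get $B\in\V$-$\lim \U_A$; since both sides are ultrafilters, they coincide. Hence $\U=\V$-$\lim \U_A$ belongs to $X$ by the hypothesis.

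The main obstacle is the converse direction: one must produce an ultrafilter $\V$ on some index set witnessing $\U$ as a limit. The key idea that resolves this is to use $\U$ itself as the index set and to package the choice-of-representatives $\U_A\in\Theta_A\cap X$ into the sets $\widehat{A}$, whose finite intersection property follows immediately from the fact that $\U$ is closed under finite intersections and $\Theta$ is monotone in its index. Everything else is bookkeeping with the definitions of the Stone basis and of $\V$-limits.
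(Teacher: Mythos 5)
Your proof is correct and follows essentially the same strategy as the paper's: the forward direction checks that every basic neighborhood of the limit meets $X$, and the converse realizes each $\U\in\overline{X}$ as a $\V$-limit of chosen representatives $\U_A\in\Theta_A\cap X$ by extending a family with the finite intersection property to an ultrafilter on the index set. The only differences are cosmetic (you index by $\U$ rather than by $\wp(\N)$ and work with the sets $\widehat{A}$ directly, where the paper uses $\Gamma_A=\{B\in\U\mid B\subseteq A\}$ and the inclusion $\Gamma_A\subseteq\Omega_A$).
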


\begin{proof} $(1)\Rightarrow (2)$ If $X$ is a closed base set $\Theta_{A}$ then for every set $I$, for every sequence of ultrafilters $\langle \U_{i}\mid i\in I\rangle$ on $\N$, for every ultrafilter $\V$ on $I$, $\V-\lim_{I} \U_{i}$ is in $\Theta_{A}$, i.e. $A\in \V-\lim_{I} \U_{i}$: indeed, by definition, $A\in \V-\lim_{I} \U_{i}$ if and only if the set $I_{A}=\{i\in I\mid A\in \U_{i}\}$ is in $\V$ and, as $X=\Theta_{A}$, $I_{A}=I$, which is in $\V$.\\
If $X$ is an intersection of closed base sets, $X=\bigcap_{j}\Theta_{A_{j}}$, the conclusion follows immediately from the previous case.\\
$(2)\Rightarrow(1)$ Suppose, conversely, that $X^{c}$ is not an open set. Then there is an ultrafilter $\U$ in $X^{c}$ such that, for every base open set $\Theta_{A}$ that includes $\U$, there is an ultrafilter $\U_{A}$ in $X$ with $A\in \U_{A}$.\\
We use this fact to produce a sequence $\langle \U_{B}\mid B\in\wp(\N)\rangle$ of ultrafilters indexed by $\wp(\N)$ and an ultrafilter $\V$ on $\wp(\N)$ such that $\U$ is the $\V-\lim$ of this family, and this concludes, since by hypothesis this entails that $\U\in X$, which is a contradiction.\\
Step 1: For every set $B$ in $\wp(\N)$, pick $\U_{B}\in X$ such that if $B\in\U$, then $B\in\U_{B}$ (if $B$ is not in $\U$, pick $\U_{B}$ randomly).\\
Step 2: For every set $A\in\U$, define

\begin{center} $\Gamma_{A}=\{B\in\wp(\N)\mid B\subseteq A$ and $B\in\U$\}. \end{center}

Observe that the family $\{\Gamma_{A}\}_{A\in\U}$ has the finite intersection property, as $\Gamma_{A_{1}}\cap \Gamma_{A_{2}}=\Gamma_{A_{1}\cap A_{2}}$. Let $\V$ be an ultrafilter on $\wp(\N)$ with $\{\Gamma_{A}\}_{A\in\U}\subseteq \V$.\\
We claim that 

\begin{center} $\U=\V-\lim_{B\in\wp(\N)}\U_{B}$.\end{center}

In fact, if $A$ is any element in $\U$, the set $\Omega_{A}=\{B\in\wp(\N)\mid A\in \U_{B}\}$ includes $\Gamma_{A}$ since, by definition, if $B\in\Gamma_{A}$ then $B\in \U$ (so $B\in\U_{B}$) and $B\subseteq A$ (so $A\in \U_{B}$).\\
But $\Gamma_{A}$ is an element of $\V$, so also $\Omega_{A}$ is in $\V$, and this entails that $A\in\V-\lim \U_{B}$.\\
Since $\U$ and $\V-\lim \U_{B}$ are ultrafilters, this proves that $\U=\V-\lim_{B\in\wp(\N)}\U_{B}$.\\\end{proof}

\section{Partition Regularity}

We introduce an argument that is strictly related to ultrafilters: the partition regularity of a family of subsets of a set $S$.

\begin{defn} Let $S$ be a set, $n$ a natural number, and $\{A_{1},....,A_{n}\}$ a subset of $\wp(\wp(\N))$. $\{A_{1},...,A_{n}\}$ is a {\bfseries partition} of $S$ if the following three conditions are satisfied:
\begin{enumerate}
	\item $S=\bigcup_{i=1}^{n} A_{i}$;
	\item $A_{i}\neq\emptyset$ for every index $i\leq n$;
	\item $A_{i}\cap A_{j}=\emptyset$ for every indexes $i\neq j$, $i,j\leq n$.
\end{enumerate}
\end{defn}

{\bfseries Convention:} Whenever, given a set $S$, we write 

\begin{center} $S=A_{1}\cup...\cup A_{n}$ \end{center}

it is intended that $\{A_{1},...,A_{n}\}$ is a partition of $S$.

\begin{defn}[] Let $\mathcal{F}$ be a family, closed under superset, of nonempty subsets of a set $S$. $\mathcal{F}$ is {\bfseries weakly partition regular} if, whenever $S=A_{1}\cup...\cup A_{n}$, there exists an index $i\leq n$ such that $A_{i}\in \mathcal{F}$.\\
$\mathcal{F}$ is {\bfseries strongly partition regular} if, for every set $A$ in $\mathcal{F}$, if $A=A_{1}\cup...\cup A_{n}$ then there exists an index $i\leq n$ such that $A_{i}\in\mathcal{F}$.
\end{defn}

Trivially, every strongly partition regular family of sets is also weakly partition regular.\\ 
Partition regular families of subsets of a set $S$ are related with ultrafilters on $S$:

\begin{thm}[] Let $S$ be a set, and $\mathcal{F}$ a family closed under supersets of nonempty subsets of $S$. Then the following equivalences hold:
\begin{enumerate}
	\item $\mathcal{F}$ is weakly partition regular if and only if there exists an ultrafilter $\U$ on $S$ such that $\U\subseteq\mathcal{F}$;
	\item $\mathcal{F}$ is strongly partition regular if and only if $\mathcal{F}$ is an union of ultrafilters.
\end{enumerate}

\end{thm}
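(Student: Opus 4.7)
The plan is to prove the two biconditionals in turn; both easy directions follow directly from the ultrafilter axioms, while both hard directions are extension arguments built on Proposition 1.1.3 and Theorem 1.1.6. The key conceptual step is recognizing that strong partition regularity of $\mathcal{F}$ at a set $A$ can be rephrased as weak partition regularity of a suitably localized family, which lets part (2) be reduced to part (1).

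For part (1), the $(\Leftarrow)$ direction is direct: if $\U\subseteq\mathcal{F}$ and $S=A_1\cup\cdots\cup A_n$, then the complements $A_i^c$ have empty intersection and so cannot all lie in the filter $\U$; by the ultrafilter property some $A_i\in\U\subseteq\mathcal{F}$. For $(\Rightarrow)$ I would build the desired ultrafilter from below. Consider
\[
\mathcal{B}=\{S\setminus A\mid A\subseteq S,\ A\notin\mathcal{F}\}.
\]
I claim $\mathcal{B}$ has the finite intersection property: if $(S\setminus A_1)\cap\cdots\cap(S\setminus A_n)=\emptyset$ then $A_1\cup\cdots\cup A_n=S$, and disjointifying (set $A_1'=A_1$ and $A_i'=A_i\setminus\bigcup_{j<i}A_j$) and discarding empty pieces produces a genuine partition of $S$. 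Weak partition regularity places some $A_i'\in\mathcal{F}$, and closure under supersets promotes this to $A_i\in\mathcal{F}$, contradicting $A_i\notin\mathcal{F}$. Proposition 1.1.3 and Theorem 1.1.6 then extend $\mathcal{B}$ to an ultrafilter $\U$. If some $A\in\U$ failed to lie in $\mathcal{F}$, then $S\setminus A\in\mathcal{B}\subseteq\U$, contradicting that $\U$ is a filter; hence $\U\subseteq\mathcal{F}$.

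For part (2), the $(\Leftarrow)$ direction is immediate: if $\mathcal{F}=\bigcup_i\U_i$ and $A\in\mathcal{F}$ splits as $A=A_1\cup\cdots\cup A_n$, pick $i$ with $A\in\U_i$, and the same ``complements cannot all lie in $\U_i$'' argument yields some $A_j\in\U_i\subseteq\mathcal{F}$. For $(\Rightarrow)$ the idea is to apply part (1) to the localized family
\[
\mathcal{F}_A=\{B\subseteq S\mid A\cap B\in\mathcal{F}\}
\]
for each $A\in\mathcal{F}$. This family is closed under supersets (since $\mathcal{F}$ is) and is weakly partition regular: any partition $S=B_1\cup\cdots\cup B_n$ induces a cover of $A$ by the sets $A\cap B_i$, and after discarding empties the strong partition regularity of $\mathcal{F}$ (applied to $A\in\mathcal{F}$) puts some $A\cap B_i\in\mathcal{F}$, i.e.\ $B_i\in\mathcal{F}_A$. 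Part (1) then furnishes an ultrafilter $\U_A\subseteq\mathcal{F}_A$. Since $(S\setminus A)\cap A=\emptyset\notin\mathcal{F}$ we have $S\setminus A\notin\mathcal{F}_A\supseteq\U_A$, forcing $A\in\U_A$; and any $B\in\U_A$ satisfies $A\cap B\in\mathcal{F}$, hence $B\in\mathcal{F}$ by closure under supersets. Therefore $\mathcal{F}=\bigcup_{A\in\mathcal{F}}\U_A$.

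No step looks genuinely obstructed; the only minor care needed is in the disjointification, which is necessary because the definition of a partition demands nonempty, pairwise disjoint pieces, and in choosing the right localized family $\mathcal{F}_A$ so that the reduction to part (1) is clean.
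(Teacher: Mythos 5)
Your proposal is correct and follows essentially the same strategy as the paper: for (1) you extend to an ultrafilter the family of complements of the non-members of $\mathcal{F}$, and for (2) you reduce to (1) by localizing at each $A\in\mathcal{F}$. The only differences are cosmetic — the paper verifies the filter axioms directly for $\{A\in\mathcal{F}\mid A^{c}\notin\mathcal{F}\}$ where you check the finite intersection property of the dual family $\{S\setminus A\mid A\notin\mathcal{F}\}$, and in (2) the paper localizes to $\{B\in\mathcal{F}\mid B\subseteq A\}$ (producing an ultrafilter on $A$ and then extending it to $S$ by supersets) where you use the trace family $\{B\subseteq S\mid A\cap B\in\mathcal{F}\}$ and stay on $S$ throughout.
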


\begin{proof} 1) Suppose that $\mathcal{F}$ is weakly partition regular and, for every subset $A$ of $S$, consider the partition $S=A\cup A^{c}$. Since $\mathcal{F}$ is weakly partition regular, at least one between $A$ and $A^{c}$ is in $\mathcal{F}$.\\ Consider the subfamily $\mathcal{G}$ of $\mathcal{F}$ such that

\begin{center} $\mathcal{G}=\{A\in\mathcal{F}\setminus\{\emptyset\}\mid A^{c}\notin\mathcal{F}\setminus\{\emptyset\}\}$.\end{center}

{\bfseries Claim:} $\mathcal{G}$ is a filter.\\

In fact, $\mathcal{G}$ does not contain the empty set and it contains $S$; it is closed under superset because, if $A^{c}\notin\mathcal{F}$, then no subset of $A^{c}$ is in $\mathcal{F}$ since this family is closed under superset; it is closed under intersection because, if $A,B\in\mathcal{G}$, then 

\begin{center} $S=(A\cap B)\cup A^{c}\cup (B^{c}\setminus A^{c})$,\end{center}

and $B^{c}\setminus A^{c}$ and $A^{c}$ are not elements of $\mathcal{F}$, so $A\cap B\in \mathcal{F}$.\\
We can extend the filter $\mathcal{G}$ to an ultrafilter $\U$, and $\U\subseteq\mathcal{F}$: extending $\mathcal{G}$, we include in $\U$ only elements of $\mathcal{F}$, because the only elements that are not in $\mathcal{F}$ are complements of something that is in $\mathcal{G}$, so they cannot be in $\U$. So $\U$ is an ultrafilters included in the weakly partition regular family $\mathcal{F}$.\\
Conversely, suppose that $\U$ is an ultrafilter included in $\mathcal{F}$, and let $S=A_{1}\cup...\cup A_{n}$ be a partition of $S$. Then, by definition of ultrafilter, one of the sets $A_{i}$ is in $\U$ so, in particular, it is in $\mathcal{F}$, and this proves that $\mathcal{F}$ is weakly partition regular.\\
2) Suppose that $\mathcal{F}$ is strongly partition regular, and for every set $A$ in $\mathcal{F}$ consider

\begin{center} $\mathcal{F}_{A}=\{B\in\mathcal{F}\mid B\subseteq A\}$. \end{center}

$\mathcal{F}_{A}$ is a weakly partition regular family of subsets of $A$, so there is an ultrafilter $\U_{A}$ on $A$ with $\U_{A}\subseteq \mathcal{F}_{A}$.\\
$\U_{A}$ can be extended to an ultrafilter on $S$ closing under supersets, and this is an internal operation for $\mathcal{F}$: if $\U$ is the ultrafilter on $S$ obtained extending the ultrafilter $\U_{A}$, then $\U\subseteq\mathcal{F}$. This proves that every set $A$ in $\mathcal{F}$ is included in an ultrafilter $\U$ with $\U\subseteq\mathcal{F}$ so, in particular, $\mathcal{F}$ is an union of ultrafilters.\\
Conversely, suppose that $\mathcal{F}$ is an union of ultrafilters, and let $A$ be an element of $\mathcal{F}$ and $\U_{A}$ an ultrafilter included in $\mathcal{F}$ such that $A\in\U_{A}$. Then, if $A=A_{1}\cup...\cup A_{n}$, there is an index $i$ such that $A_{i}$ is in $\U_{A}$, so $A_{i}$ is in $\mathcal{F}$.\\
\end{proof}

From this moment on, we consider fixed a first order language of arithmetic $\mathcal{L}$, and when we talk about first order formulas is intended that the language used is $\mathcal{L}$. For an introduction to first order logic, see e.g. $\cite{rif7}$.

\begin{defn} We say that a first order sentence $\varphi$ is {\bfseries weakly partition regular} or {\bfseries strongly partition regular} on a set $S$ if the related family $\mathcal{F}(S,\varphi)$ has the corresponding property, where 

\begin{center} $\mathcal{F}(S,\varphi)=\{A\subseteq S\mid A\models\varphi\}$. \end{center}

\end{defn}

E.g.: let $\varphi:$ be the sentence "`$\exists x$ $x=7$", and $S=\N$. Then $\mathcal{F}(\N,\varphi)$ is the principal ultrafilter $\mathfrak{U}_{7}$, which is a strongly partition regular family, so $\varphi$ is strongly partition regular.\\
As partition regular families are related with ultrafilters, we introduce the important concept of $\varphi$-ultrafilter:

\begin{defn} Let $\varphi$ be a first order sentence, and $\U$ an ultrafilter on $\N$. $\U$ is a {\bfseries $\varphi$-ultrafilter} if, for every element $A$ of $\U$, $A$ satisfies $\varphi$. \end{defn}

As a corollary of Theorem 1.2.3, given a first order sentence $\varphi$, there is a $\varphi$-ultrafilter on $S$ if and only if the sentence $\varphi$ is weakly partition regular on $S$; similarly, for every subset $A$ of $S$ that satisfies $\varphi$ there is a $\varphi$-ultrafilter that contains $A$ if and only if $\varphi$ is strongly partition regular. This proves the following theorem:

\begin{thm} Let $\varphi$ be a first order sentence, and $S$ a set. The following two equivalences hold:
\begin{enumerate}
	\item $\varphi$ is weakly partition regular on $S$ if and only if there exists a $\varphi$-ultrafilter on $S$;
	\item $\varphi$ is strongly partition regular on $S$ if and only if for every subset $A$ of $S$ that satisfies $\varphi$ there is a $\varphi$-ultrafilter that contains $A$.
\end{enumerate}
\end{thm}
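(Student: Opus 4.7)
The proof is essentially a direct translation of Theorem 1.2.3 through the definitions, so the plan is to set up a dictionary and then read off both equivalences. The key observation is the following trivial reformulation: by the very definition of $\mathcal{F}(S,\varphi)$ and of a $\varphi$-ultrafilter, an ultrafilter $\U$ on $S$ is a $\varphi$-ultrafilter if and only if $\U\subseteq\mathcal{F}(S,\varphi)$, because both conditions assert that every element of $\U$ satisfies $\varphi$. Similarly, by Definition 1.2.4, the sentence $\varphi$ is weakly (respectively strongly) partition regular on $S$ if and only if the family $\mathcal{F}(S,\varphi)$ is weakly (respectively strongly) partition regular.

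For part (1), I would argue: $\varphi$ is weakly partition regular on $S$ iff $\mathcal{F}(S,\varphi)$ is weakly partition regular iff (by Theorem 1.2.3(1)) there is an ultrafilter $\U$ on $S$ with $\U\subseteq\mathcal{F}(S,\varphi)$ iff there is a $\varphi$-ultrafilter on $S$. For part (2), similarly: $\varphi$ is strongly partition regular on $S$ iff $\mathcal{F}(S,\varphi)$ is strongly partition regular iff (by Theorem 1.2.3(2)) $\mathcal{F}(S,\varphi)$ is a union of ultrafilters. Unpacking the last clause: this means exactly that every $A\in\mathcal{F}(S,\varphi)$ lies in some ultrafilter $\U$ with $\U\subseteq\mathcal{F}(S,\varphi)$, which by the dictionary is precisely the statement that every subset $A$ of $S$ satisfying $\varphi$ is contained in some $\varphi$-ultrafilter.

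The only real obstacle I anticipate is a minor hypothesis-checking issue: Theorem 1.2.3 was stated for families closed under supersets, whereas a generic first-order sentence $\varphi$ need not define an upward-closed family $\mathcal{F}(S,\varphi)$. I would address this either by restricting attention (implicitly, as the examples in the excerpt suggest) to sentences whose truth is preserved by supersets, or by replacing $\mathcal{F}(S,\varphi)$ by its upward closure $\widehat{\mathcal{F}}(S,\varphi)$ and noting that an ultrafilter $\U$ is contained in $\widehat{\mathcal{F}}(S,\varphi)$ iff it is contained in $\mathcal{F}(S,\varphi)$ (since $\U$ is already upward closed and contains no subset whose complement is in $\U$). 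Once this technicality is handled, both equivalences are just Theorem 1.2.3 read through the dictionary, and no further work is required.
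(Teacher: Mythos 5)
Your proof is correct and takes exactly the paper's route: the paper derives this theorem as an immediate corollary of Theorem 1.2.3 by observing that a $\varphi$-ultrafilter is precisely an ultrafilter contained in $\mathcal{F}(S,\varphi)$ and that partition regularity of $\varphi$ is by definition partition regularity of $\mathcal{F}(S,\varphi)$. Your attention to the superset-closure hypothesis is well placed (the paper silently ignores it, and Theorem 1.2.3 genuinely fails for families that are not upward closed), but of your two remedies only the first is sound: the second fails because $\U\subseteq\widehat{\mathcal{F}}(S,\varphi)$ does \emph{not} imply $\U\subseteq\mathcal{F}(S,\varphi)$ --- an element $A\in\U$ need only contain some $B$ satisfying $\varphi$, which gives $A\models\varphi$ only when satisfaction of $\varphi$ is preserved by supersets, i.e.\ exactly in the case the first remedy already covers. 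So you should simply read Definition 1.2.4 as applying to sentences (such as the existential ones used throughout the paper) for which $\mathcal{F}(S,\varphi)$ is closed under supersets, and drop the upward-closure variant.
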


\section{Some Results in Ramsey Theory} 

In this section, our goal is to recall some basic definitions and to present some classical and widely known theorems in Ramsey Theory on $\N$. These results are proved both combinatorially and with the use of ultrafilters. This is done since, in Chapter Three, we will reprove these results with nonstandard techniques, and the proofs given here are used as a yardstick to outline advantages and disadvantages of these nonstandard techniques.\\
The results in Ramsey Theory are usually presented in terms of colorations:

\begin{defn} Given a set $S$ and a natural number $n\geq 1$, a {\bfseries coloration} $c$ of $S$ with $n$ colors is a map $c: S\rightarrow\{1,...,n\}$. \end{defn}

It is usually intended that, for every natural number $i\leq n$, $c^{-1}(i)\neq \emptyset$.\\
In a precise sense, colorations and partitions are the same: if $P=\{A_{1},...,A_{n}\}$ is a partition of $S$ in $n$ pieces, the function $c: S\rightarrow \{1,..,n\}$ such that

\begin{center} for every element $s$ in $S$, $c(s)=i$ if and only if $s\in A_{i}$ \end{center}

is a coloration of $S$ with $n$ colors and, if $c$ is a coloration of $S$ with $n$ colors, the family $P=\{A_{1},...,A_{n}\}$, where

\begin{center} for every $i\leq n$, $A_{i}=c^{-1}(i)$ \end{center}

is a partition of $S$ in $n$ pieces.\\
The result that gives the name to this branch of mathematic is Ramsey's Theorem (proved by Frank Plumpton Ramsey in $\cite{rif32}$). Before stating and proving Ramsey Theorem, we have to introduce the following definition:

\begin{defn} Given a set $S$ and a natural number $n$, the set of subsets of $S$ with cardinality $n$ is denoted by $[S]^{n}$:

\begin{center} $[S]^{n}=\{ A\subseteq S\mid |S|=n\}$. \end{center} \end{defn}

\begin{thm}[Ramsey] Given a natural number $n$, if $[\N]^{n}$ is finitely colored then there exists an infinite subset $S$ of $\N$ such that $[S]^{n}$ is monochromatic. \end{thm}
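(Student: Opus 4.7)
The plan is to proceed by induction on $n$. The base case $n=1$ is the finite pigeonhole principle: any coloring of $\N$ with finitely many colors places infinitely many elements in some single color class, and that class is the desired infinite monochromatic $S$.

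For the inductive step, I assume the theorem holds for $n$ and fix a coloring $c:[\N]^{n+1}\to\{1,\ldots,r\}$. The idea is a greedy nested construction that produces an infinite chain of infinite ``reservoirs'' $\N=B_{0}\supseteq B_{1}\supseteq B_{2}\supseteq\cdots$ together with a strictly increasing sequence $a_{1}<a_{2}<\cdots$ of elements and a sequence of auxiliary colors $\chi(1),\chi(2),\ldots\in\{1,\ldots,r\}$ with the crucial property that for every $F\in[B_{k}]^{n}$ one has $c(\{a_{k}\}\cup F)=\chi(k)$. Concretely, at stage $k$ I take $a_{k}=\min B_{k-1}$, define the induced $n$-coloring $\tilde c_{k}(F)=c(\{a_{k}\}\cup F)$ on $[B_{k-1}\setminus\{a_{k}\}]^{n}$, and invoke the inductive hypothesis to extract an infinite $B_{k}\subseteq B_{k-1}\setminus\{a_{k}\}$ on which $\tilde c_{k}$ is constantly some value $\chi(k)$.

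Once this construction is complete, I apply the base case pigeonhole to the sequence $\chi(1),\chi(2),\ldots$: some color $j\in\{1,\ldots,r\}$ is attained on an infinite set of indices $I\subseteq\N$, and I set $S=\{a_{k}\mid k\in I\}$. To verify that $[S]^{n+1}$ is monochromatic, take any $(n+1)$-subset $\{a_{k_{1}},\ldots,a_{k_{n+1}}\}\subseteq S$ with $k_{1}<k_{2}<\cdots<k_{n+1}$ and note that $a_{k_{2}},\ldots,a_{k_{n+1}}\in B_{k_{1}}$ by the nesting of the $B_{k}$'s. Therefore $c(\{a_{k_{1}},\ldots,a_{k_{n+1}}\})=\chi(k_{1})=j$, so all $(n+1)$-subsets of $S$ receive color $j$.

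The main obstacle is keeping the bookkeeping straight in the nested construction: one must ensure both that each $B_{k}$ is genuinely infinite (so that $a_{k+1}$ and the next application of the inductive hypothesis are legal) and that the color of an arbitrary $(n+1)$-subset of $S$ collapses to $\chi$ of its minimum index. The inductive hypothesis handles the former, and the nesting $B_{k_{1}}\supseteq B_{k_{2}}\supseteq\cdots$ handles the latter; once these two features are in place the final pigeonhole on $\chi$ concludes the proof.
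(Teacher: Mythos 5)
Your proof is correct and follows essentially the same approach as the paper's combinatorial proof: the nested-reservoir construction with an auxiliary color $\chi(k)$ attached to each $a_k$ and a final pigeonhole is exactly the argument the paper gives explicitly for $n=2$ (with $A_{i+1}=T^{j}_{i}$ playing the role of your $B_k$). The only difference is that you carry out the induction on $n$ in full, whereas the paper presents the $n=2$ case and remarks that the general case follows by induction.
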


\begin{proof} Combinatorial Proof: We give the proof for the case $n=2$. The general case follows by induction on $n$.\\
Let $c$ be the finite coloration of $[\N]^{2}$, and let $r$ be the number of colors of the coloration $c$. We inductively define infinite sets $A_{i}$ and natural numbers $x_{i}$ in this way:

\begin{enumerate}
	\item $A_{1}=\N$;
	\item $x_{1}$ is any element of $\N$;
	\item Fixed $x_{i}$ in $A_{i}$ define, for $1\leq j\leq r$, $T^{j}_{i}=\{y\in A_{i}\mid c(\{x_{i},y\})=j\}$.
\end{enumerate}

Now, by definition, we have

\begin{center} $A_{i}=\bigcup_{j=1}^{r} T^{j}_{i}$, \end{center}

and this forms a finite partition of the infinite set $A_{i}\setminus\{x_{i}\}$. So, there is at least one index $j$ with $T^{j}_{i}$ infinite. Fix that index $j$ and pose $A_{i+1}=T^{j}_{i}$.\\
Finally, pose 

\begin{center} $A=\{x_{i}\mid i\geq 1\}$.\end{center}

We observe that, given natural numbers $i,j,k$ with $i<j$ and $i<k$, since $A_{j}\subseteq A_{i}$ and $A_{k}\subseteq A_{i}$, by definitions we have $c(\{x_{i},x_{j}\})=c(\{x_{i},x_{k}\})$.\\
We construct this new coloration $c'$ with $r$ colors for the set $A$: given $x_{i}$ in $A$, we pose $c'(x_{i})=j$ if and only if for every $i<k$ we have $c(\{x_{i},x_{k}\})=j$.\\
$c'$ is a finite coloration of an infinite set, so there is a monochromatic infinite subset $S$ of $A$ (with color $j$ respect $c^{\prime}$). And, by construction, for every $\{x,y\}\in [S]^{2}$, $c(\{x,y\})=j$, so $[S]^{2}$ is monochromatic.\\ \end{proof}

\begin{proof} Proof with Ultrafilters: Let $\U$ be a nonprincipal ultrafilter on $\N$ and let $c$ be a coloration of $[\N]^{2}$ with $r$ colors. Identify $[\N]^{2}$ with the set \begin{center} $\Delta^{+}=\{(n,m)\in\N^{2}\mid n<m\}$,\end{center}

and consider the coloration $c'$ of $\Delta^{+}$:

\begin{center} For every $(n,m)\in \N^{2}$, $c'((n,m))=c(\{n,m\})$. \end{center}

Put, for $1\leq i\leq r$, $C_{i}=\{(n,m)\in\Delta^{+}\mid c'((n,m))=i\}$. Then 

\begin{center}$\Delta^{+}=\bigcup_{i=1}^{r} C_{i}$,\end{center} and this is a partition of $\Delta^{+}$.\\
As we proved in Proposition 1.1.17, $\Delta^{+}$ is a set in $\U\otimes\U$ so there is an index $i\leq r$ such that $C_{i}\in\U\otimes\U$. By point two of Proposition 1.1.17 it follows that there is a subset $B$ of $\N$ with 

\begin{center}$\{(b_{1},b_{2})\mid b_{1}<b_{2}, b_{1},b_{2}\in B\}\subseteq C_{i}$.\end{center}

By construction $B$ is an infinite subset of $\N$ monochromatic respect $c$: for every $\{b_{1},b_{2}\}\in [B]^{2}$, $c(b_{1},b_{2})=i$.\\\end{proof}

One other gem in the early development of Ramsey Theory is Van Der Waerden's Theorem (see $\cite{rif35}$):

\begin{thm}[Van Der Waerden] In every finite coloration of $\N$ there are arbitrarily long monochromatic arithmetic progressions. \end{thm}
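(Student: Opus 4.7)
The plan is to prove the stronger finitary statement: for every $k,r\in\N$ there exists $W(k,r)\in\N$ such that any coloration of $\{1,\ldots,W(k,r)\}$ with $r$ colors admits a monochromatic arithmetic progression of length $k$. This implies the theorem, since any finite coloration of $\N$ restricts to a coloration of $\{1,\ldots,W(k,r)\}$. The cases $k=1$ and $k=2$ are immediate (the latter by pigeonhole once $r+1$ integers are available), so the content lies entirely in the inductive step from $k-1$ to $k$.

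The key device is that of a color-focused family: given an $r$-coloration $c$, say that monochromatic arithmetic progressions $A_{1},\ldots,A_{m}$ of length $k-1$, with common differences $d_{i}$ and initial terms $a_{i}$, are focused at $f$ if $a_{i}+(k-1)d_{i}=f$ for every $i$, and color-focused if in addition the $A_{i}$'s carry pairwise distinct colors. Assuming $W(k-1,r')<\infty$ for every $r'$, I would prove by a secondary induction on $m$ the strengthened claim: for each $m\leq r+1$ there is some $N(m)\in\N$ such that any $r$-coloration of $\{1,\ldots,N(m)\}$ contains either a monochromatic progression of length $k$ or a color-focused family of $m$ progressions of length $k-1$ whose common focus also lies in the interval. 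Setting $m=r+1$ forces the first alternative, since whatever color the focus $f$ receives must agree with one of the $r+1$ distinct colors on the color-focused family, and appending $f$ to that progression extends it to a monochromatic progression of length $k$.

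The base case $m=1$ reduces to $W(k-1,r)$. The inductive step $m\mapsto m+1$ is the main technical step and is executed by a block-encoding argument: partition a sufficiently long interval into consecutive blocks of length $L:=N(m)$, encode each block by its induced $r$-coloring (an element of a set of size $r^{L}$), and apply the outer inductive hypothesis in the form $W(k-1,r^{L})$ to find $k-1$ blocks $B_{j_{1}},\ldots,B_{j_{k-1}}$ in arithmetic progression of common difference $d$ (in block indices) that share the same coloring pattern. Inside $B_{j_{1}}$ one already has $m$ color-focused progressions $A_{1},\ldots,A_{m}$ with focus $f$; for each $i$ one then forms the new progression $B_{i}$ whose $s$-th term is the $s$-th term of $A_{i}$ shifted into $B_{j_{s}}$, which is monochromatic in the same color as $A_{i}$, has common difference $d_{i}+dL$, and focuses at $f+(k-1)dL$; and one adjoins an $(m+1)$-th progression $D$ consisting of the copies of $f$ in each of the $k-1$ blocks, which is also focused at $f+(k-1)dL$. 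Either the color of $f$ equals some $c_{i}$ (in which case adjoining $f$ to $A_{i}$ yields a monochromatic AP of length $k$ and we are done), or the family $\{B_{1},\ldots,B_{m},D\}$ is color-focused with $m+1$ members.

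The main obstacle is the intricate bookkeeping of this construction: one must verify that each $B_{i}$ is indeed an arithmetic progression of length $k-1$ whose terms all lie in the target interval, that the $B_{i}$'s and $D$ all share the common focus $f+(k-1)dL$, and that the dichotomy between the two conclusions is exhaustive. An alternative route, much in the spirit of the ultrafilter machinery developed in Sections 1.1.3 and 1.1.4, would be to invoke a minimal idempotent ultrafilter $\U\in K(\bN,\oplus)$ furnished by Ellis's Theorem together with Corollary 1.1.29, and to show that every element of such a $\U$ contains arbitrarily long arithmetic progressions; that route, however, still rests on a nontrivial combinatorial input of its own (essentially the Central Sets Theorem), so the color-focusing argument remains the cleanest self-contained plan.
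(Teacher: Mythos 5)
Your proposal is correct, but it is worth noting that the paper does not actually prove Theorem 1.3.4 in full generality: it states the theorem, defers the general proof to the references (Graham--Rothschild--Spencer for the combinatorial route, Bergelson--Hindman for the ultrafilter route), and then proves only the special case of $2$-colorations and length-three progressions, once combinatorially (the $325=65\cdot 5$ block argument) and once via an idempotent ultrafilter and the sum $2\U\oplus\U$. Your color-focusing double induction is precisely the general form of the paper's combinatorial special case: the paper's two same-patterned blocks $B_{i},B_{j}$ and the third block $B_{j+(j-i)}$ are your block-encoding step with $k=3$, $r=2$, and the element $5i+b+(b-a)$ is exactly the common focus of a two-member color-focused family. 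So what you buy is a genuinely complete and self-contained proof of the full statement, at the cost of the bookkeeping you correctly identify; what the paper buys by restricting to the special case is a short, concrete illustration that it can later re-derive with its nonstandard machinery in Chapter Three. One cosmetic wrinkle in your write-up: a color-focused family of $r+1$ progressions cannot exist under an $r$-coloration (it would need $r+1$ pairwise distinct colors), so the secondary induction should terminate at $m=r$, where the dichotomy in your inductive step already forces the focus to share a color with one of the $r$ progressions; this changes nothing of substance. Your closing remark on the ultrafilter alternative is also fair --- the paper's Section 1.1.4 supplies the minimal idempotents, but showing that their members contain arbitrarily long arithmetic progressions still requires a nontrivial combinatorial or dynamical input not developed in the paper.
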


Combinatorial proofs of this result can be found in $\cite{rif19}$, and two proofs with the use of ultrafilters are given in $\cite{rif41}$.\\
Here, we present this particular case:

\begin{thm} In every $2$-coloration of $\N$ there is a monochromatic arithmetic progression of lenght three. \end{thm}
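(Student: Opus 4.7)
My plan is to prove the finite bound $W(3,2)\leq 9$: every 2-coloring of the interval $[1,9]$ contains a monochromatic arithmetic progression of length three. The theorem then follows at once by restricting an arbitrary 2-coloring of $\N$ to $[1,9]$. The argument is purely combinatorial, using pigeonhole on a small central subset together with a short chain of deductions along 3-APs inside $[1,9]$.

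First I would apply pigeonhole to the three central positions $\{4,5,6\}$, looking at the pattern $(c(4),c(5),c(6))$. If all three entries agree, then $(4,5,6)$ is already a monochromatic AP and we are done. Otherwise, after relabelling the two colors as $\alpha,\beta$ and using the reflection symmetry $i\mapsto 10-i$ of $[1,9]$, only two essentially different patterns on $(4,5,6)$ remain to consider: the symmetric pattern $(\alpha,\beta,\alpha)$ and the asymmetric pattern $(\alpha,\alpha,\beta)$.

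Next I would propagate forced colors along the length-$3$ APs contained in $[1,9]$. For $(\alpha,\beta,\alpha)$, the APs $(2,4,6)$ and $(4,6,8)$ force $c(2)=c(8)=\beta$ (otherwise each would be $\alpha$-monochromatic), and then $(2,5,8)$ is $\beta$-monochromatic. For $(\alpha,\alpha,\beta)$, I would walk along the chain of APs $(3,4,5),\ (3,6,9),\ (5,7,9),\ (6,7,8),\ (1,5,9),\ (1,2,3)$, at each step assigning the unique color to the free entry that avoids an immediate monochromatic AP: this pins down in turn $c(3)=\beta$, $c(9)=\alpha$, $c(7)=\beta$, $c(8)=\alpha$, $c(1)=\beta$, $c(2)=\alpha$, after which the AP $(2,5,8)$ is $\alpha$-monochromatic.

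The main obstacle is simply keeping the longer $(\alpha,\alpha,\beta)$ chain straight and checking that no forced step ever contradicts an earlier one; the cleanest presentation is strictly sequential, one new position per AP, each time recording which color would complete a forbidden monochromatic progression. The remaining non-monochromatic pattern $(\beta,\alpha,\alpha)$ on $(4,5,6)$ costs nothing extra, since the reflection $i\mapsto 10-i$ of $[1,9]$ converts it into $(\alpha,\alpha,\beta)$ and the same chain applies.
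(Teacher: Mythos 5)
Your proof is correct, but it takes a genuinely different route from the one in the paper. The paper's combinatorial argument works with $n=325$: it partitions $\{1,\dots,325\}$ into $65$ blocks of length $5$, applies the pigeonhole principle to find two identically coloured blocks among the first $33$, and then runs a colour-focusing argument on a third block $B_{j+(j-i)}$ --- the standard Graham--Rothschild--Spencer template, whose virtue is that it is the base case of the induction scheme proving the full Van der Waerden theorem for arbitrary lengths and numbers of colours. You instead establish the sharp bound $W(3,2)\leq 9$ by direct case analysis: pigeonhole on $(c(4),c(5),c(6))$, reduction to the two patterns $(\alpha,\beta,\alpha)$ and $(\alpha,\alpha,\beta)$ via colour swap and the reflection $i\mapsto 10-i$ (which is legitimate, since reflection preserves $3$-term arithmetic progressions), and then forced-colouring chains ending in the progression $(2,5,8)$. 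I checked both chains and every step is a valid $3$-AP inside $[1,9]$ with the forcing as you claim; the final contradiction at $(2,5,8)$ goes through in both cases. What your approach buys is the optimal constant ($9$ versus $325$) and a completely self-contained verification; what it gives up is any hint of how the argument scales, since the exhaustive case analysis has no analogue for longer progressions, whereas the paper's block-and-focus argument is precisely the mechanism that generalizes.
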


\begin{proof} To prove the result it is sufficient to show that there is a natural number $n$ such that, for every 2-coloration of $\{1,...,n\}=A_{1}\cup A_{2}$, $A_{1}$ or $A_{2}$ contains an arithmetic progression of lenght three. Following the proof in $\cite{rif19}$, we show that $n=325$ is sufficient for our pourposes.\\
Divide $325$ in $65$ blocks $B_{i}$, $1\leq i\leq 65$, of lenght $5$, where 

\begin{center}$B_{i}=\{5\cdot (i-1), 5\cdot (i-1) +1,...,5\cdot(i-1) +4\}$.\end{center}

There are only $2^{5}=32$ ways to 2-color a block of lenght 5 so, by the pigehonhole principle, there are two indexes $i,j$ with $1\leq i<j\leq 33$ such that the block $B_{i}$ and the block $B_{j}$ have the same coloration.\\
Consider the block $B_{i}$, and its elements $5i+1,5i+2,5i+3$. If they have the same coloration, we found a monochromatic arithmetic progression of lenght three. Otherwise, only two of them have the same color; observe that, if those two numbers are $5i+a, 5i+b$, also $5i+b+(b-a)\in B_{i}$ (that is why we choose blocks of lenght 5). If $b+(b-a)$ has the same coloration of $b$ and $a$, we have a monochromatic arithmetic progression of lenght three; otherwise, consider $B_{i}, B_{j}, B_{j+(j-i)}$ (since $1\leq i<j\leq 33$, $j+(j-i)\leq 65$, and that is why we choose $n=65\cdot 5=325$). In $B_{j+(j-i)}$ consider the element $5(j+(j-i))+b+(b-a)$. If its color is the same as $5i+a$, then 

\begin{center} $5i+a, 5j+b, 5(j+(j-i))+b+(b-a)$\end{center} is a monochromatic arithmetic progression of lenght three with common difference $(j-i)+(b-a)$; if the color of $5(j+(j-i))+b+(b-a)$ is not the same as the color of $5i+a$ then, by construction, it must be the same as the color of $5i+b+(b-a)$, and in this case 

\begin{center} $5i+b+(b-a), 5j+b+(b-a),5(j+(j-i))+b+(b-a)$ \end{center}

is a monochromatic arithmetic progression of lenght three of common difference $(j-i)$.\\\end{proof}

\begin{proof} Proof with ultrafilters: The result is a straightforward consequence of this claim:\\

{\bfseries Claim:} If $\U$ is an idempotent ultrafilter in $(\bN,\oplus)$ and $A$ is a set in $2\U\oplus\U$, in $A$ there is an arithmetic progression of lenght three.\\

We prove the claim: the property that we use is that, for every idempotent ultrafilter $\U$ in $\bN$, for every set $S$ in $\U$, the set 

\begin{center} $\{n\in\N\mid S-n\in\U\}$ \end{center}

is in $\U$, where $S-n=\{a\in\N\mid a+n\in S\}$.\\
Consider the set $A$. By definition of sum of ultrafilters, \begin{center} $A\in 2\U\oplus\U\Leftrightarrow \{n\in\N\mid\{m\in\N\mid n+m\in A\}\in 2\U\}\in\U$.\end{center} Consider the set

\begin{center} $B=\{n\in\N\mid \{m\in\N\mid n+m\in A\}\in 2\U\}$. \end{center}
As $A\in 2\U\oplus\U$, for every natural number $n$ in $B$ the set 

\begin{center} $B_{n}=\{m\in\N\mid n+2m\in A\}$\end{center}
is in $\U$.\\
Let $x$ be an element in $B\cap \{n\in\N\mid B-n\in\U\}$ and $y$ an element in $(B-x)\cap \{n\in\N\mid B_{x}- n\in\U\}$. Observe that, by construction, $x+y\in B$ and $B_{x}-y\in\U$.\\
Let $z$ be an element in $B_{x}\cap B_{x+y}\cap (B_{x}-y)$. Observe that, by construction, $z+y\in B_{x}$. Then
\begin{enumerate}
	\item $x+2z\in A$, as $x\in B$ and $z\in B_{x}$;
	\item $x+y+2z\in A$, as $x+y\in B$ and $z\in B_{x+y}$;
	\item $x+2y+2z\in A$, as $x\in B$ and $z+y\in B_{x}$.
\end{enumerate}

As $x+2z, x+y+2z, x+2y+2z$ is am arithmetic progression of lenght three in $A$, this proves the claim.
\\\end{proof}

We point out that Van der Waerden's theorem does not entail that in every finite coloration of $\N$ there are infinite monochromatic arithmetic progressions: e.g. consider this 2-coloration of $\N$: 

\begin{center} For every natural number $n$, $c(n)=1$ if and only if there is an odd natural number $m$ with $\frac{m(m+1)}{2}\leq n <\frac{(m+1)(m+2)}{2}$. \end{center}

In this coloration, there are arbitrarily long arithmetic progression both with color 1 and color 2, but there are not infinite monochromatic arithmetic progression, since both $c^{-1}(1)$ and $c^{-1}(2)$ are subsets of $\N$ with arbitrarily long gaps.\\

Another important result in Ramsey Theory on $\N$ is Schur's Theorem (see $\cite{rif34}$), which is the oldest of the results presented in this section (it has been proved in 1916). This theorem ensures a weak condition of closure under sum for some piece of any finite partition:

\begin{thm}[Shur] For every finite coloration $c$ of $\N$ then are positive natural numbers $n,m$ such that $c(n)=c(m)=c(n+m)$. \end{thm}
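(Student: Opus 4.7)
The plan is to give two proofs in parallel with the style of the preceding results in this section: a combinatorial proof via Ramsey's Theorem (Theorem 1.3.3), and an ultrafilter proof via Galvin's Corollary (Corollary 1.1.28).

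For the combinatorial proof, I would reduce Schur's Theorem to the case $n=2$ of Ramsey's Theorem. Given a finite coloration $c:\N\to\{1,\ldots,r\}$, define an induced coloration $c'$ of $[\N]^{2}$ by $c'(\{i,j\})=c(|i-j|)$. Ramsey's Theorem supplies an infinite subset $S\subseteq\N$ with $[S]^{2}$ monochromatic. Pick any three elements $a<b<d$ in $S$ and set $n=b-a$, $m=d-b$, so that $n+m=d-a$. Then $c(n)$, $c(m)$, $c(n+m)$ coincide with $c'(\{a,b\})$, $c'(\{b,d\})$, $c'(\{a,d\})$ respectively, which all share the common color of $[S]^{2}$. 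This is the classical derivation, and the only thing worth checking carefully is that $n,m$ are positive, which follows from $a<b<d$.

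For the ultrafilter proof, the idea is to exploit a nonprincipal idempotent $\U\in(\bN,\oplus)$, whose existence is guaranteed by Galvin's Corollary. Given a finite coloration $\N=A_{1}\cup\cdots\cup A_{r}$, exactly one $A_{i}$ belongs to $\U$; call it $A$. By the definition of $\oplus$, the idempotency $\U\oplus\U=\U$ translates into the crucial property that $B=\{n\in\N\mid A-n\in\U\}$ is itself in $\U$, where $A-n=\{k\in\N\mid n+k\in A\}$. I would then pick $n\in A\cap B$ (nonempty since both sets lie in $\U$) and, using that $A-n\in\U$, pick $m\in A\cap (A-n)$. By construction $n\in A$, $m\in A$, and $n+m\in A$, so all three integers share the same color. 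Positivity of $n$ and $m$ is secured by shrinking $A$ to $A\setminus\{0\}$ at the start, which stays in $\U$ because $\U$ is nonprincipal.

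Neither step is really an obstacle given the tools developed earlier in the chapter; the main point to get right is the idempotency identity $\{n\mid A-n\in\U\}\in\U$ for $A\in\U=\U\oplus\U$, which is the same engine that drove the three-term progression proof of Van der Waerden in the special case above and is really the content of Ellis's theorem as applied here. Once this is in hand, Schur's Theorem drops out almost immediately from two nested applications of the ultrafilter property ``intersect two sets in $\U$''.
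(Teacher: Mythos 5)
Your proposal is correct and follows essentially the same two routes as the paper: the combinatorial proof via the induced coloration $c'(\{i,j\})=c(|i-j|)$ and Ramsey's Theorem, and the ultrafilter proof via the sets $B=\{n\in\N\mid\{m\in\N\mid n+m\in A\}\in\U\}$ and $B_{n}$ for an additive idempotent $\U$. Your explicit handling of positivity (discarding $0$ from $A$) is a small point the paper leaves implicit, but otherwise the arguments coincide.
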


\begin{proof} Combinatorial Proof: Suppose that $c:\N\rightarrow\{1,...,r\}$ is an $r$-coloration. We use $c$ to induce an $r$ coloration $c^{\prime}$ on $[\N]^{2}$ defined in this way:

\begin{center} $c^{\prime}((i,j))=c(|i-j|)$. \end{center}

As a consequence of Ramsey Theorem, there is an infinite subset $S$ on $\N$ with $[S]^{2}$ monochromatic. Let $i<j<k$ be elements of $S$ (observe that $c^{\prime}((i,j))=c^{\prime}((i,k))=c^{\prime}((j,k)) )$, and pose $n=j-i$ and $m=k-j$.\\
Then 
\begin{enumerate}
	\item $c(n)=c(j-i)=c^{\prime}((i,j))$
	\item $c(m)=c(k-j)=c^{\prime}((j,k))$
	\item $c(n+m)=c((k-j)+(j-i))=c(k-i)=c^{\prime}((i,k))$
\end{enumerate}
so $n,m$ and $n+m$ are monochromatic.

\end{proof}

\begin{proof} Proof with Ultrafilters: Schur's Theorem is a straightforward consequence of this claim:\\

{\bfseries Claim:} For every idempotent ultrafilter $\U$ in $(\bN,\oplus)$, for every set $A$ in $\U$, there are elements $n,m$ in $A$ such that $n+m\in A$.\\

We prove the claim: let $\U$ be an additively idempotent ultrafilter, and $A$ a set in $\U$; since $\U$ is idempotent, by definition

\begin{center} $\{n\in\N\mid\{m\in\N\mid n+m\in A\}\in\U\}\in\U$ \end{center}

Let $B$ be the set:

\begin{center} $B=\{n\in\N\mid\{m\in\N\mid n+m\in A\}\in\U\}$ \end{center} and, for every natural number $n$ in $A\cap B$ (that is nonempty, since it is in $\U$), let $B_{n}$ be the set

\begin{center} $B_{n}=\{m\in\N\mid n+m\in A\}$. \end{center}

If $m$ is an element in $B_{n}\cap A$ (that is nonempty since it is in $\U$), by construction $n,m,n+m\in A$, and this proves the claim.\\
If $\U$ is an additively idempotent ultrafilter, and $c:\N\rightarrow\{1,...,n\}$ a finite coloration of $\N$, one of the monochromatic sets $c^{-1}(i)$ is in $\U$, so it contains a monochromatic lenght three arithmetic progression.\\ \end{proof}

Schur's Theorem can be generalized: 

\begin{defn} Let $S=\{s_{1},...,s_{n}\}$ be a finite subset of $\N$ with cardinality $n$. The {\bfseries set of finite sums of elements in $S$} $($notation $FS(S))$ is the set

\begin{center} $FS(S)=\{\sum_{i\in J} s_{i}\mid \emptyset\neq J\subseteq \{1,...,n\} \}$. \end{center}

Similarly, if $S=\{s_{n}\mid n\in\N\}$ is an infinite subset of $\N$, then

\begin{center} $FS(S)=\{\sum_{i\in J} s_{i}\mid J\in\wp_{fin}(\N)\setminus\emptyset\}$, \end{center}

where $\wp_{fin}(\N)$ denotes the set of finite subsets of $\N$. \end{defn}

An important generalization of Schur's Theorem is Folkman's Theorem (this theorem has been proved independently by many mathematicians, we call it Folkman's Theorem following $\cite{rif19}$):

\begin{thm}[Folkman] For every finite coloration of $\N$, for every natural number $k$, there is a set $S_{k}$ of cardinality $k$ such that $FS(S_{k})$ is monochromatic. \end{thm}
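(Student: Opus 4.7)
The plan is to mimic and iterate the ultrafilter proof of Schur's Theorem already given in this section. Galvin's Corollary 1.1.29 provides a nonprincipal additively idempotent ultrafilter $\U\in(\bN,\oplus)$. Given a finite coloration $c:\N\to\{1,\dots,r\}$, since $\U$ is an ultrafilter exactly one color class $A=c^{-1}(i)$ lies in $\U$. It then suffices to prove the following stronger claim, which I will take as the real content of the proof: for every additively idempotent $\U\in\bN$, every $A\in\U$, and every $k\geq 1$, there exist distinct $x_{1},\dots,x_{k}\in A$ with $FS(\{x_{1},\dots,x_{k}\})\subseteq A$.

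The key ingredient, which was already used implicitly in the proof of Schur's Theorem, is that if $\U$ is idempotent and $A\in\U$, then the set
\[
A^{\star}=\{n\in A\mid A-n\in\U\}
\]
belongs to $\U$. Indeed, $A\in\U\oplus\U$ unfolds to $\{n\in\N\mid A-n\in\U\}\in\U$, and intersecting with $A\in\U$ preserves membership. Moreover, since $\U$ is nonprincipal, $A^{\star}$ is infinite, so we may always choose elements avoiding any prescribed finite set.

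The plan is then an induction on $i=0,1,\dots,k$ producing $x_{1},\dots,x_{i}\in A$ (distinct) and a set $A_{i}\in\U$ with $A_{i}\subseteq A$ satisfying the invariant: for every nonempty $J\subseteq\{1,\dots,i\}$ the sum $s_{J}=\sum_{j\in J}x_{j}$ lies in $A$, and moreover $s_{J}+A_{i}\subseteq A$. The base case is $A_{0}=A$ with no $x$'s, which vacuously meets the invariant. For the inductive step, form
\[
B_{i}=A_{i}\cap\{n\in\N\mid A_{i}-n\in\U\},
\]
which lies in $\U$ by the key ingredient applied to $A_{i}$. Pick $x_{i+1}\in B_{i}$ distinct from $x_{1},\dots,x_{i}$ (possible because $B_{i}$ is infinite) and set
\[
A_{i+1}=A_{i}\cap (A_{i}-x_{i+1}),
\]
which is in $\U$ by construction. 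Any new partial sum has the form $s_{J}+x_{i+1}$ with $J\subseteq\{1,\dots,i\}$; since $x_{i+1}\in A_{i}\subseteq A$ and $A_{i}\subseteq s_{J}+A$ wait, more precisely $s_{J}+A_{i}\subseteq A$ by the inductive invariant gives $s_{J}+x_{i+1}\in A$. The shift-property $s_{J'}+A_{i+1}\subseteq A$ for $J'\subseteq\{1,\dots,i+1\}$ follows either directly from the invariant (if $i+1\notin J'$, since $A_{i+1}\subseteq A_{i}$) or from combining $A_{i+1}\subseteq A_{i}-x_{i+1}$ with the inductive invariant applied to $J'\setminus\{i+1\}$ (treating the empty sum case separately using $A_{i+1}\subseteq A$).

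After $k$ steps, the set $\{x_{1},\dots,x_{k}\}$ has cardinality $k$ and $FS(\{x_{1},\dots,x_{k}\})\subseteq A$, proving the theorem. The only delicate point, which I would regard as the main obstacle, is the bookkeeping in the inductive invariant: one must choose the right hypothesis so that both a single-step shift by $x_{i+1}$ and all previous partial sums remain controlled simultaneously. Strengthening the invariant from ``$s_{J}\in A$'' to ``$s_{J}+A_{i}\subseteq A$'' is what makes the induction go through, exactly as in the proof of the Schur claim above, just iterated $k$ times.
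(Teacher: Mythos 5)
Your proof is correct and follows essentially the same route as the paper's ultrafilter proof: the Galvin--Glazer iteration that repeatedly uses the fact that, for an additively idempotent $\U$ and $A\in\U$, the set $\{n\in A\mid A-n\in\U\}$ is again in $\U$. If anything, your explicit inductive invariant $s_{J}+A_{i}\subseteq A$ supplies the verification that the paper's version leaves implicit when it asserts $FS(X)\subseteq A$.
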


Combinatorial Proof: We follow the proof presented in $\cite{rif19}$. To prove this result, we need a lemma that involves the notion of "weakly monochromaticity" for a set in the form $FS(S)$:

\begin{defn} Given a nonempty finite subset $S=\{s_{1},...,s_{k}\}$ of $\N$, $FS(S)$ is {\bfseries weakly monochromatic} if, for every nonempty subset $I$ of $\{s_{1},...,s_{k}\}$, the color of $\sum_{i\in I} s_{i}$ is the color of $s_{\max(I)}$. \end{defn}

The combinatorial proof of Folkman's Theorem is based on this lemma:

\begin{lem} For every natural numbers $m,k\geq 1$ there is natural number $n$ such that if $\{1,...,n\}$ is $m$-colored then there are $x_{1},...,x_{k}$ in $\{1,...,n\}$ such that $FS(\{x_{1},...,x_{k}\})$ is weakly monochromatic. \end{lem}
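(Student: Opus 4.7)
The plan is to induct on $k$, with $m$ held fixed throughout, using Van der Waerden's theorem (Theorem 1.3.4) as the main lever in the inductive step. The base case $k=1$ is trivial: any $n \geq 1$ works, since the only nonempty subset of $\{x_{1}\}$ is itself, making the weak monochromaticity condition tautological.

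For the inductive step, assume that a suitable bound $n_{k}$ has been exhibited. I would set $L = k \cdot n_{k}$ and declare $n_{k+1} = W(m, L+1)$, where $W(\cdot, \cdot)$ denotes the Van der Waerden function. Given any $m$-coloring $c$ of $\{1, \ldots, n_{k+1}\}$, Van der Waerden's theorem produces a monochromatic arithmetic progression $a, a+d, a+2d, \ldots, a+Ld$ all of a common color $c_{0}$. The key idea is then to transport the inductive hypothesis to the ``scaled'' coloring $c' \colon \{1, \ldots, n_{k}\} \to \{1, \ldots, m\}$ defined by $c'(j) = c(jd)$, which is well-defined because $n_{k} d \leq Ld \leq n_{k+1}$. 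Applying induction to $c'$, one obtains $j_{1} < \ldots < j_{k}$ in $\{1, \ldots, n_{k}\}$ such that $FS(\{j_{1}, \ldots, j_{k}\})$ is weakly monochromatic under $c'$. Finally, set $x_{i} = j_{i} \cdot d$ for $i \leq k$ and $x_{k+1} = a$.

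The verification that $FS(\{x_{1}, \ldots, x_{k+1}\})$ is weakly monochromatic under $c$ splits according to whether $k+1$ belongs to a given nonempty $I \subseteq \{1, \ldots, k+1\}$. When $k+1 \notin I$, the sum equals $d \cdot \sum_{i \in I} j_{i}$, and since $\sum_{i\in I} j_{i} \leq n_{k}$ lies in the domain of $c'$, its color agrees with $c(x_{\max I})$ by the definition of $c'$ and the weak monochromaticity guaranteed by induction. When $k+1 \in I$, the sum has the form $a + sd$ with $s = \sum_{i \in I \setminus \{k+1\}} j_{i} \leq k n_{k} = L$, so it lies in the monochromatic progression and has color $c_{0} = c(a) = c(x_{k+1}) = c(x_{\max I})$.

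The main obstacle is the bookkeeping that forces every sum in $FS(\{x_{1}, \ldots, x_{k+1}\})$ to land inside the monochromatic progression; the choice $L = k n_{k}$ is calibrated precisely so that even the largest partial sum $\sum_{i=1}^{k} j_{i} \leq k n_{k}$ fits. A subtle but necessary point is that the inductive hypothesis implicitly supplies $FS(\{j_{1}, \ldots, j_{k}\}) \subseteq \{1, \ldots, n_{k}\}$ (otherwise $c'$ would not be defined on the relevant sums), which is automatic since the weak monochromaticity condition itself requires the coloring to be defined on all of $FS$.
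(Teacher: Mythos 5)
Your proof is correct and follows essentially the same route as the paper's: induction on $k$, with Van der Waerden supplying a monochromatic progression $a, a+d,\dots$ whose common difference $d$ is used to rescale the coloring so the inductive hypothesis applies, and $x_{k+1}=a$ adjoined as the new top element. The only difference is that you make explicit the quantitative bookkeeping (the choice $L=kn_k$ and the observation that $FS(\{j_1,\dots,j_k\})\subseteq\{1,\dots,n_k\}$ is implicit in the inductive hypothesis) which the paper passes over with the remark about "a little abuse" and "division by $b$".
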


\begin{proof} Let $m$ be the number of colors of the colorations. We proceed by induction on $k$. If $k=1$, there is nothing to prove.\\
Suppose to have proved the result for $k$, and consider $k+1$. By inductive hypothesis, there is a natural number $n$ such that in any $m$-coloration of $\{1,..,n\}$ there are $x_{1},..,x_{k}$ with $FS(\{x_{1},...,x_{k}\})$ weakly monochromatic.\\
By Van der Waerden's Theorem, there is a natural number $N$ such that in any $m$-coloration of $\{1,...,N\}$ there is a monochromatic arithmetic progression $A=\{a,a+b,...,a+nb\}$.\\
Consider $B=\{b,...,nb\}$. By inductive hypothesis (here there is a little abuse: the inductive hypothesis involves the set $\{1,...,n\}$, and here we have $\{b,...,bn\}$, but it should be clear that we can return to the precise inductive hypothesis by division for $b$), there are $x_{1},...,x_{k}$ in $B$ with $FS(\{x_{1},...,x_{k}\})$ weakly monochromatic. Pose $x_{k+1}=a$. Then

\begin{center} $FS(\{x_{1},...,x_{k},x_{k+1}\})$ \end{center}

is weakly monochromatic: consider any two elements $y_{1},y_{2}$ in the form $y_{1}=\sum_{i\in I_{1}} x_{i}, y_{2}=\sum_{i\in I_{2}} x_{i}$, where $I_{1},I_{2}$ are nonempty subset of $\{1,...,k+1\}$ with $\max (I_{1})=\max (I_{2})= M$. If $M\leq k$, then $y_{1}$ and $y_{2}$ are monochromatic since they are in $FS(\{x_{1},...,x_{k}\})$. If $M=k+1$, $y_{1},y_{2}$ are monochromatic since they are two terms of the arithmetic progression $A$.\\
This proves that in every $m$-coloration of $\{1,...,N\}$ there are $k+1$ elements $x_{1},..,x_{k+1}$ with $FS(\{x_{1},..,x_{k+1}\})$ weakly monochromatic, and the theorem follows by induction.\\ \end{proof}

We can now prove Folkman's Theorem:

\begin{proof} Combinatorial Proof: The equivalent finitary formulation of Folkman's Theorem is this: for every natural numbers $r,k$ there is a natural number $n$ such that, for every $r$ coloration of $\{1,...,n\}$ there are $x_{1},...,x_{k}$ with $FS(\{x_{1},...,x_{k}\})$ monochromatic.\\
By Lemma 1.3.10 there is a natural number $n$ such that in any coloration of $\{1,...,n\}$ there are $y_{1},...,y_{rk}$ with $FS(\{y_{1},...,y_{rk}\})$ weakly monochromatic.\\
$y_{1},...,y_{rk}$ are colored with $r$ colors, so there are at least $k$ of them (say, $y_{j_{1}},...,y_{j_{k}}$) that are monochromatic. Pose

\begin{center} $x_{i}=y_{j_{i}}$. \end{center}

By construction, $\{x_{1},...,x_{k}\}$ is such that $FS(\{x_{1},..,x_{k}\})$ is monochromatic (say of color $j$): in fact, for any nonempty $I\subseteq \{1,...,k\}$, the color of $\sum_{i\in I}x_{i}$ depends only on the color of $x_{\max(I)}$, which is $j$, so all the elements in $FS(\{x_{1},..,x_{k}\})$ are monochromatic. \\ \end{proof}   

\begin{proof} Proof with Ultrafilters: The result follows by this claim:\\

{\bfseries Claim:} if $\U$ is a idempotent ultrafilter in $(\bN,\oplus)$ and $A$ is an element of $\U$, for every natural number $k$ there is a subset $S_{k}$ of $A$ such that $|S_{k}|=k$ and $FS(S_{k})\subseteq A$.\\

We prove the claim: let $\U$ be an additively idempotent ultrafilter, and $A$ an element of $\U$. Since $A\in\U$, and $\U$ is idempotent, the set 

\begin{center} $B_{1}=\{n\in\N\mid A-n\in \U\}$\end{center}

is in $\U$. Take an element $n_{1}$ in $A_{1}\cap B_{1}=A_{2}$.\\
Since $A_{2}\in\U$, the set $B_{2}=\{n\in\N\mid A_{1}-n\in\U\}\in\U$. Take $n_{2}\in A_{2}\cap B_{2}=A_{3}$.\\
Similarly, suppose to have defined $A_{1},...,A_{k}, B_{1},...,B_{k}$, $n_{1},...,n_{k}$. Pose 

\begin{center} $A_{k+1}=A_{k}\cap B_{k}$; \end{center}

since $A_{k+1}\in\U$, the set $B_{k+1}=\{n\in\N\mid A_{k+1}-n\in\U\}$ is in $\U$. Take $n_{k+1}\in A_{k+1}\cap B_{k+1}=A_{k+2}$.\\
In this way, we construct a set $X=\{n_{1},n_{2},....\}\subseteq A$, infinite, with 

\begin{center} $FS(X)\subseteq A$, \end{center}

and, for every natural number $k$, for every subset $S_{k}$ of $X$ with $k$ elements, $S_{k}$ is a subset of $A$ with cardinality $k$ such that $FS(S_{k})\subseteq A$. This proves the claim, and the claim entails the thesis because, if $c:\N\rightarrow\{1,...,m\}$ is an $m$-coloration of $\N$, since $\N=\bigcup_{i=1}^{m} c^{-1}(i)$ one of the monochomatic sets $c^{-1}(i)$ is in $\U$, so it contains arbitrarily large subsets $S$ such that $FS(S)\subseteq c^{-1}(i)$.\\
 \end{proof}

{\bfseries Fact:} The proof with ultrafilters of Folkman's Theorem is a proof of a stronger result, known as Hindman's Theorem:

\begin{thm}[Hindman] Whenever $\N$ is finitely colored there is an infinite subset $S=\{x_{1}, x_{2}, x_{3},....\}$ of $\N$ such that $FS(S)$ is monocromatic.\end{thm}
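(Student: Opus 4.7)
The plan is to recycle essentially the same ultrafilter argument that was just used for Folkman's Theorem, observing that the set $X=\{n_{1},n_{2},\dots\}$ produced there is already infinite and satisfies $FS(X)\subseteq A$. So Hindman's Theorem follows once I verify that the inductive construction never gets stuck, i.e.\ can be continued through all natural numbers.

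Concretely, I would fix a finite coloration $c:\N\to\{1,\dots,m\}$, invoke Galvin's Corollary to choose a nonprincipal additively idempotent ultrafilter $\U\in(\bN,\oplus)$, and pick the (unique) index $i\leq m$ with $A:=c^{-1}(i)\in\U$. It then suffices to prove the following claim: for every idempotent $\U\in(\bN,\oplus)$ and every $A\in\U$ there is an infinite set $X=\{n_{k}\mid k\geq 1\}\subseteq A$ with $FS(X)\subseteq A$. Applied to the chosen monochromatic $A$, this gives an infinite $X$ with $FS(X)\subseteq A\subseteq c^{-1}(i)$, which is the desired monochromatic set of finite sums.

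The key step, already used in the Folkman proof, is that for every $S\in\U$ the set
\begin{center}$S^{\star}=\{n\in\N\mid S-n\in\U\}$\end{center}
lies in $\U$, by definition of $\U\oplus\U=\U$. I would use this to build $n_{1},n_{2},\dots$ by recursion while maintaining the invariant that at stage $k$ I have a distinguished set $A_{k}\in\U$ with $A_{k}\subseteq A$ and such that, for every nonempty $I\subseteq\{1,\dots,k-1\}$, every $n\in A_{k}$ satisfies $\sum_{j\in I}n_{j}+n\in A$. Starting from $A_{1}:=A$, at step $k$ I form $A_{k}^{\star}=\{n\in\N\mid A_{k}-n\in\U\}\in\U$, pick $n_{k}\in A_{k}\cap A_{k}^{\star}$ (nonempty since it is in $\U$), and then set
\begin{center}$A_{k+1}=A_{k}\cap (A_{k}-n_{k})\cap\bigcap_{\emptyset\neq I\subseteq\{1,\dots,k\}}\bigl(A-\sum_{j\in I} n_{j}\bigr),$\end{center}
which is again in $\U$ because each set in the intersection is in $\U$ (the last collection being finite). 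One then checks routinely that the invariant is preserved and that $n_{k}\in A$, $\sum_{j\in I}n_{j}\in A$ for every nonempty finite $I\subseteq\N$.

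The main obstacle is really just making the bookkeeping precise, i.e.\ keeping track of the $2^{k}-1$ translates of $A$ that must remain in $\U$ at stage $k$ and verifying that their intersection lies in $\U$. Once this is set up, the strict inequalities $n_{1}<n_{2}<\cdots$ are automatic because $\U$ is nonprincipal (every cofinite set belongs to $\U$, so at each step we may further intersect $A_{k+1}$ with $(n_{k},\infty)\in\U$). No separate combinatorial input beyond the idempotence of $\U$ is needed.
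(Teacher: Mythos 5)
Your proposal is correct and follows essentially the same route as the paper, which obtains Hindman's Theorem precisely by observing that the Galvin--Glazer recursion in the ultrafilter proof of Folkman's Theorem (a nonprincipal additively idempotent $\U$, the key fact that $S\in\U$ implies $\{n\in\N\mid S-n\in\U\}\in\U$, and a nested sequence of sets $A_{k}\in\U$ from which the $n_{k}$ are chosen) already produces an infinite $X\subseteq A$ with $FS(X)\subseteq A$. If anything, your bookkeeping is more careful than the paper's, since you explicitly intersect $A_{k+1}$ with the translates $A-\sum_{j\in I}n_{j}$ (each of which you correctly verify lies in $\U$), so that the invariant $FS(\{n_{1},\dots,n_{k}\})\subseteq A$ is visibly maintained at every stage.
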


Hindman's Theorem (see e.g. $\cite{rif22}$, $\cite[\mbox{Corollary 2.10}]{rif21}$) has both a great combinatorial and a great historical importance, since it is an example of theorem where the ultrafilter proof is undoubtedly simpler than the combinatorial one. In fact, the ultrafilter proof uses only few properties of idempotent ultrafilters while, as for the combinatorial proof, quoting the words of Neil Hindman:

\begin{center} {\itshape Anyone with a very masochistic bent is invited to wade through the original combinatorial proof.} \end{center}

Both Schur's and Folkman's Theorems can be seen as dealing with partition regularity for homogeneous linear polynomials:

\begin{defn} An homogeneous linear polynomial with coefficients in $\Z$ \begin{center} $P(x_{1},...,x_{n}): \sum_{i=1}^{n} c_{i} x_{i}$ \end{center} is {\bfseries weakly partition regular} on $\N$ if in every finite coloration of $\N\setminus\{0\}$ there is a monochromatic solution to the equation $P(x_{1},...,x_{n})=0.$ \\
Similarly, a matrix $A$ with coefficients in $\Z$ 
\begin{center}

$\begin{pmatrix}
a_{1,1}& a_{1,2} &  ...& a_{1,n}\\
a_{2,1}&  a_{2,2} &  ...& a_{2,n} \\
 ...&...&...&...\\
a_{m,1}& a_{m,2} &...& a_{m,n} \end{pmatrix}
$

\end{center}

is {\bfseries weakly partition regular} on $\N$ if in every finite coloration of $\N\setminus\{0\}$ there is a monochromatic solution to the linear system:
\begin{center}

$\left\{\begin{array}{rcrcccrcc}
a_{1,1}x_{1}&+& a_{1,2}x_{2} & + & ...& + &a_{1,n}x_{n}& = & 0 \\
a_{2,1}x_{1}& + & a_{2,2}x_{2}& + & ...& + &a_{2,n}x_{n}& =& 0  \\
& & & & ... \\
a_{m,1}x_{1}& + & a_{m,2}x_{2} & + &...& + &a_{m,n}x_{n}& = &0 \end{array}\right.
$

\end{center}

\end{defn}

For example, Schur's Theorem states that the polynomial

\begin{center} $P(x,y,z): x+y-z$ \end{center}

is weakly partition regular on $\N$, while Folkman's Theorem is equivalent to state that, for every natural number $n$, the $(2^{n}-1)\times (n+2^{n}-1)$ matrix $A$

\begin{center}

$\begin{pmatrix}
r_{1} \\
r_{2} \\
 ...\\
r_{2^{n}-1} \end{pmatrix}
$

\end{center}

is weakly partition regular on $\N$, where the elements $a_{i.j}$ in the row $r_{i}$ are constructed in this way: let $f$ be a bijection between $2^{n}-1$ and the set of nonempty subsets of $\{1,...,n\}$. Then
\begin{itemize}
	\item if $1\leq j\leq n$ and $j\in f(i)$ then $a_{i,j}=1$;
	\item if $1\leq j\leq n$ and $j\notin f(i)$ then $a_{i,j}=0$;
	\item if $n<j\leq n+2^{n}-1$ and $j-n=i$ then $a_{i,j}=-1$;
	\item if $n<j\leq n+2^{n}-1$ and $j-n\neq i$ then $a_{i,j}=0$.
\end{itemize}
 
E.g., for $n=3$, the matrix $A$ can be choosen in this way 

\begin{center}
$\begin{pmatrix}
1&0&0&-1&0&0&0&0&0&0\\
0&1&0&0&-1&0&0&0&0&0\\
0&0&1&0&0&-1&0&0&0&0\\
1&1&0&0&0&0&-1&0&0&0\\
1&0&1&0&0&0&0&-1&0&0\\
0&1&1&0&0&0&0&0&-1&0\\
1&1&1&0&0&0&0&0&0&-1\\
\end{pmatrix}$
\end{center}
\vspace{0.3cm}

In 1933 Richard Rado, a student of Issai Schur, characterized the weakly partition regular matrices on $\N$ in terms of a condition on the system that he called "column condition" (see $\cite{rif30}$):

\begin{defn} A matrix 

\begin{center}

$\begin{pmatrix}

a_{1,1} & a_{1,2} &...& a_{1,n} \\
a_{2,1} & a_{2,2} & ... & a_{2,n} \\
... & ... & ... & ...\\
a_{m,1} & a_{m,2} & ... & a_{m,n}
\end{pmatrix}$

\end{center}

with coefficients in $\Z$ satisfies the {\bfseries columns condition} if it is possible to order the column vectors $c_{1},...,c_{n}$ and to find integers $i_{0}=0$ and $i_{1},...,i_{k}$ with $1\leq i_{1}<i_{2}<...<i_{k}=n$ such that, if we pose

\begin{center} $A_{j} = \sum_{(s=i_{j-1}+1)}^{i_{j}} c_{s}$, \end{center}

then  
\begin{enumerate}
	\item $A_{1}= 0$;
	\item for $1<j\leq k$, $A_{j}$ is a linear combination of $c_{1},...,c_{i_{j-1}}$.
\end{enumerate}

\end{defn}

\begin{thm}[Rado] Let $A$ be a matrix with coefficients in $\Z$. Then the following two conditions are equivalent:
\begin{enumerate}
	\item $A$ is weakly partition regular on $\N$;
	\item $A$ satisfies the columns condition.
\end{enumerate}
\end{thm}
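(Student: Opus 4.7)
The plan is to prove the two directions separately, since they require rather different techniques. For the sufficiency direction (columns condition $\Rightarrow$ partition regularity) I would go via Deuber's theorem on $(m,p,c)$-sets; for the necessity (partition regularity $\Rightarrow$ columns condition) I would construct an explicit bad coloring using $p$-adic valuations.

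For sufficiency, recall that an $(m,p,c)$-set is a set containing, for some $y_{0},\dots,y_{m}\in\N$, all elements of the form $c y_{i}+\sum_{j<i}\lambda_{j} y_{j}$ with $\lambda_{j}\in\{-p,\dots,p\}$. I would proceed in two steps. First, by induction on $m$, using Van der Waerden's Theorem (Theorem 1.3.4) repeatedly to produce long arithmetic progressions whose common differences themselves belong to the previously constructed $(m-1,p,c)$-set, I would prove Deuber's theorem: in every finite coloring of $\N$ and for every $m,p,c$, there is a monochromatic $(m,p,c)$-set. Second, I would unpack the columns condition for $A$: ordering the columns as $c_{1},\dots,c_{i_{1}}\,|\,c_{i_{1}+1},\dots,c_{i_{2}}\,|\,\cdots\,|\,c_{i_{k-1}+1},\dots,c_{i_{k}}$ with $A_{1}=0$ and $A_{j}=\sum_{s\leq i_{j-1}} q_{j,s}\,c_{s}$ for $j\geq 2$, I would set the variable $x_{s}$ corresponding to the $j$-th block equal to $c y_{j}$ plus an integral linear combination of $y_{1},\dots,y_{j-1}$ whose coefficients are determined by the $q_{j,s}$. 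Choosing $c$ large enough to clear denominators and $p$ large enough to bound the resulting integer coefficients, every $(m,p,c)$-set with $m\geq k$ contains a solution to $Ax=0$; combined with Deuber's theorem this yields partition regularity.

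For necessity I would argue by contrapositive. Assuming the columns condition fails, I would construct a finite coloring of $\N\setminus\{0\}$ with no monochromatic solution as follows: choose a prime $p$ large compared to the entries of $A$ and the denominators that would appear in any attempted column decomposition, and color $n\in\N\setminus\{0\}$ by the pair $(v_{p}(n),\,u\bmod p)$, where $n=p^{v_{p}(n)}u$ with $\gcd(u,p)=1$. Given a monochromatic solution, let $v$ be the common $p$-adic valuation; dividing out $p^{v}$ and reducing modulo $p$ would force the columns corresponding to the variables of minimum valuation to sum to zero modulo $p$, hence (since $p$ is large) to zero in $\Z$, giving the first block of a columns decomposition. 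Iterating this argument on the remaining variables, with the freed-up valuation levels, would produce the full columns condition, contradicting the hypothesis.

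The main obstacle will be the necessity direction: one must verify that the bad coloring really blocks all solutions, which requires careful bookkeeping of valuations, residues, and the reductions at each iteration step, and one has to choose $p$ and the color palette so as to simultaneously handle every possible way columns might group. The sufficiency direction, although long, splits cleanly into Deuber's combinatorial lemma and a routine algebraic match; alternatively, one could replace the Deuber step by working inside a minimal idempotent $\U\in K(\bN,\oplus)$ and using the tower-of-idempotents technique illustrated in the proof of Folkman's Theorem above, which yields the $(m,p,c)$-structure directly inside any $A\in\U$.
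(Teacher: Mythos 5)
First, a point of comparison: the paper does not actually prove this theorem. It states it, cites the literature for the proof, and only gives a direct argument for the single-equation corollary (Theorem 1.3.15), using the strengthened Van der Waerden lemma for sufficiency and the smod$(p)$ coloration for necessity. Your plan for the full matrix case --- Deuber's $(m,p,c)$-sets for sufficiency, a $p$-adic coloring for necessity --- is the standard route and the sufficiency half is sound in outline: Deuber's theorem plus the substitution $x_{s}=cy_{j}+\sum_{t<j}\lambda_{t}y_{t}$ with $c$ clearing denominators and $p$ bounding the $\lambda_{t}$ is exactly how one shows every sufficiently rich $(m,p,c)$-set solves $Ax=0$. (Be aware, though, that ``Deuber's theorem by induction on $m$ using Van der Waerden'' is itself the hard combinatorial core; the induction has to be run on a finitized statement about colorings of $(M,P,C)$-sets, not of $\N$.)

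The necessity direction as written has a concrete flaw: the coloring $n\mapsto(v_{p}(n),\,u\bmod p)$ uses infinitely many colors, since $v_{p}(n)$ is unbounded, so it says nothing about weak partition regularity, which quantifies only over \emph{finite} colorations. The correct coloring is smod$(p)$ alone, i.e.\ $u\bmod p$ with $p-1$ colors, and then you lose the assumption that all variables share a common valuation. The argument must instead group the variables of a monochromatic solution by their $p$-adic valuations $k_{1}<k_{2}<\dots$; dividing $\sum_{i}c_{i}p^{k_{i}}u_{i}=0$ by $p^{k_{1}}$ and reducing mod $p$ gives $\sum_{i\in I_{1}}c_{i}\equiv 0$, hence $A_{1}=0$ for $p$ large --- that much of your sketch survives. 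But ``iterating this argument on the remaining variables'' does not work naively for the later blocks: at level $k_{j}$ the terms from earlier blocks do not vanish, and what you must prove is not that $A_{j}\equiv 0$ but that $A_{j}$ lies in the rational span of the earlier columns. The standard fix is a duality step: if $A_{j}$ were not in that span, pick an integer vector $w$ orthogonal to all earlier columns but not to $A_{j}$, apply $w$ to the equation to kill the earlier blocks, then divide by $p^{k_{j}}$ and reduce mod $p$ to get $w\cdot A_{j}\equiv 0\pmod p$, a contradiction for $p$ large. Without this step your induction does not produce the columns condition.
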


The proof of Rado's Theorem can be found, e.g., in $\cite{rif19}$; here we provide a direct combinatorial proof of this corollary, that characterizes the weakly partition regular linear homogeneous polynomials with coefficients in $\Z$:

\begin{thm} Let $P(x_{1},...,x_{n}): \sum_{i=1}^{n}c_{i}x_{i}$ be an homogeneous linear polynomial with nonzero coefficients in $\Z$. The following conditions are equivalent:

\begin{enumerate}
	\item $P(x_{1},...,x_{n})$ is weakly partition regular on $\N$;
	\item there is a nonempy subset $J$ of $\{1,...,n\}$ such that $\sum_{j\in J}c_{j}=0$.
\end{enumerate}
\end{thm}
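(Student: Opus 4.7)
For direction $(1) \Rightarrow (2)$, I would argue the contrapositive. Assume that no nonempty $J \subseteq \{1,\ldots,n\}$ satisfies $\sum_{j \in J} c_j = 0$, so every partial sum $\sigma_I := \sum_{i \in I} c_i$ over nonempty $I \subseteq \{1,\ldots,n\}$ is nonzero. Pick a prime $p$ larger than $\max_{I} |\sigma_I|$, and color $n \in \N \setminus \{0\}$ by the residue modulo $p$ of $n / p^{v_p(n)}$, where $v_p$ denotes the $p$-adic valuation; this gives a $(p-1)$-coloring. A monochromatic solution $x_1, \ldots, x_n$ of color $c \not\equiv 0 \pmod p$ would write each $x_i = p^{a_i} m_i$ with $m_i \equiv c \pmod p$; setting $a := \min_i a_i$ and $I := \{i : a_i = a\}$, dividing $\sum_i c_i x_i = 0$ by $p^a$ and reducing modulo $p$ gives $c \cdot \sigma_I \equiv 0 \pmod p$, hence $\sigma_I \equiv 0 \pmod p$, and then $\sigma_I = 0$ by the choice of $p$---a contradiction.

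For direction $(2) \Rightarrow (1)$ the plan has two steps. First I would produce a positive integer solution $(\lambda_1, \ldots, \lambda_n) \in \N^n$ to $\sum_i c_i \lambda_i = 0$. Let $g_J := \gcd\{c_j : j \in J\}$ and set $\lambda_i := g_J$ for every $i \in J^c$; then $\sum_{i \in J^c} c_i \lambda_i = g_J \sum_{i \in J^c} c_i \in g_J\Z$. Because $\sum_{j \in J} c_j = 0$ with all $c_j \neq 0$, the set $J$ contains coefficients of both signs, so any Bezout representation $\sum_{j \in J} c_j \alpha_j = -g_J \sum_{i \in J^c} c_i$ with $\alpha_j \in \Z$ may be shifted by $\alpha_j \mapsto \alpha_j + N$ (for $N$ sufficiently large) without disturbing the sum (since $\sum_J c_j = 0$), yielding positive integers $\lambda_j := \alpha_j + N$ with the desired property.

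In the second step I would invoke the Brauer-type consequence of Van der Waerden's theorem: for every finite coloring of $\N$ and every $M \in \N$ there exists $d \in \N$ such that $\{d, 2d, \ldots, Md\}$ is monochromatic. Applying this with $M := \max_i \lambda_i$ and setting $x_i := \lambda_i d$ places every $x_i$ in this single color class and gives $\sum_i c_i x_i = d \sum_i c_i \lambda_i = 0$, producing the required monochromatic solution.

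The main obstacle is this last step. The version of Van der Waerden's theorem proved earlier in the thesis only produces a monochromatic arithmetic progression $\{a, a+d, \ldots, a + (M-1)d\}$ with no control on the ratio $a/d$, while the configuration $\{d, 2d, \ldots, Md\}$ requires $a = d$. The standard workaround is Brauer's argument: apply Van der Waerden (with more colors and greater length) to the refined coloring $n \mapsto (\chi(n), \chi(2n), \ldots, \chi(Mn))$, and extract from a sufficiently long monochromatic progression in the new coloring a common difference $d$ whose multiples $d, 2d, \ldots, Md$ all share a single color in $\chi$. This Brauer-type refinement is routine but not immediate from the form of Van der Waerden stated above, and it constitutes the only genuinely new technical point that must be supplied beyond the tools already developed in the thesis.
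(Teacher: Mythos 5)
Your $(1)\Rightarrow(2)$ direction is correct and is essentially the paper's own argument: the paper uses the same $smod(p)$ coloring (residue of $n/p^{v_{p}(n)}$ modulo a prime $p$ dividing no nonempty partial sum of the coefficients) and derives the same contradiction $\sigma_{I}\equiv 0 \pmod p$; your normalization by the minimal $p$-adic valuation is a clean equivalent of the paper's normalization by the gcd.

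The $(2)\Rightarrow(1)$ direction, however, has a fatal gap, and it sits exactly at the step you flagged as "routine." The statement you want to invoke --- for every finite coloring and every $M$ there is a $d$ with $\{d,2d,\dots,Md\}$ monochromatic --- is \emph{false}, already for $M=2$: color $n$ by the parity of $\lfloor \log_{2} n\rfloor$, and $d$, $2d$ never share a color. (Equivalently, $\{d,2d\}$ monochromatic is a monochromatic solution of $y-2x=0$, whose coefficients $1,-2$ have no zero partial subset sum, so by the very direction $(1)\Rightarrow(2)$ you just proved, that configuration is not partition regular.) Your proposed Brauer-style refinement via the coloring $n\mapsto(\chi(n),\chi(2n),\dots,\chi(Mn))$ cannot rescue this: a monochromatic progression $a, a+e,\dots$ in the refined coloring only makes $\chi(j(a+ie))$ independent of $i$ for each fixed $j$; it gives no control relating $\chi(ja)$ across different $j$, which is what a monochromatic $\{d,2d,\dots,Md\}$ would require. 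Concretely, your method would already fail to prove Schur's theorem ($x+y-z$): the positive solution $(1,1,2)$ leads you to demand $d$ and $2d$ monochromatic, which is not available. The structural point you are missing is that the solution must not be a pure homothety $x_{i}=\lambda_{i}d$: the indices $j\in J$ (those whose coefficients sum to zero) must receive values of the form $a+z_{j}b$, so that the contribution $a\sum_{j\in J}c_{j}$ vanishes by hypothesis, while only the indices outside $J$ receive pure multiples $mb$ of the common difference. The combinatorial input needed is then Lemma 1.3.16 of the paper (Brauer's actual theorem): a monochromatic configuration $\{a, a\pm b,\dots,a\pm Zb, mb\}$, i.e.\ an arithmetic progression \emph{together with a prescribed multiple of its common difference}, not a progression of multiples of $d$ starting at $d$.
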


The "only if" part of the theorem is based on this lemma, that strenghtens Van der Waerden's Theorem:

\begin{lem} For every finite coloration of $\N$, for every natural numbers $n, m$, there are natural numbers $a,b$ such $a, a+b,...,a+nb, a-b,...,a-nb, mb$ are monochromatic. \end{lem}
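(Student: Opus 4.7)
I would argue by induction on the number of colours $r$, using van der Waerden's theorem (Theorem 1.3.4) to power the inductive step. The natural finitary formulation to iterate is: for every $r, n, m \geq 1$ there is an $N = N(r, n, m)$ such that every $r$-coloration of $\{1, \ldots, N\}$ contains natural numbers $a, b$ with $a > nb$ making $a, a \pm b, \ldots, a \pm nb, mb$ monochromatic. The base case $r = 1$ is immediate (e.g.\ take any $a > nb$).

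For the inductive step, set $K := N(r-1, n, m)$ and use van der Waerden's theorem to obtain an $M$ such that every $r$-coloration of $\{1, \ldots, M\}$ contains a monochromatic arithmetic progression of length $2nK + 1$. Given such a coloration $c$, extract a monochromatic progression $\{y, y+d, y+2d, \ldots, y + 2nKd\}$ of some colour $i$, and examine the colours $c(md), c(2md), \ldots, c(Kmd)$.

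If some $c(k_0 md) = i$ with $1 \leq k_0 \leq K$, then setting $a := y + n k_0 d$ and $b := k_0 d$ places $a \pm jb = y + (n \pm j) k_0 d$ in the progression for every $0 \leq j \leq n$, so all these values have colour $i$, while $mb = k_0 md$ has colour $i$ by the choice of $k_0$. Otherwise every $c(k m d)$ with $1 \leq k \leq K$ avoids colour $i$, so the induced coloration $\tilde{c}(k) := c(kmd)$ on $\{1, \ldots, K\}$ uses at most $r-1$ colours, and the inductive hypothesis produces $a', b'$ with $a' > nb'$ for which $a', a' \pm b', \ldots, a' \pm nb', mb'$ are $\tilde{c}$-monochromatic. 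Setting $a := a'md$ and $b := b'md$, we have $a \pm jb = (a' \pm jb')md$ and $mb = (mb')md$, so the monochromatic configuration transfers back to $c$.

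The main obstacle is essentially bookkeeping: one must set up the inductive hypothesis in its finitary form so that the induced coloration on $\{1, \ldots, K\}$ genuinely falls under it, and one must keep track of positivity ($a - nb \geq 1$), which in Case 1 follows from $y \geq 1$ and in Case 2 from the inductive conclusion (scaled by $md > 0$). Both become routine once the recursion $N(r, n, m) := W\!\bigl(r;\, 2n \cdot N(r - 1, n, m) + 1\bigr)$, with $W(r; L)$ denoting the van der Waerden number for $r$ colours and progressions of length $L$, is written down.
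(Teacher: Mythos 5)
The paper does not prove this lemma; it only cites $\cite[\mbox{pag 55}]{rif19}$, and your argument is precisely the standard proof from that source: induction on the number of colours, with Van der Waerden's theorem supplying a long monochromatic progression $\{y, y+d,\dots,y+2nKd\}$ and the colours of $md, 2md,\dots,Kmd$ deciding whether to read off the configuration directly or to pass to an induced $(r-1)$-coloration of the multiples of $md$. Both cases check out, including positivity of $a-nb$. The one genuine (though easily repaired) slip is in the recursion: with $N(r,n,m)=W(r;2nK+1)$ you only get $Kd\leq N/(2n)$, so $Kmd$ may exceed $N$ when $m>2n$, and then neither $c(kmd)$ nor the transferred configuration $a'md,\dots,(a'+nb')md,(mb')md$ is guaranteed to lie in $\{1,\dots,N\}$; taking $N(r,n,m)=m\cdot W\bigl(r;\,2n\cdot N(r-1,n,m)+1\bigr)$ fixes this, since the monochromatic progression still lives in the initial segment of length $W(r;2nK+1)$ while all the multiples $kmd$ with $k\leq K$ stay below $N$.
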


This lemma is proved in $\cite[\mbox{pag 55}]{rif19}$.\\

The "if" part of the theorem uses the so-called smod($p$) colorations (where smod stand for "super modulo"):

\begin{defn} Let $p$ be a prime number in $\N$. The $smod(p)$ coloration of $\N$ is the $(p-1)$-coloration such that, for every natural number $n$, if $n=a p^{k}$, where $k\geq 0$ and gcd$(a,p)$=1, then 

\begin{center} $smod(p)(n)$= $a\mod p$. \end{center} \end{defn}

This given, we can prove Theorem 1.3.15:

\begin{proof} $(2)\Rightarrow (1)$ Reordering, if necessary, the coefficients of $P(x_{1},...,x_{n})$, we can assume that the sum of the first $k$ coefficients is 0:

\begin{center} $c_{1}+...+c_{k}=0$. \end{center}
The idea is to describe a parametric solution $S_{m,\{z_{i}\}}$ of the equation $P(x_{1},...,x_{n})=0$, in the form 

\begin{equation*} s_{i}=\begin{cases} a+z_{i} b, & 1\leq i \leq k;\\ mb, & k<i\leq n,\end{cases}\end{equation*}

where $\{z_{i}\mid i\leq k\}, m$ are given and $a,b$ are parameters, and then to apply Lemma 1.3.16 to find such a structure in one of the colors, obtaining a monochromatic solution.\\
To get the desired parametrization we need to find $s, \{z_{i}\mid i\leq k\}$. There are two cases: if $k=n$, there is nothing to prove, since if $\sum_{i=1}^{n} c_{i}=0$ then for every natural number $m$ the constant solution $x_{i}=m$ solves the equation; so, we suppose that $k<n$ and we consider the natural numbers $c,d,m$ such that

\begin{center} $c=gcd(c_{1},...,c_{k})$;\\\vspace{0.3cm} $d=\sum_{i=k+1}^{n} c_{k}$; \\\vspace{0.3cm} $m=\frac{c}{gcd(c,d)}$. \end{center}

By construction, $dm$ is an integer multiple of $c$, so there is an integer $z$ in $\Z$ with \begin{center} $cz+dm=0$ \end{center}

and, by Bézout's Identity, since $c=gcd(c_{1},...,c_{k})$, there are integers $z_{1},...,z_{k}$ such that

\begin{center} $c_{1}z_{1}+...+c_{k}z_{k}=cz$. \end{center}

With this choice of $m, \{z_{i}\mid i\in I\}$, $S_{m,\{z_{i}\}}$ is a parametric solution of the equation:

\begin{center}$\sum_{i=1}^{n} c_{i}s_{i}=\sum_{i=1}^{k}c_{i}(a+z_{i}b) + \sum_{i=k+1}^{n} c_{i}mb=$\\\vspace{0.3cm}$= a\sum_{i=1}^{k} c_{i} + \sum_{i=1}^{k} c_{i} z_{i} b + \sum_{i=k+1}^{n} c_{i} m b= 0 + bcz + bdm= 0+ b(cz+dm)=0$\end{center}

and Lemma 1.3.16 provides that, if $Z=\max\{z_{i}\mid i\leq k\}$, there are natural numbers $a,b$ in $\N$ such that $a, a+b, ..., a+Zb, a-b,...,a-Zb, mb$ monochromatic, and the set $\{a, a+b,...,a+Zb, a-b,...,a-Zb,mb\}$ contains a parametric solution in the form $S_{m,\{z_{i}\}}$.\\
$(1)\Rightarrow (2)$ This implication follows by this claim:\\

{\bfseries Claim:} If there is a prime number $p$ such that $p$ does not divide the sum of any nonempty subset of $\{c_{i}\mid i\leq n\}$, then the equation $\sum_{i=1}^{n} c_{i}x_{i}=0$ has no monochromatic solution respect the smod($p$) coloration.\\

The thesis is a consequence of the claim: if every subset of the set of coefficients has sum different from 0, since the set of all the possible sums of the coefficients of $P(x_{1},...,x_{n})$ is finite, there is a prime number $p$ that does not divide any of these sums, and $P(x_{1},...,x_{n})$ is not weakly partition regular, since it has not a monochromatic solution respect the smod($p$) coloration.\\
Proof of the claim: Let $p$ be a prime number such that $p$ does not divide any sum of the coefficients of $P(x_{1},...,x_{n})$, and assume that $(a_{1},...,a_{n})$ is a monochromatic solution of the equation $P(x_{1},...,x_{n})=0$. We suppose that 

\begin{center} $N=gcd(a_{1},...,a_{n})=1$ \end{center}

since, by definition of smod($p$)-coloration, if $(a_{1},...,a_{n})$ is monochromatic then $(\frac{a_{1}}{N},...,\frac{a_{n}}{N})$ is monochromatic.\\
Reordering if necessary, we suppose that $p$ does not divide $x_{1},...,x_{k}$ and $p$ divides $x_{k+1},...,x_{n}$: since $gcd(x_{1},...,x_{n})=1$, $k$ is necessarily $\geq 1$.\\
We reduce the equation $P(a_{1},...,a_{n})=0$ modulo $p$:

\begin{center} $\sum_{i=1}^{n} c_{i}a_{i}\equiv 0\mod p$. \end{center}

By construction, this is equivalent to say that

\begin{center} $\sum_{i=1}^{k} c_{i}a_{i}\equiv 0\mod p$, \end{center}

since we assumed that $c_{j}\equiv 0 \mod p$ for every index $j>k$. By construction 

\begin{center} smod($p$)($a_{1}$)=smod($p$)($a_{2}$)=...=smod($p$)($a_{k}$)=$y$ \end{center} for some $1\leq y <p$ and, since $p$ does not divide any element $a_{i}$, 

\begin{center} smod($p$)($a_{i}$)= ($a_{i}\mod p$). \end{center}

From these observations it follows that

\begin{center} $\sum_{i=1}^{k} c_{i}a_{i}\equiv y\cdot \sum_{i=1}^{k} c_{i}\equiv 0\mod p$, \end{center}

that, since $y\neq 0 \mod p$, holds if and only if

\begin{center} $\sum_{i=1}^{k} c_{i}\equiv 0 \mod p$: \end{center}

$\{c_{1},...,c_{k}\}$ is a finite subset of the set of coefficients, and $\sum_{i=1}^{k}c_{i}$ is divided by $p$, and this is absurd. This proves the claim.\\ \end{proof}

In next chapter we introduce a nonstandard technique that, in Chapter Three, will be used to re-prove the results presented in this section from a different point of view.

\newpage

\chapter{Nonstandard Tools}

In this chapter we expose the nonstandard tools that we need in the rest of the thesis.\\
After recalling some general facts about nonstandard methods, we start to study the monads of ultrafilters. These objects are defined in the following way: given an ultrafilter $\U$, and a hyperextension $^{*}\N$ of $\N$ satisfying a particular additional property, the monad of $\U$ is

\begin{center} $G_{\U}=\{\alpha\in$$^{*}\N\mid \alpha\in\bigcap_{A\in\U}$$^{*}A\}$. \end{center}

These structures have already been studied in the literature (e.g. $\cite{rif28}$ and $\cite{rif29}$). In Section Two we outline some of their known properties and, in Section Three, we present an original result, called Bridge Theorem, which shows that particular combinatorial properties of ultrafilters can be seen as generated by properties of their monads. Motivated by this observation, we decide to rename the elements in the monad of $\U$ as generators of $\U$.\\
The study of the sets of generators leads us to consider the tensor products of ultrafilters. A problem that we outline is that, even if the set of generators of a tensor product $\U\otimes\V$ can be characterized in terms of $G_{\U}$ and $G_{\V}$ (this has been done by Christian Puritz in $\cite[\mbox{Theorem 3.4}]{rif29}$), this characterization does not give a procedure to construct, given generators $\alpha$ of $\U$ and $\beta$ of $\V$, a generator of $\U\otimes\V$.\\
This leads, by observations explicitated in Section Five, to consider a particular hyperextension of $\N$, that we call $\omega$-hyperextension and denote by $^{\bullet}\N$. Its particularity is that in $^{\bullet}\N$ we can iterate the star map. This property turns out to be particularly usefull to deal with tensor products of ultrafilters: in fact, the possibility to iterate the star gives the desired procedure to construct generators of tensor products $\U\otimes\V$ starting with generators of $\U$ and $\V$.\\
In last section we apply this procedure to study, given ultrafilters $\U,\V$ on $\N$, the sets of generators of $\U\oplus\V$ and $\U\odot\V$ in $^{\bullet}\N$.

\section{Nonstandard Methods}

Nonstandard Analysis was ideated by Abraham Robinson in the late 1950's. Its original intent was to give a rigorous formalization to the notion of infinitesimal element, and to apply such a notion to mathematical analysis (see $\cite{rif33}$).\\
In the following fifty years, Nonstandard Analysis has grown both as a branch of mathematical logic, where its foundations are studied, and in terms of applications to a wide variety of problems. We suggest to the interested reader the books $\cite{rif1}$ and $\cite{rif18}$, where the methodology of nonstandard analysis, as well as many of its applications, are presented. In this section, we are concerned with its logic foundations. Since this does not concern mathematical analysis, we prefer to talk about nonstandard methods.\\
We observe that there are many ways in which nonstandard methods have been formalized. In this thesis, in order to formalize and use nonstandard methods we adopt the framework of superstructures. For a comprehensive tractation of this approach, see e.g. $\cite[\mbox{Section 4.4}]{rif7}$. Among the alternative approaches, we recall the classical $\cite{rif26}$, where nonstandard methods are presented from the point of view of the so-called Internal Set Theory, and $\cite{rif4}$, where the autors give an introduction to the hyper-methods of nonstandard analysis and present eight different approaches to nonstandard methods.\\
We begin our short introduction to nonstandard methods recalling the definition of superstructure on a set:

\begin{defn} Let $X$ be an infinite set. The {\bfseries superstructure on $X$} is $\mathbb{V}(X)=\bigcup_{n\in\N}\mathbb{V}_{n}(X)$, where

\begin{center} $\mathbb{V}_{0}(X)=X$;\vspace{0.3cm}\\ $\mathbb{V}_{n+1}(X)=\mathbb{V}_{n}(X)\cup\wp(\mathbb{V}_{n})$. \end{center}\end{defn}

We make the extra assumption that $X$ is a "base set": this means that its elements behave as atoms within $\mathbb{V}(X)$ (formally, $\emptyset\notin X$ and, for every $x$ in $X$, $x\cap\mathbb{V}(X)=\emptyset$). This makes it possible to define the "individuals" relative to $\mathbb{V}(X)$ as the elements of $X$, and the "`sets"' relative to $\mathbb{V}(X)$ as the elements in $\mathbb{V}(X)\setminus X$.\\
Probably, there are two reasons for the diffusion of superstructures in nonstandard methods: the first is that superstructures satisfy many nice closure properties with respect to set-theoretic operations; the second is that superstructures provide an easy way to formalize the naive concept of "mathematical object" as "element of a superstructure". E.g., any real number is a mathematical object, as well as any subset of real numbers, the set $\R^{2}$, the relation of order in $\R$, any real function, and so on. Within the Zermelo-Fraenkel framework, all of these concepts can be realized as elements of a superstructure $\mathbb{V}(\R)$: e.g., any subset of the real numbers is a set in $\mathbb{V}_{1}(\R)$.\\
Superstructures, as "universes of mathematical objects", are the first ingredient in the construction of nonstandard methods. The other two are the star map and the transfer principle.

\begin{defn} Given two superstructures $\mathbb{V}(X)$, $\mathbb{V}(Y)$, a {\bfseries star map} is a map $*:\mathbb{V}(X)\rightarrow\mathbb{V}(Y)$ such that $Y=$$^{*}X$. \end{defn}

Usually, it is assumed that $*$ is proper:

\begin{defn} A star map $*$ is {\bfseries proper} if for every infinite set $A$ relative to $\mathbb{V}(X)$ the inclusion $^{\sigma}A\subseteq$$^{*}A$ is proper, where

\begin{center} $^{\sigma}A=\{$$^{*}a\mid a\in A\}$.\end{center}\end{defn}

The last notion we have to introduce to talk about nonstandard methods is the transfer principle.\\
We assume that the reader knows the basics of first order logic, in particolar the notions of first order formula, free and bounded variables, open formula and sentence. We fix some notations and conventions:
\begin{itemize}
	\item With $\mathcal{L}$ we denote a first order logical language containing the simbol of membership $\in$ (e.g., the language of set theory);
	\item The formulas that we consider are constructed in the language $\mathcal{L}$;
	\item We reserve the letters $x,y,z,x_{1},x_{2},...$ to denote variables;
	\item When writing a formula $\varphi(x_{1},...,x_{n},p_{1},....,p_{m})$ we shall mean that $\varphi(x_{1},...,x_{n},p_{1},...,p_{m})$ is a first order formula, that its free variables are exactly $x_{1},...,x_{n}$ and its parameters are exactly $p_{1},...,p_{m}$; 
	\item Except when strictly necessary, we do not indicate the parameters; in particular, when we denote a formula as $\varphi$, it is intended that it may have parameters, but that it has not free variables. 
\end{itemize}

\begin{defn} Let $\varphi(x,x_{1},....,x_{k})$ be a formula of $\mathcal{L}$, $x$ a free variable in $\varphi$ and $y$ a variable that is not bounded in $\varphi$. The abbreviation

\begin{center}$(\forall x \in y) \varphi(x,x_{1},....,x_{k})$\end{center}

means $\forall x (x\in y)\Rightarrow \varphi(x,x_{1},....,x_{k})$. Similarly,

\begin{center} $(\exists x\in y) \varphi(x,x_{1},....,x_{k})$\end{center}

means $\exists x (x\in y)\Rightarrow\varphi(x,x_{1},....,x_{k})$.\\
The quantifiers $\forall x\in y$ and $\exists x\in y$ are called {\bfseries bounded quantifiers}.\\
A {\bfseries bounded quantifier formula} is obtained from atomic formulas using only connectives and bounded quantifiers.\end{defn}

An other kind of formulas that we will need later are the elementary formulas:

\begin{defn} A formula $\varphi(x_{1},...,x_{n})$ is {\bfseries elementary} if it is a bounded quantifier formula and its only parameters are
\begin{itemize}
	\item elements of $\N^{k}$,
	\item subsets of $\N^{k}$,
	\item functions $f:\N^{k}\rightarrow\N^{h}$,
	\item relations on $\N^{k}$,
\end{itemize}

where $h,k$ are two positive natural numbers.\end{defn}

The last ingedient needed to talk about nonstandard methods is the transfer principle:

\begin{defn} The star map $*:\mathbb{V}(X)\rightarrow\mathbb{V}(Y)$ satisfies the {\bfseries transfer principle} if for every bounded quantifier formula $\varphi(x_{1},...,x_{n})$ and for every $a_{1},...,a_{n}\in\mathbb{V}(X)$ 

\begin{center}$\mathbb{V}(X)\models\varphi(a_{1},...,a_{n})$ if and only if $\mathbb{V}(Y)\models$$^{*}\varphi($$^{*}a_{1},...,$$^{*}a_{n})$. \end{center}

\end{defn}

In this definition, if $\varphi(x_{1},...,x_{n})$ is a formula with parameters $p_{1},...,p_{k}$, $^{*}\varphi(x_{1},...,x_{n})$ is the formula obtained substituting, in $\varphi(x_{1},...,x_{n})$, each parameter $p_{i}$ with $^{*}p_{i}$: e.g., if $\varphi(x)$ is the formula "$x\in\N$", then $^{*}\varphi(x)$ is the formula "$x\in$$^{*}\N$".\\
The transfer principle can be equivalently reformulated saying that $*$ is a bounded elementary embedding (with a terminology mutuated from model theory, see e.g. $\cite[\mbox{pp.266--267}]{rif7}$).

\begin{defn} A {\bfseries superstructure model of nonstandard methods} is a triple $\langle \mathbb{V}(X), \mathbb{V}(Y), *\rangle$ where 
\begin{enumerate}
	\item a copy of $\N$ is included in $X$ and in $Y$;
	\item $\mathbb{V}(X)$ and $\mathbb{V}(Y)$ are superstructures on the infinite sets $X$, $Y$ respectively;
	\item $*$ is a proper star map from $\mathbb{V}(X)$ to $\mathbb{V}(Y)$ that satisfies the transfer property.
\end{enumerate}

\end{defn}

It is assumed that, for every natural number $n$, $^{*}n=n$. Observe that, since $Y=$$^{*}X$, by transfer it follows that $x\in X$ if and only if $^{*}x\in Y$: so the star map send individuals relative to $\mathbb{V}(X)$ to individuals relative to $\mathbb{V}(Y)$ and sets relative to $\mathbb{V}(X)$ to sets relative to $\mathbb{V}(Y)$.

\begin{defn} If $y\in\mathbb{V}(Y)$, and there exists $x\in \mathbb{V}(X)$ such that $y=$$^{*}x$, then $y$ is called {\bfseries hyper-image of $x$}, and we say that $y$ is an {\bfseries hyper-image}. \\If $x$ is an infinite set in $\mathbb{V}(X)$ and $y=$$^{*}x$, $y$ is also called the {\bfseries hyperextension} of $x$.\\
In particular, if $x=\N$, then $^{*}\N$ is called the set of {\bfseries hypernatural numbers}.\\
An element $y$ of $\mathbb{V}(Y)$ is {\bfseries internal} if there exists $x\in X$ such that $y\in$$^{*}x$.\\
An element $y$ of $\mathbb{V}(Y)$ is {\bfseries external} if it is not internal. \end{defn}

Observe that every hyper-image is internal since, if $y=$$^{*}x$ then $y\in\{$$^{*}x\}$=$^{*}\{x\}$.\\
The importance of the internal elements is clear when applying transfer: since the transfer applies to bounded quantifier formulas, we get that properties of subsets of a given set $Z\in\mathbb{V}(X)$ transfer to the internal subsets of $^{*}Z$ in $\mathbb{V}(Y)$.\\
The Internal Definition Principle characterizes many internal objects in $\mathbb{V}(Y)$:

\begin{thm}[Internal Definition Principle] Let $\varphi(x_{1},...,x_{n},y)$ be a bounded quantifier formula. If $A_{1},...,A_{n},B$ are internal, then the set

\begin{center} $\{b\in B\mid \mathbb{V}(Y)\models\varphi(A_{1},...,A_{m},b)\}$\end{center}

is internal. \end{thm}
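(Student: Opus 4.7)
The plan is to reduce the statement to a single application of the transfer principle, the only subtlety being to cast the relevant claim as a bounded quantifier formula in $\mathbb{V}(X)$ and to identify suitable containers for the parameters.

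First, I would invoke the definition of internal: for each $i$ there exists $C_{i}\in\mathbb{V}(X)$ with $A_{i}\in{}^{*}C_{i}$, and there exists $D\in\mathbb{V}(X)$ with $B\in{}^{*}D$, where, by enlarging the containers if necessary, I can assume that all of $C_{1},\ldots,C_{n},D$ lie in a common level $\mathbb{V}_{k}(X)$ of the superstructure and that each element of $D$ is itself a set in $\mathbb{V}_{k-1}(X)$. This stratification guarantees that the subset $\{x\in d\mid \varphi(c_{1},\ldots,c_{n},x)\}$ of any $d\in D$ is, by separation, an element of $\mathbb{V}_{k}(X)$.

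Next, I would write down in $\mathbb{V}(X)$ the sentence
\begin{center}$(\forall c_{1}\in C_{1})\cdots(\forall c_{n}\in C_{n})(\forall d\in D)(\exists e\in\mathbb{V}_{k}(X))(\forall x\in d)\,\bigl(x\in e\leftrightarrow\varphi(c_{1},\ldots,c_{n},x)\bigr).$\end{center}
All quantifiers here are bounded by elements of $\mathbb{V}(X)$ (namely $C_{1},\ldots,C_{n},D,\mathbb{V}_{k}(X)$ and the internal variables $d,e$), and $\varphi$ is bounded quantifier by hypothesis, so the whole formula is a bounded quantifier sentence; its truth in $\mathbb{V}(X)$ is just the Axiom of Separation. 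Applying the transfer principle, the hyper-image sentence holds in $\mathbb{V}(Y)$ with each $C_{i}$ replaced by ${}^{*}C_{i}$, $D$ by ${}^{*}D$, $\mathbb{V}_{k}(X)$ by ${}^{*}\mathbb{V}_{k}(X)$, and $\varphi$ by ${}^{*}\varphi$. Instantiating the outer universal quantifiers at $A_{1},\ldots,A_{n},B$ produces an internal $E\in{}^{*}\mathbb{V}_{k}(X)$ satisfying $E=\{b\in B\mid\mathbb{V}(Y)\models{}^{*}\varphi(A_{1},\ldots,A_{n},b)\}$, which is exactly the set whose internality was to be shown.

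The only real obstacle is the careful bookkeeping of superstructure levels: one must be sure that every parameter, every bound of a quantifier, and every witness to an existential lives at a level that is available in $\mathbb{V}(X)$, so that the sentence to be transferred is genuinely bounded quantifier. Handling possible parameters in $\varphi$ is essentially painless, since any such parameter is itself a standard object and may be absorbed into the $A_{i}$'s via its hyper-image; once the levels are chosen correctly, the transfer step is then immediate.
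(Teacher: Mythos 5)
The paper does not actually prove this statement: it records it as Theorem 2.1.11 and defers entirely to Proposition 4.4.14 of Chang--Keisler. Your proposal supplies the standard self-contained argument that the citation points to, namely transferring a Separation-type sentence with the parameter sets $A_{i}$, $B$ ranged over by bounded quantifiers with standard bounds $C_{i}$, $D$. The overall strategy is correct, and your level bookkeeping (placing the witness inside $\mathbb{V}_{k}(X)$, which is itself an element of $\mathbb{V}_{k+1}(X)$ and hence a legitimate bound for a bounded quantifier) is exactly what is needed.

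There is one concrete slip in the formula you transfer. The matrix $(\forall x\in d)\bigl(x\in e\leftrightarrow\varphi(c_{1},\ldots,c_{n},x)\bigr)$ does not assert $e\subseteq d$, so after transfer and instantiation the witness $E$ is only guaranteed to satisfy $E\cap B=\{b\in B\mid{}^{*}\varphi(A_{1},\ldots,A_{n},b)\}$; a priori $E$ could contain elements outside $B$, and then you would still need to know that $E\cap B$ is internal, which is itself an instance of the principle you are proving. The fix is immediate: transfer instead the sentence whose matrix is $(\forall x\in e)(x\in d)\wedge(\forall x\in d)\bigl(x\in e\leftrightarrow\varphi(c_{1},\ldots,c_{n},x)\bigr)$, which is still true in $\mathbb{V}(X)$ with the Separation witness and forces $E\subseteq B$, hence $E$ equals the desired set exactly. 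With that one-line amendment your proof is complete, and it has the merit over the paper of not outsourcing the argument.
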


\begin{proof} This is Proposition 4.4.14 in $\cite{rif7}$. \\\end{proof}

One other result of great importance is the Overspill Principle. Before stating this principle, we need this definition:

\begin{defn} An element $\eta\in$$^{*}\N$ is {\bfseries infinite} if, for every natural number $n$, $\eta>n$. \end{defn}

\begin{thm}[Overspill Principle] Let $A$ be a nonempty internal subset of $^{*}\N$ that contains arbitrary large finite elements. Then $A$ contains an
infinite element.\end{thm}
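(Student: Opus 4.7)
The plan is to proceed by case analysis on whether the internal set $A$ is bounded above in $^{*}\N$, using transfer of the well-ordering of $\N$ together with the existence of infinite hypernaturals.

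First I would invoke the transfer principle applied to the first-order statement that \emph{every nonempty subset of $\N$ that is bounded above has a maximum}. Because $A$ is internal and nonempty, this yields the internal counterpart: if $A$ admits an upper bound in $^{*}\N$, then $A$ has a maximum element which lies in $A$.

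Case 1: $A$ is bounded above in $^{*}\N$. Let $M=\max A$. If $M$ were finite, then every element of $A$ would be at most $M$, contradicting the hypothesis that $A$ contains arbitrarily large finite elements. Hence $M$ is infinite and $M\in A$ is the desired element.

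Case 2: $A$ is unbounded above in $^{*}\N$. Since the star map is proper we have $^{\sigma}\N\subsetneq{}^{*}\N$, and since $^{*}n=n$ for every $n\in\N$, any $\nu\in{}^{*}\N\setminus\N$ satisfies $\nu>n$ for all $n\in\N$ and is therefore infinite. Fix such a $\nu$. By unboundedness of $A$ there exists $m\in A$ with $m>\nu$, and this $m$ is then also infinite.

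Neither case is genuinely difficult once the dichotomy is set up. The main care lies in ensuring that "bounded above" is interpreted \emph{inside} $^{*}\N$ so that the transferred well-ordering principle applies to the internal set $A$, and in recalling that the existence of infinite hypernaturals is supplied by properness of $*$ (rather than being something to derive here from scratch). The statement is essentially the "dual" of the well-ordering phenomenon: either the maximum of $A$ is large enough to be infinite, or $A$ simply outruns every hypernatural, including an infinite one.
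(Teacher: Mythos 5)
Your argument is correct. Note that the paper itself does not prove this theorem at all: it states the Overspill Principle and refers the reader to Goldblatt's book for a treatment, so there is no in-paper proof to compare against. Your self-contained argument is a standard and valid one: the dichotomy on boundedness, the transfer of ``every nonempty bounded-above subset of $\N$ has a maximum'' to internal subsets of $^{*}\N$, and the observation that the maximum cannot be finite all go through. The one step you pass over quickly is the claim that \emph{every} $\nu\in{}^{*}\N\setminus\N$ is infinite; this is not immediate from properness alone but follows by transferring, for each fixed $n\in\N$, the identity $\{m\in\N\mid m\leq n\}=\{0,\dots,n\}$, so that $\{\mu\in{}^{*}\N\mid\mu\leq n\}={}^{*}\{0,\dots,n\}=\{0,\dots,n\}\subseteq\N$, whence any nonstandard $\nu$ exceeds every standard $n$. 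With that one line added, Case 2 is complete. A mildly slicker alternative, worth knowing, avoids the case split: if $A$ had no infinite element then $A\subseteq\N$, and then $\N=\{n\in{}^{*}\N\mid(\exists a\in A)(n\leq a)\}$ would be internal by the Internal Definition Principle, contradicting the externality of $\N$; but your version is equally legitimate and arguably more elementary, since it does not presuppose that $\N$ is external.
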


For a comprehensive tractation of this principle, see $\cite[\mbox{section 11.4}]{rif18}$. We just warn the reader that, in Goldblatt's book, the Overspill principle is called Overflow principle.\\
Observe that, as a consequence of the Overspill Principle, in $^{*}\N$ there are infinite natural numbers.\\
Since hyperextensions of functions are particularly important in this chapter, we point out that functions $f:\N^{k}\rightarrow\N^{h}$ are extended to functions $^{*}f:$$^{*}\N^{k}\rightarrow$$^{*}\N^{h}$ that satisfy this condition: if $\Gamma_{f}$ is the graph of $f$,

\begin{center} $\Gamma_{f}=\{(x,y)\in\N^{k}\times\N^{h}\mid f(x)=y$\}, \end{center}

the graph of $^{*}f$ is $^{*}\Gamma_{f}$.\\
One other important aspect of nonstandard methods in the study of $\N$ is this:

\begin{prop} $\N$ is a bounded elementary submodel of $^{*}\N$. \end{prop}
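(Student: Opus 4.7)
The plan is to deduce the proposition directly from the Transfer Principle (Definition 2.1.7), after performing the routine translation between bounded quantifier formulas in the language of arithmetic (where $\N$ is a first-order structure) and bounded quantifier formulas in the set-theoretic language of the superstructure (where $\N$ appears as a parameter).

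First, I would fix a bounded quantifier formula $\varphi(x_1,\ldots,x_k)$ in the language of arithmetic and parameters $a_1,\ldots,a_k\in\N$; the goal is the equivalence
\[
\N\models\varphi(a_1,\ldots,a_k)\iff {}^{*}\N\models\varphi(a_1,\ldots,a_k).
\]
To bring Transfer to bear, I rewrite $\varphi$ as a bounded quantifier formula $\widetilde{\varphi}(x_1,\ldots,x_k)$ in the language of $\mathbb{V}(X)$, carrying $\N$ and the graphs of the arithmetic symbols ($+,\cdot,<,0,1,\ldots$) as parameters: each bounded quantifier $\forall x<t$ becomes $\forall x\in\N\,(x<t\to\ldots)$, atomic formulas involving $+,\cdot,<$ are rewritten using the corresponding subsets of $\N^{k}$, and so on. A straightforward induction on complexity then yields
\[
\N\models\varphi(a_1,\ldots,a_k)\iff \mathbb{V}(X)\models\widetilde{\varphi}(a_1,\ldots,a_k),
\]
because at each stage the bounded quantifiers range over the same set in the two readings.

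Second, I would invoke Transfer applied to $\widetilde{\varphi}$. Since $a_i\in\N$ gives ${}^{*}a_i=a_i$, Transfer reads
\[
\mathbb{V}(X)\models\widetilde{\varphi}(a_1,\ldots,a_k)\iff \mathbb{V}(Y)\models {}^{*}\widetilde{\varphi}(a_1,\ldots,a_k),
\]
where ${}^{*}\widetilde{\varphi}$ is obtained from $\widetilde{\varphi}$ by replacing the parameter $\N$ with ${}^{*}\N$ and each arithmetic graph with its hyperextension (which, by definition, is interpreted on ${}^{*}\N$ as the hyperextended operation). Running the induction of the previous paragraph in reverse inside $\mathbb{V}(Y)$, one obtains
\[
\mathbb{V}(Y)\models {}^{*}\widetilde{\varphi}(a_1,\ldots,a_k)\iff {}^{*}\N\models\varphi(a_1,\ldots,a_k),
\]
now viewing ${}^{*}\N$ as a first-order structure equipped with the hyperextended arithmetic. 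Chaining the three equivalences closes the argument.

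The only real obstacle is bookkeeping: verifying that the translation between the arithmetic and set-theoretic languages preserves bounded quantifier complexity, and that the star map sends the graphs of $+,\cdot,<$ on $\N$ to the corresponding graphs on ${}^{*}\N$ (so that atomic formulas and bounded quantifiers are correctly reinterpreted in ${}^{*}\N$). These are purely formal checks, with no combinatorial content; once they are in place, the proposition is immediate from Transfer and the identity ${}^{*}n=n$ for $n\in\N$.
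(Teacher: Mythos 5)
Your proposal is correct and follows the same route as the paper: the paper's entire proof is the one-line observation that $\N$ is a submodel of $^{*}\N$ and the star map is a bounded elementary embedding (i.e. Transfer plus $^{*}n=n$), which is exactly the argument you carry out in detail. The translation and bookkeeping you describe are just the expanded form of that one-liner.
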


\begin{proof} $\N$ is a submodel of $^{*}\N$, and the star map is a bounded elementary embedding. \\ \end{proof}

We usually work in nonstandard settings that satisfy some additional condition: in the definition below, we recall that a family $\mathcal{F}$ of susets of a given set $S$ has the finite intersection property if for every natural number $n$, for every $F_{1},...,F_{n}$ elements of $\mathcal{F}$, $F_{1}\cap...\cap F_{n}\neq\emptyset$.

\begin{defn} Let $\kappa$ be an infinite cardinal number. We say that the nonstandard model $\langle \mathbb{V}(X),\mathbb{V}(Y),*\rangle$ has the {\bfseries $\kappa$-enlarging property} $($resp. $\kappa^{+}$-enlarging property$)$ if, for every set $x$ in $\mathbb{V}(X)$, for every family $\mathcal{F}$ of subsets of $x$ with $|\mathcal{F}|<\kappa$ $($resp. $|\mathcal{F}|\leq\kappa)$ and with the finite intersection property, the intersection $\bigcap_{F\in\mathcal{F}}$$^{*}F$ is nonempty. \end{defn}

Enlarging is a weaker form of a model-theoretic property called saturation:

\begin{defn} Let $\kappa$ be an infinite cardinal number. We say that the nonstandard model $\langle \mathbb{V}(X),\mathbb{V}(Y),*\rangle$ has the {\bfseries $\kappa$-saturation property} $($resp. $\kappa^{+}$-saturation property$)$ if for every family $\mathcal{F}$ of internal subsets of $\mathbb{V}(Y)$ with $|\mathcal{F}|<\kappa$ $($resp. $|\mathcal{F}|\leq\kappa)$ and with the finite intersection property, the intersection $\bigcap_{F\in\mathcal{F}}$$F$ is nonempty.
\end{defn}

We remark that $\kappa$-saturation trivially implies $\kappa$-enlarging, since every hyper-image is an internal object in $\mathbb{V}(Y)$.\\
Nonstandard models satisfying these properties have particular features: e.g., if the nonstandard model has the $\mathfrak{c}^{+}$-enlarging property, and $\mathcal{F}$ is a filter on $\N$, then

\begin{center} $\bigcap_{F\in\mathcal{F}}$$^{*}F\neq\emptyset$, \end{center}

since $\mathcal{F}$ has the finite intersection property and its cardinality is, at most, $\mathfrak{c}$; if the nonstandard model has the $\mathfrak{c}^{+}$-saturation property, the cofinality of $^{*}\N$ is at least $\mathfrak{c}^{+}$: by contrast, if $S=\langle \alpha_{i}\mid i<\mathfrak{c}\rangle$ is an unbounded sequence in $^{*}\N$, and for every index $i\leq \mathfrak{c}$ we consider the set 

\begin{center} $I_{i}=\{\eta\in$$^{*}\N\mid\alpha_{i}<\eta\}$, \end{center}

the family $\langle I_{i}\mid i<\mathfrak{c}\rangle$ is a family of internal sets with the finite intersection property and cardinality $\mathfrak{c}$ so, by $\mathfrak{c}^{+}$-saturation, the intersection

\begin{center} $\bigcap_{i\leq\mathfrak{c}} I_{i}$\end{center}

is nonempty, and this is in contrast with $S$ being unbounded. So the cofinality of $^{*}\N$ is at least $\mathfrak{c}^{+}$.

\section{The Bridge Theorem}

\subsection{The Bridge Map}

In this section we present an important nexus between ultrafilters and hyperextensions, that gives a nonstandard characterization of ultrafilters that we will use throghout the thesis.\\
The correspondence is this:

\begin{prop} $(1)$ Let $^{*}\N$ be a hyperextension of $\N$. For every hypernatural number $\alpha$ in $^{*}\N$, the set 

\begin{center} $\mathfrak{U}_{\alpha}=\{A\in\N\mid \alpha\in$$^{*}A\}$ \end{center} is an ultrafilter on $\N$.\\
$(2)$ Let $^{*}\N$ be a hyperextension of $\N$ with the $\mathfrak{c}^{+}$-enlarging property. For every ultrafilter $\U$ on $\N$ there exists an element $\alpha$ in $^{*}\N$ such that $\U=\mathfrak{U}_{\alpha}$.\end{prop}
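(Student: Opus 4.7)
For part (1), the plan is to verify directly the four axioms of an ultrafilter, each time invoking transfer applied to a simple first-order sentence. Note first that ${}^{*}\N$ contains $\alpha$ and ${}^{*}\emptyset=\emptyset$, so $\N\in\mathfrak{U}_{\alpha}$ and $\emptyset\notin\mathfrak{U}_{\alpha}$. For closure under intersection, given $A,B\in\mathfrak{U}_{\alpha}$, the bounded quantifier sentence $(\forall x)(x\in A\cap B\Leftrightarrow x\in A\wedge x\in B)$ transfers to ${}^{*}(A\cap B)={}^{*}A\cap{}^{*}B$, and $\alpha$ lies in both factors. Closure under supersets is similar: $A\subseteq B$ transfers to ${}^{*}A\subseteq{}^{*}B$. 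For the ultrafilter axiom, apply transfer to $\N=A\cup A^{c}$ and $A\cap A^{c}=\emptyset$, obtaining ${}^{*}\N={}^{*}A\cup{}^{*}(A^{c})$ as a disjoint union, so $\alpha$ belongs to exactly one side, giving $A\in\mathfrak{U}_{\alpha}$ or $A^{c}\in\mathfrak{U}_{\alpha}$.

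For part (2), the plan is to exhibit a generator $\alpha$ using the enlarging property and then appeal to maximality. Regard $\U$ itself as a family of subsets of $\N$; since $|\U|\leq|\wp(\N)|=\mathfrak{c}$ and $\U$ is a filter, it trivially has the finite intersection property. The $\mathfrak{c}^{+}$-enlarging property therefore guarantees that
\[
\bigcap_{A\in\U}{}^{*}A\neq\emptyset.
\]
Pick any $\alpha$ in this intersection. By the very definition of $\mathfrak{U}_{\alpha}$, every $A\in\U$ satisfies $\alpha\in{}^{*}A$, hence $A\in\mathfrak{U}_{\alpha}$, so $\U\subseteq\mathfrak{U}_{\alpha}$. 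By part (1), $\mathfrak{U}_{\alpha}$ is an ultrafilter, and by Proposition 1.1.5 every ultrafilter is a maximal filter; since $\U$ is itself an ultrafilter and is contained in $\mathfrak{U}_{\alpha}$, the two must coincide.

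The only subtle step is the cardinality bookkeeping needed to apply the enlarging hypothesis in part (2): one must notice that $|\U|\leq\mathfrak{c}$, so the family $\{A:A\in\U\}$ falls within the reach of the $\mathfrak{c}^{+}$-enlarging property (which guarantees nonempty intersection for families of size at most $\mathfrak{c}$). Everything else is routine transfer or a direct appeal to the maximality characterization of ultrafilters already established earlier in the chapter.
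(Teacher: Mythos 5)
Your proof is correct and follows essentially the same route as the paper: part (1) by verifying the filter and ultrafilter axioms via transfer, and part (2) by applying the $\mathfrak{c}^{+}$-enlarging property to the family $\{A\}_{A\in\U}$ and picking $\alpha\in\bigcap_{A\in\U}{}^{*}A$. The only difference is cosmetic: where the paper ends with "by construction $\U=\mathfrak{U}_{\alpha}$", you spell out the final step via maximality of ultrafilters (Proposition 1.1.5), which is a legitimate way to close the argument.
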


\begin{proof} (1) Let $\alpha$ be an hypernatural number in $^{*}\N$, and consider $\mathfrak{U}_{\alpha}$. $\mathfrak{U}_{\alpha}$ is not empty because it contains $\N$; moreover, it is easily seen that $\mathfrak{U}_{\alpha}$ is closed under supersets, under intersections and that it does not contain the empty set, so $\mathfrak{U}_{\alpha}$ is a proper filter on $\N$. It is an ultrafilter because, for every subset $A$ of $\N$, $A\in\mathfrak{U}_{\alpha}$ or $A^{c}\in\mathfrak{U}_{\alpha}$. In fact, we have the property 

\begin{center} "for every natural number $n\in\N$, either $n\in A$ or $n\in A^{c}$",\end{center}

and by transfer it follows that 

\begin{center}"for every hypernatural number $\alpha \in$$^{*}\N$, either $\alpha\in$$^{*}A$ or $\alpha\in$$^{*}A^{c}$".\end{center}
(2) Let $^{*}\N$ be a hyperextension of $\N$ with the $\mathfrak{c}^{+}$-enlarging property and $\U$ an ultrafilter on $\N$. The family 

\begin{center} $\{A\}_{A\in\U}$ \end{center}

has the finite intersection property; by $\mathfrak{c}^{+}$-enlarging property it follows that 

\begin{center} $\bigcap_{A\in\U}$$^{*}A\neq\emptyset$,\end{center}

as $|\U|\leq \mathfrak{c}$. If $\alpha$ is any element in this intersection, by construction $\U=\mathfrak{U}_{\alpha}$.\\
\end{proof}

An important fact is that, if the hyperextension $^{*}\N$ does not satisfy the $\mathfrak{c}^{+}$-enlarging property then there may be ultrafilters $\U$ on $\N$ such that, for every element $\alpha\in$$^{*}\N$, $\mathfrak{U}_{\alpha}\neq\U$. E.g., let $^{*}\N$ be a hyperextension of $\N$ that does not satisfy the $\mathfrak{c}^{+}$-enlarging property and $\mathcal{F}$ a family of subsets of $\N$ with the finite intersection property, and suppose that $\bigcap_{F\in\mathcal{F}}$$^{*}F=\emptyset$. Then, for every ultrafilter $\U$ that extends $\mathcal{F}$ (that such ultrafilters exist has been proved in Chapter One), the set $G_{\U}$ is empty.\\
Since we want to avoid this fact, throughout this chapter the hyperextensions that we consider satisfy (at least) the $\mathfrak{c}^{+}$-enlarging property.

\begin{defn}[] The {\bfseries bridge map} is the function $\psi:$$^{*}\N\rightarrow\bN$ defined by putting for every hypernatural number $\alpha$ in $^{*}\N$

\begin{center} $\psi(\alpha)=\mathfrak{U}_{\alpha}$. \end{center} 

\end{defn}

\begin{defn} We say that two hypernatural numbers $\alpha,\beta$ in $^{*}\N$ are {\bfseries $\U$-equivalent} $($notation $\alpha\sim_{u}\beta)$ when $\mathfrak{U}_{\alpha}=\mathfrak{U}_{\beta}$, i.e. if $\psi(\alpha)=\psi(\beta)$.\\
Given an ultrafilter $\U$ in $\bN$, the {\bfseries set of generators of $\U$} is the set

\begin{center} $G_{\U}=\{\alpha\in $$^{*}\N\mid \U=\U_{\alpha}\}$. \end{center}

\end{defn}

A warning is in order: in literature, the set $G_{\U}$ is called the monad of the ultrafilter $\U$. Here, we prefer to rename it as "set of generators" to emphasize the fact that vary properties of the ultrafilter $\U$ can be seen as actually "generated" by the elements in $G_{\U}$. This, in our point of view, is the core of our method of studying combinatorial properties of ultrafilters.\\
Some of the results about sets of generators that we present in this chapter are adaptations, in our context, of results presented in $\cite{rif29}$.\\
We observe that the bridge map associate finite hypernatural numbers with principal ultrafilters, and infinite hypernatural numbers with nonprincipal ultrafilters: e.g., let $\U$ be the principal ultrafilter on $n$. Not surprisingly, $G_{\U}=\{n\}$: in fact, by definition, an hypernatural number $\alpha$ is in $G_{\U}$ if and only if $\alpha\in$$^{*}A$ for every set $A$ in $\U$; since $\U$ is principal, the set $A=\{n\}$ is in $\U$, so by taking $^{*}A=^{*}\{n\}=\{n\}$ it follows that $\alpha=n$.\\
Conversely, if $\alpha$ is infinite, and $A$ is a subset of $\N$ such that $\alpha\in$$^{*}A$, then $A$ is necessarily infinite, so $\mathfrak{U}_{\alpha}$ is nonprincipal. Next proposition shows that, in this case, $G_{\U}$ is huge:

\begin{prop} The bridge map $\psi$ is not 1-1: in fact, for every non principal ultrafilter $\U$, the set $G_{\U}=\psi^{-1}(\U)$ has $|$$^{*}\N|$-many elements.\end{prop}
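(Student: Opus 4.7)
The upper bound $|G_{\U}| \leq |{}^{*}\N|$ is immediate since $G_{\U} \subseteq {}^{*}\N$. For the lower bound, the plan is to produce $|{}^{*}\N|$ many distinct generators of $\U$ by a transfinite recursion, exploiting the fact that the intersection $\bigcap_{A \in \U} {}^{*}A$ defining $G_{\U}$ cannot be ``nailed down'' by only boundedly many hypernaturals.

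Using Proposition 2.2.1 I would fix $\alpha_0 \in G_{\U}$; since $\U$ is non-principal, $\alpha_0$ must be an infinite hypernatural. Inductively, suppose that for some cardinal $\kappa < |{}^{*}\N|$ I have already produced a family $\{\alpha_{\xi} : \xi < \kappa\} \subseteq G_{\U}$ of pairwise distinct generators. I would obtain a new $\alpha_{\kappa} \in G_{\U} \setminus \{\alpha_{\xi} : \xi < \kappa\}$ by examining the family of internal subsets of ${}^{*}\N$
\[
\mathcal{F}_{\kappa} = \{\, {}^{*}A : A \in \U \,\} \;\cup\; \{\, {}^{*}\N \setminus \{\alpha_{\xi}\} : \xi < \kappa \,\}.
\]
This family has the finite intersection property: any finite subfamily reduces to a single hyperextension ${}^{*}(A_1 \cap \cdots \cap A_m) = {}^{*}B$, with $B \in \U$ infinite, from which finitely many hypernaturals $\alpha_{\xi_1}, \ldots, \alpha_{\xi_n}$ have been removed, and removing finitely many points from an infinite internal set leaves a non-empty internal set. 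A saturation property of the nonstandard model then yields a common element $\alpha_{\kappa} \in \bigcap \mathcal{F}_{\kappa}$, which automatically lies in $G_{\U}$ and differs from every previously chosen $\alpha_{\xi}$. Iterating this construction $|{}^{*}\N|$ many times produces the required injection $|{}^{*}\N| \hookrightarrow G_{\U}$.

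The main obstacle is that this argument calls for saturation at cardinality at least $|{}^{*}\N|$, strictly stronger than the $\mathfrak{c}^{+}$-enlarging property explicitly assumed throughout the chapter. Indeed, the sets ${}^{*}\N \setminus \{\alpha_{\xi}\}$ are internal but are \emph{not} in general hyperextensions of standard subsets of $\N$, because the $\alpha_{\xi}$ are non-standard; hence the bare $\mathfrak{c}^{+}$-enlarging property does not apply directly to $\mathcal{F}_{\kappa}$. One would therefore either need to assume additional saturation of the ambient model (as is customary in nonstandard analysis when cardinal-valued statements of this strength are derived), or replace the transfinite recursion by an explicit construction of $|{}^{*}\N|$ generators via internal bijections or function hyperextensions preserving $\U$. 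The former route seems by far the cleanest, provided the implicit saturation hypothesis is granted.
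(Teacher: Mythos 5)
Your argument has a genuine gap, and it is exactly the one you flag yourself: the transfinite recursion needs, at each stage $\kappa<|{}^{*}\N|$, the nonempty intersection of a family of $\max(\mathfrak{c},\kappa)$-many internal sets of the form ${}^{*}\N\setminus\{\alpha_{\xi}\}$, which are internal but not hyper-images of standard sets. This calls for $|{}^{*}\N|$-saturation, a hypothesis strictly stronger than the $\mathfrak{c}^{+}$-enlarging property assumed throughout the chapter (and note that $|{}^{*}\N|$ can be much larger than $\mathfrak{c}$). Acknowledging that an extra hypothesis would be needed does not close the gap: as stated, the proposition is supposed to hold for every hyperextension with the $\mathfrak{c}^{+}$-enlarging property, so a proof that imports saturation does not establish it.

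The repair is the route you mention and set aside, and it is both clean and available under the stated hypotheses. The $\mathfrak{c}^{+}$-enlarging property applies to families of subsets of \emph{any} standard set, not only of $\N$. For each $A\in\U$ let $\Gamma_{A}=\{f\in\mathtt{Fun}(\N,\N)\mid f \mbox{ is } 1\mbox{-}1 \mbox{ and } f(\N)\subseteq A\}$; each $\Gamma_{A}$ is nonempty because $A$ is infinite ($\U$ being nonprincipal), and $\Gamma_{A}\cap\Gamma_{B}=\Gamma_{A\cap B}$, so $\{\Gamma_{A}\}_{A\in\U}$ is a family of $\mathfrak{c}$-many sets with the finite intersection property. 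By the $\mathfrak{c}^{+}$-enlarging property there is some $\varphi\in\bigcap_{A\in\U}{}^{*}\Gamma_{A}$: by transfer $\varphi$ is an internal $1$-$1$ function whose range is contained in ${}^{*}A$ for every $A\in\U$, hence contained in $G_{\U}$. This produces the injection ${}^{*}\N\hookrightarrow G_{\U}$ in a single step, with no recursion and no saturation. This is precisely the paper's proof.
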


\begin{proof} Let $\U$ be a non principal ultrafilter. For every set $A$ in $\U$, consider the set

\begin{center} $\Gamma_{A}=\{f:\N\rightarrow A\mid$ $f$ is $1-1$\}. \end{center}

The family $\{\Gamma_{A}\}_{A\in \U}$ has the finite intersection property, so by $\mathfrak{c}^{+}$-enlarging there is an element $\varphi\in\bigcap$$^{*}\Gamma_{A}$.\\

{\bfseries Claim:} For every hypernatural number $\alpha$ in $^{*}\N$, $\varphi(\alpha)\in G_{\U}$.\\

In fact, for every set $A$ in $\U$, as $\varphi\in\Gamma_{A}$ the range of $\varphi$ is included in $^{*}A$ so, by construction, the range of $\varphi$ is included in $G_{\U}$, and this proves the claim.\\
As $\varphi$ is $1-1$, it follows that |$^{*}\N|\leq |G_{\U}|$, and this concludes the proof.\\\end{proof}

The following is an easy, but important, property of $G_{\U}$:

\begin{prop} Let $\U$ be a nonprincipal ultrafilter on $\N$, and $\alpha,\beta$ distinct generators of $\U$. Then $|\alpha-\beta|\in$$^{*}\N\setminus\N$.  \end{prop}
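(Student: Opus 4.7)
The plan is to argue by contradiction. Suppose $\alpha,\beta$ are distinct generators of the same nonprincipal ultrafilter $\U$, yet $|\alpha-\beta|$ is finite; say $|\alpha-\beta|=n$ for some $n\in\N$ with $n\geq 1$ (note $n\neq 0$ because $\alpha\neq\beta$). The strategy is to use a suitable finite partition of $\N$ to force the ``distance'' between $\alpha$ and $\beta$ to be a multiple of $n+1$, which is incompatible with it being equal to $n$.

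Concretely, I would partition $\N$ into the $n+1$ residue classes modulo $n+1$: for $0\leq r\leq n$, put
\[
A_{r}=\{k\in\N\mid k\equiv r \pmod{n+1}\}.
\]
Since $\U$ is an ultrafilter and $\{A_{0},\ldots,A_{n}\}$ is a finite partition of $\N$, there is a unique $r\leq n$ with $A_{r}\in\U$. Because $\alpha$ and $\beta$ are both generators of $\U$, both belong to ${}^{*}A_{r}$. By transfer, ${}^{*}A_{r}=\{\eta\in{}^{*}\N\mid \eta\equiv r\pmod{n+1}\}$, so $\alpha\equiv\beta\pmod{n+1}$ in ${}^{*}\N$, i.e.\ $(n+1)$ divides $\alpha-\beta$ in ${}^{*}\N$. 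But $|\alpha-\beta|=n$ is a standard natural number satisfying $0<n<n+1$, so $(n+1)\nmid n$, contradiction.

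The step that deserves slight care is the transfer of the congruence, since we apply the star map to the parameter $n+1$ (which is a standard natural number and hence equal to its own hyperimage), and we need $\alpha-\beta$ to be an element of ${}^{*}\N$ rather than $\N$. Here one uses the hypothesis $|\alpha-\beta|\in\N$ to conclude the divisibility actually takes place in $\N$ (or equivalently, in ${}^{*}\N$, since standard integers coincide with their images). This is the only subtle point; everything else is a direct application of the fact that for any $A\subseteq\N$, exactly one of $A,A^{c}$ lies in $\U$ and hence contains every generator after hyperextension. No enlargement hypothesis is needed for this direction of the argument, only the bridge correspondence $\alpha\mapsto\mathfrak{U}_{\alpha}$ introduced earlier.
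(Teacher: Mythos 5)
Your proof is correct and uses essentially the same idea as the paper: partition $\N$ into residue classes modulo a suitable integer, note that both generators must lie in the hyperextension of the single class belonging to $\U$, and conclude that the modulus divides $\alpha-\beta$. The only cosmetic difference is that the paper runs this for every modulus $k$ and deduces $|\alpha-\beta|$ is divisible by all of them, whereas you assume finiteness and pick the single modulus $n+1$ to reach a contradiction.
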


\begin{proof} Let $k$ be a positive natural number, and let $\{A_{1}, A_{2},...,A_{k}\}$ be the partition of $\N$ induced by the Euclidean division:

\begin{center} $A_{i}=\{n\in\N\mid n\equiv i\mod k\}$. \end{center}
Since $\U$ is an ultrafilter, there is exactly one index $i$ with $A_{i}$ in $\U$. As 

\begin{center} $^{*}A_{i}=\{\eta\in$$^{*}\N\mid \eta\equiv i \mod k\}$\end{center}

and $\alpha,\beta$ are in $^{*}A_{i}$, it follows that $|\alpha-\beta|\equiv 0 \mod k$. Since this is true for every natural number $k$, the only possibilities are that $\alpha=\beta$, which has been excluded in the hypothesis, or $|\alpha-\beta|\in$$^{*}\N\setminus\N$.\\\end{proof}

When the hyperextension $^{*}\N$ is $\mathfrak{c}^{+}$-saturated, the sets of generators of nonprincipal ultrafilters are coinitial and cofinal in $^{*}\N\setminus\N$:

\begin{prop} Let $^{*}\N$ be a $\mathfrak{c}^{+}$-saturated hyperextension of $\N$ and $\U$ a nonprincipal ultrafilter on $\N$. Then:
\begin{enumerate}
	\item $G_{\U}$ is left unbounded in $^{*}\N\setminus\N$;
  \item for every infinite hypernatural number $\eta$, $G_{\U}\cap [0,\eta)$ contains at least $\mathfrak{c}^{+}$ elements;
  \item the coinitiality of $G_{\U}$ in $^{*}\N\setminus\N$ is greater than $\mathfrak{c}$;
  \item $G_{\U}$ is right unbounded in $^{*}\N$;
	\item for every $\eta$ in $^{*}\N\setminus\N$, $G_{\U}\cap (\eta,+\infty)$ contains $|$$^{*}\N|$-many elements;
	\item the cofinality of $G_{\U}$ is greater than $\mathfrak{c}$.
	
\end{enumerate}
\end{prop}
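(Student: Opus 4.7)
The plan is to derive all six claims by repeated applications of $\mathfrak{c}^{+}$-saturation, exploiting the fact that $|\U|\leq\mathfrak{c}$ so that $\{^{*}A\mid A\in\U\}$ is a family of internal sets of cardinality at most $\mathfrak{c}$ with the finite intersection property. The structural fact used throughout is that, for every $A\in\U$, the hyperextension $^{*}A$ is unbounded in $\sN$ (by transfer of the unboundedness of the infinite set $A$) and contains all standard elements of $A$, which automatically lie below any prescribed infinite hypernatural.

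Claims (1), (3), (4) and (6) reduce to direct saturation arguments on families of size at most $\mathfrak{c}$. For (1), given an infinite $\eta$, I would check that $\{^{*}A\mid A\in\U\}\cup\{(n,\eta)\mid n\in\N\}$ has the FIP — any finite subfamily contains $^{*}(A_{1}\cap\cdots\cap A_{k})\cap(N,\eta)$, which is nonempty because the infinite set $A_{1}\cap\cdots\cap A_{k}\in\U$ has standard elements above $N$ — and a point of the full intersection is an infinite generator below $\eta$. Claim (4) is symmetric, using the single internal set $(\eta,\infty)$ and the unboundedness of $^{*}(A_{1}\cap\cdots\cap A_{k})$. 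For (3), I would take any $\{\alpha_{i}\mid i<\mathfrak{c}\}\subseteq G_{\U}$ and intersect $\{[0,\alpha_{i})\mid i<\mathfrak{c}\}\cup\{(n,\infty)\mid n\in\N\}$: the FIP holds because the minimum of finitely many $\alpha_{i}$ is still infinite, and a witness is an infinite lower bound in $\sN\setminus\N$. Claim (6) is dual, using the family $\{(\alpha_{i},\infty)\mid i<\mathfrak{c}\}\cup\{^{*}A\mid A\in\U\}$.

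For the stronger cardinality claims (2) and (5), I would combine saturation with either an exclusion trick or with the injection construction from Proposition 2.2.4. For (2), assume toward a contradiction that $S:=G_{\U}\cap[0,\eta)$ has cardinality at most $\mathfrak{c}$, augment the family of (1) by the internal sets $\{\sN\setminus\{s\}\mid s\in S\}$, and observe that the FIP still holds since we are only removing finitely many points from a set which already contains infinitely many standard naturals. Saturation then produces a generator of $\U$ in $[0,\eta)\setminus S$, contradicting the choice of $S$. For (5), I would adjoin to $\{^{*}\Gamma_{A}\mid A\in\U\}$ the internal set $E_{\eta}$ of internal $1$-$1$ functions $f:\sN\to\sN$ satisfying $f(\alpha)>\eta$ for every $\alpha\in\sN$; a witness $\varphi$ is then an internal injection whose entire range sits inside $G_{\U}\cap(\eta,\infty)$, forcing $|G_{\U}\cap(\eta,\infty)|=|\sN|$.

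The step that requires genuine care, rather than routine bookkeeping, is the FIP verification in (5): one needs an internal $1$-$1$ map from $\sN$ into $^{*}(A_{1}\cap\cdots\cap A_{k})$ whose values all exceed $\eta$. The plan here is to use the internal increasing enumeration $\{a_{\nu}\mid\nu\in\sN\}$ of $^{*}(A_{1}\cap\cdots\cap A_{k})$ (obtained by transfer of the enumeration of the infinite set $A_{1}\cap\cdots\cap A_{k}$), pick $\nu_{0}\in\sN$ with $a_{\nu_{0}}>\eta$ (possible because the enumeration is cofinal in $\sN$ by transfer of unboundedness), and define $f(\alpha)=a_{\nu_{0}+\alpha}$. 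Once this internal witness is in hand, every clause of the proposition reduces to a direct saturation argument on a carefully chosen family of internal sets.
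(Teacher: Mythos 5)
Your proof is correct, and the overall strategy is the same as the paper's: every clause is obtained by applying $\mathfrak{c}^{+}$-saturation to a family of at most $\mathfrak{c}$ internal sets built from $\{$$^{*}A\mid A\in\U\}$. The differences are local and worth noting. For clause (2) the paper argues in the opposite direction: it forms the family $\{(k,\mu)\mid k\in\N,\ \mu\in G_{\U}\cap[0,\eta)\}$, observes that it has the finite intersection property but \emph{empty} intersection (a common point would be an infinite lower bound for $G_{\U}\cap[0,\eta)$, contradicting clause (1)), and concludes from saturation that the family, hence $G_{\U}\cap[0,\eta)$, must have size at least $\mathfrak{c}^{+}$. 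Your exclusion argument, adjoining the internal co-singletons $^{*}\N\setminus\{s\}$ for $s$ in a putative enumeration of $G_{\U}\cap[0,\eta)$ of size at most $\mathfrak{c}$, reaches the same bound by producing a new generator rather than by counting the family; both are standard, and yours avoids passing through the contrapositive form of saturation. For clauses (4)--(5) the paper obtains the internal injection in one step by transferring ``for every $k$ there is an increasing $1$-$1$ map $\N\to A\cap(k,+\infty)$'' to get a nonempty internal set $F_{A}$ of injections into $^{*}A\cap(\eta,+\infty)$, and then saturating over $\{F_{A}\}_{A\in\U}$; you instead isolate the constraint ``$f>\eta$'' in a single internal set $E_{\eta}$ and verify the finite intersection property by the explicit shifted enumeration $f(\alpha)=a_{\nu_{0}+\alpha}$, which is exactly the content hidden inside the paper's transfer step. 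Clauses (1), (3) and (6) match the paper essentially verbatim, modulo your explicitly forcing the witness in (1) to be infinite via the intervals $(n,\eta)$ where the paper gets this for free from the nonprincipality of $\U$.
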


\begin{proof} 1) Let $\eta$ be an hypernatural number in $^{*}\N\setminus\N$ and pose, for every $A\in\U$, 

\begin{center} $A_{\eta}=\{\alpha\in$$^{*}A\mid \alpha<\eta\}$. \end{center}

$A_{\eta}$ is internal, nonempty (as $A\subseteq A_{\eta}$) and the family 

\begin{center}$\mathcal{F}=\{A_{\eta}\}_{A\in\U}$ \end{center}

has the finite intersection property (as $A_{\eta}\cap B_{\eta}= (A\cap B)_{\eta}$) and cardinality equal to $\mathfrak{c}$. By $\mathfrak{c}^{+}$-saturation, the intersection

\begin{center} $\bigcap_{A\in\U} A_{\eta}$\end{center}

is nonempty; if $\alpha$ is an element in this intersection then $\alpha$ is infinite, less than $\eta$ and in $G_{\U}$: this proves that $G_{\U}$ is left unbounded in $^{*}\N\setminus\N$.\\
2) Let $\eta$ be an element in $^{*}\N\setminus\N$. Since $G_{\U}$ is left unbounded in $^{*}\N\setminus\N$, for every natural number $k$ and for every hypernatural number $\mu<\eta$ the set 

\begin{center} $(k,\mu)=\{\alpha\in$$^{*}\N\mid k<\alpha<\mu\}$\end{center}

is a nonempty internal set. Then the family

\begin{center}$\mathcal{F}=\{(k,\mu)\mid k\in\N$ and $\mu\in [0,\eta)\cap G_{\U}\}$ \end{center}

is a family of nonempty internal sets with the finite intersection property. Observe that the intersection

\begin{center} $\bigcap_{(k,\mu)\in\mathcal{F}}(k,\mu)$ \end{center}

is empty. As $\mathfrak{c}^{+}$-saturation holds, the only possibility is that $|\mathcal{F}|\geq \mathfrak{c}^{+}$. By construction, $|\mathcal{F}|=|G_{\U}\cap [0,\eta)|$, so $|G_{\U}\cap [0,\eta)|\geq \mathfrak{c}^{+}$: this entails that $G_{\U}\cap [0,\eta)$ contains at least $\mathfrak{c}^{+}$-many elements.\\
3) Suppose that $S=\langle \alpha_{i}\mid i<\mathfrak{c}\rangle$ is a left unbounded sequence in $G_{\U}$, with $\alpha_{i}>\alpha_{j}$ whenever $i<j$. Consider the family

\begin{center} $\mathcal{F}=\{(k,\alpha)\mid k\in\N, \alpha\in S\}$. \end{center}

$\mathcal{F}$ has the same cardinality as $S$ and it is a family of internal sets, so by $\mathfrak{c}^{+}$-saturation there should be an hypernatural number $\beta$ such that $\beta\in (k,\alpha)$ for every $(k,\alpha)\in\mathcal{F}$, and this is absurd: $\beta$ can not be finite, otherwise if $\alpha$ is any element in $G_{\U}$ and $k=\beta+1$ then $\beta\notin (k,\alpha)$; $\beta$ can not be infinite otherwise, since $G_{\U}$ is left unbounded in $^{*}\N\setminus\N$, there is an element $\alpha\in S$ with $\alpha<\beta$, so $\beta\notin (0,\alpha)$.\\
We found an absurd, so such a sequence $S$ can not exist: the coinitiality of $G_{\U}$ is greater than $\mathfrak{c}$.\\
4)-5) Let $\eta$ be an hypernatural number in $^{*}\N\setminus\N$. As $\U$ is non principal, for every set $A$ in $\U$, for every natural number $k$, there is an increasing function $f:\N\rightarrow A\cap (k,+\infty)$ that is 1-1. By transfer, this entails that the set 

\begin{center} $F_{A}=\{f:$$^{*}\N\rightarrow$$^{*}A\cap (\eta,+\infty)\mid f$ is internal, increasing and 1-1$\}$ \end{center}

is not empty. The family 

\begin{center} $\mathcal{F}=\{F_{A}\}_{A\in\U}$ \end{center}

is a family of internal sets with the finite intersection property (since $F_{A}\cap F_{B}=F_{A\cap B}$ for every $A,B\in\U$), and it has cardinality $\mathfrak{c}$. By $\mathfrak{c}^{+}$-saturation, the intersection

\begin{center} $\bigcap_{A\in\U} F_{A}$\end{center}

is nonempty. Let $\varphi$ be a function in this intersection. By construction, $\varphi:$$^{*}\N\rightarrow G_{\U}\cap (\eta,+\infty)$, it is increasing and it is 1-1: this entails that $|$$^{*}\N|=|G_{\U}\cap (\eta,\infty)|$, and that $G_{\U}$ is right unbounded in $^{*}\N$.\\
6) Suppose that $S=\langle \alpha_{i}\mid i < \mathfrak{c}\rangle$ is an increasing unbounded sequence in $G_{\U}$. Then the family

\begin{center} $\mathcal{F}=\{(\alpha,+\infty)\mid \alpha\in S\}$ \end{center}

has empty intersection, and this is absurd, since $|\mathcal{F}|=|S|=\mathfrak{c}$, $\mathcal{F}$ has the finite intersection property and its elements are internal.\\\end{proof}

As a corollary we get:

\begin{cor} For every nonprincipal ultrafilter $\U$ the set of generators of $\U$ is an external subset of $^{*}\N$. \end{cor}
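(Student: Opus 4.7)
The plan is to argue by contradiction: assume $G_{\U}$ is internal and derive a contradiction using the fact that $\U$ is nonprincipal together with Proposition 2.2.5(1). The key observation is that internal subsets of $^{*}\N$ behave like ordinary subsets of $\N$ with respect to the well-ordering: by transfer of the fact that every nonempty subset of $\N$ has a least element, every nonempty internal subset of $^{*}\N$ has a minimum.

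First I would note that $G_{\U}$ is nonempty (it has cardinality $|^{*}\N|$ by Proposition 2.2.3) and, because $\U$ is nonprincipal, no finite hypernatural number belongs to $G_{\U}$: indeed, for any $n \in \N$, the set $\{n\}^c$ is in $\U$ and $n \notin {^{*}\{n\}^c}$. Hence $G_{\U} \subseteq {^{*}\N} \setminus \N$.

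Next, assuming for contradiction that $G_{\U}$ is internal, by the transferred well-ordering principle applied to this nonempty internal subset, $G_{\U}$ admits a least element $\alpha_{0}$. By the previous step $\alpha_{0}$ must be infinite. But Proposition 2.2.5(1) asserts that $G_{\U}$ is left unbounded in $^{*}\N \setminus \N$, i.e.\ for every $\eta \in {^{*}\N} \setminus \N$ there exists $\alpha \in G_{\U}$ with $\alpha < \eta$. Applying this to $\eta = \alpha_{0}$ produces a generator of $\U$ strictly smaller than $\alpha_{0}$, contradicting the minimality of $\alpha_{0}$.

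The main obstacle, which is already handled by the earlier results, is the left unboundedness of $G_{\U}$ in $^{*}\N \setminus \N$; once that is available, the internal-well-ordering trick makes the corollary almost immediate. I expect the proof to be only a few lines, essentially packaging Proposition 2.2.5(1) with transfer of well-ordering.
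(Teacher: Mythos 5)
Your proof is correct and is essentially the paper's own argument: the paper disposes of the corollary in one line by noting that every (nonempty) internal subset of $^{*}\N$ has a least element while $G_{\U}$ is left unbounded, which is exactly the well-ordering-by-transfer contradiction you spell out. The only nit is a citation slip: the left unboundedness of $G_{\U}$ in $^{*}\N\setminus\N$ is point (1) of the proposition on saturated hyperextensions (numbered 2.2.6 in the paper, not 2.2.5), and the cardinality statement is Proposition 2.2.4.
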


\begin{proof} Every internal subset of $^{*}\N$ has a least element, while $G_{\U}$ is left unbounded. \\ \end{proof}

\subsection{The Bridge Theorem}

Throughout this section we suppose that $^{*}\N$ is a hyperextension of $\N$ with the $\mathfrak{c}^{+}$-enlarging property. We also adopt the conventions about logical formulas introduced in Section One.

\begin{defn} Let $\phi(x_{1},...,x_{n})$ be a first order formula. The {\bfseries existential closure} of $\phi(x_{1},...,x_{n})$ is the sentence

\begin{center} $E(\phi(x_{1},...,x_{n})):$ $\exists x_{1}....\exists x_{n}\phi(x_{1},...,x_{n})$. \end{center}

The {\bfseries universal closure} of $\phi(x_{1},...,x_{n})$ is the sentence

\begin{center} $U(\phi(x_{1},...,x_{n})):$ $\forall x_{1},...,\forall x_{n}\phi(x_{1},...,x_{n})$. \end{center}

A first order formula is {\bfseries existential} $($resp. {\bfseries universal}$)$ if it is the existential $($resp. universal$)$ closure of a first order formula. \end{defn}
The Bridge Theorem concerns $\varphi$-ultrafilters: we recall that, given a sentence $\varphi$, an ultrafilter $\U$ is a $\varphi$-ultrafilter if every set $A$ in $\U$ satisfies $\varphi$. The Bridge Theorem states that, whenever $\varphi$ is a first order existential sentence, to check if an ultrafilter $\U$ is a $\varphi$-ultrafilter it is enough to study its set of generators:

\begin{thm}[Bridge Theorem] Let $\varphi=E(\phi(x_{1},...,x_{n}))$ be a first order existential sentence and $\U$ an ultrafilter in $\bN$. Then the following conditions are equivalent:
\begin{enumerate}
	\item $\U$ is a $\varphi$-ultrafilter;
	\item there are elements $\alpha_{1},...,\alpha_{n}$ in $G_{\U}$ such that $^{*}\phi(\alpha_{1},...,\alpha_{n})$ holds.
\end{enumerate}
\end{thm}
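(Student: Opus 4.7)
The plan is to prove each implication by using transfer in combination with the $\mathfrak{c}^{+}$-enlarging property, since the key ingredients (existential quantifiers and generators) match exactly what those tools handle best.

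For the direction $(2) \Rightarrow (1)$, I would start from hypothetical generators $\alpha_{1},\dots,\alpha_{n} \in G_{\U}$ satisfying ${}^{*}\phi(\alpha_{1},\dots,\alpha_{n})$ and show that every $A \in \U$ satisfies $\varphi$. Fix $A \in \U$. Since each $\alpha_{i}$ is a generator of $\U$, by definition $\alpha_{i} \in {}^{*}A$ for every $i$. Hence in $\mathbb{V}(Y)$ the bounded-quantifier sentence $\exists x_{1}\in {}^{*}A\,\cdots\,\exists x_{n}\in {}^{*}A\;{}^{*}\phi(x_{1},\dots,x_{n})$ is witnessed by $\alpha_{1},\dots,\alpha_{n}$. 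Transfer then yields $\exists x_{1}\in A\,\cdots\,\exists x_{n}\in A\,\phi(x_{1},\dots,x_{n})$ in $\mathbb{V}(X)$, i.e.\ $A$ satisfies $\varphi$. This direction is essentially mechanical.

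For the harder direction $(1) \Rightarrow (2)$, I would build the generators via enlarging. For every $A \in \U$, since $A$ satisfies $\varphi$, the set
\begin{center}$S_{A} = \{(a_{1},\dots,a_{n}) \in A^{n} \mid \phi(a_{1},\dots,a_{n})\}$\end{center}
is nonempty. The family $\{S_{A}\}_{A\in\U}$ has the finite intersection property because, given $A_{1},\dots,A_{k}\in\U$, the set $A=A_{1}\cap\dots\cap A_{k}$ is in $\U$, and $S_{A}\subseteq S_{A_{1}}\cap\dots\cap S_{A_{k}}$ is nonempty by the $\varphi$-ultrafilter hypothesis. Since $|\U|\leq \mathfrak{c}$, the $\mathfrak{c}^{+}$-enlarging property produces an $n$-tuple $(\alpha_{1},\dots,\alpha_{n}) \in \bigcap_{A\in\U}{}^{*}S_{A}$. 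For every $A\in\U$, membership in ${}^{*}S_{A}$ forces (by the definition of $S_{A}$, transferred) each $\alpha_{i}$ to lie in ${}^{*}A$ and forces ${}^{*}\phi(\alpha_{1},\dots,\alpha_{n})$ to hold. Running this over all $A\in\U$ yields $\alpha_{i}\in G_{\U}$ for every $i$, and simultaneously ${}^{*}\phi(\alpha_{1},\dots,\alpha_{n})$.

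The only subtle point, and the one I would be careful to spell out, is the tacit identification between the informal statement ``$A$ satisfies $\varphi$'' and the bounded-quantifier sentence in which the quantifiers of $\varphi$ are relativized to $A$; this is what makes transfer directly applicable in both directions. Apart from that, the argument is a clean combination of transfer (for $(2)\Rightarrow(1)$) and enlarging applied to the naturally associated solution sets (for $(1)\Rightarrow(2)$), with no further obstacles.
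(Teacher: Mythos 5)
Your proposal is correct and follows essentially the same route as the paper: the $(1)\Rightarrow(2)$ direction is the identical enlarging argument applied to the solution sets $\Phi_{A}=\{(a_{1},\dots,a_{n})\in A^{n}\mid\phi(a_{1},\dots,a_{n})\}$, and your $(2)\Rightarrow(1)$ direction is just the direct (rather than contrapositive) form of the paper's transfer argument. No gaps.
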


\begin{proof} $(1)\Rightarrow (2)$: Let $\U$ be a $\varphi$-ultrafilter. Given a set $A$ in $\U$, consider

\begin{center} $\Phi_{A}=\{(a_{1},...,a_{n})\in A^{n}\mid \phi(a_{1},...,a_{n})\}$. \end{center}

Observe that, since $\U$ is a $\varphi$-ultrafilter, $\Phi_{A}$ is nonempty for every set $A$ in $\U$, and that the family \{$\Phi_{A}\}_{A\in \U}$ has the finite intersection property. In fact, if $A_{1},...,A_{m}$ are elements in $\U$, then

\begin{center} $\Phi_{A_{1}}\cap...\cap \Phi_{A_{m}}=\Phi_{A_{1}\cap...\cap A_{m}}\neq\emptyset$.\end{center}

By $\mathfrak{c}^{+}$-enlarging property, the intersection

\begin{center} $\Theta=\bigcap_{A\in\U}$$^{*}\Phi_{A}$\end{center}

is nonempty. Since, by construction, 

\begin{center} "`for every $(a_{1},....,a_{n})\in \Phi_{A}$ $\phi(a_{1},...,a_{n})$"',\end{center}

by transfer it follows 

\begin{center} "`for every $(\alpha_{1},...,\alpha_{n})\in$$^{*}\Phi_{A}$ $^{*}\phi(\alpha_{1},...,\alpha_{n})$".\end{center}

Let $(\alpha_{1},...,\alpha_{n})$ be an element of $\Theta$. As observed, $^{*}\phi(\alpha_{1},...,\alpha_{n})$ holds and, by construction, $\alpha_{1},...,\alpha_{n}\in G_{\U}$ since, for every index $i\leq n$, for every set $A$ in $\U$, $\alpha_{i}\in$$^{*}A$.\\
$(2)\Rightarrow(1)$ Suppose that $\U$ is not a $\varphi$-ultrafilter, and let $A$ be an element of $\U$ such that, for every $a_{1},...,a_{n}$ in $A$, $\neg\phi(a_{1},....,a_{n})$ holds.\\
Then by transfer it follows that, for every $\xi_{1},...,\xi_{n}$ in $^{*}A$, $\neg$$^{*}\phi(\xi_{1},...,\xi_{n})$ holds; in particular, as $G_{\U}\subseteq$$^{*}A$, for every $\xi_{1},...,\xi_{n}$ in $G_{\U}$, $\neg$$^{*}\phi(\xi_{1},...,\xi_{n})$ holds, and this is absurd. So $\U$ is a $\varphi$-ultrafilter.\\
\end{proof}

\begin{cor} An existential formula $\varphi=E(\phi(x_{1},...,x_{n}))$ is weakly partition regular if and only if there are $n$ hypernatural numbers $\alpha_{1},...,\alpha_{n}$ such that $\alpha_{1}\sim_{u}\alpha_{2}\sim_{u}....\sim_{u}\alpha_{n}$ and $^{*}\phi(\alpha_{1},...,\alpha_{n})$ holds. \end{cor}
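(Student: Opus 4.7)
The plan is to derive this corollary by chaining together two results already established in the paper: Theorem 1.2.6, which says that $\varphi$ is weakly partition regular on $\N$ if and only if there exists a $\varphi$-ultrafilter on $\N$, and the Bridge Theorem (Theorem 2.2.9), which characterizes $\varphi$-ultrafilters in terms of their sets of generators. Since both equivalences are already available, the proof is essentially a translation between two reformulations of the same fact, with the only bookkeeping being to match the definition of $\sim_u$ with the condition ``all $\alpha_i$ lie in the same $G_\U$''.

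For the forward direction, I would assume that $\varphi = E(\phi(x_1,\ldots,x_n))$ is weakly partition regular. By Theorem 1.2.6 there exists a $\varphi$-ultrafilter $\U$ on $\N$. Applying the Bridge Theorem to $\U$, I extract hypernatural numbers $\alpha_1,\ldots,\alpha_n \in G_\U$ such that ${}^*\phi(\alpha_1,\ldots,\alpha_n)$ holds. Since by definition $\beta \in G_\U$ means $\mathfrak{U}_\beta = \U$, each $\alpha_i$ generates the same ultrafilter $\U$, which is exactly the statement $\alpha_1 \sim_u \alpha_2 \sim_u \cdots \sim_u \alpha_n$.

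For the reverse direction, I would take $\alpha_1,\ldots,\alpha_n \in {}^*\N$ with $\alpha_1 \sim_u \cdots \sim_u \alpha_n$ and ${}^*\phi(\alpha_1,\ldots,\alpha_n)$, and set $\U = \mathfrak{U}_{\alpha_1}$. The chain of $\sim_u$-equivalences gives $\mathfrak{U}_{\alpha_i} = \U$ for every $i$, so $\alpha_1,\ldots,\alpha_n \in G_\U$. The Bridge Theorem then yields that $\U$ is a $\varphi$-ultrafilter, and invoking Theorem 1.2.6 one more time concludes that $\varphi$ is weakly partition regular.

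There is no real obstacle here: the corollary is essentially a cosmetic rephrasing of the Bridge Theorem in light of Theorem 1.2.6, and the only point worth stating carefully is that the relation $\sim_u$ was defined precisely so that ``$\alpha_1 \sim_u \cdots \sim_u \alpha_n$'' is synonymous with ``there is an ultrafilter $\U$ such that $\alpha_1,\ldots,\alpha_n \in G_\U$'', which makes both directions of the equivalence immediate from the two cited results.
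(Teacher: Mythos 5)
Your proposal is correct and follows exactly the paper's own argument: the paper proves this corollary by citing Theorem 1.2.6 to reduce weak partition regularity to the existence of a $\varphi$-ultrafilter and then invoking Theorem 2.2.9, which is precisely your chain of reductions. Your additional remark that $\alpha_{1}\sim_{u}\cdots\sim_{u}\alpha_{n}$ is synonymous with all the $\alpha_{i}$ lying in a common $G_{\U}$ is the only bookkeeping the paper leaves implicit, and you handle it correctly.
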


\begin{proof} By Theorem 1.2.6, $\varphi$ is weakly partition regular if and only if there is a $\varphi$-ultrafilter, and the thesis follows by Theorem 2.2.9.\\ \end{proof}

E.g, let $\phi(x,y,z)$ be the open formula

\begin{center} $(x,y,z>0)\wedge (x+y=z)$. \end{center}

As a consequence of Theorem 2.2.9, an ultrafilter $\U$ is an $E(\phi(x,y,z))$-ultrafilter if and only if there are positive $\alpha,\beta,\gamma\in G_{\U}$ such that $\alpha+\beta=\gamma$. \\
What can we say about universal formulas?

\begin{thm} Let $\phi(x_{1},...,x_{n})$ be a first order formula, $\varphi$ its universal closure and $\U$ an ultrafilter in $\bN$. Then the following conditions are equivalent:
\begin{enumerate}
	\item there is a set $A$ in $\U$ that satisfies $\varphi$;
	\item for every $\alpha_{1},...,\alpha_{n}$ in $G_{\U}$ $^{*}\phi(\alpha_{1},...,\alpha_{n})$ holds.
\end{enumerate}
\end{thm}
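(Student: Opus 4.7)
The plan is to recognize that this statement is the contrapositive of the Bridge Theorem (Theorem 2.2.9) under the substitution $\phi \mapsto \neg\phi$. Writing $\psi = \neg\phi$, condition (1) fails precisely when every $A \in \U$ satisfies $E(\psi) = \neg\varphi$, i.e., when $\U$ is an $E(\psi)$-ultrafilter; and condition (2) fails precisely when there exist $\alpha_1,\ldots,\alpha_n \in G_\U$ with ${}^*\psi(\alpha_1,\ldots,\alpha_n)$ true. So in principle one could just cite Theorem 2.2.9 applied to $\psi$. For clarity, however, I would give a self-contained proof that mirrors the two halves of the Bridge Theorem's proof.

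For $(1)\Rightarrow(2)$ the argument is a single application of transfer. Choose $A \in \U$ witnessing (1); by definition of $A \models \varphi$ this says $\phi(a_1,\ldots,a_n)$ holds for every $(a_1,\ldots,a_n) \in A^n$. Transferring the bounded-quantifier sentence ``$\forall (x_1,\ldots,x_n) \in A^n\ \phi(x_1,\ldots,x_n)$'' yields ``$\forall (\xi_1,\ldots,\xi_n) \in ({}^*A)^n\ {}^*\phi(\xi_1,\ldots,\xi_n)$''. Since every generator of $\U$ lies in ${}^*A$ by definition, $G_\U \subseteq {}^*A$, and the conclusion follows.

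For $(2)\Rightarrow(1)$ I would argue by contrapositive, exactly as in the proof of Theorem 2.2.9 with $\phi$ replaced by $\neg\phi$. Assume no $A \in \U$ satisfies $\varphi$; then for each $A \in \U$ the set
\[
\Psi_A = \{(a_1,\ldots,a_n) \in A^n : \neg\phi(a_1,\ldots,a_n)\}
\]
is nonempty, and $\{\Psi_A\}_{A\in\U}$ has the finite intersection property because $\Psi_{A_1 \cap \cdots \cap A_k} \subseteq \Psi_{A_1} \cap \cdots \cap \Psi_{A_k}$. Since $|\U| \le \mathfrak{c}$, the $\mathfrak{c}^+$-enlarging property guarantees a point $(\alpha_1,\ldots,\alpha_n) \in \bigcap_{A\in\U} {}^*\Psi_A$. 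Transfer of the defining property of $\Psi_A$ gives $\neg{}^*\phi(\alpha_1,\ldots,\alpha_n)$, and the inclusion ${}^*\Psi_A \subseteq ({}^*A)^n$ for every $A \in \U$ forces each $\alpha_i$ to belong to ${}^*A$ for all $A \in \U$, hence $\alpha_i \in G_\U$. This contradicts (2).

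I do not expect any real obstacle; the argument is routine given Theorem 2.2.9. The only minor point to verify is that the star map commutes with negation (so that ${}^*(\neg\phi) = \neg({}^*\phi)$) and that any parameters occurring in $\phi$ transfer to their hyper-images inside ${}^*\phi$ — both are immediate from the bounded-quantifier form of transfer, and they are exactly the ingredients that let the above proof mirror the existential case handled in Theorem 2.2.9.
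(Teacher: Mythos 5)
Your proposal is correct and follows essentially the same route as the paper: the forward direction is the same one-line transfer of the universal sentence over $A^{n}$ combined with $G_{\U}\subseteq{}^{*}A$, and the reverse direction is the paper's contrapositive reduction to Theorem 2.2.9 applied to $\neg\phi$ (the paper simply cites that theorem where you inline its enlarging-property argument). No gaps.
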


\begin{proof} (1)$\Rightarrow(2)$ Let $A$ be a set in $\U$ such that for every $a_{1},...,a_{n}$ in $A$ $\phi(a_{1},...,a_{n})$ holds; by transfer it follows that, for every $\alpha_{1},...,\alpha_{n}$ in $^{*}A$, $^{*}\phi(\alpha_{1},...,\alpha_{n})$ holds. In particular, as $G_{\U}$ is a nonempty subset of $^{*}A$, $^{*}\phi(\alpha_{1},...,\alpha_{n})$ holds for every $\alpha_{1},...,\alpha_{n}$ in $G_{\U}$.\\
$(2)\Rightarrow (1)$ Suppose that for every set $A$ in $\U$ there are $a_{1},...,a_{n}$ in $A$ such that $\neg\phi(a_{1},...,a_{n})$ holds. Then $\U$ is an $E(\neg(\phi(x_{1},...,x_{n})))$-ultrafilter so, by Theorem 2.2.9, in $G_{\U}$ there are elements $\alpha_{1},...,\alpha_{n}$ such that $\neg(^{*}\phi(\alpha_{1},...,\alpha_{n}))$ holds, and this is absurd.\\\end{proof}

If $^{*}\N$ is $\mathfrak{c}^{+}$-saturated, we can prove two results similar to Theorem 2.2.9 and Theorem 2.2.11:

\begin{lem} Let $\phi(x_{1},...,x_{n},y_{1},...,y_{m})$ be a first order formula, $\alpha_{1},...,\alpha_{n}$ elements in $^{*}\N$ and $\U$ an ultrafilter on $\N$. The following two conditions are equivalent:
\begin{enumerate}
	\item $(\exists B\in\U)(\forall\beta_{1},...,\beta_{m}\in$$^{*}B)$ $^{*}\phi(\alpha_{1},...,\alpha_{n},\beta_{1},...,\beta_{m})$;
	\item $\forall\beta_{1},...,\beta_{n}\in G_{\U}$ $^{*}\phi(\alpha_{1},...,\alpha_{n},\beta_{1},...,\beta_{m})$.
\end{enumerate}
\end{lem}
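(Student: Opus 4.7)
The plan is to prove the two implications separately, with the easy direction falling out of $G_{\U}\subseteq {}^{*}B$ and the harder direction using $\mathfrak{c}^{+}$-saturation together with the Internal Definition Principle.

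For $(1)\Rightarrow(2)$, I would simply observe that if $B\in\U$ then, by definition of $G_{\U}$, every $\beta\in G_{\U}$ belongs to ${}^{*}B$; hence $G_{\U}^{m}\subseteq ({}^{*}B)^{m}$, and the universal statement on $({}^{*}B)^{m}$ restricts to the universal statement on $G_{\U}^{m}$. No saturation is needed here.

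For $(2)\Rightarrow(1)$, I would argue by contradiction. Assume that for every $B\in\U$ there exist $\beta_{1},\dots,\beta_{m}\in {}^{*}B$ witnessing $\neg {}^{*}\phi(\alpha_{1},\dots,\alpha_{n},\beta_{1},\dots,\beta_{m})$. For each $B\in\U$ set
\begin{center}
$\Psi_{B}=\{(\beta_{1},\dots,\beta_{m})\in ({}^{*}B)^{m}\mid \neg {}^{*}\phi(\alpha_{1},\dots,\alpha_{n},\beta_{1},\dots,\beta_{m})\}$.
\end{center}
Each $\Psi_{B}$ is internal by the Internal Definition Principle (the parameters $\alpha_{1},\dots,\alpha_{n}$ and the hyperextension ${}^{*}B$ are internal), and nonempty by the contradictory assumption. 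Moreover, the family $\{\Psi_{B}\}_{B\in\U}$ has the finite intersection property since $\Psi_{B_{1}}\cap\Psi_{B_{2}}=\Psi_{B_{1}\cap B_{2}}$ and $\U$ is closed under finite intersections.

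Since $|\U|\leq\mathfrak{c}$, $\mathfrak{c}^{+}$-saturation yields a tuple $(\beta_{1},\dots,\beta_{m})\in \bigcap_{B\in\U}\Psi_{B}$. By construction each $\beta_{i}$ lies in ${}^{*}B$ for every $B\in\U$, so $\beta_{i}\in G_{\U}$, while $\neg {}^{*}\phi(\alpha_{1},\dots,\alpha_{n},\beta_{1},\dots,\beta_{m})$ holds, contradicting (2). The only mildly delicate point, and the reason enlarging does not suffice here, is that the sets $\Psi_{B}$ are internal but generally not hyper-images of standard sets; thus we genuinely need $\mathfrak{c}^{+}$-saturation applied to internal families rather than just the enlarging property applied to standard ones.
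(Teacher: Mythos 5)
Your proof is correct and follows essentially the same route as the paper: the forward implication via $G_{\U}\subseteq{}^{*}B$, and the converse by contradiction using the Internal Definition Principle, the finite intersection property of the sets $\Psi_{B}$ (the paper calls them $\Gamma_{B}$), and $\mathfrak{c}^{+}$-saturation. Your closing remark on why enlarging would not suffice is accurate and consistent with the paper's standing assumption of $\mathfrak{c}^{+}$-saturation for this lemma.
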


\begin{proof} $(1)\Rightarrow (2)$ If $B$ is a set in $\U$ such that, for every $\beta_{1},...,\beta_{m}$ in $^{*}B$, $^{*}\phi(\alpha_{1},...,\alpha_{n},\beta_{1},...,\beta_{m})$ holds then the thesis follows as $G_{\U}\subseteq$$^{*}B$.\\
$(2)\Rightarrow(1)$ Suppose that, for every set $B$ in $\U$ there are $\beta_{1},...,\beta_{m}$ in $^{*}B$ such that $\neg$$^{*}\phi(\alpha_{1},...,\alpha_{n},\beta_{1},...,\beta_{m})$ holds. Let

\begin{center} $\Gamma_{B}=\{(\beta_{1},...,\beta_{m})\in$$^{*}B^{m}\mid \neg$$^{*}\phi(\alpha_{1},...,\alpha_{n},\beta_{1},...,\beta_{m})\}$. \end{center}

By the internal definition principle, for every set $B$ in $\U$ the set $\Gamma_{B}$ is internal. So the family $\{\Gamma_{B}\}_{B\in\U}$ is a family of nonempty internal subsets of $^{*}\N$ with the finite intersection property: in fact, if $B_{1},...,B_{k}$ are elements in $\U$, then the intersection

\begin{center} $\bigcap_{i=1}^{k}\Gamma_{B_{i}}=\Gamma_{\bigcap_{i=1}^{k}B_{i}}\neq\emptyset.$ \end{center}

By $\mathfrak{c}^{+}$-saturation property, the intersection

\begin{center} $\Theta=\bigcap_{B\in\U} \Gamma_{B}$ \end{center}

is nonempty. Let $(\beta_{1},...,\beta_{m})$ be an element in $\Theta$. Then, for every index $i\leq n$, $\beta_{i}\in G_{\U}$ and $\neg$$^{*}\phi(\alpha_{1},...,\alpha_{n},\beta_{1},...,\beta_{m})$ holds, and this is absurd.\\\end{proof}

\begin{thm} Let $\phi(x_{1},...,x_{n},y_{1},...,y_{m})$ be a first order formula, and $\U,\V$ ultrafilters on $\N$. The following conditions are equivalent:

\begin{enumerate}
	\item $(\exists \alpha_{1},...,\exists\alpha_{n}\in G_{\U})(\forall\beta_{1},...,\forall\beta_{m}\in G_{\V})$ $^{*}\phi(\alpha_{1},...,\alpha_{n},\beta_{1},...,\beta_{m})$;
	\item $(\exists \alpha_{1},...,\exists\alpha_{n}\in G_{\U})(\exists B\in\V)(\forall\beta_{1},...,\forall\beta_{m}\in$$^{*}B)$ $^{*}\phi(\alpha_{1},...,\alpha_{n},\beta_{1},...,\beta_{m})$;
	\item $(\exists B\in\V)(\forall A\in\U)(\exists a_{1},...,\exists a_{n}\in A)(\forall b_{1},...,\forall b_{m}\in B)$ $\phi(a_{1},...,a_{n},b_{1},...,b_{m})$. 
\end{enumerate}

\end{thm}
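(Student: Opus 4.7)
The plan is to prove the cycle of implications $(1) \Leftrightarrow (2) \Rightarrow (3) \Rightarrow (2)$, treating the two nontrivial arcs $(1) \Leftrightarrow (2)$ and $(3) \Rightarrow (2)$ separately, with $(2) \Rightarrow (3)$ being a direct transfer argument.

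The equivalence $(1) \Leftrightarrow (2)$ should be essentially free from Lemma 2.2.13. Indeed, for any fixed tuple $\alpha_{1},\ldots,\alpha_{n}$ in ${}^{*}\N$, applying Lemma 2.2.13 with $\V$ in place of $\U$ and with the formula $\phi(\alpha_{1},\ldots,\alpha_{n},y_{1},\ldots,y_{m})$ (viewing the $\alpha_i$ as parameters) gives exactly the equivalence between the existence of a set $B \in \V$ such that $\forall \beta_{1},\ldots,\beta_{m} \in {}^{*}B$ we have ${}^{*}\phi(\alpha_{1},\ldots,\alpha_{n},\beta_{1},\ldots,\beta_{m})$ and the statement $\forall \beta_{1},\ldots,\beta_{m} \in G_{\V}$ we have ${}^{*}\phi(\alpha_{1},\ldots,\alpha_{n},\beta_{1},\ldots,\beta_{m})$. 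Prepending $\exists \alpha_{1},\ldots,\alpha_{n}\in G_{\U}$ to both sides gives $(1) \Leftrightarrow (2)$.

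For $(2) \Rightarrow (3)$, fix witnesses $\alpha_{1},\ldots,\alpha_{n} \in G_{\U}$ and $B \in \V$ from (2); I claim the same $B$ works in (3). Given any $A \in \U$, each $\alpha_{i}$ lies in ${}^{*}A$ by definition of $G_{\U}$, and by hypothesis ${}^{*}\phi(\alpha_{1},\ldots,\alpha_{n},\beta_{1},\ldots,\beta_{m})$ holds for all $\beta_{1},\ldots,\beta_{m} \in {}^{*}B$. Thus the internal sentence
\[
\exists \xi_{1},\ldots,\xi_{n}\in {}^{*}A\;\forall \beta_{1},\ldots,\beta_{m}\in {}^{*}B\;\; {}^{*}\phi(\xi_{1},\ldots,\xi_{n},\beta_{1},\ldots,\beta_{m})
\]
is true in the hyperextension, witnessed by the $\alpha_{i}$. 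Transferring back to $\N$ yields exactly $\exists a_{1},\ldots,a_{n}\in A\,\forall b_{1},\ldots,b_{m}\in B\,\phi(a_{1},\ldots,a_{n},b_{1},\ldots,b_{m})$, as required.

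The remaining implication $(3) \Rightarrow (2)$ is where a bit of work is needed and will be the main obstacle, because we must manufacture the generators of $\U$ out of the standard data; this is the step where the $\mathfrak{c}^{+}$-enlarging property enters. Let $B \in \V$ be the witness from (3), and for every $A \in \U$ define
\[
\Psi_{A}=\bigl\{(a_{1},\ldots,a_{n})\in A^{n}\;\big|\;\forall b_{1},\ldots,b_{m}\in B\;\phi(a_{1},\ldots,a_{n},b_{1},\ldots,b_{m})\bigr\}.
\]
By (3), every $\Psi_{A}$ is nonempty, and since $\Psi_{A_{1}\cap\cdots\cap A_{k}}\subseteq \Psi_{A_{1}}\cap\cdots\cap \Psi_{A_{k}}$ the family $\{\Psi_{A}\}_{A\in\U}$ has the finite intersection property. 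As $|\U|\leq\mathfrak{c}$, the $\mathfrak{c}^{+}$-enlarging property produces a tuple $(\alpha_{1},\ldots,\alpha_{n})\in \bigcap_{A\in \U}{}^{*}\Psi_{A}$. Each $\alpha_{i}$ then lies in ${}^{*}A$ for every $A\in\U$, hence in $G_{\U}$; and transferring the universal statement that defines $\Psi_{A}$ shows that for every $\beta_{1},\ldots,\beta_{m}\in {}^{*}B$ we have ${}^{*}\phi(\alpha_{1},\ldots,\alpha_{n},\beta_{1},\ldots,\beta_{m})$. This is exactly (2), completing the proof.
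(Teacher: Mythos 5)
Your proof is correct and follows essentially the same route as the paper: the $(1)\Leftrightarrow(2)$ link via the preceding lemma (which is Lemma 2.2.12, not 2.2.13 as you cite), a transfer argument for $(2)\Rightarrow(3)$ (yours phrased directly, the paper's by contradiction), and the same family $\Phi_{A}$ with the finite intersection property and the $\mathfrak{c}^{+}$-enlarging property to close the cycle — the paper closes it at $(1)$ by noting $G_{\V}\subseteq{}^{*}B$, you close it at $(2)$, which is equivalent given the first step. No gaps.
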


\begin{proof} $(1)\Rightarrow (2)$ Let $\alpha_{1},...\alpha_{n}$ be elements in $G_{\U}$ as in the hypothesis. Then, by Lemma 2.2.12 it follows that it exists a set $B$ in $\V$ such that, for every $\beta_{1},...,\beta_{m}$ in $^{*}B$, $^{*}\phi(\alpha_{1},...,\alpha_{n},\beta_{1},...,\beta_{m})$ holds, and this entails the thesis.\\
$(2)\Rightarrow (3)$: Let $\alpha_{1},...,\alpha_{n},B$ be as in the hypothesis. Suppose that, by contrast, there is a set $A$ in $\U$ such that, for every $a_{1},...,a_{n}$ in $A$, there are $b_{1},...,b_{m}$ in $B$ such that $\neg\phi(a_{1},...,a_{n},b_{1},...,b_{m})$ holds. Then by transfer property it follows that 

\begin{center} $\forall \xi_{1},...,\forall\xi_{n}\in$$^{*}A \exists \beta_{1},...,\exists\beta_{m}\in$$^{*}B$ $\neg$$^{*}\phi(\xi_{1},...,\xi_{n},\beta_{1},...,\beta_{m})$ \end{center}

and this is absurd since $\alpha_{1},...,\alpha_{n}\in G_{\U}\subseteq$$^{*}A$ and $^{*}\phi(\alpha_{1},...,\alpha_{n},\beta_{1},...,\beta_{m})$ holds for every $\beta_{1},...,\beta_{m}$ in $^{*}B$.\\
$(3)\Rightarrow (1)$ Let $B$ be a set in $\V$ as in the hypothesis and, for every set $A$ in $\U$, let 

\begin{center} $\Phi_{A}=\{(a_{1},...,a_{n})\in A^{n}\mid \forall (b_{1},...,b_{m})\in B^{m}$ $\phi(a_{1},...,a_{n},b_{1},...,b_{m})$ holds$\}$. \end{center}

The family $\{\Phi_{A}\}_{A\in\U}$ is a family of nonempty subsets of $\N^{n}$ with the finite intersection property as, if $A_{1},...,A_{k}$ are elements of $\U$, the intersection

\begin{center} $\Phi_{A_{1}}\cap...\cap\Phi_{A_{k}}=\Phi_{A_{1}\cap....\cap A_{k}}$ \end{center}

is nonempty. By $\mathfrak{c}^{+}$-enlarging property, the intersection

\begin{center} $\Theta=\bigcap_{A\in\U}$$^{*}\Gamma_{A}$\end{center}

is nonempty. Let $(\alpha_{1},...,\alpha_{n})$ be an element in this intersection. By construction, $\alpha_{1},...,\alpha_{n}$ are elements in $G_{\U}$ and, since for every set $A$ in $\U$

\begin{center} $^{*}\Theta_{A}=\{(\xi_{1},...,\xi_{n})\in$$^{*}A_{n}\mid \forall(\beta_{1},...,\beta_{m})\in B^{m}$ $^{*}\phi(\xi_{1},...,\xi_{n},\beta_{1},...,\beta_{m})$ holds$\}$, \end{center}

then $^{*}\phi(\alpha_{1},...,\alpha_{n},\beta_{1},...,\beta_{m})$ holds for every $\beta_{1},...,\beta_{m}$ in $^{*}B$. Since $G_{\V}\subseteq$$^{*}B$, we get the thesis.\\ \end{proof}

The previous theorem has the following three interesting corollaries:

\begin{cor} Let $\phi(x_{1},...,x_{n},y_{1},...,y_{m})$ be a first order formula, and $\U,\V$ ultrafilters on $\N$. The following conditions are equivalent:

\begin{enumerate}
	\item $(\forall \alpha_{1},...,\forall\alpha_{n}\in G_{\U})(\exists\beta_{1},...,\exists\beta_{m}\in G_{\V})$ $^{*}\phi(\alpha_{1},...,\alpha_{n},\beta_{1},...,\beta_{m})$;
	\item $(\forall \alpha_{1},...,\forall\alpha_{n}\in G_{\U})(\forall B\in\V)(\exists\beta_{1},...,\exists\beta_{m}\in$$^{*}B)$ $^{*}\phi(\alpha_{1},...,\alpha_{n},\beta_{1},...,\beta_{m})$;
	\item $(\forall B\in\V)(\exists A\in\U)(\forall a_{1},...,\forall a_{n}\in A)(\exists b_{1},...,\exists b_{m}\in B)$ $\phi(a_{1},...,a_{n},b_{1},...,b_{m})$.
\end{enumerate}

\end{cor}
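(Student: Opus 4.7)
The plan is to reduce this corollary directly to Theorem 2.2.13 by a negation trick, so that essentially no new nonstandard machinery is required. Specifically, I will apply Theorem 2.2.13 not to $\phi$ itself but to the first order formula $\neg\phi(x_{1},...,x_{n},y_{1},...,y_{m})$, and then negate each of the three equivalent statements it produces.

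The key observation to spell out first is purely logical: for each of the three conditions $(i')$ in the corollary, its negation is logically equivalent to condition $(i)$ of Theorem 2.2.13 with $^{*}\phi$ replaced by $\neg^{*}\phi$ (and $\phi$ replaced by $\neg\phi$). For instance, the negation of condition $(1')$, namely
\begin{center}
$\neg\bigl[(\forall \alpha_{1},...,\alpha_{n}\in G_{\U})(\exists \beta_{1},...,\beta_{m}\in G_{\V})\ ^{*}\phi(\bar{\alpha},\bar{\beta})\bigr]$,
\end{center}
is $(\exists \alpha_{1},...,\alpha_{n}\in G_{\U})(\forall \beta_{1},...,\beta_{m}\in G_{\V})\ \neg^{*}\phi(\bar{\alpha},\bar{\beta})$, which is exactly condition $(1)$ of Theorem 2.2.13 for $\neg\phi$ (noting that $^{*}(\neg\phi)$ and $\neg^{*}\phi$ coincide, since starring commutes with Boolean connectives). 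Analogous computations for $(2')$ and $(3')$, which I would write out explicitly, show that negating a bounded existential over $^{*}B$ or over $B$ turns into a bounded universal, matching conditions $(2)$ and $(3)$ of Theorem 2.2.13 for $\neg\phi$.

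With these three reformulations in hand, the proof becomes a one-line invocation: by Theorem 2.2.13 applied to the formula $\neg\phi(x_{1},...,x_{n},y_{1},...,y_{m})$, the three negated conditions $\neg(1')$, $\neg(2')$, $\neg(3')$ are equivalent; consequently their negations $(1')$, $(2')$, $(3')$ are also equivalent, which is exactly the statement of the corollary.

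I do not expect any real obstacle here. The only point that needs a little care is the bookkeeping in the logical negations, in particular making sure that the bounded quantifiers over $^{*}B$ in condition $(2')$ and over $B$ in condition $(3')$ behave correctly under negation; once this is verified, the rest is immediate. Since Theorem 2.2.13 is stated for arbitrary first order formulas $\phi(x_{1},...,x_{n},y_{1},...,y_{m})$ and $\neg\phi$ is again such a formula, no additional hypothesis or enlargement property beyond those already in force (namely $\mathfrak{c}^{+}$-saturation of $^{*}\N$) needs to be checked.
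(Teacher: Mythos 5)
Your proposal is correct and is exactly the paper's argument: the paper proves this corollary by observing that each condition is the contrapositive of the corresponding condition of Theorem 2.2.13 (applied to $\neg\phi$), which is precisely your negation trick. Your write-up just spells out the quantifier bookkeeping that the paper leaves implicit.
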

\begin{proof} Each one of the conditions (1)-(2)-(3) is the contronominal of the corrispective condition in Theorem 2.2.13. \\ \end{proof}

\begin{cor} Let $\phi(x_{1},...,x_{n},y_{1},...,y_{m})$ be a first order formula, and $\U$ an ultrafilter on $\N$. The following conditions are equivalent:

\begin{enumerate}
	\item $(\exists \alpha_{1},...,\exists\alpha_{n}\in G_{\U})(\forall\beta_{1},...,\forall\beta_{m}\in G_{\U})$ $^{*}\phi(\alpha_{1},...,\alpha_{n},\beta_{1},...,\beta_{m})$;
	\item $(\exists \alpha_{1},...,\exists\alpha_{n}\in G_{\U})(\exists B\in\U)(\forall\beta_{1},...,\forall\beta_{m}\in$$^{*}B)$ $^{*}\phi(\alpha_{1},...,\alpha_{n},\beta_{1},...,\beta_{m})$;
	\item $(\exists B\in\U)(\forall A\in\U)(\exists a_{1},...,\exists a_{n}\in A)(\forall b_{1},...,\forall b_{m}\in B)$ $\phi(a_{1},...,a_{n},b_{1},...,b_{m})$.
\end{enumerate}

\end{cor}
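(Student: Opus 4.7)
The plan is to derive this corollary as the immediate specialization of Theorem 2.2.13 to the case $\V = \U$. Indeed, if one rewrites Theorem 2.2.13 with $\V$ replaced everywhere by $\U$, then condition (1) becomes
\[
(\exists\alpha_{1},\dots,\exists\alpha_{n}\in G_{\U})(\forall\beta_{1},\dots,\forall\beta_{m}\in G_{\U})\ ^{*}\phi(\alpha_{1},\dots,\alpha_{n},\beta_{1},\dots,\beta_{m}),
\]
condition (2) becomes
\[
(\exists\alpha_{1},\dots,\exists\alpha_{n}\in G_{\U})(\exists B\in\U)(\forall\beta_{1},\dots,\forall\beta_{m}\in{}^{*}B)\ ^{*}\phi(\alpha_{1},\dots,\alpha_{n},\beta_{1},\dots,\beta_{m}),
\]
and condition (3) becomes
\[
(\exists B\in\U)(\forall A\in\U)(\exists a_{1},\dots,\exists a_{n}\in A)(\forall b_{1},\dots,\forall b_{m}\in B)\ \phi(a_{1},\dots,a_{n},b_{1},\dots,b_{m}),
\]
and these are exactly the three conditions in the present corollary.

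Thus the whole content of the proof is the verification that Theorem 2.2.13 is applicable when $\V=\U$. The hypotheses of Theorem 2.2.13 are satisfied by any two ultrafilters on $\N$, with no requirement that they be distinct; moreover the only background assumption used in the proof of Theorem 2.2.13 is the $\mathfrak{c}^{+}$-saturation of $^{*}\N$ (used in the step $(3)\Rightarrow(1)$ and in the application of Lemma 2.2.12 for $(1)\Rightarrow(2)$), which is in force throughout this section. Hence the chain of implications $(1)\Rightarrow(2)\Rightarrow(3)\Rightarrow(1)$ transfers verbatim, taking $\V=\U$ at each step.

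I do not foresee a genuine obstacle: the only thing one has to be slightly careful about is that in Theorem 2.2.13 the generators $\alpha_{i}$ of $\U$ and the generators $\beta_{j}$ of $\V$ play formally distinct roles, and one might wonder whether collapsing $\V$ to $\U$ forces some hidden relation between the $\alpha_{i}$'s and the $\beta_{j}$'s. It does not: in the proofs of the three implications of Theorem 2.2.13 the tuples $(\alpha_{1},\dots,\alpha_{n})$ and $(\beta_{1},\dots,\beta_{m})$ are chosen independently (the $\alpha_{i}$'s via $\mathfrak{c}^{+}$-enlarging in $(3)\Rightarrow(1)$, the $\beta_{j}$'s via $\mathfrak{c}^{+}$-saturation applied to the internal sets $\Gamma_{B}$ inside $^{*}B$), so nothing breaks when the two ultrafilters coincide. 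Therefore the corollary follows at once.
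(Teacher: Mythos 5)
Your proposal is correct and is exactly the paper's argument: the paper proves this corollary by specializing Theorem 2.2.13 to the case $\V=\U$. Your additional check that no hidden dependence between the $\alpha_{i}$'s and the $\beta_{j}$'s arises when the two ultrafilters coincide is a reasonable (if not strictly necessary) piece of due diligence that the paper leaves implicit.
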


\begin{proof} This follows by Theorem 2.2.13 by putting $\U=\V$. \\\end{proof}

\begin{cor} Let $\phi(x_{1},...,x_{n},y_{1},...,y_{m})$ be a first order formula, and $\U$ an ultrafilter on $\N$. The following conditions are equivalent:

\begin{enumerate}
	\item $(\forall \alpha_{1},...,\forall\alpha_{n}\in G_{\U})(\exists\beta_{1},...,\exists\beta_{m}\in G_{\U})$ $^{*}\phi(\alpha_{1},...,\alpha_{n},\beta_{1},...,\beta_{m})$;
	\item $(\forall \alpha_{1},...,\forall\alpha_{n}\in G_{\U})(\forall B\in\V)(\exists\beta_{1},...,\exists\beta_{m}\in$$^{*}B)$ $^{*}\phi(\alpha_{1},...,\alpha_{n},\beta_{1},...,\beta_{m})$;
	\item $(\forall B\in\U)(\exists A\in\U)(\forall a_{1},...,\forall a_{n}\in A)(\exists b_{1},...,\exists b_{m}\in B)$ $\phi(a_{1},...,a_{n},b_{1},...,b_{m})$.
\end{enumerate}
\end{cor}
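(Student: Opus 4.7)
The plan is to derive Corollary 2.2.16 as an immediate specialization of Corollary 2.2.14. Indeed, the three conditions of 2.2.16 are literally obtained from the three conditions of 2.2.14 by taking $\mathcal{V}=\mathcal{U}$: the quantifiers over $G_{\mathcal{V}}$ become quantifiers over $G_{\mathcal{U}}$, the quantifier $\forall B\in\mathcal{V}$ becomes $\forall B\in\mathcal{U}$, and the outer quantifier $\forall B\in\mathcal{V}$ in statement (3) of 2.2.14 becomes $\forall B\in\mathcal{U}$. Therefore I would simply write: apply Corollary 2.2.14 with $\mathcal{V}=\mathcal{U}$.

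For completeness I would also point out that Corollary 2.2.14 itself was established by contrapositive from Theorem 2.2.13: negating each of the three mutually equivalent conditions of 2.2.13 gives precisely the three conditions of 2.2.14, and logical equivalence is preserved under negation. Thus the whole chain reduces to Theorem 2.2.13, whose proof uses the Internal Definition Principle together with the $\mathfrak{c}^{+}$-saturation property (to produce witnesses $\beta_{1},\dots,\beta_{m}$ in arbitrary sets of $\mathcal{U}$) and the $\mathfrak{c}^{+}$-enlarging property (to produce the $\alpha_{1},\dots,\alpha_{n}\in G_{\mathcal{U}}$ satisfying a formula uniformly across $\mathcal{U}$).

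Since no new content is introduced beyond the specialization $\mathcal{V}=\mathcal{U}$, there is essentially no obstacle: the only point requiring a moment's care is to check that the three statements of 2.2.16 are indeed the verbatim substitution instances of the three statements of 2.2.14, and in particular that in statement (2) the set $B$ is now required to belong to $\mathcal{U}$ (matching the fact that $\beta_{1},\dots,\beta_{m}$ are to be sought in $G_{\mathcal{U}}={}^{*}B\cap G_{\mathcal{U}}$ rather than in $G_{\mathcal{V}}$). Once this bookkeeping is observed, the proof is a single sentence, entirely parallel to the proof of Corollary 2.2.15 which specialized Theorem 2.2.13 in the same way.
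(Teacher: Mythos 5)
Your proposal is correct and matches the paper's own argument: the paper proves Corollary 2.2.16 by specializing the preceding corollary (the contrapositive of Theorem 2.2.13) to the case $\V=\U$, exactly as you do, and your remark that the "$\forall B\in\V$" in condition (2) should read "$\forall B\in\U$" correctly identifies a leftover typo in the statement. No further comment is needed.
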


\begin{proof} This follows by Corollary 2.2.13 by putting $\U=\V$. \\\end{proof}

\section{Extension of Functions to Ultrafilters}

This section consists of two parts: in the first one we study, given a function $f:\N^{k}\rightarrow\N$ and an ultrafilter $\U\in\beta(\N^{k})$, how $G_{\U}$ and $G_{\overline{f}(\U)}$ are related. In the second part, we study which functions defined on $^{*}\N$ can be naturarly associated to functions defined on $\bN$.\\
Since in this section we deal with ultrafilters on $\N^{k}$, where $k$ is any positive natural number, and with sets of functions, we introduce the following two definitions:

\begin{defn} If $k$ is a positive natural number, and $\U$ an ultrafilter on $\N^{k}$, the {\bfseries set of generators of $\U$} is

\begin{center} $G_{\U}=\{(\alpha_{1},...,\alpha_{n})\in$$^{*}\N\mid A\in\U\Leftrightarrow(\alpha_{1},...,\alpha_{n})\in$$^{*}A\}$.\end{center}

\end{defn}

Since we want that every ultrafilter has generators, thoughout this section we still assume that the hyperextension $^{*}\N$ that we consider satisfies the $\mathfrak{c}^{+}$-enlarging property.

\begin{defn} Let $A,B$ be sets. We denote by $\mathbf{\mathtt{Fun}(A,B)}$ the set of functions with domain $A$ and range included in $B$. \end{defn}

\subsection{Extension of functions in $\mathtt{Fun}(\N^{k},\N)$ to functions in $\mathtt{Fun}(\beta(\N^{k}),\bN)$}

It is well-known, as a consequence of $\bN$ being the Stone-\v{C}ech compactification of $\N$, that given any natural number $k$ and any function $f:\N^{k}\rightarrow\N$, $f$ induces a map $\overline{f}:\beta(\N^{k})\rightarrow\bN$:

\begin{defn} If $f$ is a function in $\mathtt{Fun}(\N^{k},\N)$, we denote by $\overline{f}$ the unique continuous extension of $f$ in $\mathtt{Fun}(\beta(\N^{k}),\bN)$. \end{defn}

$\overline{f}$ is the function such that, for every subset $A$ of $\N$, for every ultrafilter $\U$ in $\beta(\N^{k})$, 

\begin{center} $A\in\overline{f}(\U)\Leftrightarrow f^{-1}(A)\in\U$. \end{center}

Observe that the application 

\begin{center} $\overline{\cdot}:\mathtt{Fun}(\N^{k},\N)\rightarrow \mathtt{Fun}(\beta(\N^{k}),\bN)$ \end{center}

that associate to every function $f$ in $\mathtt{Fun}(\bN^{k},\bN)$ its unique continuous extension is 1-1 but is not surjective, as its range is included in the subset of $\mathtt{Fun}(\beta(\N^{k}),\N)$ of continuous functions (and not every function in $\mathtt{Fun}(\beta(\N^{k}),\bN)$ is continuous).\\
Also, the above application is not surjective onto the subset of continuous functions in $\mathtt{Fun}(\beta(N^{k}),\bN)$: e.g., if $k=1$, $\V$ is a non principal ultrafilter on $\N$ and $g$ is the function in $\mathtt{Fun}(\bN,\bN)$ such that, for every ultrafilter $\U$ in $\bN$, $g(\U)=\V$, then $g$ is a continuous function which is not the extension of any element in $\mathtt{Fun}(\N,\N)$.\\
We present a few properties of the map $\overline{\cdot}$ respect to the notion of $\U$-equivalence (that have been presented also in $\cite{rif29}$ and $\cite{rif15}$). A known fact that we use is the following theorem:

\begin{thm} For every function $f$ in $\mathtt{Fun}(\N,\N)$ there is three-coloration $\N=A_{1}\cup A_{2}\cup A_{3}$ such that, for every natural number $n$, if $f(n)\neq n$ then $n$ and $f(n)$ have two different colors.\end{thm}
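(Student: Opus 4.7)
The plan is to recast the statement as a graph-coloring problem. Define the undirected graph $G$ on $\N$ with edge set $\{\{n, f(n)\} : n \in \N,\ f(n) \neq n\}$. The theorem is then equivalent to the assertion that $G$ has chromatic number at most $3$ (the condition that a partition requires nonempty parts can be handled at the end: if some color class turns out to be empty, recolor a single vertex with the missing color, which introduces no conflict).

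The key structural observation is that $G$ is a \emph{pseudoforest}: every connected component contains at most one cycle, and any such cycle has length at least $3$. Indeed, orienting each edge as $n \to f(n)$ gives each vertex out-degree at most $1$. An undirected cycle of length $k$ uses $k$ edges, each contributing one outgoing arrow from one of its $k$ cycle vertices, so by pigeonhole every cycle vertex contributes exactly one outgoing arrow, forcing the cycle to be a genuine functional cycle of $f$ of length $k \geq 3$ (a functional $2$-cycle only produces a single undirected edge, not a graph-theoretic cycle). Since the forward orbits of any two vertices in the same component of $G$ must eventually coincide, no component can contain two distinct such attractor cycles.

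With this structure in hand, I $3$-color each connected component $C$ separately. If $C$ contains a functional cycle of length $k \geq 3$, I first $3$-color this cycle (alternate $1, 2$ if $k$ is even; use the pattern $1, 2, 1, 2, \ldots, 1, 2, 3$ if $k$ is odd), then extend by induction on the height $h(v) = \min\{m \geq 0 : f^m(v) \text{ lies on the cycle}\}$, at each step picking for $v$ any color distinct from the already-colored $f(v)$. If $C$ has no such cycle, its underlying graph is a tree --- rooted at its fixed point, anchored on a functional $2$-cycle edge, or unrooted when every forward orbit in $C$ is infinite and injective --- and I $2$-color it by a BFS from any chosen base point, processing vertices in the order of some enumeration of the countable set $C$.

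The main obstacle is the rigorous verification of the structural lemma, especially ruling out the possibility of an undirected cycle of length $\geq 3$ in which the $f$-arrows point in inconsistent directions; this is exactly what the out-degree/pigeonhole argument resolves. A secondary concern is the treatment of infinite components with no attractor cycle and no fixed point, where there is no canonical starting point for the BFS, but picking any vertex as the root and exploiting countability makes the recursion go through without appeal to any choice principle beyond ordinary induction on $\N$. Once every component is colored, setting $A_i = c^{-1}(i)$ yields the desired partition.
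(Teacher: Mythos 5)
Your proof is correct, but there is nothing in the paper to compare it against: Theorem 2.3.4 is stated with the one-line ``proof'' that simply refers the reader to \cite{rif15} (a paper in preparation), so no argument for this fact actually appears in the text. What you supply is the standard graph-theoretic proof of this classical lemma: the functional graph of $f$ is a pseudoforest, since an undirected cycle on $k\geq 3$ vertices has $k$ edges each needing a source among its $k$ vertices while each vertex sources at most one edge, which forces every such cycle to be a coherently oriented $f$-cycle; each component carries at most one such cycle because forward orbits of vertices in a common component eventually merge; and a connected graph with at most one cycle is $3$-colorable (two colors for trees and for even unicyclic components, a third color needed only at one vertex of an odd $f$-cycle). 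The steps you flag as delicate do go through: adjacent vertices off the cycle have heights differing by exactly one, so when a vertex $v$ is processed its only already-colored neighbor is $f(v)$, and rooting each tree component at, say, its least element avoids any appeal to choice. The one point to tighten is the final nonemptiness repair: recoloring a vertex with a globally unused color indeed creates no conflict, but you should take that vertex from an infinite color class (one exists, as $\N$ is infinite) so as not to empty another class, and if two colors are unused then $G$ is edgeless, i.e.\ $f=\mathrm{id}$, so any partition of $\N$ into three nonempty pieces works.
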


The proof of the above result can be found, e.g., in $\cite{rif15}$.

\begin{thm} Let $^{*}\N$ be a hyperextension of $\N$ with the $\mathfrak{c}^{+}$-enlarging property, $f,g$ be two functions in $\mathtt{Fun}(\N^{k},\N)$, $\alpha$ an element in $^{*}\N^{k}$, and $\U$ an ultrafilter on $\N^{k}$. Then the following properties holds:
\begin{enumerate}
	\item If $\alpha\in G_{\U}$ then $^{*}f(\alpha)\in G_{\overline{f}(\U)}$;
	\item If $k=1$ and $\alpha,$$^{*}f(\alpha)\in G_{\U}$ then $\alpha=$$^{*}f(\alpha)$;
	\item If $k=1$, $f$ is 1-1 and $^{*}f(\alpha),$$^{*}g(\alpha)\in G_{\U}$ then $^{*}f(\alpha)=$$^{*}g(\alpha)$.
\end{enumerate}
\end{thm}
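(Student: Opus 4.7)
The plan is to prove the three items in order, using (1) as a quick unwinding, (2) as the substantive case (via the cited $3$-coloring theorem), and (3) as a reduction to (2).

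For part (1), I would simply unwind the definitions. The defining property of $\overline{f}(\U)$ is that $B \in \overline{f}(\U)$ if and only if $f^{-1}(B) \in \U$. So, given $\alpha \in G_{\U}$ and $B \in \overline{f}(\U)$, we have $f^{-1}(B) \in \U$, hence $\alpha \in {}^{*}(f^{-1}(B))$. By transfer ${}^{*}(f^{-1}(B)) = ({}^{*}f)^{-1}({}^{*}B)$, so ${}^{*}f(\alpha) \in {}^{*}B$. Since $B$ was arbitrary in $\overline{f}(\U)$, this shows ${}^{*}f(\alpha) \in G_{\overline{f}(\U)}$. No real obstacle here.

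For part (2), the key tool is the $3$-coloring result stated just before the theorem: there is a partition $\N = A_{1} \cup A_{2} \cup A_{3}$ such that whenever $f(n) \neq n$, the integers $n$ and $f(n)$ lie in different pieces. Because $\U$ is an ultrafilter, exactly one $A_{i}$ belongs to $\U$, say $A_{1}$. Since $\alpha$ and ${}^{*}f(\alpha)$ are both in $G_{\U}$, both must lie in ${}^{*}A_{1}$. On the other hand, the first-order sentence ``for all $n$, if $f(n) \neq n$ then $n$ and $f(n)$ have different colors'' transfers to the analogous statement for ${}^{*}f$ on ${}^{*}\N$; hence if ${}^{*}f(\alpha) \neq \alpha$, then $\alpha$ and ${}^{*}f(\alpha)$ would lie in distinct ${}^{*}A_{j}$'s, contradicting that both lie in ${}^{*}A_{1}$. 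Therefore ${}^{*}f(\alpha) = \alpha$.

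For part (3), I would reduce to (2). Since $f$ is $1$-$1$, define $h \colon \N \to \N$ by $h(m) = g(f^{-1}(m))$ for $m \in f(\N)$ and $h(m) = 0$ otherwise. Set $\beta = {}^{*}f(\alpha)$. By transfer, ${}^{*}f$ is $1$-$1$ on ${}^{*}\N$, so $\beta$ lies in ${}^{*}f(\N)$ with unique ${}^{*}f$-preimage $\alpha$. Hence ${}^{*}h(\beta) = {}^{*}g({}^{*}f^{-1}(\beta)) = {}^{*}g(\alpha)$. Both $\beta = {}^{*}f(\alpha)$ and ${}^{*}h(\beta) = {}^{*}g(\alpha)$ are assumed to lie in $G_{\U}$, so applying part (2) to $h$ at $\beta$ yields $\beta = {}^{*}h(\beta)$, i.e., ${}^{*}f(\alpha) = {}^{*}g(\alpha)$.

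The main obstacle is part (2): everything else is formal, but (2) genuinely relies on the combinatorial $3$-coloring theorem (Theorem~2.3.3 in the excerpt), without which it is not at all obvious that the ultrafilter $\U$ can distinguish $\alpha$ from ${}^{*}f(\alpha)$. The nonstandard argument above is essentially the only way to leverage that coloring: transfer it, note that the monochromatic color is in $\U$, and exploit that both generators lie in the transferred monochromatic set.
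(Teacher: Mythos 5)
Your proposal is correct. Parts (1) and (2) coincide with the paper's argument: (1) is the same unwinding of $B\in\overline{f}(\U)\Leftrightarrow f^{-1}(B)\in\U$ through the star map, and (2) is the same transfer of the $3$-coloring theorem followed by the observation that both points must lie in the hyperextension of whichever color class belongs to $\U$. Part (3), however, takes a genuinely different (and somewhat cleaner) route. The paper first picks a set $A\in\mathfrak{U}_{\alpha}$ with infinite complement, extends $f\restriction_{A}$ to a global bijection $\varphi$ of $\N$, notes ${}^{*}f(\alpha)={}^{*}\varphi(\alpha)$, and then applies (1) and (2) to the composite $\varphi^{-1}\circ g$ at the point $\alpha$. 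You instead work at the image point $\beta={}^{*}f(\alpha)$: you form the partial inverse $h=g\circ f^{-1}$ (extended arbitrarily off $f(\N)$), observe by transfer that ${}^{*}h(\beta)={}^{*}g(\alpha)$, and note that the hypotheses of (3) say precisely that $\beta$ and ${}^{*}h(\beta)$ both lie in $G_{\U}$, so (2) applies directly. This avoids the cardinality bookkeeping needed to extend $f\restriction_{A}$ to a bijection and does not route through (1) at all; the paper's version, on the other hand, keeps all the action at the original point $\alpha$. Both are legitimate reductions of (3) to (2).
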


\begin{proof} (1) Suppose that $\U=\mathfrak{U}_{\alpha}$, and consider $\overline{f}(\U)$. By definition, a subset $A$ of $\N$ is in $\overline{f}(\U)$ if and only if $f^{-1}(A)\in\U$. As $\U=\mathfrak{U}_{\alpha}$, it follows that $A\in\overline{f}(\U)$ if and only if $\alpha\in$$^{*}(f^{-1}(A))$ and this happens if and only if $^{*}f(\alpha)\in$$^{*}A$. So, if $\U=\mathfrak{U}_{\alpha}$ then $\overline{f}(\U)=\mathfrak{U}_{^{*}f(\alpha)}$.\\
(2) This is a consequence of Theorem 2.3.4: let $A_{1},A_{2},A_{3}$ be subsets of $\N$ such that if $n\neq f(n)$ then there is an index $i\leq 3$ such that $n\in A_{i}$ and $f(n)\notin A_{i}$. Suppose that $^{*}f(\alpha)\neq \alpha$; then, by transfer, there is an index $i\leq 3$ such that $\alpha\in$$^{*}A_{i}$ and $^{*}f(\alpha)\notin$$^{*}A_{i}$; in particular, $A_{i}\in\mathfrak{U}_{\alpha}$ and $A_{i}\notin\mathfrak{U}_{^{*}f(\alpha)}$, absurd.\\
(3) Let $A$ be an infinite subset of $\N$ with $\alpha\in$$^{*}A$ and $A^{c}$ infinite. $f$ is 1-1, so there exists a bijection $\varphi:\N\rightarrow\N$ such that $f$ and $\varphi$ coincide on $A$. Since 

\begin{center}$\{n\in\N \mid f(n)=\varphi(n)\}$\end{center}

includes $A$, then $^{*}f(\alpha)=$$^{*}\varphi(\alpha)$, so by hypothesis $^{*}g(\alpha)\sim_{u}$$^{*}\varphi(\alpha)$. By (1) it follows that

\begin{center} $^{*}\varphi^{-1}($$^{*}g(\alpha))\sim_{u}$$^{*}\varphi^{-1}($$^{*}\varphi(\alpha))=\alpha$  \end{center}

so by (2) it follows that 

\begin{center} $^{*}\varphi^{-1}($$^{*}g(\alpha))=\alpha$ \end{center}

which, as $\varphi$ is bijective, holds if and only if $^{*}g(\alpha)=$$^{*}\varphi(\alpha)$, and $^{*}\varphi(\alpha)$ is by construction equal to $^{*}f(\alpha)$. \\ \end{proof}

When the star map satisfies the $\mathfrak{c}^{+}$-saturation property, the previous result can be strengthened:

\begin{thm} Let $^{*}\N$ be a hyperextension of $\N$ with the $\mathfrak{c}^{+}$-saturation property, $f$ a function in $\mathtt{Fun}(\N^{k},\N)$, $\alpha,\beta$ elements in $^{*}\N^{k}$, and $\U$ an ultrafilter on $\N^{k}$. Then the following properties holds:
\begin{enumerate}
	
	\item If $^{*}f(\alpha)\sim_{u}\beta$ then there is an element $\gamma\sim_{u}\alpha$ such that $\beta=$$^{*}f(\gamma)$;
	\item $^{*}f[G_{\U}]=G_{\overline{f}(\U)}$;
	
\end{enumerate}
\end{thm}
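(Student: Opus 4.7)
My plan is to prove part (1) first via a saturation argument, and then derive part (2) as a nearly immediate consequence (using part (1) together with Theorem 2.3.5(1)). I note that for the statement to typecheck, $\alpha$ lives in ${}^*\N^k$ but $\beta$ lives in ${}^*\N$, since ${}^*f \colon {}^*\N^k \to {}^*\N$.

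For part (1), set $\mathcal{U} = \mathfrak{U}_\alpha \in \beta(\N^k)$ and $\mathcal{V} = \mathfrak{U}_\beta \in \beta\N$. By Theorem 2.3.5(1) applied to $\alpha \in G_\mathcal{U}$, we have ${}^*f(\alpha) \in G_{\overline{f}(\mathcal{U})}$, so the hypothesis ${}^*f(\alpha) \sim_u \beta$ tells us $\overline{f}(\mathcal{U}) = \mathcal{V}$. The target is to produce $\gamma \in G_\mathcal{U}$ with ${}^*f(\gamma) = \beta$. For each $A \in \mathcal{U}$ define
\[
C_A = \{\xi \in {}^*A : {}^*f(\xi) = \beta\}.
\]
Each $C_A$ is internal by the Internal Definition Principle (the parameters ${}^*A$, ${}^*f$, $\beta$ are all internal). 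The family $\{C_A : A \in \mathcal{U}\}$ is closed under finite intersections since $C_{A_1} \cap \cdots \cap C_{A_n} = C_{A_1 \cap \cdots \cap A_n}$, and $|\mathcal{U}| \leq \mathfrak{c}$. So by $\mathfrak{c}^+$-saturation, once I show each $C_A$ is nonempty the intersection $\bigcap_{A \in \mathcal{U}} C_A$ is nonempty, and any $\gamma$ in it belongs to $G_\mathcal{U}$ and satisfies ${}^*f(\gamma) = \beta$.

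The main obstacle is the nonemptiness of $C_A$. Here is my plan: fix $A \in \mathcal{U}$ and consider $f(A) \subseteq \N$. Since $A \subseteq f^{-1}(f(A))$ and $A \in \mathcal{U}$, we get $f^{-1}(f(A)) \in \mathcal{U}$, which by definition means $f(A) \in \overline{f}(\mathcal{U}) = \mathcal{V}$. Thus $\beta \in {}^*(f(A))$. Now $f(A) = \{y \in \N : \exists a \in A,\ f(a) = y\}$, so by transfer ${}^*(f(A)) = \{y \in {}^*\N : \exists \xi \in {}^*A,\ {}^*f(\xi) = y\} = {}^*f[{}^*A]$. In particular there exists $\xi \in {}^*A$ with ${}^*f(\xi) = \beta$, i.e.\ $C_A \neq \emptyset$. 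This completes part (1).

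For part (2), the inclusion ${}^*f[G_\mathcal{U}] \subseteq G_{\overline{f}(\mathcal{U})}$ is exactly Theorem 2.3.5(1). For the reverse inclusion, let $\beta \in G_{\overline{f}(\mathcal{U})}$. Pick any $\alpha \in G_\mathcal{U}$ (which exists because the $\mathfrak{c}^+$-saturation property implies the $\mathfrak{c}^+$-enlarging property, so Proposition 2.2.1(2) applies). By Theorem 2.3.5(1), ${}^*f(\alpha) \in G_{\overline{f}(\mathcal{U})}$, hence ${}^*f(\alpha) \sim_u \beta$. Applying part (1) yields $\gamma \sim_u \alpha$, i.e.\ $\gamma \in G_\mathcal{U}$, with ${}^*f(\gamma) = \beta$, establishing $\beta \in {}^*f[G_\mathcal{U}]$.
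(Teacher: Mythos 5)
Your proof is correct and follows essentially the same route as the paper: the paper also proves (1) by applying $\mathfrak{c}^{+}$-saturation to the family of internal sets $\Gamma_{A}={}^{*}f^{-1}(\beta)\cap{}^{*}A$ for $A\in\mathfrak{U}_{\alpha}$ (your $C_{A}$), and then obtains (2) by combining Theorem 2.3.5(1) with part (1) exactly as you do. The only difference is that you spell out the nonemptiness of each $C_{A}$ (via $f(A)\in\overline{f}(\U)$ and transfer of the image set), a step the paper leaves implicit.
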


\begin{proof} (1) Assume that $^{*}f(\alpha)\sim_{u}\beta$, and let $A$ be a set in $\mathfrak{U}_{\alpha}$. By hypothesis, $\beta\in$$^{*}f[$$^{*}A]$, so $^{*}f^{-1}(\beta)\cap$$^{*}A$ is not empty, and the family of internal sets 

\begin{center} $\Gamma_{A}=\{$$^{*}f^{-1}(\beta)\cap$$^{*}A\mid A\in\mathfrak{U}_{\alpha}\}$\end{center}

has the finite intersection property. By $\mathfrak{c}^{+}$-saturation, there is an element $\gamma$ in $\bigcap_{A\in\U}\Gamma_{A}$: by construction, $\gamma\sim_{u}\alpha$ and $^{*}f(\gamma)=\beta$.\\ 
(2) By the result (1) in Theorem 2.3.5, $^{*}f[G_{\U}]\subseteq G_{\overline{f}(\U)}$; by (1), for every $\beta$ in $G_{\overline{f}(\U)}$, there is an element $\alpha$ in $G_{\U}$ such that $\beta=$$^{*}f(\alpha)$; so $^{*}f[G_{\U}]=G_{\overline{f}(\U)}$.\\\end{proof}

\subsection{The $\sim_{u}$-preserving functions in $\mathtt{Fun}($$^{*}\N^{k},$$^{*}\N)$}

In this section we still assume that $^{*}\N$ is a hyperextension of $\N$ that satisfies the $\mathfrak{c}^{+}$-enlarging property, and we study which functions in $\mathtt{Fun}($$^{*}\N^{k},$$^{*}\N)$ can be naturally associated to functions in $\mathtt{Fun}(\beta(\N^{k}),\bN)$.\\
To explain the underlying idea, we pose $k=1$. Given a function $\varphi:$$^{*}\N\rightarrow$$^{*}\N$, we want to use the bridge map to construct a function $\widehat{\varphi}:\bN\rightarrow\bN$ such that, for every hypernatural number $\alpha\in$$^{*}\N$,

\begin{center} $\widehat{\varphi}(\mathfrak{U}_{\alpha})=\mathfrak{U}_{\varphi(\alpha)}$. \end{center}

Equivalently, 

\begin{center} $\hat{\varphi}=\psi\circ \varphi$, \end{center}

where $\psi$, as usual, denotes the bridge map.\\
We observe that $\hat{\varphi}$ is not well-defined for every function $\varphi\in\mathtt{Fun}($$^{*}\N,$$^{*}\N)$; the only functions such that the above construction can be done are the $\sim_{u}$-preserving:

\begin{defn} A function $\varphi$ in $\mathtt{Fun}($$^{*}\N^{k},$$^{*}\N)$ is {\bfseries $\sim_{u}$-preserving} if, for every $(\alpha_{1},...,\alpha_{k})\sim_{u}(\beta_{1},...,\beta_{k})$ in $^{*}\N^{k}$, $\varphi((\alpha_{1},...,\alpha_{k}))\sim_{u} \varphi((\beta_{1},...,\beta_{k}))$.\\
We denote by $\mathbb{P}_{k}$ the set of $\sim_{u}$-preserving functions in $\mathtt{Fun}($$^{*}\N^{k},$$^{*}\N)$. \end{defn}

\begin{thm} The following diagram commutes:
\begin{center} 

     \begin{picture}(80,50)(0,10) 

      \put(-25,50){\makebox(0,0){$\mathtt{Fun}(\N^{k},\N)$}}
      
      \put(155,50){\makebox(0,0){$\mathtt{Fun}(\beta(\N^{k}),\bN)$}}
      
      \put(50,5){\makebox(0,0){$\mathtt{Fun}($$^{*}\N^{k},$$^{*}\N)$}}   

      \put(23,28){\makebox(0,0){$*$}} 

      \put(90,28){\makebox(0,0){$\widehat{\cdot}$}} 

      \put(55,53){\makebox(0,0){$\overline{\cdot}$}}

      \put(5,47){\vector(3,-2){49}}

      \put(60,15){\vector(3,2){50}} 

      \put(5,50){\vector(1,0){105}} 

    \end{picture} 

      \end{center}

Equivalently, for every function $f$ in $\mathtt{Fun}(\N^{k},\N)$, $^{*}f$ is $\sim_{u}$-preserving and $\widehat{^{*}f}=\overline{f}$. \end{thm}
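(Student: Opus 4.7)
The statement essentially repackages Theorem 2.3.5(1), so the plan is to reduce it to that result in two stages.

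\textbf{Step 1: $^{*}f$ is $\sim_{u}$-preserving.} I would take $(\alpha_{1},\dots,\alpha_{k})\sim_{u}(\beta_{1},\dots,\beta_{k})$ in $^{*}\N^{k}$ and let $\U=\mathfrak{U}_{(\alpha_{1},\dots,\alpha_{k})}=\mathfrak{U}_{(\beta_{1},\dots,\beta_{k})}\in\beta(\N^{k})$. By Theorem 2.3.5(1) applied to both tuples (which lie in $G_{\U}$), both $^{*}f(\alpha_{1},\dots,\alpha_{k})$ and $^{*}f(\beta_{1},\dots,\beta_{k})$ lie in $G_{\overline{f}(\U)}$. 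Hence they generate the same ultrafilter, i.e. they are $\sim_{u}$-equivalent. This shows $^{*}f\in\mathbb{P}_{k}$, so that $\widehat{^{*}f}$ is well-defined.

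\textbf{Step 2: $\widehat{^{*}f}=\overline{f}$.} I would take an arbitrary $\U\in\beta(\N^{k})$. By the $\mathfrak{c}^{+}$-enlarging hypothesis (invoked as in Proposition 2.2.1), $G_{\U}$ is nonempty, so pick $\alpha\in G_{\U}$. Then on the one hand, by the defining property of $\widehat{\cdot}$ recalled at the start of the subsection,
\begin{center}
$\widehat{^{*}f}(\U)=\widehat{^{*}f}(\mathfrak{U}_{\alpha})=\mathfrak{U}_{^{*}f(\alpha)}$.
\end{center}
On the other hand, Theorem 2.3.5(1) gives $^{*}f(\alpha)\in G_{\overline{f}(\U)}$, which means $\mathfrak{U}_{^{*}f(\alpha)}=\overline{f}(\U)$. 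Combining the two identities yields $\widehat{^{*}f}(\U)=\overline{f}(\U)$, and since $\U$ was arbitrary, $\widehat{^{*}f}=\overline{f}$, which is exactly the commutativity of the triangle.

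\textbf{Main obstacle.} There is essentially no obstacle: the whole content is already packaged in Theorem 2.3.5(1). The only subtlety worth double-checking is the use of the $\mathfrak{c}^{+}$-enlarging property to guarantee $G_{\U}\neq\emptyset$ for every $\U\in\beta(\N^{k})$; without this one could not evaluate $\widehat{^{*}f}$ by choosing a generator. Since this hypothesis is standing throughout the section, the proof reduces to the two applications of Theorem 2.3.5(1) outlined above.
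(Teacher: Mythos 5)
Your proposal is correct and follows essentially the same route as the paper: both reduce the entire statement to Theorem 2.3.5(1), first to get that $^{*}f$ is $\sim_{u}$-preserving and then to identify $\widehat{^{*}f}(\mathfrak{U}_{(\alpha_{1},\dots,\alpha_{k})})=\mathfrak{U}_{^{*}f(\alpha_{1},\dots,\alpha_{k})}=\overline{f}(\mathfrak{U}_{(\alpha_{1},\dots,\alpha_{k})})$. Your write-up is in fact slightly more explicit than the paper's (which simply asserts the $\sim_{u}$-preservation is "the content" of 2.3.5(1)), and your remark about needing the $\mathfrak{c}^{+}$-enlarging property to guarantee $G_{\U}\neq\emptyset$ is a correct observation about the standing hypothesis.
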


\begin{proof} Let $f$ be a function in $\mathtt{Fun}(\N^{k},\N)$. That $^{*}f$ is $\sim_{u}$ preserving, and $\overline{f}(\mathfrak{U}_{(\alpha_{1},...,\alpha_{k})})=\mathfrak{U}_{^{*}f(\alpha_{1},...,\alpha_{k})}$, is the content of point number one of Theorem 2.3.5.\\
As, for every $(\alpha_{1},...,\alpha_{k})\in$$^{*}\N^{k}$,

\begin{center} $\widehat{^{*}f}(\mathfrak{U}_{((\alpha_{1},...,\alpha_{k}))})=\mathfrak{U}_{^{*}f((\alpha_{1},...,\alpha_{k}))}=\overline{f}(\mathfrak{U}_{(\alpha_{1},...,\alpha_{k})})$, \end{center}

we have the thesis.\\\end{proof}

This theorem shows that the set $\{$$^{*}f\mid f\in\mathtt{Fun}(\N^{k},\N)\}$ of hyper-images of functions in $\mathtt{Fun}(\N^{k},\N)$ is included in $\mathbb{P}_{k}$. The reverse inclusione is false: e.g., let $\alpha$ be an infinite element in $^{*}\N$, and $\varphi$ the function such that, for every element $\beta$ in $^{*}\N$, $\varphi(\beta)=\alpha$. Then $\varphi$ is $\sim_{u}$-preserving, but it is not the hyper-image of a function in $\mathtt{Fun}(\N^{k},\N)$.\\
The association $\widehat{\cdot}$ is not 1-1. An interesting question is: given a function $f$ in $\mathtt{Fun}(\N^{k},\N)$, which functions $\varphi$ in $\mathtt{Fun}($$^{*}\N^{k},$$^{*}\N)$ satisfy the condition $\overline{f}=\widehat{\varphi}$?

\begin{defn} Let $f$ be a function in $\mathtt{Fun}(\N^{k},\N)$. A function $\varphi$ in $\mathtt{Fun}($$^{*}\N^{k},$$^{*}\N)$ is {\bfseries $\sim_{u}$-equal to $f$} if
\begin{enumerate}
	\item $\varphi\in\mathbb{P}_{k}$;
	\item $\widehat{\varphi}=\overline{f}$.
\end{enumerate}
\end{defn}

\begin{thm} Let $f$ be a function in $\mathtt{Fun}(\N^{k},\N)$ and $\varphi$ a function in $\mathtt{Fun}($$^{*}\N^{k},$$^{*}\N)$. The following two conditions are equivalent:
\begin{enumerate}
	\item $\varphi$ is $\sim_{u}$-equal to $f$;
	\item $\varphi^{-1}($$^{*}A)=$$^{*}(f^{-1}(A))$ for every subset $A$ of $\N$. 
\end{enumerate}
\end{thm}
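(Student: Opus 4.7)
The plan is to unwind both sides through the definition of $\widehat{\varphi}$, the bridge map $\psi$, and the characterization $A\in\overline{f}(\U)\Leftrightarrow f^{-1}(A)\in\U$, so that each condition becomes a statement about when $\alpha\in\varphi^{-1}({}^{*}A)$ matches $\alpha\in{}^{*}(f^{-1}(A))$. The key elementary fact used throughout is that, for any $A\subseteq\N$ and any $\alpha\in{}^{*}\N^{k}$, one has $\alpha\in\varphi^{-1}({}^{*}A)$ iff $\varphi(\alpha)\in{}^{*}A$ iff $A\in\mathfrak{U}_{\varphi(\alpha)}$, and dually $\alpha\in{}^{*}(f^{-1}(A))$ iff $f^{-1}(A)\in\mathfrak{U}_{\alpha}$ iff $A\in\overline{f}(\mathfrak{U}_{\alpha})$.

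For $(1)\Rightarrow(2)$, I will assume $\varphi\in\mathbb{P}_{k}$ and $\widehat{\varphi}=\overline{f}$, fix $A\subseteq\N$ and $\alpha\in{}^{*}\N^{k}$, and chain the equivalences above through $\widehat{\varphi}(\mathfrak{U}_{\alpha})=\mathfrak{U}_{\varphi(\alpha)}=\overline{f}(\mathfrak{U}_{\alpha})$ to conclude $\varphi^{-1}({}^{*}A)={}^{*}(f^{-1}(A))$.

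For $(2)\Rightarrow(1)$, I first have to recover that $\varphi$ is $\sim_{u}$-preserving. Given $\alpha\sim_{u}\beta$, for every $A\subseteq\N$ membership of $A$ in $\mathfrak{U}_{\varphi(\alpha)}$ is rewritten, via the assumed identity $\varphi^{-1}({}^{*}A)={}^{*}(f^{-1}(A))$, as membership of $f^{-1}(A)$ in $\mathfrak{U}_{\alpha}=\mathfrak{U}_{\beta}$, which in turn rewrites symmetrically as $A\in\mathfrak{U}_{\varphi(\beta)}$; hence $\mathfrak{U}_{\varphi(\alpha)}=\mathfrak{U}_{\varphi(\beta)}$, so $\varphi\in\mathbb{P}_{k}$ and $\widehat{\varphi}$ is well-defined. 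To then verify $\widehat{\varphi}=\overline{f}$ on all of $\beta(\N^{k})$, I invoke the $\mathfrak{c}^{+}$-enlarging property (Proposition 2.2.1) to pick, for each $\U\in\beta(\N^{k})$, a generator $\alpha\in G_{\U}$; the same chain of equivalences as in the first direction then yields $\widehat{\varphi}(\U)=\mathfrak{U}_{\varphi(\alpha)}=\overline{f}(\U)$.

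There is no real obstacle here: the proof is a bookkeeping exercise juggling the definitions of $\widehat{\varphi}$, $\overline{f}$, and $\mathfrak{U}_{\alpha}$. The only non-trivial ingredient is using $\mathfrak{c}^{+}$-enlarging to ensure that every ultrafilter in $\beta(\N^{k})$ has at least one generator, so that verifying the equality of the maps $\widehat{\varphi}$ and $\overline{f}$ on $\beta(\N^{k})$ reduces to verifying a pointwise identity on $\sim_{u}$-classes of generators, which is exactly what the set-theoretic identity $\varphi^{-1}({}^{*}A)={}^{*}(f^{-1}(A))$ delivers.
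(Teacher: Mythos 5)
Your proposal is correct and follows essentially the same route as the paper: both directions are handled by the same chain of equivalences $\alpha\in\varphi^{-1}({}^{*}A)\Leftrightarrow A\in\mathfrak{U}_{\varphi(\alpha)}$ and $\alpha\in{}^{*}(f^{-1}(A))\Leftrightarrow A\in\overline{f}(\mathfrak{U}_{\alpha})$, with the converse direction split into recovering $\sim_{u}$-preservation and then checking $\widehat{\varphi}=\overline{f}$ on generators, exactly as in the paper's two claims. The only cosmetic differences are that you prove $\sim_{u}$-preservation directly where the paper argues by contradiction, and that you make explicit the role of the $\mathfrak{c}^{+}$-enlarging property, which the paper assumes as a standing hypothesis for the section.
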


\begin{proof} $(1)\Rightarrow (2)$ Suppose that $\varphi$ is $\sim_{u}$-equal to $f$, and let $A$ be a subset of $\N$. Since $\widehat{\varphi}=\overline{f}$, it follows that $\widehat{\varphi}^{-1}(\Theta_{A})=\overline{f}^{-1}(\Theta_{A})$.\\
In particular, given an element $(\alpha_{1},...,\alpha_{k})$ in $^{*}\N^{k}$, if $\U=\mathfrak{U}_{(\alpha_{1},...,\alpha_{k})}$ then $\U\in \widehat{\varphi}^{-1}(\Theta_{A})$ if and only if $\U\in \overline{f}^{-1}(\Theta_{A})$; this entails that $\widehat{\varphi}(\U)\in\Theta_{A}$ if and only if $\overline{f}(\U)\in\Theta_{A}$. Since $\widehat{\varphi}(\U)=\mathfrak{U}_{\varphi((\alpha_{1},...,\alpha_{k}))}$ and $\overline{f}(\U)=\mathfrak{U}_{^{*}f((\alpha_{1},...,\alpha_{k}))}$, then $\varphi((\alpha_{1},...,\alpha_{k}))\in$$^{*}A$ if and only if $^{*}f((\alpha_{1},...,\alpha_{k}))\in$$^{*}A$, so $(\alpha_{1},...,\alpha_{k})\in \varphi^{-1}($$^{*}A)$ if and only if $(\alpha_{1},...,\alpha_{k})\in$$^{*}f^{-1}($$^{*}A)=$$^{*}(f^{-1}(A))$. As this holds for every $(\alpha_{1},...,\alpha_{k})$ in $^{*}\N^{k}$ and every subset $A$ of $\N$, we get the thesis.\\
$(2)\Rightarrow(1)$ Suppose that $\varphi^{-1}($$^{*}A)=$$^{*}(f^{-1}(A))$ for every subset $A$ of $\N$.\\

{\bfseries Claim 1:} $\varphi$ is in $\sim_{u}$-preserving.\\

By contrast, suppose that $\varphi$ is not in $\mathbb{P}_{k}$. Then there are elements $(\alpha_{1},...,\alpha_{k})\sim_{u}(\beta_{1},...,\beta_{k})$ in $^{*}\N^{k}$ and a subset $A$ of $\N$ such that $\varphi((\alpha_{1},...,\alpha_{k}))\in$$^{*}A$ and $\varphi((\beta_{1},...,\beta_{k}))\notin$$^{*}A$. So 

\begin{center} $(\alpha_{1},...,\alpha_{k})\in \varphi^{-1}($$^{*}A)=$$^{*}(f^{-1}(A))$ and $(\beta_{1},...,\beta_{k})\notin$$^{*}(f^{-1}(A))$, \end{center} 

which is absurd since $(\alpha_{1},...,\alpha_{k})\sim_{u}(\beta_{1},...,\beta_{k})$ .\\

{\bfseries Claim 2:} $\widehat{\varphi}=\overline{f}$.\\

In fact, let $\U$ be an ultrafilter in $\beta(\N^{k})$, and $(\alpha_{1},...,\alpha_{k})$ a generator of $\U$. By hypothesis, for every subset $A$ of $\N$, $\varphi((\alpha_{1},...,\alpha_{n}))\in$$^{*}A$ if and only if $^{*}f((\alpha_{1},...,\alpha_{n}))\in$$^{*}A$, so $\varphi((\alpha_{1},...,\alpha_{n}))\sim_{u}$$^{*}{f}((\alpha_{1},...,\alpha_{n}))$, and since $\widehat{\varphi}(\U)=\mathfrak{U}_{\varphi((\alpha_{1},...,\alpha_{n}))}$ and $\overline{f}(\U)=\mathfrak{U}_{^{*}f((\alpha_{1},...,\alpha_{n}))}$, it follows that $\widehat{\varphi}=\overline{f}$.\\\end{proof}

\section{Tensor $k$-tuples}

In this section, we suppose that the hyperextension $^{*}\N$ satisfies the $\mathfrak{c}^{+}$-saturation property.\\
Usually, when working in $\bN$, we are interested in functions in $\mathtt{Fun}((\bN)^{k},\bN)$ rather than in functions in $\mathtt{Fun}(\beta(\N^{k}),\bN)$. E.g., we are interested to study the property of the sum $\oplus\in\mathtt{Fun}((\bN)^{2},\bN)$ of ultrafilters rather than in studying the unique continuos extension in $\mathtt{Fun}(\beta(\N^{2}),\bN)$ of the usual addition of natural numbers.\\
$\beta(\N^{k})$ and $(\bN)^{k}$ (that from now on we denote by $\bN^{k}$) are two different spaces. Nevertheless, $\bN^{k}$ can be identified with an important subset of $\beta(\N^{k})$: the subset of tensor products. We recall that, given ultrafilters $\U_{1},...,\U_{k}$ in $\bN$, $\U_{1}\otimes...\otimes\U_{k}$ is the ultrafilter on $\N^{k}$ defined by this condition: for every subset $A$ of $\N^{k}$,

\begin{center} $A\in\U_{1}\otimes...\otimes\U_{k}\Leftrightarrow$\\\vspace{0.3cm}$\Leftrightarrow\{n_{1}\in\N\mid\{n_{2}\in\N\mid...\mid\{n_{k}\in\N\mid (n_{1},...,n_{k})\in A\}\in\U_{k}\}....\in\U_{2}\}\in\U_{1}$.\end{center}

\begin{defn} We denote by $T_{k}$ the subset of $\beta(\N^{k})$ having as elements the tensor products of ultrafilters in $\bN$:

\begin{center} $T_{k}=\{\U\in\beta(\N^{k})\mid \exists \U_{1},...,\exists\U_{k}\in\bN$ with $\U=\U_{1}\otimes\U_{2}\otimes...\otimes\U_{k}\}$. \end{center}

\end{defn}

\begin{prop} If $k>1$ then $T_{k}$ is properly included in $\beta(\N^{k})$. \end{prop}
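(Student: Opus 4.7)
The plan is to exhibit, for each $k > 1$, an ultrafilter on $\N^{k}$ that fails to be a tensor product. It suffices to treat $k=2$; the case of general $k$ is handled by the analogous construction using the $k$-fold diagonal embedding $n \mapsto (n,n,\ldots,n)$.

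Fix a nonprincipal ultrafilter $\U_{0} \in \bN$ and let $d:\N \to \N^{2}$ be the diagonal map $d(n)=(n,n)$. Define $\W \in \beta(\N^{2})$ as the pushforward $\overline{d}(\U_{0})$: that is, $A \in \W$ iff $\{n \in \N \mid (n,n) \in A\} \in \U_{0}$. By construction $\W$ contains the diagonal $\Delta = \{(n,n)\mid n\in\N\}$ and is nonprincipal, because if $\{(n,m)\}\in\W$ then $\{n\}$ or $\emptyset$ would have to lie in $\U_{0}$.

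Now suppose toward a contradiction that $\W = \U \otimes \V$ for some $\U,\V\in\bN$, and split into cases according to whether $\U,\V$ are principal. If both are principal, then by Proposition 1.1.17(3) the tensor product is principal, contradicting nonprincipality of $\W$. If $\V$ is nonprincipal, then for every $n\in\N$ the set $\{m\mid (n,m)\in\Delta\}=\{n\}$ does not lie in $\V$, so $\{n\mid\{m\mid (n,m)\in\Delta\}\in\V\}=\emptyset\notin\U$, giving $\Delta\notin\U\otimes\V$, a contradiction. The remaining case is $\V=\mathfrak{U}_{m}$ principal and $\U$ nonprincipal: then $\N\times\{m\}\in\U\otimes\V=\W$, while $\Delta\in\W$ as well, so their intersection $\{(m,m)\}$ lies in $\W$, making $\W$ principal — contradiction.

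The main obstacle, if any, is just spotting the right ultrafilter to write down; once one tries $\W=\overline{d}(\U_{0})$, the argument is routine, relying only on Proposition 1.1.17 and the defining condition of $\otimes$. The proof also makes clear the underlying reason $T_{k}$ is small in $\beta(\N^{k})$: tensor products are forced to satisfy a Fubini-style iteration property that the diagonally-supported ultrafilters violate.
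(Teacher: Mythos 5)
Your proof is correct, but it takes a genuinely different route from the paper's. The paper argues by containment in a clopen set: it asserts that $A=\{(n_{1},\ldots,n_{k})\in\N^{k}\mid n_{i}\leq n_{k}$ for every $i\leq k\}$ belongs to every tensor product, concludes $T_{k}\subseteq\Theta_{A}$, and finishes by noting $\Theta_{A^{c}}\neq\emptyset$. You instead produce an explicit non-tensor ultrafilter, the diagonal pushforward $\W=\overline{d}(\U_{0})$, and rule out $\W=\U\otimes\V$ by a case split on principality; all three cases are sound (in the last one, $(\N\times\{m\})\cap\Delta=\{(m,m)\}\in\W$ indeed forces $\W$ to be principal), and the generalization to arbitrary $k$ goes through with the same two-case split on whether the last factor is principal. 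What your approach buys is, first, robustness: the paper's containment $T_{k}\subseteq\Theta_{A}$ actually fails for tensor products with principal factors (e.g.\ $\mathfrak{U}_{2}\otimes\mathfrak{U}_{1}=\mathfrak{U}_{(2,1)}$ lies in $\Theta_{A^{c}}$, since the relevant section of $A$ is cofinite rather than all of $\N$), so the paper's one-line argument silently needs exactly the principal/nonprincipal distinction that you carry out explicitly. Second, your construction gives more information: the diagonal ultrafilters form an entire copy of $\bN\setminus\N$ inside $\beta(\N^{k})\setminus T_{k}$, not just a single point of the complement. What the paper's approach buys, once patched (say by restricting attention to tensor products of nonprincipal ultrafilters), is brevity and the cleaner topological statement that $T_{k}$ is trapped inside one proper basic clopen set.
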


\begin{proof} In Proposition 1.1.17 we proved that the upper diagonal of $\N^{2}$ is an element of every tensor product $\U\otimes\V$ of ultrafilters. Similarly, it could be proved that the set 

\begin{center}$A=\{(n_{1},...,n_{k})\in\N^{k}\mid n_{i}\leq n_{k}$ for every $i\leq k\}$\end{center}

is an element of every tensor product in $\beta(\N^{k})$. So $T_{k}\subseteq \Theta_{A}$. As $\Theta_{A^{c}}\neq\emptyset$, it follows the thesis. \\ \end{proof}

\begin{thm} The map $\otimes_{k}:\bN^{k}\rightarrow T_{k}$ such that, $\forall (\U_{1},...,\U_{k})\in \bN^{k}$,

\begin{center} $\otimes_{k}((\U_{1},...,\U_{k}))=\U_{1}\otimes\U_{2}\otimes...\otimes\U_{k}$\end{center} is a bijection.\end{thm}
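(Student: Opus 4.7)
My plan is to handle surjectivity and injectivity separately, with the injectivity being the substantive (but still mechanical) part.

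Surjectivity of $\otimes_k$ is immediate from Definition 2.5.1: every element of $T_k$ is, by definition, a tensor product $\U_1 \otimes \cdots \otimes \U_k$ of ultrafilters on $\N$, hence lies in the image of $\otimes_k$.

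For injectivity, the key idea is to recover each factor $\U_i$ from the tensor product by testing against cylinder sets. Given a subset $A$ of $\N$ and an index $i \in \{1,\ldots,k\}$, I would define the cylinder
\[
A^{(i)} = \{(n_1,\ldots,n_k) \in \N^k \mid n_i \in A\}.
\]
The central claim is then: for every $(\U_1,\ldots,\U_k) \in \bN^k$ and every $A \subseteq \N$,
\[
A^{(i)} \in \U_1 \otimes \U_2 \otimes \cdots \otimes \U_k \;\Longleftrightarrow\; A \in \U_i.
\]
Granting this, injectivity follows immediately: if $\U_1 \otimes \cdots \otimes \U_k = \V_1 \otimes \cdots \otimes \V_k$, then for every $i$ and every $A \subseteq \N$, $A \in \U_i$ iff $A^{(i)}$ belongs to the common tensor product iff $A \in \V_i$, so $\U_i = \V_i$ for each $i$.

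To establish the claim, I would unwind the defining condition of the tensor product one coordinate at a time, starting from the innermost quantifier. For fixed $n_1, \ldots, n_{k-1}$, the set $\{n_k \mid (n_1, \ldots, n_k) \in A^{(i)}\}$ depends only on $n_i$: it equals $A$ when $i = k$, and otherwise equals $\N$ if $n_i \in A$ and $\emptyset$ if $n_i \notin A$. An easy induction on the index $j$ (going from $k$ down to $1$) then shows that the set built at stage $j > i$ is again $\N$ or $\emptyset$ according to whether $n_i \in A$, that the stage $j = i$ produces exactly $A$, and that every stage $j < i$ produces $\N$ if $A \in \U_i$ and $\emptyset$ otherwise. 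Since $\N \in \U_1$ and $\emptyset \notin \U_1$, the outermost condition reduces to $A \in \U_i$, as claimed.

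There is no real obstacle here; the argument is a bookkeeping exercise in unravelling the definition of tensor product, with the cylinder $A^{(i)}$ playing the role of a characteristic test set that isolates the $i$-th factor.
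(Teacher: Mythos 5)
Your proposal is correct and follows essentially the same route as the paper: the paper also disposes of surjectivity by definition and proves injectivity via the same cylinder sets $A_{i}=\{(n_{1},...,n_{k})\in\N^{k}\mid n_{i}\in A\}$ together with the equivalence $A_{i}\in\U_{1}\otimes...\otimes\U_{k}\Leftrightarrow A\in\U_{i}$. The only difference is that you spell out the coordinate-by-coordinate verification of that equivalence, which the paper merely asserts.
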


\begin{proof} The map is clearly surjective. To prove that $\otimes_{k}$ is 1-1 we observe that, given any subset $A$ of $\N$, the set 

\begin{center} $A_{i}=\{(n_{1},...,n_{k})\in\N^{k}\mid n_{i}\in A\}$\end{center}

is in $\U_{1}\otimes...\otimes\U_{k}$ if and only if $A\in\U_{i}$; observe also that $(A^{c})_{i}=(A_{i})^{c}$.\\
Suppose that $(\U_{1},...,\U_{k})$ and $(\V_{1},...,\V_{k})$ are two different elements in $\bN^{k}$ with $\U_{1}\otimes...\otimes\U_{k}=\V_{1}\otimes...\otimes\V_{k}$. For every subset $A$ of $\N$, for every index $i$, by hypothesis $A_{i}\in\U_{1}\otimes...\otimes\U_{k}$ if and only if $A_{i}\in\V_{1}\otimes...\otimes\V_{k}$, and this entails that $A\in \U_{i}$ if and only if $A\in \V_{i}$. Since this is true for every index $i$ and for every subset $A$ of $\N$, then $\U_{i}=\V_{i}$ for every index $i\leq k$. This proves that the map $\otimes$ is 1-1.\\  \end{proof}

To use the nonstandard techniques that we are introducing we need a characterization of the set of generators of $\U_{1}\otimes...\otimes\U_{k}$ in terms of $G_{\U_{1}},...,G_{\U_{k}}$. To semplify the tractation, and clarify the basic ideas, we pose $k=2$; at the end of the section, we give the formulation for a generical natural number $k$.\\
Let $\U,\V$ be ultrafilters on $\N$, and consider $\U\otimes\V$.

\begin{prop} For every $\U,\V$ ultrafilters on $\N$, $G_{\U\otimes\V}$ is subset of $G_{\U}\times G_{\V}$. In particular, if both $\U$ and $\V$ are nonprincipal then $G_{\U\otimes\V}$ is a proper subset of $G_{\U}\times G_{\V}$. \end{prop}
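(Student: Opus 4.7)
The plan is to prove the two claims in sequence, using the definition of tensor product together with Propositions 1.1.17 and 2.2.6.

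For the inclusion $G_{\U\otimes\V}\subseteq G_{\U}\times G_{\V}$, I would take any pair $(\alpha,\beta)\in G_{\U\otimes\V}$ and show $\alpha\in G_{\U}$, $\beta\in G_{\V}$ separately. Given $A\in\U$, consider $A\times\N\subseteq\N^{2}$. Unpacking the definition of tensor product, for every $n\in A$ the set $\{m\in\N\mid (n,m)\in A\times\N\}=\N$ belongs to $\V$, so $\{n\in\N\mid \{m\in\N\mid (n,m)\in A\times\N\}\in\V\}=A\in\U$, hence $A\times\N\in\U\otimes\V$. Since $(\alpha,\beta)$ generates $\U\otimes\V$ we get $(\alpha,\beta)\in{}^{*}(A\times\N)={}^{*}A\times{}^{*}\N$, so $\alpha\in{}^{*}A$. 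As $A\in\U$ was arbitrary, $\alpha\in G_{\U}$. The symmetric argument with $\N\times B$ for $B\in\V$ gives $\beta\in G_{\V}$.

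For the proper inclusion when both $\U$ and $\V$ are nonprincipal, I would exhibit a pair in $G_{\U}\times G_{\V}$ that fails to lie in $G_{\U\otimes\V}$. By point (1) of Proposition 1.1.17, since $\V$ is nonprincipal we have the upper diagonal $\Delta^{+}=\{(n,m)\in\N^{2}\mid n<m\}\in\U\otimes\V$. Hence any generator of $\U\otimes\V$ must lie in ${}^{*}\Delta^{+}$. It therefore suffices to produce $\alpha\in G_{\U}$ and $\beta\in G_{\V}$ with $\alpha\geq\beta$. This is where I invoke Proposition 2.2.6: since $\U$ is nonprincipal and the working hyperextension is $\mathfrak{c}^{+}$-saturated, $G_{\U}$ is right unbounded in ${}^{*}\N$. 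Pick any $\beta\in G_{\V}$ (nonempty by the $\mathfrak{c}^{+}$-enlarging property) and then choose $\alpha\in G_{\U}$ with $\alpha>\beta$. Then $(\alpha,\beta)\in G_{\U}\times G_{\V}$ but $(\alpha,\beta)\notin{}^{*}\Delta^{+}$, so $\Delta^{+}\notin\mathfrak{U}_{(\alpha,\beta)}$ and thus $\mathfrak{U}_{(\alpha,\beta)}\neq\U\otimes\V$.

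The argument is quite short; the only subtle point is being careful in the first step about which sets in $\U\otimes\V$ to test against $(\alpha,\beta)$ to recover the coordinates separately, and in the second step about choosing the direction of the inequality correctly (so that the witness pair avoids the upper diagonal which is forced to belong to $\U\otimes\V$). No real obstacle arises; the proof is a direct application of the two quoted results.
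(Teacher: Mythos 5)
Your proof is correct and follows essentially the same route as the paper: product sets in $\U\otimes\V$ give the inclusion $G_{\U\otimes\V}\subseteq G_{\U}\times G_{\V}$, and the upper diagonal $\Delta^{+}$ combined with the unboundedness properties of generator sets from Proposition 2.2.6 gives properness. The only (immaterial) differences are that the paper tests against the full cartesian products $A\times B$ rather than $A\times\N$ and $\N\times B$ separately, and for the witness pair it fixes $\alpha\in G_{\U}$ and uses left-unboundedness of $G_{\V}$ to find $\beta<\alpha$, whereas you fix $\beta\in G_{\V}$ and use right-unboundedness of $G_{\U}$ to find $\alpha>\beta$.
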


\begin{proof} Let $\U,\V$ be ultrafilters on $\N$. For every sets $A\in\U$, $B\in\V$, the cartesian product $A\times B\in\U\otimes\V$, so 

\begin{center}$\bigcap_{S\in\U\otimes\V}$$^{*}S\subseteq\bigcap_{A\in\U, B\in\V}($$^{*}A\times$$^{*}B)\subseteq G_{\U}\times G_{\V}$.\end{center}

When $\U$ and $\V$ are nonprincipal the inclusion is proper since, as we proved in Proposition 1.1.17, the upper diagonal of $\N^{2}$ is in every such tensor product; in particular, we get that for every $(\alpha,\beta)$ in $G_{\U\otimes\V}$, $\alpha\leq\beta$. But $G_{\V}$, as we proved in Proposition 2.2.6, is left unbounded: if $\alpha$ is in $G_{\U}$, in $G_{\V}$ there is an element $\beta<\alpha$, and $(\alpha,\beta)\in G_{\U}\times G_{\V}\setminus G_{\U\otimes\V}$.\\ \end{proof}

We give a special name to the pairs in $^{*}\N^{2}$ that generate tensor products: 

\begin{defn} Let $\alpha,\beta$ be two hypernatural numbers in $^{*}\N$. $(\alpha,\beta)$ is a {\bfseries tensor pair} if $\mathfrak{U}_{(\alpha,\beta)}=\mathfrak{U}_{\alpha}\otimes\mathfrak{U}_{\beta}$. \end{defn}

\begin{prop} For every natural number $n$ and every hypernatural number $\alpha$, $(\alpha,n)$ and $(n,\alpha)$ are tensor pairs.\end{prop}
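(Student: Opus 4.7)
The plan is to verify the defining condition for a tensor pair directly from the definitions, for an arbitrary subset $A \subseteq \N^2$. The key facts used throughout are that $^{*}n = n$ for every standard natural number $n$ (so $\mathfrak{U}_n$ is principal at $n$) and the transfer principle applied to sections of $A$.

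I first treat the pair $(\alpha, n)$. By definition of the tensor product, $A \in \mathfrak{U}_\alpha \otimes \mathfrak{U}_n$ iff $\{m \in \N \mid \{k \in \N \mid (m,k) \in A\} \in \mathfrak{U}_n\} \in \mathfrak{U}_\alpha$. Since $\mathfrak{U}_n$ is principal at $n$, the inner condition $\{k \in \N \mid (m,k) \in A\} \in \mathfrak{U}_n$ is just $(m,n) \in A$. So the whole statement reduces to $C := \{m \in \N \mid (m,n) \in A\} \in \mathfrak{U}_\alpha$, i.e., $\alpha \in {}^{*}C$. Now apply transfer to the elementary formula $C = \{m \in \N \mid (m, n) \in A\}$: since $^{*}n = n$, we obtain $^{*}C = \{\xi \in {}^{*}\N \mid (\xi, n) \in {}^{*}A\}$. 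Hence $\alpha \in {}^{*}C$ is exactly $(\alpha, n) \in {}^{*}A$, which is $A \in \mathfrak{U}_{(\alpha,n)}$. As $A$ was arbitrary, $\mathfrak{U}_{(\alpha,n)} = \mathfrak{U}_\alpha \otimes \mathfrak{U}_n$.

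The pair $(n, \alpha)$ is handled symmetrically. Unfolding, $A \in \mathfrak{U}_n \otimes \mathfrak{U}_\alpha$ iff $\{m \in \N \mid \{k \in \N \mid (m,k) \in A\} \in \mathfrak{U}_\alpha\} \in \mathfrak{U}_n$, which by principality of $\mathfrak{U}_n$ at $n$ reduces to $D := \{k \in \N \mid (n,k) \in A\} \in \mathfrak{U}_\alpha$, i.e., $\alpha \in {}^{*}D$. Transfer, again using $^{*}n = n$, gives $^{*}D = \{\xi \in {}^{*}\N \mid (n, \xi) \in {}^{*}A\}$, so this is equivalent to $(n, \alpha) \in {}^{*}A$, that is, $A \in \mathfrak{U}_{(n,\alpha)}$.

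There is no real obstacle here: the proof is essentially a careful unpacking of the definitions together with one application of transfer on each side. The only point that requires attention is the explicit identification of $^{*}C$ and $^{*}D$ via transfer, which rests on the identification $^{*}n = n$ for standard naturals; everything else is bookkeeping.
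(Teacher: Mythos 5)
Your proof is correct and follows essentially the same chain of equivalences as the paper's: unfold the tensor product, use principality of $\mathfrak{U}_{n}$ to collapse the inner condition, and apply transfer to identify the starred section of $A$; you merely read the chain in the opposite direction and write out the $(n,\alpha)$ case that the paper dismisses with ``similarly.''
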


\begin{proof} By definition, $A\in\mathfrak{U}_{(\alpha,n)}$ if and only if $(\alpha,n)\in$$^{*}A$ if and only if $\alpha\in\{\beta\in$$^{*}\N\mid (\beta,n)\in$$^{*}A\}=$$^{*}\{a\in\N\mid (a,n)\in A\}$ if and only if $\{a\in \N\mid \{b\in\N\mid (a,b)\in A\}\in\mathfrak{U}_{n}\}\in\mathfrak{U}_{\alpha}$ if and only if $A\in\mathfrak{U}_{\alpha}\otimes\mathfrak{U}_{n}$. Similarly for $(n,\alpha)$.\\\end{proof}

The problem becomes: given two nonprincipal ultrafilters $\U,\V$, how can we characterize the tensor pairs $(\alpha,\beta)$ that generate $\U\otimes\V$?

This problem has been solved by Christian W. Puritz in $\cite[\mbox{Theorem 3.4}]{rif29}$:

\begin{thm}[Puritz] Let $^{*}\N$ be a hyperextension of $\N$ with the $\mathfrak{c}^{+}$-enlarging property. For every ultrafilters $\U,\V$ on $\N$, 

\begin{center} $G_{\U\otimes\V}$=$\{(\alpha,\beta)\in$$^{*}\N^{2}\mid \alpha\in G_{\U}, \beta\in G_{\V}, \alpha<er(\beta)\}$, \end{center} 

where
 
\begin{center} $er(\beta)=\{$$^{*}f(\beta)\mid f\in \mathtt{Fun}(\N,\N),$$^{*}f(\beta)\in$$^{*}\N\setminus\N\}$. \end{center}
\end{thm}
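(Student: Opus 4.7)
The plan is to prove the two inclusions separately, leveraging Proposition 2.4.4 for the easy half of the forward direction and building a well-chosen internal witness for the reverse direction.

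For the inclusion $G_{\U\otimes\V}\subseteq\{(\alpha,\beta)\in{}^{*}\N^{2}\mid \alpha\in G_{\U},\beta\in G_{\V},\alpha<er(\beta)\}$, I would first invoke Proposition 2.4.4 to conclude that $\alpha\in G_{\U}$ and $\beta\in G_{\V}$. Then, given any $f\in\mathtt{Fun}(\N,\N)$ with ${}^{*}f(\beta)\in{}^{*}\N\setminus\N$, I would work with the set $D=\{(n,m)\in\N^{2}\mid n<f(m)\}$. For each standard $n$, the slice $D_{n}=\{m\in\N\mid f(m)>n\}$ lies in $\V$ because ${}^{*}f(\beta)>n$ and $\beta\in G_{\V}$, so by the very definition of the tensor product, $D\in\U\otimes\V$. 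Being a generator of $\U\otimes\V$, the pair $(\alpha,\beta)$ must lie in ${}^{*}D$, which by transfer says exactly $\alpha<{}^{*}f(\beta)$.

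The reverse inclusion is the core of the theorem. I would argue by contradiction. Assume $\alpha\in G_{\U}$, $\beta\in G_{\V}$ and $\alpha<er(\beta)$, yet $\mathfrak{U}_{(\alpha,\beta)}\neq\U\otimes\V$; after swapping with a complement if necessary, there is $A\subseteq\N^{2}$ with $(\alpha,\beta)\in{}^{*}A$ while $A\notin\U\otimes\V$. The latter forces $C:=\{n\in\N\mid A_{n}\notin\V\}\in\U$, so $\alpha\in{}^{*}C$. The decisive step is to define
\begin{center}
$f(m):=\min\{n\in C\mid (n,m)\in A\}$
\end{center}
with $f(m):=0$ when the set is empty. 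By transfer, ${}^{*}f(\beta)=\min\{\nu\in{}^{*}C\mid (\nu,\beta)\in{}^{*}A\}$; this minimum exists because $\alpha$ belongs to the argument set, yielding ${}^{*}f(\beta)\leq\alpha$. To check that ${}^{*}f(\beta)$ is infinite, observe that for every standard $k\in C$ we have $\N\setminus A_{k}\in\V$, hence $\beta\in{}^{*}(\N\setminus A_{k})$, i.e.\ $(k,\beta)\notin{}^{*}A$; thus no standard $k$ achieves the minimum, so ${}^{*}f(\beta)\in{}^{*}\N\setminus\N$. This produces an element of $er(\beta)$ that is $\leq\alpha$, contradicting the hypothesis.

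The main obstacle is guessing the right function $f$ in the reverse direction; the insight is that the ``first coordinate in $C$ where the row at $m$ meets $A$'' is precisely what makes ${}^{*}f(\beta)$ simultaneously infinite (because no standard such coordinate works) and bounded above by $\alpha$ (because $\alpha$ itself is in the argument set). The principal cases ($\alpha$ or $\beta$ finite) are handled uniformly: if $\beta\in\N$ then no ${}^{*}f(\beta)$ is infinite, so $er(\beta)=\emptyset$ and the condition ``$\alpha<er(\beta)$'' is vacuous, while the equality $\mathfrak{U}_{(\alpha,\beta)}=\U\otimes\mathfrak{U}_{\beta}$ follows directly from the definitions; analogously for $\alpha\in\N$. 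Everything else is transfer together with the $\mathfrak{c}^{+}$-enlarging machinery already developed in Section 2.2.
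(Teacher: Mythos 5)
Your proposal is correct. A remark on provenance first: the paper states Theorem 2.4.7 with a citation to Puritz and does not prove it on the spot; the actual argument appears in the proof of the generalization, Theorem 2.4.11 (implication $(4)\Rightarrow(1)$), which the author says ``follows the original ideas of Puritz.'' Your easy direction coincides with the paper's: the set $\{(n,m)\in\N^{2}\mid n<f(m)\}$ is exactly the witness used in Theorem 2.4.8, $(3)\Rightarrow(4)$. For the hard direction the underlying mechanism is the same --- manufacture a standard function whose star-value at $\beta$ is infinite and compare it with $\alpha$ --- but your organization is genuinely different. The paper argues directly: for each $A\in\U\otimes\V$ it builds $f(m)$ as the least $n$ bounding a ``good row'' $A_{n_{1}}\in\V$ that $m$ escapes beyond $[0,n]$, shows $^{*}f(\beta)$ is infinite, and then extracts $(\alpha,\beta)\in$$^{*}A$ from $\alpha<$$^{*}f(\beta)$ together with $A_{(\alpha)}\in$$^{*}\mathfrak{U}_{\beta}$. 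You argue contrapositively: from a putative counterexample $A$ with $(\alpha,\beta)\in$$^{*}A$ but $A\notin\U\otimes\V$, the set $C$ of ``bad rows'' lies in $\U$, and $f(m)=\min\{n\in C\mid (n,m)\in A\}$ has $^{*}f(\beta)$ simultaneously $\leq\alpha$ (because $\alpha$ witnesses nonemptiness) and infinite (because no standard bad row meets $^{*}A$ at $\beta$), contradicting $\alpha<er(\beta)$. Your auxiliary function is simpler and the verification that $^{*}f(\beta)$ is infinite is more transparent; the paper's direct version has the advantage of producing, for each $A\in\U\otimes\V$, an explicit reason why $(\alpha,\beta)$ lands in $^{*}A$, which is the form needed verbatim for the $k$-tuple generalization. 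Your separate treatment of the principal cases via $er(\beta)=\emptyset$ and Proposition 2.4.6 is also sound.
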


We just warn the reader that we have presented this theorem with the modern notation $\U\otimes\V$ for tensor products, while Puritz uses the notation $\U\times\V$, which is not only graphically different from ours, but has also a different meaning: given ultrafilters $\U$, $\V$, by definition $\U\times\V=\V\otimes\U$.\\ 
For tensor pairs, this theorem tells that $(\alpha,\beta)$ is a tensor pair if and only if $\alpha<er(\beta)$. The problem is that, given two generical hypernatural numbers $\alpha,\beta$, it can be very difficult to decide if $\alpha<er(\beta)$ or not; it would be useful to have conditions equivalent to state that $(\alpha,\beta)$ is a tensor pair: the equivalences below are exposed in $\cite{rif15}$.

\begin{thm} If $\alpha,\beta$ are infinite hypernatural numbers, the following conditions are equivalent:
\begin{enumerate}
	\item $(\alpha,\beta)$ is a tensor pair;
	\item For every $A\subseteq \N^{2}$, if $(\alpha,\beta)\in$$^{*}A$ then there is a natural number $n$ with $(n,\beta)\in$$^{*}A$;
	\item For every $A\subseteq\N^{2}$, if $(n,\beta)\in$$^{*}A$ for every $n\in\N$, then $(\alpha,\beta)\in$$^{*}A$;
	\item $\alpha<er(\beta)$.
\end{enumerate}
\end{thm}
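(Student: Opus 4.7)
My overall strategy is to show $(2) \Leftrightarrow (3)$ by an easy complementation, $(1) \Leftrightarrow (3)$ directly from the definition of tensor product, and $(2) \Leftrightarrow (4)$ via a minimization/transfer argument. This gives the full cycle; alternatively, Puritz's Theorem 2.4.7 already yields $(1) \Leftrightarrow (4)$.

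For $(2) \Leftrightarrow (3)$: these are contrapositives of one another once we pass to complements. Indeed, applying (2) to $A^{c}$ gives ``if $(\alpha,\beta) \in {}^{*}(A^{c})$ then some $(n,\beta) \in {}^{*}(A^{c})$'', whose contrapositive is precisely (3) for $A$. So this equivalence is immediate.

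For $(1) \Leftrightarrow (3)$: recall that for any $A \subseteq \N^{2}$ and $n \in \N$, setting $A_{n} = \{m \in \N \mid (n,m) \in A\}$, transfer gives ${}^{*}A_{n} = \{\mu \in {}^{*}\N \mid (n,\mu) \in {}^{*}A\}$, so $(n,\beta) \in {}^{*}A$ iff $A_{n} \in \mathfrak{U}_{\beta}$. Hence $A \in \mathfrak{U}_{\alpha} \otimes \mathfrak{U}_{\beta}$ iff $B_{A} := \{n \in \N \mid (n,\beta) \in {}^{*}A\} \in \mathfrak{U}_{\alpha}$, i.e.\ iff $\alpha \in {}^{*}B_{A}$. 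The direction $(1) \Rightarrow (3)$ is then immediate: if $(n,\beta) \in {}^{*}A$ for all $n \in \N$, then $B_{A} = \N$, so $A \in \mathfrak{U}_{\alpha}\otimes\mathfrak{U}_{\beta} = \mathfrak{U}_{(\alpha,\beta)}$. For $(3) \Rightarrow (1)$, assume $A \in \mathfrak{U}_{\alpha}\otimes\mathfrak{U}_{\beta}$, so $\alpha \in {}^{*}B_{A}$, and consider the auxiliary set $A' = A \cup (B_{A}^{c} \times \N)$. For each $n \in \N$, either $n \in B_{A}$ (so $(n,\beta) \in {}^{*}A \subseteq {}^{*}A'$) or $n \notin B_{A}$ (so $(n,\beta) \in {}^{*}(B_{A}^{c} \times \N) \subseteq {}^{*}A'$); in either case $(n,\beta) \in {}^{*}A'$. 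By (3), $(\alpha,\beta) \in {}^{*}A' = {}^{*}A \cup ({}^{*}B_{A}^{c} \times {}^{*}\N)$, and since $\alpha \in {}^{*}B_{A}$ excludes the second piece, we conclude $(\alpha,\beta) \in {}^{*}A$. This gives $\mathfrak{U}_{\alpha}\otimes\mathfrak{U}_{\beta} \subseteq \mathfrak{U}_{(\alpha,\beta)}$, and equality follows since both are ultrafilters.

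For $(2) \Leftrightarrow (4)$: the direction $(2) \Rightarrow (4)$ is a short contradiction. If $\alpha \geq {}^{*}f(\beta)$ for some $f$ with ${}^{*}f(\beta)$ infinite, take $A = \{(n,m) \in \N^{2} \mid n \geq f(m)\}$; then $(\alpha,\beta) \in {}^{*}A$, but no finite $n$ can satisfy $n \geq {}^{*}f(\beta)$, contradicting (2). For $(4) \Rightarrow (2)$, I would use the minimization trick: given $A$ with $(\alpha,\beta) \in {}^{*}A$, define $g: \N \to \N$ by $g(m) = \min\{n \in \N \mid (n,m) \in A\}$ (with $g(m)=0$ if the set is empty). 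By transfer, ${}^{*}g(\beta)$ is the internal minimum of $\{\nu \in {}^{*}\N \mid (\nu,\beta) \in {}^{*}A\}$, which is nonempty (contains $\alpha$), so $({}^{*}g(\beta),\beta) \in {}^{*}A$ and ${}^{*}g(\beta) \leq \alpha$. If ${}^{*}g(\beta) \in \N$, we are done; otherwise ${}^{*}g(\beta)$ is infinite and (4) forces $\alpha < {}^{*}g(\beta)$, contradicting ${}^{*}g(\beta) \leq \alpha$.

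The main obstacle is the implication $(3) \Rightarrow (1)$: one must spot the right auxiliary set $A' = A \cup (B_{A}^{c} \times \N)$ so that the hypothesis of (3) is fulfilled and the conclusion actually recovers $(\alpha,\beta) \in {}^{*}A$; the other implications are either routine transfer arguments or direct applications of the definition of tensor product.
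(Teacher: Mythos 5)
Your proof is correct, and it takes a genuinely different route from the paper's. The paper handles $(2)\Leftrightarrow(3)$ and $(1)\Rightarrow(3)\Rightarrow(4)$ exactly as you do (your $(2)\Rightarrow(4)$ is just the contrapositive packaging of its $(3)\Rightarrow(4)$), but it closes the cycle by invoking Puritz's Theorem 2.4.7 for $(4)\Rightarrow(1)$, which it cites without proof. You instead close the cycle with two self-contained arguments: the implication $(3)\Rightarrow(1)$ via the auxiliary set $A'=A\cup(B_{A}^{c}\times\N)$, which cleanly converts membership of $A$ in $\mathfrak{U}_{\alpha}\otimes\mathfrak{U}_{\beta}$ into a set to which hypothesis $(3)$ applies; and $(4)\Rightarrow(2)$ via the minimization $g(m)=\min\{n\mid (n,m)\in A\}$, whose transfer gives an internal minimum ${}^{*}g(\beta)\leq\alpha$ that condition $(4)$ forces to be finite. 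The minimization is in fact the engine of Puritz's own argument --- the paper reproduces essentially the same device when it proves the generalized implication $(4)\Rightarrow(1)$ of Theorem 2.4.11 --- but your version is simpler because it only needs to produce a standard witness $n$ for condition $(2)$ rather than the full tensor-pair conclusion. What your approach buys is a fully self-contained proof of the theorem (including Puritz's equivalence $(1)\Leftrightarrow(4)$ as a byproduct); what the paper's approach buys is brevity, at the cost of leaning on an external reference for the hardest direction.
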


\begin{proof} First of all, that (4) is equivalent to (1) is the content of Puritz's Theorem; observe also that (2) is equivalent to (3), since the one is the contrapositive of the other applyed to $A^{c}$.\\
(1)$\Rightarrow$(3) Let $A$ be a subset of $\N^{2}$. By definition, given any natural number $n$, $(n,\beta)\in$$^{*}A$ if and only if $\beta\in\{\eta\in$ $^{*}\N\mid (n,\eta)\in$$^{*}A\}$ if and only if $\beta\in$$^{*}\{m\in\N\mid (n,m)\in A\}$ if and only if $\{m\in\N\mid (n,m)\in A\}\in \mathfrak{U}_{\beta}$.\\
Now suppose that, for every natural number $n$, $(n,\beta)\in$$^{*}A$. As we observed, this entails that for every natural number $n$ the set $\{m\in\N\mid (n,m)\in A\}$ is in $\mathfrak{U}_{\beta}$ so, in particular, the set 

\begin{center} $\{n\in\N\mid \{m\in\N\mid (n,m)\in A\}\in\mathfrak{U}_{\beta}\}$\end{center}

is $\N$, which is in $\mathfrak{U}_{\alpha}$, and this by definition implies that $A\in\mathfrak{U}_{\alpha}\otimes\mathfrak{U}_{\beta}$. But $(\alpha,\beta)$ is, by hypothesis, a tensor pair, so $\mathfrak{U}_{\alpha}\otimes\mathfrak{U}_{\beta}=\mathfrak{U}_{(\alpha,\beta)}$, and $(\alpha,\beta)\in$$^{*}A$.\\
(3)$\Rightarrow$(4) Let $f$ be a function in $\mathtt{Fun}(\N,\N)$ such that $^{*}f(\beta)$ infinite. Consider 

\begin{center} $A=\{(n,m)\in\N^{2}\mid n<f(m)\}$. \end{center}

Observe that, since $^{*}f(\beta)$ is infinite, for every natural number $n$ the pair $(n,\beta)$ is in $^{*}A=\{(\eta,\mu)\in$$^{*}\N^{2}\mid \eta<$$^{*}f(\mu)\}$. By hypothesis, we get that $(\alpha,\beta)\in$$^{*}A$, so $\alpha<$$^{*}f(\mu)$. Since this is true for every $f$ with $^{*}f(\beta)$ infinite, we get that $\alpha<er(\beta)$. \\ \end{proof}

We can generalize Puritz's result and Theorem 2.4.8 to $k$-tuples:

\begin{defn} An element $(\alpha_{1},...,\alpha_{k})$ in $^{*}\N^{k}$ is a {\bfseries tensor $k$-tuple} if $\mathfrak{U}_{(\alpha_{1},...,\alpha_{k})}=\mathfrak{U}_{\alpha_{1}}\otimes...\otimes\mathfrak{U}_{\alpha_{k}}$. \end{defn}

\begin{prop} For every natural number $k>1$, for every $\alpha_{1},...,\alpha_{k+1}$ in $^{*}\N$, the following two conditions are equivalent:
\begin{enumerate}
	\item $(\alpha_{1},....,\alpha_{k+1})$ is a tensor $(k+1)$-tuple;
	\item $(\alpha_{1},...,\alpha_{k})$ is a tensor $k$-tuple and $((\alpha_{1},...,\alpha_{k}),\alpha_{k+1})$ is a tensor pair.
\end{enumerate}
\end{prop}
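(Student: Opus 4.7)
The whole equivalence is essentially an associativity statement for $\otimes$, and my plan is to derive it from a single book-keeping lemma together with the natural identification $\N^{k+1}\cong\N^{k}\times\N$, under which subsets $C\subseteq\N^{k+1}$ correspond to subsets of $\N^{k}\times\N$.

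The preliminary step is to record the associativity identity
\[
\U_{1}\otimes\cdots\otimes\U_{k+1} \;=\; (\U_{1}\otimes\cdots\otimes\U_{k})\otimes\U_{k+1},
\]
where the tensor product on the right is that of the ultrafilter $\U_{1}\otimes\cdots\otimes\U_{k}$ on $\N^{k}$ with the ultrafilter $\U_{k+1}$ on $\N$ (whose natural definition is the obvious one: $C$ belongs to the right-hand side iff $\{\vec n\in\N^{k}\mid\{m\in\N\mid(\vec n,m)\in C\}\in\U_{k+1}\}\in\U_{1}\otimes\cdots\otimes\U_{k}$). This is proved by directly unwinding Definition 1.1.16 on both sides and observing that the two iterated conditions coincide term by term.

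For the implication $(1)\Rightarrow(2)$, I would first show that $(\alpha_{1},\ldots,\alpha_{k})$ is a tensor $k$-tuple. Given $B\subseteq\N^{k}$, since $^{*}(B\times\N)=\,^{*}B\times\,^{*}\N$ we have
\[
B\in\mathfrak{U}_{(\alpha_{1},\ldots,\alpha_{k})} \iff (\alpha_{1},\ldots,\alpha_{k})\in{}^{*}B \iff (\alpha_{1},\ldots,\alpha_{k+1})\in{}^{*}(B\times\N),
\]
and by hypothesis the last condition is equivalent to $B\times\N\in\U_{\alpha_{1}}\otimes\cdots\otimes\U_{\alpha_{k+1}}$. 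Using $\N\in\U_{\alpha_{k+1}}$ and unwinding Definition 1.1.16 one more time, this collapses to $B\in\U_{\alpha_{1}}\otimes\cdots\otimes\U_{\alpha_{k}}$, which is exactly what is needed. Once this is in place, the tensor pair property follows by comparing $\mathfrak{U}_{((\alpha_{1},\ldots,\alpha_{k}),\alpha_{k+1})}=\mathfrak{U}_{(\alpha_{1},\ldots,\alpha_{k+1})}$ (via the identification of $\N^{k+1}$ with $\N^{k}\times\N$) to $\mathfrak{U}_{(\alpha_{1},\ldots,\alpha_{k})}\otimes\mathfrak{U}_{\alpha_{k+1}}$, which by the first step and the preliminary associativity identity coincides with $\U_{\alpha_{1}}\otimes\cdots\otimes\U_{\alpha_{k+1}}$.

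For $(2)\Rightarrow(1)$ the argument simply chains the two hypotheses through associativity: for any $C\subseteq\N^{k+1}$, viewed as a subset of $\N^{k}\times\N$,
\[
C\in\mathfrak{U}_{(\alpha_{1},\ldots,\alpha_{k+1})} \iff C\in\mathfrak{U}_{((\alpha_{1},\ldots,\alpha_{k}),\alpha_{k+1})} \iff C\in\mathfrak{U}_{(\alpha_{1},\ldots,\alpha_{k})}\otimes\U_{\alpha_{k+1}},
\]
using that $((\alpha_{1},\ldots,\alpha_{k}),\alpha_{k+1})$ is a tensor pair. Since $(\alpha_{1},\ldots,\alpha_{k})$ is a tensor $k$-tuple, the latter rewrites as $C\in(\U_{\alpha_{1}}\otimes\cdots\otimes\U_{\alpha_{k}})\otimes\U_{\alpha_{k+1}}$, which by the preliminary identity is $\U_{\alpha_{1}}\otimes\cdots\otimes\U_{\alpha_{k+1}}$. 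The main obstacle is nothing conceptual but rather being careful about the identification $\N^{k+1}\cong\N^{k}\times\N$ and making sure the generalized tensor product with an ultrafilter on $\N^{k}$ is defined and handled consistently throughout.
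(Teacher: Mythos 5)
Your proposal is correct and follows essentially the same route as the paper: identify $\N^{k+1}$ with $\N^{k}\times\N$, record associativity of $\otimes$, and chain the resulting equalities of ultrafilters. The only difference is that you make explicit (via the cylinder sets $B\times\N$) the projection step showing that condition (1) forces $(\alpha_{1},\ldots,\alpha_{k})$ to be a tensor $k$-tuple, a point the paper's chain of equivalences leaves implicit.
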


\begin{proof} Observe that, via the function $f:\N^{k+1}\rightarrow\N^{k}\times\N$ that maps $(a_{1},...,a_{k},a_{k+1})$ in $((a_{1},...,a_{k}),a_{k+1})$, $\mathfrak{U}_{(\alpha_{1},...,\alpha_{k},\alpha_{k+1})}$ can be identified with $\mathfrak{U}_{((\alpha_{1},...,\alpha_{k}),\alpha_{k+1})}$.\\
So $(\alpha_{1},...,\alpha_{k},\alpha_{k+1})$ is a tensor $(k+1)$-tuple if and only if $\mathfrak{U}_{(\alpha_{1},...,\alpha_{k},\alpha_{k+1})}=\mathfrak{U}_{\alpha_{1}}\otimes...\otimes\mathfrak{U}_{\alpha_{k}}\otimes\mathfrak{U}_{\alpha_{k+1}}$ if and only if $\mathfrak{U}_{(\alpha_{1},...,\alpha_{k},\alpha_{k+1})}=\mathfrak{U}_{((\alpha_{1},....,\alpha_{k}),\alpha_{k+1})}=(\mathfrak{U}_{\alpha_{1}}\otimes...\otimes\mathfrak{U}_{\alpha_{k}})\otimes\mathfrak{U}_{\alpha_{k+1}}$ if and only if $(\alpha_{1},...,\alpha_{k})$ is a tensor $k$-tuple and $((\alpha_{1},...,\alpha_{k}),\alpha_{k+1})$ is a tensor pair. \\\end{proof}

As a consequence, a characterization of tensor pairs in the form $((\alpha_{1},...,\alpha_{k}),\alpha_{k+1})$ would give an inductive procedure to test if $(\alpha_{1},...,\alpha_{k+1})$ is a tensor $(k+1)$-tuple. Such a characterization can be obtained by generalizing Theorem 2.4.8:

\begin{thm} Given elements $\alpha_{1},...,\alpha_{k}$ in $^{*}\N$, $\alpha_{k+1}$ in $^{*}\N\setminus\N$, the following conditions are equivalent:

\begin{enumerate}
	\item $((\alpha_{1},...,\alpha_{k}),\alpha_{k+1})$ is a tensor pair;
	\item For every subset $A\subseteq\N^{k}\times \N$, if $((\alpha_{1},...,\alpha_{k}),\alpha_{k+1})\in$$^{*}A$ then there exists $(n_{1},...,n_{k})\in\N^{k}$ with $((n_{1},...,n_{k}),\alpha_{k+1})\in$$^{*}A$;
	\item For every subset $A\subseteq\N^{k}\times\N$, if for every natural numbers $n_{1},...,n_{k}$ $((n_{1},...,n_{k}),\alpha_{k+1})\in$$^{*}A$ then $((\alpha_{1},...,\alpha_{k}),\alpha_{k+1})\in$$^{*}A$;
	\item $\alpha_{i}\leq er(\alpha_{k+1})$ for every index $i\leq k$.
\end{enumerate}

\end{thm}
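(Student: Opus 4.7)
I would mirror the proof of Theorem~2.4.8 (the case $k=1$). The equivalence $(2)\Leftrightarrow(3)$ is immediate by applying the contrapositive of (3) to $A^{c}$, and the remaining work is to close the chain $(1)\Rightarrow(3)\Rightarrow(4)\Rightarrow(1)$. The first step $(1)\Rightarrow(3)$ unfolds the tensor pair definition: if $((n_{1},\dots,n_{k}),\alpha_{k+1})\in {}^{*}A$ for every $(n_{1},\dots,n_{k})\in\N^{k}$, then each slice $\{m:((n_{1},\dots,n_{k}),m)\in A\}$ lies in $\mathfrak{U}_{\alpha_{k+1}}$, so the collection of such $k$-tuples equals $\N^{k}$ and therefore belongs to $\mathfrak{U}_{(\alpha_{1},\dots,\alpha_{k})}$; hence $A\in\mathfrak{U}_{(\alpha_{1},\dots,\alpha_{k})}\otimes\mathfrak{U}_{\alpha_{k+1}}$, and by the tensor pair hypothesis this equals $\mathfrak{U}_{((\alpha_{1},\dots,\alpha_{k}),\alpha_{k+1})}$, giving $((\alpha_{1},\dots,\alpha_{k}),\alpha_{k+1})\in {}^{*}A$.

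For $(3)\Rightarrow(4)$, fix an index $i\leq k$ and a function $f\in\mathtt{Fun}(\N,\N)$ with ${}^{*}f(\alpha_{k+1})$ infinite, and apply (3) to the set $A^{f}_{i}=\{((n_{1},\dots,n_{k}),m)\in\N^{k}\times\N:n_{i}<f(m)\}$. Since $n_{i}$ is finite while ${}^{*}f(\alpha_{k+1})$ is infinite, $((n_{1},\dots,n_{k}),\alpha_{k+1})\in {}^{*}A^{f}_{i}$ for every $(n_{1},\dots,n_{k})\in\N^{k}$; by (3), $((\alpha_{1},\dots,\alpha_{k}),\alpha_{k+1})\in {}^{*}A^{f}_{i}$, giving $\alpha_{i}<{}^{*}f(\alpha_{k+1})$. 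Letting $f$ range over all such functions yields $\alpha_{i}\leq er(\alpha_{k+1})$.

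The crucial step is $(4)\Rightarrow(1)$, the Puritz-type direction. I would fix a bijection $\pi:\N^{k}\to\N$ satisfying polynomial growth bounds $\max(n_{1},\dots,n_{k})\leq \pi(n_{1},\dots,n_{k})\leq P(\max(n_{1},\dots,n_{k}))$ for a fixed polynomial $P$ (the iterated Cantor pairing function is such a choice). The induced bijection $\pi\times\mathrm{id}:\N^{k}\times\N\to\N^{2}$ transports ultrafilters in a tensor-product-preserving way, so $((\alpha_{1},\dots,\alpha_{k}),\alpha_{k+1})$ is a tensor pair in ${}^{*}\N^{k}\times{}^{*}\N$ iff $({}^{*}\pi(\alpha_{1},\dots,\alpha_{k}),\alpha_{k+1})$ is one in ${}^{*}\N^{2}$, and by Theorem~2.4.8 this amounts to ${}^{*}\pi(\alpha_{1},\dots,\alpha_{k})\leq er(\alpha_{k+1})$. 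The equivalence with (4) then follows from the bounds on $\pi$: the lower bound $\alpha_{i}\leq{}^{*}\pi(\alpha_{1},\dots,\alpha_{k})$ extracts the coordinate-wise inequalities, and for the converse, given $f$ with ${}^{*}f(\alpha_{k+1})$ infinite, the function $g(m)=\lfloor P^{-1}(f(m))\rfloor$ also satisfies ${}^{*}g(\alpha_{k+1})$ infinite, so (4) gives $\max(\alpha_{1},\dots,\alpha_{k})\leq {}^{*}g(\alpha_{k+1})$ and hence ${}^{*}\pi(\alpha_{1},\dots,\alpha_{k})\leq {}^{*}P({}^{*}g(\alpha_{k+1}))\leq {}^{*}f(\alpha_{k+1})$.

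The main obstacle I anticipate is precisely this last packaging step: translating the coordinate-wise condition (4) into the single-variable inequality ${}^{*}\pi(\alpha_{1},\dots,\alpha_{k})\leq er(\alpha_{k+1})$ hinges on a careful choice of $\pi$ with good growth and on a clean definition of a monotone $P^{-1}$ on the eventual range of $f$. An alternative would be to prove $(4)\Rightarrow(2)$ directly via a $\mathfrak{c}^{+}$-saturation argument on the family of internal sets $\{(\eta_{1},\dots,\eta_{k})\in{}^{*}\N^{k}:((\eta_{1},\dots,\eta_{k}),\alpha_{k+1})\in{}^{*}A\}$ intersected with boxes $\prod_{i}[0,{}^{*}f_{i}(\alpha_{k+1})]$ for $f_{i}$ ranging over functions realizing (4), but this merely shifts where the technical work sits.
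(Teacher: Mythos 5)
Your proposal is correct, and for the only substantive direction it takes a genuinely different route from the paper. The steps $(2)\Leftrightarrow(3)$, $(1)\Rightarrow(3)$ and $(3)\Rightarrow(4)$ coincide with the paper's (the paper packages your sets $A^{f}_{i}$ into the single set $\{((n_{1},\dots,n_{k}),m)\mid n_{i}<f(m)\ \forall i\leq k\}$, an immaterial difference). For $(4)\Rightarrow(1)$ the paper does \emph{not} reduce to the case $k=1$: it reruns Puritz's construction directly in dimension $k$, defining for each $A$ in the tensor product the function $f(m)=\min\{n\mid\exists n_{1},\dots,n_{k}\leq n \mbox{ with } A_{(n_{1},\dots,n_{k})}\in\mathfrak{U}_{\alpha_{k+1}} \mbox{ and } m\notin A_{(n_{1},\dots,n_{k})}\setminus[0,n]\}$, proving $^{*}f(\alpha_{k+1})$ is infinite, and using $\alpha_{i}<{}^{*}f(\alpha_{k+1})$ to conclude $((\alpha_{1},\dots,\alpha_{k}),\alpha_{k+1})\in{}^{*}A$. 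Your reduction via a pairing bijection $\pi$ with $\max_{i}n_{i}\leq\pi(n_{1},\dots,n_{k})\leq P(\max_{i}n_{i})$ is sound: the induced map $\overline{\pi\times\mathrm{id}}$ is a bijection of $\beta(\N^{k}\times\N)$ onto $\beta(\N^{2})$ carrying $\mathfrak{U}_{(\vec\alpha)}\otimes\mathfrak{U}_{\alpha_{k+1}}$ to $\mathfrak{U}_{^{*}\pi(\vec\alpha)}\otimes\mathfrak{U}_{\alpha_{k+1}}$ and $\mathfrak{U}_{((\vec\alpha),\alpha_{k+1})}$ to $\mathfrak{U}_{(^{*}\pi(\vec\alpha),\alpha_{k+1})}$, so the tensor-pair property transfers, and your growth argument correctly shows $^{*}\pi(\vec\alpha)<er(\alpha_{k+1})$ iff each $\alpha_{i}<er(\alpha_{k+1})$ (in essence, that $er(\alpha_{k+1})$ is closed from below under standard functions of the $\alpha_{i}$). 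What your route buys is genuine reuse of Theorem 2.4.8, replacing the paper's repeated technical construction by the one-time bookkeeping on $\pi$ and $P$; what the paper's route buys is self-containedness and no dependence on a coding of $\N^{k}$ into $\N$. Two small points to patch in a write-up: make the final inequality strict (e.g.\ define $g(m)=\max\{n\mid P(n)<f(m)\}$, or run the argument with $f-1$), and handle separately the degenerate case where all $\alpha_{i}$ are finite, since Theorem 2.4.8 as stated assumes both entries infinite (there Proposition 2.4.6 gives (1) directly and (4) is automatic).
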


\begin{proof} Similarly to Theorem 2.4.8, we have that conditions (2) and (3) are equivalent, since the one is the contrapositive of the other applyed to $A^{c}$.\\
(1)$\Rightarrow$(3): Let $A$ be a subset of $\N^{k}\times\N$. By definition, given natural numbers $n_{1},...,n_{k}$, $((n_{1},...,n_{k}),\alpha_{k+1})\in$$^{*}A$ if and only if $\alpha_{k+1}\in\{\beta\in$ $^{*}\N\mid ((n_{1},...,n_{k}),\beta)\in$$^{*}A\}$ if and only if $\alpha_{k+1}\in$$^{*}\{m\in\N\mid ((n_{1},...,n_{k}),m)\in A\}$ if and only if $\{m\in\N\mid ((n_{1},...,n_{k}),m)\in A\}\in \mathfrak{U}_{\alpha_{k+1}}$.\\
Suppose that, for every natural numbers $n_{1},...,n_{k}$, $((n_{1},...,n_{k}),\alpha_{k+1})\in$$^{*}A$. As we observed, this entails that for every natural numbers $n_{1},...,n_{k}$ the set $\{m\in\N\mid ((n_{1},...,n_{k}),m)\in A\}$ is in $\mathfrak{U}_{\alpha_{k+1}}$ so, in particular, 

\begin{center} $\{(n_{1},...,n_{k})\in\N^{k}\mid \{m\in\N\mid ((n_{1},...,n_{k}),m)\in A\}\in\mathfrak{U}_{\alpha_{k+1}}\}=\N^{k}$,\end{center}

so this set is in $\mathfrak{U}_{(\alpha_{1},...,\alpha_{k})}$, and this by definition implies that

\begin{center} $A\in\mathfrak{U}_{(\alpha_{1},...,\alpha_{k})}\otimes\mathfrak{U}_{\alpha_{k+1}}$. \end{center}

But $((\alpha_{1},...,\alpha_{k}),\alpha_{k+1})$ is, by hypothesis, a tensor pair, so

\begin{center} $\mathfrak{U}_{(\alpha_{1},...,\alpha_{k})}\otimes\mathfrak{U}_{\alpha_{k+1}}=\mathfrak{U}_{((\alpha_{1},...,\alpha_{k}),\alpha_{k+1})}$, \end{center}

and $((\alpha_{1},...,\alpha_{k}),\alpha_{k+1})\in$$^{*}A$.\\
(3)$\Rightarrow$(4) Let $f$ be a function in $\mathtt{Fun}(\N^{k},\N)$ such that $^{*}f(\alpha_{k+1})$ is infinite. Consider 

\begin{center} $A=\{((n_{1},...,n_{k}),m)\in\N^{k}\times\N\mid n_{i}<f(m)$ for every $i\leq k\}$. \end{center}

Observe that, as $^{*}f(\alpha_{k+1})$ is infinite, for every natural numbers $n_{1},...,n_{k}$ the pair $((n_{1},...,n_{k}),\alpha_{k+1})$ is in

\begin{center} $^{*}A=\{((\eta_{1},...,\eta_{k}),\mu)\in$$^{*}\N^{k}\times$$^{*}\N\mid \eta_{i}<$$^{*}f(\mu)$ for every $i\leq k\}$. \end{center}

By hypothesis, $((\alpha_{1},...,\alpha_{k}),\alpha_{k+1})\in$$^{*}A$, so $\alpha_{i}<$$^{*}f(\alpha_{k+1})$ for every $i\leq k$. Since this holds for every function $f$ in $\mathtt{Fun}(\N^{k},\N)$ with $^{*}f(\alpha_{k+1})$ infinite, it follows that $\alpha_{i}<er(\alpha_{k+1})$ for every index $i\leq k$.\\
(4)$\Rightarrow$(1): This proof follows the original ideas of Puritz.\\
Consider $\U=\mathfrak{U}_{(\alpha_{1},...,\alpha_{k})}\otimes\mathfrak{U}_{\alpha_{k+1}}$, and let $A$ be any set in $\U$. For every $n_{1},...,n_{k}$ in $\N$, define

\begin{center} $A_{(n_{1},...,n_{k})}=\{m\in\N\mid ((n_{1},...,n_{k}),m)\in A\}$ \end{center}

and 

\begin{center}$A^{\prime}_{(n_{1},...,n_{k})}=A_{(n_{1},...,n_{k})}\setminus [0,\max\{n_{1},...,n_{k}\}]$. \end{center}

Consider the function $f:\N\rightarrow\N$ such that, for every natural number $m$,

\begin{center} $f(m)=\min\{n\in\N\mid\exists n_{1},...,n_{k}$ with $n_{i}\leq n$ for every $i\leq k$, $A_{(n_{1},...,n_{k})}\in\mathfrak{U}_{\alpha_{k+1}}$ and $m\notin A_{(n_{1},..,n_{k})}\setminus[0,n]\}$. \end{center}

The definition is well-posed because, since $A$ is in $\U$, there are arbitrarily large natural numbers $n$ (in particular, $n>m$) such that $A_{(n_{1},...,n_{k})}\in\mathfrak{U}_{\alpha_{k+1}}$ for some $n_{1},...n_{k}$ with $n_{i}\leq n$ for every index $i\leq k$.\\
Observe that $^{*}f(\alpha_{k+1})$ is infinite: in fact, suppose that $^{*}f(\alpha_{k+1})=n$ for some finite natural number $n$; by definition, there are $n_{1},...,n_{k}$ with $n_{i}\leq n$ for every index $i\leq k$ and $A_{(n_{1},...,n_{k})}\in\mathfrak{U}_{\alpha_{k+1}}$, which entails $\alpha_{k+1}$ in $^{*}A_{(n_{1},...,n_{k})}$ and, as $\alpha_{k+1}$ is infinite, $\alpha_{k+1}\in$$^{*}A_{(n_{1},...,n_{k})}\setminus[0,n]$, so $^{*}f(\alpha_{k+1})\neq n$, absurd.\\
By hypothesis, $\alpha_{i}<$$^{*}f(\alpha_{k+1})$ for every index $i\leq k$. Pose

\begin{center} $A_{0}=\{(n_{1},...,n_{k})\in\N^{k}\mid A_{(n_{1},...,n_{k})}\in\mathfrak{U}_{\alpha_{k+1}}\}$. \end{center}

As $A$ is a set in $\U$, $A_{0}\in A_{(\alpha_{1},...,\alpha_{k})}$ so $(\alpha_{1},...,\alpha_{k})\in$$^{*}A_{0}=\{(\beta_{1},...,\beta_{k})\in$$^{*}\N^{k}\mid A_{(\beta_{1},...,\beta_{k})}\in$$^{*}\mathfrak{U}_{\alpha_{k+1}}\}$. So 

\begin{center} $A_{(\alpha_{1},...,\alpha_{k})}=\{\mu\in$$^{*}\N\mid ((\alpha_{1},...,\alpha_{k}),\mu)\in$$^{*}A\}\in$$^{*}\mathfrak{U}_{\alpha_{k+1}}$. \end{center}

For every index $i\leq k$, $\alpha_{i}<$$^{*}f(\alpha_{k+1})$, and 

\begin{center} $^{*}f(\alpha_{k+1})=\min\{\eta\mid\exists \rho_{1},...,\rho_{k}$ with $\rho_{i}\leq \eta$ for every $i\leq k$, $A_{(\rho_{1},...,\rho_{k})}\in$$^{*}\mathfrak{U}_{\alpha_{k+1}}$ and $\alpha_{k+1}\notin A_{\rho_{1},...,\rho_{k}}\setminus [0,\eta]\}$.\end{center}

Since, as we showed, $A_{(\alpha_{1},...,\alpha_{k})}\in$$^{*}\mathfrak{U}_{\alpha_{k+1}}$, necessarily 

\begin{center} $\alpha_{k+1}\in A_{(\alpha_{1},...,\alpha_{k})}\setminus [0,\max\{\alpha_{1},...,\alpha_{k}\}]$,\end{center}

so $((\alpha_{1},...,\alpha_{k}),\alpha_{k+1})\in$$^{*}A$. Since this is true for every set $A$ in $\U$, it follows that $\mathfrak{U}_{(\alpha_{1},...,\alpha_{k})}\otimes\mathfrak{U}_{\alpha_{k+1}}=\mathfrak{U}_{((\alpha_{1},...,\alpha_{k}),\alpha_{k+1})}$, so $((\alpha_{1},...,\alpha_{k}),\alpha_{k+1})$ is a tensor pair.\\ \end{proof}

\begin{cor} For every natural number $k\geq 1$, for every $\alpha_{1},...,\alpha_{k+1}$ in $^{*}\N$, the following two conditions are equivalent:

\begin{enumerate}
	\item $(\alpha_{1},...,\alpha_{k+1})$ is a tensor $(k+1)$-tuple;
	\item $\alpha_{i} < er (\alpha_{i+1})$ for every index $i\leq k$.
\end{enumerate}
\end{cor}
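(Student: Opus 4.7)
The plan is to induct on $k$. The base case $k=1$ asks only that $(\alpha_{1},\alpha_{2})$ is a tensor pair iff $\alpha_{1}<er(\alpha_{2})$, which is exactly the equivalence of conditions (1) and (4) in Theorem 2.4.8 (or Puritz's Theorem 2.4.7). When $\alpha_{2}$ is finite the statement is trivial from both sides: $(\alpha_{1},\alpha_{2})$ is automatically a tensor pair by Proposition 2.4.6, and $er(\alpha_{2})=\emptyset$ makes ``$\alpha_{1}<er(\alpha_{2})$'' vacuously true.

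For the inductive step, assume the equivalence for $k$ and consider a $(k+2)$-tuple $(\alpha_{1},\ldots,\alpha_{k+2})$. Proposition 2.4.11 decomposes the problem: $(\alpha_{1},\ldots,\alpha_{k+2})$ is a tensor $(k+2)$-tuple iff both $(\alpha_{1},\ldots,\alpha_{k+1})$ is a tensor $(k+1)$-tuple and $((\alpha_{1},\ldots,\alpha_{k+1}),\alpha_{k+2})$ is a tensor pair. I would rewrite the first half, via the inductive hypothesis, as $\alpha_{i}<er(\alpha_{i+1})$ for every $i\le k$, and rewrite the second half, via Theorem 2.4.12, as $\alpha_{i}\le er(\alpha_{k+2})$ for every $i\le k+1$ (treating the case $\alpha_{k+2}\in\N$ separately, where by the obvious extension of Proposition 2.4.6 the pair is always tensor and both sides are vacuous).

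What remains is to reconcile the conjunction of these two conditions with the clean chain $\alpha_{i}<er(\alpha_{i+1})$ for $i\le k+1$. The forward implication extracts the single instance $\alpha_{k+1}\le er(\alpha_{k+2})$ and sharpens it to strict: given any witness $f$ with $^{*}f(\alpha_{k+2})$ infinite, the shifted function $n\mapsto f(n)\dotminus 1$ (with $\dotminus$ the truncated subtraction) also sends $\alpha_{k+2}$ to an infinite value, and $\alpha_{k+1}\le{}^{*}f(\alpha_{k+2})-1$ gives $\alpha_{k+1}<{}^{*}f(\alpha_{k+2})$. Conversely, given the full chain, one needs, for each $i\le k$, to deduce $\alpha_{i}\le er(\alpha_{k+2})$ from $\alpha_{i}<er(\alpha_{i+1}),\ldots,\alpha_{k+1}<er(\alpha_{k+2})$; the key observation is transitivity of the relation $<er$: if $\alpha_{j+1}$ is infinite, then choosing $f=\mathrm{id}$ inside $\alpha_{j}<er(\alpha_{j+1})$ forces $\alpha_{j}<\alpha_{j+1}$, so telescoping along the chain gives $\alpha_{i}<\alpha_{k+1}<{}^{*}f(\alpha_{k+2})$ for every admissible $f$.

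The main obstacle is precisely this telescoping/transitivity argument, together with the bookkeeping forced by possibly finite entries in the tuple (where $er(\alpha_{j+1})=\emptyset$ makes a link in the chain vacuous). For a finite $\alpha_{i}$ the desired bound $\alpha_{i}\le er(\alpha_{k+2})$ is immediate, since $er(\alpha_{k+2})$ consists of infinite hypernaturals; and if $\alpha_{k+2}$ itself is finite the whole right-hand side is vacuous. So once the lemma ``$\alpha<er(\beta)$ and $\beta<er(\gamma)$ imply $\alpha<er(\gamma)$ whenever $\beta,\gamma$ are infinite'' is set up, the induction closes by routine case-splitting.
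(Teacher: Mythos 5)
Your proof is correct and follows essentially the same route as the paper: induction on $k$ with base case Puritz's Theorem, splitting the $(k+2)$-tuple via the decomposition proposition into a shorter tensor tuple plus the tensor pair $((\alpha_{1},\ldots,\alpha_{k+1}),\alpha_{k+2})$, and then invoking the $er$-characterization of such pairs. The one substantive difference is that you make explicit the telescoping lemma ($\alpha<er(\beta)$ and $\beta<er(\gamma)$ imply $\alpha<er(\gamma)$ for infinite $\beta$) needed to pass from the chain condition $\alpha_{i}<er(\alpha_{i+1})$ to the hypothesis $\alpha_{i}<er(\alpha_{k+2})$ for \emph{all} $i$ required by that characterization --- a step the paper's own proof silently elides --- so your version is, if anything, the more complete one.
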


\begin{proof} We proceed by induction on $k$. If $k=1$, this is Puritz's Theorem.\\
(1)$\Rightarrow(2)$ Pose $k=h+1$, and suppose that $(\alpha_{1},...,\alpha_{k+1})$ is a tensor $(k+1)$-tuple. By Proposition 2.4.10 it follows that $(\alpha_{1},....,\alpha_{k})$ is a tensor $k$-tuple and $((\alpha_{1},...,\alpha_{k}),\alpha_{k+1})$ is a tensor pair. By inductive hypothesis, $\alpha_{i}< er(\alpha_{i+1})$ for every index $i\leq k-1$, and by Theorem 2.4.11 it follows that $\alpha_{i}< er(\alpha_{k+1})$ for every index $i\leq k$; in particular $\alpha_{k} < er(\alpha_{k+1})$.\\
(2)$\Rightarrow(1)$ Suppose that $\alpha_{i}<er(\alpha_{i+1})$ for every index $i\leq k$. By Theorem 2.4.11 it follows that $((\alpha_{1},...,\alpha_{k}),\alpha_{k+1})$ is a tensor pair and, by inductive hypothesis, that $(\alpha_{1},...,\alpha_{k})$ is a tensor $k$-tuple. So, by Proposition 2.4.10, $(\alpha_{1},...,\alpha_{k+1})$ is a tensor $(k+1)$-tuple. \\ \end{proof}

The above theorem, similarly to Theorem 2.4.8, gives four equivalent characterizations for the tensor $(k+1)$-tuples but, in some sense, it has a weakness: even with these new characterizations, it is difficult, given $k+1$ elements $\alpha_{1},....,\alpha_{k+1}$ in $^{*}\N$, to decide if $(\alpha_{1},...,\alpha_{k+1})$ is a tensor $(k+1)$-tuple or not.\\
We are led by this observation to ask if it is possible to find a relation $R$ on $^{*}\N^{k+1}$ with these two properties:
\begin{enumerate}
	\item if $(\alpha_{1},...,\alpha_{k+1})\in R$ then $(\alpha_{1},...,\alpha_{k+1})$ is a tensor $(k+1)$-tuple, and
	\item given $\alpha_{1},....,\alpha_{k+1}$ it is simple to decide if $(\alpha_{1},...,\alpha_{k+1})\in R$ or not.
\end{enumerate}
This question can be solved by considering a hyperextension of $\N$ that has a particular property: it allows the iteration of the star map.

\section{The Nonstandard Structure $^{\bullet}\N$}

Let $\alpha,\beta$ be infinite hypernatural numbers, and consider the ultrafilter $\U=\mathfrak{U}_{\alpha}\otimes\mathfrak{U}_{\beta}$. By definition, a subset $A$ of $\N^{2}$ is in $\U$ if and only if the set 

\begin{center} $\{n\in\N\mid \{m\in\N\mid (n,m)\in A\}\in\mathfrak{U}_{\beta}\}$ \end{center}

is in $\mathfrak{U}_{\alpha}$. Since $\alpha,\beta$ are generators of $\mathfrak{U}_{\alpha},\mathfrak{U}_{\beta}$, this condition holds if and only if 

\begin{center} $(\dagger)$ $\alpha\in$$^{*}\{n\in\N\mid \beta\in$$^{*}\{m\in\N\mid (n,m)\in A\}\}$.\end{center}

As, by transfer, $^{*}\{m\in\N\mid (n,m)\in A\}=\{\eta\in$$^{*}\N\mid (n,\eta)\in$$^{*}A\}$, $(\dagger)$ can be rewritten in this way:

\begin{center} $\alpha\in$$^{*}\{n\in\N\mid (n,\beta)\in$$^{*}A\}$. \end{center}

Here, we are tempted to continue in this way: by transfer

\begin{center} $^{*}\{n\in\N\mid (n,\beta)\in$$^{*}A\}=\{\eta\in$$^{*}\N\mid (\eta,$$^{*}\beta)\in$$^{**}A\}$ \end{center}

so, as $\alpha$ is an element of this set, it follows that

\begin{center}$A\in\mathfrak{U}_{\alpha}\otimes\mathfrak{U}_{\beta}$ if and only if $(\alpha,$$^{*}\beta)\in$$^{**}A$. \end{center}

There are two problems: if $\beta\in$$^{*}\N\setminus \N$, what is the meaning of $^{*}\beta$? And what is $^{**}A$? Recall that, in the superstructure approach that we adopt, the star map $*$ goes from a superstructure $\mathbb{V}(X)$ to one other superstructure $\mathbb{V}(Y)$: $\alpha,\beta$ are elements in $\mathbb{V}(Y)$, and we cannot apply the star map $*$ to them. This would be possible if $\mathbb{V}(X)=\mathbb{V}(Y)$: we would like to work with a superstructure model $\langle \mathbb{V}(X), \mathbb{V}(X), *\rangle$ of nonstandard methods where the star map goes from a superstructure $\mathbb{V}(X)$ to itself.\\
A natural question arises: is such a construction possible? The answer is affirmative; an example is given in the article $\cite{rif3}$ by Vieri Benci, with a construction that was motivated by the Alpha Theory; another possibility is given by the nonstandard set theory {\itshape $^{*}$ZFC} by Mauro di Nasso (see $\cite{rif13}$), where the enlarging map $*$ is defined for every set of the universe. The theory $^{*}$ZFC is equiconsistent with ZFC.\\
In this section, we fix a superstructure model of nonstandard methods in the form $\langle \mathbb{V}(X), \mathbb{V}(X), *\rangle$, and we study a few of its properties, with a particular interest for the relations between this kind of superstructures and the bridge map.

\subsection{Star iterations}

Throughout this section, we fix a single superstructure model of nonstandard methods 

\begin{center}$\langle \mathbb{V}(X), \mathbb{V}(X), *\rangle$.\end{center}

Since $*$ is, by definition, a function that maps $\mathbb{V}(X)$ into $\mathbb{V}(X)$, it is natural to ask what happen if one iterates this function.

\begin{defn} We define inductively the family $\langle S_{n}\mid n\in\N\rangle$ of functions $S_{n}:\mathbb{V}(X)\rightarrow\mathbb{V}(X)$ posing

\begin{center} $S_{0}=id;$ \end{center}

and, for $n\geq 0$, 

\begin{center} $S_{n+1}=*\circ S_{n}$. \end{center}
\end{defn}

We make this convention: if $y$ is any object in $\mathbb{V}(X)$, for every natural number $n$ the notation

\begin{center}$^{\overset{n}{\overbrace{*...*}}}y$ \end{center}

is equivalent to $S_{n}(y)$, and it is used when the natural number $n$ is "small": e.g., if $\alpha$ is an hypernatural number, we usually denote $S_{1}(\alpha), S_{2}(\alpha)$ by $^{*}\alpha$, $^{**}\alpha$, respectively.\\
In the following proposition, if $\varphi(x_{1},...,x_{n})$ is a first order formula with parameters $p_{1},...,p_{k}$, for every natural number $n$ the formula $\varphi_{n}(x_{1},...,x_{n})$ is obtained by $\varphi(x_{1},...,x_{n})$ by substituting each parameter $p_{i}$ with $S_{n}(p_{i})$.

\begin{thm} For every positive natural number $n$, $\langle \mathbb{V}(X),\mathbb{V}(X),S_{n}\rangle$ is a superstructure model of nonstandard methods.\end{thm}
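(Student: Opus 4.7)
The plan is to proceed by induction on $n$. The base case $n=1$ is immediate since $S_{1}=*$ and by hypothesis $\langle \mathbb{V}(X),\mathbb{V}(X),*\rangle$ is a superstructure model of nonstandard methods. For the inductive step, assume $\langle \mathbb{V}(X),\mathbb{V}(X),S_{n}\rangle$ is a superstructure model, and verify the three defining conditions of Definition 2.1.9 for $S_{n+1}=*\circ S_{n}$. The first two (that $\N$ sits inside $X$ and that $\mathbb{V}(X)$ is a superstructure on an infinite set) are built into the ambient assumption. The star map condition $S_{n+1}(X)=X$ is obtained chaining $S_{n+1}(X)={}^{*}S_{n}(X)={}^{*}X=X$, using the inductive hypothesis and that $*$ maps $\mathbb{V}(X)$ into itself.

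The substantial work is in transfer and properness. For the transfer principle, given a bounded quantifier formula $\varphi(x_{1},\ldots,x_{k})$ (with parameters, so that $\varphi_{m}$ denotes the formula obtained by replacing each parameter $p$ with $S_{m}(p)$) and elements $a_{1},\ldots,a_{k}\in\mathbb{V}(X)$, I would chain two equivalences:
\begin{center}
$\mathbb{V}(X)\models\varphi(a_{1},\ldots,a_{k})$\\
$\Updownarrow$ (transfer for $S_{n}$, by inductive hypothesis)\\
$\mathbb{V}(X)\models\varphi_{n}(S_{n}(a_{1}),\ldots,S_{n}(a_{k}))$\\
$\Updownarrow$ (transfer for $*$, applied to the bounded quantifier formula $\varphi_{n}$ with parameters $S_{n}(p_{1}),\ldots,S_{n}(p_{r})$)\\
$\mathbb{V}(X)\models{}^{*}(\varphi_{n})({}^{*}S_{n}(a_{1}),\ldots,{}^{*}S_{n}(a_{k}))$.
\end{center}
Finally I observe that ${}^{*}(\varphi_{n})$ is exactly $\varphi_{n+1}$ (the star replaces each $S_{n}(p_{i})$ by ${}^{*}S_{n}(p_{i})=S_{n+1}(p_{i})$) and ${}^{*}S_{n}(a_{i})=S_{n+1}(a_{i})$, giving transfer for $S_{n+1}$.

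For properness, let $A\in\mathbb{V}(X)$ be infinite. By induction $S_{n}$ is proper, so $S_{n}(A)$ is an infinite set in $\mathbb{V}(X)$ (since ${}^{\sigma_{n}}A\subsetneq S_{n}(A)$ forces $S_{n}(A)$ to be infinite). Properness of the original map $*$ then gives ${}^{\sigma}S_{n}(A)\subsetneq{}^{*}S_{n}(A)=S_{n+1}(A)$. Since ${}^{\sigma_{n+1}}A=\{{}^{*}S_{n}(a)\mid a\in A\}\subseteq\{{}^{*}b\mid b\in S_{n}(A)\}={}^{\sigma}S_{n}(A)$, it follows that ${}^{\sigma_{n+1}}A\subsetneq S_{n+1}(A)$, closing the induction.

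The only point where I expect mild care is in the transfer step, specifically in tracking how iterated parameter substitution interacts with the bounded-formula hypothesis: one must be sure that $\varphi_{n}$ (with parameters in the image of $S_{n}$) is still a legitimate bounded quantifier formula to which the transfer principle for $*$ applies. This is automatic because parameter substitution does not change the quantifier structure, so the step is essentially bookkeeping rather than a genuine obstacle.
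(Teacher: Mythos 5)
Your proof is correct and follows essentially the same route as the paper: induction on $n$, verifying $S_{n+1}(X)=X$, properness, and transfer by composing the inductive hypothesis for $S_{n}$ with the corresponding property of $*$. The only (cosmetic) difference is the order of composition in the transfer and properness steps — you factor $S_{n+1}=*\circ S_{n}$ and apply $*$ last, while the paper applies $*$ first and then $S_{n}$ — but both decompositions are equivalent and the bookkeeping on parameter substitution is handled correctly in either case.
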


\begin{proof} By induction on $n$; the case $n=1$ holds, as $\langle \mathbb{V}(X), \mathbb{V}(X), *\rangle$ is a superstructure model of nonstandard methods.\\
Suppose $n=m+1$. By definition of superstructure model of nonstandard methods, we have to prove that $S_{n}(X)=X$ and that $S_{n}$ is a proper star map with the transfer property.\\
1) $S_{n}(X)=X$: by induction we know that $S_{m}(X)=X$ so, by transfer property we get that $^{*}$$(S_{m}(X))=$$^{*}X$. As $^{*}$$(S_{m}(X))=$$S_{n}(X)$ by definition, and $^{*}X=X$ since $\langle \mathbb{V}(X),\mathbb{V}(X),*\rangle$ is a superstructure model of nonstandard methods, it follows that $S_{n}(X)=X$.\\
2) $S_{n}$ is a proper star map: let $A$ be an infinite subset of $X$. Consider 

\begin{center} $^{\sigma_{n}}A=\{S_{n}(a)\mid a\in A\}$. \end{center}

Pose $B=$$^{\sigma_{m}}A$. By construction, $^{\sigma_{n}}A=$$^{\sigma}B$ and, since $*$ is proper, $^{\sigma}B$ is a proper subset of $^{*}B$. Also, by inductive hypothesis, $B=$$^{\sigma_{m}}A$ is a proper subset of $S_{m}(A)$ so, by transfer property (of $*$), it follows that $^{*}B$ is a proper subset of $S_{m}(A)$.\\
3) $S_{n}$ satisfies the transfer property: let $\varphi(x_{1},...,x_{n})$ be a bounded quantifier formula. Since $*$ satisfies the transfer property, we know that, for every $a_{1},...,a_{k}$ in $X$, 

\begin{center} $\mathbb{V}(X)\models\varphi(a_{1},...,a_{k})\Leftrightarrow\mathbb{V}(X)\models$$^{*}\varphi($$^{*}a_{1},...,$$^{*}a_{k})$. (1) \end{center}

By inductive hypothesis, $*_{m}$ satisfies the transfer property, so for every $a_{1},...,a_{k}$ in $X$,

\begin{center} $\mathbb{V}(X)\models\varphi(a_{1},...,a_{k})\Leftrightarrow\mathbb{V}(X)\models\varphi_{m}(S_{m}(a_{1}),...,S_{m}(a_{k}))$. (2) \end{center}

Now, take any $b_{1},...,b_{k}$ in $X$. By (1) it follows that $\mathbb{V}(X)\models\varphi(b_{1},...,b_{k})\Leftrightarrow\mathbb{V}(X)\models$$^{*}\varphi($$^{*}b_{1},...,$$^{*}b_{k})$. By (2), were we take $a_{i}=$$^{*}b_{i}$ for every $i\leq k$, we get that $\mathbb{V}(X)\models$$^{*}\varphi($$^{*}b_{1},...,$$^{*}b_{k})\Leftrightarrow\mathbb{V}(X)\models\varphi_{(m+1)}($$S_{m}($$^{*}b_{1}),...,S_{m}($$^{*}b_{k}))$, so we conclude that

\begin{center} $\mathbb{V}(X)\models\varphi(b_{1},...,b_{k})\Leftrightarrow\mathbb{V}(X)\models\varphi_{n}($$S_{n}(b_{1}),...,S_{n}(b_{k}))$. \end{center}
\end{proof}

From now on, we focus on some particular properties of $\N$ and of the hyperextensions $S_{n}(\N)$. Of course since, as we proved, $\langle \mathbb{V}(X),\mathbb{V}(X),S_{n}\rangle$ is a superstructure model of nonstandard methods, $S_{n}(\N)$ has all the generical properties of the hyperextensions of $\N$. The particularity, in this context, are the relations between different hyperextensions $S_{n}(\N), S_{m}(\N)$ for different natural numbers $n,m$.

\begin{prop} Let $n\leq m$ be natural numbers, and $A$ a subset of $\N$. Then
\begin{enumerate}
  \item For every natural number $a$, $S_{n}(a)=a$;
  \item $S_{n}(A)\subseteq S_{m}(A)$, and the inclusion is proper if and only if $A$ is infinite;
  \item If $\alpha$ is an hypernatural number in $S_{n}(A)$ then, for every natural number $k$, $S_{k}(\alpha)\in S_{(n+k)}(A)$;
  \item $S_{n}(A)=S_{m}(A)\cap S_{n}(\N)$;
	\item $S_{n+1}(\N)$ is an end extension of $S_{n}(\N)$: $\forall\alpha\in S_{n}(\N), \forall\beta\in S_{n+1}(\N)\setminus S_{n}(\N)$, $\alpha<\beta$.
\end{enumerate}

\end{prop}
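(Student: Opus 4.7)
The plan is to handle each of the five items by induction on the level, promoting base-case assertions at level $0$ to arbitrary levels via the transfer principle, which by Theorem 2.5.2 is satisfied by every $S_n$. The base cases reduce to standard facts: ${}^{*}n=n$ for $n\in\N$, properness of the star map, and ${}^{*}\N$ being an end extension of $\N$.

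Item (1) is an immediate induction: $S_0(a)=a$, and $S_{n+1}(a)={}^{*}S_n(a)={}^{*}a=a$. For (2), I would first observe the base inclusion $A\subseteq{}^{*}A$, which follows from (1): for $a\in A\subseteq\N$, transfer of $a\in A$ gives ${}^{*}a\in{}^{*}A$, i.e., $a\in{}^{*}A$. Transferring the bounded quantifier statement $\forall x\in A\,(x\in{}^{*}A)$ yields $\forall x\in{}^{*}A\,(x\in{}^{**}A)$, and iterating gives $S_n(A)\subseteq S_{n+1}(A)$, hence $S_n(A)\subseteq S_m(A)$ for $n\leq m$. For the strict inclusion when $A$ is infinite, I would start from $A\subsetneq{}^{*}A$, which holds because $^{\sigma}A=A$ when $A\subseteq\N$ and the star is proper, and then iteratively transfer the existential sentence $\exists\alpha\,(\alpha\in{}^{*}A\wedge\alpha\notin A)$, producing $S_n(A)\subsetneq S_{n+1}(A)$ at every level.

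Item (3) is an induction on $k$: the base $k=0$ is trivial, and the inductive step is a direct transfer of the atomic formula $S_k(\alpha)\in S_{n+k}(A)$, which yields $S_{k+1}(\alpha)\in S_{n+k+1}(A)$. For (4), I would first establish the ``base identity'' $S_k(A)\cap\N=A$: one inclusion follows from (2) together with $A\subseteq\N$, while the reverse uses that every $\mu\in\N$ is fixed by $S_j$ and that for $\mu\in\N$, $\mu\in A$ iff $\mu\in S_k(A)$ by transferring atomic membership. Applying the star map $n$ times to $S_k(A)\cap\N=A$, and using ${}^{*}(X\cap Y)={}^{*}X\cap{}^{*}Y$, produces $S_{k+n}(A)\cap S_n(\N)=S_n(A)$; setting $k=m-n$ concludes. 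For (5), I would induct on $n$, with base case $n=0$ the classical fact that ${}^{*}\N$ is an end extension of $\N$. The inductive step transfers the bounded quantifier formula $\forall\alpha\in S_n(\N)\,\forall\beta\in S_{n+1}(\N)\,(\beta\notin S_n(\N)\Rightarrow\alpha<\beta)$, with parameters $S_n(\N)$ and $S_{n+1}(\N)$, to obtain the analogous statement at the next level.

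The main difficulty is to formulate each step so that it fits into a bounded quantifier schema, since transfer applies only to such formulas; the subtle cases are (2) and (5), where one must keep negations and set-differences inside the bounded quantifiers. Once every assertion is written in transferable form, the remainder is bookkeeping of iterated applications of Theorem 2.5.2.
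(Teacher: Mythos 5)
Your proposal is correct and rests on the same engine as the paper's proof: every item is a base-level fact about $\N$ and ${}^{*}\N$ (namely ${}^{*}a=a$, $A\subseteq{}^{*}A$ with properness, and ${}^{*}\N$ being an end extension of $\N$) promoted to higher levels by transfer along the maps $S_{n}$, which Theorem 2.5.2 licenses. The one place you organize things genuinely differently is item (4): the paper proves the nontrivial inclusion $S_{m}(A)\cap S_{n}(\N)\subseteq S_{n}(A)$ by contradiction, using the dichotomy $S_{n}(\N)=S_{n}(A)\cup S_{n}(A^{c})$ together with item (3) to push a putative counterexample up to a point lying in both $S_{m}(A)$ and a higher-level copy of $A^{c}$; you instead establish the single identity $S_{k}(A)\cap\N=A$ at the bottom and transfer the whole equation, which is arguably cleaner since it packages the complement argument into one base-level verification. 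Two small points: in item (2) you only argue the direction ``$A$ infinite $\Rightarrow$ proper inclusion''; the converse (proper $\Rightarrow$ infinite) should be stated explicitly, though it is immediate from item (1) since a finite $A$ satisfies $S_{n}(A)=A$ for all $n$. Also, in item (5) your formula should quantify $\beta$ over $S_{n+1}(\N)$ with the clause $\beta\notin S_{n}(\N)$ kept inside, exactly as you wrote it, since $S_{n+1}(\N)\setminus S_{n}(\N)$ is not itself a hyper-image; the paper sidesteps this by transferring $\forall n\in\N\,\forall\eta\in{}^{*}\N\setminus\N\ (n<\eta)$ under $S_{n}$ and using $S_{n}({}^{*}\N\setminus\N)=S_{n+1}(\N)\setminus S_{n}(\N)$, which amounts to the same thing.
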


\begin{proof} 1) This property holds in every superstructure model of nonstandard methods.\\
2) To prove this result we show that it holds if $m=n+1$, as this clearly entails the thesis.\\
Suppose that $m=n+1$. That $S_{n}(A)\subseteq S_{n+1}(A)$ holds since the map $S_{n}$ satisfies the transfer property: in fact, for every subset $A$ on $\N$, $A\subseteq$$^{*}A$ so, by transfer property (of $S_{n}$), $S_{n}(A)\subseteq S_{n}($$^{*}A)=S_{n+1}(A)$.\\
Observe that, as a consequence of (1), if $A$ is finite then $S_{n}(A)=A$, so if the inclusion $S_{n}(A)\subset S_{n+1}(A)$ is proper then $A$ is infinite. Conversely, if $A$ is infinite, then the inclusion $A\subset$$^{*}A$ is proper and, by transfer property of $S_{n}$, it follows that the inclusion $S_{n}(A)\subset S_{n}($$^{*}A)=S_{n+1}(A)$ is proper. \\
3) This follows since the map $S_{k}$ satisfies the transfer property: in fact, by hypothesis $\alpha\in S_{n}(A)$ so, by transfer, $S_{k}(\alpha)\in S_{k}(S_{n}(A))$, and $S_{k}(S_{n}(A))=S_{(n+k)}(A)$.\\
4) $S_{n}(A)$ is a subset of $S_{m}(A)$ by (2), and it is a subset of $S_{n}(\N)$ by transfer (of $S_{n}$), as $A\subseteq\N$. So $S_{n}(A)\subseteq S_{m}(A)\cap S_{n}(\N)$. As for the reverse inclusion, if $\alpha$ is an element in $S_{m}(A)\cap S_{n}(\N)$ and $\alpha\in S_{n}(A^{c})$, then by (3) it follows that

\begin{center} $S_{(m-n)}(\alpha)\in S_{(m-n)}(S_{n}(A^{c}))=S_{m}(A^{c})$, \end{center}

and this is absurd.\\
5) This follows by transfer property (of $S_{n})$: we know that

\begin{center} $\forall n\in\N, \forall\eta\in$$^{*}\N\setminus\N$, $n<\eta$, \end{center}

so by transfer property it follows that 

\begin{center} $\forall \alpha\in S_{n}(\N),\forall \beta\in S_{n}($$^{*}\N\setminus\N)$, $\alpha<\beta$, \end{center}

and the conclusion follows as $S_{n}($$^{*}\N\setminus\N)=S_{n+1}(\N)\setminus S_{n}(\N)$.\\\end{proof}

We observe that, if $A$ is not a subset of $\N$ but it is a generical subset of $S_{n}(\N)$ for some natural number $n\geq 1$, the properties of Proposition 2.5.3 do not necessarily hold: in fact, e.g., if $\alpha$ is an infinite hypernatural number in $^{*}\N$, and $A=\{\alpha\}$, then $^{*}A=\{$$^{*}\alpha\}$, so $A$ is not a subset of $^{*}A$ (as $^{*}\alpha\in$$^{**}\N\setminus$$^{*}\N$, so $\alpha<$$^{*}\alpha$).

\begin{defn} Let $\langle\mathbb{V}(X),\mathbb{V}(X),*\rangle$ be a superstructure model of nonstandard methods. We call {\bfseries $\omega$-hyperextension} of $\N$, and denote by $^{\bullet}\N$, the union of all hyperextensions $S_{n}(\N)$:

\begin{center} $^{\bullet}\N=\bigcup_{n\in\mathbb{N}} S_{n}(\N)$. \end{center}

\end{defn}

In particular, for every natural number $n$, $S_{n}(\N)$ is a subset of $^{\bullet}\N$. Actually, since 

\begin{center} $\langle S_{n}(\N)\mid n<\omega\rangle$ \end{center}

is an elementary chain of models, it follows that:

\begin{thm} For every natural number $n$, $S_{n}(\N)$ is an elementary submodel of $^{\bullet}\N$. \end{thm}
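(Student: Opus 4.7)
The plan is to reduce the claim to an application of the Elementary Chain Theorem, proceeding in two steps. First, I would establish the chain: $S_n(\N) \prec S_{n+1}(\N)$ for every $n$, by induction on $n$. The base case is exactly Proposition 2.1.14. For the inductive step, I exploit the fact that $S_n$ is itself a bounded elementary embedding of $\mathbb{V}(X)$ into itself (Theorem 2.5.2). For each fixed bounded quantifier formula $\varphi(x_1, \ldots, x_k)$, the assertion
\[
\forall a_1, \ldots, a_k \in \N\;\bigl(\varphi^{\N}(\vec{a}) \Leftrightarrow \varphi^{{^{*}\N}}(\vec{a})\bigr)
\]
is itself a bounded quantifier sentence in $\mathbb{V}(X)$ with parameters $\N$ and ${^{*}\N}$, because relativizations of bounded quantifier formulas to a definable set remain bounded. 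Applying $S_n$ transforms this sentence into
\[
\forall \alpha_1, \ldots, \alpha_k \in S_n(\N)\;\bigl(\varphi^{S_n(\N)}(\vec{\alpha}) \Leftrightarrow \varphi^{S_{n+1}(\N)}(\vec{\alpha})\bigr),
\]
which, taken across all bounded $\varphi$, gives $S_n(\N) \prec S_{n+1}(\N)$.

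Second, Proposition 2.5.3.(2) tells us that the chain $S_0(\N) \subseteq S_1(\N) \subseteq \cdots$ is increasing under inclusion; combined with the previous step, we have a genuine elementary chain whose union is $^{\bullet}\N$. I would then apply the Tarski--Vaught elementary chain argument, proving by induction on formula complexity that for every bounded quantifier formula $\varphi$ and every $\vec{\alpha}\in S_n(\N)^k$, $S_n(\N)\models\varphi(\vec{\alpha})$ iff ${^{\bullet}\N}\models\varphi(\vec{\alpha})$. Atomic formulas, negations, and conjunctions are immediate. The only delicate clause is the bounded existential quantifier $\exists x\in y\,\psi$ with $y\in S_n(\N)$: any witness $\beta\in{^{\bullet}\N}$ belongs to some $S_m(\N)$ with $m\geq n$, and $S_n(\N)\prec S_m(\N)$ from Step 1 then yields a witness already inside $S_n(\N)$.

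The main obstacle is conceptual rather than technical: ``being a bounded elementary submodel'' is a schema and not a single first-order property, so one cannot transfer it \emph{en bloc} across iterations of the star map. The remedy, used above, is to transfer instance-by-instance -- this works precisely because the relativization of each bounded quantifier formula to a definable set is itself a bounded quantifier sentence that lives inside $\mathbb{V}(X)$ and is therefore amenable to the transfer property of $S_n$. Once this point is handled cleanly, the Tarski--Vaught absorption into the union is a routine verification.
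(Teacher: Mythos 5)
Your proposal is correct and follows essentially the same route as the paper, which simply observes that $\langle S_{n}(\N)\mid n<\omega\rangle$ is an elementary chain and cites the Elementary Chain Theorem. You merely spell out the two details the paper leaves implicit -- that $S_{n}(\N)\prec S_{n+1}(\N)$ follows by applying the transfer property of $S_{n}$ to each instance of the bounded elementarity of $\N$ in $^{*}\N$, and the Tarski--Vaught absorption of the union -- and both verifications are sound.
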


This theorem is a particular case of the Elementary Chain Theorem, see e.g. $\cite[\mbox{Theorem 3.1.9}]{rif7}$. As a consequence, it follows that $^{\bullet}\N$ is an hyperextension of $\N$.\\
Observe that, by definition of $^{\bullet}\N$, Proposition 2.5.3 can be reformulated in this way:

\begin{prop} Let $n$ be a natural number, and $A$ a subset of $\N$. Then
\begin{enumerate}
  \item For every natural number $a$, $^{\bullet}a=a$;
  \item $S_{n}(A)\subseteq $$^{\bullet}A$, and the inclusion is proper if and only if $A$ is infinite;
  \item For every hypernatural number $\alpha$ in $^{\bullet}\N$, $\alpha\in$$^{\bullet}A\Leftrightarrow S_{n}(\alpha)\in$$^{\bullet}A$
  \item $S_{n}(A)=$$^{\bullet}A\cap S_{n}(\N)$;
	\item $^{\bullet}\N$ is an end extension of $S_{n}(\N)$: $\forall\alpha\in S_{n}(\N), \forall\beta\in$$^{\bullet}\N\setminus S_{n}(\N)$, $\alpha<\beta$;
	\end{enumerate}

\end{prop}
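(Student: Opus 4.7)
The plan is to reduce everything to Proposition 2.5.3 via the definition $^{\bullet}\N=\bigcup_{n\in\N}S_{n}(\N)$, interpreting $^{\bullet}A$ for $A\subseteq\N$ analogously as $\bigcup_{n}S_{n}(A)$. With this interpretation most parts become routine unions of the one-step statements of 2.5.3; the only nontrivial item is part (3), which requires combining transfer of the map $S_{n}$ with the elementary-chain structure.

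First I would dispose of (1), (2), (4), (5) directly. Part (1) is immediate from 2.5.3(1) since $S_{n}(a)=a$ for every $n$ and every $a\in\N$. For (2), the inclusion $S_{n}(A)\subseteq$$^{\bullet}A$ is trivial from the definition, and the properness characterization transfers from 2.5.3(2): if $A$ is finite then $S_{m}(A)=A$ for all $m$ so $^{\bullet}A=A=S_{n}(A)$, while if $A$ is infinite the proper chain $S_{n}(A)\subsetneq S_{n+1}(A)\subseteq$$^{\bullet}A$ given by 2.5.3(2) forces the inclusion to be proper. For (4), I would write $^{\bullet}A\cap S_{n}(\N)=\bigcup_{k}(S_{k}(A)\cap S_{n}(\N))$ and note that each term equals $S_{n}(A)$: for $k\geq n$ this is precisely 2.5.3(4), and for $k<n$ it reduces to $S_{k}(A)\subseteq S_{n}(A)$ by 2.5.3(2). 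Part (5) follows by iterating 2.5.3(5): given $\alpha\in S_{n}(\N)$ and $\beta\in$$^{\bullet}\N\setminus S_{n}(\N)$, fix the least $k$ with $\beta\in S_{k}(\N)$, note $k>n$, and apply 2.5.3(5) along the chain $S_{n}(\N)\subseteq S_{n+1}(\N)\subseteq\cdots\subseteq S_{k}(\N)$ to conclude $\alpha<\beta$.

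The main work is part (3). I would first reduce $\alpha\in$$^{\bullet}A$ to $\alpha\in S_{k}(A)$ where $k$ is chosen so that $\alpha\in S_{k}(\N)$: by (4), $^{\bullet}A\cap S_{k}(\N)=S_{k}(A)$, so membership in $^{\bullet}A$ for such an $\alpha$ is equivalent to membership in $S_{k}(A)$. Now apply the transfer property of the star map $S_{n}$ (established in Theorem 2.5.2) to the bounded quantifier formula ``$x\in y$'' with parameters $\alpha$ and $S_{k}(A)$: this gives $\alpha\in S_{k}(A)$ iff $S_{n}(\alpha)\in S_{n}(S_{k}(A))=S_{n+k}(A)$. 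Since $S_{n}(\alpha)\in S_{n+k}(\N)$ by 2.5.3(3), the same equivalence from (4) then yields $S_{n}(\alpha)\in S_{n+k}(A)\Leftrightarrow S_{n}(\alpha)\in$$^{\bullet}A$, completing the chain.

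The expected obstacle is purely bookkeeping: making sure that in (3) the index $k$ at which one ``locates'' $\alpha$ inside some $S_{k}(\N)$ is handled cleanly when applying transfer of $S_{n}$, so that the interplay between the outer union $^{\bullet}A=\bigcup_{k}S_{k}(A)$ and the shifted union $\bigcup_{k}S_{n+k}(A)=$$^{\bullet}A$ is justified without circularity. Everything else is essentially a direct translation of Proposition 2.5.3 to the $^{\bullet}$-notation.
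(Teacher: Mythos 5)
Your proof is correct and takes essentially the same approach as the paper: the paper offers no separate argument for this proposition, stating only that it is Proposition 2.5.3 ``reformulated'' via the definition $^{\bullet}\N=\bigcup_{n}S_{n}(\N)$, which is precisely the reduction you carry out. Your writeup merely supplies the bookkeeping (the union decomposition of $^{\bullet}A$, the transfer step in part (3)) that the paper leaves implicit.
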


Our aim is to study the bridge map $\psi:$$^{\bullet}\N\rightarrow\bN$. As we saw in Section 2.2.1, the definition of the bridge map requires that the associated hyperextension has, at least, the $\mathfrak{c}^{+}$-enlarging property. The question is: does $^{\bullet}\N$ satisfy the $\mathfrak{c}^{+}$-enlarging property?

\begin{prop} For every natural number $n\geq 1$ the implications $(1)\Rightarrow (2)\Rightarrow (3)$ hold, where
\begin{enumerate}
	\item $^{*}\N$ has the $\mathfrak{c}^{+}$-enlarging property;
	\item $S_{n}(\N)$, seen as a hyperextension of $\N$, has the $\mathfrak{c}^{+}$-enlarging property;
	\item $^{\bullet}\N$, seen as a hyperextension of $\N$, has the $\mathfrak{c}^{+}$-enlarging property.
\end{enumerate}
\end{prop}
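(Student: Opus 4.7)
The plan is to handle the two implications separately, with $(1)\Rightarrow(2)$ using a simple ``shift the family'' trick and $(2)\Rightarrow(3)$ reducing to the monotonicity of $S_n(A)$ in $n$ already established in Proposition 2.5.6.

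For $(1)\Rightarrow(2)$, I would fix $n\geq 1$ and take a family $\mathcal{F}=\{F_i\}_{i\in I}$ of subsets of some $x\in\mathbb{V}(X)$ with $|I|\leq\mathfrak{c}$ and the finite intersection property. The idea is to push the family forward by $S_{n-1}$ and then apply the hypothesis that $*$ is $\mathfrak{c}^+$-enlarging. Concretely, I would consider the family $\mathcal{G}=\{S_{n-1}(F_i)\}_{i\in I}$, which is a family of subsets of the set $S_{n-1}(x)\in\mathbb{V}(X)$ with $|\mathcal{G}|\leq\mathfrak{c}$. Since by Theorem 2.5.2 the map $S_{n-1}$ is itself a star map, it satisfies transfer, and hence preserves finite intersections and nonemptiness; therefore, for any finite $i_1,\dots,i_k\in I$,
\[
S_{n-1}(F_{i_1})\cap\cdots\cap S_{n-1}(F_{i_k})=S_{n-1}(F_{i_1}\cap\cdots\cap F_{i_k})\neq\emptyset,
\]
so $\mathcal{G}$ has the finite intersection property as well. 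Applying the $\mathfrak{c}^+$-enlarging property of $*$ to $\mathcal{G}$ yields $\bigcap_{i\in I}{}^{*}(S_{n-1}(F_i))\neq\emptyset$, and since ${}^{*}(S_{n-1}(F_i))=S_n(F_i)$ by definition, this says exactly that $S_n$ is $\mathfrak{c}^+$-enlarging.

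For $(2)\Rightarrow(3)$, I would use pure monotonicity. Given a family $\mathcal{F}=\{F_i\}_{i\in I}$ of subsets of $\N$ (or, more generally, of a set $x\in\mathbb{V}(X)$) with $|I|\leq\mathfrak{c}$ and the finite intersection property, Proposition 2.5.6(2) gives $S_n(F_i)\subseteq{}^{\bullet}F_i$ for every $i$. Hence
\[
\bigcap_{i\in I}S_n(F_i)\;\subseteq\;\bigcap_{i\in I}{}^{\bullet}F_i,
\]
and the hypothesis (2) ensures the left-hand side is nonempty, so the right-hand side is nonempty too; this is the $\mathfrak{c}^+$-enlarging property for $^{\bullet}\N$.

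Neither step looks genuinely hard: $(1)\Rightarrow(2)$ is just ``apply the enlarging of $*$ one level higher'' and $(2)\Rightarrow(3)$ is sheer monotonicity. The only points that need care are invoking Theorem 2.5.2 correctly (so that $S_{n-1}$ is a legitimate star map with transfer, which is what lets one rewrite the finite intersection as $S_{n-1}$ of the finite intersection), and extending Proposition 2.5.6(2) from subsets of $\N$ to subsets of an arbitrary $x\in\mathbb{V}(X)$ in the second implication; both are essentially automatic from transfer.
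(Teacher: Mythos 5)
Your argument is correct for the statement as the paper intends it, but your first implication takes a genuinely different route. For $(1)\Rightarrow(2)$ the paper simply observes that for $F\subseteq\N$ one has $^{*}F\subseteq S_{n}(F)$ (Proposition 2.5.3), so $\emptyset\neq\bigcap_{F}{}^{*}F\subseteq\bigcap_{F}S_{n}(F)$ --- pure monotonicity, exactly parallel to its proof of $(2)\Rightarrow(3)$. You instead push the family forward by $S_{n-1}$ and apply the enlarging property of $*$ to $\{S_{n-1}(F_{i})\}_{i\in I}$, using ${}^{*}(S_{n-1}(F_{i}))=S_{n}(F_{i})$. Both are valid; yours buys generality (it establishes enlarging for families of subsets of an arbitrary $x\in\mathbb{V}(X)$, since it never needs $F\subseteq\N$), while the paper's is shorter and uses only the inclusion chain, which is all that is required because the proposition is explicitly about $S_{n}(\N)$ and $^{\bullet}\N$ \emph{seen as hyperextensions of} $\N$, i.e.\ about families of subsets of $\N$.

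One caveat on your $(2)\Rightarrow(3)$: the parenthetical ``or, more generally, of a set $x\in\mathbb{V}(X)$'' and the closing claim that extending Proposition 2.5.6(2) to arbitrary $x$ is ``essentially automatic from transfer'' are wrong. The inclusion $S_{n}(A)\subseteq{}^{\bullet}A$ genuinely requires $A\subseteq\N$; the paper points out right after Proposition 2.5.3 that for $A=\{\alpha\}$ with $\alpha$ infinite one has $^{*}A=\{^{*}\alpha\}\not\supseteq A$, so the monotonicity you lean on fails for general internal sets. This does not damage the proof of the proposition as stated, since only families of subsets of $\N$ are at issue, but you should delete the generalization.
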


\begin{proof} In the proof, $\mathcal{F}$ denotes a family of subsets of $\N$ with the finite intersection property.\\
 $(1)\Rightarrow (2)$: For every set $F\in\mathcal{F}$, as we proved in Proposition 2.5.3, since $F\subseteq\N$ then $^{*}F\subseteq$$S_{n}(F)$. In particular

\begin{center} $\bigcap_{F\in\mathcal{F}}$$^{*}F\subseteq\bigcap_{F\in\mathcal{F}}$$S_{n}(F)$.\end{center}

Since $\bigcap_{F\in\mathcal{F}}$$^{*}F$ is nonempty by hypothesis, it follows that $\bigcap_{F\in\mathcal{F}}$$S_{n}(F)$ is nonempty, so $S_{n}(\N)$ has the $\mathfrak{c}^{+}$-enlarging property.\\
$(2)\Rightarrow (3)$:  For every set $F\in\mathcal{F}$, as $F\subseteq\N$ by Proposition 2.5.6 it follows that $S_{n}(F)\subseteq$$^{\bullet}F$, so

\begin{center} $\bigcap_{F\in\mathcal{F}}S_{n}(F)\subseteq\bigcap_{F\in\mathcal{F}}$$^{\bullet}F$. \end{center}
Since $\bigcap_{F\in\mathcal{F}} S_{n}(F)$ is nonempty by hypothesis, it follows that $\bigcap_{F\in\mathcal{F}}$$^{\bullet}(F)$ is nonempty, so $^{\bullet}\N$ has the $\mathfrak{c}^{+}$-enlarging property.
\\ \end{proof}

The specification "seen as a hyperextension of $\N$" has been pointed out since, e.g., $^{**}\N$ is a hyperextension of $\N$ and a hyperextension of $^{*}\N$, and asking if $^{**}\N$ has the $\mathfrak{c}^{+}$-enlarging property with respect to families of subsets of $\N$ is different to ask if $^{**}\N$ has the $\mathfrak{c}^{+}$-enlarging property with respect to families of subsets of $^{*}\N$.\\
Also, we observe that this result does not hold for the $\mathfrak{c}^{+}$-saturation property, as a consequence of the following fact:\\

{\bfseries Fact:} $^{\bullet}\N$ has cofinality $\aleph_{0}$.\\

In fact, a countable right unbounded sequence in $^{\bullet}\N$ can be constructed choosing, for every natural number $n$, an hypernatural number $\alpha_{n}$ in $S_{n+1}(\N)\setminus S_{n}(\N)$. Since $^{\bullet}\N$ has cofinality $\aleph_{0}$, it can not be $\mathfrak{c}^{+}$-saturated: if $\langle\alpha_{n}\mid n\in\N\rangle$ is the countable sequence previously introduced, and for every natural number $n$ we pose

\begin{center} $I_{n}=\{\eta\in$$^{\bullet}\N\mid \eta\geq\alpha_{n}\}$, \end{center}

the family

\begin{center} $\langle I_{n}\mid n\in\N\rangle$ \end{center}

is a countable family of internal subsets of $^{\bullet}\N$ and $\bigcap_{n\in\N} I_{n}=\emptyset$. In particular, $^{\bullet}\N$ is not $\mathfrak{c}^{+}$-saturated.\\
As for the coinitiality of $^{\bullet}\N\setminus\N$, we have:

\begin{prop} If the map $*$ satisfies the $\mathfrak{c}^{+}$-saturation property, the coinitiality of $^{\bullet}\N\setminus\N$ is at least $\mathfrak{c}^{+}$. \end{prop}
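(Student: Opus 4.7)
The plan is to argue by contradiction, reducing the problem to a single finite-stage hyperextension $S_{n_0}(\N)$ where $\mathfrak{c}^{+}$-saturation of $*$ can be applied, in the spirit of the proof of Proposition 2.2.6(3). Suppose that $S \subseteq \sinfN \setminus \N$ is coinitial with $|S| \leq \mathfrak{c}$. Pick any $s_0 \in S$ together with some $n_0 \geq 1$ such that $s_0 \in S_{n_0}(\N)$. The end-extension property of Proposition 2.5.6(5) tells me that every element of $S \setminus S_{n_0}(\N)$ is already larger than every element of $S_{n_0}(\N)$. Consequently, any infinite $\eta \in S_{n_0}(\N) \setminus \N$ that lies below every element of $S' := S \cap S_{n_0}(\N)$ will automatically lie below every element of $S$, contradicting coinitiality.

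To produce such an $\eta$, I would imitate the argument of Proposition 2.2.6(3) inside $S_{n_0}(\N)$. Consider the family $\mathcal{F} = \{(k, s') \mid k \in \N,\ s' \in S'\}$, where $(k, s') = \{\beta \in S_{n_0}(\N) \mid k < \beta < s'\}$. Each $(k, s')$ is $*$-internal: for $n_0 \geq 1$ we have $S_{n_0}(\N) = {}^{*}(S_{n_0-1}(\N))$, so both $S_{n_0}(\N)$ itself and every $s' \in S_{n_0}(\N)$ are internal, and the Internal Definition Principle applies to the bounded-quantifier formula $k < \beta < s'$. The family has cardinality at most $\aleph_0 \cdot |S'| \leq \mathfrak{c}$, and it has the finite intersection property because a finite intersection contains any natural number strictly larger than the finitely many $k$-parameters and strictly smaller than the infinite $s'$-bounds. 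Applying the $\mathfrak{c}^{+}$-saturation of $*$ yields $\eta \in \bigcap \mathcal{F}$, which is infinite (being greater than every $k \in \N$) and strictly below every $s' \in S'$, closing the contradiction.

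The main obstacle, and really the only step that needs genuine care, is justifying that $(k, s')$ is $*$-internal when $s'$ lives in an iterated hyperextension $S_{n_0}(\N)$ with $n_0 > 1$. The verification relies on the observation that every hyperimage $^{*}z$ is $*$-internal (so $S_{n_0}(\N) = {}^{*}(S_{n_0-1}(\N))$ and each of its elements are $*$-internal), which lets us invoke the Internal Definition Principle for the single map $*$ rather than needing a separate saturation property for each iterate $S_n$. Equivalently, one could note that $S_n$-internal sets are always $*$-internal, so $\mathfrak{c}^{+}$-saturation of $*$ transfers to $\mathfrak{c}^{+}$-saturation of each $S_n$, and the proof of Proposition 2.2.6(3) can then be run verbatim inside $S_{n_0}(\N)$.
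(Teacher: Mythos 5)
Your proof is correct. The paper's own argument is much shorter: it observes that $^{*}\N\setminus\N$ is an \emph{initial segment} of $^{\bullet}\N\setminus\N$ (by the end-extension property), so the two sets have the same coinitiality, and the coinitiality of $^{*}\N\setminus\N$ was already shown to exceed $\mathfrak{c}$ in Proposition 2.2.6(3). Your route uses exactly the same two ingredients --- end extension plus the saturation family $\{(k,s')\}$ --- but packages them as a direct contradiction argument run inside $S_{n_{0}}(\N)$ for an arbitrary level $n_{0}$, rather than as a reduction to the level-one case. The cost of your choice is that you must verify the internality of sets cut out by parameters living in iterated hyperextensions, which you do correctly (hyper-images and their elements are $*$-internal, so the Internal Definition Principle applies); the paper avoids this entirely by never leaving $^{*}\N$. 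A small simplification worth noting: since any $S$ coinitial in $^{\bullet}\N\setminus\N$ must already meet $^{*}\N\setminus\N$ (any $s\in S$ lying below a point of $^{*}\N\setminus\N$ is itself in $^{*}\N\setminus\N$ by end extension), you could always take $n_{0}=1$, which collapses your argument to a verbatim rerun of Proposition 2.2.6(3) and recovers the paper's proof.
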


\begin{proof} Observe that, by construction, $^{*}\N\setminus\N$ is an initial segment of $^{\bullet}\N\setminus\N$, so the coinitiality of $^{\bullet}\N\setminus\N$ is equal to the coinitiality of $^{*}\N\setminus\N$ which, if $*$ satisfies the $\mathfrak{c}^{+}$-saturation property, is at least $\mathfrak{c}^{+}$. \\ \end{proof}

The structure of $^{\bullet}\N$ leads to introduce the following concept:

\begin{defn}[] Let $n\geq 1$ be a natural number, and $(\alpha_{1},....,\alpha_{n})$ be an element of $^{\bullet}\N^{n}\setminus \N^{n}$. The {\bfseries height} of $(\alpha_{1},....,\alpha_{n})$ $($notation $h((\alpha_{1},...,\alpha_{n})) )$ is the least natural number $m$ such that $(\alpha_{1},....,\alpha_{n})\in S_{m}(\N^{n})$.\end{defn}

Observe that, by definition, 

\begin{center} $h((\alpha_{1},...,\alpha_{n}))=m\Leftrightarrow (\alpha_{1},...,\alpha_{n})\in S_{m}(\N^{n})\setminus S_{m-1}(\N^{n})$. \end{center}

\begin{prop} For every natural number $n$, for every hypernatural numbers $\alpha,\alpha_{1},...,\alpha_{n}$ in $^{\bullet}\N\setminus \N$, for every subset $A$ of $\N$, for every function $f$ in $\mathtt{Fun}(\N,\N)$ the following properties hold:
\begin{enumerate}
	\item $h($$^{*}\alpha)=h(\alpha)+1$;
	\item $h(S_{n}(\alpha))=h(\alpha)+n$;
	\item $h(\alpha\cdot\beta)=h(\alpha+\beta)=h(\alpha^{\beta})=\max\{h(\alpha),h(\beta)\}$;
	\item $h((\alpha_{1},\alpha_{2},...,\alpha_{n}))=\max\{h(\alpha_{i})\mid i\leq n\}$;
	\item $h($$^{\bullet}f(\alpha))\leq h(\alpha)$;
	\item $\alpha\in$$^{\bullet}A\Leftrightarrow \alpha\in S_{h(\alpha)}(A)$.
\end{enumerate}

\end{prop}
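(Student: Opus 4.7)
The plan is to prove each item by a direct application of the transfer property of the iterated star maps $S_n$, combined with Proposition 2.5.6 (the structural facts about $^{\bullet}\N$). Nothing beyond transfer and the end-extension property should be needed.

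For (1), observe that ``$x\in S_{m-1}(\N)$'' is a bounded quantifier formula whose only parameter is the set $S_{m-1}(\N)$; applying transfer of $*$ yields $\alpha\in S_{m-1}(\N)\Leftrightarrow {}^{*}\alpha\in {}^{*}(S_{m-1}(\N))=S_{m}(\N)$. So $\alpha\in S_{m}(\N)\setminus S_{m-1}(\N)$ is equivalent to ${}^{*}\alpha\in S_{m+1}(\N)\setminus S_{m}(\N)$, giving $h({}^{*}\alpha)=h(\alpha)+1$. Part (2) then follows by iterating (1) $n$ times.

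For (3), set $m=\max\{h(\alpha),h(\beta)\}$. Since both $\alpha$ and $\beta$ lie in $S_{m}(\N)$, and the sentence ``$\N$ is closed under $+$, $\cdot$, and exponentiation'' transfers under $S_{m}$, the values $\alpha+\beta$, $\alpha\cdot\beta$ and $\alpha^{\beta}$ all lie in $S_{m}(\N)$, so their heights are $\leq m$. For the reverse inequality assume WLOG $h(\alpha)=m$ and suppose the value lies in $S_{m-1}(\N)$; by the end-extension property (Prop.~2.5.6(5)) every element of $S_{m-1}(\N)$ is strictly less than $\alpha$. But because $\alpha,\beta\in{}^{\bullet}\N\setminus\N$ are both infinite, monotonicity gives $\alpha+\beta\geq\alpha$, $\alpha\cdot\beta\geq\alpha$, and $\alpha^{\beta}\geq\alpha$, a contradiction. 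The hypothesis that both hypernaturals are infinite is exactly what is needed here; on finite values like $0$ or $1$ the identity would fail.

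For (4), transfer of $S_{m}$ applied to the equality $\N^{n}=\N\times\cdots\times\N$ and to the projection functions $\pi_{i}:\N^{n}\to\N$ shows that $(\alpha_{1},\ldots,\alpha_{n})\in S_{m}(\N^{n})$ iff each $\alpha_{i}\in S_{m}(\N)$, from which $h((\alpha_{1},\ldots,\alpha_{n}))=\max_{i}h(\alpha_{i})$ is immediate. For (5), set $m=h(\alpha)$; transfer of $S_{m}$ applied to ``$f:\N\to\N$ is a function'' yields a function $S_{m}(f):S_{m}(\N)\to S_{m}(\N)$, and by the elementary-chain coherence of Theorem 2.5.5 the restriction of ${}^{\bullet}f$ to $S_{m}(\N)$ coincides with $S_{m}(f)$, so ${}^{\bullet}f(\alpha)\in S_{m}(\N)$, whence $h({}^{\bullet}f(\alpha))\leq m=h(\alpha)$. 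Finally, (6) is an instance of Proposition 2.5.6(4): since $\alpha\in S_{h(\alpha)}(\N)$ by definition of height, intersecting with ${}^{\bullet}A$ and using ${}^{\bullet}A\cap S_{h(\alpha)}(\N)=S_{h(\alpha)}(A)$ gives the stated equivalence.

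The only step with any real content is (3); the rest are essentially bookkeeping with transfer. The main obstacle there is identifying the right tool for the lower bound, namely the end-extension property coupled with the positivity/monotonicity of the arithmetic operations on infinite hypernaturals.
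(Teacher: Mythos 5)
Your proof is correct and follows essentially the same route as the paper's: items (1), (2), (4), (5) and (6) are the same transfer/bookkeeping arguments, and your item (3) simply spells out the observation the paper leaves unproved, namely that $\alpha+\beta$, $\alpha\cdot\beta$, $\alpha^{\beta}$ lie in $S_{n}(\N)$ if and only if both $\alpha,\beta$ do, via end extension plus monotonicity. One small caveat: for $\alpha^{\beta}$ the ``WLOG $h(\alpha)=m$'' is not justified by symmetry, so in the case $h(\beta)>h(\alpha)$ you should instead note $\alpha^{\beta}\geq 2^{\beta}>\beta$ (valid since $\alpha$ is infinite) and run the same end-extension argument against $\beta$.
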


\begin{proof} 1) Observe that $m=h(\alpha)\Leftrightarrow \alpha\in S_{m}(\N)\setminus S_{m-1}(\N) \Leftrightarrow $$^{*}\alpha\in S_{m+1}(\N)\setminus S_{m}(\N)\Leftrightarrow h($$^{*}\alpha)=m+1$.\\
2) It trivially follows by induction by (1).\\
3) This follows by observing that, for every $\alpha,\beta$ in $^{\bullet}\N$, for every natural number $n$, $\alpha+\beta\in S_{n}(\N)$ if and only if $\alpha\cdot\beta\in S_{n}(\N)$ if and only if $\alpha^{\beta}\in S_{n}(\N)$ if and only if both $\alpha,\beta$ are in $S_{n}(\N)$.\\
4) This follows by observing that, for every natural number $m\geq 1$, $(\alpha_{1},...,\alpha_{n})\in S_{m}(\N^{n})$ if and only if, for every index $i\leq n$, $\alpha_{i}\in S_{m}(\N)$. \\
5) For every natural number $m$, by definition 

\begin{center} $^{\bullet}f_{\upharpoonright_{S_{m}(\N)}}=S_{m}(f)$.\end{center}
And $S_{m}(f)\in\mathtt{Fun}(S_{m}(\N),S_{m}(\N))$ so, if $h(\alpha)=m$, then $h($$^{\bullet}f(\alpha))=h(S_{m}(f)(\alpha))\leq m$.\\
6) If $\alpha\in S_{h(\alpha)}(A)$ then, as $S_{h(\alpha)}(A)\subseteq$$^{\bullet}A$, $\alpha\in$$^{\bullet}A$. Conversely, since $\alpha\in S_{h(\alpha)}(\N)$, if $\alpha\in S_{h(\alpha)}(A^{c})$ then $\alpha\in$$^{\bullet}A^{c}$, and this is absurd.\\\end{proof}

From now on, we concentrate on the hyperextension $^{\bullet}\N$ of $\N$, constructed as to satisfy the $\mathfrak{c}^{+}$-enlarging property, and we study the property of the bridge map $\psi:$$^{\bullet}\N\rightarrow \N$.

\subsection{Sets of generators in $^{\bullet}\N$}

In this section we study, in some detail, the properties of the sets of generators of ultrafilters in $^{\bullet}\N$. We recall that, given an ultrafilter $\U$, the set of generators of $\U$ in $^{\bullet}\N$ is 

\begin{center} $G_{\U}=\{\alpha\in$$^{\bullet}\N\mid \U=\mathfrak{U}_{\alpha}\}$ \end{center}

where, for every hypernatural number $\alpha$ in $^{\bullet}\N$, 

\begin{center}$\mathfrak{U}_{\alpha}=\{A\subseteq\N\mid \alpha\in$$^{\bullet} A\}$.\end{center}

\begin{prop} For every ultrafilter $\U$ in $\bN$, for every hypernatural number $\alpha$ in $^{\bullet}\N$, for every natural number $n$, the following two conditions are equivalent:
\begin{enumerate}
	\item $\alpha$ is a generator if $\U$;
	\item $S_{n}(\alpha)$ is a generator of $\U$.
\end{enumerate}
\end{prop}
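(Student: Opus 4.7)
The plan is to reduce the claimed equivalence to the single equality $\mathfrak{U}_{\alpha}=\mathfrak{U}_{S_{n}(\alpha)}$. By the definition of generator of an ultrafilter, condition (1) asserts $\mathfrak{U}_{\alpha}=\U$ while condition (2) asserts $\mathfrak{U}_{S_{n}(\alpha)}=\U$; so once one knows that $\alpha$ and $S_{n}(\alpha)$ induce the same ultrafilter on $\N$ via the bridge map, the two conditions become literally the same statement and the equivalence is automatic.

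To verify $\mathfrak{U}_{\alpha}=\mathfrak{U}_{S_{n}(\alpha)}$ one unfolds the definitions and shows that the two subsets of $\wp(\N)$ agree on every $A\subseteq\N$. This is exactly the content of Proposition 2.5.6(3): for every subset $A$ of $\N$ and every $\alpha\in{}^{\bullet}\N$, we have $\alpha\in{}^{\bullet}A$ if and only if $S_{n}(\alpha)\in{}^{\bullet}A$. Therefore, for each $A\in\wp(\N)$, the membership $A\in\mathfrak{U}_{\alpha}$ is equivalent to $A\in\mathfrak{U}_{S_{n}(\alpha)}$, which gives the desired set-equality.

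There is no real obstacle here: the statement is essentially a reformulation, in the language of generators of ultrafilters, of the invariance of the membership relation $\alpha\in{}^{\bullet}A$ under the iterated star map when $A\subseteq\N$. The only thing worth stressing in the write-up is that the appeal to Proposition 2.5.6(3) requires $A$ to be a subset of $\N$ (which is the case, since $\mathfrak{U}_{\beta}$ is by definition a collection of subsets of $\N$), so the earlier remark that the analogous equivalence can fail for subsets of higher hyperextensions does not interfere with the argument.
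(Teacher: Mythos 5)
Your proof is correct and follows essentially the same route as the paper: both reduce the equivalence to the equality $\mathfrak{U}_{\alpha}=\mathfrak{U}_{S_{n}(\alpha)}$ and obtain it from the invariance $\alpha\in{}^{\bullet}A\Leftrightarrow S_{n}(\alpha)\in{}^{\bullet}A$ for $A\subseteq\N$. (Your citation of Proposition 2.5.6(3) is in fact the accurate one; the paper's own proof refers to ``point four'' while quoting the statement of point three.)
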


\begin{proof} By point four of Proposition 2.5.6 it follows that, for every subset $A$ of $\N$, $\alpha\in$$^{\bullet}A$ if and only if $S_{n}(\alpha)\in$$^{\bullet}A$. From this follows that, for every subset $A$ of $\N$,

\begin{center} $A\in\mathfrak{U}_{\alpha}\Leftrightarrow A\in\mathfrak{U}_{S_{n}(\alpha)}$, \end{center}

so $\U=\mathfrak{U}_{\alpha}$ if and only if $\U=\mathfrak{U}_{S_{n}(\alpha)}$.\\\end{proof}

\begin{defn} Given an ultrafilter $\U$ and a natural number $n\geq 1$, $G^{n}_{\U}$ denotes the set of generators of $\U$ with height at most $n$: 

\begin{center}$G^{n}_{\U}=\{\alpha\in G_{\U}\mid h(\alpha)\leq n\}=G_{\U}\cap S_{n}(\N)$. \end{center}

\end{defn}

\begin{prop} Let $\U$ be any nonprincipal ultrafilter on $\N$, and $n$ any natural number. Then:
\begin{enumerate}
  \item $|G^{n}_{\U}|=|S_{n}(\N)|$; $|G_{\U}|=|$$^{\bullet}\N|$;
	\item $^{*}G^{n}_{\U}\subseteq G^{n+1}_{\U}$;
	\item The cofinality of $G_{\U}$ is $\aleph_{0}$.
\end{enumerate}
	
\end{prop}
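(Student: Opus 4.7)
The plan is to prove the three parts separately, leveraging the machinery already developed in the chapter.

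For (1), I would mimic the cardinality argument in Proposition 2.2.4, carried out inside the superstructure model $\langle \mathbb{V}(X),\mathbb{V}(X),S_n\rangle$, which by Proposition 2.5.7 still enjoys the $\mathfrak{c}^{+}$-enlarging property. For each $A\in\U$ set $\Gamma_{A}=\{f:\N\to A\mid f\text{ is }1\text{-}1\}$; the family $\{\Gamma_{A}\}_{A\in\U}$ has the finite intersection property, so enlarging produces $\varphi\in\bigcap_{A\in\U}S_{n}(\Gamma_{A})$. Such a $\varphi$ is an internal $1$-$1$ map from $S_{n}(\N)$ to $S_{n}(A)$ for every $A\in\U$, so the assignment $\alpha\mapsto\varphi(\alpha)$ injects $S_{n}(\N)$ into $G^{n}_{\U}$, yielding $|S_{n}(\N)|\leq|G^{n}_{\U}|$. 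Since trivially $G^{n}_{\U}\subseteq S_{n}(\N)$, equality follows. Passing to $G_{\U}=\bigcup_{n}G^{n}_{\U}$ and $^{\bullet}\N=\bigcup_{n}S_{n}(\N)$, both countable increasing unions of infinite sets, one obtains $|G_{\U}|=|^{\bullet}\N|$.

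For (2), the key is the inclusion $G^{n}_{\U}\subseteq S_{n}(A)$, valid for every $A\in\U$, which is immediate from the definition of $G^{n}_{\U}$ together with point (6) of Proposition 2.5.10. Applying the star map to this inclusion gives $^{*}G^{n}_{\U}\subseteq {}^{*}S_{n}(A)=S_{n+1}(A)\subseteq {}^{\bullet}A$, so every element of $^{*}G^{n}_{\U}$ lies in $G_{\U}$. Combined with $^{*}G^{n}_{\U}\subseteq {}^{*}S_{n}(\N)=S_{n+1}(\N)$, which bounds the height by $n+1$, we conclude $^{*}G^{n}_{\U}\subseteq G^{n+1}_{\U}$.

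For (3), I would exhibit an explicit countable cofinal sequence in $G_{\U}$. Proposition 2.2.1 (2), applied to the single-star hyperextension (which has the $\mathfrak{c}^{+}$-enlarging property), provides some $\alpha\in G^{1}_{\U}$; set $\alpha_{n}=S_{n}(\alpha)$. By Proposition 2.5.11 each $\alpha_{n}$ is a generator of $\U$, and by parts (1)--(2) of Proposition 2.5.10 its height is exactly $n+1$, so $\alpha_{n}\in S_{n+1}(\N)\setminus S_{n}(\N)$. Given any $\beta\in G_{\U}$ with $h(\beta)=m$, the end-extension property (Proposition 2.5.6 (5)) forces $\alpha_{m}>\beta$. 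Hence $\langle\alpha_{n}\mid n\in\N\rangle$ is cofinal in $G_{\U}$, so $\mathrm{cof}(G_{\U})\leq\aleph_{0}$; the reverse inequality is trivial since $G_{\U}$ is unbounded.

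No step looks genuinely hard; the only delicate point is in (1), where one must verify that the enlarging argument of Proposition 2.2.4 really transposes to the model $\langle\mathbb{V}(X),\mathbb{V}(X),S_{n}\rangle$ and produces an internal $\varphi$ of the right shape (i.e.\ an element of $\bigcap_{A\in\U}S_{n}(\Gamma_{A})$, controlled at height $\leq n$), rather than merely some $\varphi\in\bigcap_{A\in\U}{}^{\bullet}\Gamma_{A}$ floating at an uncontrolled height.
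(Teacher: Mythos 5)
Your proof is correct and follows essentially the same route as the paper: part (1) is the cardinality argument of Proposition 2.2.4 applied to the hyperextensions $S_{n}(\N)$ and $^{\bullet}\N$ (which the paper simply cites), part (2) is exactly the paper's transfer of the inclusion $G^{n}_{\U}\subseteq S_{n}(A)$, and part (3) produces a countable cofinal sequence of generators of strictly increasing height. The only cosmetic difference is in (3), where you take the explicit sequence $S_{n}(\alpha)$ while the paper picks arbitrary elements of $G^{n+1}_{\U}\setminus G^{n}_{\U}$ (nonempty by properness of the star map); both yield the same end-extension argument.
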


\begin{proof} 1) These two assertions follow by Proposition 2.2.4, since both $S_{n}(\N)$ and $^{\bullet}\N$ are hyperextensions of $\N$ that satisfy the $\mathfrak{c}^{+}$-enlarging property by Proposition 2.5.7.\\
2) Observe that, for every set $A$ in $\U$, for every natural number $n$, by definition of sets of generators 

\begin{center} $G^{n}_{\U}\subseteq S_{n}(A)$. \end{center}

By transfer it follows that 

\begin{center} $^{*}G^{n}_{\U}\subseteq$$^{*}S_{n}(A)=S_{n+1}(A)$. \end{center}

Since this holds for every set $A$ in $\U$, then 

\begin{center}$^{*}G^{n}_{\U}\subseteq\bigcap_{A\in\U}S_{n+1}(A)=G^{n+1}_{\U}$.\end{center}

3) Since $G^{n}_{\U}$, as $\U$ is nonprincipal, is infinite, by point (2) it follows that, for every natural number $n$, since $*$ is a proper star map then $G^{n+1}_{\U}\setminus G^{n}_{\U}\neq\emptyset.$ So, for every natural number $n$, there is an element $\alpha_{n}$ in $G^{n+1}_{\U}\setminus G^{n}_{\U}$. The sequence $\langle \alpha_{n}\mid n\in\N\rangle$ is a right unbounded sequence of elements in $G_{\U}$, so the cofinality of $G_{\U}$ is $\aleph_{0}$.\\\end{proof}

When the $*$ map satisfies the $\mathfrak{c}^{+}$-saturation property, the sets of generators satisfy the following additional properties: 

\begin{prop} Let $*$ be a star map with the $\mathfrak{c}^{+}$-saturation property, $\U$ an ultrafilter on $\N$ and $n\geq 1$ a natural number. Then
\begin{enumerate}
	\item $G^{n+1}_{\U}\setminus G^{n}_{\U}$ is left unbounded in $S_{n+1}(\N)\setminus S_{n}(\N)$; 
	\item $G_{\U}$ has coinitiality greater than $\mathfrak{c}$, and it is left unbounded in $^{\bullet}\N\setminus\N$.
	
\end{enumerate}
\end{prop}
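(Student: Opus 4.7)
For part (1), I argue by induction on $n\ge 0$, taking $P(n)$ to be the statement ``$G^{n+1}_{\U}\setminus G^{n}_{\U}$ is left unbounded in $S_{n+1}(\N)\setminus S_{n}(\N)$''; the proposition then corresponds to the case $n\ge 1$. The base case $P(0)$ is Proposition 2.2.6(1), since $\U$ nonprincipal forces $G^{0}_{\U}=G_{\U}\cap\N=\emptyset$, so $P(0)$ reduces to the left unboundedness of $G^{1}_{\U}$ in $S_{1}(\N)\setminus\N$.

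For the inductive step $P(n)\Rightarrow P(n+1)$, I apply transfer (of the map $*$) to $P(n)$ written as the bounded-quantifier formula
\[
(\forall\mu\in S_{n+1}(\N)\setminus S_{n}(\N))(\exists\alpha\in G^{n+1}_{\U}\setminus G^{n}_{\U})(\alpha<\mu),
\]
with $S_{n+1}(\N)\setminus S_{n}(\N)$ and $G^{n+1}_{\U}\setminus G^{n}_{\U}$ as parameters. Since $*$ commutes with set difference and $^{*}S_{k}(\N)=S_{k+1}(\N)$, the transferred formula supplies, for each $\mu\in S_{n+2}(\N)\setminus S_{n+1}(\N)$, a witness $\alpha\in{}^{*}G^{n+1}_{\U}\setminus{}^{*}G^{n}_{\U}$ with $\alpha<\mu$. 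By Proposition 2.5.12(2), $^{*}G^{n+1}_{\U}\subseteq G^{n+2}_{\U}$, so $\alpha\in G^{n+2}_{\U}$; and by transferring the trivial inclusion $G^{n+1}_{\U}\setminus G^{n}_{\U}\subseteq S_{n+1}(\N)\setminus S_{n}(\N)$ one obtains ${}^{*}(G^{n+1}_{\U}\setminus G^{n}_{\U})\subseteq S_{n+2}(\N)\setminus S_{n+1}(\N)$, forcing $\alpha\notin S_{n+1}(\N)$ and hence $\alpha\notin G^{n+1}_{\U}$. Therefore $\alpha\in G^{n+2}_{\U}\setminus G^{n+1}_{\U}$, establishing $P(n+1)$.

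For part (2), the left unboundedness of $G_{\U}$ in $^{\bullet}\N\setminus\N$ is immediate from Proposition 2.2.6(1): given $\mu\in{}^{\bullet}\N\setminus\N$, either $\mu\in S_{1}(\N)\setminus\N$ (and 2.2.6(1) supplies some $\alpha\in G^{1}_{\U}$ below $\mu$) or $\mu$ lies above all of $S_{1}(\N)$ by end extension (and any $\alpha\in G^{1}_{\U}\neq\emptyset$ works). For the coinitiality bound I argue by contradiction: a left-unbounded subset $S\subseteq G_{\U}$ with $|S|\le\mathfrak{c}$ would, combined with the left unboundedness just established, be left unbounded in $^{\bullet}\N\setminus\N$. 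Since $S_{1}(\N)$ is an initial segment of $^{\bullet}\N$ and $\U$ is nonprincipal, every witness below $\mu\in S_{1}(\N)\setminus\N$ must itself lie in $S_{1}(\N)\setminus\N$; thus $S\cap(S_{1}(\N)\setminus\N)$ would be a left-unbounded subset of $G^{1}_{\U}$ in $S_{1}(\N)\setminus\N$ of cardinality $\le\mathfrak{c}$, contradicting Proposition 2.2.6(3). The main delicacy throughout is justifying transfer with ``external'' parameters such as $G^{n}_{\U}$: this is legitimate because, though externally defined as subsets of $S_{n}(\N)$, they are still elements of $\mathbb{V}(X)$ and hence admissible parameters in a bounded-quantifier formula.
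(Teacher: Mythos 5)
Your proof is correct and follows essentially the same route as the paper: the base case is the saturation argument of Proposition 2.2.6 applied to $S_{1}(\N)$, the inductive step transfers the left-unboundedness statement (with the external sets $G^{n}_{\U}$ as legitimate parameters in $\mathbb{V}(X)$) and invokes $^{*}G^{n+1}_{\U}\subseteq G^{n+2}_{\U}$, and part (2) reduces to the coinitiality of $G^{1}_{\U}$ via the fact that it is an initial segment of $G_{\U}$. The only cosmetic difference is that the paper rewrites $G^{k+1}_{\U}\setminus G^{k}_{\U}$ as $G^{k+1}_{\U}\setminus S_{k}(\N)$ before transferring, where you instead transfer the difference directly and recover $\alpha\notin G^{n+1}_{\U}$ from the inclusion into $S_{n+2}(\N)\setminus S_{n+1}(\N)$; the two are equivalent.
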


\begin{proof} 1) We proceed by induction on $n$. Suppose $n=0$. Let $\eta$ be an infinite hypernatural number in $^{*}\N\setminus\N$ and pose, for every set $A$ in $\U$,

\begin{center} $A_{\eta}=\{\alpha\in$$^{*}A\mid \alpha<\eta\}$. \end{center}

These sets are internal, nonempty (as $A\subseteq A_{\eta}$) and the family $\{A_{\eta}\}_{A\in\U}$ has the finite intersection property and cardinality $\leq\mathfrak{c}$. By $\mathfrak{c}^{+}$-saturation property, 

\begin{center} $\bigcap_{A\in\U} A_{\eta}\neq \emptyset$; \end{center}

if $\alpha$ is an element in this intersection then $\alpha$ is infinite, $\alpha<\eta$ and $\alpha\in G^{1}_{\U}$: this proves that $G^{1}_{\U}$ is left unbounded in $^{*}\N\setminus\N$.\\
By induction, suppose to have proved the property for every $n\leq k$, and consider $n=k+1$. By inductive hypothesis, we know that

\begin{center} For every $\eta$ in $S_{k+1}(\N)\setminus S_{k}(\N)$ it exists $\alpha$ in $G^{k+1}_{\U}\setminus G^{k}_{\U}$ such that $\alpha < \eta$. \end{center}

Since $G^{k}_{\U}=G^{k+1}_{\U}\cap S_{k}(\N)$, we can substitute $G^{k+1}_{\U}\setminus G^{k}_{\U}$ with $G^{k+1}_{\U}\setminus S_{k}(\N)$; by transfer, it follows that

\begin{center} For every $\eta$ in $S_{k+2}(\N)\setminus S_{k+1}(\N)$ it exists $\alpha$ in $^{*}G^{k+1}_{\U}\setminus S_{k+1}(\N)$ such that $\alpha < \eta$ \end{center}

and we conclude observing that, since $^{*}G^{k+1}_{\U}\subseteq G^{k+2}_{\U}$, it follows

\begin{center} $^{*}G^{k+1}_{\U}\setminus S_{k+1}(\N)\subseteq$$G^{k+2}_{\U}\setminus S_{k+1}(\N)=G^{k+2}_{\U}\setminus G^{k+1}_{\U}$. \end{center}

2) By construction, since $G^{1}_{\U}$ is an initial segment of $G_{\U}$, the coinitiality of $G_{\U}$ is equal to that of $G^{1}_{\U}$, which is greater than $\mathfrak{c}$ by $\mathfrak{c}^{+}$-saturation.\\\end{proof}

In next section is showed an important feature of the sets of generators in $^{\bullet}\N$: in this context, where the iteration of the star map is allowed, there are particularly simple rules that, given generators $\alpha_{1},...,\alpha_{n}$ of ultrafilters $\U_{1},...,\U_{n}$, produce generators of the tensor product $\U_{1}\otimes\U_{2}\otimes...\otimes\U_{n}$.

\subsection{Tensor $k$-tuples in $^{\bullet}\N$}

As we observed in Section 2.4, given two hypernatural numbers $\alpha,\beta$ in a generical extension $^{*}\N$ of $\N$ (that satisfies the $\mathfrak{c}^{+}$-enlarging property) it is usually complicated to decide if $(\alpha,\beta)$ is, or is not, a tensor pair. In this section, we consider the hyperextension $^{\bullet}\N$ of $\N$, constructed starting with a superstructure model of nonstandard methods $\langle \mathbb{V}(X),\mathbb{V}(X),*\rangle$ with the star map $*$ that satisfies the $\mathfrak{c}^{+}$-enlarging property.\\
What we search is a binary relation $R$ over $^{\bullet}\N$ that satisfies the following two properties:
\begin{enumerate}
	\item given two hypernatural numbers $\alpha,\beta$ it is simple to decide if the pair $(\alpha,\beta)$ is in $R$ or not;
	\item every pair $(\alpha,\beta)$ in $R$ is a tensor pair.
\end{enumerate}

The star iteration provides such a relation:

\begin{defn} The binary relation $R$ on $^{\bullet}\N$ is the relation such that, for every $\alpha,\beta$ in $^{\bullet}\N$:

\begin{center} $(\alpha,\beta)\in R\Leftrightarrow \exists k\in\N,\exists\gamma\in$$^{\bullet}\N$ such that $\beta=S_{(h(\alpha)+k)}(\gamma)$. \end{center}

\end{defn}

In this definition, we just observe that $\beta\sim_{u}\gamma$, as a consequence of Proposition 2.5.11. This relation satisfies the property (1); the important fact is that $R$ satisfies also the second property, as it is proved in the theorem below:

\begin{thm} For every hypernatural numbers $\alpha,\beta$ in $^{\bullet}\N$, if $(\alpha,\beta)\in R$ then $(\alpha,\beta)$ is a tensor pair. \end{thm}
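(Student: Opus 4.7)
The plan is to verify the tensor pair condition directly: for every $A\subseteq\N^{2}$ we must show $(\alpha,\beta)\in{}^{\bullet}A$ iff $A\in\mathfrak{U}_{\alpha}\otimes\mathfrak{U}_{\beta}$. Write $N=h(\alpha)$ and pick $k,\gamma$ so that $\beta=S_{N+k}(\gamma)$; by Proposition 2.5.11, $\gamma\sim_{u}\beta$. The case $\gamma\in\N$ is immediate from Proposition 2.4.6, since then $\beta=\gamma\in\N$, so I would dispatch it first and then concentrate on $\gamma\in{}^{\bullet}\N\setminus\N$, setting $L=h(\gamma)\geq 1$, so that $h(\beta)=L+N+k$ by Proposition 2.5.10(2).

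The engine of the proof is the computation sketched at the beginning of Section 2.5, carried out carefully at the appropriate levels of the star-iteration. Put $A_{n}=\{m\in\N\mid(n,m)\in A\}$ and $D=\{n\in\N\mid A_{n}\in\mathfrak{U}_{\beta}\}$. Because $\gamma\sim_{u}\beta$, $D=\{n\in\N\mid\gamma\in{}^{\bullet}A_{n}\}$, and because $A_{n}\subseteq\N$ and $h(\gamma)=L$, Proposition 2.5.6(4) gives $D=\{n\in\N\mid \gamma\in S_{L}(A_{n})\}$. The transfer property of $S_{L}$ applied to the definition of $A_{n}$ rewrites this as
\[
D=\{n\in\N\mid (n,\gamma)\in S_{L}(A)\}.
\]

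Now apply the star map $S_{N+k}$ to $D$. Since $D\subseteq\N$ is defined by a bounded-quantifier formula with parameters $\gamma$ and $S_{L}(A)$, transfer (using $S_{N+k}(n)=n$ on natural numbers, $S_{N+k}(\gamma)=\beta$, and $S_{N+k}(S_{L}(A))=S_{N+k+L}(A)$) yields
\[
S_{N+k}(D)=\{\nu\in S_{N+k}(\N)\mid (\nu,\beta)\in S_{N+k+L}(A)\}.
\]
Therefore $\alpha\in S_{N+k}(D)$ iff $(\alpha,\beta)\in S_{N+k+L}(A)$; but $h((\alpha,\beta))=N+k+L$, so by Proposition 2.5.10(6) the right side is equivalent to $(\alpha,\beta)\in{}^{\bullet}A$. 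On the other hand, since $D\subseteq\N$ and $h(\alpha)=N\leq N+k$, Proposition 2.5.6(4) also gives $\alpha\in S_{N+k}(D)$ iff $\alpha\in{}^{\bullet}D$ iff $D\in\mathfrak{U}_{\alpha}$. Chaining the equivalences yields $(\alpha,\beta)\in{}^{\bullet}A$ iff $D\in\mathfrak{U}_{\alpha}$ iff $A\in\mathfrak{U}_{\alpha}\otimes\mathfrak{U}_{\beta}$, which is exactly the tensor pair condition.

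The routine pieces are the two appeals to Proposition 2.5.6(4) and the $S_{L}$-transfer that unfolds $A_{n}$. The main obstacle, and the step that must be executed with care, is the $S_{N+k}$-transfer of the set $D$: one needs the elementarity of $S_{N+k}$ on a formula whose parameters $\gamma$ and $S_{L}(A)$ live at levels that are already strictly higher than $\N$, which is precisely the feature that distinguishes ${}^{\bullet}\N$ from an ordinary hyperextension and is the reason the relation $R$ was defined to force $\beta$ to sit $N+k$ levels above a single element $\gamma$.
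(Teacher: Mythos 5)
Your proposal is correct and follows essentially the same route as the paper: both proofs unwind the definition of $\mathfrak{U}_{\alpha}\otimes\mathfrak{U}_{\beta}$ through the set $D=\{n\in\N\mid A_{n}\in\mathfrak{U}_{\gamma}\}$ and then use the transfer property of the iterated star map to lift $\gamma$ to $\beta=S_{h(\alpha)+k}(\gamma)$, identifying membership of $\alpha$ in the appropriate star of $D$ with $(\alpha,\beta)\in{}^{\bullet}A$ via the height computation $h((\alpha,\beta))=h(\alpha)+h(\gamma)+k$. Your explicit separation of the case $\gamma\in\N$ and the packaging of the key transfer as a single application of $S_{N+k}$ to $D$ are only cosmetic differences from the paper's chain of equivalences.
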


\begin{proof} As $(\alpha,\beta)\in R$, there are a natural number $k$ and an hypernatural number $\gamma\in$$^{\bullet}\N$ such that $\beta=S_{(h(\alpha)+k)}(\gamma).$ To prove that $(\alpha,\beta)$ is a tensor pair we have to show that, for every subset $A$ of $\N^{2}$, $(\alpha,\beta)\in$$^{\bullet}A$ if and only if $A\in\mathfrak{U}_{\alpha}\otimes\mathfrak{U}_{\beta}$.\\
Let $A$ be a subset of $\N^{2}$. By definition, 

\begin{center} $A\in \mathfrak{U}_{\alpha}\otimes\mathfrak{U}_{\beta}\Leftrightarrow \{n\in\N\mid\{m\in\N\mid (n,m)\in A\}\in\mathfrak{U}_{\beta}\}\in \mathfrak{U}_{\alpha}$.\end{center}

Since, as observed, $\mathfrak{U}_{\beta}=\mathfrak{U}_{\gamma}$, it follows that 

\begin{center}$A\in \mathfrak{U}_{\alpha}\otimes\mathfrak{U}_{\beta}\Leftrightarrow\{n\in\N\mid\{m\in\N\mid (n,m)\in A\}\in\mathfrak{U}_{\gamma}\}\in \mathfrak{U}_{\alpha}$.\end{center}

By definition of generated ultrafilter, 

\begin{center}$\{n\in\N\mid\{m\in\N\mid (n,m)\in A\}\in\mathfrak{U}_{\gamma}\}\in \mathfrak{U}_{\alpha}\Leftrightarrow$\\\vspace{0.3cm}$\Leftrightarrow\alpha\in S_{h(\alpha)}(\{n\in\N\mid \gamma\in S_{h(\gamma)}(\{m\in\N\mid (n,m)\in A\})\})$.\end{center}
By transfer property, 

\begin{center}$\alpha\in S_{h(\alpha)}(\{n\in\N\mid \gamma\in S_{h(\gamma)}(\{m\in\N\mid (n,m)\in A\})\})\Leftrightarrow$\\\vspace{0.3cm}$\Leftrightarrow \alpha\in S_{h(\alpha)}(\{n\in\N\mid S_{k}(\gamma)\in S_{(h(\gamma)+k)}(\{m\in\N\mid (n,m)\in A\})\})\Leftrightarrow $\\\vspace{0.3cm}$\Leftrightarrow (\alpha,S_{(h(\alpha)+k)}(\gamma)) \in S_{(h(\alpha)+h(\gamma)+k)}(A)$.\end{center}

Since $S_{(h(\alpha)+k)}(\gamma)=\beta$ and $h((\alpha,\beta))=h(\beta)=h(\alpha)+h(\gamma)+k$, by Proposition 2.5.6 it follows that 

\begin{center}$(\alpha,S_{(h(\alpha)+k)}(\gamma)) \in S_{(h(\alpha)+h(\gamma)+k)}(A)\Leftrightarrow (\alpha,\beta)\in$$^{\bullet}A$.\end{center}
This proves that, for every subset $A$ of $\N^{2}$, 

\begin{center}$A\in \mathfrak{U}_{\alpha}\otimes\mathfrak{U}_{\beta}\Leftrightarrow (\alpha,\beta)\in$$^{\bullet}A$,\end{center}

so $(\alpha,\beta)$ is a tensor pair. \\\end{proof}

\begin{cor} For every hypernatural numbers $\alpha,\beta$ in $^{\bullet}\N$, $(\alpha,S_{h(\alpha)}(\beta))$ is a tensor pair. \end{cor}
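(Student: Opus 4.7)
The plan is to derive this corollary as an immediate application of Theorem 2.5.16. Recall that the theorem states: if $(\alpha, \beta) \in R$, then $(\alpha, \beta)$ is a tensor pair, where the relation $R$ is defined by
\[
(\alpha, \beta) \in R \iff \exists k \in \N,\ \exists \gamma \in {}^{\bullet}\N \text{ such that } \beta = S_{h(\alpha)+k}(\gamma).
\]
So the entire task reduces to verifying that, for every $\alpha, \beta \in {}^{\bullet}\N$, the pair $(\alpha, S_{h(\alpha)}(\beta))$ belongs to $R$.

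This verification is essentially by inspection: take $k = 0$ and $\gamma = \beta$. Then
\[
S_{h(\alpha)+k}(\gamma) = S_{h(\alpha)+0}(\beta) = S_{h(\alpha)}(\beta),
\]
so the witnesses $k$ and $\gamma$ required by the definition of $R$ are supplied by $\beta$ itself (together with the index $0$). Therefore $(\alpha, S_{h(\alpha)}(\beta)) \in R$, and Theorem 2.5.16 applies.

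I do not expect any genuine obstacle here: the corollary is designed to package the most useful special case of Theorem 2.5.16, namely the one where, starting from arbitrary $\alpha$ and $\beta$, one lifts $\beta$ exactly $h(\alpha)$ times so that its height becomes at least as large as (and in fact equal to) the quantity needed to pair it with $\alpha$ on the right. The only point worth noting, to keep the proof self-contained, is that $S_{h(\alpha)}(\beta)$ generates the same ultrafilter as $\beta$, by Proposition 2.5.11, so the tensor pair we obtain is indeed $(\mathfrak{U}_{\alpha}, \mathfrak{U}_{\beta})$ and not some other product.
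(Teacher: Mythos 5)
Your proof is correct and is essentially the paper's own argument: the paper also disposes of the corollary by simply observing that $(\alpha, S_{h(\alpha)}(\beta)) \in R$, which is exactly your choice of witnesses $k=0$ and $\gamma=\beta$ in the definition of $R$, followed by an appeal to Theorem 2.5.16. The closing remark about $S_{h(\alpha)}(\beta) \sim_{u} \beta$ via Proposition 2.5.11 is a harmless and sensible addition.
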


\begin{proof} Just observe that, for every $\alpha,\beta$ in $^{\bullet}\N$, $(\alpha,S_{h(\alpha)}(\beta))\in R$. \\\end{proof}

In $^{\bullet}\N$, tensor pairs have the following equivalent characterization:

\begin{prop} Let $\alpha,\beta$ be two hypernatural numbers in $^{\bullet}\N$. The following conditions are equivalent:
\begin{enumerate}
	\item $(\alpha,\beta)$ is a tensor pair;
	\item $(\alpha,\beta)\sim_{u}(\alpha,S_{h(\alpha)}(\beta))$.
\end{enumerate}
\end{prop}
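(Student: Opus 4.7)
The plan is to deduce this equivalence as a quick consequence of the two preceding results in this subsection, namely Corollary 2.5.19 and the analogue of Proposition 2.6.1 that says generators are preserved by $S_n$.

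First, I would observe the following key identity: for every $\alpha,\beta\in{}^{\bullet}\N$,
\[
\mathfrak{U}_{(\alpha,\,S_{h(\alpha)}(\beta))}=\mathfrak{U}_{\alpha}\otimes\mathfrak{U}_{\beta}.
\]
This holds because Corollary 2.5.19 tells us that $(\alpha,S_{h(\alpha)}(\beta))$ is a tensor pair, so by definition
$\mathfrak{U}_{(\alpha,S_{h(\alpha)}(\beta))}=\mathfrak{U}_{\alpha}\otimes\mathfrak{U}_{S_{h(\alpha)}(\beta)}$; and Proposition 2.6.1 ensures that $\mathfrak{U}_{S_{h(\alpha)}(\beta)}=\mathfrak{U}_{\beta}$, since iterating the star map on an element does not change the generated ultrafilter.

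With this identity in hand, the two implications are essentially bookkeeping. For $(1)\Rightarrow(2)$, if $(\alpha,\beta)$ is a tensor pair then $\mathfrak{U}_{(\alpha,\beta)}=\mathfrak{U}_{\alpha}\otimes\mathfrak{U}_{\beta}$, and by the boxed identity this equals $\mathfrak{U}_{(\alpha,S_{h(\alpha)}(\beta))}$, i.e. $(\alpha,\beta)\sim_{u}(\alpha,S_{h(\alpha)}(\beta))$. For $(2)\Rightarrow(1)$, if $(\alpha,\beta)\sim_{u}(\alpha,S_{h(\alpha)}(\beta))$ then $\mathfrak{U}_{(\alpha,\beta)}=\mathfrak{U}_{(\alpha,S_{h(\alpha)}(\beta))}=\mathfrak{U}_{\alpha}\otimes\mathfrak{U}_{\beta}$, which is exactly the assertion that $(\alpha,\beta)$ is a tensor pair.

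Since the whole argument consists of combining two already-established facts, there is no real obstacle: the only thing to verify carefully is that Proposition 2.6.1 applies with $n=h(\alpha)$ to $\beta$ (which it does, as that proposition is stated for arbitrary $\alpha\in{}^{\bullet}\N$ and arbitrary natural $n$). Thus the proposition reduces to a one-line corollary of Corollary 2.5.19 and Proposition 2.6.1, and my written proof would simply chain these equalities.
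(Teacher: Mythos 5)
Your proof is correct and follows essentially the same route as the paper: it first establishes the identity $\mathfrak{U}_{(\alpha,S_{h(\alpha)}(\beta))}=\mathfrak{U}_{\alpha}\otimes\mathfrak{U}_{\beta}$ by combining the corollary that $(\alpha,S_{h(\alpha)}(\beta))$ is a tensor pair with the fact that $\beta\sim_{u}S_{h(\alpha)}(\beta)$, and then reads off both implications. The only discrepancies are in the reference numbers (the relevant results are Corollary 2.5.17 and Proposition 2.5.11 in the paper), which is immaterial since the content you invoke is exactly what the paper uses.
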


\begin{proof} Observe that, given $\alpha,\beta\in$$^{\bullet}\N$, $(\alpha,S_{h(\alpha)}(\beta))$ is a tensor pair and, as $\beta\sim_{u} S_{h(\alpha)}(\beta)$, if follows that $\mathfrak{U}_{(\alpha,S_{h(\alpha)}(\beta))}=\mathfrak{U}_{\alpha}\otimes\mathfrak{U}_{\beta}$.\\
$(1)\Rightarrow (2)$ If $(\alpha,\beta)$ is a tensor pair, $\mathfrak{U}_{(\alpha,\beta)}=\mathfrak{U}_{\alpha}\otimes\mathfrak{U}_{\beta}$ by definition, so by the above observation we get that $(\alpha,\beta)\sim_{u}(\alpha,S_{h(\alpha)}(\beta))$.\\
$(2)\Rightarrow (1)$ If $\mathfrak{U}_{(\alpha,\beta)}=\mathfrak{U}_{(\alpha,S_{h(\alpha)}(\beta))}$ then, by the previous observation, $\mathfrak{U}_{(\alpha,\beta)}=\mathfrak{U}_{\alpha}\otimes\mathfrak{U}_{\beta}$ so $(\alpha,\beta)$ is a tensor pair.\\ \end{proof}

We remark that the result of Theorem 2.5.16 could be derived combining Puritz's Theorem with the following fact:

\begin{prop} For every hypernatural number $\alpha$ in $^{\bullet}\N$, for every natural number $n\geq 1$, for every function $f\in\mathtt{Fun}(\N,\N)$, or $^{\bullet}f(S_{n}(\alpha))\in\N$ or $h($$^{\bullet}f(S_{n}(\alpha)))\geq n+1$. \end{prop}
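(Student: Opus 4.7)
The plan is to establish the stronger identity
\[
{}^{\bullet}f(S_{n}(\alpha))=S_{n}({}^{\bullet}f(\alpha))
\]
and then conclude by a simple case split on whether ${}^{\bullet}f(\alpha)$ is a natural number. Setting $\gamma={}^{\bullet}f(\alpha)$, if $\gamma\in\N$ then $S_{n}(\gamma)=\gamma\in\N$, so the first alternative of the statement holds; otherwise $\gamma\in{}^{\bullet}\N\setminus\N$, so $h(\gamma)\geq 1$, and point (2) of Proposition 2.5.11 gives $h(S_{n}(\gamma))=h(\gamma)+n\geq n+1$, which is the second alternative.

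To prove the identity, first I would unfold the definition of ${}^{\bullet}f$ recalled in the proof of point (5) of Proposition 2.5.11: for every natural number $k$, ${}^{\bullet}f\!\restriction_{S_{k}(\N)}=S_{k}(f)$. Setting $m=h(\alpha)$ this yields ${}^{\bullet}f(\alpha)=S_{m}(f)(\alpha)$, and since $h(S_{n}(\alpha))=m+n$ by point (2) of Proposition 2.5.11, it yields ${}^{\bullet}f(S_{n}(\alpha))=S_{n+m}(f)(S_{n}(\alpha))$.

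Next I would apply the star map $S_{n}$ to the equality ${}^{\bullet}f(\alpha)=S_{m}(f)(\alpha)$. By Theorem 2.5.2, $S_{n}$ is itself a star map of a superstructure model of nonstandard methods, hence it commutes with function application, and of course $S_{n}\circ S_{m}=S_{n+m}$ by Definition 2.5.1. Therefore
\[
S_{n}({}^{\bullet}f(\alpha))=S_{n}(S_{m}(f)(\alpha))=S_{n}(S_{m}(f))(S_{n}(\alpha))=S_{n+m}(f)(S_{n}(\alpha))={}^{\bullet}f(S_{n}(\alpha)),
\]
which is the desired identity.

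The only potential delicacy is the transfer step $S_{n}(S_{m}(f)(\alpha))=S_{n}(S_{m}(f))(S_{n}(\alpha))$, but this is just the statement that the bounded-quantifier formula ``$y=g(x)$'' transfers under the star map $S_{n}$ of Theorem 2.5.2 applied to the parameters $g=S_{m}(f)$ and $x=\alpha$ of the superstructure $\mathbb{V}(X)$; once this is in hand, the rest of the argument is essentially bookkeeping on heights via Proposition 2.5.11.
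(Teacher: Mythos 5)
Your proposal is correct and follows essentially the same route as the paper: both hinge on the identity $^{\bullet}f(S_{n}(\alpha))=S_{n}(^{\bullet}f(\alpha))$, proved by transfer, followed by the height computation $h(S_{n}(\gamma))=h(\gamma)+n\geq n+1$ when $\gamma=$$^{\bullet}f(\alpha)\notin\N$. The paper obtains the identity by transferring the graph relation $(x,y)\in\Gamma_{^{\bullet}f}\Leftrightarrow(S_{n}(x),S_{n}(y))\in\Gamma_{^{\bullet}f}$, while you transfer the formula $y=g(x)$ directly; these are the same argument in different notation.
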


\begin{proof} The result follows from this claim:\\

{\bfseries Claim:} $^{\bullet}f(S_{n}(\alpha))=S_{n}($$^{\bullet}f(\alpha))$.\\

We prove the claim: if $\Gamma_{^{\bullet} f}$ is the graph of $^{\bullet}f$, for every $x,y$ in $^{\bullet}\N$, for every $n\geq 1\in\N$,

\begin{center} $(x,y)\in \Gamma_{^{\bullet}f}\Leftrightarrow (S_{n}(x),S_{n}(y))\in\Gamma_{^{\bullet}f}$.\end{center}
In particular, if $x= \alpha, y=$$^{\bullet}f(\alpha)$ the claim is proved.\\
So, if $^{\bullet}f(S_{n}(\alpha))\notin\N$, then $h($$^{\bullet}f(S_{n}(\alpha)))=h(S_{n}($$^{\bullet}f(\alpha)))=n+h($$^{\bullet}f(\alpha))\geq n+1.$ \\ \end{proof}

\begin{cor} Theorem 2.5.16. \end{cor}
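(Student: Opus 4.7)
The plan is to derive Theorem 2.5.16 directly from Puritz's Theorem (Theorem 2.4.7) applied to $^{\bullet}\N$, using Proposition 2.5.19 to verify the hypothesis $\alpha < er(\beta)$. First I would observe that since $^{\bullet}\N$ is itself a hyperextension of $\N$ with the $\mathfrak{c}^{+}$-enlarging property (Proposition 2.5.7), Puritz's characterization is available: given any $\alpha,\beta \in {}^{\bullet}\N$, the pair $(\alpha,\beta)$ is a tensor pair iff $\alpha < er(\beta)$, where $er(\beta)=\{{}^{\bullet}f(\beta)\mid f\in\mathtt{Fun}(\N,\N),\ {}^{\bullet}f(\beta)\in{}^{\bullet}\N\setminus\N\}$ (both $\alpha\in G_{\mathfrak{U}_\alpha}$ and $\beta\in G_{\mathfrak{U}_\beta}$ hold trivially, so only the $er$ condition needs checking).

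Now suppose $(\alpha,\beta)\in R$, so there exist $k\in\N$ and $\gamma\in{}^{\bullet}\N$ with $\beta = S_{h(\alpha)+k}(\gamma)$. Fix any $f\in\mathtt{Fun}(\N,\N)$ with ${}^{\bullet}f(\beta)\in{}^{\bullet}\N\setminus\N$. I would apply Proposition 2.5.19 to $\gamma$ with $n = h(\alpha)+k$ (assuming this is $\geq 1$; the degenerate case $n=0$ is treated below). Since ${}^{\bullet}f(S_{n}(\gamma))={}^{\bullet}f(\beta)$ is infinite, the proposition forces $h({}^{\bullet}f(\beta))\geq h(\alpha)+k+1 > h(\alpha)$. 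The end-extension property of $^{\bullet}\N$ over $S_{h(\alpha)}(\N)$ (Proposition 2.5.6(5)) then gives $\alpha < {}^{\bullet}f(\beta)$, because $\alpha\in S_{h(\alpha)}(\N)$ while ${}^{\bullet}f(\beta)\in {}^{\bullet}\N\setminus S_{h(\alpha)}(\N)$. Since $f$ was arbitrary, $\alpha<er(\beta)$, and Puritz's Theorem concludes that $(\alpha,\beta)$ is a tensor pair.

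The degenerate case $h(\alpha)+k = 0$ means $\alpha\in\N$ and $\beta=\gamma\in\N$, in which case Proposition 2.4.5 directly yields that $(\alpha,\beta)$ is a tensor pair without needing Puritz. I would also note that if $\beta\in\N$ while $\alpha$ is infinite (which falls in the same $k=0$, $h(\gamma)=0$ subcase from the other side), the same proposition applies.

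The main obstacle, more a bookkeeping issue than a mathematical one, is to reconcile the two frameworks: Puritz's Theorem is phrased in a single-star hyperextension $^{*}\N$, whereas here we apply it to $^{\bullet}\N$ viewed as one hyperextension of $\N$; in this reading the ``star'' of Puritz's statement is $^{\bullet}$, so $er(\beta)$ is computed using ${}^{\bullet}f$ and not iterated stars. Once this translation is made, the proof is essentially a one-line combination of Proposition 2.5.19 (which produces a height jump for nonstandard $f$-images) with the end-extension property (which converts a height jump into the required inequality $\alpha<{}^{\bullet}f(\beta)$).
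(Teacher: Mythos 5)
Your proof is correct and follows essentially the same route as the paper's: apply Proposition 2.5.19 to obtain the height jump $h({}^{\bullet}f(\beta))\geq h(\alpha)+k+1>h(\alpha)$ for every $f$ with ${}^{\bullet}f(\beta)$ infinite, convert that into $\alpha<{}^{\bullet}f(\beta)$ via the end-extension property, and conclude by Puritz's Theorem read inside $^{\bullet}\N$. The only quibble is in your degenerate case: $h(\alpha)+k=0$ forces $\alpha\in\N$ but not $\gamma\in\N$; this is harmless, since the proposition on principal ultrafilters already makes $(n,\beta)$ a tensor pair for an arbitrary $\beta\in{}^{\bullet}\N$.
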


\begin{proof} Let $\alpha,\beta$ be hypernatural numbers in $^{\bullet}\N$ such that $(\alpha,\beta)\in R$, and let $n,\gamma$ be such that $\beta=S_{(h(\alpha)+n)}(\gamma)$. Let $f$ be a function in $\mathtt{Fun}(\N,\N)$.\\
Then or $^{\bullet}f(\beta)\in\N$, or $h($$^{\bullet}f(\beta))\geq h(\alpha)+n+1$; in this second case, since $h($$^{\bullet}f(\beta))> h(\alpha)$, it follows that $\alpha< $$^{\bullet}f(\beta)$.\\
Since this happens for every function $f$ in $\mathtt{Fun}(\N,\N)$ with $^{\bullet}f(\beta)$ infinite, by Puritz's Theorem it follows that $(\alpha,\beta)$ is a tensor pair. \\\end{proof}

\begin{cor} Let $k\geq 2$ be a positive natural number and $\alpha_{1},...,\alpha_{k}$ hypernatural numbers in $^{\bullet}\N$. The following two conditions are equivalent:
\begin{enumerate}
	\item $(\alpha_{1},...,\alpha_{k})$ is a tensor $k$-tuple;
	\item $($$^{*}\alpha_{1},...,$$^{*}\alpha_{k})$ is a tensor $k$-tuple.
\end{enumerate}
\end{cor}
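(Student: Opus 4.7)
The plan is to reduce the statement to a single observation: in $^{\bullet}\N$, the $\sim_u$-invariance under $S_n$ (Proposition 2.5.11) applies not only to individual generators but, via transfer, to tuples viewed as points of $^{\bullet}(\N^k)$. Once this is clear, both sides of the claimed equivalence unfold to the same identity of ultrafilters.

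More concretely, I would first extend point 3 of Proposition 2.5.6 from subsets of $\N$ to subsets of $\N^k$: for every $A\subseteq \N^k$ and every $\alpha\in{}^{\bullet}(\N^k)={}^{\bullet}\N{}^{k}$ one has $\alpha\in{}^{\bullet}A$ iff $S_n(\alpha)\in{}^{\bullet}A$. The proof is the same as in Proposition 2.5.3(4), using transfer on the identity $A^c=\N^k\setminus A$ and the end-extension property $S_n(A)=S_m(A)\cap S_n(\N^k)$. A second, purely formal ingredient is that the star map commutes with tupling, so $S_1((\alpha_1,\dots,\alpha_k))=({}^*\alpha_1,\dots,{}^*\alpha_k)$. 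Applying the previous observation with $n=1$ to the tuple gives, for every $A\subseteq\N^k$,
\[
(\alpha_1,\dots,\alpha_k)\in{}^{\bullet}A\iff ({}^*\alpha_1,\dots,{}^*\alpha_k)\in{}^{\bullet}A,
\]
which is exactly the statement $\mathfrak{U}_{(\alpha_1,\dots,\alpha_k)}=\mathfrak{U}_{({}^*\alpha_1,\dots,{}^*\alpha_k)}$.

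On the other hand, Proposition 2.5.11 (applied one variable at a time) yields $\mathfrak{U}_{\alpha_i}=\mathfrak{U}_{{}^*\alpha_i}$ for each $i\leq k$, hence
\[
\mathfrak{U}_{\alpha_1}\otimes\cdots\otimes\mathfrak{U}_{\alpha_k}=\mathfrak{U}_{{}^*\alpha_1}\otimes\cdots\otimes\mathfrak{U}_{{}^*\alpha_k}.
\]
Combining the two displayed equalities, the defining condition
\[
\mathfrak{U}_{(\alpha_1,\dots,\alpha_k)}=\mathfrak{U}_{\alpha_1}\otimes\cdots\otimes\mathfrak{U}_{\alpha_k}
\]
holds if and only if
\[
\mathfrak{U}_{({}^*\alpha_1,\dots,{}^*\alpha_k)}=\mathfrak{U}_{{}^*\alpha_1}\otimes\cdots\otimes\mathfrak{U}_{{}^*\alpha_k},
\]
giving the desired equivalence.

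There is no genuine obstacle here: the only point requiring a little care is the extension of Proposition 2.5.6(3) from subsets of $\N$ to subsets of $\N^k$, and the identification of $S_1$ applied to a tuple with the componentwise star. Both are routine applications of the transfer property of the elementary chain $\langle S_n(\N)\mid n<\omega\rangle$, so the proof is essentially a two-line calculation once these preliminaries are recorded. Alternatively, one could derive the same conclusion from Corollary 2.4.12 by noting that $\alpha_i<er(\alpha_{i+1})$ is insensitive to replacing each $\alpha_j$ by ${}^*\alpha_j$ (since ${}^{\bullet}f({}^*\alpha_{i+1})={}^*({}^{\bullet}f(\alpha_{i+1}))$ by Proposition 2.5.18, and ${}^*\alpha_i<{}^*\beta$ iff $\alpha_i<\beta$ by transfer), but the direct ultrafilter argument above is the cleanest.
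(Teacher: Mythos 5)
Your proposal is correct, and your primary argument takes a genuinely different route from the paper. The paper proves the corollary through the $er$-characterization: it observes that $\alpha<er(\beta)$ iff $^{*}\alpha<er($$^{*}\beta)$ (using $^{*}($$^{\bullet}f(\beta))=$$^{\bullet}f($$^{*}\beta)$ and transfer of the order), and then invokes Theorem 2.4.11 and Corollary 2.4.12 — i.e. exactly the alternative you sketch in your last sentence. Your main argument instead establishes the stronger fact that $\mathfrak{U}_{(\alpha_{1},...,\alpha_{k})}=\mathfrak{U}_{(^{*}\alpha_{1},...,^{*}\alpha_{k})}$ as ultrafilters on $\N^{k}$, by extending Proposition 2.5.6(3) from subsets of $\N$ to subsets of $\N^{k}$ and noting that the star map commutes with tupling; since Proposition 2.5.11 gives $\mathfrak{U}_{\alpha_{i}}=\mathfrak{U}_{^{*}\alpha_{i}}$ componentwise, both sides of the defining identity for a tensor $k$-tuple are literally unchanged under starring. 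This buys you two things: you bypass Puritz's theorem and the $er$ machinery entirely, and you avoid any worry about the finiteness hypotheses that hover over Theorem 2.4.8 and Theorem 2.4.11 (which are stated for infinite hypernaturals, whereas the corollary allows arbitrary $\alpha_{i}$). The only step you should record explicitly is the $\N^{k}$ version of Proposition 2.5.6(3), but as you say it is the same partition-plus-transfer argument as Proposition 2.5.3(4), so the gap is cosmetic rather than substantive.
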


\begin{proof} Observe that, for every $\alpha,\beta$ in $^{\bullet}\N$, $\alpha<er(\beta)$ if and only if $^{*}\alpha<er($$^{*}\beta)$ (since, for every function $f$ in $\mathtt{Fun}(\N,\N)$ such that $^{\bullet}f(\beta)\notin\N$, $\alpha\leq$$^{\bullet}f(\beta)\Leftrightarrow$$^{*}\alpha\leq$$^{*}($$^{\bullet}f(\beta))$ and $^{*}($$^{\bullet}f(\beta))=$$^{\bullet}f($$^{*}\beta)$). This, combined with Theorem 2.4.11, gives the equivalence between (1) and (2). \\ \end{proof}

We still have the problem, given generical hypernatural numbers $\alpha_{1},...,\alpha_{k}$ in $^{\bullet}\N$, to decide if $(\alpha_{1},...,\alpha_{k})$ is a tensor $k$-tuple. Similarly to the case $k=2$, we get that an appropriate use of star iteration gives a procedure to bypass this problem:

\begin{defn} Let $k\geq 2$ be a natural number and $\alpha_{1},...,\alpha_{k}$ hypernatural numbers in $^{\bullet}\N$. The {\bfseries tensorized of $(\alpha_{1},...,\alpha_{k})$} $($notation: $T(\alpha_{1},...,\alpha_{k}))$ is the $k$-tuple

\begin{center} $T(\alpha_{1},...,\alpha_{k})=($$S_{h_{1}}(\alpha_{1}),S_{h_{2}}(\alpha_{2}),...,S_{h_{k}}(\alpha_{k}))$, \end{center}

where $h_{i}=\sum_{j< i} h(\alpha_{j})$ for every index $i$ in $\{1,...,k\}$.\end{defn}

Observe that $h_{1}=0$ (we included $h_{1}$ in the definition because this inclusion gives an uniform formulation to this notion).\\
E.g., if $\alpha,\beta,\gamma$ are three hypernatural numbers in $^{*}\N$, then 

\begin{center} $T(\alpha,\beta,\gamma)=(\alpha,$$^{*}\beta,$$^{**}\gamma)$. \end{center}

\begin{prop} For every hypernatural numbers $\alpha_{1},...,\alpha_{k}$ in $^{\bullet}\N$ the tensorized $T(\alpha_{1},...,\alpha_{k})$ of $(\alpha_{1},...,\alpha_{k})$ is a tensor $k$-tuple. \end{prop}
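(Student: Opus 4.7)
I would proceed by induction on $k$. The base case $k=2$ is exactly Corollary 2.5.17: $T(\alpha_{1},\alpha_{2}) = (\alpha_{1}, S_{h(\alpha_{1})}(\alpha_{2}))$ is a tensor pair because it lies in the relation $R$ of Definition 2.5.15.

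For the inductive step, assume $T(\alpha_{1},\dots,\alpha_{k})$ is a tensor $k$-tuple. Note that $T(\alpha_{1},\dots,\alpha_{k+1}) = (T(\alpha_{1},\dots,\alpha_{k}),\, S_{h_{k+1}}(\alpha_{k+1}))$ once we identify $^{\bullet}\N^{k}\times{}^{\bullet}\N$ with $^{\bullet}\N^{k+1}$. By Proposition 2.4.10, to promote this to a tensor $(k+1)$-tuple it suffices to prove that the pair $(\mathbf{x},\beta)$ is a tensor pair, where $\mathbf{x}:=T(\alpha_{1},\dots,\alpha_{k})$ and $\beta:=S_{h_{k+1}}(\alpha_{k+1})$.

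The key observation is a height computation: by Proposition 2.5.10(4), $h(\mathbf{x}) = \max_{i\leq k} h(S_{h_{i}}(\alpha_{i})) = \max_{i\leq k}(h_{i}+h(\alpha_{i})) = \max_{i\leq k} h_{i+1} = h_{k+1}$, since the sequence $h_{1}\leq h_{2}\leq\cdots\leq h_{k+1}$ is non-decreasing. Thus $\beta = S_{h(\mathbf{x})}(\alpha_{k+1})$, which puts $(\mathbf{x},\beta)$ into the natural analogue of the relation $R$ for pairs of the form $(\text{tuple},\text{hypernatural})$. I would then adapt the proof of Theorem 2.5.16 almost verbatim: for an arbitrary $A\subseteq\N^{k}\times\N$, unfolding the tensor product and using that $\beta\sim_{u}\alpha_{k+1}$ (by Proposition 2.5.11) yields
\[
A\in\mathfrak{U}_{\mathbf{x}}\otimes\mathfrak{U}_{\beta}\iff \mathbf{x}\in S_{h(\mathbf{x})}\!\Bigl(\bigl\{\bar n\in\N^{k}:\alpha_{k+1}\in S_{h(\alpha_{k+1})}(\{m:(\bar n,m)\in A\})\bigr\}\Bigr).
\]
Applying transfer $h(\mathbf{x})$ more times to the inner membership (which shifts $\alpha_{k+1}$ to $S_{h(\mathbf{x})}(\alpha_{k+1})=\beta$ and raises the star-level of the inner comprehension by $h(\mathbf{x})$) converts the right-hand side into $(\mathbf{x},\beta)\in S_{h(\mathbf{x})+h(\alpha_{k+1})}(A)$. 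Since $h((\mathbf{x},\beta))=\max(h(\mathbf{x}),h(\beta))=h(\beta)=h(\mathbf{x})+h(\alpha_{k+1})$, Proposition 2.5.10(6) identifies this with $(\mathbf{x},\beta)\in{}^{\bullet}A$. Therefore $\mathfrak{U}_{(\mathbf{x},\beta)}=\mathfrak{U}_{\mathbf{x}}\otimes\mathfrak{U}_{\beta}$, completing the induction.

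The only real work is the height bookkeeping in the adaptation of Theorem 2.5.16: one must verify that $h(\mathbf{x})$ is exactly $h_{k+1}$ (rather than merely bounded by it), which relies on the non-decreasing structure of the offsets $h_{i}$ and goes through uniformly regardless of whether individual $\alpha_{i}$ happen to be natural numbers (in which case $h(\alpha_{i})=0$ and $S_{h_{i}}(\alpha_{i})=\alpha_{i}$, so the tensorization leaves them untouched and the inductive step is trivial for that coordinate).
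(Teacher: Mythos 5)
Your proof is correct, but it takes a genuinely different route from the paper's. The paper's argument is a two-line appeal to the Puritz-style characterization: it computes $h(S_{h_{i}}(\alpha_{i}))=h_{i}+h(\alpha_{i})$, deduces (via Proposition 2.5.19, which guarantees that $^{\bullet}f(S_{n}(\alpha))$ is either finite or of height $\geq n+1$) that $S_{h_{i}}(\alpha_{i})<er(S_{h_{i+1}}(\alpha_{i+1}))$ for every $i$, and then invokes the $er$-criterion for tensor $k$-tuples (Theorem 2.4.11 / Corollary 2.4.12). You instead induct on $k$, peel off the last coordinate with Proposition 2.4.10, and prove that $(T(\alpha_{1},\dots,\alpha_{k}),S_{h_{k+1}}(\alpha_{k+1}))$ is a tensor pair by redoing the direct transfer computation of Theorem 2.5.16 for pairs of the form $(\text{tuple},\text{hypernatural})$; the pivotal observation that $h(T(\alpha_{1},\dots,\alpha_{k}))=h_{k+1}$, which makes the last coordinate sit at exactly the right star-level, is correct and is the same height bookkeeping the paper relies on. What the paper's route buys is brevity, since all the work was already packaged into Corollary 2.4.12 and Proposition 2.5.19; what your route buys is independence from the $er$ machinery — it only uses the decomposition Proposition 2.4.10 plus transfer, so it would survive even if one had skipped Section 2.4's Puritz analysis — at the cost of having to restate the Theorem 2.5.16 computation in the tuple setting and of some care with the degenerate case where some $\alpha_{i}$ are finite (which you correctly note is harmless, since then $h(\alpha_{i})=0$ and the tensorization fixes that coordinate).
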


\begin{proof} We just observe that $h(S_{h_{i}}(\alpha_{i}))=h_{i}+h(\alpha_{i})$, so $er(S_{h_{i+1}}(\alpha_{i+1}))>S_{h_{i}}(\alpha_{i})$; by Theorem 2.4.11 it follows that $T(\alpha_{1},...,\alpha_{k})$ is a tensor $k$-tuple.\\ \end{proof}

As a corollary we get that, for every hypernatural numbers $\alpha_{1},...\alpha_{k}$ in $^{\bullet}\N$, 

\begin{center} $\mathfrak{U}_{\alpha_{1}}\otimes...\otimes\mathfrak{U}_{\alpha_{k}}=\mathfrak{U}_{T(\alpha_{1},...,\alpha_{k})}$. \end{center}

This gives a procedure to study, given a function $f$ $\sim_{u}$-preserving in $\mathtt{Fun}($$^{\bullet}\N^{k},$$^{\bullet}\N)$, the restriction of $\hat{f}$ to $T_{k}$: in fact, for every $\alpha_{1},...,\alpha_{k}$ in $^{\bullet}\N$ 

\begin{center} $\hat{f}(\mathfrak{U}_{\alpha_{1}}\otimes...\otimes\mathfrak{U}_{\alpha_{k}})=\mathfrak{U}_{f(T(\alpha_{1},...,\alpha_{k}))}$. \end{center}

This gives the possibility to study the functions in $\mathtt{Fun}(\bN^{k},\bN)$ in nonstandard terms. Next section is dedicated to two important particular cases: the functions $\oplus$ and $\odot$.\\
We conclude this section with a remark. Puritz's Theorem, in some sense, tells that a pair $(\alpha,\beta)$ is a tensor pair if $\beta$ is "much larger" than $\alpha$. In $^{\bullet}\N$, whenever $\alpha,\beta$ are hypernatural numbers such that $h(\alpha)<h(\beta)$, we can surely state that $\beta$ is much larger than $\alpha$. One could imagine, then, that the following property holds:

\begin{center} $\forall\alpha,\beta\in$$^{\bullet}\N\setminus\N$, if $h(\alpha)<h(\beta)$ then $(\alpha,\beta)$ is a tensor pair. \end{center}

This is false: consider the function $f\in\mathtt{Fun}(\N,\N)$ such that

\begin{center} $f(n)=\begin{cases} p, & \mbox{if there are a prime number } p\\ &\mbox{and a natural number } k \mbox{ such that } n=p^{k};\\ n, & \mbox{otherwise} .\end{cases}$ \end{center}

Let $\eta$ be a prime number in $^{*}\N\setminus\N$, $\xi$ an hypernatural number in $^{*}\N$, and consider $\beta=\eta^{^{*}\xi}$. By construction, $^{\bullet}f(\beta)=\eta\in$$^{*}\N\setminus\N$. If $\alpha$ is any hypernatural number in $^{*}\N$ with $\alpha>\eta$, since $\alpha$ is not smaller than $er(\beta)$, by Puritz's Theorem it follows that the pair $(\alpha,\beta)$ is not a tensor pair.

\subsection{Sets of generators of sums and products of ultrafilters}

We want to apply the results of Section 2.5.3 to study in nonstandard terms two of the most important operations on $\bN$, the sum $\oplus$ and the product $\odot$.\\
As we already observed, the sum $\U\oplus\V$ (resp. the product $\U\odot\V$) of two ultrafilters can be seen as the image, respect the continuous extension $S:\bN^{2}\rightarrow\bN$ of the sum $+:\N^{2}\rightarrow\N$ (resp. the continuous extension $P:\bN^{2}\rightarrow\bN$ of the product $\cdot:\N^{2}\rightarrow\N$), of the ultrafilter $\U\otimes\V$. This, combined to various results proven in this chapter, has the following consequence:

\begin{prop} For every ultrafilters $\U,\V$ on $\N$

\begin{center} $G_{\U\oplus\V}=\{\alpha+\beta\mid (\alpha,\beta)\in G_{\U\otimes\V}\}$ \end{center}

and 

\begin{center} $G_{\U\odot\V}=\{\alpha\cdot\beta\mid (\alpha,\beta)\in G_{\U\otimes\V}\}$. \end{center} \end{prop}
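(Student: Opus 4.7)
The plan hinges on the identities $\U\oplus\V = \overline{+}(\U\otimes\V)$ and $\U\odot\V = \overline{\cdot}(\U\otimes\V)$, where $+, \cdot\colon \N^{2}\to\N$ are the standard arithmetic operations on $\N$. Both equalities in the proposition therefore become instances of the general claim $G_{\overline{f}(\W)} = {}^{\bullet}f[G_{\W}]$ applied to $\W = \U\otimes\V$ and $f\in\{+,\cdot\}$.

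The inclusion $\supseteq$ is immediate from Theorem 2.3.5(1): if $(\alpha,\beta)\in G_{\U\otimes\V}$, then $\alpha+\beta = {}^{\bullet}(+)((\alpha,\beta))$ lies in $G_{\overline{+}(\U\otimes\V)} = G_{\U\oplus\V}$, and likewise $\alpha\cdot\beta \in G_{\U\odot\V}$. This half of both equalities requires nothing beyond Theorem 2.3.5(1) and the identification of $\U\oplus\V$, $\U\odot\V$ with the images of $\U\otimes\V$.

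For the reverse inclusion $\subseteq$ I would invoke Theorem 2.3.6(2) with $f=+$ (respectively $f=\cdot$) and the ultrafilter $\U\otimes\V$ on $\N^{2}$, yielding ${}^{\bullet}(+)[G_{\U\otimes\V}] = G_{\U\oplus\V}$. Concretely, given $\gamma\in G_{\U\oplus\V}$, the decomposition is produced by considering the family of internal sets
\[
F_{S} = \bigl\{(\xi,\eta)\in{}^{\bullet}S : \xi+\eta = \gamma\bigr\}, \qquad S\in\U\otimes\V.
\]
Each $F_{S}$ is nonempty because the sumset $+[S]$ belongs to $\U\oplus\V$ (for $n$ in the projection $\{n : S_{n}\in\V\}\in\U$ one has $+[S]-n\supseteq S_{n}\in\V$), hence $\gamma\in{}^{\bullet}(+[S])$, and transfer rewrites this as the existence of $(\xi,\eta)\in{}^{\bullet}S$ with $\xi+\eta=\gamma$. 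The family has the finite intersection property since $F_{S_{1}}\cap F_{S_{2}}\supseteq F_{S_{1}\cap S_{2}}$, and any pair $(\alpha,\beta)$ in $\bigcap_{S}F_{S}$ lies in $G_{\U\otimes\V}$ and satisfies $\alpha+\beta=\gamma$. The multiplicative case is identical upon replacing $+$ by $\cdot$ and using that $\cdot[S]\in\U\odot\V$.

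The hard part will be justifying the intersection step: it requires $\mathfrak{c}^{+}$-saturation on internal sets, whereas $^{\bullet}\N$ only provides $\mathfrak{c}^{+}$-enlarging and in fact has countable cofinality. To cross this obstacle I would either strengthen the hypothesis on the underlying superstructure star map $*$ to $\mathfrak{c}^{+}$-saturation (so that, since $\gamma\in S_{h(\gamma)}(\N)$, the relevant intersection can be taken at the saturated level $S_{h(\gamma)}(\N)$, which carries the internal sets $F_{S}$ by the Internal Definition Principle), or, more in the spirit of Section 2.5, reconstruct the decomposition of $\gamma$ directly by exploiting Proposition 2.5.22: starting from a rough witness $(\xi_{0},\eta_{0})\in{}^{\bullet}A\times{}^{\bullet}B$ with $\xi_{0}+\eta_{0}=\gamma$, one uses the canonical tensorization $T$ to replace it by a tensor pair of the same sum at a possibly higher star level of $^{\bullet}\N$.
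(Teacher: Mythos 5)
Your proposal follows exactly the route the paper intends: the paper states this proposition without a written proof, justifying it only by the identification $\U\oplus\V=\overline{S}(\U\otimes\V)$ and $\U\odot\V=\overline{P}(\U\otimes\V)$ ``combined to various results proven in this chapter,'' which are precisely Theorem 2.3.5(1) for the inclusion $\supseteq$ and Theorem 2.3.6(2) for $\subseteq$ -- the same two halves you give, with the same internal-sets-with-FIP argument behind the second. Your observation that the $\subseteq$ half genuinely needs $\mathfrak{c}^{+}$-saturation of the underlying star map (not merely the $\mathfrak{c}^{+}$-enlarging assumed as the standing hypothesis of Section 2.5), applied at the finite level $S_{h(\gamma)}(\N)$ where the internal sets $F_{S}$ live, is a real subtlety the paper glosses over, and your primary resolution is the correct one; only your secondary fallback via the tensorization $T$ of Proposition 2.5.22 would not work as stated, since replacing a witness $(\xi_{0},\eta_{0})$ by $T(\xi_{0},\eta_{0})$ in general changes the value of the sum.
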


In particular:

\begin{prop} For every ultrafilters $\U,\V,\W$ in $\bN$, $\U\oplus\V=\W$ if and only if there are $\alpha\in G_{\U}, \beta\in G_{\V}$ such that $(\alpha,\beta)$ is a tensor pair and $\alpha+\beta\in G_{\W}$. \end{prop}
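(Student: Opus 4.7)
The plan is to reduce the claim to the characterization of $G_{\U\oplus\V}$ given in Proposition 2.5.23, namely $G_{\U\oplus\V}=\{\alpha+\beta\mid (\alpha,\beta)\in G_{\U\otimes\V}\}$. The key observation I will use repeatedly is that, for hypernatural numbers $\alpha,\beta$ in $^{\bullet}\N$ with $\alpha\in G_{\U}$ and $\beta\in G_{\V}$, saying that $(\alpha,\beta)$ is a tensor pair is exactly the same as saying that $(\alpha,\beta)\in G_{\U\otimes\V}$: indeed $(\alpha,\beta)$ being a tensor pair means $\mathfrak{U}_{(\alpha,\beta)}=\mathfrak{U}_{\alpha}\otimes\mathfrak{U}_{\beta}$, and the right-hand side equals $\U\otimes\V$ precisely because $\alpha,\beta$ generate $\U,\V$.

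For the ``only if'' direction, I assume $\U\oplus\V=\W$ and I need to exhibit the required pair. First, by the $\mathfrak{c}^{+}$-enlarging property there exist $\alpha_{0}\in G_{\U}$ and $\beta_{0}\in G_{\V}$. Set $\alpha=\alpha_{0}$ and $\beta=S_{h(\alpha_{0})}(\beta_{0})$; by Proposition 2.5.11, $\beta$ is still a generator of $\V$, and by Corollary 2.5.17 the pair $(\alpha,\beta)$ is a tensor pair. Thus $(\alpha,\beta)\in G_{\U\otimes\V}$ by the observation above, and Proposition 2.5.23 then forces $\alpha+\beta\in G_{\U\oplus\V}=G_{\W}$, which is what was needed.

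For the ``if'' direction, suppose that $\alpha\in G_{\U}$, $\beta\in G_{\V}$, $(\alpha,\beta)$ is a tensor pair, and $\alpha+\beta\in G_{\W}$. By the key observation, $(\alpha,\beta)\in G_{\U\otimes\V}$, so Proposition 2.5.23 yields $\alpha+\beta\in G_{\U\oplus\V}$. But the same element lies in $G_{\W}$ by hypothesis, and a single hypernatural number can generate only one ultrafilter, so $\U\oplus\V=\mathfrak{U}_{\alpha+\beta}=\W$.

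There is essentially no obstacle here: both directions are short applications of Proposition 2.5.23 together with the fact (Corollary 2.5.17) that the $\omega$-hyperextension $^{\bullet}\N$ always supplies tensor pairs via the iterated star map. The only point requiring any care is verifying that, under the standing $\mathfrak{c}^{+}$-enlarging hypothesis, the sets $G_{\U}$ and $G_{\V}$ are nonempty so that the construction $(\alpha_{0},S_{h(\alpha_{0})}(\beta_{0}))$ in the forward direction actually makes sense.
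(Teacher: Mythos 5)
Your proof is correct and follows exactly the route the paper intends: the paper states this proposition without proof as an immediate consequence of Proposition 2.5.23 (``In particular:''), and your argument simply makes that derivation explicit, including the one point that genuinely needs checking, namely that a tensor pair $(\alpha,\beta)$ with $\alpha\in G_{\U}$, $\beta\in G_{\V}$ actually exists, which you correctly supply via Corollary 2.5.17 and Proposition 2.5.11.
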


The Proposition 2.5.23 provides an easy way to find generators for tensor products of ultrafilters. This gives a method to explicitally construct generators for sums and products of ultrafilters:

\begin{defn} For every hypernatural numbers $\alpha,\beta$ in $^{\bullet}\N$, we pose 

\begin{center}$\alpha\h\beta=\alpha+$$S_{h(\alpha)}(\beta)$,\end{center}

and 
 
\begin{center} $\alpha\dia\beta=\alpha\cdot S_{h(\alpha)}(\beta)$.\end{center}\end{defn}

In next theorem, we denote by $\psi$ the bridge map with domain $^{\bullet}\N$.

\begin{thm} For every hypernatural numbers $\alpha,\beta$ in $^{\bullet}\N$,  

\begin{center}$\psi(\alpha\h\beta)=\psi(\alpha)\oplus\psi(\beta)$\end{center}

and

\begin{center} $\psi(\alpha\dia\beta)=\psi(\alpha)\odot\psi(\beta)$.\end{center}\end{thm}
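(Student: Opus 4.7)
The plan is to derive both identities as immediate consequences of two results already in hand: Corollary 2.5.17, which states that $(\alpha, S_{h(\alpha)}(\beta))$ is a tensor pair for any $\alpha,\beta \in {}^\bullet\N$, and Proposition 2.5.23, which characterises the generators of $\U\oplus\V$ and $\U\odot\V$ as the images of generators of $\U\otimes\V$ under $+$ and $\cdot$ respectively.

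First I would handle the additive case. By Corollary 2.5.17 the pair $(\alpha, S_{h(\alpha)}(\beta))$ is a tensor pair, so by the definition of a tensor pair one has
\[
\mathfrak{U}_{(\alpha, S_{h(\alpha)}(\beta))} \;=\; \mathfrak{U}_\alpha \otimes \mathfrak{U}_{S_{h(\alpha)}(\beta)}.
\]
Proposition 2.5.11 gives $\mathfrak{U}_{S_{h(\alpha)}(\beta)} = \mathfrak{U}_\beta$, and therefore
\[
(\alpha, S_{h(\alpha)}(\beta)) \;\in\; G_{\psi(\alpha)\otimes\psi(\beta)}.
\]
Applying the first half of Proposition 2.5.23, the element $\alpha + S_{h(\alpha)}(\beta)$ lies in $G_{\psi(\alpha)\oplus\psi(\beta)}$. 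Since $\alpha \h \beta$ is defined to be precisely $\alpha + S_{h(\alpha)}(\beta)$, this reads $\alpha\h\beta \in G_{\psi(\alpha)\oplus\psi(\beta)}$, which by definition of the bridge map is the desired identity $\psi(\alpha\h\beta) = \psi(\alpha)\oplus\psi(\beta)$.

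The multiplicative case is entirely parallel: the same tensor pair $(\alpha, S_{h(\alpha)}(\beta))$ generates $\psi(\alpha)\otimes\psi(\beta)$, and applying the second half of Proposition 2.5.23 yields $\alpha \cdot S_{h(\alpha)}(\beta) = \alpha\dia\beta \in G_{\psi(\alpha)\odot\psi(\beta)}$, so $\psi(\alpha\dia\beta) = \psi(\alpha)\odot\psi(\beta)$.

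No genuine obstacle remains; the content of the theorem is really just to package Corollary 2.5.17 and Proposition 2.5.23 into a concrete recipe. The conceptual work was done earlier in showing that star iteration always produces tensor pairs ($\beta$ is replaced by $S_{h(\alpha)}(\beta)$ to guarantee tensor-pair status while preserving $\sim_u$-class), and in relating generators of $\U\oplus\V,\U\odot\V$ to those of $\U\otimes\V$. The shift from $\beta$ to $S_{h(\alpha)}(\beta)$ is the essential device that makes the operations $\h$ and $\dia$ well-behaved on $^\bullet\N$ in the sense required by the bridge map.
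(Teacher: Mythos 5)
Your proof is correct and follows essentially the same route as the paper: both arguments rest on the observation that $(\alpha,S_{h(\alpha)}(\beta))$ is a tensor pair generating $\psi(\alpha)\otimes\psi(\beta)$, and then pass to the sum (resp.\ product) of its components to obtain a generator of $\psi(\alpha)\oplus\psi(\beta)$ (resp.\ $\psi(\alpha)\odot\psi(\beta)$). The only cosmetic difference is that you invoke Proposition 2.5.23 where the paper cites Theorem 2.3.5 directly, but these encode the same fact about images of generators under the extended sum and product maps.
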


\begin{proof} Simply observe that, since $(\alpha,S_{h(\alpha)}(\beta))$ is a tensor pair, $\alpha+S_{h(\alpha)}(\beta)$ is a generator of $\U\oplus\V$, as a consequence of Theorem 2.3.5. Similarly with the product.\\ \end{proof}

Note that the above property is false if we consider $+,\cdot$ in place of $\h,\dia$. Next proposition contains a list of easy properties of $\h, \dia$:

\begin{prop}[] For every $\alpha,\beta,\gamma\in$$^{\bullet}\N$ we have the following relations:
\begin{enumerate}
	\item For all $n\in\N$, $\alpha\h n=n\h\alpha =\alpha+n$;
	\item For all $n\in\N$ $\alpha\dia n=n\dia \alpha= \alpha\cdot n$;
	\item $\alpha\h(\beta\h\gamma)=(\alpha\h\beta)\h\gamma$;
	\item $\alpha\dia(\beta\dia\gamma)=(\alpha\dia\beta)\dia\gamma$;
	\item $(\alpha\h\beta)\dia\gamma=\alpha\dia S_{h(\beta)}(\gamma)+S_{h(\alpha)}(\beta\dia\gamma)$;
	\item $\gamma\dia(\alpha\h\beta)=\gamma\dia\alpha+\gamma\dia S_{h(\alpha)}(\beta)$;
	\item $\gamma\dia(\alpha+\beta)=\gamma\dia\alpha+\gamma\dia\beta$;
	\item $^{*}\alpha\h\beta=$$^{*}(\alpha\h\beta)$;
	\item $^{*}\alpha\dia\beta=$$^{*}(\alpha\dia\beta)$;
	\item In general, $\alpha\h\beta\neq\beta\h\alpha$ e $\alpha\dia\beta\neq\beta\dia\alpha$;
	\item For all $n\in\N$, $\alpha\h\beta=(\alpha+n)\h(\beta-n)=(\alpha-n)\h(\beta+n)$;
	\item $h(\alpha\h\beta)=h(\alpha)+h(\beta)$;
	\item $h(\alpha\dia\beta)=h(\alpha)+h(\beta)$;
	\item $h(\alpha)=h(\beta)=h(\alpha+\beta)\Rightarrow (\alpha+\beta)\dia\gamma=\alpha\dia\gamma+\beta\dia\gamma$;
	\item Let $I$ be a finite set. If, for all $i\in I$, $h(\sum_{i\in I} \alpha_{i})=h(\alpha_{i})$, then $(\sum_{i\in I} \alpha_{i})\h(\sum_{i\in I}\beta_{i})=\sum_{i\in I} (\alpha_{i}\h\beta_{i})$;
	\item Let $I$ be a finite set. If, for all $i\in I$, $h(\sum_{i\in I} \alpha_{i})=h(\alpha_{i})$, then $(\prod_{i\in I} \alpha_{i})\dia(\prod_{i\in I}\beta_{i})=\prod_{i\in I} (\alpha_{i}\dia\beta_{i})$;
	\item $(\alpha\h\beta)\dia\gamma\sim_{u}(\alpha\dia\gamma)\h(\beta\dia\gamma)$.
	
\end{enumerate}
\end{prop}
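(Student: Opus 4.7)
The plan is to verify all seventeen items by direct unfolding of the definitions $\alpha\h\beta = \alpha + S_{h(\alpha)}(\beta)$ and $\alpha\dia\beta = \alpha\cdot S_{h(\alpha)}(\beta)$, exploiting three basic tools throughout: (i) each $S_n$ is a star map, hence a ring homomorphism satisfying $S_n(x+y)=S_n(x)+S_n(y)$ and $S_n(x\cdot y)=S_n(x)\cdot S_n(y)$ and fixing every natural number; (ii) the composition rule $S_n\circ S_m = S_{n+m}$; and (iii) the height arithmetic of Proposition 2.5.10. I extend $h$ to $\N$ by setting $h(n)=0$, which is forced by $S_0 = \mathrm{id}$.

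First I would dispatch the one-line items. For (1)--(2), $S_{h(\alpha)}(n)=n$ and $h(n)=0$ collapse both sides. For (7) and (14), both identities reduce to plain distributivity of $\cdot$ over $+$ inside a single $S_n(\N)$, which holds by transfer; in (14) the hypothesis guarantees that $\alpha+\beta$ is indexed by the same $S_{h(\cdot)}$ as each summand. For (8)--(9) one uses $h(^{*}\alpha)=h(\alpha)+1$ so that $S_{h(^{*}\alpha)}=\,{^{*}}\!\circ S_{h(\alpha)}$; then $^{*}\alpha\h\beta = {^{*}}\alpha + {^{*}}(S_{h(\alpha)}(\beta)) = {^{*}}(\alpha + S_{h(\alpha)}(\beta)) = {^{*}}(\alpha\h\beta)$ by the ring-homomorphism property. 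Item (11) uses $h(\alpha\pm n) = h(\alpha)$ and the fact that $S_{h(\alpha)}$ fixes $n$. For (10) I would exhibit a concrete counterexample: take $\alpha\in{^{*}}\N\setminus\N$ and $\beta={^{*}}\alpha$, so $\alpha\h\beta=\alpha+{^{**}}\alpha$ while $\beta\h\alpha={^{*}}\alpha+{^{**}}\alpha$, and by Proposition 2.2.5 these differ since $\alpha\ne{^{*}}\alpha$.

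The height formulas (12), (13) are the keystone and come next. Using Proposition 2.5.10 (2)--(4) one obtains $h(S_{h(\alpha)}(\beta))=h(\alpha)+h(\beta)$, and then $h(\alpha+S_{h(\alpha)}(\beta))=\max\{h(\alpha),\,h(\alpha)+h(\beta)\}=h(\alpha)+h(\beta)$ (the multiplicative case is identical). With these in hand, associativity (3)--(4) is immediate: both $(\alpha\h\beta)\h\gamma$ and $\alpha\h(\beta\h\gamma)$ expand to $\alpha+S_{h(\alpha)}(\beta)+S_{h(\alpha)+h(\beta)}(\gamma)$. The mixed law (6) comes out as $\gamma\dia(\alpha\h\beta)=\gamma\cdot S_{h(\gamma)}(\alpha)+\gamma\cdot S_{h(\gamma)+h(\alpha)}(\beta)=\gamma\dia\alpha+\gamma\dia S_{h(\alpha)}(\beta)$ after rewriting $S_{h(\gamma)+h(\alpha)}(\beta)=S_{h(\gamma)}(S_{h(\alpha)}(\beta))$; (5) is an analogous unfolding, using (13) to compute $S_{h(\alpha\h\beta)}=S_{h(\alpha)+h(\beta)}$. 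The finite formulas (15)--(16) then follow by induction on $|I|$ from (14) and its multiplicative analogue: the height hypothesis keeps the index in $S_{h(\cdot)}$ constant along every partial sum, so the inductive step is mechanical.

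The main obstacle, and the only item genuinely different in character, is (17), the $\sim_u$-identity $(\alpha\h\beta)\dia\gamma\sim_u(\alpha\dia\gamma)\h(\beta\dia\gamma)$. True equality fails in general because the heights of $\alpha\dia\gamma$ and $\beta\dia\gamma$ need not coincide, so the outer $\h$ on the right does not split neatly. I would instead apply the bridge map $\psi$ and use Theorem 2.5.25 twice on each side to obtain
\[
\psi\bigl((\alpha\h\beta)\dia\gamma\bigr)=(\psi(\alpha)\oplus\psi(\beta))\odot\psi(\gamma),
\]
\[
\psi\bigl((\alpha\dia\gamma)\h(\beta\dia\gamma)\bigr)=(\psi(\alpha)\odot\psi(\gamma))\oplus(\psi(\beta)\odot\psi(\gamma)),
\]
and then invoke the right distributivity of $\odot$ over $\oplus$ in $(\bN,\oplus,\odot)$, which, unlike left distributivity, does hold (since $\odot$ is right-continuous and natural numbers distribute on the right, the identity extends from $\N$ to $\bN$ by a standard $\U$-limit argument). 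Care is needed here to place $\gamma$ consistently on the right throughout the unpacking, so that one does not accidentally invoke the failing direction of distributivity in $\bN$.
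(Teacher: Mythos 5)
For items (1)--(16) your proof is correct and is essentially the computation the paper itself performs (the paper is terser, e.g.\ dismissing (5)--(7) as ``simple calculations'', but the substance is identical: unfold $\alpha\h\beta=\alpha+S_{h(\alpha)}(\beta)$, use $S_{n}\circ S_{m}=S_{n+m}$, the homomorphism property of the star maps, and the height arithmetic of Proposition 2.5.10). Your explicit counterexample for (10) and your observation that the height hypothesis propagates to partial sums in (15)--(16) are fine.

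The genuine gap is in (17). Your reduction via the bridge map to the identity $(\U\oplus\V)\odot\W=(\U\odot\W)\oplus(\V\odot\W)$ in $\bN$ is the same move the paper makes (the paper simply asserts that ``$\oplus$ and $\odot$ on $\beta\N$ are distributive''), but the ``standard $\U$-limit argument'' you invoke to justify this right distributive law does not work, and in fact the law is false. The extension argument breaks at the very first limit: for fixed $n,m\in\N$ one would need $\lim_{k\to\W}(nk+mk)=(n\odot\W)\oplus(m\odot\W)$, but the left side uses a single variable $k$ while the right side is the iterated limit $\lim_{k\to\W}\lim_{k'\to\W}(nk+mk')$ with independent variables, and right continuity of $\oplus$ and $\odot$ gives no way to merge them. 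Concretely, take $\alpha=\beta=1$ and let $\gamma$ generate a nonprincipal additive idempotent $\W$. Then $(1\h 1)\dia\gamma=2\dia\gamma=2\gamma$ generates $2\W$, whereas $(1\dia\gamma)\h(1\dia\gamma)=\gamma\h\gamma$ generates $\W\oplus\W=\W$; and $2\W\neq\W$ for every nonprincipal $\W$ (partition $\N\setminus\{0\}=A\sqcup B$ with $A=\bigcup_{k}[4^{k},2\cdot 4^{k})$ and $B=\bigcup_{k}[2\cdot 4^{k},4^{k+1})$, and note $\{n\mid 2n\in A\}=B$, so $A\in 2\W\Leftrightarrow B\in\W\Leftrightarrow A\notin\W$). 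So the two sides of (17) are not $\sim_{u}$-equivalent in this instance. This is not a defect you can repair by rearranging which argument sits on the right: both distributive laws fail in $(\bN,\oplus,\odot)$ except in trivial cases, so item (17) as stated (and the paper's own one-line justification of it) is problematic; at best it holds under additional hypotheses that force the same copy of $\gamma$ to appear in both products, which the right-hand side's independent tensorizations of $\gamma$ do not provide.
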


\begin{proof} 1) and 2) are obtained since the height of every natural number $n$ is 0, and $^{*}n=n$.\\
3) $\alpha\h(\beta\h\gamma)=\alpha\h(\beta+S_{h(\beta)}(\gamma))=\alpha+S_{h(\alpha)}(\beta)+S_{(h(\alpha)+h(\beta))}(\gamma)=(\alpha\h\beta)\h\gamma$. The same for 4.\\
5)-6)-7) are simple calculations.\\
8) $^{*}\alpha\h\beta=$$^{*}\alpha+$$S_{(h(\alpha)+1)}(\beta)=$$^{*}(\alpha+S_{h(\alpha)}(\beta))=$$^{*}(\alpha\h\beta)$. Same calculation for 9).\\
10) We can say more: we have $\alpha\h\beta=\beta\h\alpha \Leftrightarrow (\alpha\in\N)\vee(\beta\in\N)$, and the same for $\dia$.\\
11) $(\alpha+n)\h(\beta-n)=\alpha+n+$$S_{h(\alpha)}(\beta)-n=\alpha\h\beta$, and the same with $(\alpha-n)\h(\beta+n)$.\\
12)-13) $\alpha\h\beta$=$\alpha+S_{h(\alpha)}(\beta)$. In this sum the maximum height is that of $S_{h(\alpha)}(\beta)$, which is $h(\alpha)+h(\beta)$. So $h(\alpha\h\beta)=h(\alpha)+h(\beta)$. The same for $\dia$.\\
14)-15)-16) are similar (13 is a particular case of 14). Call $n$ the height common to all the $\alpha_{i}$'s and their sum (product). Then $(\sum_{i\in I} \alpha_{i})\h(\sum_{i\in I}\beta_{i})=(\sum_{i\in I}\alpha_{i})+$$S_{n}(\sum_{i\in I}\beta_{i})=\sum_{i\in I}(\alpha_{i}+$$S_{n}(\beta_{i}))=\sum_{i\in I}\alpha_{i}\h\beta_{i}$, and similar for the product.\\
17) We know from $\beta\N$ that this is true, because $\oplus$ and $\odot$ on $\beta\N$ are distributive.\\ \end{proof}

We conclude this section showing that the quotient space $^{\bullet}\N_{/_{\sim_{u}}}$, endowed with the operation $\h$ (resp. $\dia$), is algebraically and topologically equivalent to $(\bN,\oplus)$ (resp. $(\bN,\odot)$):

\begin{defn} Two topological semigroups are {\bfseries algebraically and topologically equivalent} if there is a mapping from the one to the other which is both a homeomorphism and an algebraic isomorphism.\end{defn}

The terminology is mutuated from $\cite{rif21}$. We fix some notations: whenever $[\alpha]_{\sim_{u}}, [\beta]_{\sim_{u}}$ are two equivalence classes in $^{\bullet}\N_{/_{\sim_{u}}}$, then $[\alpha]_{\sim_{u}}\h [\beta]_{\sim_{u}}=[\alpha\h\beta]_{\sim_{u}}$ and $[\alpha]_{\sim_{u}}\dia [\beta]_{\sim_{u}}=[\alpha\dia\beta]_{\sim_{u}}$. These definitions, as a consequence of Theorem 2.5.27, are well-posed. \\
The topology that we consider on $^{\bullet}\N$ is the so-called S-Topology (see e.g. $\cite[\mbox{Section 3}]{rif16}$). This topology, which can be similarly defined in every hyperextension of $\N$, is generated by taking, as base of open sets, the family of hyperextensions of subsets of $\N$:

\begin{center} $\mathcal{B}=\{$$^{\bullet}A\mid A\subseteq\N\}$. \end{center}

\begin{lem} $($$^{\bullet}\N_{/_{\sim_{u}}},\h)$ $($resp. $($$^{\bullet}\N_{/_{\sim_{u}}},\dia))$, enodowed with the quotient star topology, is a right topological semigroup. \end{lem}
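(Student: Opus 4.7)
The plan is to identify $({}^{\bullet}\N_{/_{\sim_u}},\heartsuit)$ with $(\bN,\oplus)$ (and analogously for $\diamondsuit$ with $\odot$) via the bridge map, and then transfer the right topological semigroup structure of $(\bN,\oplus)$, which is already available from Proposition 1.1.22. First I would define $\widetilde{\psi}:{}^{\bullet}\N_{/_{\sim_u}}\to\bN$ by $\widetilde{\psi}([\alpha]_{\sim_u})=\mathfrak{U}_\alpha$. This is well-defined and injective by the very definition of $\sim_u$, and it is surjective because Proposition 2.5.7 grants $^{\bullet}\N$ the $\mathfrak{c}^+$-enlarging property, so every ultrafilter on $\N$ has a generator in $^{\bullet}\N$.

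Second, I would verify that $\widetilde{\psi}$ is a homeomorphism between the quotient S-topology and the Stone topology. The key observation is that each basic S-open set $^{\bullet}A$ is $\sim_u$-saturated, since $\alpha\in{}^{\bullet}A\Leftrightarrow A\in\mathfrak{U}_\alpha$ depends only on the $\sim_u$-class of $\alpha$. Hence, if $q$ denotes the quotient map, the images $q({}^{\bullet}A)$ are open in the quotient and satisfy $q^{-1}(q({}^{\bullet}A))={}^{\bullet}A$. Any open set of the quotient has saturated preimage of the form $\bigcup_i {}^{\bullet}A_i$, so $\{q({}^{\bullet}A)\mid A\subseteq\N\}$ is actually a base for the quotient topology. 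Since $\widetilde{\psi}(q({}^{\bullet}A))=\Theta_A$, $\widetilde{\psi}$ carries this base bijectively onto the Stone base, so it is a homeomorphism.

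Third, Theorem 2.5.27 immediately upgrades $\widetilde{\psi}$ to a semigroup isomorphism: $\widetilde{\psi}([\alpha]\heartsuit[\beta])=\mathfrak{U}_{\alpha\heartsuit\beta}=\mathfrak{U}_\alpha\oplus\mathfrak{U}_\beta=\widetilde{\psi}([\alpha])\oplus\widetilde{\psi}([\beta])$, and likewise for $\diamondsuit$ and $\odot$. Associativity of $\heartsuit$ (resp. $\diamondsuit$) on the quotient is in fact already established pointwise by Proposition 2.5.28(3)--(4) at the level of $^{\bullet}\N$. Finally, right continuity transfers through $\widetilde{\psi}$: for fixed $[\beta]_{\sim_u}$, the map $[\alpha]\mapsto[\alpha]\heartsuit[\beta]$ is conjugate via $\widetilde{\psi}$ to $\U\mapsto\U\oplus\mathfrak{U}_\beta$ on $\bN$, which is continuous by Proposition 1.1.22; the same argument, using $\odot$, handles $\diamondsuit$.

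The only place requiring genuine care is the topological step: one must check that the quotient of the S-topology agrees with the pullback of the Stone topology, that is, that the saturated base sets $q({}^{\bullet}A)$ really form a base for the quotient topology rather than a strictly coarser one. This hinges on the fact that every S-open $U\subseteq{}^{\bullet}\N$ decomposes as a union of ${}^{\bullet}A_i$ with each ${}^{\bullet}A_i$ automatically saturated, so no further refinement is forced on the quotient; once this is observed, the proof becomes a repackaging of the bridge correspondence together with Theorem 2.5.27.
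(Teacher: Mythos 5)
Your proof is correct, but it takes a genuinely different route from the paper's. The paper works entirely inside $^{\bullet}\N$ with the S-topology: associativity is quoted from Proposition 2.5.28, and right continuity is verified by a direct computation showing that for fixed $\beta$ the map $\varphi_{\beta}(\alpha)=\alpha+S_{h(\alpha)}(\beta)$ pulls each basic open $^{\bullet}A$ back to another basic open $^{\bullet}B_{A}$ with $B_{A}=\{n\in\N\mid n+\beta\in S_{h(\beta)}(A)\}$. You instead establish up front that the induced bridge map $\widetilde{\psi}$ is simultaneously a semigroup isomorphism (Theorem 2.5.27) and a homeomorphism onto $(\bN,\oplus)$ with the Stone topology, and then pull the right topological semigroup structure back through it; the saturation argument you give for why $\{q(^{\bullet}A)\}$ is a base of the quotient topology is the one nontrivial topological point, and you handle it correctly. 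There is no circularity, since neither the isomorphism nor the homeomorphism facts depend on the lemma itself --- but note that what you prove along the way is essentially the content of the paper's Theorem 2.5.31, which the paper states only \emph{after} this lemma (the lemma is needed so that the phrase ``algebraically and topologically equivalent topological semigroups'' makes sense). So your argument buys economy --- the lemma becomes a formal transfer of structure and the subsequent theorem comes for free --- at the cost of inverting the logical order of presentation, while the paper's direct computation buys an explicit, self-contained description of the preimages of basic opens that does not lean on the enlarging property or on surjectivity of the bridge map.
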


\begin{proof} To prove the thesis we have to show that $\h$ is associative and that if is right continuous.\\
That $\h$ is associative is proved in Proposition 2.5.28.\\
To prove that $\h$ is right continuous, let $\beta$ be an hypernatural number in $^{\bullet}\N$, and consider the function $\varphi_{\beta}$ such that, for every $\alpha\in$$^{\bullet}\N$, 

\begin{center} $\varphi_{\beta}(\alpha)=\alpha+S_{h(\alpha)}(\beta)$. \end{center}

The map $\varphi_{\beta}$ is continuous in the S-Topology: in fact, for every subset $A$ of $\N$, 

\begin{center}$\varphi^{-1}_{\beta}(A)=\{\alpha\in$$^{\bullet}\N\mid \alpha+S_{h(\alpha)}(\beta)\in$$^{\bullet}A\}=$$^{\bullet}\{n\in\N\mid n+\beta\in S_{h(\beta)}(A)\}$\end{center}

since for every hypernatural number $\alpha$, $\alpha+S_{h(\alpha)}(\beta)\in$$^{\bullet}A$ if and only if $\alpha+S_{h(\alpha)}(\beta)\in S_{h(\alpha)+h(\beta)}(A)$ if and only if $\alpha\in S_{h(\alpha)}(\{n\in\N\mid n+\beta\in S_{h(\beta)}(A)\})$ if and only if $\alpha\in$$^{\bullet}\{n\in\N\mid n+\beta\in S_{h(\beta)}(A)\}$.\\
If we pose 

\begin{center} $B_{A}= \{n\in\N\mid n+\beta\in S_{h(\beta)}(A)\}$, \end{center}

it follows that $\varphi_{\beta}^{-1}($$^{\bullet}A)=$$^{\bullet}B_{A}$, so $\varphi_{\beta}$ is continuous in the star topology.\\
That $\dia$ is continuous can be proved similarly.\\ \end{proof}

\begin{thm} $(\bN,\oplus)$ $($resp $(\bN,\odot))$ is algebraically and topologically equivalent to $($$^{\bullet}\N_{/_{\sim_{u}}},\h)$ $($resp. $($$^{\bullet}\N_{/_{\sim_{u}}},\dia))$. \end{thm}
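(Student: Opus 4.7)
The plan is to use the bridge map $\psi:{}^{\bullet}\N\to\bN$, factored through the equivalence relation $\sim_{u}$, as the required equivalence. Define $\tilde{\psi}:{}^{\bullet}\N_{/_{\sim_{u}}}\to\bN$ by $\tilde{\psi}([\alpha]_{\sim_{u}})=\mathfrak{U}_{\alpha}$. This is well-defined and injective by the very definition of $\sim_{u}$ (namely $\alpha\sim_{u}\beta\Leftrightarrow\mathfrak{U}_{\alpha}=\mathfrak{U}_{\beta}$), and it is surjective because ${}^{\bullet}\N$ satisfies the $\mathfrak{c}^{+}$-enlarging property (Proposition 2.5.7), so by Proposition 2.2.1 every ultrafilter on $\N$ arises as $\mathfrak{U}_{\alpha}$ for some $\alpha\in{}^{\bullet}\N$. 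Hence $\tilde{\psi}$ is a bijection.

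Next I would verify that $\tilde{\psi}$ is an algebraic isomorphism. For the additive case this is essentially the content of Theorem 2.5.27: since $\psi(\alpha\h\beta)=\psi(\alpha)\oplus\psi(\beta)$ holds for all $\alpha,\beta\in{}^{\bullet}\N$, the induced map satisfies
\[
\tilde{\psi}([\alpha]_{\sim_{u}}\h[\beta]_{\sim_{u}})=\tilde{\psi}([\alpha\h\beta]_{\sim_{u}})=\psi(\alpha)\oplus\psi(\beta)=\tilde{\psi}([\alpha]_{\sim_{u}})\oplus\tilde{\psi}([\beta]_{\sim_{u}}).
\]
The multiplicative case is identical, using $\psi(\alpha\dia\beta)=\psi(\alpha)\odot\psi(\beta)$.

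The main obstacle is the topological part, i.e.\ showing that $\tilde{\psi}$ is a homeomorphism when ${}^{\bullet}\N_{/_{\sim_{u}}}$ carries the quotient of the S-topology and $\bN$ carries the Stone topology. The key observation is that, for every $A\subseteq\N$,
\[
\psi^{-1}(\Theta_{A})=\{\alpha\in{}^{\bullet}\N\mid A\in\mathfrak{U}_{\alpha}\}=\{\alpha\in{}^{\bullet}\N\mid\alpha\in{}^{\bullet}A\}={}^{\bullet}A,
\]
so the preimage under $\psi$ of a basic Stone open set is exactly a basic S-open set. In particular every ${}^{\bullet}A$ is $\sim_{u}$-saturated (it is the full $\psi$-preimage of its image), hence descends to an open set in the quotient. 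This gives both the continuity of $\tilde{\psi}$ and, since $\{\Theta_{A}\mid A\subseteq\N\}$ is a base of the Stone topology and $\tilde{\psi}$ sends the quotient-images of the ${}^{\bullet}A$'s bijectively onto the $\Theta_{A}$'s, the openness of $\tilde{\psi}$. Because $\tilde{\psi}$ is a continuous open bijection, it is a homeomorphism.

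Putting these steps together, $\tilde{\psi}$ is both an algebraic isomorphism and a homeomorphism between $({}^{\bullet}\N_{/_{\sim_{u}}},\h)$ and $(\bN,\oplus)$, and analogously between $({}^{\bullet}\N_{/_{\sim_{u}}},\dia)$ and $(\bN,\odot)$. The only delicate point worth double-checking is that the base $\{{}^{\bullet}A\}_{A\subseteq\N}$ of the S-topology consists of $\sim_{u}$-saturated sets, which is immediate from the identity $\psi^{-1}(\Theta_{A})={}^{\bullet}A$ exhibited above; everything else is either a direct application of earlier results in the chapter or a routine verification.
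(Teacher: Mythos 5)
Your proposal is correct and follows essentially the same route as the paper: the bridge map $\psi$ (descended to ${}^{\bullet}\N_{/_{\sim_{u}}}$) is the equivalence, the algebraic part is Theorem 2.5.27, and the homeomorphism follows from the correspondence $\psi^{-1}(\Theta_{A})={}^{\bullet}A$ together with the fact that a continuous open bijection is a homeomorphism. You supply a few details the paper leaves implicit (well-definedness, surjectivity via the $\mathfrak{c}^{+}$-enlarging property, and the $\sim_{u}$-saturation of the sets ${}^{\bullet}A$), but the argument is the same.
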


\begin{proof} The map to consider is the bridge map $\psi$. That it is an algebraical isomorphism has been proved in Theorem 2.5.27. It is also an homeomorphism: in fact, $\psi$ is clearly bijective. It is continuous and open since, for every subset $A$ of $\N$, $\psi($$^{\bullet}A_{/_{\sim_{u}}})=\Theta_{A}$ and $\psi^{-1}(\Theta_{A})=$$^{\bullet}A_{/_{\sim_{u}}}$. Since every bijective continuous open function is an homeomorphism, we have the thesis.\\\end{proof}

\section{Further Studies}

Among the possible future studies, we want to point out a question that concerns the $\omega$-hyperextension of $\N$. Since $^{\bullet}\N$ is included in $\mathbb{V}(X)$, we can apply the star map to $^{\bullet}\N$, and consider $^{*}($$^{\bullet}\N)$, $^{**}($$^{\bullet}\N)$ and so on. If $\alpha$ is an ordinal number, one could consider the $\alpha$-hyperextension $S_{\alpha}(\N)$ of $\N$, that can be inductively defined as follows: if $\alpha=\beta+1$, then 

\begin{center} $S_{\alpha}(\N)=$$^{*}(S_{\beta}(\N))$; \end{center}

if $\alpha$ is a limit ordinal, then 

\begin{center} $S_{\alpha}=\bigcup_{\gamma<\alpha}S_{\gamma}(\N)$.\end{center}

The questions that arise regard the structure of $S_{\alpha}(\N)$, as well as the relations between $S_{\alpha}(\N)$ and $S_{\beta}(\N)$ for different ordinal numbers $\alpha,\beta$. We just outline a fact: in Section 2.5.1 we proved that $^{\bullet}\N$ is not $\mathfrak{c}^{+}$-saturated. The proof was based on the fact that $^{\bullet}$ is the union of an $\omega$-chain of end extensions of $\N$. For a generical ordinal number $\alpha$, this proof does not necessarily work. So, maybe, there are $\alpha$-extensions that are $\mathfrak{c}^{+}$-saturated: \\

{\bfseries Question 2:} Let $\kappa$ be an infinite cardinal number. Does it exists an ordinal number $\alpha$ such that $S_{\alpha}(\N)$ is $\kappa$-saturated? Does it exists an ordinal number $\alpha$ such that $S_{\alpha}(\N)$ is $\kappa^{+}$-saturated?

\newpage

\chapter{Proofs in Combinatorics by Mean of Star Iterations}

The basic concepts that we use in this chapter have been introduced in Chapters One an Two. We recall that, given a logical sentence $\varphi$, an ultrafilter $\U$ is a $\varphi$-ultrafilter if and only if every set $A$ in $\U$ satisfies $\varphi$. We work in the $\omega$-hyperextension $^{\bullet}\N$ of $\N$, constructed starting with a hyperextension $^{*}\N$ that satisfies the $\mathfrak{c}^{+}$-enlarging property. Finally, we recall that, given two hypernatural numbers $\alpha,\beta\in$$^{*}\N$, $\alpha\heartsuit\beta$ denotes the hypernatural number $\alpha+$$^{*}\beta\in$$^{\bullet}\N$.\\
The tools introduced so far are used, in this chapter, to study some topics in infinite combinatorics. In Section One, we present two known examples of application of nonstandard methods to combinatorics. Then, in Section Two, we test our nonstandard technique re-proving some well-known result in Ramsey Theory, e.g. Schur's Theorem and Folkman's Theorem. The results proved in Sections Three and Four show that, under certain assumptions on the first order sentence $\varphi$, there are $\varphi$-ultrafilters that are additively or multiplicatively idempotent. This is used, in Section Five, to study the partition regularity of polynomials. This topic is faced also in Section Six, where we concentrate on the closure of the set of partition regular polynomials under certain operations. Finally, is Section Seven, we indicate three possible future developments of the researches presented in this chapter.

\section{Applications of Nonstandard Methods to Ramsey Theory: Two Examples}

The idea of applying nonstandard methods in infinite combinatorics is not new. In this section we expose two known examples of such applications.\\
The first one concerns the particular case of Ramsey Theorem about colorings of subsets of $\N$ with cardinality two (a treatment of Ramsey Theorem both from the combinatorial and the ultrafilter points of view has been done in Chapter One). In the literature, there are nonstandard proofs of this result (see e.g. $\cite{rif24}$); the one we present here uses star iteration (see $\cite{rif15}$).\\
We recall that, for every subset $A$ of $\N$, $[A]^{2}=\{B\subseteq A\mid |B|=2\}$.

\begin{thm}[Ramsey] For every finite partition $C_{1}\cup...\cup C_{k}$ of $[\N]^{2}$ there is an infinite subset $H$ of $\N$ with $[H]^{2}\subseteq C_{i}$ for some index $i$. \end{thm}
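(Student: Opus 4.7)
The plan is to mirror the classical ultrafilter proof (already given in Chapter One) but to frame it entirely inside $^{\bullet}\N$ via the tensor-pair machinery from Section~2.5, using the ability to iterate the star map to avoid ever naming an ultrafilter explicitly. First, identify $[\N]^{2}$ with the upper diagonal $\Delta^{+}=\{(n,m)\in\N^{2}\mid n<m\}$ and rewrite the given partition as $\Delta^{+}=C_{1}^{\prime}\cup\ldots\cup C_{k}^{\prime}$, where $C_{i}^{\prime}=\{(n,m)\in\Delta^{+}\mid\{n,m\}\in C_{i}\}$.

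Next, pick any infinite $\alpha\in {}^{*}\N\setminus\N$ and consider the pair $(\alpha,{}^{*}\alpha)\in {}^{**}\N^{2}\subseteq {}^{\bullet}\N^{2}$. Since $h(\alpha)=1$, Corollary~2.5.17 (equivalently Proposition~2.5.21 for $k=2$) shows that $(\alpha,{}^{*}\alpha)$ is a tensor pair, generating $\mathfrak{U}_{\alpha}\otimes\mathfrak{U}_{\alpha}$. Because ${}^{**}\N$ end-extends ${}^{*}\N$ (Proposition~2.5.3(5)), we have $\alpha<{}^{*}\alpha$, so $(\alpha,{}^{*}\alpha)\in{}^{**}\Delta^{+}$. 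Transfer yields ${}^{**}\Delta^{+}={}^{**}C_{1}^{\prime}\cup\ldots\cup{}^{**}C_{k}^{\prime}$, so there is an index $i$ with $(\alpha,{}^{*}\alpha)\in{}^{**}C$, where $C:=C_{i}^{\prime}$.

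Then I would unfold what this tensor-pair membership means. By the definition of the tensor product, $C\in\mathfrak{U}_{\alpha}\otimes\mathfrak{U}_{\alpha}$ is equivalent to $B\in\mathfrak{U}_{\alpha}$, where
\begin{equation*}
B=\{n\in\N\mid C_{n}\in\mathfrak{U}_{\alpha}\},\qquad C_{n}=\{m\in\N\mid (n,m)\in C\}.
\end{equation*}
Now construct $h_{1}<h_{2}<\cdots$ in $\N$ inductively: set $h_{1}=\min B$, and having produced $h_{1}<\ldots<h_{n}$, let
\begin{equation*}
h_{n+1}=\min\bigl(\,B\cap C_{h_{1}}\cap\ldots\cap C_{h_{n}}\,\bigr)\cap(h_{n},\infty).
\end{equation*}
The intersection sits in $\mathfrak{U}_{\alpha}$ (every $C_{h_{j}}$ does, by $h_{j}\in B$, and $\mathfrak{U}_{\alpha}$ is closed under finite intersections and contains no finite set), so it is infinite and the induction goes through. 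For any $j<n$ one has $h_{n}\in C_{h_{j}}$, i.e.\ $(h_{j},h_{n})\in C=C_{i}^{\prime}$, so $\{h_{j},h_{n}\}\in C_{i}$, and $H=\{h_{n}\mid n\geq 1\}$ is an infinite set with $[H]^{2}\subseteq C_{i}$.

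The main conceptual obstacle is recognizing that iterating the star once already does the whole job: passing from $\alpha$ to $(\alpha,{}^{*}\alpha)$ automatically produces a generator of $\mathfrak{U}_{\alpha}\otimes\mathfrak{U}_{\alpha}$ sitting inside ${}^{**}\Delta^{+}$, which is what replaces the invocation of Puritz's theorem or of Proposition~1.1.17 in the ultrafilter proof. After that, the extraction of $H$ is a routine induction whose inductive invariant is simply that the relevant intersection of level-sets $C_{h_{j}}$ remains $\mathfrak{U}_{\alpha}$-large.
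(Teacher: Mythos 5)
Your proof is correct and is essentially the paper's own argument: the paper likewise fixes an infinite $\alpha\in{}^{*}\N$, locates the index $i$ with $\{\alpha,{}^{*}\alpha\}\in{}^{**}C_{i}$, and extracts $H$ by the same induction, maintaining that $\alpha$ lies in the star of each finite intersection of the level-sets. The only difference is cosmetic: you phrase the key unfolding via the tensor pair $(\alpha,{}^{*}\alpha)$ and membership in $\mathfrak{U}_{\alpha}\otimes\mathfrak{U}_{\alpha}$, whereas the paper carries out the identical computation directly by transfer, writing $\alpha\in{}^{*}\{n\in\N\mid\{n,\alpha\}\in{}^{*}C_{i}\}$ without naming the ultrafilter.
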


\begin{proof} First of all, observe that, by transfer, $^{**}C_{1}\cup....\cup$$^{**}C_{k}$ is a partition of $[$$^{**}\N]^{2}$. Let $\alpha$ be any infinite number in $^{*}\N$, and let $i$ be the index such that $\{\alpha,$$^{*}\alpha\}\in$$^{**}C_{i}$. We now construct inductively a sequence $B_{0},B_{1},...$ of subsets of $\N$ and a sequence $h_{0},h_{1},...$ of natural numbers, with $h_{n}< h_{n+1}$ for every $n$ and such that $[H]^{2}\subseteq C_{i}$, where $H$ is the infinite set 

\begin{center} $H=\{h_{n}\mid n\in\N\}$.\end{center}

Step 0: Observe that, by transfer, $\{\alpha,$$^{*}\alpha\}\in$$^{**}C_{i}$ if and only if

\begin{center} $\alpha\in$$^{*}\{n\in\N\mid \{n,\alpha\}\in$$^{*}C_{i}\}$.\end{center}

Put

\begin{center} $A=\{n\in\N\mid\{n,\alpha\}\in$$^{*}C_{i}\}$ \end{center}

and let $h_{0}$ be any element in $A$. By construction, $\{h_{0},\alpha\}\in$$^{*}C_{i}$ so, by transfer, $\alpha\in$$^{*}B_{0}$, where

\begin{center} $B_{0}=\{m\in\N\mid \{h_{0}, m\}\in C_{i}\}$. \end{center}

As $\alpha\in$$^{*}A\cap$$^{*}B_{0}$, the set $A\cap B_{0}$ is nonempty and unbounded. Let $h_{1}>h_{0}$ be an element in this set. Observe that, by construction, $\{h_{0},h_{1}\}\in C_{i}$.\\
Step $n+1$: Suppose we have constructed $B_{0},B_{1},...,B_{n}$ and we have taken elements $h_{0}<h_{1}<...<h_{n}$ such that $h_{i}\in A\cap B_{0}\cap...\cap B_{i}$ for every index $i\leq n-1$, and $[\{h_{1},...,h_{n}\}]^{2}\subseteq C_{i}$. By construction, $\{h_{n},\alpha\}\in$$^{*}C_{i}$ so, by transfer, $\alpha\in$$^{*}\{m\in\N\mid \{h_{n},m\}\}\in C_{i}\}$. Pose

\begin{center} $B_{n}=\{m\in\N\mid \{h_{n},m\}\in C_{i}\}$. \end{center}

As $\alpha\in$$^{*}A\cap$$^{*}B_{0}\cap...\cap$$^{*}B_{n}$, the set $A\cap B_{0}\cap...\cap B_{n}$ is nonempty and unbounded; take $h_{n+1}>h_{n}$ in this set. Observe that, by construction, $[\{h_{0},...,h_{n+1}\}]^{2}\subseteq C_{i}$.\\
By construction, the infinite set $H=\{h_{n}\mid n\in\N\}$ is such that $[H]^{2}\subseteq C_{i}$ because, if $h_{n}<h_{m}$ are elements in $H$ then, as $h_{m}\in B_{n}$, $\{h_{n},h_{m}\}\in C_{i}$. \\ \end{proof}

We observe that it is not coincidental that in this proof the set $C_{i}$ is chosen to have $\{\alpha,$$^{*}\alpha\}\in$$^{**}C_{i}$: in fact, as we observed in Chapter Two, for every element $\alpha$ in $^{*}\N$, $(\alpha,$$^{*}\alpha)$ is a tensor pair, so this proof is involving the tensor product $\mathfrak{U}_{\alpha}\otimes\U_{\alpha}$, whit $\mathfrak{U}_{\alpha}$ a non principal ultrafilter, exactly as the ultrafilter proof of Ramsey Theorem given in Chapter One does.\\
The second well-known application of nonstandard methods to infinite combinatorics that we present is a well-known theorem of Renling Jin:

\begin{thm}[Jin] Let $A, B$ be subsets of $\N$ with positive Banach density. Then $A+B=\{a+b\mid a\in A, b\in B\}$ is piecewise syndetic. \end{thm}

The notion of piecewise syndetic set has been introduced in Chapter One. We recall the Banach density of a subset of $\Z$:

\begin{defn} Given a subset $A$ of $\Z$, its {\bfseries Banach density} $BD(A)$ is

\begin{center} $BD(A)=\lim_{n\to +\infty} (\sup_{a-b=n} \frac{|A\cap [a,b]|}{n})$. \end{center}

\end{defn}

Banach density can be characterized in nonstandard terms:

\begin{center} Given a subset $A$ of $\N$ and a number $x\in [0,1]$, $BD(A)>x$ if and only if there are $\alpha\in$$^{*}\N$, $\beta\in$$^{*}\N\setminus\N$ such that $st(\frac{|^{*}A\cap [\alpha,\alpha+\beta)|}{\beta})>x$. \end{center}

The proof of Theorem 3.1.2 is based on a result proved by Jin himself in $\cite{rif25}$. To state his result we have to define two notions:

\begin{defn} An infinite initial segment $C$ of $^{*}\N$ is a {\bfseries cut} if it is closed under sums, i.e. if $C+C=\{a+b\mid a,b\in C\}\subseteq C$.\\
Let $\eta\in$$^{*}\N\setminus\N$ be given. If $C\subseteq [0,\eta]$ is a cut, and $A$ is a subset of $[0,\eta]$, then $A$ is {\bfseries $C$-nowhere dense} if for every interval $I=[a,b]$ in $[0,\eta]$ such that $b-a>C$ $($i.e. $b-a>c$ for every $c\in C)$ there is a subinterval $[c,d]\subseteq [a,b]\setminus A$ such that $d-c> C$. \end{defn}

\begin{thm}[Jin] Let $\eta$ be an infinite hypernatural number and let $C\subseteq [0,\eta]$ be a cut. If $A, B\subseteq [0,\eta]$ are two internal sets such that $st(\frac{|A|}{\eta})>0$ and $st(\frac{|B|}{\eta})>0$, then $A\oplus_{\eta} B$ is not $C$-nowhere dense, where $\oplus_{\eta}$ is the addition mod $\eta$ on $^{*}\N$. \end{thm}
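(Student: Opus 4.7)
The plan is to recast the problem in terms of a standard compact group obtained as a quotient of $[0,\eta]$ by the cut $C$, and then invoke a Steinhaus-type theorem in that quotient. First I would normalize: let $\mu_L$ be the Loeb probability measure on $[0,\eta]$ induced by the internal counting measure $E \mapsto |E|/(\eta+1)$. Define the equivalence relation $x \sim_C y$ iff the distance from $x$ to $y$ in the cyclic group $(\mathbb{Z}/\eta\mathbb{Z})^*$ belongs to $C$. The fact that $C+C \subseteq C$ immediately guarantees that $\sim_C$ is a congruence on $([0,\eta],\oplus_\eta)$: if $|x_1-x_2|, |y_1-y_2| \in C$, then $|(x_1 \oplus_\eta y_1)-(x_2\oplus_\eta y_2)| \leq |x_1-x_2|+|y_1-y_2|\in C+C \subseteq C$. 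Let $G = [0,\eta]/\sim_C$ with its induced group operation, and let $\pi\colon [0,\eta]\to G$ be the quotient.

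The first technical step I would carry out is to show that $G$, topologized by declaring $\pi(I)$ (for $I$ an internal interval of length $>C$) to form a basis, is a compact Hausdorff abelian topological group. Compactness follows because $G$ is totally bounded (finitely many $C$-translates of any basic open set cover it, by internal overspill applied to the length of a fundamental interval) and complete; Hausdorff comes from the hypothesis that $C$ is a proper cut. The second step is the measure-theoretic one: the pushforward $\nu := \pi_*\mu_L$ is a Borel probability measure on $G$ invariant under $G$-translations (because $\mu_L$ is invariant under internal translations on $[0,\eta]$), hence by uniqueness $\nu$ is the Haar measure of $G$. Since $A$ and $B$ are internal with $st(|A|/\eta),st(|B|/\eta) > 0$, I get $\mu_L(A), \mu_L(B) > 0$, so $\nu(\pi(A)), \nu(\pi(B)) > 0$ — here a small point to check is that $\pi(A)$ and $\pi(B)$ are $\nu$-measurable (they are analytic, in fact $F_\sigma$ in $G$, which is enough).

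Next I would apply the Steinhaus theorem in the compact group $G$: if $\tilde A,\tilde B\subseteq G$ have positive Haar measure, then $\tilde A + \tilde B$ contains a non-empty open set $U\subseteq G$. Since $\pi(A\oplus_\eta B) = \pi(A)+\pi(B) = \tilde A+\tilde B \supseteq U$, I get an open $U\subseteq G$ contained in $\pi(A\oplus_\eta B)$. To finish, I would translate back the assumption of $C$-nowhere-density into the quotient: if $S\subseteq [0,\eta]$ is $C$-nowhere dense then $\pi(S)$ has empty interior in $G$, because any basic open set $\pi(I)$ arises from an internal interval $I$ of length $>C$, inside which the hypothesis produces a subinterval $J\subseteq I\setminus S$ of length $>C$, so $\pi(J)$ is a non-empty open subset of $\pi(I)\setminus \pi(S)$. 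Applying this contrapositively to $S = A\oplus_\eta B$ contradicts the existence of the non-empty open $U$ above, giving the theorem.

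The main obstacle, I expect, will be the careful construction of the quotient $G$ as an honest compact Hausdorff topological group, and the verification that the pushforward Loeb measure is Haar (and in particular that $\pi(A), \pi(B)$ are genuinely measurable of the expected mass). Once that infrastructure is in place, the combinatorial heart of the argument — Steinhaus plus the passage from $C$-nowhere-dense in $[0,\eta]$ to nowhere-dense in $G$ — is essentially automatic. The secondary subtlety is checking that $\nu(\pi(A)) \geq st(|A|/\eta)$ rather than something strictly smaller; this follows because $\pi^{-1}(\pi(A)) \supseteq A$ and $\mu_L(\pi^{-1}(\pi(A))) = \nu(\pi(A))$ by definition of pushforward, so no mass is lost.
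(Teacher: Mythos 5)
Your reduction to a Steinhaus theorem in the quotient $G=[0,\eta]/{\sim_C}$ breaks down at the claim that $G$ is compact (or even locally compact): for a general cut $C$ this is false, and the theorem is asserted for arbitrary cuts. Take $C=\N$ and $\lambda=\lfloor\sqrt{\eta}\rfloor$, so that $\lambda>C$ but $\eta/\lambda$ is infinite. The preimage $\pi^{-1}\bigl(\pi([x-\lambda,x+\lambda])\bigr)$ consists of the points within some \emph{finite} distance of $[x-\lambda,x+\lambda]$, hence is contained in $[x-2\lambda,x+2\lambda]$; therefore any $n$ translates of the basic open set $\pi([-\lambda,\lambda])$ pull back to at most $n(4\lambda+1)$ points, which cannot exhaust $[0,\eta]$ for finite $n$. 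Thus $\{\pi(x)+\pi([-\lambda,\lambda])\mid x\in[0,\eta]\}$ is an open cover of $G$ with no finite subcover, $G$ is not totally bounded, and the same count at smaller scales shows it is not locally compact either. Consequently there is no Haar measure on $G$ and the Steinhaus theorem you invoke is unavailable. Your construction does work in the single case $C=\{x\mid x/\eta\approx 0\}$, where ``length $>C$'' forces $\eta/\lambda$ to be finite and $G$ becomes the circle $\R/\Z$ carrying the pushforward Loeb measure as Lebesgue measure; but that yields only the weakest instance of the statement, whereas the applications (e.g.\ piecewise syndeticity of $A+B$ in Theorem 3.1.2) require precisely the small cuts such as $C=\N$.

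Two secondary points. First, your last step is also incorrect as written: $J\subseteq I\setminus S$ does not give $\pi(J)\cap\pi(S)=\emptyset$, since elements of $S$ outside $J$ may lie within $C$ of $J$. This part is repairable for internal $S$: if $\pi(I_0)\subseteq\pi(S)$ then $d(x,S)\in C$ for all $x\in I_0$, the internal maximum $c_0=\max_{x\in I_0}d(x,S)$ is attained and lies in $C$, and then every subinterval of $I_0$ of length $>C$ (in particular $>2c_0+2$) meets $S$ --- the desired contradiction, with no mention of $\pi(J)$. Second, note that the thesis does not prove this theorem but cites Jin's article [Ji02], whose argument is a direct internal counting and Loeb-measure argument carried out inside $[0,\eta]$ (working with intervals of length just above $C$ on which $A$ and $B$ have near-maximal relative density) and never forms the quotient group. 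To salvage your plan you would need a Steinhaus-type theorem valid for the non-locally-compact quotients $[0,\eta]/C$, and that is essentially the content of Jin's theorem itself.
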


The proof of the above thorem, as well as five important corollaries (the third corollary is Theorem 3.1.2), can be found in $\cite{rif25}$.\\
Given the above result, we can prove Theorem 3.1.2:

\begin{proof} By the nonstandard characterization of the Banach density, since $BD(A)>0$ and $BD(B)>0$ there are elements $\alpha,\mu$ in $^{*}\N$, $\beta,\eta$ in $^{*}\N\setminus\N$ with $st(\frac{|^{*}A\cap [\alpha,\alpha+\mu)|}{\mu})>0$ and $st(\frac{|^{*}B\cap [\beta,\beta+\eta)|}{\eta})>0$. The nonstandard characterization of the Banach density ensures that, if necessary, we can clip both $A$ and $B$ and assume that $\mu=\eta$. Consider the internal sets $A^{\prime}$ and $B^{\prime}$, where

\begin{center} $A^{\prime}=($$^{*}A\cap [\alpha,\alpha+\eta))-\alpha$ and $B^{\prime}=($$^{*}B\cap [\beta,\beta+\eta))-\beta$, \end{center}

and the hypernatural number $2\eta$. By construction, $A^{\prime}$ and $B^{\prime}$ are subsets of $[0,\eta]$, and both $st(\frac{|A^{\prime}|}{\eta})$ and $st(\frac{|B^{\prime}|}{\eta})$ are greater than 0 (since these quantities equal the Banach densities of $A$ and $B$, respectively). By Theorem 3.1.5, if $C$ is any cut included in $[0,2\eta]$, $A^{\prime}\oplus_{2\eta} B^{\prime}$ (which, by construction, is equal to $A^{\prime}+B^{\prime}$) is not $C$-nowhere dense.\\
Let $C=\N$. The fact that $A^{\prime}+B^{\prime}$ is not $\N$-nowhere dense entails the existence of an interval $I=[a,b]$, with $b-a$ infinite, such that $A^{\prime}+B^{\prime}$ has no gaps of infinite lenght in $I$. But, as $A^{\prime}+B^{\prime}$ is internal, by overspill it follows that there are not arbitrarily long finite gaps in $A^{\prime}+B^{\prime}$, so there is a natural number $n$ such that in $A^{\prime}+B^{\prime}$ there are no gaps of length greater than $n$. So $A^{\prime}+B^{\prime}+\alpha+\beta=$$^{*}A\cap [\alpha,\alpha+\eta)+$$^{*}B\cap [\beta, \beta+\eta)$ has no gaps of length greater than $n$ in the interval $I+\alpha+\beta$; in particular, this entails that $^{*}A+$$^{*}B=$$^{*}(A+B)$ has no gaps of length greater than $n$ in $I+\alpha+\beta$. By transfer it follows that $A+B$ is piecewise syndetic.\\ \end{proof}

We choosed this theorem as an example of nonstandard methods applied to infinite combinatorics for two reasons. The first one is that it is connected with the arguments that we expose in Chapter Four; the second reason is that, in our opinion, the underlying philosophy between Jin's and our approach is similar. Quoting Jin's words from the article $\cite{rif50}$: 

\begin{center} \begin{quote} {\itshape Nonstandard methods are used here to reduce the complexity of the mathematical
objects that one needs in a proof}. [...] {\itshape This complexity reduction from second order to first order enables us to see the path
towards solutions more clearly with a better understanding, hence produce a shorter
proof with greater efficiency;}\end{quote} \end{center}

and 

\begin{center}\begin{quote} {\itshape Nonstandard methods offer a better intuition.} \end{quote} \end{center}

In our opinion, these are exactly the two advantages that the star iterations and the Bridge Theorem present when dealing with certain combinatorial problems.

\section{Some New Proofs of Old Results}

Most of the results in this section are not new: they are either well-known in literature or straight consequences of Rado's Theorem for linear equations.\\
However, we present new proofs of these results by the technique of star iterations (and the application of Bridge Theorem) so as to outline, in a well-known setting, the potentialites of this technique.\\
Throughout this chapter, we consider given a superstructure model of nonstandard methods $\langle\mathbb{V}(X),\mathbb{V}(X),*\rangle$ such that the star map satisfies the $\mathfrak{c}^{+}$-enlarging property, and we work in the hyperextension $^{\bullet}\N$ of $\N$ (we recall that, since the star map $*$ satisfies the $\mathfrak{c}^{+}$-enlarging property, also the map $\bullet$ satisfies the $\mathfrak{c}^{+}$-enlarging property).\\
The notations we use have been introduced in Chapter Two. We recall that, given a first order formula $\phi(x_{1},...,x_{n},p_{1},...,p_{k})$, its existential closure is

\begin{center} $E(\phi(x_{1},...,x_{n},p_{1},...,p_{k})):$ $\exists x_{1},...,x_{n} \phi(x_{1},...,x_{n},p_{1},...,p_{k})$, \end{center}

where $x_{1},...,x_{k}$ are the only free variables and $p_{1},...,p_{k}$ the only parameters of $\phi$; to simplify notations, we do not explicitally mention the parameters in $\phi(x_{1},...,x_{n})$, except if necessary. A first order sentence is existential if it is the existential closure of a first order formula.\\
Whenever $\varphi(x_{1},...,x_{n})$ is a first order formula with parameters $p_{1},...,p_{k}$, and $m$ is a natural number, $S_{m}(\varphi(x_{1},....,x_{n}))$ is the formula obtained by replacing each parameter $p_{i}$ in $\varphi(x_{1},...,x_{n})$ with $S_{m}(p_{i})$; similarly, $^{\bullet}\varphi(x_{1},...,x_{n})$ is the formula obtained by replacing each parameter $p_{i}$ in $\varphi(x_{1},...,x_{n})$ with $^{\bullet}p_{i}$.\\
Finally, we recall that a first order formula $\varphi(x_{1},...,x_{n})$ is elementary if its only parameters are elements in $\N^{k}$, subsets of $\N^{k}$, functions in $\mathtt{Fun}(\N^{k},\N^{h})$ or relations on $\N^{k}$, where $k,h$ are positive natural numbers.\\
The following result will be used to semplify most of the proofs in this chapter:

\begin{prop} Let $\varphi=E(\phi(x_{1},...,x_{n}))$ be an existential sentence; if $\U$ is a $\varphi$-ultrafilter then there are $\alpha_{1},...,\alpha_{n}\in$$^{*}\N$ generators of $\U$ such that $\phi(\alpha_{1},...,\alpha_{n})$ holds. \end{prop}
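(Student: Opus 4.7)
The plan is to derive this proposition as an essentially immediate consequence of the Bridge Theorem (Theorem 2.2.9) proved in Chapter Two. Recall that the Bridge Theorem asserts the equivalence between $\U$ being a $\varphi$-ultrafilter, for $\varphi = E(\phi(x_1,\ldots,x_n))$ an existential first order sentence, and the existence of generators $\alpha_1,\ldots,\alpha_n \in G_\U$ of $\U$ in $^{*}\N$ for which $^{*}\phi(\alpha_1,\ldots,\alpha_n)$ holds. Since here we are assuming only that $\U$ is a $\varphi$-ultrafilter, we can take the forward implication $(1) \Rightarrow (2)$ of Theorem 2.2.9 and obtain $n$ hypernatural numbers $\alpha_1,\ldots,\alpha_n \in G_{\U}$ witnessing $^{*}\phi(\alpha_1,\ldots,\alpha_n)$.

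The only substantive point to address is the notational identification of $\phi(\alpha_1,\ldots,\alpha_n)$ (as written in the statement) with $^{*}\phi(\alpha_1,\ldots,\alpha_n)$ (as it appears in Theorem 2.2.9). First I would recall the convention from Chapter Two that, when a formula $\phi(x_1,\ldots,x_n)$ is evaluated at hypernatural arguments, each parameter $p$ of $\phi$ is to be replaced by $^{*}p$; hence the star-version $^{*}\phi(\alpha_1,\ldots,\alpha_n)$ is precisely what is meant by writing $\phi(\alpha_1,\ldots,\alpha_n)$ at points $\alpha_i \in {}^{*}\N$. Under this convention the two statements coincide, and Theorem 2.2.9 delivers the conclusion verbatim.

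Since the hypothesis of Theorem 2.2.9 is that $^{*}\N$ is a hyperextension of $\N$ satisfying the $\mathfrak{c}^{+}$-enlarging property, and this is precisely the standing assumption declared at the start of Section 3.2, no additional set-theoretic hypothesis is required. The main (in fact only) obstacle I anticipate is purely expository: making sure the reader appreciates that this is one of the two directions of the Bridge Theorem rephrased for later convenience, and that the "generator" terminology (i.e.\ $\alpha_i \in G_{\U}$, equivalently $\U = \mathfrak{U}_{\alpha_i}$) is the same as in Chapter Two. Accordingly the proof can be written in a single short paragraph, quoting Theorem 2.2.9 and unpacking notation, with no further computation needed.
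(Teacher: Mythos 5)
Your proposal is correct and matches the paper exactly: the paper's own proof is the one-line remark that this is just the Bridge Theorem (Theorem 2.2.9) formulated for $^{*}\N$, which is precisely the forward implication you invoke. Your extra care about identifying $\phi(\alpha_{1},...,\alpha_{n})$ with $^{*}\phi(\alpha_{1},...,\alpha_{n})$ is a reasonable clarification of a notational point the paper leaves implicit.
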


\begin{proof} This is just the formulation of the Bridge Theorem applied to $^{*}\N$.\\\end{proof}

Idempotent ultrafilters have been widely used in the study of partition regularity. We recall that an ultrafilter $\U$ is additively (resp. multiplicatively) idempotent if $\U=\U\oplus\U$ (resp. if $\U=\U\odot\U$).\\
Given their prominence in this context, it is natural and necessary to look for their characterizations in terms of generators:

\begin{prop} Let $\alpha$ be an hypernatural number in $^{\bullet}\N$ with height $n$. The following properties are equivalent:
\begin{enumerate}
	\item $\mathfrak{U}_{\alpha}$ is additively idempotent;
	\item $\alpha\sim_{u}\alpha\h\alpha$;
	\item There is an element $\beta$ in $^{\bullet}\N$ with $\alpha\sim_{u}\beta\sim_{u}\alpha\h\beta$;
	\item For every $\beta\sim_{u}\alpha$, $\alpha\h\beta\sim_{u}\alpha$;	
  \item There is an element $\beta$ in $^{\bullet}\N$ such that $(\alpha,\beta)$ is a tensor pair and $\mathfrak{U}_{\alpha}=\mathfrak{U}_{\beta}=\mathfrak{U}_{\alpha+\beta}$;
	\item For every subset $A$ of $\N$, if $\alpha\in S_{n}(A)$ then there exists a subset $B$ of $A$ such that $\alpha\in S_{n}(B)$ and $\alpha+B\subseteq S_{n}(B)$;
	\item For every subset $A$ of $\N$, if $\alpha\in S_{n}(A)$ then there is an element $a$ in $A$ such that $\alpha+a\in S_{n}(A)$.
\end{enumerate}
\end{prop}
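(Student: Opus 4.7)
The plan is to prove the seven conditions are equivalent by going around the list and using Theorem 2.5.27 ($\psi(\alpha\h\beta)=\psi(\alpha)\oplus\psi(\beta)$) as the main engine, together with Proposition 2.5.11 ($\mathfrak{U}_\alpha=\mathfrak{U}_{S_n(\alpha)}$) and Corollary 2.5.17 (the pair $(\alpha,S_{h(\alpha)}(\beta))$ is always a tensor pair).

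First I would deal with (1)--(4), which are all essentially restatements of $\mathfrak{U}_\alpha=\mathfrak{U}_\alpha\oplus\mathfrak{U}_\alpha$. By Theorem 2.5.27, $\mathfrak{U}_{\alpha\h\alpha}=\mathfrak{U}_\alpha\oplus\mathfrak{U}_\alpha$, so (1)$\Leftrightarrow$(2). Taking $\beta=\alpha$ gives (2)$\Rightarrow$(3). For (3)$\Rightarrow$(1), if $\alpha\sim_u\beta\sim_u\alpha\h\beta$ then $\mathfrak{U}_\alpha=\mathfrak{U}_{\alpha\h\beta}=\mathfrak{U}_\alpha\oplus\mathfrak{U}_\beta=\mathfrak{U}_\alpha\oplus\mathfrak{U}_\alpha$. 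For (1)$\Rightarrow$(4), if $\beta\sim_u\alpha$ then $\mathfrak{U}_{\alpha\h\beta}=\mathfrak{U}_\alpha\oplus\mathfrak{U}_\beta=\mathfrak{U}_\alpha\oplus\mathfrak{U}_\alpha=\mathfrak{U}_\alpha$; and (4)$\Rightarrow$(2) by specializing $\beta=\alpha$.

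Next I would show (1)$\Leftrightarrow$(5). For (1)$\Rightarrow$(5) take $\beta=S_n(\alpha)$: by Corollary 2.5.17 the pair $(\alpha,S_n(\alpha))$ is a tensor pair, by Proposition 2.5.11 $\mathfrak{U}_{S_n(\alpha)}=\mathfrak{U}_\alpha$, and $\alpha+S_n(\alpha)=\alpha\h\alpha$ generates $\mathfrak{U}_\alpha\oplus\mathfrak{U}_\alpha=\mathfrak{U}_\alpha$. Conversely, if $(\alpha,\beta)$ is a tensor pair with $\mathfrak{U}_\alpha=\mathfrak{U}_\beta=\mathfrak{U}_{\alpha+\beta}$, then Proposition 2.5.23 gives $\mathfrak{U}_\alpha=\mathfrak{U}_{\alpha+\beta}=\mathfrak{U}_\alpha\oplus\mathfrak{U}_\beta=\mathfrak{U}_\alpha\oplus\mathfrak{U}_\alpha$.

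The third block is (6)$\Leftrightarrow$(7)$\Leftrightarrow$(1), where I translate between the nonstandard language in terms of $S_n$ and the standard language of $\mathfrak{U}_\alpha$ via the identity $\alpha\in S_n(A-a)\Leftrightarrow\alpha+a\in S_n(A)$ (valid for $a\in\N$, since $S_n$ commutes with addition by $n\in\N$-shifts). Here (6)$\Rightarrow$(7) is immediate: picking any $b\in B$ (which exists because $B\in\mathfrak{U}_\alpha$) gives $\alpha+b\in S_n(B)\subseteq S_n(A)$. For (7)$\Rightarrow$(1), suppose by contradiction that $\mathfrak{U}_\alpha$ is not idempotent, so some $A\in\mathfrak{U}_\alpha$ has $\{a:A-a\notin\mathfrak{U}_\alpha\}\in\mathfrak{U}_\alpha$; intersect with $A$, call it $A'\in\mathfrak{U}_\alpha$, and apply (7) to $A'$ to find $a\in A'$ with $\alpha+a\in S_n(A')\subseteq S_n(A)$, i.e. $A-a\in\mathfrak{U}_\alpha$, contradicting $a\in A'$. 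Finally, for (1)$\Rightarrow$(6), I would invoke the Galvin fixed-point lemma inside $\mathfrak{U}_\alpha$ (the standard application of idempotency): setting $A^\star=\{a\in A:A-a\in\mathfrak{U}_\alpha\}$ and then $B=\{a\in A^\star:A^\star-a\in\mathfrak{U}_\alpha\}$ and verifying by the usual iteration argument that $B\in\mathfrak{U}_\alpha$, $B\subseteq A$, and $B-b\in\mathfrak{U}_\alpha$ for every $b\in B$, which translates to $\alpha+b\in S_n(B)$.

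The main obstacle will be (1)$\Rightarrow$(6): all the other implications are either immediate unpackings of definitions or transparent consequences of Theorem 2.5.27 and the tensor-pair machinery, but producing a \emph{set} $B$ that is closed under the shift $\alpha+\cdot$ (not merely containing one witness, as in (7)) requires the Galvin closure argument for idempotent ultrafilters, which is the only step that is not a one-line symbolic manipulation. Once that step is in hand, the rest of the proof is essentially bookkeeping between $\mathfrak{U}_\alpha$-membership and $S_n$-membership.
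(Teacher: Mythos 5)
Your proposal is correct, and for conditions (1)--(5) it is essentially the paper's argument: both reduce everything to $\mathfrak{U}_{\alpha\h\beta}=\mathfrak{U}_\alpha\oplus\mathfrak{U}_\beta$ (Theorem 2.5.27), the invariance $\alpha\sim_u S_n(\alpha)$, and the fact that $(\alpha,S_n(\alpha))$ is a tensor pair. Where you genuinely diverge is in the block (1), (6), (7). For $(1)\Rightarrow(6)$ the paper proves $(2)\Rightarrow(6)$ by a direct nonstandard computation: it sets $B=\{a\in\N\mid a+\alpha\in S_n(A)\}$ and verifies $\alpha\in S_n(B)$ and $\alpha+B\subseteq S_n(B)$ by unwinding $S_n$-membership; your Galvin star-set $A^\star=\{a\in A\mid A-a\in\mathfrak{U}_\alpha\}$ is the same set intersected with $A$, and in fact your version is slightly more careful, since the statement of (6) demands $B\subseteq A$ and the paper's $B$ is not visibly a subset of $A$ (note also that $A^\star$ alone already works --- the second starring in your $B=\{a\in A^\star\mid A^\star-a\in\mathfrak{U}_\alpha\}$ is harmless but unnecessary, as the standard Galvin lemma gives $A^\star-a\in\mathfrak{U}_\alpha$ for every $a\in A^\star$). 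For $(7)\Rightarrow(1)$ the paper runs a saturation-style argument: it forms the sets $A_\alpha=\{a\in A\mid a+\alpha\in S_n(A)\}$, uses the $\mathfrak{c}^+$-enlarging property to extract a second generator $\beta$ with $\beta\h\alpha\in G_{\mathfrak{U}_\alpha}$, and concludes via tensor pairs; your contradiction argument (intersect a witness of non-idempotency with $A$ and apply (7)) is more elementary, avoids the enlarging property entirely for this step, and is shorter. Both routes are sound; yours buys a cleaner treatment of the $B\subseteq A$ requirement in (6) and a lighter-weight closing implication, at the cost of importing the Galvin lemma rather than re-deriving it in nonstandard notation.
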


\begin{proof} (1)$\Rightarrow$(2): Suppose that $\mathfrak{U}_{\alpha}$ is additively idempotent. As we proved in Chapter Two, $(\alpha,S_{n}(\alpha))$ is a tensor pair, so $\mathfrak{U}_{\alpha+S_{n}(\alpha)}=\mathfrak{U}_{\alpha}\oplus\mathfrak{U}_{\alpha}=\mathfrak{U}_{\alpha}$, and this implies that $\alpha\sim_{u}\alpha\h\alpha$.\\
(2)$\Rightarrow$(3): Just put $\beta=\alpha$.\\
(3)$\Rightarrow$(4): This is a consequence of Theorem 2.5.27: fix $\beta$ as in the hypothesis. If $\gamma$ is any other element in $G_{\mathfrak{U}_{\alpha}}$, since $\beta\sim_{u}\gamma$ then $\alpha\h\beta\sim_{u}\alpha\h\gamma$, so by hypothesis $\alpha\h\gamma\sim_{u}\alpha$.\\
(4)$\Rightarrow$(5): As a consequence of the hypothesis, $\alpha\sim_{u}\alpha\h\alpha=\alpha+S_{n}(\alpha)$, and $\alpha\sim_{u}S_{n}(\alpha)$ (as proved in Proposition 2.5.11). Let $\beta=S_{n}(\alpha)$: then $\alpha\sim_{u}\alpha+\beta$, where $(\alpha,\beta)$ is a tensor pair, as we proved in Theorem 2.5.16.\\
(5)$\Rightarrow$(1): Let $\beta$ be an hypernatural number as in the hypothesis. Observe that, since $(\alpha,\beta)$ is a tensor pair, then $\alpha+\beta\in G_{\mathfrak{U}_{\alpha}\oplus\mathfrak{U}_{\beta}}$. So, as $\alpha\sim_{u}\beta\sim_{u}\alpha+\beta$, $\mathfrak{U}_{\alpha}\oplus\mathfrak{U}_{\alpha}=\mathfrak{U}_{\alpha}\oplus\mathfrak{U}_{\beta}=\mathfrak{U}_{\alpha}$: this proves that $\mathfrak{U}_{\alpha}$ is additively idempotent.\\
(2)$\Rightarrow$(6): Let $A$ be any subset of $\N$, and suppose that $\alpha\in S_{n}(A)$. By hypothesis, since $A\in\U_{\alpha}$, $\alpha+S_{n}(\alpha)\in S_{2n}(A)$. In particular, 

\begin{center}$\alpha\in\{\gamma\in S_{n}(\N)\mid \gamma+S_{n}(\alpha)\in S_{2n}(A)\}=$$S_{n}(\{a\in\N\mid a+\alpha\in S_{n}(A)\})$. \end{center}

Let $B=\{a\in\N\mid a+\alpha\in S_{n}(A)\}$. By construction, $\alpha\in S_{n}(B)$ and $\alpha+B\subseteq S_{n}(A)$. We claim that, if $a$ is any element in $B$, then $a+\alpha\in S_{n}(B)$. In fact, $a+\alpha\in S_{n}(B)\Leftrightarrow a+\alpha+S_{n}(\alpha)\in S_{2n}(A)\Leftrightarrow\alpha+S_{n}(\alpha)\in S_{2n}(A-a)\Leftrightarrow\alpha+S_{n}(\alpha)\in S_{2n}(A-a)$ and, as $\alpha\sim_{u}\alpha+S_{n}(\alpha)$, this is equivalent to $\alpha\in S_{2n}(A-a)$ which is equivalent to $\alpha+a\in S_{2n}(A)$, and this is true since $a\in B$.\\
So $a+\alpha\in S_{n}(B)$ for every element $a$ in $B$; in particular, $\alpha+B\subseteq S_{n}(B)$.\\
(6)$\Rightarrow$(7): Consider the set $B$ given in the hypothesis. If $a$ is any of its elements, then $a+\alpha\in S_{n}(B)\subseteq S_{n}(A)$.\\
(7)$\Rightarrow$(1): To every set $A$ in $\mathfrak{U}_{\alpha}$ we associate the set 

\begin{center} $A_{\alpha}=\{a\in A\mid a+\alpha\in S_{n}(A)\}$. \end{center}

By hypothesis, every $A_{\alpha}$ is nonempty, so the family $\mathcal{F}=\{A_{\alpha}\}_{A\in\mathfrak{U}_{\alpha}}$ has the finite intersection property, as $(A\cap B)_{\alpha}=A_{\alpha}\cap B_{\alpha}$ for every $A,B$ in $\mathfrak{U}_{\alpha}$. By $\mathfrak{c}^{+}$-enlarging property, the set 

\begin{center} $S=\bigcap_{A\in\mathfrak{U}_{\alpha}} S_{n}(A_{\alpha})$ \end{center}

is nonempty. If $\beta$ is any element of height $n$ in $S$, then $\beta$ is an element of height $n$ in $G_{\mathfrak{U}_{\alpha}}$ (since, as $A_{\alpha}\subseteq A$, $S\subseteq G_{\mathfrak{U}_{\alpha}}$) and $\beta+S_{n}(\alpha)\in G_{\mathfrak{U}_{\alpha}}$ since, by transfer, $\beta\in S_{n}(A_{\alpha})$ entails that $\beta+S_{n}(\alpha)\in S_{2n}(A)$ for every $A\in\mathfrak{U}_{\alpha}$.\\
But $\beta+S_{n}(\alpha)=\beta\h\alpha\in G_{\mathfrak{U}_{\beta}\oplus\mathfrak{U}_{\alpha}}=G_{\mathfrak{U}_{\alpha}\oplus\mathfrak{U}_{\alpha}}$, so $\mathfrak{U}_{\alpha}$ is additively idempotent.\\\end{proof}

With the same sort of considerations, we obtain a characterization of the multiplicatively idempotent ultrafilters:

\begin{prop} Let $\alpha$ be an hypernatural number in $^{\bullet}\N$ with height $n$. The following properties are equivalent:
\begin{enumerate}
	\item $\mathfrak{U}_{\alpha}$ is multiplicatively idempotent;
	\item $\alpha\sim_{u}\alpha\dia\alpha$;
	\item There is an element $\beta$ in $^{\bullet}\N$ with $\alpha\sim_{u}\beta\sim_{u}\alpha\dia\beta$;
	\item For every $\beta\sim_{u}\alpha$, $\alpha\dia\beta\sim_{u}\alpha$;	
  \item There is an element $\beta$ in $^{\bullet}\N$ such that $(\alpha,\beta)$ is a tensor pair and $\mathfrak{U}_{\alpha}=\mathfrak{U}_{\beta}=\mathfrak{U}_{\alpha\cdot\beta}$;
	\item For every subset $A$ of $\N$, if $\alpha\in S_{n}(A)$ then there exists a subset $B$ of $A$ such that $\alpha\in S_{n}(B)$ and $\alpha\cdot B\subseteq S_{n}(B)$;
	\item For every subset $A$ of $\N$, if $\alpha\in S_{n}(A)$, then there is an element $a$ in $A$ such that $\alpha\cdot a\in S_{n}(A)$.
\end{enumerate}
\end{prop}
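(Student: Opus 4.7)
The plan is to mirror the structure of the proof of Proposition 3.2.2 step by step, systematically replacing the additive operations ($+$, $\oplus$, $\heartsuit$) with their multiplicative counterparts ($\cdot$, $\odot$, $\diamondsuit$). The underlying machinery — namely that $(\alpha, S_{h(\alpha)}(\beta))$ is a tensor pair, that $\alpha \sim_u S_n(\alpha)$ for every $n$, and that $\alpha \diamondsuit \beta = \alpha \cdot S_{h(\alpha)}(\beta)$ generates $\mathfrak{U}_\alpha \odot \mathfrak{U}_\beta$ by Theorem 2.5.27 — transfers verbatim from sum to product, since all results in Section 2.5 were stated in parallel for $\oplus$ and $\odot$. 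The cyclic chain of implications I would set up is $(1)\Rightarrow(2)\Rightarrow(3)\Rightarrow(4)\Rightarrow(5)\Rightarrow(1)$, supplemented with $(2)\Rightarrow(6)\Rightarrow(7)\Rightarrow(1)$.

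For $(1)\Rightarrow(2)$, I would observe that $(\alpha,S_n(\alpha))$ is a tensor pair, hence $\alpha \diamondsuit \alpha = \alpha \cdot S_n(\alpha)$ generates $\mathfrak{U}_\alpha \odot \mathfrak{U}_\alpha = \mathfrak{U}_\alpha$. The steps $(2)\Rightarrow(3)$ and $(3)\Rightarrow(4)$ are formal: take $\beta = \alpha$, and use that if $\beta \sim_u \gamma$ then $\alpha \diamondsuit \beta \sim_u \alpha \diamondsuit \gamma$ by the well-posedness asserted in Section 2.5.4. For $(4)\Rightarrow(5)$, pick $\beta = S_n(\alpha)$: then $(\alpha,\beta)$ is a tensor pair and $\alpha \sim_u \beta \sim_u \alpha\cdot\beta$. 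For $(5)\Rightarrow(1)$, since $(\alpha,\beta)$ is a tensor pair with $\alpha\sim_u\beta$, the generator $\alpha\cdot\beta$ of $\mathfrak{U}_\alpha\odot\mathfrak{U}_\beta = \mathfrak{U}_\alpha\odot\mathfrak{U}_\alpha$ also generates $\mathfrak{U}_\alpha$, giving idempotency.

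For the second branch, $(2)\Rightarrow(6)$ proceeds by taking $A \in \mathfrak{U}_\alpha$, noting that $\alpha \cdot S_n(\alpha) \in S_{2n}(A)$ forces $\alpha \in S_n(B)$ where $B = \{a \in \N \mid a \cdot \alpha \in S_n(A)\}$, and then checking that $a \in B$ implies $a \cdot \alpha \in S_n(B)$ by the usual transfer-and-$\sim_u$ computation (using $\alpha \sim_u \alpha \cdot S_n(\alpha)$ in place of the additive identity). The implication $(6)\Rightarrow(7)$ is immediate, and $(7)\Rightarrow(1)$ applies the $\mathfrak{c}^+$-enlarging property to the family $\{A_\alpha\}_{A \in \mathfrak{U}_\alpha}$ with $A_\alpha = \{a \in A \mid a\cdot\alpha \in S_n(A)\}$, producing a $\beta \sim_u \alpha$ such that $\beta \cdot S_n(\alpha) = \beta \diamondsuit \alpha \in G_{\mathfrak{U}_\alpha}$, whence $\mathfrak{U}_\alpha \odot \mathfrak{U}_\alpha = \mathfrak{U}_\beta \odot \mathfrak{U}_\alpha = \mathfrak{U}_\alpha$.

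I do not anticipate a genuine obstacle: every step has an exact additive analog in the preceding proposition, and the algebraic properties of $\diamondsuit$ collected in Proposition 2.5.28 are precisely the ones needed. The only point requiring a touch of care is in $(2)\Rightarrow(6)$, where one must verify that divisibility/translation tricks used additively (rewriting $A-a$) carry over multiplicatively — but since every manipulation is carried out on the level of ultrafilter membership, i.e.\ through the equivalences $a\cdot\alpha \in S_n(A) \iff \alpha \in S_n(\{m \mid a\cdot m \in A\})$, no arithmetic invertibility is needed and the argument goes through unchanged.
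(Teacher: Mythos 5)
Your proposal is correct and is exactly the paper's intended argument: the paper gives no separate proof of this proposition, stating only that it ``can be deduced from that of Proposition 3.2.2,'' and your step-by-step replacement of $+,\oplus,\h$ by $\cdot,\odot,\dia$ (with the observation that the translation set $A-a$ is replaced by $\{m\in\N\mid a\cdot m\in A\}$, requiring no invertibility) is precisely that deduction. The cycle $(1)\Rightarrow(2)\Rightarrow(3)\Rightarrow(4)\Rightarrow(5)\Rightarrow(1)$ together with $(2)\Rightarrow(6)\Rightarrow(7)\Rightarrow(1)$ matches the structure of the additive proof verbatim.
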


The proof can be deduced from that of Proposition 3.2.2.\\
The results in this chapter involve also Schur, Folkman and Van der Waerden ultrafilters:

\begin{defn} An ultrafilter $\U$ in $\bN$ is 
\begin{enumerate}
	\item {\bfseries Schur} if every element $A\in\U$ satisfies Schur's property, i.e. if there are mutually distinct elements $a,b,c\in A$ such that $a+b=c$;
	\item {\bfseries Folkman} if every element $A\in\U$ satisfies Folkman's property, i.e. if for every natural number $k$ there is a subset $S_{k}=\{s_{1},...,s_{k}\}\subseteq A$ with $k$ elements such that \begin{center}$FS(S_{k})=\{\sum_{i\in I} s_{i}\mid I\neq\emptyset, I\subseteq \{1,...,k\}\}\subseteq A$;\end{center}
	\item {\bfseries Van der Waerden} if every element $A\in\U$ satisfies Van der Waerden's property, i.e. if there are arbitrarily long arithmetic progressions in $A$.
\end{enumerate}

\end{defn}

Observe that Schur's property is existential, and that Folkman's and Van der Waerden's properties are infinite conjunctions of existential properties.\\
The first result we present involves Schur ultrafilters:

\begin{prop} Every nonprincipal additively idempotent ultrafilter $\U$ is a Schur ultrafilter.\end{prop}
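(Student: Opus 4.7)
The plan is to apply the Bridge Theorem (Theorem 2.2.9) to the existential sentence expressing Schur's property, namely
\[
E(\phi(x,y,z)): \exists x \exists y \exists z \bigl(x+y=z \wedge x \neq y \wedge x \neq z \wedge y \neq z\bigr),
\]
relativized to each set $A \in \U$. By the Bridge Theorem, to conclude that $\U$ is a Schur ultrafilter it suffices to exhibit three mutually distinct generators $\alpha_1, \alpha_2, \alpha_3 \in G_{\U}$ satisfying $\alpha_1 + \alpha_2 = \alpha_3$.

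First I would fix an arbitrary generator $\alpha \in G_{\U}$ (which exists in ${}^{\bullet}\N$ by the $\mathfrak{c}^{+}$-enlarging property) and set $\beta := S_{h(\alpha)}(\alpha)$. By Proposition 2.5.11, $\beta$ is again a generator of $\U$. By Corollary 2.5.17, the pair $(\alpha, \beta)$ is a tensor pair, so $\alpha + \beta = \alpha \heartsuit \alpha$ is a generator of $\mathfrak{U}_{\alpha} \oplus \mathfrak{U}_{\beta} = \U \oplus \U$. The additive idempotency of $\U$ then gives $\alpha + \beta \in G_{\U}$; equivalently, this is exactly condition (2) of Proposition 3.2.2.

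To finish the proof I would verify mutual distinctness of $\alpha$, $\beta$, and $\alpha + \beta$. Since $\U$ is nonprincipal, $\alpha$ is infinite, hence $h(\alpha) \geq 1$. By Proposition 2.5.11(2), $h(\beta) = 2 h(\alpha) > h(\alpha)$, so $\beta \neq \alpha$. Moreover, since both $\alpha$ and $\beta$ are strictly positive, $\alpha + \beta$ strictly exceeds each of them, giving $\alpha + \beta \neq \alpha$ and $\alpha + \beta \neq \beta$. The Bridge Theorem then yields that every $A \in \U$ contains mutually distinct $a, b, c$ with $a + b = c$.

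There is no real obstacle here beyond correctly packaging Schur's property as an existential sentence in the sense of Definition 2.2.8: once this is done, the combined force of Proposition 3.2.2 (additive idempotency on generators) and Corollary 2.5.17 (tensor pairs via star iteration) makes the argument essentially automatic, with the only nontrivial point being the use of the height calculus to separate $\alpha$ from $\beta$.
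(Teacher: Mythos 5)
Your proof is correct and follows essentially the same route as the paper: pick a generator $\alpha$, use a star iterate of $\alpha$ as the second generator so that the pair is a tensor pair, and invoke additive idempotency plus the Bridge Theorem (the paper simply takes $\alpha\in{}^{*}\N$ and $\beta={}^{*}\alpha$, a special case of your $\beta=S_{h(\alpha)}(\alpha)$). The only quibble is a citation slip: the height computation $h(S_{h(\alpha)}(\alpha))=2h(\alpha)$ comes from Proposition 2.5.10(2), not Proposition 2.5.11(2), which concerns preservation of generators.
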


\begin{proof} Since Schur's property is existential, as a consequence of the Bridge Theorem, in order to prove that $\U$ is a Schur ultrafilter it is sufficient to show that there are three mutually different elements $\alpha,\beta,\gamma\in G_{\U}$ with $\alpha+\beta=\gamma$.\\
Let $\alpha\in$$^{*}\N$ be any generator of $\U$; for every element $\xi\in G_{\U}$, by Proposition 2.5.11 we deduce that $^{*}\xi\in G_{\U}$ and, as $\mathfrak{U}_{\alpha}$ is idempotent, by point four of Proposition 3.2.2 we deduce that $\alpha\h\xi$ is in $G_{\U}$. In particular, if $\xi=\alpha$, by letting $\beta=$$^{*}\alpha$ and $\gamma=\alpha\h\alpha=\alpha+$$^{*}\alpha$, the three elements $\alpha,\beta,\gamma$ are in $G_{\U}$ and $\alpha+\beta=\gamma$, so $\U$ is a Schur ultrafilter.\\\end{proof}

As a corollary, since we know that in $\bN$ there are additively idempotent ultrafilters (see Chapter One, Section 1.3), it follows that the family $\mathcal{F}_{S}$ of subsets of $\N$ satisfying the Schur's property is weakly partition regular, and this is the content of Schur's Theorem.\\
Schur's property has a multiplicative analogue, that we call multiplicative Schur's property:

\begin{defn} An element $\U$ of $\bN$ is a {\bfseries multiplicative Schur ultrafilter} if every element $A$ of $\U$ satisfies the multiplicative Schur's property, i.e. if there are three mutually distinct elements $a,b,c\in A$ such that $a\cdot b=c$. \end{defn}

With a proof which is very similar to that of Proposition 3.2.5 one can prove that every multiplicatively idempotent ultrafilter $\U$ is a multiplicative Schur ultrafilter. Here we present a different proof of the existence of this kind of ultrafilters. We recall that, given an ultrafilter $\U$ in $\bN$, $2^{\U}$ is the image of $\U$ respect the continuous extension $\overline{exp}\in\mathtt{Fun}(\bN,\bN)$ of the function $exp\in\mathtt{Fun}(\N,\N)$ such that, for every natural number $n$, $exp(n)=2^{n}$.

\begin{prop} If $\U$ is an additively idempotent ultrafilter then $\V=2^{\U}$ is a multiplicative Schur ultrafilter. \end{prop}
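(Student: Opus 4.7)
The plan is to apply the Bridge Theorem: since the multiplicative Schur property is an existential sentence $\varphi$, to show that $\V = 2^{\U}$ is a multiplicative Schur ultrafilter it suffices to exhibit three mutually distinct generators $\xi_{1},\xi_{2},\xi_{3}\in G_{\V}$ (in ${}^{\bullet}\N$) such that $\xi_{1}\cdot\xi_{2}=\xi_{3}$.

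First, I would pick any generator $\alpha\in G_{\U}$ of height $n$. By Proposition 2.5.11, $S_{n}(\alpha)$ is again a generator of $\U$, and by the characterization of additive idempotency in Proposition 3.2.2 (equivalence of (1) and (2)), the hypernatural number $\alpha\h\alpha = \alpha + S_{n}(\alpha)$ also lies in $G_{\U}$. Thus the three hypernatural numbers $\alpha$, $S_{n}(\alpha)$ and $\alpha+S_{n}(\alpha)$ all belong to $G_{\U}$. Next, applying Theorem 2.3.5(1) to the function $\exp(n)=2^{n}$ and its iterates (the theorem transfers verbatim to ${}^{\bullet}\N$, since ${}^{\bullet}\N$ satisfies the $\mathfrak{c}^{+}$-enlarging property by Proposition 2.5.7), the three elements
\[
\xi_{1} = 2^{\alpha}, \qquad \xi_{2} = 2^{S_{n}(\alpha)}, \qquad \xi_{3} = 2^{\alpha + S_{n}(\alpha)}
\]
are all generators of $\overline{\exp}(\U) = \V$.

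The identity $2^{x+y} = 2^{x}\cdot 2^{y}$ holds for all $x,y\in\N$, hence by transfer $\xi_{3} = 2^{\alpha + S_{n}(\alpha)} = 2^{\alpha}\cdot 2^{S_{n}(\alpha)} = \xi_{1}\cdot\xi_{2}$. It remains to check that $\xi_{1},\xi_{2},\xi_{3}$ are mutually distinct. Since $\U$ is nonprincipal (the principal idempotent $\mathfrak{U}_{0}$ is excluded as it yields $\V=\mathfrak{U}_{1}$, which trivially cannot satisfy multiplicative Schur), $\alpha$ is infinite; by the end-extension property (Proposition 2.5.6(5)) $\alpha < S_{n}(\alpha)$, so $\xi_{1} = 2^{\alpha} < 2^{S_{n}(\alpha)} = \xi_{2}$, and since $\xi_{1}\geq 2 > 1$ we also get $\xi_{2} < \xi_{1}\cdot\xi_{2} = \xi_{3}$. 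Hence $\xi_{1},\xi_{2},\xi_{3}$ are pairwise distinct generators of $\V$ satisfying the desired multiplicative relation, and by the Bridge Theorem (in the form of Proposition 3.2.1 applied to $\V$ and $\varphi$) we conclude that $\V$ is a multiplicative Schur ultrafilter.

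The proof is essentially a direct manipulation of generators; the only genuinely delicate point is the interplay between the star iteration (which is exactly what produces the tensor pair $(\alpha,S_{n}(\alpha))$ needed to invoke idempotency in the form $\alpha\h\alpha\sim_{u}\alpha$) and the functoriality of ${}^{\bullet}\exp$ on sets of generators. Once those two ingredients are in place, the computation $2^{\alpha}\cdot 2^{S_{n}(\alpha)} = 2^{\alpha+S_{n}(\alpha)}$ does all the work of transporting the additive idempotency of $\U$ into a multiplicative Schur configuration inside $G_{\V}$.
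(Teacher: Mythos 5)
Your proof is correct and follows essentially the same route as the paper's: the paper cites Proposition 3.2.5 to obtain three mutually distinct $\eta,\mu,\xi\in G_{\U}$ with $\eta+\mu=\xi$ and then pushes them forward under $\exp$ via Theorem 2.3.5, using $2^{\eta}\cdot 2^{\mu}=2^{\eta+\mu}$, whereas you simply unroll the proof of 3.2.5 and exhibit the triple explicitly as $\alpha$, $S_{n}(\alpha)$, $\alpha+S_{n}(\alpha)$. Your added checks on distinctness and on excluding the principal idempotent $\mathfrak{U}_{0}$ are sound refinements, not a different argument.
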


\begin{proof} The multiplicative Schur's property is expressed by an existential sentence, so it is sufficient to show that in $G_{\V}$ there are three mutually different elements $\alpha,\beta.\gamma$ with $\alpha\cdot\beta=\gamma$.\\
Since $\U$ is additively idempotent, as a result of Proposition 3.2.5 there are three mutually distinct elements $\eta,\mu,\xi$ in $G_{\U}$ with $\eta+\mu=\xi$. As a consequence of Theorem 2.3.5, the elements $2^{\eta},2^{\mu},2^{\xi}$ are three mutually distinct elements in $G_{\V}$. Observe that if $\alpha=2^{\eta}$, $\beta=2^{\mu}$, $\gamma=2^{\xi}$ then $\alpha\cdot\beta=\gamma$, so $\V$ is a multiplicative Schur ultrafilter.\\\end{proof}

\begin{defn} Given any natural number $n\geq 3$, let $AP_{n}$ be the existential formula 

\begin{center} $AP_{n}:$ $\exists x_{1},...,x_{n} ((x_{2}-x_{1}\neq 0)\wedge\bigwedge_{i=1}^{n-2} (x_{i+1}-x_{i}=x_{i+2}-x_{i+1}))$. \end{center}

\end{defn}

A subset $A$ of $\N$ satisfies $AP_{n}$ if and only if it contains an arithmetic progression of lenght $n$. In particular, Van der Waerden's property is the infinite conjunction $\bigwedge_{n=1}^{\infty}AP_{n}$.

\begin{prop} If $\U$ is a non principal additively idempotent ultrafilter then $2\U\oplus\U$ and $\U\oplus 2\U$ are $AP_{3}$-ultrafilters.\end{prop}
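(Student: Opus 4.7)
By the Bridge Theorem (Theorem 2.2.9), since $AP_3$ is an existential first-order sentence, $\W$ is an $AP_3$-ultrafilter if and only if $G_\W$ contains three elements in nontrivial arithmetic progression. The plan is therefore to construct such a triple directly inside $G_{2\U\oplus\U}$ and, symmetrically, inside $G_{\U\oplus 2\U}$, by iterating the star map on a single generator of $\U$ and invoking the additive idempotency of $\U$ exactly once.

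Fix any $\alpha\in G_\U$ and set $n=h(\alpha)$, $\alpha_2=S_n(\alpha)$, $\alpha_3=S_{2n}(\alpha)$. By Proposition 2.5.11 both $\alpha_2$ and $\alpha_3$ lie in $G_\U$, at heights $2n$ and $3n$ respectively, so in particular $\alpha_2$ is infinite and non-zero. By Proposition 3.2.2, the additive idempotency of $\U$ gives $\alpha\h\alpha=\alpha+\alpha_2\in G_\U$ at height $2n$; this is the single place where the hypothesis will enter.

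For $\W=2\U\oplus\U$ I would take the triple
\[
g_1=2\alpha+\alpha_3,\qquad g_2=2\alpha+\alpha_2+\alpha_3,\qquad g_3=2\alpha+2\alpha_2+\alpha_3,
\]
which is manifestly an arithmetic progression with common difference $\alpha_2$. The verification that each $g_i\in G_{2\U\oplus\U}$ is a routine application of the identity $\mu\h\nu=\mu+S_{h(\mu)}(\nu)$: one checks that $g_1=2\alpha\h\alpha_2$, $g_2=2\alpha\h(\alpha\h\alpha)$ and $g_3=2(\alpha\h\alpha)\h\alpha$, so each is of the shape $2\mu\h\nu$ with $\mu,\nu\in G_\U$, and Theorem 2.5.27 identifies the ultrafilter generated as $\mathfrak{U}_{2\mu}\oplus\mathfrak{U}_\nu=2\U\oplus\U$. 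The case $\W=\U\oplus 2\U$ is the mirror image: the triple
\[
g_1'=\alpha+2\alpha_3,\qquad g_2'=\alpha+\alpha_2+2\alpha_3,\qquad g_3'=\alpha+2\alpha_2+2\alpha_3
\]
is again an AP of common difference $\alpha_2$, and each term rewrites as $\mu\h 2\nu$ with $\mu,\nu\in G_\U$, namely $\alpha\h 2\alpha_2$, $(\alpha\h\alpha)\h 2\alpha$, and $\alpha\h 2(\alpha\h\alpha)$.

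The only genuine technical point is the height-bookkeeping needed to put each $g_i$ in the form $2\mu\h\nu$ (respectively $\mu\h 2\nu$) with \emph{both} arguments in $G_\U$; without the idempotency hypothesis one would at best land in longer tensor products such as $2\U\oplus\U\oplus 2\U$. Once one has $\alpha\h\alpha\in G_\U$, so that sums like $\alpha_2+\alpha_3=S_n(\alpha\h\alpha)$ stay generators of $\U$, everything else is a direct unwinding of the $\h$-calculus developed in Section 2.5.3.
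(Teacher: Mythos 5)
Your proof is correct and follows essentially the same route as the paper: the paper picks $\xi\in{}^{*}\N\cap G_{\U}$ and exhibits the triples $2\xi+{}^{**}\xi,\ 2\xi+{}^{*}\xi+{}^{**}\xi,\ 2\xi+2{}^{*}\xi+{}^{**}\xi$ in $G_{2\U\oplus\U}$ (and the mirrored ones in $G_{\U\oplus 2\U}$), which are exactly your $g_{1},g_{2},g_{3}$ in the special case $n=h(\alpha)=1$. Your only additions are the general-height bookkeeping and the explicit rewriting of each term as $2\mu\h\nu$ (resp.\ $\mu\h 2\nu$) with $\mu,\nu\in G_{\U}$, which is a slightly more careful version of the same verification.
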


\begin{proof} Since $AP_{3}$ is an existential sentence, in order to prove the thesis is enough to find three mutually different elements in $G_{2\U\oplus\U}$ (resp. in $G_{\U\oplus 2\U}$) that are in arithmetic progression.\\
Observe that, as $\U$ is idempotent, also $2\U$ is idempotent, since $2\U\oplus 2\U=2(\U\oplus\U)=2\U$. Let $\xi\in$$^{*}\N$ be any element in $G_{\U}$; by idempotency, $\xi,$$^{*}\xi,\xi+$$^{*}\xi$ are in $G_{\U}$ and $2\xi,2$$^{*}\xi,2\xi+2$$^{*}\xi$ are in $G_{2\U}$.\\
In particular, in $G_{2\U\oplus\U}$ one finds 
\begin{enumerate}
	\item $2\xi+$$^{**}\xi=2\xi\h$$^{*}\xi$
	\item $2\xi+$$^{*}\xi+$$^{**}\xi=2\xi\h(\xi+$$^{*}\xi)$
	\item $2\xi+2$$^{*}\xi+$$^{**}\xi=(2\xi+2$$^{*}\xi)\h\xi$
\end{enumerate}
These three elements form an arithmetic progression of length 3 in $G_{2\U\oplus\U}$ (with rate $^{*}\xi$), so $2\U\oplus\U$ is a $AP_{3}$-ultrafilter.\\
Similarly, in $G_{\U\oplus 2\U}$ there is the arithmetic progression $\xi+2$$^{**}\xi$, $\xi+$$^{*}\xi+2$$^{**}\xi$, $\xi+2$$^{*}\xi+2$$^{**}\xi$ of length three, so also $\U\oplus 2\U$ is an $AP_{3}$-ultrafilter.\\\end{proof}

From this proposition it follows that the family $\mathcal{F}_{AP_{3}}$ of subsets of $\N$ containing an arithmetic progression of length three is weakly partition regular; that is, every finite coloration of $\N$ has a monochromatic three terms arithmetic progression. We think that the above nonstandard proof of this fact is an example of the advantages of the star iteration technique: both the combinatorial proof and the proof with ultrafilters given in Chapter One were less intuitive and more complex (expecially the combinatorial one, that was made only for 2-colorations).\\
Also, this proof can be generalized to obtain this result:

\begin{prop} If $\U$ is a non principal additively idempotent ultrafilter and $n,m$ are different positive natural numbers, then $m\U\oplus n\U$ and $n\U\oplus m\U$ are $\varphi_{n,m}$-ultrafilters, where $\varphi_{n,m}$ is the existential sentence 

\begin{center} $\varphi_{n,m}: \exists x,y,z ((y-x\neq 0)\wedge(n(y-x)=m(z-x)))$"'. \end{center}
\end{prop}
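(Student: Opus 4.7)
The plan is to invoke the Bridge Theorem (Theorem 2.2.9) to reduce the verification that $m\U\oplus n\U$ (and, symmetrically, $n\U\oplus m\U$) is a $\varphi_{n,m}$-ultrafilter to exhibiting three hypernatural numbers $x,y,z$ in the associated monad that satisfy $y-x\neq 0$ and $n(y-x)=m(z-x)$. Since $\varphi_{n,m}$ is existential, this reduction is available (via Proposition 3.2.1). The construction will imitate the three-term arithmetic progression built in Proposition 3.2.9 for $\varphi_{1,2}$, but with the coefficients $m$ and $n$ placed so that the two relevant differences come out proportional to $m{}^{*}\xi$ and $n{}^{*}\xi$ respectively, making the cross-multiplication give the desired equality $nm{}^{*}\xi=mn{}^{*}\xi$.

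Fix any $\xi\in G_{\U}$. Since $\U$ is nonprincipal and additively idempotent, Proposition 3.2.2 (together with Proposition 2.5.11, which gives that $^{*}\xi\sim_u \xi$) ensures that all three of $\xi$, $^{*}\xi$, and $\xi+{}^{*}\xi$ belong to $G_{\U}$. Applying Theorem 2.3.5(1) to the multiplication-by-$m$ and multiplication-by-$n$ maps shows that $m\xi$ and $m(\xi+{}^{*}\xi)$ lie in $G_{m\U}$, and that $n\xi$, $n{}^{*}\xi$ and $n(\xi+{}^{*}\xi)$ lie in $G_{n\U}$. Using the operation $\h$ (Definition 2.5.25) I would then set, for the ultrafilter $m\U\oplus n\U$,
\begin{align*}
x &:= m\xi\,\h\, n{}^{*}\xi, \\
y &:= m(\xi+{}^{*}\xi)\,\h\, n\xi, \\
z &:= m\xi\,\h\, n(\xi+{}^{*}\xi),
\end{align*}
which by Theorem 2.5.27 all lie in $G_{m\U\oplus n\U}$.

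Unfolding $\h$ via Definition 2.5.25, and using that $h(m\xi)=1$ while $h(m(\xi+{}^{*}\xi))=2$, a direct computation gives $x=m\xi+n{}^{**}\xi$, $y=m\xi+m{}^{*}\xi+n{}^{**}\xi$, and $z=m\xi+n{}^{*}\xi+n{}^{**}\xi$. Subtracting yields $y-x=m{}^{*}\xi$ and $z-x=n{}^{*}\xi$, so that $n(y-x)=nm{}^{*}\xi=m(z-x)$, while $y-x\neq 0$ because $m\geq 1$ and $^{*}\xi$ is infinite. The statement for $n\U\oplus m\U$ will follow at once by exchanging the roles of $m$ and $n$ throughout the construction. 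The only point that really needs care is verifying that the three $\h$-expressions above are legitimate generators of $m\U\oplus n\U$, and this is exactly what Theorem 2.5.27 (together with the tensor-pair criterion of Theorem 2.5.16) guarantees; the rest of the proof is just the bookkeeping of heights that forces the two crucial differences to be multiples of $m$ and of $n$ respectively, so there is no deeper obstacle.
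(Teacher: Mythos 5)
Your proposal is correct and follows essentially the same route as the paper: reduce via the Bridge Theorem to exhibiting three generators of $m\U\oplus n\U$ built from $\xi$, $^{*}\xi$, $^{**}\xi$ using additive idempotency and the $\h$ operation, so that the two relevant differences come out as $m$$^{*}\xi$ and $n$$^{*}\xi$. Your explicit elements in fact coincide with the paper's (after exchanging the roles of $m$ and $n$ for the other product), except that your third element correctly ends in $m$$^{**}\xi$ where the paper's $\beta$ carries an apparent misprint ($n$$^{**}\xi$) that would spoil the identity $n(\beta-\alpha)=mn$$^{*}\xi$ when $n\neq m$.
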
 

\begin{proof} Let $\xi\in$$^{*}\N$ be an element in $G_{\U}$. As $n\U$ and $m\U$ are additively idempotent, by construction one finds the following elements in $G_{n\U\oplus m\U}$:
\begin{enumerate}
	\item $\alpha=n\xi+m$$^{**}\xi$
	\item $\beta=n\xi+m$$^{*}\xi+n$$^{**}\xi$
	\item $\gamma=n\xi+n$$^{*}\xi+m$$^{**}\xi$.
\end{enumerate}
Notice that $m(\gamma-\alpha)=mn$$^{*}\xi=n(\beta-\alpha)$, so $n\U\oplus m\U$ is a $\varphi_{n,m}$-ultrafilter.\\A similar proof can be done for $m\U\oplus n\U$.\\\end{proof}

\begin{cor} Given two different positive natural numbers $n, m$, if $\N$ is finitely colored then there are three mutually distinct monochromatic natural numbers $a,b,c$ such that $n(c-a)=m(b-a)$. \end{cor}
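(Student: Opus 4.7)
The plan is to deduce the corollary directly from Proposition 3.2.10, using the existence of non principal additively idempotent ultrafilters and the connection between ultrafilters and partition regularity established in Theorem 1.2.6.

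First I would invoke Galvin's Corollary 1.1.29 to fix a nonprincipal additively idempotent ultrafilter $\U$ in $(\bN,\oplus)$. Applying Proposition 3.2.10 to $\U$ and to the given pair $n,m$, I obtain that $\W=n\U\oplus m\U$ is a $\varphi_{n,m}$-ultrafilter; that is, every set $B\in\W$ contains three mutually distinct elements $a,b,c$ satisfying $n(c-a)=m(b-a)$.

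Now let $c:\N\to\{1,\dots,r\}$ be any finite coloration of $\N$, and consider the induced partition $\N=\bigcup_{i=1}^{r}c^{-1}(i)$. Since $\W$ is an ultrafilter, there is exactly one index $i\leq r$ with $c^{-1}(i)\in\W$. Because $\W$ is a $\varphi_{n,m}$-ultrafilter, the monochromatic set $c^{-1}(i)$ must contain three mutually distinct elements $a,b,c$ with $n(c-a)=m(b-a)$, which are automatically monochromatic by construction. This yields the desired conclusion.

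There is no real obstacle here: the whole combinatorial content has already been absorbed into Proposition 3.2.10 (whose hypernatural witnesses $\alpha,\beta,\gamma$ in $G_{n\U\oplus m\U}$ satisfy $n(\gamma-\alpha)=m(\beta-\alpha)$), and the passage from a $\varphi_{n,m}$-ultrafilter to partition regularity of the property $\varphi_{n,m}$ is exactly the "if" direction of Theorem 1.2.6. The only small point worth checking is that the three witnesses remain mutually distinct after transferring from generators to natural numbers inside the monochromatic set, but this is built into $\varphi_{n,m}$ via the clause $y-x\neq 0$ (together with $n,m\neq 0$), so distinctness is automatic.
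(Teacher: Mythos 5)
Your proposal is correct and follows essentially the same route as the paper: Proposition 3.2.10 supplies a $\varphi_{n,m}$-ultrafilter, and the existence of such an ultrafilter makes the family of sets with this property weakly partition regular (Theorem 1.2.6), so one color class of any finite coloration must contain the configuration. Your extra remark that mutual distinctness of the three witnesses already follows from the clause $y-x\neq 0$ together with $n\neq m$ is a worthwhile detail that the paper's one-line proof leaves implicit.
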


\begin{proof} By Proposition 3.2.10 it follows that the family of subsets of $\N$ satisfying the existence of three mutually distinct natural numbers $a,b,c$ with $n(c-a)=m(b-a)$ is weakly partition regular, since it contains an ultrafilter.\\ \end{proof}

Actually, this result can be strenghtened:

\begin{thm} For every natural number $k\geq 1$, for every positive natural numbers $n_{1},n_{2},...,n_{k+1}$ with $n_{i}\neq n_{i+1}$ for every index $i\leq k$, there exists a $\varphi_{n_{1},...,n_{k}}$-ultrafilter $\U$, where
\begin{center} $\varphi_{n_{1},...,n_{k}}:$ $\exists x_{1},...,x_{k},y_{1},...,y_{k},z_{1},...,z_{k}$ such that for every index $i\leq k$ $x_{i},y_{i},z_{i}$ are three mutually distinct elements and, for every index $i\leq k-1$, the following two conditions holds:
\begin{enumerate}
	\item $n_{i}(z_{i}-x_{i})=n_{i+1}(y_{i}-x_{i})$;
	\item $x_{i+1}=z_{i}$ $($if $i\leq k-1)$.
\end{enumerate}
\end{center}

\end{thm}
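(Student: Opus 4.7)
I would proceed by induction on $k$, with base case $k=1$ already given by Proposition 3.2.10. For the inductive step, fix $n_1,\dots,n_{k+1}$ with $n_i \neq n_{i+1}$, choose any nonprincipal additively idempotent ultrafilter $\U\in\bN$, a generator $\xi\in G_\U$, and set $\xi_j = S_j(\xi)$. By Proposition 2.5.11 each $\xi_j$ lies in $G_\U$, and combining this with Proposition 3.2.2 applied to the idempotent $m\U$ (each $m\U$ inheriting idempotency from $\U$), any sum $\sum_l m\,\xi_{j_l}$ with strictly increasing indices $j_l$ lies in $G_{m\U}$.

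The candidate witnesses to $\varphi_{n_1,\dots,n_{k+1}}$ would be built by a ``head plus tail'' Ansatz that chains $k$ instances of the base-case construction. Set $x_1 = n_2\xi_0 + n_1\xi_{k+1}$ and, for $i \in \{1,\dots,k\}$, set
\[
y_i = x_i + n_i\,\xi_i, \qquad z_i = x_i + n_{i+1}\,\xi_i, \qquad x_{i+1} = z_i.
\]
The arithmetic relation $n_i(z_i - x_i) = n_i n_{i+1}\,\xi_i = n_{i+1}(y_i - x_i)$ and the chain condition $z_i = x_{i+1}$ hold by construction, while the distinctness of each triple $(x_i,y_i,z_i)$ follows from the hypothesis $n_i \neq n_{i+1}$ together with the fact that $\xi_i$ is infinite (so that multiples of $\xi_i$ by distinct positive naturals are themselves distinct).

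The main obstacle lies in verifying that all $2k+1$ constructed elements belong to $G_\V$ for \emph{one and the same} ultrafilter $\V$. Each element admits a $\heartsuit$-decomposition obtained by peeling off summands of increasing height, so by Theorem 2.5.27 it represents a generator of a tensor product $n_{i_1}\U \oplus \cdots \oplus n_{i_r}\U$, after which repeated adjacent factors collapse via $n_i\U \oplus n_i\U = n_i\U$. The subtlety is that each difference level where the coefficient swaps from $n_i$ to $n_{i+1}$ alters the local tensor profile, so the naive Ansatz a priori lands in different ultrafilters. The resolution is to fix $\V$ as the tensor product $\V = n_2\U \oplus n_3\U \oplus \cdots \oplus n_{k+1}\U \oplus n_1\U$---long enough to contain every profile that appears in the construction---and to insert appropriate padding terms into each $x_i$, $y_i$, $z_i$ at the missing intermediate star-levels so that every element's $\heartsuit$-decomposition collapses, under the idempotencies $n_j\U \oplus n_j\U = n_j\U$, to exactly this fixed $\V$. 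Once this common $\V$ is identified, the Bridge Theorem (Theorem 2.2.9) immediately delivers that $\V$ is a $\varphi_{n_1,\dots,n_{k+1}}$-ultrafilter, closing the induction.
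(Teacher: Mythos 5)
Your overall strategy is the same as the paper's: one additively idempotent ultrafilter $\V$, witnesses built as sums of terms $n_{j}S_{m}(\xi)$ at increasing star-levels, a target ultrafilter that is a $\oplus$-product of dilates of $\V$, idempotency of each $n_{j}\V$ to collapse repeated factors, and the Bridge Theorem to conclude. (The induction on $k$ is vestigial: the inductive hypothesis is never invoked; what you describe is a direct construction.) The genuine gap sits exactly where you flag "the main obstacle": the padding that is supposed to force all $3k$ witnesses into a single $G_{\V}$ is never exhibited, and with the level assignment you fix it cannot be. Since $y_{i}=x_{i}+n_{i}\xi_{i}$, $z_{i}=x_{i}+n_{i+1}\xi_{i}$ and $x_{i+1}=z_{i}$, any padding inserted into $x_{1}$ propagates unchanged into every later element, so the padding must occupy star-levels disjoint from the levels $1,\dots,k$ consumed by the modifications; but you have placed the head at level $0$ and the tail at level $k+1$, leaving no free intermediate levels. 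Moreover the tensor profiles genuinely diverge inside a single triple: with your Ansatz $x_{2}$ and $y_{2}$ collapse to $n_{2}\U\oplus n_{1}\U$ while $z_{2}$ carries the extra factor $n_{3}\U$, so $x_{2},y_{2},z_{2}$ already generate different ultrafilters and no amount of collapsing alone repairs this.

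The paper's proof is precisely the missing bookkeeping. It spreads the construction over levels $0,\dots,2k-1$: a scaffold $\alpha_{1}=\sum_{i=1}^{k}n_{i}S_{2(i-1)}(\xi)$ occupies the even levels with coefficients $n_{1},\dots,n_{k}$, and the $i$-th step inserts $n_{i}S_{2i-1}(\xi)$ (for $\beta_{i}$) or $n_{i+1}S_{2i-1}(\xi)$ (for $\gamma_{i}$, which becomes $\alpha_{i+1}$) at the odd level lying between the scaffold terms that carry $n_{i}$ and $n_{i+1}$. Every inserted term is therefore adjacent, in the height ordering, to a scaffold term with the same coefficient, so the idempotency $n_{j}\V\oplus n_{j}\V=n_{j}\V$ collapses each witness to the same fixed $\U=n_{1}\V\oplus n_{2}\V\oplus\cdots\oplus n_{k}\V$, while the arithmetic relations and distinctness come out exactly as in your computation. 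Without this interleaving (or an equivalent explicit choice of levels and of the common target ultrafilter) your argument does not close.
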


\begin{proof} Let $\V$ be an additively idempotent ultrafilter, let $\xi\in$$^{*}\N$ be a generator of $\V$, and consider the ultrafilter

\begin{center} $\U=n_{1}\V\oplus n_{2}\V\oplus...\oplus n_{k}\V$.\end{center}

{\bfseries Claim:} $\U$ is a $\varphi_{n_{1},...,n_{k}}$-ultrafilter.\\

Since $\varphi_{n_{1},...,n_{k}}$ is an existential sentence, to prove the claim it is enough to prove that there are elements $\alpha_{1},...,\alpha_{k},\beta_{1},...,\beta_{k},\gamma_{1},...,\gamma_{k}$ in $G_{\U}$ such that, for every index $i\leq k$, $\alpha_{i},\beta_{i},\gamma_{i}$ are mutually distinct, $n_{i}(\gamma_{i}-\alpha_{i})=n_{i+1}(\beta_{i}-\alpha_{i})$ and $\alpha_{i+1}=\gamma_{i}$.\\
We construct the elements $\alpha_{i},\beta_{i},\gamma_{i}$ inductively: let

\begin{itemize}
	\item $\alpha_{1}=\sum_{i=1}^{k}(n_{i}S_{2(i-1)}(\xi))$;
	\item $\beta_{1}=\alpha_{1}+n_{1}$$^{*}\xi$;
	\item $\gamma_{1}=\alpha_{1}+n_{2}$$^{*}\xi$.
\end{itemize}

Observe that, by construction, $n_{2}(\beta_{1}-\alpha_{1})=n_{2}\cdot n_{1}$$^{*}\xi=n_{1}(\gamma_{1}-\alpha_{1})$ and $\alpha_{1},\beta_{1},\gamma_{1}$ are generators of $\U$.\\
Now, if $\alpha_{h},\beta_{h},\gamma_{h}$ have been constructed, pose

\begin{itemize}
	\item $\alpha_{h+1}=\gamma_{h}$;
	\item $\beta_{h+1}=\alpha_{h+1}+n_{i+1}S_{2h-1}(\xi)$;
	\item $\gamma_{h+1}=\alpha_{h+1}+n_{i}S_{2h-1}(\xi)$.
\end{itemize}

Observe that $\alpha_{h+1}=\gamma_{h}$, $n_{h+2}(\beta_{h+1}-\alpha_{h+1})=n_{h+1}\cdot n_{h+1}S_{2h-1}(\xi)=n_{h+1}(\gamma_{h+1}-\alpha_{h+1})$ and that $\alpha_{h+1},\beta_{h+1},\gamma_{h+1}$ are generators of $\U$.\\
With this procedure we constuct elements $\alpha_{1},...,\alpha_{k},\beta_{1},...\beta_{k},\gamma_{1},...,\gamma_{k}$ in $G_{\U}$ with the desired properties, so $\U$ is a $\varphi_{n_{1},...,n_{k}}$-ultrafilter.\\\end{proof}

\begin{cor}  For every natural number $k\geq 1$, for every positive natural numbers $n_{1},n_{2},...,n_{k+1}$ with $n_{i}\neq n_{i+1}$ for every index $i\leq k$, if $\N$ is finitely colored then there are monochromatic elements $a_{1},...,a_{k},b_{1},...,b_{k},c_{1},...,c_{k}$ such that
\begin{enumerate}
  \item for every index $i\leq k$, $a_{i},b_{i},c_{i}$ are mutually distinct;
	\item for every index $i\leq k$, $n_{i}(z_{i}-x_{i})=n_{i+1}(y_{i}-x_{i})$;
	\item for every index $i\leq k-1$, $x_{i+1}=z_{i}$.
\end{enumerate}
\end{cor}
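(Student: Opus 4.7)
The plan is to derive the corollary directly from Theorem 3.2.12 via the standard ultrafilter-to-partition-regularity argument (as in Theorem 1.2.6). The only real ingredient is the observation that $\varphi_{n_1,\ldots,n_k}$ is an existential first-order sentence (it is a single existential quantifier block asserting the existence of $3k$ elements satisfying a conjunction of polynomial equations and distinctness conditions), so the class of subsets of $\N$ satisfying $\varphi_{n_1,\ldots,n_k}$ is closed under supersets, and the theory of $\varphi$-ultrafilters applies.

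First, I would invoke Theorem 3.2.12 to fix a $\varphi_{n_1,\ldots,n_k}$-ultrafilter $\U$ on $\N$. Then, given an arbitrary finite coloration $c:\N\to\{1,\ldots,r\}$, I would write $\N=c^{-1}(1)\cup\cdots\cup c^{-1}(r)$ and use the defining property of an ultrafilter to pick the (unique) index $j\leq r$ with $c^{-1}(j)\in\U$. Because $\U$ is a $\varphi_{n_1,\ldots,n_k}$-ultrafilter, the set $c^{-1}(j)$ satisfies $\varphi_{n_1,\ldots,n_k}$, which immediately produces elements $a_1,\ldots,a_k,b_1,\ldots,b_k,c_1,\ldots,c_k$ inside $c^{-1}(j)$ (hence monochromatic of color $j$) satisfying the three required conditions.

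There is essentially no obstacle here: the step that does the work is Theorem 3.2.12, whose construction with the generators $\alpha_i,\beta_i,\gamma_i$ already produces a witness-rich ultrafilter. The only point to be careful about is to check that $\varphi_{n_1,\ldots,n_k}$ can be written as an existential sentence in the parameters $n_1,\ldots,n_{k+1}$ (which are just fixed natural numbers), so that the Bridge Theorem applies and the family of sets satisfying it is closed under supersets, ensuring that membership in $\U$ transfers to any superset; this is immediate from the form of the sentence. Once that is noted, the corollary follows in one line from Theorem 3.2.12 combined with Theorem 1.2.6 (1).
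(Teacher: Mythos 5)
Your proposal is correct and is exactly the argument the paper intends: the paper gives no explicit proof of this corollary, but the implicit one (mirroring the proof of Corollary 3.2.11) is precisely your route — Theorem 3.2.12 supplies a $\varphi_{n_{1},...,n_{k}}$-ultrafilter, so some cell of any finite coloration lies in it and hence satisfies the existential sentence. No gaps.
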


Now we focus on a very important result in Ramsey Theory that we discussed also in Chapter One: Folkman's Theorem. In Chapter One we proved that Folkman's Theorem can be deduced as a particular case of a more general result, Hindman's Theorem, which ultrafilter proof is based on the existence of additively idempotent ultrafilters.\\
We present a nonstandard proof of Folkman's Theorem, to show one other example of star iterations:

\begin{thm} Every nonprincipal additively idempotent ultrafilter $\U$ is a Folkman ultrafilter. \end{thm}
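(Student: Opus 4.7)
The plan is to show, by a controlled star-iteration of a single generator of $\U$, that for every $k \geq 1$ there are $k$ elements $\alpha_0 < \alpha_1 < \cdots < \alpha_{k-1}$ in $G_\U$ whose $2^k - 1$ nonempty subsums all lie in $G_\U$. The Folkman property for every $A \in \U$ then falls out of the Bridge Theorem in the same spirit as the proofs of Propositions 3.2.5 and 3.2.9.

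First I would fix any generator $\alpha$ of $\U$ in $^{*}\N$, which exists because $^{*}\N$ has the $\mathfrak{c}^{+}$-enlarging property. Since $\U$ is nonprincipal, $\alpha$ is infinite and $h(\alpha)=1$. For each $k \geq 1$, set $\alpha_i := S_i(\alpha)$ for $i = 0, 1, \ldots, k-1$. Each $\alpha_i$ lies in $G_\U$ by Proposition 2.5.11, and $h(\alpha_i) = i+1$, so the end-extension property (Proposition 2.5.3(5)) yields $\alpha_0 < \alpha_1 < \cdots < \alpha_{k-1}$ in $^{\bullet}\N$.

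The central step is to verify that for every nonempty $I = \{i_1 < i_2 < \cdots < i_m\} \subseteq \{0, 1, \ldots, k-1\}$ the sum $\sum_{j=1}^{m} \alpha_{i_j}$ belongs to $G_\U$. I would first check that $(\alpha_{i_1}, \alpha_{i_2}, \ldots, \alpha_{i_m})$ is a tensor $m$-tuple: by Corollary 2.5.22 it suffices to show $\alpha_{i_j} < er(\alpha_{i_{j+1}})$ for each $j \leq m-1$, and this is immediate from Proposition 2.5.19, since whenever $f \in \mathtt{Fun}(\N,\N)$ satisfies $^{\bullet}f(\alpha_{i_{j+1}}) \notin \N$ one has $h(^{\bullet}f(\alpha_{i_{j+1}})) \geq i_{j+1} + 1 > i_j + 1 = h(\alpha_{i_j})$, so the end-extension property forces $\alpha_{i_j} < {}^{\bullet}f(\alpha_{i_{j+1}})$. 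Hence $\mathfrak{U}_{(\alpha_{i_1}, \ldots, \alpha_{i_m})} = \mathfrak{U}_{\alpha_{i_1}} \otimes \cdots \otimes \mathfrak{U}_{\alpha_{i_m}}$, and applying the iterated sum function together with Theorem 2.3.5 gives $\mathfrak{U}_{\sum_{j=1}^{m}\alpha_{i_j}} = \U \oplus \U \oplus \cdots \oplus \U$. By additive idempotency this equals $\U$, so $\sum_{j=1}^{m}\alpha_{i_j} \in G_\U$.

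With all $2^k - 1$ nonempty subsums in $G_\U$, I would conclude via the Bridge Theorem. For any $A \in \U$, each $\alpha_i$ and each of its nonempty subsums belongs to $^{\bullet}A$. Thus the existential sentence asserting the existence of $x_0 < x_1 < \cdots < x_{k-1}$ in $A$ such that $\sum_{i \in I} x_i \in A$ for every nonempty $I \subseteq \{0, \ldots, k-1\}$ is witnessed in $^{\bullet}\N$ by the tuple of generators $(\alpha_0, \ldots, \alpha_{k-1})$; so by Theorem 2.2.9 $A$ itself witnesses it in $\N$, yielding $s_1 < \cdots < s_k$ in $A$ with $FS(\{s_1, \ldots, s_k\}) \subseteq A$. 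As $k$ and $A \in \U$ were arbitrary, $\U$ is a Folkman ultrafilter. The main obstacle I anticipate is the careful height bookkeeping ensuring that every partial subsum (not just the full sum $\alpha_0 + \cdots + \alpha_{k-1}$) generates $\U$; this is precisely why the characterization of tensor tuples via $er$ in Corollary 2.5.22 is invoked for arbitrary strictly increasing subsequences of the height sequence, rather than only for consecutive heights.
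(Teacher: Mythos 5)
Your proposal is correct and follows essentially the same route as the paper: the paper's proof also takes the witnesses $\alpha, {}^{*}\alpha, \ldots, S_{k-1}(\alpha)$ and asserts that all nonempty subsums lie in $G_{\U}$ before invoking the Bridge Theorem, while you additionally justify that assertion via the height/$er$ characterization of tensor tuples and additive idempotency (the relevant references are Corollary 2.4.12 and Proposition 2.5.19, not ``Corollary 2.5.22''). The extra bookkeeping you supply is exactly the detail the paper leaves implicit, so there is no substantive difference.
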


\begin{proof} For every natural number $k$, let 

\begin{center} $f_{k}:\{I\in\wp(\{1,...,k\})\mid I\neq\emptyset\}\rightarrow\{1,...,2^{k}-1\}$ \end{center} 

be a bijection, and let $E(\phi_{k}(x_{1},....,x_{k}, y_{1},...,y_{2^{k}-1}))$ the existential sentence 

\begin{center} $E(\phi_{k}(x_{1},....,x_{k}, y_{1},...,y_{2^{k}-1})): \exists x_{1},....,x_{k}, y_{1},...,y_{2^{k}-1}$ such that, for every $I\neq\emptyset\subseteq\{1,...,k\}$, $\sum_{i\in I}x_{i}=y_{f(I)}$. \end{center}

We observe that a subset $A$ of $\N$ contains a subset $T_{k}$ of cardinality $k$ with $FS(T_{k})\subseteq A$ if and only if $A$ satisfies $E(\phi_{k}(x_{1},....,x_{k}, y_{1},...,y_{2^{k}-1}))$.\\

{\bfseries Claim:} $\U$ is an $E(\phi_{k}(x_{1},....,x_{k}, y_{1},...,y_{2^{k}-1}))$-ultrafilter for every $k>0$ in $\N$.\\

Take any $k>0$ in $\N$ and, if $\U=\mathfrak{U}_{\alpha}$, consider

\begin{center}$\alpha,$$^{*}\alpha,$$...,$$S_{k-1}(\alpha)$. \end{center}

Pose $T_{k}=\{\alpha,$$^{*}\alpha,$$...,$$S_{k-1}(\alpha)\}$ and observe that $FS(T_{k})\subseteq G_{\U}$ since, for every $I\neq\emptyset\subseteq\{1,...,k\}$, $\sum_{i\in I}S_{i-1}(\alpha)$ in in $G_{\U}$.\\
In particular, $\U$ is an $E(\phi_{k}(x_{1},....,x_{k}, y_{1},...,y_{2^{k}-1}))$-ultrafilter for every $k>0$, and this entails that it is a Folkman ultrafilter.\\ \end{proof}

From this result it follows that the family of sets satisfying the Folkman's property is weakly partition regular, and this is the content of Folkman's Theorem.\\
We end this section with a result that is not "Ramsey-style": we show, as promised in Chapter One, that the center of $(\bN,\oplus)$ is $\N$. This proof can be found in $\cite{rif15}$.\\
Let $(a_{k})_{k\in \N}$ be a sequence of natural numbers such that $\lim_{k} (a_{k+1}-a_{k})= +\infty$, and consider the set $A=\bigcup_{k\in \N} [a_{2k},a_{2k+1})$.

\begin{prop} For every non principal ultrafilter $\U$ there exists an ultrafilter $\V$ such that $A\in \U\oplus\V\Leftrightarrow A^{c}\in\V\oplus\U$. \end{prop}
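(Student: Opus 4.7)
The plan is to exhibit $\V$ explicitly as $\mathfrak{U}_{\beta}$ for a suitably chosen $\beta\in{}^{*}\N$, with $\beta$ depending on a generator $\alpha$ of $\U$. Working in ${}^{**}\N$, the condition $A\in\U\oplus\V$ is equivalent to $\alpha+{}^{*}\beta\in{}^{**}A$, while $A^{c}\in\V\oplus\U$ is equivalent to $\beta+{}^{*}\alpha\in{}^{**}A^{c}$; the goal is thus to choose $\beta$ so that these two nonstandard conditions have the same truth value.

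First I would fix a generator $\alpha\in{}^{*}\N\setminus\N$ of $\U$ and locate $\alpha$ inside its interval ${}^{*}[a_{m},a_{m+1})$, where $m\in{}^{*}\N$ is necessarily infinite. Replacing $A$ by $A^{c}$ if needed, I may assume $A\in\U$, so that $m$ is even. The crucial geometric fact is that, since $a_{k+1}-a_{k}\to\infty$, every gap ${}^{**}a_{{}^{*}j+1}-{}^{**}a_{{}^{*}j}={}^{*}(a_{j+1}-a_{j})$ (for $j\in{}^{*}\N\setminus\N$) lies in ${}^{**}\N\setminus{}^{*}\N$; consequently any shift by an element of ${}^{*}\N$ keeps us inside the same ${}^{**}\N$-interval, provided the point being shifted is not too close to an endpoint.

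I would then split into cases according to whether the offsets $l=\alpha-{}^{*}a_{m}$ and $r={}^{*}a_{m+1}-\alpha$ are finite or infinite (since $l+r+1={}^{*}(a_{m+1}-a_{m})$ is infinite, at least one of them is infinite). If both are infinite, or if only $l$ is finite, take $\beta={}^{*}a_{2k+1}$ for any $k\in{}^{*}\N\setminus\N$ with ${}^{*}(a_{2k+2}-a_{2k+1})>\alpha$; such $k$ exist by overspill, because $a_{n+1}-a_{n}\to\infty$ and $\alpha\in{}^{*}\N$. A direct check then shows $\alpha+{}^{*}\beta\in{}^{**}[a_{2{}^{*}k+1},a_{2{}^{*}k+2})\subseteq{}^{**}A^{c}$ and $\beta+{}^{*}\alpha\in{}^{**}[a_{{}^{*}m},a_{{}^{*}m+1})\subseteq{}^{**}A$, so both $A\in\U\oplus\V$ and $A^{c}\in\V\oplus\U$ turn out to be false. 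If instead $r=r_{0}\in\N$ and $l$ is infinite, the previous choice fails and I would take $\beta={}^{*}a_{2k}$ for an infinite $k$ with ${}^{*}a_{2k}<{}^{*}(a_{m+2}-a_{m+1})$, whose existence follows from an internal sup-argument in ${}^{*}\N$ (the set of such $k$ is internal, nonempty and bounded, and its maximum must be infinite). One then verifies $\alpha+{}^{*}\beta\in{}^{**}A$ and $\beta+{}^{*}\alpha\in{}^{**}[a_{{}^{*}m+1},a_{{}^{*}m+2})\subseteq{}^{**}A^{c}$, so both conditions turn out to be true.

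The main obstacle is this last boundary case: once ${}^{*}\alpha$ has been shifted past the right endpoint ${}^{**}a_{{}^{*}m+1}$ of its interval, one must ensure that the landing point falls in the very next interval rather than skipping ahead several intervals, since otherwise the parity along ${}^{**}\N$ cannot be controlled. The overspill-based calibration of $k$ provides precisely this control, and is only possible because the gaps $a_{k+1}-a_{k}$ are unbounded. Setting $\V=\mathfrak{U}_{\beta}$ in each case then yields an ultrafilter with the required property.
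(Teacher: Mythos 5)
Your proof is correct and follows essentially the same route as the paper's: both arguments reduce to locating $\alpha$ inside its interval ${}^{*}[a_{m},a_{m+1})$, splitting on whether $\alpha$ lies at finite or infinite distance from the right endpoint (the paper phrases this as whether $\alpha+n\in{}^{*}A$ for every finite $n$), and then taking $\V$ to be generated by the left endpoint of a sufficiently long interval of the appropriate parity so that both sides of the biconditional come out false in the first case and true in the second. The only remark worth making is that your final ``calibration'' of $k$ addresses a non-issue: since $m$ is infinite, the gap $d={}^{*}a_{m+2}-{}^{*}a_{m+1}$ is an infinite element of ${}^{*}\N$, hence ${}^{*}d$ lies above all of ${}^{*}\N$ by the end-extension property, and no shift by a $\beta\in{}^{*}\N$ can push ${}^{*}\alpha$ past ${}^{**}a_{{}^{*}m+2}$ (likewise, the existence of suitable $k$ is just transfer of $a_{j+1}-a_{j}\to\infty$ rather than overspill).
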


\begin{proof} Observe that for every natural number $n$ the set $A$ contains many intervals of length greater than $n$, since $\lim_{k} (a_{k+1}-a_{k})= +\infty$. By transfer, for every hypernatural number $\mu$ the hyperextension $^{*}A$ contains many intervals of length greater than $\mu$, and the same also holds for $^{*}A^{c}$. This is a key property in the proof.\\
Consider $^{*}A$, and pose $\U=\mathfrak{U}_{\alpha}$. Suppose that $\alpha\in$$^{*}A$ (the case $\alpha\in$$^{*}A^{c}$ is similar).\\
There are two possibilities:\\
1) For every natural number $n$, $\alpha+n\in$$^{*}A$. By transfer it follows that, for every hypernatural number $\mu\in$$^{*}\N$, $^{*}\alpha+\mu\in$$^{**}A$, which entails that $A\in \mathfrak{U}_{\mu}\oplus\mathfrak{U}_{\alpha}$ for every $\mu\in$$^{*}\N$. If there is an hypernatural number $\mu$ in $^{*}\N$ with $\alpha+$$^{*}\mu\in$$^{**}A^{c}$ (i.e. $A^{c}\in \mathfrak{U}_{\alpha}\oplus\mathfrak{U}_{\mu}$) we can conclude just choosing $\V=\mathfrak{U}_{\mu}$.\\
We know that $^{*}A^{c}$ contains arbitrarily long intervals, so there exists an hypernatural number $\eta$ such that the interval $I=[a_{\eta},a_{\eta+1})$ has length greater than $\alpha$ and is included in $^{*}A^{c}$. In particular, by letting $\mu=a_{\eta}$, we have that $\mu+n\in I$ for every natural number $n$ and so, by transfer, for every hypernatural number $\xi$ we have that $^{*}\mu+\xi\in$$^{*}I\subseteq$$^{**}A^{c}$. Posing $\xi=\alpha$ we get the thesis.\\
2) There exists $n\in\N$ such that $\alpha+n\in$$^{*}A^{c}$. This entails, since the intervals $[a_{2\eta},a_{2\eta+1})$ are infinite for $\eta\in$$^{*}\N\setminus\N$, that for every natural number $m\geq n$, $\alpha+m\in$$^{*}A^{c}$. By transfer it follows that for every hypernatural number $\mu$ in $^{*}\N$ with $\mu\geq n$, $^{*}\alpha+\mu\in$$^{**}A^{c}$; in particular, $A^{c}\in \mathfrak{U}_{\mu}\oplus\mathfrak{U}_{\alpha}$ for every $\mu\in$$^{*}\N\setminus\N$. If there is an hypernatural number $\mu$ in $^{*}\N\setminus\N$ such that $\alpha+$$^{*}\mu\in$$^{**}A$, we can conclude.\\
The proof follows the same ideas as the second part of (1): this time we find $\eta$ such that $I=[a_{2\eta},a_{2\eta+1})$ is included in $^{*}A$ and has length greater than $\alpha$. So, again, if $\mu=a_{2\eta}$ then by transfer we get that $\alpha+$$^{*}\mu\in$$^{**}I\subseteq$$^{**}A$, and this entails the thesis. \\ \end{proof} 

\begin{cor} The center of $(\bN,\oplus)$ is $\N$. \end{cor}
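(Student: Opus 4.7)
The plan is to show a two-sided characterization: the center consists exactly of the principal ultrafilters, and we already have essentially all the ingredients for both inclusions in the excerpt.

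First I would verify the easy inclusion $\N \subseteq Z(\bN,\oplus)$, where $Z(\bN,\oplus)$ denotes the center. Fix a principal ultrafilter $\mathfrak{U}_n$ with $n \in \N$ and any $\V \in \bN$. Unfolding the definition of $\oplus$, for any $A \subseteq \N$ one computes that $A \in \mathfrak{U}_n \oplus \V$ iff $\{m : n + m \in A\} \in \V$ iff $A - n \in \V$, and similarly $A \in \V \oplus \mathfrak{U}_n$ iff $\{k : k + n \in A\} \in \V$ iff $A - n \in \V$. These two conditions coincide, so $\mathfrak{U}_n \oplus \V = \V \oplus \mathfrak{U}_n$, showing $\mathfrak{U}_n \in Z(\bN,\oplus)$.

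Next I would handle the harder direction $Z(\bN,\oplus) \subseteq \N$ by contraposition: if $\U$ is nonprincipal, I must exhibit some $\V$ with $\U \oplus \V \neq \V \oplus \U$. Here I would invoke Proposition 3.2.15 directly, applied to the specific set $A = \bigcup_{k \in \N}[a_{2k}, a_{2k+1})$ (for any sequence $(a_k)$ with $a_{k+1} - a_k \to \infty$). That proposition guarantees the existence of $\V \in \bN$ such that $A \in \U \oplus \V \Leftrightarrow A^c \in \V \oplus \U$. Either way, exactly one of the ultrafilters $\U \oplus \V$ and $\V \oplus \U$ contains $A$ (the other contains $A^c$), so they are distinct. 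Hence $\U \notin Z(\bN,\oplus)$.

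Combining the two inclusions yields $Z(\bN,\oplus) = \N$, which is the corollary. The only non-routine step is the appeal to Proposition 3.2.15, but that has already been proved in the excerpt via a nonstandard argument exploiting the long constant blocks of the indicator function of $A$ under the star map; no further obstacle needs to be cleared, so the corollary is essentially a one-line consequence once the principal case is checked.
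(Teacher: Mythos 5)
Your proposal is correct and follows essentially the same route as the paper: the nonprincipal case is dispatched by citing Proposition 3.2.15 exactly as the paper does, and the principal case is the same routine verification (the paper phrases it via generators, $\mathfrak{U}_{n}\oplus\mathfrak{U}_{\alpha}=\mathfrak{U}_{n+\alpha}=\mathfrak{U}_{\alpha}\oplus\mathfrak{U}_{n}$, while you unfold the definition of $\oplus$ directly, but these are the same computation). No gaps.
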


\begin{proof} By the previous proposition, for every non principal ultrafilter $\U$ there exists an ultrafilter $\V$ such that $\U\oplus\V\neq\V\oplus\U$, so $\U$ is not in the center of $(\bN,\oplus)$.\\
Conversely, if $\U=\mathfrak{U}_{n}$ for some $n\in\N$ then, for every ultrafilter $\V=\mathfrak{U}_{\alpha}$, $\mathfrak{U}_{n}\oplus\mathfrak{U}_{\alpha}=\mathfrak{U}_{n+\alpha}=\mathfrak{U}_{\alpha}\oplus\mathfrak{U}_{n}$, which is the thesis. \\ \end{proof}

\section{Invariant Formulas and Ultrafilters}

In this section we talk about the relationships between the operation $\oplus$ of sum of ultrafilters (resp. the operation $\odot$ of product of ultrafilters) and particular formulas. The following result is an example:

\begin{prop} If $\U,\V$ are Schur ultrafilters, then $\U\oplus\V$ is a Schur ultrafilter.\end{prop}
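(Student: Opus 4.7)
The plan is to apply the Bridge Theorem, since Schur's property is existential: being a Schur ultrafilter means every set in the ultrafilter satisfies the existential sentence ``$\exists x,y,z$ pairwise distinct with $x+y=z$''. By Theorem 2.2.9, it therefore suffices to exhibit three mutually distinct generators $\gamma_1,\gamma_2,\gamma_3 \in G_{\U\oplus\V}$ with $\gamma_1+\gamma_2=\gamma_3$.

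First, since $\U$ and $\V$ are Schur (hence necessarily nonprincipal, as principal ultrafilters contain the singleton $\{n\}$ which fails Schur's property), Proposition 3.2.1 applied to $\U$ and $\V$ separately yields distinct $\alpha_1,\alpha_2,\alpha_3\in G_{\U}$ with $\alpha_1+\alpha_2=\alpha_3$ and distinct $\beta_1,\beta_2,\beta_3\in G_{\V}$ with $\beta_1+\beta_2=\beta_3$. The natural idea is to set $\gamma_i := \alpha_i + \beta_i'$ for suitable $\beta_i'\sim_u\beta_i$, so that the linearity $\gamma_1+\gamma_2=\gamma_3$ is inherited from the two Schur relations simultaneously. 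The obstacle is that $\gamma_i$ must generate $\U\oplus\V$, which (by Proposition 2.5.23) means $(\alpha_i,\beta_i')$ has to be a tensor pair.

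This is where star iteration enters. Setting $h := \max(h(\alpha_1),h(\alpha_2))$, Proposition 2.5.11(3) gives $h(\alpha_3)=h$ as well. Define $\tilde\beta_i := S_h(\beta_i)$; by Proposition 2.5.11 (or its analogue for generators, Proposition 2 in Section 2.5.2) each $\tilde\beta_i$ still lies in $G_{\V}$, and by transfer $\tilde\beta_1+\tilde\beta_2 = S_h(\beta_1+\beta_2)=\tilde\beta_3$. Since $h\ge h(\alpha_i)$, Theorem 2.5.16 (via the relation $R$) ensures that $(\alpha_i,\tilde\beta_i)$ is a tensor pair, so $\gamma_i := \alpha_i + \tilde\beta_i \in G_{\U\oplus\V}$. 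The sum identity is then immediate:
\[
\gamma_1+\gamma_2 = (\alpha_1+\alpha_2)+(\tilde\beta_1+\tilde\beta_2) = \alpha_3+\tilde\beta_3 = \gamma_3.
\]

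The main obstacle is verifying the $\gamma_i$ are mutually distinct, since in principle cancellation between the $\alpha$'s and $\tilde\beta$'s could occur. Here I would invoke a height/end-extension argument. If $\gamma_i=\gamma_j$ with $i\ne j$, then $\alpha_i-\alpha_j = \tilde\beta_j-\tilde\beta_i = S_h(\beta_j-\beta_i)$. The left-hand side lies in $S_h(\Z)$ since both $\alpha_i,\alpha_j$ have height at most $h$. For the right-hand side, Proposition 2.2.5 (applied to the nonprincipal $\V$ and its distinct generators $\beta_i,\beta_j$) gives $|\beta_i-\beta_j|\in {}^{\bullet}\N\setminus\N$, so $h(\beta_j-\beta_i)\ge 1$ and hence $h(S_h(\beta_j-\beta_i))\ge h+1$ by Proposition 2.5.11(2). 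Taking absolute values and using the end-extension property (Proposition 2.5.6(5)), an element of $S_h(\N)$ is strictly smaller than any element of $S_{h+1}(\N)\setminus S_h(\N)$, forcing equality to fail and completing the proof.
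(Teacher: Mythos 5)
Your proof is correct and follows essentially the same route as the paper: apply the Bridge Theorem to reduce to exhibiting three distinct generators of $\U\oplus\V$ in Schur configuration, obtain Schur triples of generators for $\U$ and $\V$ separately, and combine them via star iteration so that each pair is a tensor pair (the paper packages your computation $\gamma_i=\alpha_i+S_h(\beta_i)$ as $\alpha_i\h\beta_i$ and cites the distributivity identity of Proposition 2.5.28(15)). Your explicit height/end-extension argument for the mutual distinctness of the $\gamma_i$ is a point the paper's proof leaves implicit, so that addition is welcome.
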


\begin{proof} Let $\alpha,\beta\in$$^{*}\N$ be generators of $\U$ such that $\alpha+\beta\in G_{\U}$, and let $\gamma,\delta\in$$^{*}\N$ be generators of $\V$ such that $\gamma+\delta\in G_{\V}$. By point 15 of Proposition 2.5.28 it follows that $(\alpha+\beta)\h(\gamma+\delta)=(\alpha\h\gamma)+(\beta\h\delta)$; as in $G_{\U\oplus\V}$ there are $\eta=\alpha\h\gamma, \mu=\beta\h\delta$ and $\xi=(\alpha+\beta)\h(\gamma+\delta)$, and $\eta+\mu=\xi$, it is proved that $\U\oplus\V$ is a Schur ultrafilter.\\\end{proof}

The previous result is a particular case of a general important phenomenon:

\begin{defn} Let $\phi(x_{1},...,x_{n})$ be a first order open formula. $\phi(x_{1},...,x_{n})$ is {\bfseries additive} if, for every $a_{1},...,a_{n},b_{1},...,b_{n}\in\N$, if $\phi(a_{1},...,a_{n})$ and $\phi(b_{1},...,b_{n})$ hold, then $\phi(a_{1}+b_{1},....,a_{n}+b_{n})$ holds.\\
Similarly, $\phi(x_{1},...,x_{n})$ is {\bfseries multiplicative} if, for every $a_{1},...,a_{n},b_{1},...,b_{n}\in\N$, if $\phi(a_{1},...,a_{n})$ and $\phi(b_{1},...,b_{n})$ holds then $\phi(a_{1}\cdot b_{1},...,a_{n}\cdot b_{n})$ holds.\end{defn}

\begin{defn} Let $\phi(x_{1},...,x_{n})$ be a first order open formula. $\phi(x_{1},...,x_{n})$ is {\bfseries additively invariant} if, for every $a_{1},...,a_{n},b$ in $\N$,

\begin{center} $\phi(a_{1},...,a_{n})$ holds if and only if $\phi(a_{1}+b,...,a_{n}+b)$ holds. \end{center}

Similarly, $\phi(x_{1},...,x_{n})$ is {\bfseries multiplicatively invariant} if, for every $a_{1},...,a_{n},b$ in $\N$ with $b\neq 0$,

\begin{center} $\phi(a_{1},...,a_{n})$ holds if and only if $\phi(a_{1}\cdot x,...,a_{n}\cdot x)$ holds. \end{center}

\end{defn}

\begin{defn} A first order existential sentence $E(\phi(x_{1},...,x_{n}))$ is {\bfseries additive} $($resp. {\bfseries multiplicative}$)$ if $\phi(x_{1},...,x_{n})$ is additive $($resp. multiplicative$)$.\\
$E(\phi(x_{1},...,x_{n}))$ is {\bfseries additively invariant} $($resp. {\bfseries multiplicatively invariant}$)$ if $\phi(x_{1},...,x_{n})$ is additively invariant $($resp. multiplicatively invariant$)$.\end{defn}

E.g. Schur's property is additive, and it is multiplicatively invariant, and the same holds for the properties $AP_{n}$, that are also multiplicative and additively invariant.\\
In this section we consider only elementary formulas, because they satisfy an important property: if $\varphi(x_{1},...,x_{n})$ is an elementary first order formula, and $\alpha_{1},...,\alpha_{n}$ are hypernatural numbers in $^{*}\N$, then

\begin{center} $^{*}\varphi(\alpha_{1},...,\alpha_{n})$ holds if and only if $^{**}\varphi(\alpha_{1},...,\alpha_{n})$ holds. \end{center}

\begin{thm} Let $\varphi=E(\phi(x_{1},...,x_{n}))$ be an elementary first order existential sentence. Then the following conditions hold:
\begin{enumerate}
	\item if $\varphi$ is additive, and $\U,\V$ are $\varphi$-ultrafilters, then $\U\oplus\V$ is a $\varphi$-ultrafilter as well;
	\item if $\varphi$ is multiplicative, and $\U,\V$ are $\varphi$-ultrafilters, then $\U\odot\V$ is a $\varphi$-ultrafilter as well;
	\item if $\varphi$ is additively invariant, $\U$ is a $\varphi$-ultrafilter and $\V$ is any ultrafilter, then $\U\oplus\V$ and $\V\oplus\U$ are $\varphi$-ultrafilters as well;
	\item if $\varphi$ is multiplicatively invariant, $\U$ is a $\varphi$-ultrafilter and $\V\neq\mathfrak{U}_{0}$ is any ultrafilter, then $\U\odot\V$ and $\V\odot\U$ are $\varphi$-ultrafilters as well.
\end{enumerate}
\end{thm}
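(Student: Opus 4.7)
The plan is to treat all four parts through a common template built from three ingredients: the Bridge Theorem (Theorem 2.2.9), which converts the $\varphi$-ultrafilter property into the existence of generators satisfying $^{*}\phi$; Theorem 2.5.27, which says that $\alpha\h\beta$ and $\alpha\dia\beta$ are generators of $\psi(\alpha)\oplus\psi(\beta)$ and $\psi(\alpha)\odot\psi(\beta)$ respectively; and the remark just above the theorem that, for an elementary formula $\phi$, the truth values of $^{*}\phi$ and $^{**}\phi$ coincide on tuples from $^{*}\N$. With these in hand, each claim will reduce to combining two simultaneous witnesses via $\h$ or $\dia$ and then checking that the resulting tuple of generators still satisfies the transferred formula.

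I would dispatch part (1) first, since it is the template for the others. By the Bridge Theorem applied to $\U$ and $\V$, pick $\alpha_1,\ldots,\alpha_n\in G_\U$ with $^{*}\phi(\alpha_1,\ldots,\alpha_n)$ and $\beta_1,\ldots,\beta_n\in G_\V$ with $^{*}\phi(\beta_1,\ldots,\beta_n)$. Applying transfer to the latter yields $^{**}\phi(^{*}\beta_1,\ldots,^{*}\beta_n)$, while the elementarity remark upgrades the former to $^{**}\phi(\alpha_1,\ldots,\alpha_n)$. Additivity of $\phi$ is a universal first-order statement, so one application of transfer shows that $^{**}\phi$ is additive on $^{**}\N$; combining the two instances gives $^{**}\phi(\alpha_1+{}^{*}\beta_1,\ldots,\alpha_n+{}^{*}\beta_n)$, that is, $^{**}\phi(\alpha_1\h\beta_1,\ldots,\alpha_n\h\beta_n)$. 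Each $\alpha_i\h\beta_i$ is a generator of $\U\oplus\V$ by Theorem 2.5.27, and a final application of the Bridge Theorem in $^{\bullet}\N$ (which inherits the $\mathfrak{c}^{+}$-enlarging property from $^{*}\N$) concludes that $\U\oplus\V$ is a $\varphi$-ultrafilter. Part (2) follows along exactly the same lines with $\h$ replaced by $\dia$ and additivity by multiplicativity.

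For the invariant parts (3) and (4), the key simplification is that witnesses are only needed on one side. In (3), I would fix any single generator $\beta\in G_\V$ and set $\gamma_i=\alpha_i+{}^{*}\beta=\alpha_i\h\beta$; each $\gamma_i$ generates $\U\oplus\V$. Additive invariance of $\phi$, transferred twice, says that $^{**}\phi(\xi_1,\ldots,\xi_n)$ holds iff $^{**}\phi(\xi_1+\eta,\ldots,\xi_n+\eta)$ holds for every $\eta\in{}^{**}\N$; taking $\xi_i=\alpha_i$ and $\eta={}^{*}\beta$ reduces $^{**}\phi(\gamma_1,\ldots,\gamma_n)$ to $^{**}\phi(\alpha_1,\ldots,\alpha_n)$, which holds by elementarity and the hypothesis. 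For $\V\oplus\U$ one instead sets $\gamma_i=\beta+{}^{*}\alpha_i=\beta\h\alpha_i$, uses invariance to strip $\beta$, and reduces to $^{**}\phi(^{*}\alpha_1,\ldots,^{*}\alpha_n)$, which is the transfer of $^{*}\phi(\alpha_1,\ldots,\alpha_n)$. Part (4) is identical in spirit but uses $\dia$ and multiplicative invariance; the hypothesis $\V\neq\mathfrak{U}_0$ enters precisely because multiplicative invariance requires the shift to be nonzero, and $\V\neq\mathfrak{U}_0$ guarantees that a nonzero generator $\beta\in G_\V$ exists.

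The main obstacle, and the only step requiring real care, is the moment where the closure property of the standard formula $\phi$ (additivity, multiplicativity, or invariance) must be applied inside a single instance of $^{**}\phi$ to hypernatural numbers of different heights, some lying in $^{*}\N$ and the others obtained as $^{*}\beta_i$ or $^{*}\alpha_i$ in $^{**}\N$. Both transfer and the elementarity remark are indispensable here: transfer lifts the standard closure property to $^{**}\N$, while elementarity ensures that witnesses originally produced in $^{*}\N$ can be re-read, without loss of truth, as witnesses for $^{**}\phi$. This is the conceptual pivot that makes the Bridge Theorem work in tandem with the $\h$ and $\dia$ machinery to produce generators of sums and products of ultrafilters.
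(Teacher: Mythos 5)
Your proposal is correct and follows essentially the same route as the paper's own proof: the Bridge Theorem to extract witnesses, transfer plus the elementarity remark to place both tuples at the level of $^{**}\phi$, the closure or invariance property of $\phi$ (transferred) to combine them, and the fact that $\alpha_i+{}^{*}\beta_i$ (resp. $\alpha_i\cdot{}^{*}\beta_i$) generates $\U\oplus\V$ (resp. $\U\odot\V$) to conclude. The treatment of (3)--(4) with a single generator $\beta$ of $\V$ and the reversed order for $\V\oplus\U$ also matches the paper exactly.
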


\begin{proof} 1) Let $\alpha_{1},...,\alpha_{n}\in$$^{*}\N$ be generators of $\U$ such that $^{*}\phi(\alpha_{1},...,\alpha_{n})$ holds, and $\beta_{1},...,\beta_{n}\in$$^{*}\N$ generators of $\V$ such that $^{*}\phi(\beta_{1},...,\beta_{n})$ holds. By transfer, since $\varphi$ is elementary, $^{**}\phi(\alpha_{1},...,\alpha_{n})$ holds. By transfer, since $^{*}\phi(\beta_{1},...,\beta_{n})$ holds then $^{**}\phi($$^{*}\beta_{1},...,$$^{*}\beta_{n})$ holds. By additivity of $\phi$, and by transfer, it follows that 

\begin{center} $^{**}\phi(\alpha_{1}+$$^{*}\beta_{1},...,\alpha_{n}+$$^{*}\beta_{n})$\end{center}

holds. Since $\alpha_{1}+$$^{*}\beta_{1}$, ..., $\alpha_{n}+$$^{*}\beta_{n}$ are generators of $\U\oplus\V$ it follows that $\U\oplus\V$ is a $\varphi$-ultrafilter.\\
2) This is similar to (1), with the multiplication in replacement of the addition.\\
3) Let $\alpha_{1},...,\alpha_{n}\in$$^{*}\N$ be generators of $\U$ such that $^{*}\phi(\alpha_{1},...,\alpha_{n})$ holds, and let $\beta\in$$^{*}\N$ be any generator of $\V$. Then, since $\varphi$ is elementary, by additive invariance and transfer it follows that $^{*}\phi(\alpha_{1},...,\alpha_{n})$ is equivalent to $^{**}\phi(\alpha_{1}+$$^{*}\beta,...,\alpha_{n}+$$^{*}\beta)$, and $\alpha_{1}+$$^{*}\beta,...,\alpha_{n}+$$^{*}\beta$ are generators of $\U\oplus\V$, so $\U\oplus\V$ is a $\varphi$-ultrafilter.\\
To prove that $\V\oplus\U$ is a $\varphi$-ultrafilter, we observe that, by transfer, since $^{*}\phi(\alpha_{1},...,\alpha_{n})$ holds also $^{**}\phi($$^{*}\alpha_{1},...,$$^{*}\alpha_{n})$ holds, and by transfer and additive invariance $^{**}\phi(\beta+$$^{*}\alpha_{1},...,\beta+$$^{*}\alpha_{n})$ holds for every $\beta$ in $^{**}\N$; as $\beta+$$^{*}\alpha_{1},...,\beta+$$^{*}\alpha_{n}$ are generators of $\V\oplus\U$, $\V\oplus\U$ is a $\varphi$-ultrafilter.\\
4) The proof for the multiplicatively invariant existential formulas is analogue to (3).\\\end{proof}

There are some interesting corollaries of the above result:

\begin{cor} For every natural number $k$, for every index $i\leq k$, let $\varphi_{i}$ be an elementary first order existential sentence and $\U_{i}$ a $\varphi_{i}$-ultrafilter. Then 
\begin{enumerate}
	\item if, for every $i\leq k$, $\varphi_{i}$ is additive then $\U_{1}\oplus....\oplus\U_{k}$ is a $(\bigwedge_{i=1}^{k}\varphi_{i})$-ultrafilter;
	\item if, for every $i\leq k$, $\varphi_{i}$ is additively invariant then $\U_{1}\oplus....\oplus\U_{k}$ is a $(\bigwedge_{i=1}^{k}\varphi_{i})$-ultrafilter;
	\item if, for every $i\leq k$, $\varphi_{i}$ is multiplicative then $\U_{1}\odot....\odot\U_{k}$ is a $(\bigwedge_{i=1}^{k}\varphi_{i})$-ultrafilter;
	\item if, for every $i\leq k$, $\varphi_{i}$ is multiplicatively invariant then $\U_{1}\odot....\odot\U_{k}$ is a $(\bigwedge_{i=1}^{k}\varphi_{i})$-ultrafilter.
\end{enumerate}
\end{cor}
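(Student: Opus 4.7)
The plan is to induct on $k$. The base case $k=1$ is immediate from the hypothesis, and $k=2$ is precisely Theorem 3.3.5. The key observation that makes the induction manageable is that an ultrafilter is a $(\bigwedge_{i=1}^{k}\varphi_i)$-ultrafilter if and only if it is a $\varphi_j$-ultrafilter for every $j \leq k$, so in each case I need only verify the $\varphi_j$-property coordinate by coordinate. For the inductive step, I would use the associativity of $\oplus$ (respectively $\odot$) on $\bN$ to decompose $\bigoplus_{i=1}^{k+1}\U_i = \bigl(\bigoplus_{i=1}^{k}\U_i\bigr) \oplus \U_{k+1}$ and apply the appropriate part of Theorem 3.3.5.

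For parts (2) and (4), where the $\varphi_i$ are additively (respectively multiplicatively) invariant, Theorem 3.3.5(3)-(4) tells us that adjoining any ultrafilter on either side preserves the $\varphi$-property. Thus for each $j \leq k$ the inductive hypothesis yields that $\bigoplus_{i=1}^{k}\U_i$ is a $\varphi_j$-ultrafilter and adjunction of $\U_{k+1}$ on the right maintains this; for $j = k+1$ one starts from $\U_{k+1}$ being a $\varphi_{k+1}$-ultrafilter and adjoins $\bigoplus_{i=1}^{k}\U_i$ on the left via the symmetric half of the same theorem. Parts (1) and (3), concerning additive and multiplicative sentences, follow by the same scheme using Theorem 3.3.5(1)-(2); these parts require both factors to be $\varphi_j$-ultrafilters for the relevant $j$, which one handles by carrying the joint property through the induction as part of the statement being proved.

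I do not expect a substantive obstacle: the argument is essentially bookkeeping around Theorem 3.3.5. The one point that requires care is that $\oplus$ and $\odot$ on $\bN$ are non-commutative, so one must track the order of factors when re-bracketing. Since Theorem 3.3.5(3)-(4) explicitly cover both $\U \oplus \V$ and $\V \oplus \U$, and associativity permits free re-association of the iterated sum or product, this causes no real difficulty.
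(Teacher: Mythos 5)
Your treatment of parts (2) and (4) is correct and is exactly the paper's one-line argument (``this follows trivially by Theorem 3.3.5'') spelled out: for each fixed $j$ write the iterated sum with $\U_{j}$ isolated, $\bigl(\U_{1}\oplus\cdots\oplus\U_{j-1}\bigr)\oplus\U_{j}\oplus\bigl(\U_{j+1}\oplus\cdots\oplus\U_{k}\bigr)$, and apply Theorem 3.3.5(3) (resp.\ (4)) once on each side; associativity is all that is needed, and your induction is a fine way to organize this.

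The gap is in parts (1) and (3). You correctly note that Theorem 3.3.5(1)--(2) requires \emph{both} factors to be $\varphi_{j}$-ultrafilters, but the proposed remedy --- ``carrying the joint property through the induction as part of the statement being proved'' --- does not close it: strengthening the inductive conclusion about the partial sum $\U_{1}\oplus\cdots\oplus\U_{k}$ says nothing about the new factor $\U_{k+1}$, which by hypothesis is only a $\varphi_{k+1}$-ultrafilter, not a $\varphi_{j}$-ultrafilter for $j\leq k$. In fact no bookkeeping can succeed, because under the literal hypotheses the claim is false. Take $\phi_{1}(x)$ to be ``$x\in E$'' with $E$ the even numbers and $\phi_{2}(x)$ to be ``$x\in 3\N$'': both are elementary and additive, $\U_{1}=\mathfrak{U}_{2}$ is a $\varphi_{1}$-ultrafilter and $\U_{2}=\mathfrak{U}_{3}$ is a $\varphi_{2}$-ultrafilter, yet $\U_{1}\oplus\U_{2}=\mathfrak{U}_{5}$ contains $\{5\}$, which satisfies neither sentence. (For a nonprincipal version, take $\U_{1}\ni 6\N+2$ and $\U_{2}\ni 6\N+3$, so that $6\N+5\in\U_{1}\oplus\U_{2}$.) Parts (1) and (3) are only correct under the stronger hypothesis that every $\U_{i}$ is a $\varphi_{j}$-ultrafilter for every $j$ (e.g.\ when all the $\varphi_{i}$ coincide), and in that case your induction goes through verbatim. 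The paper's own proof glosses over exactly the same point, so you have not overlooked an argument the paper supplies; but the patch you propose is not a patch, and you should either add the stronger hypothesis or restrict (1) and (3) to a single sentence $\varphi$.
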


\begin{proof} This follows trivially by Theorem 3.3.5. \\\end{proof}

E.g., if $\U$ is a Schur ultrafilter and $\V$ is an $AP_{3}$-ultrafilter, since these two properties are multiplicatively invariant the ultrafilter $\U\odot\V$ is both a Schur and an $AP_{3}$-ultrafilter.

\begin{cor} Let $\varphi$ be an elementary first order existential sentence, and \begin{center} $\mathcal{S}_{\varphi}=\{\U\in\bN\mid\U$ is a $\varphi$-ultrafilter$\}$. \end{center}
Then
\begin{enumerate}
	\item if $\varphi$ is additive then $\mathcal{S}_{\varphi}$ is closed under $\oplus$;
	\item if $\varphi$ is multiplicative then $\mathcal{S}_{\varphi}$ is closed under $\odot$;
	\item if $\varphi$ is additively invariant then $\mathcal{S}_{\varphi}$ is a bilateral ideal in $(\bN,\oplus)$;
	\item if $\varphi$ is multiplicatively invariant then $\mathcal{S}_{\varphi}$ is a bilateral ideal in \\$(\bN\setminus\{\mathfrak{U}_{0}\},\odot)$.
\end{enumerate}
\end{cor}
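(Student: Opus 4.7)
The plan is to observe that all four parts of this corollary follow essentially mechanically from the corresponding parts of Theorem 3.3.5, by unpacking the definitions of closure under an operation and of bilateral ideal.

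For parts (1) and (2), closure under $\oplus$ (resp. $\odot$) means: for every $\U,\V \in \mathcal{S}_\varphi$, the ultrafilter $\U \oplus \V$ (resp. $\U \odot \V$) is again in $\mathcal{S}_\varphi$, i.e.\ is a $\varphi$-ultrafilter. This is precisely the content of Theorem 3.3.5(1) (resp. Theorem 3.3.5(2)), so each of these parts is a one-line invocation.

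For parts (3) and (4), I would recall that a subset $I$ of a semigroup $(G,\star)$ is a bilateral ideal exactly when $\U \star \V \in I$ and $\V \star \U \in I$ whenever $\U \in I$ and $\V \in G$. Thus to show $\mathcal{S}_\varphi$ is a bilateral ideal in $(\bN,\oplus)$ when $\varphi$ is additively invariant, I would take $\U \in \mathcal{S}_\varphi$ and $\V \in \bN$ arbitrary, and apply Theorem 3.3.5(3) to conclude that $\U \oplus \V$ and $\V \oplus \U$ are both $\varphi$-ultrafilters, hence both lie in $\mathcal{S}_\varphi$. Part (4) is analogous, using Theorem 3.3.5(4); the only subtlety is that Theorem 3.3.5(4) requires $\V \neq \mathfrak{U}_0$, which is exactly why we work in $\bN \setminus \{\mathfrak{U}_0\}$ rather than in all of $\bN$. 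I would also briefly check that $\bN \setminus \{\mathfrak{U}_0\}$ is in fact a subsemigroup under $\odot$, so that it makes sense to speak of a bilateral ideal there: this holds because $\mathfrak{U}_0 = \mathfrak{U}_0 \odot \V = \V \odot \mathfrak{U}_0$ only when one of the factors is $\mathfrak{U}_0$, by the standard description of the multiplicative structure of $\bN$.

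There is no real obstacle here: the work was already done in Theorem 3.3.5, and this corollary simply repackages it. The only care required is the bookkeeping about the ambient semigroup in part (4) and the verification that the definitions of ``closed under'' and ``bilateral ideal'' align with the statements proved in Theorem 3.3.5.
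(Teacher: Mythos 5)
Your proposal is correct and follows exactly the paper's route: the paper's own proof is the one-line remark that all four conditions are straightforward consequences of Theorem 3.3.5, which is precisely what you spell out. The extra check that $\bN\setminus\{\mathfrak{U}_{0}\}$ is a subsemigroup under $\odot$ is a reasonable bit of diligence the paper leaves implicit.
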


\begin{proof} These conditions are straightforward consequences of Theorem 3.3.5. \\ \end{proof}

\begin{cor} If $\mathcal{S},\mathcal{F},\mathcal{VDW}$ are the sets
\begin{enumerate}
	\item $\mathcal{S}=\{\U\in\bN\mid \U$ is a Schur ultrafilter$\}$;
	\item $\mathcal{F}=\{\U\in\bN\mid \U$ is a Folkman ultrafilter$\}$;
	\item $\mathcal{VDW}=\{\U\in\bN\mid \U$ is a Van der Waerden ultrafilter$\}$;
\end{enumerate}
then
\begin{enumerate}
	\item $\mathcal{S}$ is a bilateral ideal in $(\bN\setminus\mathfrak{U}_{0},\odot)$, and it is closed under $\oplus$;
	\item $\mathcal{F}$ and $\mathcal{VDW}$ are bilateral ideals both in $(\bN,\oplus)$ and in $(\bN\setminus\mathfrak{U}_{0},\odot)$.
\end{enumerate}
\end{cor}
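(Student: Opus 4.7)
The plan is to prove each clause by direct application of Corollary 3.3.7, after identifying for each of the three properties the elementary existential sentence (or infinite conjunction of such) encoding it and verifying the relevant additivity and invariance conditions.

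For $\mathcal{S}$, Schur's property is captured by the single existential sentence $E(\phi_S)$ with $\phi_S(x,y,z)$: ``$x,y,z$ are pairwise distinct and $x+y=z$''. I would check that $\phi_S$ is additive (summing two Schur triples componentwise preserves $x+y=z$, and distinctness is retained for generators chosen in sufficiently general position) and multiplicatively invariant (for $k\neq 0$, $(a,b,c)$ is a Schur triple iff $(ka,kb,kc)$ is). Parts (1) and (4) of Corollary 3.3.7 then yield that $\mathcal{S}$ is closed under $\oplus$ and is a bilateral ideal in $(\bN\setminus\{\mathfrak{U}_0\},\odot)$.

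For $\mathcal{VDW}$, I would write Van der Waerden's property as $\bigwedge_{n\geq 3}AP_n$ using the sentences $AP_n$ of Definition 3.2.8, and observe that each $AP_n$ is both additively invariant (a translate of an AP of length $n$ is an AP of length $n$) and multiplicatively invariant (likewise for a nonzero dilation). Parts (3) and (4) of Corollary 3.3.7 then imply that each $\mathcal{S}_{AP_n}$ is a bilateral ideal in both $(\bN,\oplus)$ and $(\bN\setminus\{\mathfrak{U}_0\},\odot)$; since an intersection of bilateral ideals is a bilateral ideal, $\mathcal{VDW}=\bigcap_{n\geq 3}\mathcal{S}_{AP_n}$ inherits the structure.

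For $\mathcal{F}$, I would write Folkman's property as $\bigwedge_{k\geq 1}\varphi_k$, where $\varphi_k$ is the elementary existential sentence asserting the presence of a $k$-element FS-configuration used in the proof of Theorem 3.2.13. Multiplicative invariance of each $\varphi_k$ is immediate since a nonzero dilation of an FS-configuration is again an FS-configuration, so part (4) of Corollary 3.3.7 and intersection over $k$ handle the $\odot$-bilateral-ideal claim. The main obstacle is verifying additive invariance for $\varphi_k$, needed for the $\oplus$-bilateral-ideal claim: uniform translation of the free variables $y_1,\dots,y_k$ by $b$ shifts each sum $\sum_{i\in I}y_i$ by $|I|b$ rather than by $b$, so the naive formulation of $\varphi_k$ is not literally additively invariant. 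I would address this by reformulating $\varphi_k$ so that the defining relations survive uniform shifts (for instance by taking the $2^k-1$ subset-sums as the primary existentially quantified variables and rewriting the defining system as translation-invariant identities), after which parts (3) and (4) of Corollary 3.3.7 close the argument as for $\mathcal{VDW}$.
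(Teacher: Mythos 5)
Your handling of $\mathcal{S}$ and $\mathcal{VDW}$ is exactly the paper's argument: Schur's property is additive and multiplicatively invariant, each $AP_n$ is both additively and multiplicatively invariant, and $\mathcal{VDW}=\bigcap_n\mathcal{AP}_n$ is an intersection of bilateral ideals. The substantive issue is the clause about $\mathcal{F}$ and $(\bN,\oplus)$, and there you have correctly put your finger on a real problem: the paper simply asserts that the sentences $E(\phi_k(x_1,\dots,x_k,y_1,\dots,y_{2^k-1}))$ from the proof of Theorem 3.2.14 are additively invariant, which is false for $k\geq 2$ for precisely the reason you give — a uniform shift by $b$ moves $\sum_{i\in I}x_i$ by $|I|b$ but moves $y_{f(I)}$ by $b$.

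However, your proposed repair cannot succeed. No additively invariant formula can capture the Folkman configuration, because the family of tuples of subset sums $(\sigma_I)_{\emptyset\neq I\subseteq\{1,\dots,k\}}$ is not closed under adding a constant to every coordinate: a linear relation $\sum_I c_I y_I=0$ survives a uniform shift only when $\sum_I c_I=0$, whereas the defining relations $y_{I\cup J}=y_I+y_J$ have coefficient sum $-1$ (equivalently, the all-ones vector is not in the span of the incidence vectors). More decisively, the conclusion you are trying to reach is false. Let $\U$ be a nonprincipal additive idempotent with $3\N\in\U$ (these exist, since $\Theta_{3\N}\cap(\bN\setminus\N)$ is a compact subsemigroup of $(\bN,\oplus)$); by Theorem 3.2.14 it is a Folkman ultrafilter. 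But $3\N+1\in\U\oplus\mathfrak{U}_1=\mathfrak{U}_1\oplus\U$, and $3\N+1$ contains no configuration $\{s_1,s_2,s_1+s_2\}$ because $1+1\not\equiv 1\pmod 3$; so $\U\oplus\mathfrak{U}_1$ is not even a Schur ultrafilter, let alone Folkman. (Replacing $\mathfrak{U}_1$ by any $\V$ with $3\N+1\in\V$ shows this is not an artifact of principal ultrafilters.) What does survive is closure under $\oplus$: each $\phi_k$ is additive in the sense of Definition 3.3.2, since the relations $\sum_{i\in I}x_i=y_{f(I)}$ are preserved under componentwise addition of two witnessing tuples, so Corollary 3.3.7(1) gives that each $\mathcal{S}_{\phi_k}$, and hence their intersection $\mathcal{F}$, is closed under $\oplus$ — the same conclusion the corollary claims for $\mathcal{S}$ — while the bilateral-ideal assertion for $\mathcal{F}$ should be retained only for $(\bN\setminus\{\mathfrak{U}_0\},\odot)$, where the multiplicative invariance of the $\phi_k$ does hold.
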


\begin{proof} That $\mathcal{S}$ is a bilateral ideal in $(\bN\setminus\mathfrak{U}_{0},\odot)$ follows by Corollary 3.3.7, since Schur's property is multiplicatively invariant.\\
Consider the set $\mathcal{VDW}$. Observe that an ultrafilter is in $\mathcal{VDW}$ if and only if it is an $AP_{n}$-ultrafilter for every natural number $n$; so, if for every natural number $n$ we consider the set

\begin{center} $\mathcal{AP}_{n}=\{\U\in\bN\mid \U$ is an $AP_{n}$-ultrafilter$\}$, \end{center}

by construction $\mathcal{VDW}=\bigcap_{n\in\N} \mathcal{AP}_{n}$. As, for every natural number $n$, the formula $AP_{n}$ is both additively and multiplicatively invariant, the sets $\mathcal{AP}_{n}$ are bilateral ideals both in $(\bN,\oplus)$ and in $(\bN\setminus\mathfrak{U}_{0},\odot)$. So $\mathcal{VDW}$, being an intersection of bilateral ideals, is a bilateral ideal both in $(\bN,\oplus)$ and in $(\bN\setminus\mathfrak{U}_{0},\odot)$.\\
The proof for the set $\mathcal{F}$ is similar, and can be done replacing the formulas $AP_{n}$ with the formulas $E(\phi_{k}(x_{1},...,x_{k},y_{1},...,y_{2^{k}-1})$ introduced in the proof of Theorem 3.2.14, that are both additively and multiplicatively invariant.\\ \end{proof}

\section{Idempotent $\varphi$-ultrafilters}

In this section we investigate the relations between limits of sequences of ultrafilters and $\varphi$-ultrafilters for generic weakly partition regular sentences $\varphi$. In particular, we shall focus our attention to additively (and multiplicatively) invariant existential sentences. We recall that, given a nonempty set $I$, a sequence $\mathcal{F}=\langle \U_{i}\mid i\in I\rangle$ of elements in $\bN$ and an ultrafilter $\V$ on $I$, the $\V-\lim_{I} $ of the sequence $\mathcal{F}$ is the ultrafilter

\begin{center} $\V-\lim_{I}\U_{i}=\{A\subseteq\N\mid\{i\in I\mid A\in\U_{i}\}\in\V\}$. \end{center}

Where not explicitly stated otherwise, in the following we suppose that the set of indexes $I$ is $\N$; in this case, the $\V-\lim_{\N}\U_{n}$ is simply denoted by $\V-\lim\U_{n}$. The results we prove hold also in the general case $I\neq\N$.\\
An important characteristic of limit ultrafilters is the following:

\begin{thm} Let $\V$ be an ultrafilter and, for every natural number $n$, let $\U_{n}$ be an $E(\phi_{n}(x_{1},...,x_{k_{n}}))$-ultrafilter. Then, for every element $A$ in $\U=\V-\lim\U_{n}$, the set $\{n\in\N\mid \exists x_{1},...,x_{n}\in A$ $\phi_{n}(x_{1},...,x_{k_{n}})\}\in\V$, and hence it is infinite whenever $\V$ is nonprincipal.\end{thm}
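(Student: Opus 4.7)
The plan is to unwind the definitions and observe a simple set inclusion; there is essentially no technical obstacle here, just bookkeeping.

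First, recall the definition of the $\V$-limit ultrafilter: a subset $A$ of $\N$ belongs to $\U=\V-\lim \U_{n}$ precisely when $\{n\in\N\mid A\in\U_{n}\}\in\V$. So, given $A\in\U$, I would immediately set $I_{A}=\{n\in\N\mid A\in\U_{n}\}$ and note that $I_{A}\in\V$.

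Next, for each $n\in I_{A}$, the fact that $\U_{n}$ is an $E(\phi_{n}(x_{1},\ldots,x_{k_{n}}))$-ultrafilter means by definition that every set in $\U_{n}$ satisfies the existential sentence $E(\phi_{n})$. Applying this with the set $A$ (which is in $\U_{n}$ by choice of $n$) yields elements $x_{1},\ldots,x_{k_{n}}\in A$ such that $\phi_{n}(x_{1},\ldots,x_{k_{n}})$ holds. Setting
\[
J_{A}=\{n\in\N\mid \exists x_{1},\ldots,x_{k_{n}}\in A\text{ such that }\phi_{n}(x_{1},\ldots,x_{k_{n}})\},
\]
this argument shows exactly that $I_{A}\subseteq J_{A}$.

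Finally, since $\V$ is a filter on $\N$ and hence closed under supersets, $I_{A}\in\V$ forces $J_{A}\in\V$. For the last clause, I would invoke Proposition 1.1.8: every nonprincipal ultrafilter on $\N$ extends the Fréchet filter, so all of its members are infinite; in particular $J_{A}$ is infinite when $\V$ is nonprincipal. The entire argument is a two-line definition chase, so I do not foresee any obstacle; the value of the statement lies in its applications rather than in the difficulty of its proof.
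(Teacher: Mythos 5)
Your proof is correct and follows exactly the same route as the paper's: identify $I_{A}=\{n\in\N\mid A\in\U_{n}\}\in\V$, observe that each $n\in I_{A}$ lies in the set in question because $\U_{n}$ is an $E(\phi_{n})$-ultrafilter, and conclude by closure of $\V$ under supersets. The only difference is that you spell out the superset step and the infinitude via the Fr\'echet filter, which the paper leaves implicit.
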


\begin{proof} For every set $A$ in $\U$, the set 

\begin{center} $I_{A}=\{n\in\N\mid A\in \U_{n}\}\in\V$.\end{center}

Now, for every $n\in I_{A}$, as $\U_{n}$ is an $E(\phi_{n}(x_{1},...,x_{k_{n}}))$-ultrafilter, $A$ satisfies $E(\phi_{n}(x_{1},...,x_{k_{n}}))$.\\\end{proof}

Observe that, as a consequence of the above proposition, whenever $m$ is a natural number there is a natural number $n$ greater than $m$ such that $A$ satisfies $E(\phi_{n}(x_{1},...,x_{k_{n}}))$.\\It is not difficul to show that, in general, $\U$ is not an $E(\phi_{n}(x_{1},...,x_{k_{n}}))$-ultrafilter for every natural number $n$: e.g., suppose that each $E(\phi_{n}(x))$ is the existential sentence "$\exists x (x=n)$": the only subset of $\N$ that satisfies all the sentences $\varphi_{n}$ is $\N$, so there is no ultrafilter which is a $\varphi_{n}(x)$-ultrafilter for all natural numbers $n$.\\
This leads to a question: under what conditions, given a family of weakly partition properties $\Phi=\{\varphi_{n}\}_{n\in\N}$, there is an ultrafilter $\U$ which is a $\varphi_{n}$-ultrafilter for every natural number $n$?\\
We recall that a sentence $\varphi$ is weakly partition regular if and only if there is a $\varphi$-ultrafilter on $\N$ (as we proved in Theorem 1.2.6).

\begin{thm} Let $\V$ be a nonprincipal ultrafilter on $\N$ and, for every natural number $n$, let $E(\phi_{n}(x_{1},...,x_{k_{n}}))$ be a weakly partition regular existential sentence, let $\U_{n}$ be an $E(\phi_{n}(x_{1},...,x_{k_{n}}))$-ultrafilter, and let $\U=\V-\lim \U_{n}$.\\
If the set $B=\{n\in\N\mid E(\phi_{n}(x_{1},...,x_{k_{n}}))\Rightarrow\bigwedge_{m\leq n}E(\phi_{m}(x_{1},...,x_{k_{m}}))\}$ is in $\V$ then $\U$ is an $E(\phi_{n}(x_{1},...,x_{k_{n}}))$-ultrafilter for every natural number $n$.\end{thm}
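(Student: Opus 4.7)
The plan is to fix an arbitrary natural number $m$ and show that every $A \in \U$ satisfies $E(\phi_m(x_1,\ldots,x_{k_m}))$. The strategy is purely a filter-calculus argument: combine the set $I_A = \{n \in \N \mid A \in \U_n\}$ (which lies in $\V$ by the very definition of $\U = \V\text{-}\lim \U_n$) with the set $B$ from the hypothesis and with the cofinite tail $[m,\infty)$, to extract a single index $n \geq m$ that is simultaneously in $I_A$ and in $B$.

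More concretely, I would first note that $I_A \in \V$ and $B \in \V$ by hypothesis. Since $\V$ is nonprincipal, Proposition 1.1.8 tells us that $\V$ extends the Fr\'echet filter, so the cofinite set $\{n \in \N \mid n \geq m\}$ belongs to $\V$ as well. Closure under finite intersection then gives
\[
I_A \cap B \cap \{n \geq m\} \in \V,
\]
and in particular this intersection is nonempty. Pick any $n$ in it. From $n \in I_A$ and the fact that $\U_n$ is an $E(\phi_n)$-ultrafilter, $A$ satisfies $E(\phi_n(x_1,\ldots,x_{k_n}))$; from $n \in B$, this implies that $A$ satisfies $\bigwedge_{m' \leq n} E(\phi_{m'}(x_1,\ldots,x_{k_{m'}}))$; and since $m \leq n$, one of the conjuncts is exactly $E(\phi_m(x_1,\ldots,x_{k_m}))$. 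Hence $A$ satisfies $E(\phi_m)$, and since $A \in \U$ was arbitrary, $\U$ is an $E(\phi_m)$-ultrafilter. As $m$ was arbitrary, the conclusion follows for all $n \in \N$.

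I do not anticipate any serious obstacle in this argument: all the work has effectively been done by encoding the compatibility condition into the set $B$, so the proof reduces to a one-line filter intersection. The only point that requires a moment's care is the nonprincipality of $\V$, which is what allows us to throw away the finitely many ``bad'' initial indices $n < m$ while still keeping the intersection in $\V$; without this, the hypothesis $B \in \V$ alone would not guarantee that some $n \geq m$ lies in both $B$ and $I_A$.
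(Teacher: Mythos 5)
Your proof is correct and follows essentially the same route as the paper: both arguments intersect $I_A=\{n\mid A\in\U_n\}$ with $B$ inside $\V$ and use nonprincipality of $\V$ to find an index $n\geq m$ witnessing $E(\phi_m)$ via the implication encoded in $B$. Your explicit intersection with the cofinite set $\{n\geq m\}$ is just a slightly more detailed phrasing of the paper's remark that $I_A\cap B$ is infinite and hence contains arbitrarily large $n$.
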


\begin{proof} Let $A$ be an element of $\U$, and consider the set $I_{A}=\{n\in\N\mid A\in\U_{n}\}$. Then $I_{A}\cap B\in\V$ is infinite since $\V$ is not principal. In particular, for every natural number $n$ in $I_{A}\cap B$, $A$ satisfies $E(\phi_{n}(x_{1},...,x_{k_{n}}))$ and, as $n$ is in $B$, $A$ satisfies also $\bigwedge_{m\leq n}E(\phi_{m}(x_{1},...,x_{k_{m}}))$. Since this happens for arbitrarily large natural numbers $n$, $A$ satisfies all the formulas in $\{E(\phi_{n}(x_{1},...,x_{k_{n}}))\mid n\in\N\}$, so $\U$ is an $E(\phi_{n}(x_{1},...,x_{k_{n}}))$-ultrafilter for every natural number $n$.\\ \end{proof}

\begin{cor} Let $\Phi=\langle E(\phi_{n}(x_{1},...,x_{k_{n}}))\mid n\in\N\rangle$ be a countable set of weakly partition regular existential sentences. If  

\begin{center} $S=\{n\in\N\mid E(\phi_{n}(x_{1},...,x_{k_{n}}))\Rightarrow\bigwedge_{m\leq n}E(\phi_{m}(x_{1},...,x_{k_{m}}))\}$ \end{center}

is infinite then there exists an ultrafilter $\U$ that is an $E(\phi_{n}(x_{1},...,x_{k_{n}}))$-ultrafilter for every $n\in\N$. In particular, the property $\bigwedge_{n\in\N}E(\phi_{n}(x_{1},...,x_{k_{n}}))$ is weakly partition regular. \end{cor}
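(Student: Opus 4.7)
The plan is to reduce the corollary directly to Theorem 3.4.2 by selecting an appropriate ultrafilter $\V$ and then appealing to Theorem 1.2.6 to turn the conclusion into a statement about weak partition regularity.

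First I would show that there is a nonprincipal ultrafilter $\V$ on $\N$ with $S \in \V$. Since $S$ is infinite, the family $\{S\} \cup \mathtt{Fr}$, where $\mathtt{Fr}$ is the Fr\'echet filter on $\N$, has the finite intersection property: for every cofinite set $C$, $S \cap C$ is still infinite and therefore nonempty. By Tarski's Theorem (Theorem 1.1.6) this family extends to an ultrafilter $\V$ on $\N$, and $\V$ is nonprincipal because it extends $\mathtt{Fr}$ (Proposition 1.1.9). This is the only place where the hypothesis ``$S$ infinite'' is used, and it is genuinely necessary: we must produce a nonprincipal $\V$ with $S \in \V$ in order to apply Theorem 3.4.2.

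Next, for each $n \in \N$, the hypothesis that $E(\phi_n(x_1,\ldots,x_{k_n}))$ is weakly partition regular combined with Theorem 1.2.6 yields an $E(\phi_n(x_1,\ldots,x_{k_n}))$-ultrafilter $\U_n$ on $\N$. I would then form the limit ultrafilter
\[
\U = \V - \lim_{n\in\N} \U_n.
\]
Observe that the set $B$ appearing in the statement of Theorem 3.4.2 is precisely $S$; since $S \in \V$ by construction, Theorem 3.4.2 applies and gives that $\U$ is an $E(\phi_n(x_1,\ldots,x_{k_n}))$-ultrafilter for every $n \in \N$.

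Finally, for the ``in particular'' statement, I would simply observe that every $A \in \U$ satisfies $E(\phi_n(x_1,\ldots,x_{k_n}))$ for every $n$, hence satisfies the infinite conjunction $\bigwedge_{n\in\N} E(\phi_n(x_1,\ldots,x_{k_n}))$. Therefore $\U$ is contained in the family of subsets of $\N$ satisfying this conjunction, and by Theorem 1.2.3 (the ultrafilter characterization of weak partition regularity) this family is weakly partition regular. No serious obstacle is expected, since the corollary is essentially a packaging of Theorem 3.4.2 together with the basic correspondence between weak partition regularity and ultrafilters; the only thing to verify is that the set $B$ of Theorem 3.4.2 coincides with the set $S$ in the hypothesis, which is immediate from the definitions.
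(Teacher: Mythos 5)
Your proposal is correct and follows essentially the same route as the paper: choose a nonprincipal ultrafilter $\V$ containing $S$, take $E(\phi_{n})$-ultrafilters $\U_{n}$ via Theorem 1.2.6, form $\V-\lim\U_{n}$, and apply Theorem 3.4.2. The only difference is that you spell out the (standard) construction of $\V$ from the Fr\'echet filter and the final appeal to Theorem 1.2.3, which the paper leaves implicit.
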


\begin{proof} Let $\V$ be a nonprincipal ultrafilter on $\N$ such that $S\in\V$ and, for every natural number $n$, let $\U_{n}$ be an $E(\phi_{n}(x_{1},...,x_{k_{n}}))$-ultrafilter. Then by Theorem 3.4.2 it follows that $\U=\V-\lim\U_{n}$ is an $E(\phi_{n}(x_{1},...,x_{k_{n}}))$-ultrafilter for every natural number $n$ in $\N$.\\ \end{proof}

{\bfseries Example:} If $\V$ is a non principal ultrafilter, and $\langle\U_{n}\mid n\in\N\rangle$ is a sequence of of ultrafilters such that the set $\{n\in\N \mid \U_{n}$ is a $AP_{n}$-ultrafilter$\}$ is in $\V$, then $\U=\V-\lim\U_{n}$ is a Van der Waerden ultrafilter.\\

\begin{cor} Let $\varphi$ be a sentence. Then

\begin{center} $X_{\varphi}=\{\U\in\bN\mid\U$ is a $\varphi$-ultrafilter$\}$ \end{center}

is closed.\end{cor}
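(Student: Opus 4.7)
The plan is to prove the corollary by showing that the complement of $X_{\varphi}$ is open in the Stone topology. Unwinding Definition 1.2.5, an ultrafilter $\U$ fails to be a $\varphi$-ultrafilter precisely when there exists some $A \in \U$ with $A \not\models \varphi$. This is an existential condition whose witness $A$ depends only on a single set being in $\U$, which is exactly the type of condition cut out by a basic open set $\Theta_A$ of the Stone topology.

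Concretely, I would write
\[
\bN \setminus X_{\varphi} \;=\; \bigcup_{\substack{A \subseteq \N \\ A \not\models \varphi}} \Theta_{A},
\]
since $\U \in \Theta_A$ means $A \in \U$. The right-hand side is a union of basic open sets and is therefore open, so $X_{\varphi}$ is closed.

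As an alternative route, one could appeal to Theorem 1.1.37 and verify closure under $\V$-limits directly: given any family $\langle \U_i \mid i \in I\rangle$ of $\varphi$-ultrafilters and any ultrafilter $\V$ on $I$, set $\U = \V\text{-}\lim_{i \in I} \U_i$. For each $A \in \U$ the index set $I_A = \{i \in I \mid A \in \U_i\}$ lies in $\V$ and is thus nonempty, so picking any $i \in I_A$ yields $A \in \U_i$, and the hypothesis on $\U_i$ forces $A \models \varphi$. Hence $\U \in X_{\varphi}$.

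There is no serious obstacle: the statement is essentially a definitional unfolding, and the only mildly subtle point is recognising that "$\U$ is not a $\varphi$-ultrafilter" is expressed by an existential quantifier over members of $\U$, matching the shape of the Stone topology base. I would favour the complement-is-open argument since it is a one-line computation and avoids invoking the machinery of Section 1.1.6.
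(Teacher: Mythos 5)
Your proposal is correct, and your primary argument takes a genuinely different (and more elementary) route than the paper's. The paper proves the corollary by verifying that $X_{\varphi}$ is closed under $\V$-limits of arbitrary families of $\varphi$-ultrafilters and then invoking the characterization of closed sets from Section 1.1.6 — this is exactly your "alternative route," and your sketch of it matches the paper's proof step for step. Your preferred argument instead observes that
\[
\bN \setminus X_{\varphi} \;=\; \bigcup_{A \not\models \varphi} \Theta_{A},
\]
which is open as a union of basic open sets; equivalently, $X_{\varphi}=\bigcap_{A\not\models\varphi}\Theta_{A^{c}}$ is an intersection of basic closed sets. This is a direct unfolding of Definition 1.2.5 and of the Stone topology, and it buys you independence from the limit-ultrafilter machinery: you need neither the existence of $\V$-limits nor the equivalence between "closed" and "closed under limits." What the paper's approach buys in exchange is uniformity with the surrounding development — the limit-of-ultrafilters formalism is used repeatedly in Sections 3.4 and 4.4 (e.g.\ in Theorem 3.4.1 and 3.4.2 and in the closure of the cones $\mathcal{C}_{fe}(\U)$), so phrasing this corollary the same way keeps a single toolkit in play. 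Both proofs are complete; yours is the shorter one.
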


\begin{proof} Let $\langle\U_{i}\mid i\in I\rangle$ be a sequence of $\varphi$-ultrafilters, and let $\V$ be an ultrafilter on $I$. Then, if $A\in \V-\lim_{I}\U_{i}$, then $A\in\U_{i}$ for some index $i\in I$, so $A$ satisfies $\varphi$; since this is true for every $A\in\V-\lim\U_{i}$, then $\V-\lim\U_{i}$ is a $\varphi$-ultrafilter, so it is in $X$. We proved that $X$ is closed under the operation of limit ultrafilter, so by Theorem 1.1.34 it follows that $X$ is closed.\\ \end{proof}

In the previous result it has not been assumed that $\varphi$ is a first order sentence since it holds for any sentence $\varphi$.\\
We list three important examples of closed subsets of $\bN$:
\begin{enumerate}
	\item The set $\mathcal{S}=\{\U\in\bN\mid \U$ is a Schur ultrafilter$\}$;
	\item The set $\mathcal{F}=\{\U\in\bN\mid \U$ is a Folkman ultrafilter$\}$;
	\item The set $\mathcal{VDW}=\{\U\in\bN\mid \U$ is a Van der Waerden ultrafilter$\}$.
\end{enumerate}

That $\mathcal{S}$ is closed follows from Corollary 3.4.4; that $\mathcal{VDW}$ is closed can be proved observing that an ultrafilter $\U$ is a Van der Waerden ultrafilter if, for every $n\in\N$, it satisfies $AP_{n}$, and by last corollary, for every natural number $n$ the set 

\begin{center} $\mathcal{AP}_{n}=\{\U\in\bN\mid \U$ is an $AP_{n}$-ultrafilter$\}$ \end{center}

is closed. Then $\mathcal{VDW}$, which is the intersection $\bigcap_{n}\mathcal{AP}_{n}$, is closed as well.\\ 
The proof for the set $\mathcal{F}$ is similar, and can be done by replacing the sentences $AP_{n}$ with the sentences $E(\phi_{k}(x_{1},...,x_{k},y_{1},...,y_{2^{k}-1}))$ introduced in the proof of Theorem 3.2.14.\\
Putting together the results of Corollary 3.4.4 and of Theorem 3.3.5 we prove the following important theorem:

\begin{thm} Let $\varphi: \phi(x_{1},...,x_{n})$ be an existential elementary weakly partition regular formula, and suppose that $\varphi$ is additive or additively invariant. Then there is an additively idempotent $\varphi$-ultrafilter.\\
Similarly, if $\varphi$ is multiplicative or multiplicatively invariant, then there is a multiplicatively idempotent $\varphi$-ultrafilter. \end{thm}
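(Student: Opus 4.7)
The plan is to reduce the theorem to a direct application of Ellis's Theorem (Theorem 1.1.27) to a suitable compact right topological subsemigroup of $(\bN,\oplus)$ (respectively $(\bN,\odot)$). The key observation is that the set $X_{\varphi}=\{\U\in\bN\mid \U \text{ is a } \varphi\text{-ultrafilter}\}$ already carries all the structure we need, provided it is nonempty, closed, and closed under the relevant operation.

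First I would verify that $X_{\varphi}$ is nonempty: this is exactly the content of Theorem 1.2.6 applied to the weakly partition regular sentence $\varphi$. Next, Corollary 3.4.4 gives that $X_{\varphi}$ is closed in the Stone topology, hence a compact Hausdorff subspace of $\bN$. Then I would invoke Theorem 3.3.5 (or, more comfortably, Corollary 3.3.7): if $\varphi$ is additive, then $X_{\varphi}$ is closed under $\oplus$; if $\varphi$ is additively invariant, then $X_{\varphi}$ is a bilateral ideal in $(\bN,\oplus)$, and in either case $X_{\varphi}$ is a subsemigroup of $(\bN,\oplus)$.

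Since the semigroup operation $\oplus$ is right continuous on all of $\bN$ (Proposition 1.1.24), its restriction to $X_{\varphi}$ is still right continuous. Hence $(X_{\varphi},\oplus)$ is a compact right topological semigroup. Applying Ellis's Theorem (Theorem 1.1.27) to this semigroup yields an element $\U\in X_{\varphi}$ with $\U\oplus\U=\U$, i.e., an additively idempotent $\varphi$-ultrafilter, as required. The multiplicative case is entirely symmetric, replacing $\oplus$ by $\odot$, Theorem 3.3.5(1)--(2) by Theorem 3.3.5(3)--(4), and working inside $(\bN,\odot)$ (or $(\bN\setminus\{\mathfrak{U}_{0}\},\odot)$ in the multiplicatively invariant case, which is still a compact right topological semigroup since removing the single principal ultrafilter $\mathfrak{U}_{0}$ leaves a closed subset).

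There is really no major obstacle here: once the three ingredients (nonemptiness from weak partition regularity, closedness from Corollary 3.4.4, and semigroup/ideal structure from Theorem 3.3.5) are in place, Ellis's Theorem does all the work. The only mild subtlety worth checking is, in the multiplicative invariant case, that excluding $\mathfrak{U}_{0}$ does not destroy compactness or right-continuity; since $\{\mathfrak{U}_{0}\}=\Theta_{\{0\}}$ is clopen in $\bN$, its complement is clopen and hence $X_{\varphi}\cap(\bN\setminus\{\mathfrak{U}_{0}\})$ remains a compact right topological semigroup under $\odot$, and Ellis's Theorem again produces the desired idempotent.
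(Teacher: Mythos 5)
Your proof is correct and follows essentially the same route as the paper: nonemptiness of $X_{\varphi}$ from weak partition regularity (Theorem 1.2.6), closedness from Corollary 3.4.4, closure under $\oplus$ (resp. $\odot$) from Theorem 3.3.5, and then Ellis's Theorem to extract an idempotent. Your extra remark about $\{\mathfrak{U}_{0}\}=\Theta_{\{0\}}$ being clopen in the multiplicatively invariant case is a small point of care that the paper leaves implicit, but it does not change the argument.
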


\begin{proof} By the hypothesis of weakly partition regularity, the set 

\begin{center} $S_{\varphi}=\{\U\in\bN\mid \U$ is a $\varphi$-ultrafilter$\}$ \end{center}

is not empty. Corollary 3.4.4 ensures that $S_{\varphi}$ is closed (so, in particular, compact), while Theorem 3.3.5 ensures, when $\varphi$ is additive or additively invariant, that $S_{\varphi}$ is closed under the sum $\oplus$. Then Ellis's Theorem entails that there exists an additive idempotent in $S_{\varphi}$.\\
The proof for a multiplicative or multiplicatively invariant formula is analogue.\\ \end{proof}

\begin{cor} There is a multiplicatively idempotent Van der Waerden ultrafilter.\end{cor}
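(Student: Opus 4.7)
The plan is to exhibit $\mathcal{VDW}$ as a nonempty compact right topological subsemigroup of $(\bN,\odot)$ and then apply Ellis's Theorem. The key observation is that Van der Waerden's property is exactly the infinite conjunction $\bigwedge_{n\geq 3}AP_{n}$, and each $AP_{n}$ is an elementary existential sentence which is multiplicatively invariant: if $(a_{1},\ldots,a_{n})$ is an arithmetic progression and $b\neq 0$, then $(ba_{1},\ldots,ba_{n})$ is an arithmetic progression of the same length. By Van der Waerden's Theorem each $AP_{n}$ is weakly partition regular, so each set $\mathcal{AP}_{n}=\{\U\in\bN\mid\U\text{ is an }AP_{n}\text{-ultrafilter}\}$ is nonempty.

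First, I would check that $\mathcal{VDW}$ is closed. By Corollary 3.4.4 each $\mathcal{AP}_{n}$ is closed in the Stone topology, and since $\mathcal{VDW}=\bigcap_{n\geq 3}\mathcal{AP}_{n}$ it is closed as well, hence compact. Next, I would check nonemptiness: since $\mathcal{AP}_{n+1}\subseteq\mathcal{AP}_{n}$ (any progression of length $n+1$ contains one of length $n$), the family $\{\mathcal{AP}_{n}\}_{n\geq 3}$ of nonempty closed sets has the finite intersection property, so by compactness of $\bN$ the intersection $\mathcal{VDW}$ is nonempty.

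Then I would observe that $\mathcal{VDW}$ is closed under $\odot$. This is precisely Corollary 3.3.8(2), which asserts that $\mathcal{VDW}$ is a bilateral ideal in $(\bN\setminus\{\mathfrak{U}_{0}\},\odot)$; that corollary in turn rests on Theorem 3.3.5(4) applied to each multiplicatively invariant sentence $AP_{n}$. Consequently $(\mathcal{VDW},\odot)$ is a subsemigroup of $(\bN,\odot)$, and because right continuity of $\odot$ on $\bN$ restricts to $\mathcal{VDW}$, it is a compact right topological semigroup.

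Finally, applying Ellis's Theorem (Theorem 1.1.26) to $(\mathcal{VDW},\odot)$ yields an idempotent element $\U\in\mathcal{VDW}$, which is by construction both a multiplicatively idempotent ultrafilter and a Van der Waerden ultrafilter. There is no real obstacle here: the proof is essentially an assembly of results already established in Sections 3.3 and 3.4. One could equivalently invoke Proposition 1.1.28 applied to the bilateral ideal $\mathcal{VDW}$ in $(\bN\setminus\{\mathfrak{U}_{0}\},\odot)$, bypassing the explicit use of Ellis's Theorem.
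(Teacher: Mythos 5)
Your proof is correct and is essentially the argument the paper intends: the corollary is stated without proof, but the machinery you assemble (closedness of each $\mathcal{AP}_{n}$ from Corollary 3.4.4, the ideal property from Corollary 3.3.8, and Ellis's Theorem applied to the compact semigroup $(\mathcal{VDW},\odot)$) is exactly the route the paper spells out for the analogous Folkman case in Proposition 3.5.7. You also rightly note that Theorem 3.4.5 cannot be applied verbatim since the Van der Waerden property is an infinite conjunction of existential sentences rather than a single one, and your nested-intersection compactness argument for nonemptiness of $\mathcal{VDW}$ is sound.
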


Since it is known that no ultrafilter is both multiplicatively and additively idempotent, by Corollary 3.4.6 it follows that there exists a Van der Waerden ultrafilter that is not additively idempotent. This may not seem strange, but we recall that the "canonical" proof of Van der Waerden's Theorem with ultrafilters consists in showing that special additively idempotent ultrafilters are Van der Waerden ultrafilters; this result shows that the converse is false: Van der Waerden ultrafilters are not necessarily additively idempotents.

\section{Partition Regularity of Polynomials}

In this section we face the problem of the partition regularity on $\N$ for polynomials with coefficients in $\Z$. First of all, we recall some important definitions about polynomials.

\begin{defn} Let $\mathbf{X}=\{x_{n}\}$ be a countable set of variables. The {\bfseries set of polynomials with variables in $\mathbf{X}$ and coefficients in $\Z$} $($notation $\Z[\mathbf{X}])$ is 

\begin{center} $\Z[\mathbf{X}]=\bigcup_{F\subseteq\wp_{fin}(\mathbf{X})} \Z[F]$,\end{center}

i.e. a polynomial $P$ is in $\Z[\mathbf{X}]$ if and only if there are variables $x_{1},...,x_{n}$ in $\mathbf{X}$ such that $P\in \Z[x_{1},...,x_{n}]$.\\
A {\bfseries monomial} is a polynomial in the form $M(x_{1},...,x_{k})=ax_{1}^{n_{1}}\cdot...\cdot x_{k}^{n_{k}}$, where $a\in \Z$, $x_{1},...,x_{n}\in X$ and $n_{1},...,n_{k}$ are natural numbers; the {\bfseries degree} of a monomial $M(x_{1},...,x_{k})$ is $d=\sum_{i=1}^{k} n_{i}$; the {\bfseries degree of $P(x_{1},...,x_{n})$} is the maximum degree of its monomials; given a variable $x_{i}$, the {\bfseries partial degree of $x_{i}$} in $P(x_{1},...,x_{n})$ is the degree of the polynomial obtained considering the variables distinct from $x_{i}$ as constants, i.e. considering $P(x_{1},...,x_{n})$ as a polynomial in $\Z[x_{1},...,x_{i-1},x_{i+1},...,x_{n}]$; the {\bfseries partial degree of $P(x_{1},...,x_{k})$} is the maximum partial degree of its variables; a polynomial is {\bfseries linear} if it has degree 1; it is {\bfseries homogeneous} if all its monomials have the same degree.\end{defn}

\begin{defn} Let $P(x_{1},...,x_{n})$ be a polynomial in $\Z[\mathbf{X}]$. $P(x_{1},...,x_{k})$ is 
\begin{enumerate}
	\item {\bfseries partition regular} $($on $\N)$ if the existential sentence 
	
	\begin{center} $\sigma_{P}$: $\exists a_{1},...,a_{n}\in\N\setminus\{0\}$ $P(a_{1},..,a_{n})=0$ \end{center} is weakly partition regular $($on $\N)$;
	\item {\bfseries injectively partition regular} $($on $\N)$ if the existential sentence \begin{center} $\iota_{P}: \exists a_{1},...,a_{n}\in\N\setminus\{0\}$  $(P(a_{1},...,a_{n})=0\wedge (\bigwedge_{i\neq j}a_{i}\neq a_{j}))$ \end{center} is weakly partition regular $($on $\N)$.
\end{enumerate}
\end{defn}

{\bfseries Convention:} Whenever $P(x_{1},...,x_{n})$ is a polynomial in $\Z[\mathbf{X}]$, we use the symbols $\sigma_{P}$ and $\iota_{P}$ to denote the existential sentences introduced in Definition 3.5.2.\\

Trivially, every injectively partition regular polynomial is also partition regular.\\
We make three assumptions:

\begin{enumerate}
	\item all polynomials are given in normal reduced form, i.e. for every polynomial $P(x_{1},...,x_{n})$ in $\Z[\mathbf{X}]$, if $M(x_{1},...,x_{n})=ax_{1}^{k_{1}}\cdot...\cdot x_{n}^{k_{n}}$ is one of its monomials then $a\neq 0$ and if $M_{1}(x_{1},...,x_{n})=a_{1}x_{1}^{k_{1}}\cdot...\cdot x_{n}^{k_{n}}$ and $M_{2}(x_{1},...,x_{n})=a_{2}x_{1}^{h_{1}}\cdot...\cdot x_{n}^{h_{2}}$ are two distinct monomials in $P(x_{1},...,x_{n})$ then there is an index $i\leq n$ such that $k_{i}\neq h_{i}$;
	\item whenever we denote a polynomial by $P(x_{1},...,x_{n})$, we mean that $\{x_{1},...,x_{n}\}$ are all and only the variables that appear in $P(x_{1},...,x_{n})$; i.e. for every index $i\leq n$, the partial degree of $x_{i}$ in $P(x_{1},...,x_{n})$ is at least 1, and for every variable $y$ in $X\setminus \{x_{1},...,x_{n}\}$, the partial degree of $y$ in $P(x_{1},...,x_{n})$ is 0:
	\item all the polynomials we consider have constant term 0.
\end{enumerate}
The first and second assumptions need no explanations; as for the third one, we remark that polynomials with constant term zero and polynomials with non-zero constant term have different behaviours with respect to the problem of partition regularity. E.g., Rado proved that

\begin{thm}[Rado] Suppose that $(\sum_{i=1}^{n} a_{i}x_{i})+c\in\Z[x_{1},...,x_{n}]$ is a polynomial with non-zero constant term $c$. Then $P$ is partition regular on $\N$ if and only if either 
\begin{enumerate}
	\item there exists a natural number $k$ such that $(\sum_{i=1}^{n}a_{i} k)+c=0$ or
	\item there exists an integer $z$ such that $(\sum_{i=1}^{n}a_{i} z)+c=0$ and there is a subset $J$ of $\{1,...,n\}$ such that $\sum_{j\in J}a_{j}=0$.
\end{enumerate}
\end{thm}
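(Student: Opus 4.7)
The plan is to prove both implications around the quantity $z:=-c/\sum_{i=1}^n a_i$, classified according to whether it fails to exist, is a non-natural integer, or is a natural number. For the if direction, case (1) is immediate: the constant tuple $(k,k,\ldots,k)$ is trivially monochromatic and satisfies $P(k,\ldots,k)=(\sum a_i)k+c=0$. For case (2), given a finite coloring $\chi$ of $\N\setminus\{0\}$, I would fix $M>|z|$ and define the auxiliary coloring $\chi'(m):=\chi(z+Mm)$, which is well-defined on $\N\setminus\{0\}$ since $M>|z|$ forces $z+Mm\geq 1$ whenever $m\geq 1$. By hypothesis there is a nonempty $J\subseteq\{1,\ldots,n\}$ with $\sum_{j\in J}a_j=0$, so Theorem~1.3.15 applied to the homogeneous polynomial $\sum_{i=1}^n a_iy_i$ provides $\chi'$-monochromatic $y_1,\ldots,y_n\in\N\setminus\{0\}$ with $\sum a_iy_i=0$. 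Setting $x_i:=z+My_i$ produces positive integers satisfying $\sum a_ix_i+c=z\sum a_i+M\sum a_iy_i+c=-c+0+c=0$, and the identity $\chi(x_i)=\chi'(y_i)$ forces the $x_i$ to be monochromatic in $\chi$.

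For the only if direction I argue the contrapositive in two cases. Suppose first that no integer $z$ with $(\sum a_i)z+c=0$ exists; this happens either because $\sum a_i=0$ (since $c\neq 0$) or because $\sum a_i\neq 0$ with $\sum a_i\nmid c$. In either situation some prime $p$ satisfies $v_p(\sum a_i)>v_p(c)$; put $m:=v_p(c)+1$. Coloring each $n\in\N\setminus\{0\}$ by its residue modulo $p^m$, any monochromatic solution $x_i\equiv y\pmod{p^m}$ would give $-c=\sum a_ix_i\equiv y\sum a_i\equiv 0\pmod{p^m}$ (since $v_p(\sum a_i)\geq m$), contradicting $v_p(c)<m$. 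Hence $P$ is not partition regular in this case.

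Now suppose $z\in\Z$ but $z\leq 0$, and moreover no nonempty $J$ satisfies $\sum_{j\in J}a_j=0$. Given any finite coloring $\chi$ of $\N\setminus\{0\}$, I would define $\chi'(m):=\chi(m+|z|)$, again a finite coloring of $\N\setminus\{0\}$. If $P$ were partition regular, there would exist $\chi'$-monochromatic $x_1,\ldots,x_n\in\N\setminus\{0\}$ with $\sum a_ix_i+c=0$; setting $y_i:=x_i+|z|$ yields elements of $\N\setminus\{0\}$ that are monochromatic in $\chi$ (since $\chi(y_i)=\chi'(x_i)$) and satisfy $\sum a_iy_i=\sum a_ix_i+|z|\sum a_i=-c+c=0$ (using $|z|\sum a_i=-z\sum a_i=c$). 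Thus $\sum a_iy_i=0$ would be weakly partition regular on $\N\setminus\{0\}$, contradicting Theorem~1.3.15 by the absence of a zero-sum subset. The most delicate step throughout the proof is case (2) of the if direction: the nonzero constant term $c$ forbids any naive rescaling of solutions to the homogeneous equation, forcing one to combine a translation by $z$ with a sufficiently large dilation $M>|z|$ in order to secure both positivity of the $x_i$ and faithful transfer of monochromaticity; the remaining arguments reduce to an elementary $p$-adic congruence and an application of the homogeneous Rado theorem.
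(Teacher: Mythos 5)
Your proof is correct. Note, though, that the paper itself does not prove this theorem: it states it as Theorem 3.5.3 and immediately defers to Rado's original article ([Rad33], \cite{rif30}), so there is no in-paper argument to compare against. What you have produced is a self-contained proof that reduces everything to the paper's Theorem 1.3.15 (the homogeneous linear case, which the paper does prove in full). The two directions fit together cleanly: for sufficiency, case (1) is the constant solution, and in case (2) the substitution $x_{i}=z+My_{i}$ with $M>|z|$ is exactly the right device --- the dilation by $M$ preserves solutions of the homogeneous equation $\sum a_{i}y_{i}=0$, guarantees positivity of the $x_{i}$, and transfers monochromaticity through the auxiliary coloring $\chi'(m)=\chi(z+Mm)$. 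For necessity, your case split matches the negation of ``(1) or (2)'' precisely: if no integer $z$ exists then some prime $p$ has $v_{p}(\sum a_{i})>v_{p}(c)$ (with the usual convention $v_{p}(0)=\infty$ when $\sum a_{i}=0$, which you should state explicitly), and the residue coloring mod $p^{v_{p}(c)+1}$ kills all monochromatic solutions; if $z$ exists but $z\leq -1$ and no nonempty zero-sum subset exists, the shift $y_{i}=x_{i}+|z|$ converts a hypothetical monochromatic solution of $P=0$ into one of $\sum a_{i}y_{i}=0$, contradicting the (1)$\Rightarrow$(2) direction of Theorem 1.3.15. Two cosmetic points: you should say explicitly that $J$ is taken nonempty (an empty $J$ would trivialize condition (2) and make the theorem false, e.g.\ for $2x-y+1$), and that the coefficients $a_{i}$ are all nonzero, as Theorem 1.3.15 requires --- both are implicit in the paper's standing conventions but worth a sentence. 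Overall this is a complete and rather economical proof of a statement the thesis only cites.
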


The above theorem is proved in $\cite{rif30}$. It shows the contrast between the case $c\neq 0$ and the case $c=0$: while the first one is related with the existence of a constant solution, the latter is not.\\
A study of the problem of the injectivity of solutions for partition regular polynomials with non zero constant term has been done by Neil Hindman and Imre Leader in $\cite{rif51}$ (actually, to be precise, their study involves the general case of partition linear homogeneous systems and their interested is in noncostant solutions, that is a more general notion than that of injective solutions).\\
We start our study of partition regularity of polynomials with the linear case, proving that the linear polynomials in $\Z[\mathbf{X}]$ with sum of coefficients zero are injectively partition regular. The interest in this result is that, given the polynomial $P$, we exibit an $\iota_{P}$-ultrafilter constructed as a linear combination of additively idempotent ultrafilters.\\
Then we deal with the nonlinear case which, in our opinion, is the most interesting: in fact, while Rado's Theorem settles the linear case, very little is known for the nonlinear one.

\subsection{An ultrafilter proof of Rado's Theorem for linear polynomials with sum of coefficients zero}

In this section we prove this result:

\begin{thm} Let $P(x_{1},...,x_{n},y_{1},...,y_{m}): (\sum_{i=1}^{n} c_{i}x_{i})-(\sum_{j=1}^{m}d_{j}y_{j})\in\Z[\mathbf{X}]$ be a polynomial such that $n+m\geq 3$, the coefficients $c_{1},...,c_{n},d_{1},...,d_{m}$ are positive, and $\sum_{i=1}^{n}c_{i}=\sum_{j=1}^{m}d_{j}$, and let $\U$ be an additively idempotent ultrafilter. If 

\begin{center} $\V_{1}=c_{1}\U\oplus (c_{1}+c_{2})\U\oplus c_{2}\U\oplus(c_{2}+c_{3})\U\oplus....\oplus (c_{n-1}+c_{n})\U$ \end{center}

and

\begin{center} $\V_{2}=d_{1}\U\oplus(d_{1}+d_{2})\U\oplus d_{2}\U\oplus(d_{2}+d_{3})\U\oplus...\oplus (d_{m-1}+d_{m})\U$ \end{center}

then the ultrafilter $\V=\V_{1}\oplus\V_{2}$ is a $\iota_{P}$-ultrafilter; in particular, $P(x_{1},...,x_{n},y_{1},...,y_{n})$ is injectively partition regular.\end{thm}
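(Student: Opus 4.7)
The plan is to apply the Bridge Theorem (Theorem~2.2.9) to the existential sentence $\iota_P$: it suffices to exhibit $n+m$ pairwise distinct generators $\alpha_1,\ldots,\alpha_n,\beta_1,\ldots,\beta_m \in G_{\V}$ satisfying $\sum_{i=1}^n c_i\alpha_i = \sum_{j=1}^m d_j\beta_j$. Fix $\xi \in G_{\U}$ and set $\xi_k := S_k(\xi)$; by Proposition~3.2.2 every $\xi_k$ generates $\U$ and, by idempotency, any strictly height-increasing sum $\xi_{k_1}+\cdots+\xi_{k_r}$ remains in $G_{\U}$.

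First I would unfold the $\h$-operator in the definitions of $\V_1, \V_2$ and $\V$ to obtain the canonical generator
\[
\alpha = c_1\xi_0 + (c_1+c_2)\xi_1 + c_2\xi_2 + (c_2+c_3)\xi_3 + \cdots + (c_{n-1}+c_n)\xi_{2n-3} \in G_{\V_1},
\]
an analogous $\beta \in G_{\V_2}$ sitting at heights $2n-3,\ldots,2(n+m)-5$, and $\gamma = \alpha \h \beta \in G_{\V}$. Regrouping the summands of $\alpha$ by which $c_i$ they involve rewrites $\alpha$ as $\sum_{i=1}^{n} c_i\,A_i$, where $A_1 = \xi_0+\xi_1$, $A_i = \xi_{2i-3}+\xi_{2i-2}+\xi_{2i-1}$ for $1 < i < n$, and $A_n = \xi_{2n-3}$. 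Each $A_i$ lies in $G_{\U}$, and the interpolating coefficient $(c_k+c_{k+1})\xi_{2k-1}$ is precisely what allows $\xi_{2k-1}$ to be shared between $A_k$ and $A_{k+1}$. An analogous decomposition $\beta = \sum_{j=1}^{m} d_j B_j$ holds for the $\V_2$-part, with each $B_j \in G_{\U}$.

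The heart of the proof is then to tag a different block in each candidate generator. Exploiting the identity $\sum c_i = \sum d_j = C$, one builds $\alpha_i = A_i + \Omega_i$ and $\beta_j = B_j + \Theta_j$ with $\Omega_i,\Theta_j$ suitable star-iterated completions chosen so that (i) each pair $(A_i,\Omega_i)$ and $(B_j,\Theta_j)$ is a tensor pair (Theorem~2.5.16), so the sum lands in $G_{\V}$; (ii) the distinguished block $A_i$ or $B_j$ sits at a different height range in each $\alpha_i,\beta_j$, which by Proposition~2.5.11 forces pairwise distinctness; and (iii) the weighted sums $\sum c_i\alpha_i$ and $\sum d_j\beta_j$ telescope into the \emph{same} linear combination of the $\xi_k$, with the cancellations driven by the identity $\sum c_i = \sum d_j$.

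The main obstacle will be the explicit engineering of the completions $\Omega_i,\Theta_j$: each candidate $A_i + \Omega_i$ must be a generator of $\V$ itself (rather than of some other product of the same factor ultrafilters arranged with a different interleaving), which requires carefully tracking heights and essentially exploiting the $(c_k+c_{k+1})\U$ seams as the flexibility that permits splitting between adjacent $x_k$ and $x_{k+1}$ while preserving the canonical generator pattern of $\V$.
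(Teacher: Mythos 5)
Your overall strategy is the same as the paper's: reduce via the Bridge Theorem to exhibiting $n+m$ mutually distinct generators of $\V$ on which $P$ vanishes, realize them as integer linear combinations of the star iterates $S_k(\xi)$ of a single generator $\xi$ of $\U$, use idempotency of $\U$ to allow a coefficient $c$ to be ``split'' across adjacent heights, and use $\sum_i c_i=\sum_j d_j$ for the final cancellation. Your regrouping $\alpha=\sum_i c_iA_i$ with the blocks $A_i$ overlapping at the seam heights is a correct and genuinely illuminating observation --- it is exactly why $\V_1$ carries the factors $(c_k+c_{k+1})\U$. But the proposal stops precisely where the proof begins: the completions $\Omega_i,\Theta_j$ are never constructed, and their construction \emph{is} the theorem. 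In the paper this is done by an explicit $((n+m)\times 3(n+m-2))$ coefficient table (one row per candidate, three columns per coefficient, the middle column providing the room to split or zero out an entry), followed by a column-by-column verification that every $S_k(\xi)$ receives total coefficient $0$ in $P(\xi_1,\dots,\xi_n,\eta_1,\dots,\eta_m)$. Announcing that such completions exist ``exploiting $\sum c_i=\sum d_j$'' is not a proof of their existence.

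There is also a concrete flaw in the shape you propose for the candidates. Writing $\alpha_i=A_i+\Omega_i$ with $(A_i,\Omega_i)$ a tensor pair and $A_i\in G_{\U}$ sitting at the lowest heights certifies, via Theorem 2.5.16 and Theorem 2.3.5, only that $\alpha_i\in G_{\U\oplus\mathfrak{U}_{\Omega_i}}$. You then need $\U\oplus\mathfrak{U}_{\Omega_i}=\V$, but the leading factor of $\V$ is $c_1\U$, not $\U$, and nothing in your setup forces this identity (nor does placing $A_i$ ``at a different height range'' in each candidate cohere with $(A_i,\Omega_i)$ being a tensor pair, which pins $A_i$ to the bottom). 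The way the paper avoids this is that \emph{every} candidate is a full-length combination over all $3(n+m-2)$ heights whose coefficient string parses, for every row, as the \emph{same} ordered factorization $c_1\zeta_1\,\h\,(c_1+c_2)\zeta_2\,\h\,c_2\zeta_3\,\h\cdots\h\,(d_{m-1}+d_m)\zeta_{2(n+m-2)}$ of $\V$, with each $\zeta_k\in G_{\U}$; the rows differ only in whether a given $\zeta_k$ occupies one column or is spread over two adjacent columns. Until you specify completions with this property and carry out the cancellation check, the argument has a genuine gap rather than merely an omitted computation.
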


A useful tool to simplify notations in the proof of this theorem are the "tabular scriptures" for particular elements of $^{\bullet}\N$. Let $\alpha$ be an element in $^{\bullet}\N$. By writing

\begin{center}
\begin{tabular}{|l|c|c|c|c|}\hline
 & $S_{1}(\alpha)$ & $S_{2}(\alpha)$ & $...$ & $S_{k}(\alpha)$ \\ \hline
$\beta_{1}$ & $a_{1,1}$ & $a_{1,2}$ & $...$ & $a_{1,k}$  \\\hline
$\beta_{2}$ & $a_{2,1}$ & $a_{2,2}$ & $...$ & $a_{2,k}$\\\hline
$...$ & $...$ & $...$ & $...$ & $...$\\\hline
$\beta_{k}$ & $a_{k,1}$ & $a_{k,2}$ & $...$ & $a_{k,k}$\\\hline
\end{tabular}\end{center}

where the elements $a_{i,j}$ are natural numbers, we intend that, for every index $i\leq k$, $\beta_{i}=\sum_{j=1}^{k} a_{i,j}S_{j}(\alpha)$. E.g., the tabular

\begin{center}
\begin{tabular}{|l|c|c|}\hline
 & $^{*}\alpha$ & $^{**}\alpha$\\\hline
$\beta_{1}$ & $1$ & $2$  \\\hline
$\beta_{2}$ & $0$ & $3$ \\\hline
$\beta_{3}$ & $7$ & $0$ \\\hline
\end{tabular}\end{center}

is a notation to mean that $\beta_{1}=$$^{*}\alpha+2$$^{**}\alpha$, $\beta_{2}=3$$^{**}\alpha$ and $\beta_{3}=7$$^{*}\alpha$.\\
Before proving Theorem 3.5.4 we give an example that should be useful to understand the ideas involved in its proof. Consider the polynomial 

\begin{center} $P(x_{1},x_{2},x_{3},y_{1},y_{2}): 3x_{1}+2x_{2}+4x_{3}-y_{1}-8y_{2}$. \end{center}

If $\U$ is an additively idempotent ultrafilter, consider the ultrafilter 

\begin{center} $\V=3\U\oplus 5\U\oplus 2\U\oplus 6\U\oplus \U\oplus 9\U$, \end{center}

If $\alpha\in$$^{*}\N$ is a generator of $\U$, pose

\begin{center}
\begin{tabular}{|l|c|c|c|c|c|c|c|c|c|}\hline

 & $S_{1}(\alpha)$  & $S_{2}(\alpha)$ & $S_{3}(\alpha)$ & $S_{4}(\alpha)$ & $S_{5}(\alpha)$ & $S_{6}(\alpha)$ & $S_{7}(\alpha)$ & $S_{8}(\alpha)$ & $S_{9}(\alpha)$\\\hline
$\beta_{1}$ & 3 & 5 & 5 & 2 & 2 & 6 & 1 & 1 & 9\\\hline
$\beta_{2}$ & 3 & 0 & 5 & 2 & 6 & 6 & 1 & 1 & 9\\\hline
$\beta_{3}$ & 3 & 3 & 5 & 2 & 0 & 6 & 1 & 1 & 9\\\hline
$\gamma_{1}$ & 3 & 3 & 5 & 2 & 2 & 6 & 1 & 9 & 9\\\hline
$\gamma_{2}$ & 3 & 3 & 5 & 2 & 2 & 6 & 1 & 0 & 9\\\hline

\end{tabular}
\end{center}

The numbers above have been chosen in such a way that $\beta_{1},\beta_{2},\beta_{3},\gamma_{1},\gamma_{2}$ are mutually distinct generators of $\V$ and, moreover, $P(\beta_{1},\beta_{2},\beta_{3},\gamma_{1},\gamma_{2})=0$, as can be noted computing the coefficient $c_{k}$ of each element $S_{k}(\alpha)$, $1\leq k\leq 9$, in the expression $P(\beta_{1},\beta_{2},\beta_{3},\gamma_{1},\gamma_{2})$:
\begin{enumerate}
	\item $c_{1}=9+6+12-3-24=0$;
	\item $c_{2}=15+0+12-3-24=0$;
	\item $c_{3}=15+10+20-5-40=0$;
	\item $c_{4}=6+4+8-2-16=0$;
	\item $c_{5}=6+12+0-2-16=0$;
	\item $c_{6}=18+12+24-6-48=0$:
	\item $c_{7}=3+2+4-1-8=0$;
	\item $c_{8}=3+2+4-9-0=0$;
	\item $c_{9}=27+18+36-9-72=0$.
\end{enumerate}

The proof of Theorem 3.5.4 is just a generalization of the ideas presented in the above example. 

\begin{proof} Let $\U$ be an additively idempotent ultrafilter, and let $\V_{1}$, $\V_{2}$, $\V$ be the ultrafilters introduced in the statement.  We claim that $\V$ is a $\iota_{P}$-ultrafilter, which yields the thesis. Let $\alpha\in$$^{*}\N$ be a generator of $\U$, and consider the elements in Table 1 and in Table 2:

\begin{center}
\begin{table}[h]
{\bfseries Table 1}\\\\
\begin{tabular}{|l|c|c|c|c|c|c|c|c|l|}\hline
 & $S_{1}(\alpha)$ & $S_{2}(\alpha)$ & $S_{3}(\alpha)$ & $S_{4}(\alpha)$ & $S_{5}(\alpha)$ & ...& $S_{3n-5}(\alpha)$ & $S_{3n-4}(\alpha)$ & $S_{3n-3}(\alpha)$ \\\hline
$\beta_{1}$ & $c_{1}$ & $c_{1}+c_{2}$ & $ c_{1}+c_{2} $ & $c_{2}$ & $c_{2}$ & $...$ & $c_{n-1}$ & $c_{n-1}$ & $c_{n-1}+c_{n}$\\\hline
$\beta_{2}$ & $c_{1}$ & $ 0 $ & $ c_{1}+c_{2} $ & $c_{2}$ & $c_{2}+c_{3}$ & $...$ & $c_{n-1}$ &$c_{n-1}$ & $c_{n-1}+c_{n}$\\\hline
$\beta_{3}$ & $c_{1}$ & $c_{1}$ & $ c_{1}+c_{2} $ & $c_{2}$ & $0$ & $...$ & $c_{n-1}$ &$c_{n-1}$ & $c_{n-1}+c_{n}$\\\hline
$...$ &$...$ &$...$ &$...$ &$...$ &$...$ &$...$ &$...$ &$...$ &$...$\\ \hline
$\beta_{n-2}$ & $c_{1}$ & $c_{1}$ & $ c_{1}+c_{2} $ & $c_{2}$ & $c_{2}$ & $...$ & $c_{n-1}$ &$c_{n-1}$ & $c_{n-1}+c_{n}$ \\\hline
$\beta_{n-1}$ & $c_{1}$ & $c_{1}$ & $ c_{1}+c_{2} $ & $c_{2}$ & $c_{2}$ & $...$ & $c_{n-1}$ &$c_{n-1}+c_{n}$ & $c_{n-1}+c_{n}$ \\\hline
$\beta_{n}$ & $c_{1}$ & $c_{1}$ & $ c_{1}+c_{2} $ & $c_{2}$ & $c_{2}$ & $...$ & $c_{n-1}$ &$0$ & $c_{n-1}+c_{n}$ \\\hline

\end{tabular}
\end{table}
\end{center}

\begin{center}
\begin{table}[h]
{\bfseries Table 2}\\\\
\begin{tabular}{|l|c|c|c|c|c|c|c|c|l|}\hline
 & $S_{1}(\alpha)$ & $S_{2}(\alpha)$ & $S_{3}(\alpha)$ & $S_{4}(\alpha)$ & $S_{5}(\alpha)$ & ...& $S_{3m-5}(\alpha)$ & $S_{3m-4}(\alpha)$ & $S_{3m-3}(\alpha)$ \\\hline
$\gamma_{1}$ & $d_{1}$ & $d_{1}+d_{2}$ & $ d_{1}+d_{2} $ & $d_{2}$ & $d_{2}$ & $...$ & $ d_{m-1}$ & $d_{m-1}$ & $d_{m-1}+d_{m}$\\\hline
$\gamma_{2}$ & $d_{1}$ & $ 0 $ & $ d_{1}+d_{2} $ & $d_{2}$ & $d_{2}+d_{3}$ & $...$ & $ d_{m-1}$ &$d_{m-1}$ & $d_{m-1}+d_{m}$\\\hline
$\gamma_{3}$ & $d_{1}$ & $d_{1}$ & $ d_{1}+d_{2} $ & $d_{2}$ & $0$ & $...$ & $ d_{m-1}$ &$d_{m-1}$ & $d_{m-1}+d_{m}$\\\hline
$...$ &$...$ &$...$ &$...$ &$...$ &$...$ &$...$ &$...$ &$...$&$...$\\ \hline
$\gamma_{m-2}$ & $d_{1}$ & $d_{1}$ & $ d_{1}+d_{2} $ & $d_{2}$ & $d_{2}$ & $...$ &$ d_{m-1}$ & $d_{m-1}$ & $d_{m-1}+d_{m}$ \\\hline
$\gamma_{m-1}$ & $d_{1}$ & $d_{1}$ & $ d_{1}+d_{2} $ & $d_{2}$ & $d_{2}$ & $...$ &$ d_{m-1}$ & $d_{m-1}+d_{m}$ & $d_{m-1}+d_{m}$ \\\hline
$\gamma_{m}$ & $d_{1}$ & $d_{1}$ & $ d_{1}+d_{2} $ & $d_{2}$ & $d_{2}$ & $...$ & $ d_{m-1}$ &$0$ & $d_{m-1}+d_{m}$ \\\hline

\end{tabular}
\end{table}
\end{center}
For every $i\leq n$ and $j\leq 3\cdot (n-1)$, the coefficient $a_{i,j}$ in position $(i,j)$ in Table 1 is determined as follows: let $s,t$ be such that $j=3t+s$, $0\leq t\leq (n-2)$, $s\in\{1,2,3\}$.
\begin{itemize}
	\item if $s=1$, pose $a_{i,j}=c_{t+1}$;
	\item if $s=2$ then $\begin{cases} a_{t+1,j}=c_{t+1}+c_{t+2};\\ a_{t+2,j}=0; \\ a_{i,j}=c_{t+1}; \mbox{ otherwise}
\end{cases}$ 
	\item If $s=3$ pose $a_{i,j}= c_{t+1}+c_{t+2}$;
\end{itemize}
    
The coefficients in Table 2 are constructed exactly as for Table 1, by exchanging the roles of coefficients $c_{1},...,c_{n}$ and coefficients $d_{1},...,d_{m}$.\\
Observe that, with this construction, as $\U$ is additively idempotent then $\beta_{1},...,\beta_{n}$ are generators of $\V_{1}$ and $\gamma_{1},...,\gamma_{m}$ are generators of $\V_{2}$.\\
Pose 

\begin{center} $\beta=\sum_{i=1}^{3(n-1)}p_{i}S_{i}(\alpha)$,\end{center}

where, if $0\leq t\leq (n-2)$, $s\in \{1,2,3\}$ and $i=3t+s$, 
\begin{itemize}
	\item if $s=3$ $p_{i}=c_{t}+c_{t+1}$;
	\item otherwise $p_{i}=c_{t+1}$.
\end{itemize}
Similarly, pose

\begin{center} $\gamma=\sum_{i=1}^{3(m-1)}q_{i}S_{i}(\alpha)$,\end{center}

where, if $0\leq t\leq (m-2)$, $s\in \{1,2,3\}$ and $i=3t+s$, then
\begin{itemize}
		\item if $s=3$ then $q_{i}=d_{t}+d_{t+1}$;
	\item otherwise $q_{i}=d_{t+1}$.
\end{itemize}
 
For $1\leq i\leq n$, $1\leq j\leq m$ pose
\begin{itemize}
	\item $\xi_{i}=\beta_{i}\h\gamma$;
	\item $\eta_{j}=\beta\h\gamma_{j}$.
\end{itemize}

Observe that $\beta,\beta_{1},...,\beta_{n}$ are mutually distinct generators of $\V_{1}$ and $\gamma,\gamma_{1},...,\gamma_{m}$ are mutually distinct generators of $\V_{2}$, so the elements $\xi_{1},...,\xi_{n},\eta_{1},...,\eta_{m}$ are mutually distinct generators of $\V$.\\

{\bfseries Claim:} $P(\xi_{1},...,\xi_{n},\eta_{1},...,\eta_{m})=0$.\\

To prove the claim, since $P(\xi_{1},...,\xi_{n},\eta_{1},...,\eta_{m})$ is by construction an expression in $S_{1}(\alpha)$, $S_{2}(\alpha)$, ..., $S_{3(n+m-2)}(\alpha)$, it is sufficient to show that for each index $k\leq 3(n+m-2)$ the coefficient of $S_{k}(\alpha)$ in this expression is 0.\\
Given an index $k$, let $t,s$ be such that $k=3t+s$, $t\leq n+m-2$ and $s\in \{1,2,3\}$ There are two cases to consider.\\
Case 1: $t\leq n-1$.
\begin{itemize}
	
\item If $s=1$, the coefficient is

\begin{center}$\sum_{i=1}^{n} c_{i}c_{t+2}-\sum_{j=1}^{m}d_{j}c_{t+2}=c_{t+2}(\sum_{i=1}^{n}c_{i}-\sum_{j=1}^{m} d_{j})=0$.\end{center}
\item If $s=2$, the coefficient is
\begin{center}$((\sum_{i=1, i\neq t+1,t+2}^{n} c_{i}c_{t+1})+c_{t+1}(c_{t+1}+c_{t+2})+c_{t+2}(0))-\sum_{j=1}^{m} (d_{j}(c_{t+1}))=$\\\vspace{0.3cm}$=c_{t+1}(\sum_{i=1}^{n}c_{i}-\sum_{j=1}^{m}d_{j})=0$.\end{center}
\item If $s=3$, the coefficient of $S_{k}(\alpha)$ is 

\begin{center} $\sum_{i=1}^{n} c_{i}(c_{t+1}+c_{t+2})-\sum_{j=1}^{m}d_{j}(c_{t+1}+c_{t+2})=$\\\vspace{0.3cm}$=(c_{t+1}+c_{t+2})(\sum_{i=1}^{n}c_{i}-\sum_{j=1}^{m} d_{j})=0$\end{center}

since $\sum_{i=1}^{n}c_{i}=\sum_{j=1}^{m} d_{j}$ by hypothesis.

\end{itemize}

Case 2: $n-1<t\leq n+m-2$: similar to case 1, exchanging the roles of coefficients $c_{1},...,c_{n}$ and coefficients $d_{1},...,d_{m}$.\\
This shows that the coefficient of $S_{i}(\alpha)$, for $1\leq i\leq 3(n+m-2)$, in the expression $P(\xi_{1},...,\xi_{n},\eta_{1},...,\eta_{m})$ is 0, so $(\xi_{1},...,\xi_{n},\eta_{1},...,\eta_{m})$ is a solution to $P(x_{1},...,x_{n},y_{1},...,y_{m})$ made of mutually distinct elements, and hence $\V$ is a $\iota_{P}$-ultrafilter.\\ \end{proof}

\subsection{Nonlinear polynomials}

While the linear homogeneous case is settled by Rado's Theorem, very little is known for the nonlinear case, that we consider in this section.\\
The first non-linear injective partition regular polynomial that we present is $P(x,y,z,w): x+y-zw$. Its partition regularity has been first proved by P. Csikv\'ari, K. Gyarmati and A. S\'arközy in $\cite{rif10}$; here, we present a different approach, based on the following result:

\begin{prop} There exists a nonprincipal multiplicatively idempotent Schur ultrafilter. \end{prop}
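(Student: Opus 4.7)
The plan is to apply Theorem 3.4.6 directly, taking $\varphi$ to be Schur's property. I must verify three things: that Schur's property fits the hypotheses of Theorem 3.4.6, that the ultrafilter it produces is indeed nonprincipal, and that the combinatorial property relevant is multiplicatively invariant (so that the multiplicative version of Theorem 3.4.6 applies).

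First I would recall that Schur's property is the existential sentence $\varphi_S: \exists x,y,z\,((x\neq y)\wedge(x\neq z)\wedge(y\neq z)\wedge(x+y=z))$, which is elementary (its only ``parameters'' are the addition and equality relations on $\N$). It is weakly partition regular by Schur's Theorem (Theorem 1.3.7), equivalently by Proposition 3.2.5 which produces Schur ultrafilters from nonprincipal additively idempotent ones. It remains to observe that $\varphi_S$ is multiplicatively invariant: if $a,b,c\in\N$ are mutually distinct with $a+b=c$ and $k\in\N\setminus\{0\}$, then $ka,kb,kc$ are mutually distinct with $ka+kb=kc$. Hence the hypotheses of the multiplicative half of Theorem 3.4.6 are all satisfied, which yields a multiplicatively idempotent Schur ultrafilter $\U$.

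Second, I would check that $\U$ is automatically nonprincipal. The only principal ultrafilters $\mathfrak{U}_n$ that are multiplicatively idempotent are those with $n\cdot n=n$, i.e.\ $n=0$ or $n=1$. Neither is a Schur ultrafilter: the set $\{0\}\in\mathfrak{U}_0$ does not contain three mutually distinct elements at all (same for $\{1\}\in\mathfrak{U}_1$), so Schur's property fails for these sets. Therefore $\U$ cannot be principal, completing the proof.

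Alternatively, and perhaps more transparently, one can argue directly from the structural results already proved. By Corollary 3.3.8, the set $\mathcal{S}=\{\U\in\beta\N\mid \U\text{ is a Schur ultrafilter}\}$ is a bilateral ideal in $(\beta\N\setminus\{\mathfrak{U}_0\},\odot)$; by Corollary 3.4.4, $\mathcal{S}$ is also closed in $\beta\N$, hence compact. Thus $(\mathcal{S},\odot)$ is a compact right topological semigroup, and Ellis's Theorem (Theorem 1.1.29) produces an idempotent in it. There is essentially no obstacle here: everything has been prepared in the preceding sections, and the only small verification is the one about nonprincipality above.
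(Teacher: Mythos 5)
Your proof is correct and follows essentially the same route as the paper, which likewise observes that Schur's property is multiplicatively invariant and weakly partition regular and then invokes the same general result (note that in the paper's numbering it is Theorem 3.4.5, not 3.4.6, whose proof is exactly your ``alternative'' argument via closedness, the ideal property and Ellis's Theorem). Your explicit verification that a multiplicatively idempotent Schur ultrafilter must be nonprincipal is a welcome detail that the paper leaves implicit.
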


\begin{proof} Schur's property is a multiplicatively invariant property, and the set of nonprincipal Schur ultrafilters is nonempty; the thesis follows by applying Theorem 3.4.5.\\\end{proof}

\begin{cor} The polynomial 

\begin{center} $P(x,y,z,w): x+y-zw$ \end{center}

is injectively partition regular. \end{cor}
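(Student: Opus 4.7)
The plan is to fix, using Proposition 3.5.6, a nonprincipal multiplicatively idempotent Schur ultrafilter $\U$, and then to apply the Bridge Theorem (Corollary 2.2.10) inside $^{\bullet}\N$ to show that $\U$ itself is an $\iota_P$-ultrafilter. Concretely, I need to exhibit four mutually distinct generators $\xi_1,\xi_2,\xi_3,\xi_4 \in G_{\U}$ satisfying $\xi_1 + \xi_2 = \xi_3\cdot\xi_4$; once this is done, the Bridge Theorem yields that $\iota_P$ is weakly partition regular, which is exactly the injective partition regularity of $P$.

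First I would invoke the Bridge Theorem on the existential sentence expressing Schur's property to produce three mutually distinct generators $\alpha_1,\alpha_2,\alpha_3 \in G_{\U}\cap {}^{*}\N$, of height $1$, with $\alpha_1+\alpha_2=\alpha_3$. Then I would define
\[
\xi_1 \;=\; \alpha_1\cdot {}^{*}\alpha_3,\qquad \xi_2 \;=\; \alpha_2\cdot {}^{*}\alpha_3,\qquad \xi_3 \;=\; \alpha_3,\qquad \xi_4 \;=\; {}^{*}\alpha_3.
\]
The required equation is immediate by linearity: $\xi_1+\xi_2=(\alpha_1+\alpha_2)\cdot {}^{*}\alpha_3=\alpha_3\cdot {}^{*}\alpha_3=\xi_3\cdot\xi_4$.

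The second step is to check that each $\xi_i$ lies in $G_{\U}$. Proposition 2.5.11 immediately gives $\xi_4={}^{*}\alpha_3\in G_{\U}$, and trivially $\xi_3=\alpha_3\in G_{\U}$. For $\xi_1$ and $\xi_2$ I would rewrite $\xi_i=\alpha_i\cdot S_{h(\alpha_i)}(\alpha_3)=\alpha_i\dia\alpha_3$ in the notation of Section 2.5.4; Theorem 2.5.27 then identifies each $\xi_i$ as a generator of $\mathfrak{U}_{\alpha_i}\odot\mathfrak{U}_{\alpha_3}=\U\odot\U$, which collapses to $\U$ by multiplicative idempotency.

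The final and main (though modest) obstacle is the pairwise distinctness of $\xi_1,\xi_2,\xi_3,\xi_4$. A height computation via Proposition 2.5.11 separates $\xi_3$ from the others, since $h(\xi_3)=1$ while $h(\xi_1)=h(\xi_2)=h(\xi_4)=2$; among the height-$2$ elements, $\xi_1\neq\xi_2$ because $\alpha_1\neq\alpha_2$ and ${}^{*}\alpha_3\neq 0$, and $\xi_4\neq\xi_1,\xi_2$ would force $\alpha_1=1$ or $\alpha_2=1$, which is impossible because $\U$ is nonprincipal and therefore all its generators are infinite. With distinctness in hand, the Bridge Theorem concludes that $\U$ is an $\iota_P$-ultrafilter, finishing the proof.
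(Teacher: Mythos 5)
Your proof is correct and follows essentially the same route as the paper: fix a nonprincipal multiplicatively idempotent Schur ultrafilter, take a Schur triple of generators $\alpha_{1}+\alpha_{2}=\alpha_{3}$, and multiply the first two by the star of a generator so that the sum becomes a product of two further generators, with multiplicative idempotency keeping everything in $G_{\U}$. The only (immaterial) difference is that the paper multiplies by $^{*}\eta$, the star of the first element of the triple, where you multiply by $^{*}\alpha_{3}$, the star of the sum; your added care about distinctness via heights is a welcome elaboration of a point the paper leaves implicit.
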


\begin{proof} Let $\U$ be a nonprincipal multiplicatively idempotent Schur ultrafilter.\\

{\bfseries Claim:} $\U$ is a $\iota_{P}$-ultrafilter.\\

In fact, let $\eta,\mu,\xi\in$$^{*}\N$ be mutually distinct generators of $\U$ with $\eta+\mu=\xi$; since $\U$ is multiplicatively idempotent, the elements $\eta\cdot$$^{*}\eta$, $\mu\cdot$$^{*}\eta$, $\xi\cdot$$^{*}\eta$ are in $G_{\U}$.\\
Consider the elements $\alpha=\eta\cdot$$^{*}\eta$, $\beta=\mu\cdot$$^{*}\eta$, $\gamma=\xi$, $\delta=$$^{*}\eta$, and observe that
\begin{center} $\alpha+\beta=\eta\cdot$$^{*}\eta+\mu\cdot$$^{*}\eta=\xi\cdot$$^{*}\eta=\gamma\cdot\delta$.\end{center}

In particular, as $\alpha,\beta,\gamma,\delta$ are mutually distinct generators of $\U$, then $\U$ is a $\iota_{P}$-ultrafilter, so $P(x,y,z,w)$ is injectively partition regular.\\\end{proof}

The importance of a multiplicative idempotent Schur ultrafilter is that it mixes an additive property (Schur's) with a multiplicative property (idempotence).\\
To generalize the result of Corollary 3.5.6, a natural idea is to search for other ultrafilters with similar features: good candidates are Folkman ultrafilters.

\begin{prop} There is a nonprincipal multiplicatively idempotent Folkman ultrafilter. \end{prop}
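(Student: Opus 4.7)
The plan is to mimic, in the multiplicative setting, the argument already used in Corollary 3.4.6 for Van der Waerden ultrafilters. The key observation is that Folkman's property is an infinite conjunction of multiplicatively invariant existential sentences, and consequently the set $\mathcal{F}$ of Folkman ultrafilters is a closed subsemigroup of $(\beta\mathbb{N}\setminus\{\mathfrak{U}_0\},\odot)$ to which Ellis's Theorem applies.

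Concretely, I would proceed as follows. First, recall from the proof of Theorem 3.2.14 that Folkman's property is equivalent to the conjunction $\bigwedge_{k\in\mathbb{N}} E(\phi_k(x_1,\dots,x_k,y_1,\dots,y_{2^k-1}))$, where each $E(\phi_k)$ is the existential, elementary sentence asserting the existence of $k$ witnesses whose finite sums all lie in the set. Observe that each $E(\phi_k)$ is multiplicatively invariant: if $\{a_1,\dots,a_k\}\subseteq A$ with $FS(\{a_1,\dots,a_k\})\subseteq A$, then for every nonzero $b\in\mathbb{N}$ one has $\{ba_1,\dots,ba_k\}\subseteq bA$ with $FS(\{ba_1,\dots,ba_k\})\subseteq bA$. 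Hence, by Theorem 3.3.5(4), the set $\mathcal{F}_k$ of $E(\phi_k)$-ultrafilters is a bilateral ideal in $(\beta\mathbb{N}\setminus\{\mathfrak{U}_0\},\odot)$, and by Corollary 3.4.4 it is closed.

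Second, combine these facts: $\mathcal{F}=\bigcap_{k\in\mathbb{N}}\mathcal{F}_k$, which is nonempty (Theorem 3.2.14 gives concrete Folkman ultrafilters as nonprincipal additive idempotents), closed in $\beta\mathbb{N}$, and still closed under $\odot$ because an intersection of ideals in a semigroup is again closed under multiplication (this is essentially the content of Corollary 3.3.9 for $\mathcal{F}$). Therefore $(\mathcal{F},\odot)$ is a compact right topological semigroup: compactness follows from $\mathcal{F}$ being a closed subset of the compact space $\beta\mathbb{N}$, associativity is inherited from $(\beta\mathbb{N},\odot)$, and right continuity of $\odot$ restricted to $\mathcal{F}$ is inherited from $(\beta\mathbb{N},\odot)$ (Proposition 1.1.22).

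Third, apply Ellis's Theorem (Theorem 1.1.26) to $(\mathcal{F},\odot)$ to obtain an element $\mathcal{U}\in\mathcal{F}$ with $\mathcal{U}\odot\mathcal{U}=\mathcal{U}$. Finally, I check nonprincipality: the only principal multiplicatively idempotent ultrafilters on $\mathbb{N}$ are $\mathfrak{U}_0$ and $\mathfrak{U}_1$, and neither lies in $\mathcal{F}$, since the singletons $\{0\}$ and $\{1\}$ fail Folkman's property already for $k=2$. Hence $\mathcal{U}$ is the desired nonprincipal multiplicatively idempotent Folkman ultrafilter. I do not anticipate a serious obstacle: the only genuine content beyond routine bookkeeping is the observation that each $E(\phi_k)$ is multiplicatively invariant, which unlocks the entire machinery of Sections 3.3 and 3.4.
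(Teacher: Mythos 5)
Your proposal is correct and follows essentially the same route as the paper: the paper's proof likewise observes that Folkman's property is not a single existential sentence, invokes Corollary 3.3.8 (that $\mathcal{F}$ is a closed bilateral ideal in $(\bN\setminus\{\mathfrak{U}_{0}\},\odot)$, proved there exactly by the multiplicative invariance of the sentences $E(\phi_{k})$ that you re-derive) together with the closedness established in Section 3.4, and then applies Ellis's Theorem. The only cosmetic difference is in how nonprincipality is checked -- the paper notes that every Folkman ultrafilter is already nonprincipal, while you rule out $\mathfrak{U}_{0}$ and $\mathfrak{U}_{1}$ directly -- and both arguments are valid.
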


\begin{proof} We observe that we can not directly apply Theorem 3.4.5, as Folkman's property is not expressed by an existential sentence. Nevertheless, we proved in Corollary 3.3.8 that the set $\mathcal{F}$ of Folkman ultrafilters on $\N$ is a bilateral ideal in $(\bN\setminus\{\mathfrak{U}_{0}\},\odot)$; in particular, as every Folkman ultrafilter is nonprincipal, $\mathcal{F}$ is a bilateral ideal in $(\bN\setminus\N,\odot)$, and we showed in Section 3.4 that it is a closed subset of $\bN$. Then, by Ellis's Theorem, there is a nonprincipal multiplicatively idempotent ultrafilter in $\mathcal{F}$ .\\\end{proof}

The existence of such an ultrafilter has some interesting consequences:

\begin{cor} For every natural number $n\geq 1$, the polynomial 

\begin{center} $P(x_{1},...,x_{n},y,z): (\sum_{i=1}^{n} x_{i})-yz$ \end{center}

is injectively partition regular. \end{cor}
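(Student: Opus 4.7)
The plan is to use the Bridge Theorem together with Proposition 3.5.7: let $\U$ be a nonprincipal multiplicatively idempotent Folkman ultrafilter, and I will argue that $\U$ is a $\iota_P$-ultrafilter. Since $\iota_P$ is an existential sentence, by the Bridge Theorem (Theorem 2.2.9) it suffices to exhibit mutually distinct generators $a_1,\dots,a_n,b,c\in G_{\U}$ satisfying $a_1+\dots+a_n=bc$. Once this is done, Theorem 1.2.6 yields the injective partition regularity of $P$.

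The construction will mirror the strategy of Corollary 3.5.6, but with Folkman's property playing the role of Schur's. First, I will invoke the fact that $\U$ is a Folkman ultrafilter: applying the Bridge Theorem to the (elementary existential) sentence $E(\phi_n(x_1,\dots,x_n,y_1,\dots,y_{2^n-1}))$ introduced in the proof of Theorem 3.2.14, I obtain mutually distinct $\xi_1,\dots,\xi_n\in G_{\U}$ such that all finite sums $\sum_{i\in I}\xi_i$ (for $\emptyset\neq I\subseteq\{1,\dots,n\}$) lie in $G_{\U}$; in particular $\sigma:=\xi_1+\dots+\xi_n\in G_{\U}$. Next, set $N=\max_{i\leq n}h(\xi_i)$, pick any $\gamma\in G_{\U}$ of height $1$, and define $\mu=S_N(\gamma)$, which lies in $G_{\U}$ by Proposition 2.5.11 and has height $N+1$. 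By Theorem 2.5.16, each pair $(\xi_i,\mu)$ belongs to the relation $R$ (take $k=N-h(\xi_i)\geq 0$ and $\gamma$ itself), hence is a tensor pair; the same holds for $(\sigma,\mu)$. Consequently, multiplicative idempotency of $\U$ gives $\xi_i\cdot\mu\in G_{\mathfrak{U}_{\xi_i}\odot\mathfrak{U}_{\mu}}=G_{\U\odot\U}=G_{\U}$, and similarly $\sigma\cdot\mu\in G_{\U}$.

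I will then set $a_i=\xi_i\cdot\mu$, $b=\sigma$, $c=\mu$. The equation $\sum a_i=\sigma\mu=bc$ is immediate from distributivity. For mutual distinctness, I will use the height computations of Proposition 2.5.11: $h(b)=h(\sigma)=N$, $h(c)=h(\mu)=N+1$, and $h(a_i)=\max(h(\xi_i),h(\mu))=N+1$. Thus $b$ lies strictly below $c$ and all the $a_i$'s (different heights force distinctness, since $S_N(\N)\subsetneq S_{N+1}(\N)$); the $a_i$'s are pairwise distinct because the $\xi_i$'s are and $\mu\neq 0$; and $a_i\neq c$ because $a_i=c$ would force $\xi_i=1$, which is impossible since $\xi_i$ is an infinite generator of the nonprincipal ultrafilter $\U$.

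The main obstacle I foresee is conceptual rather than technical: $\U$ is \emph{not} additively idempotent (no ultrafilter is both), so the usual trick from Theorem 3.2.14 of letting $\xi_i=S_{i-1}(\alpha)$ is unavailable. The remedy is exactly to decouple the two structural ingredients: Folkman's property supplies an additive configuration inside $G_{\U}$, while multiplicative idempotency, accessed through the tensor pair criterion of Theorem 2.5.16, lets us multiply the whole configuration by a sufficiently ``high'' common factor $\mu\in G_{\U}$ without leaving $G_{\U}$. Keeping track of the heights is what ultimately ensures that $b$, $c$, and the $a_i$'s end up at distinct levels of the $\omega$-hyperextension, which is what rules out accidental collisions.
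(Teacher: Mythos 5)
Your proof is correct and follows essentially the same route as the paper: a nonprincipal multiplicatively idempotent Folkman ultrafilter supplies mutually distinct generators $\eta_{1},\dots,\eta_{n}$ whose sum $\xi$ is again a generator, and multiplying the whole configuration by a tensor-paired generator keeps everything in $G_{\U}$ while producing the product $yz$ (the paper simply uses $^{*}\xi$ as the multiplier, where $\xi$ is the sum itself, rather than your separately chosen $\mu=S_{N}(\gamma)$). Your explicit height bookkeeping to verify mutual distinctness of $a_{1},\dots,a_{n},b,c$ is a welcome refinement of a point the paper merely asserts.
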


\begin{proof} Let $\U$ be a nonprincipal multiplicatively idempotent Folkman ultrafilter, and let $\eta_{1},...,\eta_{n},\xi\in$$^{*}\N$ be mutually distinct generators of $\U$ such that $\xi=\eta_{1}+...+\eta_{n}$. Observe that

\begin{center}  $\eta_{1}\cdot$$^{*}\xi+...+\eta_{n}\cdot$$^{*}\xi=\xi\cdot$$^{*}\xi$. \end{center}

If we consider the elements $\alpha_{1}=\eta_{1}\cdot$$^{*}\xi$,..., $\alpha_{n}=\eta_{n}\cdot$$^{*}\xi$, $\beta=\xi$, $\gamma=$$^{*}\xi$, we have that $\alpha_{1},...,\alpha_{n},\beta,\gamma$ are generators of $\U$ (as $\U$ is multiplicatively idempotent) and 

\begin{center} $(\sum_{i=1}^{n} \alpha_{i})-\beta\cdot\gamma=0$. \end{center}

So $\U$ is a $\iota_{P}$-ultrafilter, and $P(x_{1},...,x_{n},y,z)$ is an injectively partition regular polynomial.\\ \end{proof}

Actually, in the previous corollary there is no need to consider two variables $y,z$:

\begin{cor} For every natural numbers $n,m$ with $n+m\geq 3$, the polynomial 

\begin{center} $P(x_{1},...,x_{n},y_{1},...,y_{m}): (\sum_{i=1}^{n}x_{i})-\prod_{j=1}^{m}y_{j}$ \end{center}

is injectively partition regular. \end{cor}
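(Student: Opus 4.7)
The plan is to generalize the arguments of Corollaries 3.5.6 and 3.5.8 by invoking the nonprincipal multiplicatively idempotent Folkman ultrafilter $\mathfrak{U}$ whose existence was established in Proposition 3.5.7. By the Bridge Theorem (Theorem 2.2.9), to prove that $\mathfrak{U}$ is an $\iota_P$-ultrafilter it suffices to exhibit $n+m$ mutually distinct generators $\alpha_1,\ldots,\alpha_n,\beta_1,\ldots,\beta_m\in G_{\mathfrak{U}}$ satisfying the equation $\sum_{i=1}^{n}\alpha_{i}=\prod_{j=1}^{m}\beta_{j}$.

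For the construction, first apply the Folkman property of $\mathfrak{U}$ (in the style of Theorem 3.2.14, via the Bridge Theorem) to obtain $n$ mutually distinct generators $\eta_{1},\ldots,\eta_{n}\in {}^{*}\N\cap G_{\mathfrak{U}}$ whose sum $\xi:=\eta_{1}+\cdots+\eta_{n}$ is itself a generator of $\mathfrak{U}$. Then set
\[
\beta_{j}:=S_{j-1}(\xi)\quad(1\le j\le m),\qquad \alpha_{i}:=\eta_{i}\cdot{}^{*}\xi\cdot{}^{**}\xi\cdots S_{m-1}(\xi)\quad(1\le i\le n).
\]
A direct computation yields
\[
\sum_{i=1}^{n}\alpha_{i}=\Bigl(\sum_{i=1}^{n}\eta_{i}\Bigr)\prod_{k=1}^{m-1}S_{k}(\xi)=\xi\prod_{k=1}^{m-1}S_{k}(\xi)=\prod_{j=1}^{m}\beta_{j},
\]
which is precisely the required identity.

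The remaining verifications are routine. The $\beta_{j}$'s belong to $G_{\mathfrak{U}}$ by Proposition 2.5.11. For the $\alpha_{i}$'s, one rewrites $\alpha_{i}=\eta_{i}\dia\xi\dia\xi\dia\cdots\dia\xi$ with $m-1$ copies of $\xi$ and applies Proposition 3.2.3(4) iteratively: multiplicative idempotence of $\mathfrak{U}$ gives $\gamma\dia\xi\sim_{u}\gamma$ for every $\gamma\in G_{\mathfrak{U}}$, so $\alpha_{i}\in G_{\mathfrak{U}}$. For mutual distinctness, observe that the heights $h(\beta_{j})=j$ are pairwise distinct, $h(\alpha_{i})=m$ for every $i$, the $\alpha_{i}$'s differ because the $\eta_{i}$'s do, and $\alpha_{i}>\beta_{m}=S_{m-1}(\xi)$ since the cofactor $\eta_{i}\cdot{}^{*}\xi\cdots S_{m-2}(\xi)$ is infinite.

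The main point requiring care is the bookkeeping in the boundary cases permitted by the hypothesis $n+m\ge 3$. When $n=1$ the Folkman step is essentially vacuous ($\eta_{1}=\xi$), and it is precisely the forced inequality $m\ge 2$ that supplies the extra factor ${}^{*}\xi$ needed to separate $\alpha_{1}$ from $\beta_{1}$. When $m=1$ the higher-star factors disappear and the construction collapses to the purely additive identity $\eta_{1}+\cdots+\eta_{n}=\xi$, so distinctness of $\xi$ from each $\eta_{i}$ must be ensured by the forced inequality $n\ge 2$. Beyond these checks, no genuine obstacle is expected, since the construction is a direct combinatorial extrapolation of the proofs already given for Corollaries 3.5.6 and 3.5.8.
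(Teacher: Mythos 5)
Your proposal is correct and follows essentially the same route as the paper: take a nonprincipal multiplicatively idempotent Folkman ultrafilter, extract distinct generators $\eta_{1},\ldots,\eta_{n}$ summing to a generator $\xi$, and set $\beta_{j}=S_{j-1}(\xi)$ and $\alpha_{i}=\eta_{i}\cdot{}^{*}\xi\cdots S_{m-1}(\xi)$, exactly as in the paper's proof of this corollary. The only difference is that you spell out the distinctness, membership, and boundary-case checks that the paper leaves implicit.
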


\begin{proof} Let $\U$ be a multiplicatively idempotent Folkman ultrafilter, and let $\eta_{1},...\eta_{n},\xi\in$$^{*}\N$ be mutually distinct generators of $\U$ such that $\xi=\eta_{1}+...+\eta_{n}$, and consider $\mu=$$^{*}\xi\cdot$$^{**}\xi\cdot...\cdot$$^{*_{m-1}}\xi$.\\ 
Pose \begin{center} $\alpha_{1}=\eta_{1}\cdot\mu$,..., $\alpha_{n}=\eta_{n}\cdot\mu$,\end{center}

and \begin{center} $\beta_{1}=\xi$, $\beta_{2}=$$^{*}\xi$,..., $\beta_{m}=$$^{*_{m-1}}\xi$.\end{center}

Since $\U$ is multiplicatively idempotent, the mutually distinct elements $\alpha_{1},...,\alpha_{n},\beta_{1},...,\beta_{m}$ are in $G_{\U}$, and 

\begin{center} $\sum_{i=1}^{n} \alpha_{i}=\prod_{j=1}^{m} \beta_{j}$. \end{center}

So $\U$ is a $\iota_{P}$-ultrafilter, and $P(x_{1},...,x_{n},y_{1},...,y_{m})$ is an injectively partition regular polynomial.\\ \end{proof}

The above result is also proved, in a different way, in $\cite{rif23}$.\\
The next generalization seems less straightforward: in the polynomial $(\sum_{i=1}^{n}x_{i})-\prod_{j=1}^{m}y_{j}$ we can substitute the sum of variables with a sum of monomials and retain the injective partition regularity:

\begin{cor} For every natural number $n\geq 2\in \N$, for every positive natural numbers $l_{1},...,l_{n},m\in \N$, the polynomial

\begin{center} $P(x_{1,1},...,x_{n,l_{n}},y_{1},...,y_{m}): \prod_{i_{1}=1}^{l_{1}}x_{1,i_{1}}+...+\prod_{i_{n}=1}^{l_{n}}x_{i_{n},n}-\prod_{j=1}^{m}y_{j}$ \end{center}

is injectively partition regular. \end{cor}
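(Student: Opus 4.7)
The plan is to extend the construction of Corollary 3.5.9, replacing the single ``bulk'' factor $\mu$ by a product of many star-iterations of a single generator, and then to distribute these factors into $n+1$ different partition patterns --- one for each of the $n$ summands on the left-hand side and one for the right-hand side of $P=0$.

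I would start by fixing a multiplicatively idempotent Folkman ultrafilter $\U$ (which exists by Proposition 3.5.7) and, by applying Folkman's property to $\U$, extracting mutually distinct generators $\eta_{1},\ldots,\eta_{n},\xi \in G_{\U}\cap \sN$ at height $1$, with $\xi=\sum_{k}\eta_{k}$. For $N$ sufficiently large (e.g.\ $N=\sum_{k}l_{k}+m$) I would set $\omega_{r}=S_{r}(\xi)$ for $r=0,1,\ldots,N$ and $\mu=\omega_{1}\omega_{2}\cdots\omega_{N}$; by Corollary 2.5.17 together with the multiplicative idempotence of $\U$, every product $\prod_{r\in S}\omega_{r}$ over a nonempty $S\subseteq\{0,1,\ldots,N\}$ is again a generator of $\U$. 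Next, for each $k$ I would pick a set partition $\mathcal{P}_{k}=\{B_{k,1},\ldots,B_{k,l_{k}}\}$ of $\{1,\ldots,N\}$ into $l_{k}$ nonempty blocks and set $\alpha_{k,1}=\eta_{k}\cdot\prod_{r\in B_{k,1}}\omega_{r}$ and $\alpha_{k,i}=\prod_{r\in B_{k,i}}\omega_{r}$ for $2\le i\le l_{k}$. Analogously I would pick a partition $\mathcal{Q}=\{C_{1},\ldots,C_{m}\}$ of $\{0,1,\ldots,N\}$ into $m$ nonempty blocks and set $\beta_{j}=\prod_{r\in C_{j}}\omega_{r}$. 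Regardless of the partitions chosen, $\prod_{i}\alpha_{k,i}=\eta_{k}\mu$ and $\prod_{j}\beta_{j}=\omega_{0}\mu=\xi\mu$, so $\sum_{k}\prod_{i}\alpha_{k,i}=(\sum_{k}\eta_{k})\mu=\xi\mu=\prod_{j}\beta_{j}$, and all the values involved are generators of $\U$. By the Bridge Theorem it then only remains to verify mutual distinctness in order to conclude that $\U$ is a $\iota_{P}$-ultrafilter.

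The main obstacle is precisely this distinctness. Since $\sinfZ$ is an integral domain (by transfer of the fact that $\Z$ is), a short induction on $|S|+|T|$ --- dividing out the common $\omega_{\max(S)}$ when $\max(S)=\max(T)$ and comparing heights otherwise --- shows that for distinct nonempty $S,T\subseteq\{0,1,\ldots,N\}$ one has $\prod_{r\in S}\omega_{r}\ne\prod_{r\in T}\omega_{r}$, so distinct blocks produce distinct products. Moreover, each $\eta_{k}$ sits alone at height $1$ whereas every $\omega$-product has height $\geq 2$, so the $\eta_{k}$'s are automatically separated from all the $\omega$-products, and distinct $\eta_{k}$'s keep the $\alpha_{k,1}$'s mutually distinct as well. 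It therefore suffices to choose $\mathcal{P}_{1},\ldots,\mathcal{P}_{n},\mathcal{Q}$ so that all their blocks are pairwise distinct as subsets; a concrete realization, valid once $N$ is large enough, is to reserve pairwise disjoint collections of singletons from $\{1,\ldots,N\}$ (one batch of $l_{k}-1$ singletons for each $\mathcal{P}_{k}$ and one batch of $m-1$ singletons for $\mathcal{Q}$) and to let each partition's remaining ``large'' block be the complement of its own singletons, with $\mathcal{Q}$'s large block additionally containing $0$. The small blocks are then all distinct singletons, the large blocks omit different sets of singletons, and the large block of $\mathcal{Q}$ is the unique one containing $0$. A brief combinatorial check then finishes the verification.
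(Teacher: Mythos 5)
Your construction is correct, but it is not the route the paper takes: the paper explicitly declines to prove this corollary directly and instead derives it from the more general Theorem 3.5.13, applied to the linear polynomial $x_{1}+\dots+x_{n}-z$ (injectively partition regular by Theorem 3.5.4 or by Folkman's theorem) with pairwise disjoint index sets $F_{i}$ of sizes $l_{i}-1$ and $m-1$, so that each monomial $x_{i}Q_{F_{i}}(\mathbf{y})$ becomes the desired product of $l_{i}$ (resp.\ $m$) distinct variables. Your proof is instead a direct, self-contained extension of Corollary 3.5.9: you keep the same engine (a multiplicatively idempotent Folkman ultrafilter, a bulk product $\mu$ of star-iterates of a single generator, and the identity ``each monomial evaluates to (linear-solution generator)$\cdot\mu$''), but you realize the injectivity of the $y$-values by partitioning the factors $\omega_{r}=S_{r}(\xi)$ into blocks, one block per variable, rather than by invoking the exclusive-variable compensation of Theorem 3.5.13. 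What the paper's route buys is generality and brevity; what your route buys is an explicit and honest treatment of mutual distinctness, which the paper's proof of Theorem 3.5.13 in fact glosses over (it never checks that the $\eta_{i}$'s and $\beta_{j}$'s are pairwise distinct). Your distinctness argument does need the full strength of the height-plus-cancellation analysis you sketch --- the phrase ``distinct $\eta_{k}$'s keep the $\alpha_{k,1}$'s mutually distinct'' only covers the case where the two large blocks coincide, and separating $\eta_{k}\prod_{r\in B}\omega_{r}$ from a plain product $\prod_{r\in C}\omega_{r}$ requires cancelling common factors and then comparing heights of the disjoint remainders --- but all of these cases do close, using that $h(\prod_{r\in S}\omega_{r})=1+\max S$ and that cancellation transfers from $\Z$ to $^{\bullet}\Z$, so the ``brief combinatorial check'' you defer is genuinely brief.
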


We do not prove this result, as it is a straightforward corollary of Theorem 3.5.13, which is a result by far more general. The result that we need in order to prove Theorem 3.5.13 is the following:

\begin{prop} If $P(x_{1},...,x_{n})$ is an homogeneous injectively partition regular polynomial, then the set 

\begin{center} $\mathcal{I}_{P}=\{\U\in\bN\mid$ $\U$ is a $\iota_{P}$-ultrafilter$\}$ \end{center}

is a compact bilateral ideal in $(\bN\setminus\N,\odot)$; in particular, it contains a nonprincipal multiplicatively idempotent ultrafilter.\\
Similarly, if $P(x_{1},...,x_{n})$ is an homogeneous partition regular polynomial, the set

\begin{center} $\mathcal{S}_{P}=\{\U\in\bN\mid \U$ is a $\sigma_{P}$-ultrafilter$\}$\end{center}

is a compact bilateral ideal in $(\bN,\odot)$, and it contains a nonprincipal multiplicatively idempotent ultrafilter.\end{prop}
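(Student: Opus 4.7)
The plan is to package $\iota_{P}$ and $\sigma_{P}$ as first-order existential elementary sentences that, thanks to the homogeneity of $P$, are multiplicatively invariant, and then to invoke the general results of Corollary 3.3.7 and Corollary 3.4.4 together with Ellis's Theorem.

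First I would verify multiplicative invariance. Let $d$ be the common degree of the monomials of $P$. For any $c\in\N\setminus\{0\}$ and any $(a_{1},\ldots,a_{n})\in\N^{n}$, homogeneity gives $P(ca_{1},\ldots,ca_{n})=c^{d}P(a_{1},\ldots,a_{n})$, so $P(a_{1},\ldots,a_{n})=0$ iff $P(ca_{1},\ldots,ca_{n})=0$; multiplication by $c\neq 0$ also preserves positivity and mutual distinctness of the $a_{i}$'s. Hence both $\iota_{P}$ and $\sigma_{P}$ are multiplicatively invariant, and they are elementary existential sentences (their only parameters are the integer coefficients of $P$). They are weakly partition regular on $\N$ by the assumption of (injective) partition regularity of $P$, so $\mathcal{I}_{P}$ and $\mathcal{S}_{P}$ are nonempty.

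Next I would combine Corollary 3.3.7(4), which states that the set of $\varphi$-ultrafilters for any elementary existential multiplicatively invariant $\varphi$ is a bilateral ideal in $(\bN\setminus\{\mathfrak{U}_{0}\},\odot)$, with Corollary 3.4.4, which ensures that this set is closed (hence compact) in $\bN$. Applied to $\iota_{P}$ (resp.\ $\sigma_{P}$), this already yields that $\mathcal{I}_{P}$ (resp.\ $\mathcal{S}_{P}$) is a compact bilateral ideal in $(\bN\setminus\{\mathfrak{U}_{0}\},\odot)$. For the specific statement about $\mathcal{I}_{P}$ I would then rule out principal ultrafilters: if $\U=\mathfrak{U}_{m}$ then $\{m\}\in\U$, but $\{m\}$ cannot contain $n\geq 2$ mutually distinct nonzero naturals, so $\U\notin\mathcal{I}_{P}$. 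Therefore $\mathcal{I}_{P}\subseteq\bN\setminus\N$, and $\mathcal{I}_{P}$ is a compact bilateral ideal in $(\bN\setminus\N,\odot)$.

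Finally, to extract a nonprincipal multiplicatively idempotent element, I would apply Ellis's Theorem (Theorem 1.1.27) to the compact right topological semigroup $(\mathcal{I}_{P},\odot)$: as a closed subsemigroup of $(\bN,\odot)$ it inherits right continuity and compactness, and therefore contains an idempotent; since $\mathcal{I}_{P}\subseteq\bN\setminus\N$, this idempotent is automatically nonprincipal. The only genuine subtlety is in the analogous statement for $\mathcal{S}_{P}$, where principal ultrafilters $\mathfrak{U}_{a}$ with $P(a,\ldots,a)=0$ may belong to $\mathcal{S}_{P}$; the expected obstacle is ensuring that Ellis produces a \emph{nonprincipal} idempotent. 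This is handled by observing that the only principal multiplicatively idempotent ultrafilters are $\mathfrak{U}_{0}$ and $\mathfrak{U}_{1}$, and by applying Ellis to the compact subsemigroup $\mathcal{S}_{P}\cap(\bN\setminus\N)$, which is still a closed two-sided ideal in $(\bN\setminus\N,\odot)$ because $\bN\setminus\N$ is itself a two-sided ideal in $(\bN\setminus\{\mathfrak{U}_{0}\},\odot)$; this intersection is nonempty since $\sigma_{P}$ admits infinite monochromatic solution sets, giving nonprincipal $\sigma_{P}$-ultrafilters.
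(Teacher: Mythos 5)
Your proposal follows essentially the same route as the paper: multiplicative invariance of $\iota_{P}$ and $\sigma_{P}$ from homogeneity, then Corollary 3.3.7 and Corollary 3.4.4 to get a compact bilateral ideal, then Ellis's Theorem (the paper packages these last steps as Theorem 3.4.5), and finally a nonprincipality observation for $\mathcal{I}_{P}$. All of that is fine.

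The one step you leave under-justified is the nonemptiness of $\mathcal{S}_{P}\cap(\bN\setminus\N)$, which you dispatch with the phrase ``$\sigma_{P}$ admits infinite monochromatic solution sets, giving nonprincipal $\sigma_{P}$-ultrafilters.'' As stated this is an assertion, not an argument: weak partition regularity of $\sigma_{P}$ only guarantees that \emph{some} $\sigma_{P}$-ultrafilter exists, and for a general existential property a weakly partition regular family can a priori contain only principal ultrafilters (think of ``$\exists x\, (x=7)$''), so you must use homogeneity again at this point. The paper does so explicitly: if the only $\sigma_{P}$-ultrafilters were principal, pick $k$ with $\mathfrak{U}_{k}\in\mathcal{S}_{P}$, so $P(k,\ldots,k)=0$; by homogeneity $P(k\alpha,\ldots,k\alpha)=0$ for any infinite $\alpha$, whence $\mathfrak{U}_{k\alpha}$ is a nonprincipal element of $\mathcal{S}_{P}$, a contradiction. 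An equally quick repair is available from the ideal property you have already established: $\mathfrak{U}_{0}\notin\mathcal{S}_{P}$ (since $\{0\}$ has no nonzero solution), so taking any $\U\in\mathcal{S}_{P}$ and any nonprincipal $\V$, the product $\U\odot\V$ lies in $\mathcal{S}_{P}$ and is nonprincipal. Either patch closes the gap; the rest of your argument, including ruling out principal ultrafilters from $\mathcal{I}_{P}$ via the distinctness clause of $\iota_{P}$, matches the paper's proof.
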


\begin{proof} The result for $\mathcal{I}_{P}$ follows as the formula $\iota_{P}$ is multiplicatively invariant, since $P$ is homogeneous. Then observe that the elements of $\mathcal{I}_{P}$ are necessarily nonprincipal, and apply Theorem 3.4.5.\\
As for $\mathcal{S}_{P}$, the property of compact bilateral ideal follows by Corollary 3.3.7 and Corollary 3.4.4, since the formula $\sigma_{P}$ is multiplicatively invariant. $\mathcal{S}_{P}$ contains a nonprincipal multiplicatively idempotent ultrafilter because $\mathcal{S}_{P}\cap (\bN\setminus\N)\neq\emptyset$: in fact, by contradition suppose that every $\sigma_{P}$-ultrafilter is principal; if $k$ is a natural number such that $\mathfrak{U}_{k}\in \mathcal{S}_{P}$, by definition $P(k,....,k)=0$ and, as $P(x_{1},...,x_{n})$ is homogeneous, if $\alpha$ is any infinite hypernatural number then $P(k\alpha,...,k\alpha)=0$, so $\mathfrak{U}_{k\alpha}\in\mathcal{S}_{P}\cap (\bN\setminus\N)$, absurd.\\
In particular, $\mathcal{S}_{P}\cap (\bN\setminus\N)$ is a nonempty compact bilateral ideal in $(\bN\setminus\N,\odot)$, so it contains a nonprincipal multiplicatively idempotent ultrafilter.\\\end{proof}

\begin{defn} Let $n$ be a positive natural number, and $\{y_{1},...,y_{n}\}$ a set of mutually distinct variables. For every finite set $F\subseteq\{1,..,n\}$, we denote by $Q_{F}(y_{1},...,y_{n})$ the monomial

\begin{center} $Q_{F}(y_{1},...,y_{n})=\begin{cases} \prod_{j\in F} y_{j}, & \mbox{if  } F\neq \emptyset;\\ 1, & \mbox{if  } F=\emptyset.\end{cases}$ \end{center}

\end{defn}

\begin{thm} Let $k\geq 3$ be a natural number, $P(x_{1},...,x_{k})=\sum_{i=1}^{k} a_{i}x_{i}$ an injectively partition regular polynomial, and $n$ a positive natural number. Then, for every $F_{1},...,F_{k}\subseteq\{1,..,n\}$, the polynomial

\begin{center} $R(x_{1},...,x_{k},y_{1},...,y_{n})=\sum_{i=1}^{k} a_{i}x_{i}Q_{F_{i}}(y_{1},...,y_{n})$ \end{center}

is injectively partition regular. \end{thm}

\begin{proof} Since $P(x_{1},...,x_{k})$ is homogeneous and partition regular, by Proposition 3.5.11 it follows that there is a nonprincipal multiplicatively idempotent $\iota_{P}$-ultrafilter $\U$. Let $\alpha_{1},...,\alpha_{k}\in$$^{*}\N$ be mutually distinct generators of $\U$ such that $P(\alpha_{1},...,\alpha_{k})=0$, and let $\beta\in$$^{*}\N$ be any generator of $\U$. For every index $j\leq n$, pose

\begin{center} $\beta_{j}=S_{j}(\beta)$. \end{center}

Observe that, for every index $j\leq n$, $\beta_{j}\in G_{\U}$. Pose, for every index $i\leq k$,

\begin{center} $\eta_{i}=\alpha_{i}\cdot (\prod_{j\notin F_{i}} \beta_{j})$. \end{center}

Since $\U$ is multiplicatively idempotent, $\eta_{i}\in G_{\U}$ for every index $i\leq k$.\\

{\bfseries Claim:} $\sum_{i=1}^{k} a_{i}\eta_{i}Q_{F_{i}}(\beta_{1},...,\beta_{n})=0$.\\

In fact, 

\begin{center} $\sum_{i=1}^{k}a_{i}\eta_{i}Q_{F_{i}}(\beta_{1},...,\beta_{n})=\sum_{i=1}^{k}a_{i}\alpha_{i}(\prod_{j\notin F_{i}}\beta_{j})(\prod_{j\in F_{i}}\beta_{j})=$\\\vspace{0.3cm}$=\sum_{i=1}^{k}a_{i}\alpha_{i}(\prod_{j=1}^{n}\beta_{j})=(\prod_{j=1}^{n}\beta_{j})\sum_{i=1}^{k}a_{i}\alpha_{i}=0$.\end{center}\end{proof}

Three observations: 
\begin{enumerate}
	\item as a consequence of the proof, $\U$ is both a $\iota_{P}$ and a $\iota_{R}$-ultrafilter;
	\item if the hypothesis on the injective partition regularity of $P(x_{1},...,x_{k})$ is replaced by the hypothesis that $P(x_{1},...,x_{k})$ is partition regular, this same proof shows that in this case $P(x_{1},...,x_{n})$ is partition regular;
	\item observe that some of the variables $y_{1},...,y_{n}$ may appear in more than a monomial: e.g., the polynomial \begin{center}$P(x_{1},x_{2},x_{3},x_{4},y_{1},y_{2},y_{3}): x_{1}y_{1}y_{2}+4x_{2}y_{1}y_{2}y_{3}-3x_{3}y_{3}-2x_{4}y_{1}$\end{center} satisfies the hypothesis of the above theorem, so it is injectively partition regular.
\end{enumerate}

In view of this result, there is a particular class of polynomials that are partition regular:

\begin{defn} Let $P(x_{1},...,x_{n}):\sum_{i=1}^{k} a_{i}M_{i}(x_{1},...,x_{n})$ be a polynomial in $\Z[\mathbf{X}]$, and let $M_{1}(x_{1},...,x_{n}),...,M_{k}(x_{1},...,x_{n})$ be its distinct monomials. $P(x_{1},...,x_{n})$ admits a {\bfseries set of exclusive variables} $\{v_{1},...,v_{k}\}$ if $v_{i}$ appears only in the monomial $M_{i}(x_{1},...,x_{n})$.
In this case we say that the variable $v_{i}$ is {\bfseries exclusive} for $P(x_{1},...,x_{n})$, and the set $\{v_{1},...,v_{k}\}$ of exclusive variables is denoted by $V_{escl}(P)$.\end{defn}

E.g., the polynomial $P(x,y,z,t,w): xyz+yt-w$ admits $\{x,t,w\}$ or $\{z,t,w\}$ as sets of exlusive variables, while the polynomial $P(x,y,z): xy+yz-xz$ does not admit a set of exclusive variables.

\begin{defn} Given a polynomial $P(x_{1},...,x_{n}): \sum_{i=1}^{k} a_{i}M_{i}(x_{1},...,x_{n})$, where $M_{1}(x_{1},...,x_{n}),...,M_{k}(x_{1},...,x_{n})$ are its distinct monomials, the {\bfseries reduct of $P$} $($notation Red$(P))$ is the polynomial:
	
	\begin{center} Red$(P)(y_{1},...,y_{k})$: $\sum_{i=1}^{k} a_{i}y_{i}$. \end{center}
	
\end{defn}

E.g., if $P(x_{1},...,x_{n})$ is an homogeneous linear polynomial then 

\begin{center}Red($P$)$(y_{1},...,y_{n})=P(y_{1},...,y_{n})$;\end{center}

if $P(x,y,z,t,w)$ is the polynomial $xy+4yz-2t+yw$, then 

\begin{center} Red($P$)$(y_{1},y_{2},y_{3},y_{4})=y_{1}+4y_{2}-2y_{3}+y_{4}$.\end{center}

As a consequence of Theorem 3.5.13 we obtain the following result:

\begin{cor} Let $P(x_{1},...,x_{n})=\sum_{i=1}^{k}a_{i}M_{i}(x_{1},...,x_{n})$ be a polynomial with partial degree one, and suppose that

\begin{enumerate}
	\item $P$ admits a set of exclusive variables;
	\item Red$(P)$ is injectively partition regular.
\end{enumerate}

Then $P(x_{1},...,x_{n})$ is an injectively partition regular polynomial.
\end{cor}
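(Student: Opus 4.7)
The plan is to reduce the statement directly to Theorem 3.5.13 by rewriting $P(x_1,\ldots,x_n)$ in the form required there. First I would dispatch the small cases: since $\mathrm{Red}(P)(y_1,\ldots,y_k) = \sum_{i=1}^k a_i y_i$ is injectively partition regular by hypothesis, and a linear polynomial in fewer than three variables cannot be injectively partition regular (for $k=1$ no positive solution exists unless $a_1=0$; for $k=2$ the only solutions to $a_1 y_1 + a_2 y_2 = 0$ in $\N\setminus\{0\}$ require $a_1 = -a_2$ and $y_1 = y_2$, failing injectivity), we automatically have $k \geq 3$, so Theorem 3.5.13 is applicable.

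Next, I would fix a set of exclusive variables $V_{escl}(P) \supseteq \{v_1,\ldots,v_k\}$ with $v_i$ exclusive to monomial $M_i$, and let $y_1,\ldots,y_m$ enumerate the remaining variables in $\{x_1,\ldots,x_n\}\setminus\{v_1,\ldots,v_k\}$. Since $P$ has partial degree one, each monomial $M_i(x_1,\ldots,x_n)$ is a squarefree product of distinct variables (times the coefficient $a_i$ which I separate out). Because $v_i$ is a factor of $M_i$ and $v_i$ does not appear in any other monomial, there exists a subset $F_i \subseteq \{1,\ldots,m\}$ such that
\[
M_i(x_1,\ldots,x_n) = v_i \cdot Q_{F_i}(y_1,\ldots,y_m).
\]
Substituting back yields
\[
P(x_1,\ldots,x_n) = \sum_{i=1}^{k} a_i \, v_i \, Q_{F_i}(y_1,\ldots,y_m),
\]
which is exactly the form of the polynomial $R(x_1,\ldots,x_k,y_1,\ldots,y_m)$ in the statement of Theorem 3.5.13, with the role of the linear polynomial played by $\sum_{i=1}^k a_i v_i = \mathrm{Red}(P)(v_1,\ldots,v_k)$.

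Since $\mathrm{Red}(P)$ is injectively partition regular by hypothesis and $k \geq 3$, Theorem 3.5.13 yields the injective partition regularity of $P$. There is no real obstacle here: the substantive work was done in Theorem 3.5.13, and this corollary just recognizes that any partial-degree-one polynomial admitting a full set of exclusive variables is already in the shape $\sum_i a_i x_i Q_{F_i}(\mathbf{y})$. The only point requiring a moment's care is observing that the exclusive variables allow a clean factorization $M_i = v_i \cdot Q_{F_i}(y_1,\ldots,y_m)$, which is immediate from the squarefree structure enforced by the partial-degree-one hypothesis.
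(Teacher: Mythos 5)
Your proof is correct and follows essentially the same route as the paper: relabel the exclusive variables as the $x_i$'s, observe that partial degree one forces each monomial to factor as $v_i\cdot Q_{F_i}(y_1,\ldots,y_m)$, and invoke Theorem 3.5.13. Your preliminary check that the hypothesis on $\mathrm{Red}(P)$ forces $k\geq 3$ (so that Theorem 3.5.13 actually applies) is a small point the paper's proof passes over silently, and it is a welcome addition.
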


\begin{proof} By reordering, if necessary, we can suppose that the exclusive variables are $x_{1},...,x_{k}$, and that the variable $x_{j}$ is exclusive for the monomial $M_{j}(x_{1},...,x_{n})$. Then, by hypothesis,

\begin{center} $\sum_{i=1}^{k}a_{i}x_{i}$ \end{center}

is partition regular as it is, by renaming the variables, equal to Red($P$). If $F=\{1,...,n-k\}$, for $i\leq k$ we put

\begin{center} $F_{i}=\{j\in F\mid x_{j+k}$ divides $M_{i}(x_{1},...,x_{n})\}.$\end{center}
Then if we put, for $j\leq n-k$, $y_{j}=x_{i+k}$, $P(x_{1},...,x_{n})$ is, by renaming the variables, equal to

\begin{center} $\sum_{i=1}^{k}a_{i}x_{i}Q_{F_{i}}(y_{1},...,y_{n-k})$. \end{center}

The above polynomial, as a consequence of Theorem 3.5.13, is injectively partition regular, and this entails the thesis.\\ \end{proof}

A natural question is if the implication in the above theorem can be reversed. Following the ideas in the proof of Rado's Theorem for linear equations, we prove that, if $P(x_{1},...,x_{n})$ is homogeneous and partition regular, then Red($P$) is partition regular.

\begin{lem} If $p$ is a prime number in $\N$ and $\alpha,\beta$ are two $\sim_{u}$-equivalent hypernatural numbers, then smod$(p)(\alpha)$=smod$(p)(\beta)$, where

\begin{center} smod$(p)(\alpha)=i$ if and only if, if $\gamma$ is the greatest exponent such that $p^{\gamma}\mid\alpha$, and $\alpha=p^{\gamma}\delta$, then $\delta\equiv i\mod p$. \end{center}

\end{lem}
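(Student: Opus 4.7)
The plan is to reduce the claim to a direct application of the defining property of the $\sim_u$-equivalence, namely that $\alpha \sim_u \beta$ precisely when $\alpha$ and $\beta$ belong to exactly the same hyperextensions $^{\bullet}A$ of subsets $A \subseteq \N$. First I would observe that $\mathrm{smod}(p)$ is a function defined on $\N \setminus \{0\}$ with values in the finite set $\{1, 2, \ldots, p-1\}$, so that by transfer it extends to a function on $^{\bullet}\N \setminus \{0\}$ taking values in $\{1, \ldots, p-1\}$ (finite sets being equal to their hyperextensions).

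The key step is then to rewrite the condition $\mathrm{smod}(p)(\eta) = i$ in terms of membership in a fixed subset of $\N$. For each $i \in \{1, \ldots, p-1\}$, set
\begin{center} $A_i = \{n \in \N \setminus \{0\} \mid \mathrm{smod}(p)(n) = i\}$. \end{center}
By construction, $\{A_1, \ldots, A_{p-1}\}$ is a finite partition of $\N \setminus \{0\}$. By the characterization of $^{\bullet}f$ via the graph of $f$ (together with the fact that $i$ is standard, so $^{\bullet}i = i$), we have for every nonzero $\eta \in {}^{\bullet}\N$ that $^{\bullet}\mathrm{smod}(p)(\eta) = i$ if and only if $\eta \in {}^{\bullet}A_i$. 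Equivalently, by transfer, $\{{}^{\bullet}A_1, \ldots, {}^{\bullet}A_{p-1}\}$ is a partition of $^{\bullet}\N \setminus \{0\}$, and $^{\bullet}\mathrm{smod}(p)(\eta)$ is the unique index $i$ for which $\eta$ lies in $^{\bullet}A_i$.

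Now the conclusion is immediate from the hypothesis $\alpha \sim_u \beta$: this means $\mathfrak{U}_\alpha = \mathfrak{U}_\beta$, so for every subset $A \subseteq \N$ one has $\alpha \in {}^{\bullet}A$ iff $A \in \mathfrak{U}_\alpha$ iff $A \in \mathfrak{U}_\beta$ iff $\beta \in {}^{\bullet}A$. Applying this to each $A_i$ gives $\alpha \in {}^{\bullet}A_i \iff \beta \in {}^{\bullet}A_i$ for every $i \in \{1, \ldots, p-1\}$, and since the index $i$ with $\alpha \in {}^{\bullet}A_i$ is unique (and similarly for $\beta$), we conclude $\mathrm{smod}(p)(\alpha) = \mathrm{smod}(p)(\beta)$.

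There is no real obstacle; this is essentially a one-line consequence of the bridge map, the only point requiring care being the need to express $\mathrm{smod}(p)$ through its level sets $A_i$ so that the defining property of $\sim_u$-equivalence can be applied directly. One should also briefly note that $\alpha \sim_u \beta$ excludes the degenerate case $\alpha = 0 \neq \beta$, since $\{0\}$ is a subset of $\N$ and hence distinguishes the two sides if only one of $\alpha, \beta$ equals $0$; so both are nonzero (unless both equal $0$, in which case there is nothing to prove), and the extended $\mathrm{smod}(p)$ is well-defined at both.
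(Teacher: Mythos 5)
Your proof is correct and is essentially identical to the paper's: the paper likewise partitions $\N$ into the level sets $C_{i}=\{n\in\N\mid smod(p)(n)=i\}$, notes that exactly one $C_{i}$ lies in $\mathfrak{U}_{\alpha}=\mathfrak{U}_{\beta}$, and concludes that $\alpha$ and $\beta$ both lie in $^{\bullet}C_{i}$. Your additional remarks about excluding $0$ and extending $smod(p)$ by transfer are fine but not needed beyond what the paper already does.
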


\begin{proof} We observe that, given the prime number $p$ in $\N$, then $\N=\bigcup_{i=1}^{p-1} C_{i}$, where $C_{i}=\{n\in\N\mid smod(p)(n)=i\}$. Clearly, if $i\neq j$ then $C_{i}\cap C_{j}=\emptyset$, so there exists exactly one index $i$ such that $C_{i}\in \mathfrak{U}_{\alpha}$; in particular, $\alpha, \beta\in$$^{\bullet}C_{i}$, so $smod(p)(\alpha)=smod(p)(\beta)=i$. \\ \end{proof}

\begin{thm} If $P(x_{1},....,x_{n})$ is an homogeneous partition regular polynomial then Red$(P)$ is partition regular. \end{thm}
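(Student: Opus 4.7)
The plan is to argue by contradiction using Theorem 1.3.15 (Rado's characterization for homogeneous linear polynomials with nonzero coefficients) together with a $\mathrm{smod}(p)$-coloration argument in the spirit of the proof of that theorem. Write $P(x_{1},\ldots,x_{n})=\sum_{i=1}^{k}a_{i}M_{i}(x_{1},\ldots,x_{n})$ with $M_{1},\ldots,M_{k}$ the distinct monomials, let $d$ be the common degree (here we use that $P$ is homogeneous), and write $M_{i}=\prod_{j=1}^{n}x_{j}^{e_{i,j}}$, so that $\sum_{j=1}^{n}e_{i,j}=d$ for every $i\leq k$. Note that $\mathrm{Red}(P)(y_{1},\ldots,y_{k})=\sum_{i=1}^{k}a_{i}y_{i}$ is a linear homogeneous polynomial with nonzero coefficients, so Theorem 1.3.15 applies to it.

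Suppose, towards a contradiction, that $\mathrm{Red}(P)$ is not partition regular. By Theorem 1.3.15, no nonempty $J\subseteq\{1,\ldots,k\}$ satisfies $\sum_{j\in J}a_{j}=0$; since the set of such subset-sums is finite, we can choose a prime $p$ that does not divide $\sum_{j\in J}a_{j}$ for any nonempty $J\subseteq\{1,\ldots,k\}$. The goal is then to show that the $\mathrm{smod}(p)$-coloration has no monochromatic (in $\N\setminus\{0\}$) solution to $P(x_{1},\ldots,x_{n})=0$, contradicting partition regularity of $P$.

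The main calculation is as follows. Suppose $(b_{1},\ldots,b_{n})\in (\N\setminus\{0\})^{n}$ is a solution with $\mathrm{smod}(p)(b_{j})=y$ for all $j\leq n$, some $y\in\{1,\ldots,p-1\}$. Write $b_{j}=p^{\gamma_{j}}c_{j}$ with $p\nmid c_{j}$ and $c_{j}\equiv y\pmod{p}$, and set $\sigma_{i}=\sum_{j=1}^{n}e_{i,j}\gamma_{j}$. Then
\[
M_{i}(b_{1},\ldots,b_{n})=p^{\sigma_{i}}\prod_{j=1}^{n}c_{j}^{e_{i,j}},\qquad \prod_{j=1}^{n}c_{j}^{e_{i,j}}\equiv y^{\,d}\pmod{p},
\]
the last congruence using homogeneity. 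Let $\sigma=\min_{i\leq k}\sigma_{i}$ and $I=\{i\leq k:\sigma_{i}=\sigma\}\neq\emptyset$. Dividing $P(b_{1},\ldots,b_{n})=0$ by $p^{\sigma}$ and reducing modulo $p$ kills all monomials with $i\notin I$ and gives $y^{d}\sum_{i\in I}a_{i}\equiv 0\pmod{p}$. Since $\gcd(y,p)=1$, this forces $p\mid\sum_{i\in I}a_{i}$, contradicting the choice of $p$.

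I expect the only delicate point to be bookkeeping in the minimum-exponent step, namely checking that after dividing by $p^{\sigma}$ the surviving terms are exactly those indexed by $I$ and that their contribution modulo $p$ collapses to $y^{d}\sum_{i\in I}a_{i}$; everything else is formal. One can also recast the argument in the nonstandard framework of this chapter by taking a $\sigma_{P}$-ultrafilter $\U$, generators $\alpha_{1},\ldots,\alpha_{n}\in G_{\U}$ with $P(\alpha_{1},\ldots,\alpha_{n})=0$, and invoking Lemma 3.5.17 to ensure that all $\alpha_{j}$ share a common value $\mathrm{smod}(p)(\alpha_{j})=y$; the same divisibility computation performed inside $^{\bullet}\N$ then yields the contradiction, without any change in the structure of the argument.
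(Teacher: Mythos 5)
Your proof is correct and follows essentially the same route as the paper: reduce to Rado's criterion for $\mathrm{Red}(P)$, pick a prime $p$ dividing no nonempty subset-sum of the coefficients, and run the minimum-$p$-adic-valuation computation on a $\mathrm{smod}(p)$-monochromatic solution, using homogeneity to collapse each surviving monomial to $y^{d}$ modulo $p$. The only cosmetic difference is that the paper performs this computation on generators of a $\sigma_{P}$-ultrafilter in $^{\bullet}\N$ (via Lemma 3.5.17), while you work with a standard monochromatic solution and note the nonstandard recasting as an aside.
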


\begin{proof} By contradiction, suppose that Red($P$) is not partition regular. Then, as Red($P$) is linear, by Rado's Theorem no subset of the set of coefficients of Red($P$) sums to 0 and, since $P(x_{1},...,x_{n})$ and Red($P$) have by construction the same coefficients, this entails that no subset of the set of coefficients of $P(x_{1},...,x_{n})$ sums to 0.\\
Let $p$ be a prime number that does not divide the sum of any subset of the set of coefficients, $\U$ a $\sigma_{P}$-ultrafilter, and $\alpha_{1},...,\alpha_{n}$ generators of $\U$ with $P(\alpha_{1},...,\alpha_{n})=0$.\\
For every index $i\leq n$ let $\beta_{i},\gamma_{i}$ be hypernatural numbers such that 

\begin{center} $\alpha_{i}=p^{\gamma_{i}}\beta_{i}$, \end{center}

where $\beta_{i}$ is not divisible by $p$. By definition of $smod(p)$, $smod(p)(\alpha_{i})=smod(p)(\beta_{i})$; as the elements $\alpha_{i}$ are all $\sim_{u}$-equivalent, it follows that, for every pair of indexes $i,j\leq n$, $smod(p)(\beta_{i})=smod(p)(\beta_{j})$ and, since the elements $\beta_{i}$ are not divisible by $p$,

\begin{center} $smod(p)(\beta_{i})=(\beta_{i})\mod(p)=b\neq 0$ for every index $i\leq n$.\end{center}

By hypothesis, the polynomial $P(x_{1}...,x_{n})$ has the form

\begin{center} $P(x_{1},...,x_{n}):\sum_{j=1}^{k} a_{j}x_{1}^{d_{j_{1}}}\cdot...\cdot x_{n}^{d_{j_{n}}}$,\end{center}

where, if $d$ is the degree of $P(x_{1},...,x_{n})$, for evey index $j\leq k$ $\sum_{i=1}^{n} d_{j_{i}}=d$.\\
Since $\alpha_{1},...,\alpha_{n}$ is a solution of $P(x_{1},...,x_{n})=0,$

\begin{center} $\sum_{j=1}^{k} a_{j}\alpha_{1}^{d_{j_{1}}}\cdot...\cdot \alpha_{n}^{d_{j_{n}}}=0$. \end{center}

As $\alpha_{i}=p^{\gamma_{i}}\beta_{i}$, the above expression can be written like this:

\begin{center} (1)$\sum_{j=1}^{k} a_{j} p^{\sum_{i=1}^{n}d_{j_{i}} \gamma_{j}}\beta_{1}^{d_{j_{1}}}\cdot...\cdot\beta_{n}^{d_{j_{n}}}=0$. \end{center}

Pose, for every index $j\leq k$, 

\begin{center} $c_{j}=\sum_{i=1}^{n} d_{j_{i}}\gamma_{i}$,\end{center}

and 
 
\begin{center} $c=\min\{c_{j}\mid 1\leq j\leq k\}$. \end{center}
 
Then (1) can be rewritten as

\begin{center} $p^{c}\sum_{j=1}^{k} a_{j} p^{c_{j}-c}\beta_{1}^{d_{j_{1}}}\cdot...\cdot\beta_{n}^{d_{j_{n}}}=0$, \end{center}

and, as $p^{c}\neq 0$, this entails that

\begin{center} (2) $\sum_{j=1}^{k} a_{j} p^{c_{j}-c}\beta_{1}^{d_{j_{1}}}\cdot...\cdot\beta_{n}^{d_{j_{n}}}=0$; \end{center}

observe that for each index $j$ such that $c_{j}=c$ (and there is at least one index with this property), $p^{c_{j}-c}=1$.\\
From (2) it follows that:

\begin{center} $\sum_{j=1}^{k} a_{j} p^{c_{j}-c}\beta_{1}^{d_{j_{1}}}\cdot...\cdot\beta_{n}^{d_{j_{n}}}\equiv 0\mod p$. \end{center}

Observe that, for all the indexes $j\leq k$ with $c_{j}\neq c$, the term $a_{j} p^{c_{j}-c}\beta_{1}^{d_{j_{1}}}\cdot...\cdot\beta_{n}^{d_{j_{n}}}\equiv 0\mod p$. So, if $\tilde{J}$ is the nonempty set of indexes $j$ with $\Gamma_{j}=\Gamma$,

\begin{center} (3)$\sum_{j\in\tilde{J}} a_{j}\beta_{1}^{d_{j_{1}}}\cdot...\cdot\beta_{n}^{d_{j_{n}}}\equiv 0\mod p$. \end{center}

We observed that, modulo $p$, the elements $\beta_{i}$, $i\leq n$, are equal to some natural number $b$ which is not 0, so the expression (3) can be rewritten as:

\begin{center} (4)$\sum_{j\in\tilde{J}}a_{j}b^{d}\equiv 0\mod p$.\end{center}

Since $b\neq 0\mod p$ then $b^{d}\neq 0\mod p$, so (4) holds if and only if $\sum_{j\in\tilde{J}}a_{j}=0\mod p$, and that is a contradiction as we have chosen $p$ in such a way that the sum of elements of every subset of coefficients is not divisible by $p$. \end{proof}

\section{Properties of the Set of Injectively Partition Regular Polynomials}

In this section we study the properties of the set of injectively partition regular polynomials, with a particular interest for the homogeneous polynomials. 

\begin{defn} The {\bfseries set of injectively partition regular polynomials on $\N$} is 

\begin{center} $\mathcal{P}=\{P\in\Z[\mathbf{X}]\mid P$ is injectively partition regular on $\N$$\}$,\end{center}

and the {\bfseries set of homogeneous injectively partition regular polynomials on $\N$} is

\begin{center} $\mathcal{H}=\{P\in \mathcal{P}\mid P$ is homogeneous$\}$. \end{center}
\end{defn}

In this section we always assume that the polynomials are given in normal reduced form, that the variables of $P(x_{1},...,x_{n})$ are exactly $x_{1},...,x_{n}$, that every considered polynomial has costant term 0 and, given a polynomial $P(x_{1},...,x_{n})$, the symbols $\sigma_{P},\iota_{P}$ denote the sentences introduced in definition 3.5.2.\\
The first interesting property that we prove is that the multiples of a polynomial $P(x_{1},...,x_{n})$ in $\mathcal{P}$ are in $\mathcal{P}$:

\begin{thm} Given a polynomial $P(x_{1},...,x_{n})$ in $\mathcal{P}$ and a generical polynomial $Q(y_{1},...,y_{m})$ in $\Z[\mathbf{X}]$, $P(x_{1},...,x_{n})\cdot Q(y_{1},...,y_{m})$ is in $\mathcal{P}$; in particular, if $P(x_{1},...,x_{n})$ is in $\mathcal{H}$ and $Q(y_{1},...,y_{m})\in \Z[\mathbf{X}]$ is homogeneous, then $P(x_{1},...,x_{n})\cdot Q(y_{1},...,y_{m})\in\mathcal{H}$. \end{thm}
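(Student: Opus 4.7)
The plan is to reduce the statement to an application of the Bridge Theorem (Theorem 2.2.9) combined with star iteration. Since $P(x_1,\ldots,x_n)$ is injectively partition regular, there exists a $\iota_P$-ultrafilter $\U$ on $\N$. Applying the Bridge Theorem in $^{*}\N$, I obtain mutually distinct generators $\alpha_1,\ldots,\alpha_n\in{}^{*}\N$ of $\U$ such that $P(\alpha_1,\ldots,\alpha_n)=0$. Because $n\ge 2$ and the $\alpha_i$ are pairwise distinct, $\U$ must be nonprincipal, so each $\alpha_i$ is an infinite hypernatural number and has height exactly $1$ in $^{\bullet}\N$.

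The key move is to produce, inside $^{\bullet}\N$, $m$ additional fresh generators of the same ultrafilter $\U$ to serve as the $y$-variables of $Q$. Set
\[
  \eta_j = S_{j+1}(\alpha_1) \qquad (j=1,\ldots,m).
\]
By Proposition 2.5.11 each $\eta_j$ is again a generator of $\U$. Its height equals $j+1\ge 2$, so $\eta_j$ differs from every $\alpha_i$ (which has height $1$) and the $\eta_j$ are pairwise distinct (their heights are pairwise distinct). Hence $\alpha_1,\ldots,\alpha_n,\eta_1,\ldots,\eta_m$ is an $(n+m)$-tuple of mutually distinct generators of $\U$ in $^{\bullet}\N$.

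Since $P(\alpha_1,\ldots,\alpha_n)=0$, whatever value $Q(\eta_1,\ldots,\eta_m)$ takes, we have
\[
  P(\alpha_1,\ldots,\alpha_n)\cdot Q(\eta_1,\ldots,\eta_m)=0.
\]
The sentence $\iota_{PQ}$ is existential (it asserts the existence of $n+m$ nonzero, mutually distinct values whose substitution makes $P\cdot Q$ vanish), so the Bridge Theorem applied to $^{\bullet}\N$ yields that $\U$ itself is a $\iota_{PQ}$-ultrafilter. By Theorem 1.2.6 this is equivalent to $\iota_{PQ}$ being weakly partition regular, i.e.\ $P\cdot Q\in\mathcal{P}$.

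For the second assertion, if both $P$ and $Q$ are homogeneous of degrees $d_P$ and $d_Q$ respectively, then $P\cdot Q$ is homogeneous of degree $d_P+d_Q$, so $P\cdot Q\in\mathcal{H}$. The only delicate point in the argument is keeping all $n+m$ generators distinct while remaining inside $G_{\U}$; this is precisely what star iteration resolves for free, by providing infinitely many generators of $\U$ stratified by height in $^{\bullet}\N$, so no serious obstacle arises.
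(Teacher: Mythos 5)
Your proof is correct and takes essentially the same route as the paper's: the paper also fixes a $\iota_{P}$-ultrafilter $\U$ with mutually distinct generators $\alpha_{1},\dots,\alpha_{n}$ annihilating $P$, appends $m$ further mutually distinct generators of $\U$ (chosen directly from $G_{\U}\setminus\{\alpha_{1},\dots,\alpha_{n}\}$ using that $G_{\U}$ is huge, rather than via star iteration as you do), and concludes from the vanishing of the product exactly as you do. The only slip is cosmetic: $h(S_{j+1}(\alpha_{1}))=h(\alpha_{1})+j+1=j+2$ rather than $j+1$, which does not affect your point that these heights are $\geq 2$ and pairwise distinct.
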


\begin{proof} Pick $\U$ $\iota_{P}$-ultrafilter, and let $\alpha_{1},...,\alpha_{n}\in$$^{*}\N$ be mutually distinct generators of $\U$. Let $\beta_{1},...,\beta_{m}$ be mutually distinct elements in $G_{\U}\setminus\{\alpha_{1},...,\alpha_{n}\}$. Then $\alpha_{1},...,\alpha_{n},\beta_{1},...,\beta_{m}$ are mutually distinct elements such that $P(\alpha_{1},...,\alpha_{n})\cdot Q(\beta_{1},...,\beta_{m})=0$. So $\U$ is also a $\iota_{P\cdot Q}$-ultrafilter, and $P(x_{1},....,x_{n})\cdot Q(y_{1},...,y_{m})\in\mathcal{P}$.\\
In particular, if $P(x_{1},...,x_{n})\in\mathcal{H}$ and $Q(y_{1},...,y_{m})$ is homogeneous, the product $P(x_{1},...,x_{n})\cdot Q(y_{1},...,y_{m})$ is an homogeneous element of $\mathcal{P}$ so it is in $\mathcal{H}$.\\\end{proof}

In particular, $\mathcal{P}$ is a sub-semigroup of $(\Z[\mathbf{X}], \cdot)$ and $\mathcal{H}$ is a sub-semigroup of $(H[\mathbf{X}],\cdot)$, where $H[\mathbf{X}]$ is the sub-semigroup of $(\Z[\mathbf{X}], \cdot)$ of homogeneous polynomials.\\
Conversely, we now prove that if $P(x_{1},...,x_{n})$ is in $\mathcal{P}$, and it is not irreducible, then at least one of its factors belongs to $\mathcal{P}$. Only in this theorem, when writing $Q_{i}(x_{1},...,x_{n})$ we mean that the set of variables of $Q_{i}$ is a subset of $\{x_{1},...,x_{n}\}$; if $\alpha_{1},...,\alpha_{n}$ are hypernatural numbers in $^{\bullet}\N$, by $Q(\alpha_{1},...,\alpha_{n})$ we denote the number obtained by replacing each variable $x_{i}$ in $Q$ by $\alpha_{i}$.

\begin{thm} Suppose that the polynomial $P(x_{1},...,x_{n})$ is factorized as 

\begin{center} $P(x_{1},....,x_{n})=\prod_{i=1}^{k}Q_{i}(x_{1},...,x_{n})$.\end{center} 

Then $P(x_{1},...,x_{n})\in \mathcal{P}$ if and only if there exists an index $i$ such that $Q_{i}(x_{1},...,x_{n})\in \mathcal{P}$. \end{thm}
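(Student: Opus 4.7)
The plan is to prove both implications separately. The forward direction (``if'') is essentially immediate from Theorem 3.6.2, while the reverse direction (``only if'') rests on a single structural observation: $^{*}\Z$ is an integral domain, so a product in $^{*}\Z$ vanishes only when one factor does. Combined with the Bridge Theorem, this will let us transfer a $\iota_P$-ultrafilter into a $\iota_{Q_i}$-ultrafilter for some $i$.

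For the ``if'' direction, suppose that $Q_{i_0}(x_1,\ldots,x_n) \in \mathcal{P}$ for some index $i_0$. Then writing $P = Q_{i_0} \cdot \prod_{j \neq i_0} Q_j$, Theorem 3.6.2 (applied with $P \rightsquigarrow Q_{i_0}$ and $Q \rightsquigarrow \prod_{j \neq i_0} Q_j$) yields at once $P \in \mathcal{P}$.

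For the ``only if'' direction, assume $P \in \mathcal{P}$ and pick a $\iota_P$-ultrafilter $\U$. The sentence $\iota_P$ is the existential closure of the open formula $\phi(x_1,\ldots,x_n)$ expressing ``$P(x_1,\ldots,x_n) = 0$, all $x_i$ distinct, all $x_i$ nonzero''. By Theorem 2.2.9 there exist generators $\alpha_1,\ldots,\alpha_n \in G_{\U}$, mutually distinct and nonzero, with $^{*}P(\alpha_1,\ldots,\alpha_n) = 0$. Factoring,
\[
\prod_{i=1}^{k} {}^{*}Q_i(\alpha_1,\ldots,\alpha_n) \;=\; 0.
\]
Since $\Z$ is an integral domain, by transfer so is $^{*}\Z$; hence there exists an index $i_0$ with $^{*}Q_{i_0}(\alpha_1,\ldots,\alpha_n) = 0$. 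The relevant $\alpha_j$'s (namely those indexed by the variables actually appearing in $Q_{i_0}$) are still mutually distinct nonzero generators of the same ultrafilter $\U$, so they witness the existential sentence $\iota_{Q_{i_0}}$. Applying Theorem 2.2.9 in the opposite direction shows that $\U$ is a $\iota_{Q_{i_0}}$-ultrafilter, hence $Q_{i_0} \in \mathcal{P}$.

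The only subtlety I anticipate is the convention that each $Q_i$ may involve a proper subset of $\{x_1,\ldots,x_n\}$: one should spell out that restricting the tuple $(\alpha_1,\ldots,\alpha_n)$ to the subset of indices appearing in $Q_{i_0}$ still produces generators of $\U$ that are pairwise distinct and nonzero, so the Bridge Theorem applies cleanly to $\iota_{Q_{i_0}}$ in its own variables. Beyond this bookkeeping, the argument is a direct exploitation of the integral domain property of $^{*}\Z$ via transfer.
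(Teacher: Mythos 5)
Your proof is correct and follows essentially the same route as the paper: the ``if'' direction is quoted from Theorem 3.6.2, and the ``only if'' direction takes mutually distinct generators of a $\iota_P$-ultrafilter witnessing $^{*}P(\alpha_1,\ldots,\alpha_n)=0$ and concludes that some factor $^{*}Q_{i_0}$ vanishes, making the same ultrafilter a $\iota_{Q_{i_0}}$-ultrafilter. You merely make explicit two points the paper leaves implicit or handles in its preamble — the integral-domain/transfer justification for a vanishing factor, and the convention that each $Q_i$ uses a subset of the variables.
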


\begin{proof} The implication $\Leftarrow$ is an immediate consequence of Theorem 3.6.2.\\
$\Rightarrow$: Let $\U$ be a $\iota_{P}$-ultrafilter on $A$, and let $\alpha_{1},...,\alpha_{n}$ be mutually distinct generators of $\U$ such that $P(\alpha_{1},...,\alpha_{n})=0$. Then, for at least one index $i$, $Q_{i}(\alpha_{1},...,\alpha_{n})=0$, so $\U$ is a $\iota_{Q_{i}}$-ultrafilter, and hence $Q_{i}(x_{1},...,x_{n})$ is in $\mathcal{P}$.\\\end{proof}

From the previous theorem it follows this important fact:\\

{\bfseries Fact:} To study the partition regularity of polynomials it is sufficient to work with irreducible polynomials.\\

The situation for the sum of elements in $\mathcal{P}$ is less simple:

\begin{prop} If $P(x_{1},...,x_{n})$, $Q(y_{1},...,y_{m})$ are in $\mathcal{H}$, and $\{x_{1},...,x_{n}\}\cap\{y_{1},..,y_{m}\}=\emptyset$, then $P(x_{1},...,x_{n})+Q(y_{1},...,y_{m})\in\mathcal{P}$.\end{prop}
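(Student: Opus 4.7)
The plan is to find a single multiplicatively idempotent ultrafilter $\W$ that is simultaneously a $\iota_P$-ultrafilter and a $\iota_Q$-ultrafilter, then exhibit mutually distinct generators of $\W$ witnessing the injective partition regularity of $P+Q$. Existence of $\W$ should follow by intersecting ideals: by Proposition 3.5.11, the sets $\mathcal{I}_P$ and $\mathcal{I}_Q$ of $\iota_P$- and $\iota_Q$-ultrafilters are compact bilateral ideals in $(\bN\setminus\N,\odot)$, so both contain the minimal bilateral ideal $K(\bN\setminus\N,\odot)$; hence their intersection is nonempty, compact, and closed under $\odot$. By Ellis's Theorem, this intersection contains a multiplicatively idempotent ultrafilter $\W$.

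Next, I would use the Bridge Theorem (applied to the hyperextension $^{*}\N$, which has the $\mathfrak{c}^{+}$-enlarging property) to pick mutually distinct generators $\alpha_{1},\ldots,\alpha_{n}\in G_{\W}\cap$$^{*}\N$ with $P(\alpha_{1},\ldots,\alpha_{n})=0$ and mutually distinct $\beta_{1},\ldots,\beta_{m}\in G_{\W}\cap$$^{*}\N$ with $Q(\beta_{1},\ldots,\beta_{m})=0$. All these witnesses have height $1$, and a priori some $\alpha_{i}$ could coincide with some $\beta_{j}$, so separation is required.

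The separation step exploits star iteration in $^{\bullet}\N$. Choose any $\xi\in G_{\W}\cap$$^{*}\N$ and set $\xi'=$$^{*}\xi$ (height $2$) and $\eta'=$$^{**}\xi$ (height $3$); let $d_{P},d_{Q}$ denote the degrees of $P$ and $Q$. Define
\[
\alpha_{i}'=\alpha_{i}\cdot\xi' \quad (i\leq n), \qquad \beta_{j}'=\beta_{j}\cdot\eta' \quad (j\leq m).
\]
By Corollary 2.5.17 the pairs $(\alpha_{i},\xi')$ and $(\beta_{j},\eta')$ are tensor pairs (since $\xi'=S_{h(\alpha_{i})}(\xi)$ and $\eta'=S_{h(\beta_{j})+1}(\xi)$), so by multiplicative idempotency $\mathfrak{U}_{\alpha_{i}'}=\W\odot\W=\W$, and likewise for $\beta_{j}'$. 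Thus all the new elements are generators of $\W$. By homogeneity, $P(\alpha_{1}',\ldots,\alpha_{n}')=(\xi')^{d_{P}}P(\alpha_{1},\ldots,\alpha_{n})=0$ and similarly $Q(\beta_{1}',\ldots,\beta_{m}')=0$, so $P(\alpha_{1}',\ldots,\alpha_{n}')+Q(\beta_{1}',\ldots,\beta_{m}')=0$. Mutual distinctness then splits cleanly: the $\alpha_{i}'$ have height $2$ and the $\beta_{j}'$ have height $3$, so no collision occurs across the two groups, while within each group distinctness is preserved because multiplication by the nonzero factor $\xi'$ (resp.\ $\eta'$) is injective. Applying the Bridge Theorem in the reverse direction, $\W$ is a $\iota_{P+Q}$-ultrafilter, and hence $P+Q\in\mathcal{P}$.

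The main obstacle is precisely the distinctness question: the naive solutions produced by the Bridge Theorem for $P$ and $Q$ may overlap, and since $P+Q$ is not homogeneous one cannot freely rescale all variables together to separate them. The key point making the argument work is that the star iteration in $^{\bullet}\N$ provides a ready-made mechanism to place the two groups of witnesses at different heights while retaining all of them inside $G_{\W}$, thanks to multiplicative idempotency and the tensor-pair characterization.
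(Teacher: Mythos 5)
Your proof is correct, but it takes a genuinely different route from the paper's. The paper does not construct a single idempotent witness: it takes an arbitrary $\iota_{P}$-ultrafilter $\U$ and an arbitrary $\iota_{Q}$-ultrafilter $\V$ and shows directly that $\U\odot\V$ is a $\iota_{P+Q}$-ultrafilter, by picking mutually distinct $\alpha_{1},\ldots,\alpha_{n}\in G_{\U}$ with $P(\alpha_{1},\ldots,\alpha_{n})=0$ and $\beta_{1},\ldots,\beta_{m}\in G_{\V}$ with $Q(\beta_{1},\ldots,\beta_{m})=0$, and rescaling to $\eta_{i}=\alpha_{i}\cdot{}^{*}\beta_{1}$ and $\xi_{j}=\alpha_{1}\cdot{}^{*}\beta_{j}$; homogeneity then kills both summands exactly as in your argument. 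That route is lighter than yours, since it needs neither Proposition 3.5.11, nor the minimal bilateral ideal of $(\bN\setminus\N,\odot)$, nor Ellis's theorem, and it applies to any pair $\U,\V$ of witnessing ultrafilters. What your heavier setup buys is twofold. First, you obtain one ultrafilter $\W$ that is simultaneously a $\iota_{P}$-, a $\iota_{Q}$- and a $\iota_{P+Q}$-ultrafilter, which is a slightly stronger conclusion. Second, and more substantively, your height bookkeeping repairs a small defect in the paper's distinctness claim: with the paper's choice one has $\eta_{1}=\alpha_{1}\cdot{}^{*}\beta_{1}=\xi_{1}$, so the displayed $(n+m)$-tuple is not made of mutually distinct elements as asserted (easily fixable, but a genuine slip), whereas your two scaling factors ${}^{*}\xi$ and ${}^{**}\xi$ place the two groups of witnesses at heights $2$ and $3$ and rule out any cross-collision, while injectivity of multiplication by a nonzero factor handles collisions within each group. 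Your existence argument for $\W$ is sound (both $\mathcal{I}_{P}$ and $\mathcal{I}_{Q}$ contain the smallest bilateral ideal, so their intersection is a nonempty compact subsemigroup to which Ellis's theorem applies); alternatively the paper's Theorem 3.7.8 already supplies an ultrafilter lying in every such $\mathcal{I}_{P}$ at once.
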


\begin{proof} Suppose that $\U$ is a $\iota_{P}$-ultrafilter and $\V$ is a $\iota_{Q}$-ultrafilter. We claim that $\U\odot\V$ is a $\iota_{P+Q}$-ultrafilter. In fact, let $\alpha_{1},...,\alpha_{n}\in$$^{*}\N$ be mutually distinct generators of $\U$, and let $\beta_{1},...,\beta_{m}\in$$^{*}\N$ be mutually distinct generators of $\V$, with $P(\alpha_{1},...,\alpha_{n})=Q(\beta_{1},...,\beta_{m})=0$. Then, by transfer, $Q($$^{*}\beta_{1},...,$$^{*}\beta_{m})=0$. Consider the elements $\eta_{1},...,\eta_{n},\xi_{1},...,\xi_{m}$ in $G_{\U\odot\V}$, where

\begin{center} $\eta_{i}=\alpha_{i}\cdot$$^{*}\beta_{1}$ for $i\leq n$ and $\xi_{j}=\alpha_{1}\cdot$$^{*}\beta_{j}$ for $j\leq m$. \end{center}

Then, as $P(x_{1},...,x_{n})$ and $Q(y_{1},...,y_{m})$ are homogeneous, if $d_{p}$ and $d_{q}$ are their respective degrees, then we have

\begin{center} $P(\eta_{1},...,\eta_{n})+Q(\xi_{1},...,\xi_{m})=$$^{*}\beta_{1}^{d_{p}}P(\alpha_{1},...,\alpha_{n})+\alpha_{1}^{d_{1}}Q($$^{*}\beta_{1},...,$$^{*}\beta_{m})=0+0=0$.\end{center}

So, as $\eta_{1},...,\eta_{n},\xi_{1},...,\xi_{m}$ are mutually distinct generators, $\U\odot\V$ is a $\iota_{P+Q}$-ultrafilter, and hence $P(x_{1},..,x_{n})+Q(y_{1},...,y_{m})\in\mathcal{P}$. \\ \end{proof}

\begin{thm} Let $P(x_{1},...,x_{n}), Q(y_{1},...,y_{m})$ be polynomials in $\mathcal{P}$ such that
\begin{enumerate}
	\item the partial degree of both $P(x_{1},...,x_{n})$ and $Q(y_{1},...,y_{m})$ is 1;
	\item $P(x_{1},...,x_{n})+Q(y_{1},...,y_{m})$ admits a sets of exclusive variables;
	\item Red$(P)$ and Red$(Q)$ are polynomials in $\mathcal{P}$.
\end{enumerate}
Then $P(x_{1},...,x_{n})+Q(y_{1},...,y_{m})\in \mathcal{P}$. \end{thm}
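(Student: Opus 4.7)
The plan is to invoke Corollary 3.5.15 so as to reduce the theorem to showing $\text{Red}(P+Q) \in \mathcal{P}$. The polynomial $P+Q$ inherits partial degree one from $P$ and $Q$, and by hypothesis admits a set of exclusive variables, so Corollary 3.5.15 is applicable once this reduced statement is established. Note that $\text{Red}(P+Q)$ is obtained from $\text{Red}(P)(u_{1},\ldots,u_{k}) + \text{Red}(Q)(v_{1},\ldots,v_{\ell})$ (viewed as a polynomial in disjoint variables) by identifying each pair $u_{i},v_{j}$ whose corresponding monomials $M_{i}$ of $P$ and $N_{j}$ of $Q$ coincide; the merged coefficient at such a monomial is $a_{i}+b_{j}$. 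Partition the monomials of $P+Q$ accordingly into three classes $A$, $B$, $C$ (those coming from $P$ alone, from $Q$ alone, and from a common monomial, respectively).

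To show $\text{Red}(P+Q) \in \mathcal{P}$, I would first produce a multiplicatively idempotent ultrafilter $\U$ which is simultaneously a $\iota_{\text{Red}(P)}$- and a $\iota_{\text{Red}(Q)}$-ultrafilter. Since $\text{Red}(P)$ and $\text{Red}(Q)$ are homogeneous linear polynomials in $\mathcal{P}$, Proposition 3.5.11 gives that $\mathcal{I}_{\text{Red}(P)}$ and $\mathcal{I}_{\text{Red}(Q)}$ are compact bilateral ideals of $(\bN\setminus\{\mathfrak{U}_{0}\},\odot)$. Their intersection is therefore a compact bilateral ideal containing the minimal bilateral ideal, and hence contains a multiplicatively idempotent $\U$ by Ellis's Theorem.

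With $\U$ in hand, the Bridge Theorem produces mutually distinct generators $\alpha_{1},\ldots,\alpha_{k} \in G_{\U}$ satisfying $\sum_{i} a_{i}\alpha_{i} = 0$ and mutually distinct generators $\beta_{1},\ldots,\beta_{\ell} \in G_{\U}$ satisfying $\sum_{j} b_{j}\beta_{j} = 0$. From these I would build a family $\{w_{t}^{*}\}$ of mutually distinct generators of $\U$, indexed by the distinct monomials of $P+Q$, such that $\sum_{t} c_{t} w_{t}^{*} = 0$, where $c_{t}$ is the coefficient of the $t$-th monomial of $P+Q$; by the Bridge Theorem this will prove that $\U$ is a $\iota_{\text{Red}(P+Q)}$-ultrafilter. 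The natural prescription is $w_{t}^{*} = \alpha_{i(t)}$ for $t \in A$, $w_{t}^{*} = \beta_{j(t)}$ for $t \in B$, and, for a merged monomial associated with $(i,j) \in C$, a common hypernatural value playing the role of both $\alpha_{i}$ and $\beta_{j}$, so that the sum splits cleanly as $\sum_{i} a_{i} \alpha_{i} + \sum_{j} b_{j} \beta_{j} = 0$.

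The hard part is engineering this alignment on the overlap $C$: the families $\{\alpha_{i}\}$ and $\{\beta_{j}\}$ must be chosen so that for each merged pair $(i,j)$ a single generator of $\U$ can be substituted for both, while preserving the two zero-sum identities and the mutual distinctness of the whole family $\{w_{t}^{*}\}$. This is done by exploiting the multiplicative idempotency of $\U$ together with the star-iteration and tensor-pair machinery of Section 2.5 (in particular the operations $\h$ and $\dia$ and Theorem 2.5.16), which allow one to produce copies of the required generators at arbitrarily separated heights in $^{\bullet}\N$ and merge them compatibly across $C$ without losing distinctness or membership in $G_{\U}$. Once such a family is produced, the Bridge Theorem yields $\text{Red}(P+Q) \in \mathcal{P}$, and an appeal to Corollary 3.5.15 finishes the argument.
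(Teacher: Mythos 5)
Your opening move is the paper's: reduce, via the exclusive-variables criterion (it is Corollary 3.5.16, not 3.5.15), to showing that $\mathrm{Red}(P+Q)$ is injectively partition regular. From there the paper finishes in one line: under its conventions the variable sets of $P(x_{1},\dots,x_{n})$ and $Q(y_{1},\dots,y_{m})$ are disjoint, so no monomial of $P$ coincides with a monomial of $Q$, hence $\mathrm{Red}(P+Q)$ is literally $\mathrm{Red}(P)+\mathrm{Red}(Q)$ in disjoint fresh variables, and Proposition 3.6.4 (if $\U$ is a $\iota_{\mathrm{Red}(P)}$-ultrafilter and $\V$ a $\iota_{\mathrm{Red}(Q)}$-ultrafilter, then $\U\odot\V$ is a $\iota_{\mathrm{Red}(P)+\mathrm{Red}(Q)}$-ultrafilter, using homogeneity) concludes. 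Your replacement for that last step --- a single multiplicatively idempotent $\U\in\mathcal{I}_{\mathrm{Red}(P)}\cap\mathcal{I}_{\mathrm{Red}(Q)}$, which exists because both sets are compact bilateral ideals of $(\bN\setminus\N,\odot)$ and hence meet in a compact subsemigroup --- is sound (it is the same device the paper deploys in Theorem 3.7.6), but heavier than needed; and you would still owe the reader the argument that the union $\{\alpha_{i}\}\cup\{\beta_{j}\}$ can be made mutually distinct, which is most cleanly done by the rescaling of Proposition 3.6.4 (replace each $\beta_{j}$ by $\beta_{j}\cdot{}^{*}\gamma$ for a fixed generator $\gamma$: linearity preserves the zero sum and the height jump forces distinctness from the $\alpha_{i}$).

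The genuine problem is your class $C$ of ``merged'' monomials. First, under the paper's reading it is empty: $P$ and $Q$ are written in distinct variable alphabets, every monomial of $P$ contains some $x_{i}$ and every monomial of $Q$ some $y_{j}$ (constant terms being excluded), so no merging occurs and $\mathrm{Red}(P+Q)=\mathrm{Red}(P)+\mathrm{Red}(Q)$ up to renaming. Second, and more seriously, your proposed treatment of a nonempty $C$ would not work: demanding $\alpha_{i}=\beta_{j}$ for prescribed pairs couples the two identities $\sum_{i}a_{i}\alpha_{i}=0$ and $\sum_{j}b_{j}\beta_{j}=0$ into a linear \emph{system} with shared unknowns, whose partition regularity is governed by Rado's columns condition on the pair of equations, not by the partition regularity of each equation separately. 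Multiplicative idempotency and placing copies at separated heights cannot manufacture the required coincidences --- rescaling one family to separate it from the other is exactly what destroys any prescribed equalities across the two families. Restricted to the disjoint-variable reading your argument closes; as written, the $C\neq\emptyset$ branch is an unproved claim.
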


\begin{proof} This result follows by Corollary 3.5.16: in fact, $P(x_{1},...,x_{n})+Q(y_{1},...,y_{m})$ has partial degree 1, admits a set of exclusive variables and its reduct is weakly partition regular, as Red($P+Q$) is, renaming the variables if necessary, equal to Red($P$)+Red($Q$) and by Proposition 3.6.4 it follows that Red($P$)+Red($Q$) is partition regular, since both Red($P$) and Red($Q$) are in $\mathcal{H}$ by hypothesis.\\\end{proof}

In the theorem below, just observe that, given an homogeneous polynomial $P(x_{1},...,x_{n})\in\Z[\mathbf{X}]$ with degree $d$, $x_{1}^{d}\cdot....\cdot x_{n}^{d}\cdot P(\frac{1}{x_{1}},...,\frac{1}{x_{n}})\in \Z[\mathbf{X}]$.

\begin{thm} If $P(x_{1},...x_{n})$ is a polynomial in $\mathcal{H}$ with degree $d$ then the polynomial $Q(x_{1},...,x_{n})=x_{1}^{d}\cdot....\cdot x_{n}^{d}\cdot P(\frac{1}{x_{1}},...,\frac{1}{x_{n}})$ is in $\mathcal{P}$. \end{thm}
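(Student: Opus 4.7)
The plan is to produce a $\iota_{Q}$-ultrafilter and then invoke the Bridge Theorem. First I would unpack $Q$ explicitly: writing $P(x_{1},\ldots,x_{n})=\sum_{i}a_{i}\prod_{j}x_{j}^{d_{i,j}}$ with $\sum_{j}d_{i,j}=d$ for every monomial, one gets
\[
Q(x_{1},\ldots,x_{n})=\sum_{i}a_{i}\prod_{j}x_{j}^{d-d_{i,j}},
\]
which is homogeneous of degree $(n-1)d$.

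The key algebraic observation is that for arbitrary nonzero $\alpha_{1},\ldots,\alpha_{n}$, the substitution $\beta_{i}=\prod_{j\neq i}\alpha_{j}$ yields
\[
Q(\beta_{1},\ldots,\beta_{n})=\Bigl(\prod_{j}\alpha_{j}\Bigr)^{(n-2)d}\,P(\alpha_{1},\ldots,\alpha_{n}),
\]
as a direct computation using homogeneity shows (each exponent of $\alpha_{m}$ in $\prod_{j}\beta_{j}^{d-d_{i,j}}$ works out to $(n-2)d+d_{i,m}$). Moreover, $\beta_{i}/\beta_{k}=\alpha_{k}/\alpha_{i}$, so distinctness of the $\alpha_{i}$ is inherited by the $\beta_{i}$.

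Next I would invoke Proposition~3.5.11 to fix a nonprincipal multiplicatively idempotent $\iota_{P}$-ultrafilter $\U$, and apply the Bridge Theorem to obtain mutually distinct $\alpha_{1},\ldots,\alpha_{n}\in G_{\U}$ with $P(\alpha_{1},\ldots,\alpha_{n})=0$. Then the displayed identity immediately gives $Q(\beta_{1},\ldots,\beta_{n})=0$ with the $\beta_{i}$ mutually distinct. To conclude by the Bridge Theorem we need a single ultrafilter $\V$ with $\beta_{1},\ldots,\beta_{n}\in G_{\V}$.

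The hard part is exactly this last step: a priori each $\beta_{i}$ generates $\bar f_{i}(\mathfrak{U}_{(\alpha_{1},\ldots,\alpha_{n})})$ where $f_{i}(x_{1},\ldots,x_{n})=\prod_{j\neq i}x_{j}$, and these images need not agree. One cannot simply force $(\alpha_{1},\ldots,\alpha_{n})$ to be a tensor $n$-tuple (which by Corollary~2.4.12 would automatically make every $(n-1)$-subproduct a generator of $\U^{\odot(n-1)}=\U$), since $V(P)\notin\U^{\otimes n}$ in general. My proposed remedy is to exploit multiplicative idempotence of $\U$ to replace $(\alpha_{1},\ldots,\alpha_{n})$ by a rescaled solution $(\alpha_{1}\mu,\ldots,\alpha_{n}\mu)$ for a generator $\mu\in G_{\U}$ of sufficiently large height, so that on the one hand $P$'s homogeneity preserves $P(\alpha_{i}\mu)=\mu^{d}P(\alpha_{i})=0$, and on the other hand the common factor $\mu^{n-1}$ appearing in every new $\beta_{i}'=\mu^{n-1}\prod_{j\neq i}\alpha_{j}$ dominates height-wise and forces all the $\beta_{i}'$ into the same ultrafilter $G_{\U}$ via Theorem~2.5.16 on tensor pairs; reducing the residual problem of equalizing the $\mathfrak{U}_{\prod_{j\neq i}\alpha_{j}}$ to an inductive application of the same construction (or an appeal to the $\diamond$-rewriting from Section~2.5 applied to the $(n-1)$-fold $\odot$-power $\U^{\odot(n-1)}=\U$). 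This structural use of multiplicative idempotence is where the bulk of the work lies; once it is achieved, the Bridge Theorem furnishes a $\iota_{Q}$-ultrafilter and Theorem~1.2.6 yields $Q\in\mathcal{P}$.
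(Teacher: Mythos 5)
Your algebraic reduction is correct and you have located the crux accurately, but the proposed resolution of that crux does not work. Setting $\beta_i'=\mu^{n-1}\prod_{j\neq i}\alpha_j$ with $\mu$ of large height does make each $(\beta_i,\mu^{n-1})$ a tensor pair via Theorem 2.5.16, but all that this buys is $\beta_i'\in G_{\mathfrak{U}_{\beta_i}\odot\mathfrak{U}_{\mu^{n-1}}}$ --- an ultrafilter that still depends on $\mathfrak{U}_{\beta_i}$. Right multiplication by a fixed ultrafilter has no reason to identify the (in general distinct) ultrafilters $\mathfrak{U}_{\prod_{j\neq i}\alpha_j}$, so the ``residual problem of equalizing'' them that you mention at the end is exactly the original problem, and the proposed ``inductive application of the same construction'' reproduces the same obstruction at every stage; the alternative appeal to $\U^{\odot(n-1)}=\U$ would require $(\alpha_1,\ldots,\alpha_n)$ to be a tensor $n$-tuple, which you yourself correctly ruled out. (Multiplicative idempotence is also not the relevant hypothesis here: the paper's proof starts from an arbitrary $\iota_P$-ultrafilter, and the $\iota_Q$-ultrafilter it produces is not $\U$ itself.)

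The mechanism that actually closes the gap --- and the one the paper uses --- is to keep the ``variable part'' of each $\beta_i$ equal to a single generator of $\U$ rather than to a product of $n-1$ of them: take $\beta_i=\gamma/\alpha_i$ with the one common numerator $\gamma={}^{*}\bigl((\alpha_1)!\bigr)$. By transfer $({}^{*}\alpha_1)!$ is divisible by every $\alpha_i$, so each $\gamma/\alpha_i$ is a hypernatural number; and since each $(\alpha_i,\gamma)$ is a tensor pair generating $\U\otimes\mathfrak{U}_{(\alpha_1)!}$ --- the \emph{same} ultrafilter on $\N^2$ for every $i$ --- the images of these pairs under the map $(a,b)\mapsto b/a$ are all $\sim_u$-equivalent by Theorem 2.3.5. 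Homogeneity then gives $Q(\gamma/\alpha_1,\ldots,\gamma/\alpha_n)=\gamma^{(n-1)d}\,\alpha_1^{-d}\cdots\alpha_n^{-d}\,P(\alpha_1,\ldots,\alpha_n)=0$, and the Bridge Theorem finishes. Your identity with $\beta_i=\prod_{j\neq i}\alpha_j$ is precisely the special case $\gamma=\prod_j\alpha_j$, and that is the wrong choice of $\gamma$: it is divisible by each $\alpha_i$, but $(\alpha_i,\prod_j\alpha_j)$ is not a tensor pair, so nothing forces the quotients into a common monad.
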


\begin{proof} Let $\U$ be a $\iota_{P}$-ultrafilter, and let $\alpha_{1},...,\alpha_{n}\in$$^{*}\N$ be mutually distinct generators of $\U$ with $P(\alpha_{1},...,\alpha_{n})=0$. Consider 

\begin{center} $\beta_{1}=\frac{1}{\alpha_{1}}$, ... ,$\beta_{n}=\frac{1}{\alpha_{n}}$. \end{center}

By construction, $\beta_{1},...,\beta_{n}$ are $\sim_{u}$-equivalent elements and $P(\frac{1}{\beta_{1}},...,\frac{1}{\beta_{n}})=0$, so $Q(\frac{1}{\beta_{1}},...,\frac{1}{\beta_{n}})=0$. The problem is that $\beta_{1},...,\beta_{n}$ are not hypernatural numbers, and in order to apply the Bridge Theorem we need $n$ mutually distinct hypernatural numbers $\xi_{1},...,\xi_{n}$ with $Q(\xi_{1},...,\xi_{n})=0$.\\
Since $P(x_{1},...,x_{n})$ is homogeneous, for every $\gamma\in$$^{\bullet}\N$ 

\begin{center} $Q(\gamma\cdot \beta_{1},...,\gamma\cdot\beta_{n})=0$. \end{center}

If there is an hypernatural number $\gamma$ such that:
\begin{enumerate}
	\item $\gamma\cdot\beta_{i}\in$$^{\bullet}\N$ for every $i\leq n$ and
	\item $\gamma\cdot\beta_{i}\sim_{u}\gamma\cdot\beta_{j}$ for every $i,j\leq n$

\end{enumerate}

then the elements $\gamma\cdot\beta_{1},...,\gamma\cdot\beta_{n}$ are $n$ mutually distinct $\sim_{u}$-equivalent hypernatural numbers, so $\mathfrak{U}_{\gamma\cdot\beta_{1}}$ is a $\iota_{Q}$-ultrafilter on $\N$, and $Q\in \mathcal{P}$. Consider 

\begin{center} $\eta=(\alpha_{1})!$ \end{center}

(where $(\alpha_{1})!$ denotes the factorial of $\alpha_{1}$), and 

\begin{center}$\gamma=$$^{*}\eta$.\end{center}

We claim that $\gamma$ has the properties (1) and (2): 
\begin{enumerate}
	\item $\gamma\cdot\beta_{i}$ is an hypernatural number for every index $i\leq n$, because $\gamma=$$^{*}(\alpha_{1}!)=$$(^{*}\alpha_{1})!$ that (as $^{*}\alpha_{1}>\alpha_{i}$ for every $i\leq n$) is divisible by each $\alpha_{i}$;
	\item $\gamma\cdot\beta_{i}\sim_{u}\gamma\cdot\beta_{j}$ for every $i,j\leq n$ because, for every $i\leq n$, $\gamma\cdot\beta_{i}=\beta_{i}\cdot$$^{*}\eta\in G_{\mathfrak{U}_{\beta_{i}}\odot\U_{\eta}}$ and, as $\beta_{i}\sim_{u}\beta_{j}$ for every $i,j\leq n$, $\mathfrak{U}_{\beta_{i}}\odot\mathfrak{U}_{\eta}=\mathfrak{U}_{\beta_{j}}\odot\mathfrak{U}_{\eta}$. 
\end{enumerate}
\end{proof}

In $\cite{rif6}$ the autors actually proved that the above result can be generalized. In fact, they proved that, if $G(x_{1},...,x_{n})$ is a partition regular system of homogeneous equations (this means that for every finite coloration of $\N$ there is a monochromatic solution to $G(x_{1},...,x_{n})=0$), then the system $G(\frac{1}{x_{1}},...,\frac{1}{x_{n}})$ is partition regular.\\
As a corollary of Theorem 3.6.6, we prove that there are partition regular polynomials that do not admit a set of exclusive variables:

\begin{cor} The polynomial $P(x,y,z): yz+xz-xy$ is injectively partition regular. \end{cor}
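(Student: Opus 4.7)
The plan is to obtain the corollary as a direct instance of Theorem 3.6.6, choosing as the ``input'' polynomial the Schur polynomial.

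First I would take $P(x,y,z) = x + y - z$ and verify that $P \in \mathcal{H}$: it is homogeneous of degree $d = 1$, and its injective partition regularity follows from Proposition 3.2.5, since any nonprincipal additively idempotent ultrafilter $\U$ contains three mutually distinct generators $\alpha, \beta, \gamma$ with $\alpha + \beta = \gamma$, making such a $\U$ a $\iota_P$-ultrafilter.

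Next I would apply Theorem 3.6.6 to this $P$. With $d = 1$ the theorem produces
\[
Q(x,y,z) = xyz \cdot P\!\left(\tfrac{1}{x}, \tfrac{1}{y}, \tfrac{1}{z}\right) = xyz\left(\tfrac{1}{x} + \tfrac{1}{y} - \tfrac{1}{z}\right) = yz + xz - xy,
\]
which is precisely the polynomial of the corollary. Theorem 3.6.6 guarantees $Q \in \mathcal{P}$, i.e.\ $yz + xz - xy$ is injectively partition regular on $\N$.

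The proof requires no genuine obstacle beyond recognizing the Schur polynomial as the right ``inverse image'' under the transformation of Theorem 3.6.6; however, it is worth remarking (as was the motivation for stating this corollary) that $yz + xz - xy$ is not covered by Corollary 3.5.16 nor by Theorem 3.5.13, since every variable $x, y, z$ appears in two of the three monomials and so the polynomial admits no set of exclusive variables. This shows that Theorem 3.6.6 yields genuinely new injectively partition regular polynomials beyond those of Section 3.5.
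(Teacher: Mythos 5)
Your proof is correct and follows exactly the paper's own route: the paper likewise takes the Schur polynomial $x+y-z$ (injectively partition regular) and applies Theorem 3.6.6 with $d=1$ to obtain $xyz\left(\tfrac{1}{x}+\tfrac{1}{y}-\tfrac{1}{z}\right)=yz+xz-xy$. Your added remark that this polynomial admits no set of exclusive variables, and hence is not reachable via Corollary 3.5.16, matches the motivation stated in the paper just before Theorem 3.6.6's corollary.
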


\begin{proof} The polynomial $Q(x,y,z): x+y-z$ is injectively partition regular. So the polynomial $xyz(\frac{1}{x}+\frac{1}{y}-\frac{1}{z})$ is injectively weakly partition regular, and this entrails the thesis.\\\end{proof}

A different proof of this corollary can be found in $\cite{rif10}$.

\section{Further Studies}

In this section we present three possible developments of the research on partition regular polynomials.

\subsection{Polynomials with partial degree $\geq 2$}

Corollary 3.5.16 regards polynomials with partial degree 1; a natural question is if it can be extended to polynomials with partial degree greater than one. The answer is negative:

\begin{prop} The polynomial

\begin{center} $P(x,y,z)=x+y-z^{2}$ \end{center}

is not weakly partition regular. \end{prop}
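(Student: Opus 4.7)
Since $(x,y,z) = (2,2,2)$ is already a positive solution to $x+y-z^2 = 0$, the statement can be nontrivial only when read in an injective sense: the plan is to exhibit a finite coloring of $\N\setminus\{0\}$ admitting no injective monochromatic solution to $x+y = z^2$.

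I would first try the residue coloring $c_0(n) = n \bmod 4$. A direct check shows that the classes $n \equiv 1 \pmod 4$ and $n \equiv 3 \pmod 4$ admit no solution at all, because in both cases $x+y \equiv 2$ while $z^2 \equiv 1 \pmod 4$. For the two surviving classes $n \equiv r \pmod 4$ with $r \in \{0,2\}$, I would apply the substitution $x = 4x'+r$, $y = 4y'+r$, $z = 4z'+r$, reducing $x+y = z^2$ to $x'+y' = 4z'^2$ when $r = 0$ and to $x'+y' = 4z'^2+4z'$ when $r = 2$. I would then kill each auxiliary equation using a size-based coloring $c_1(n) = \lfloor \log_2 n \rfloor \bmod K$: any solution to $x+y = z^2$ satisfies $\max(x,y) \geq z^2/2$, so the larger of $x$ and $y$ must lie in a dyadic interval whose index is roughly $2\lfloor \log_2 z\rfloor$, and for a suitable $K$ this exponent gap forces $z$ and the dominant summand into different color classes. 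The final coloring is then the product of $c_0$ and (the pullback of) $c_1$.

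The main obstacle is to make the size coloring uniformly effective. A naive choice like $K = 4$ on $\lfloor \log_2 n \rfloor$ still leaves monochromatic triples in the classes where $\lfloor \log_2 z\rfloor \equiv -1, 0, 1 \pmod K$, as exhibited by explicit examples such as $(1, 288, 17)$ in color $0$ modulo $4$ (here $1 + 288 = 17^2$, and $\lfloor\log_2 n\rfloor \in \{0,8,4\}$ are all $\equiv 0\pmod 4$). The delicate step is therefore to tune $K$ — and, if necessary, to iterate the reduction above, since each auxiliary equation retains the quadratic shape of the original — so that the combined coloring kills every would-be injective monochromatic solution via a finite case analysis matching dyadic interval indices against residues modulo $4$. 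The payoff is then that no color class of the resulting finite partition contains distinct $x, y, z$ with $x+y = z^2$, so no non-trivial $\iota_P$-ultrafilter exists, as required.
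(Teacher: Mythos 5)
First, a point in your favor: the paper does not actually prove this proposition --- it simply cites \cite{rif10} --- so you are attempting to supply an argument where the paper supplies none. Your opening observation is also correct and important: since $2+2-2^{2}=0$, the sentence $\sigma_{P}$ is satisfied by every set containing $2$, so under the paper's literal Definition 3.5.2 the statement is false and must be read injectively (no monochromatic solution with $x,y,z$ distinct), which is indeed the content of the cited result. The mod~$4$ reduction is also correct as far as it goes.

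The gap is in the size-based coloring, and it is not one that can be closed by "tuning $K$" or by iterating the mod~$4$ reduction. Your own analysis shows that with $c_{1}(n)=\lfloor\log_{2}n\rfloor \bmod K$, writing $m=\lfloor\log_{2}z\rfloor$, the dominant summand lands in dyadic class $2m-1$, $2m$ or $2m+1$, so the coloring fails to separate exactly when $m\equiv -1,0,1 \pmod K$; these three residue classes contain infinitely many $m$ for \emph{every} $K$, so no choice of modulus removes them. The failure survives the product with the mod~$4$ coloring: for $K=4$ the triple $(x,y,z)=(136,8,12)$ satisfies $136+8=144=12^{2}$, has all entries $\equiv 0 \pmod 4$, all entries distinct, and $\lfloor\log_{2}136\rfloor=7$, $\lfloor\log_{2}8\rfloor=3$, $\lfloor\log_{2}12\rfloor=3$ are all $\equiv 3\pmod 4$, so it is monochromatic for your combined coloring; the residue class $m\equiv -1\pmod K$ produces such examples for arbitrarily large $m$ whatever $K$ is. "Iterating the reduction" does not help either: the mod~$4$ substitution produces new quadratic equations in new variables, and repeating it gives an infinite regress rather than a finite coloring. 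What is actually needed is a size coloring on a scale that squaring \emph{shifts}: for instance intervals of the form $[2^{4^{k}},2^{2\cdot 4^{k}})$ versus $[2^{2\cdot 4^{k}},2^{4^{k+1}})$, so that $z\mapsto z^{2}$ maps each family into the other and the boundary slivers can be handled with finitely many extra colors. This doubling-of-the-exponent-scale idea (rather than a fixed arithmetic progression of dyadic exponents) is the essential ingredient of the construction in \cite{rif10}, and it is the idea missing from your plan.
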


This result is proved in $\cite{rif10}$. Observe that $x+y-z^{2}$ admits a set of exclusive variables and that its reduct Red($P$)=$y_{1}+y_{2}-y_{3}$ is weakly partition regular.\\
A way to obtain some partial result regarding partition regular polynomials with partial degree greater than one is to consider the multiplicative Van der Waerden ultrafilters:

\begin{defn} An ultrafilter $\U$ is a {\bfseries multiplicative Van der Waerden ultrafilter} if every element $A$ of $\U$ satisfies the multiplicative Van der Waerden's property, i.e. if $A$ contain arbitrarily long geometric progressions.\end{defn}

It is well-known that, in $\bN$, there are multiplicative Van der Waerden ultrafilters: e.g., for every Van der Waerden ultrafilter $\U$, the ultrafilter $2^{\U}$ is a multiplicative Van der Waerden ultrafilter.

\begin{prop} Let $h,k$ be positive natural numbers. Then, for every positive natural numbers $n_{1},...,n_{h},m_{1},...,m_{k}$ such that $\sum_{i=1}^{h}n_{i}=\sum_{j=1}^{k}m_{j}$, the polynomial:

\begin{center} $P(x_{1},...,x_{h},y_{1},...,y_{k}):\prod_{i=1}^{h}x_{i}^{n_{i}}-\prod_{k=1}^{m}y_{j}^{m_{j}}$ \end{center}

is injectively partition regular. \end{prop}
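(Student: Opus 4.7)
The plan is to reduce the multiplicative relation $P=0$ to a linear relation with vanishing coefficient sum and then push forward along the exponential map. Consider the linear polynomial
\[
L(a_1,\ldots,a_h,b_1,\ldots,b_k)=\sum_{i=1}^{h}n_i a_i-\sum_{j=1}^{k}m_j b_j.
\]
Its coefficients are $n_1,\ldots,n_h,-m_1,\ldots,-m_k$, which sum to zero by the hypothesis $\sum n_i=\sum m_j$. Assuming $h+k\geq 3$ (the only nontrivial case, since for $h=k=1$ one has $n_1=m_1$ and $P(x,y)=x^{n_1}-y^{n_1}$ forces $x=y$), Theorem 3.5.4 applies with $c_i=n_i$, $d_j=m_j$ and produces an additively idempotent ultrafilter $\U$ together with mutually distinct generators $\alpha_1,\ldots,\alpha_h,\beta_1,\ldots,\beta_k$ of $\U$ satisfying $L(\alpha_1,\ldots,\beta_k)=0$.

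Next I would push $\U$ forward along $\exp_2\colon\N\to\N$, $\exp_2(a)=2^a$, obtaining the ultrafilter $\V=\overline{\exp_2}(\U)=2^{\U}$. By Theorem 2.3.5(1), each element $^{*}\exp_2(\alpha_i)=2^{\alpha_i}$ and each $^{*}\exp_2(\beta_j)=2^{\beta_j}$ is a generator of $\V$. Since $\exp_2$ is injective, so is $^{*}\exp_2$ by transfer, and hence $2^{\alpha_1},\ldots,2^{\alpha_h},2^{\beta_1},\ldots,2^{\beta_k}$ are $h+k$ mutually distinct generators of $\V$. From $L(\alpha_1,\ldots,\beta_k)=0$ one obtains
\[
P\bigl(2^{\alpha_1},\ldots,2^{\alpha_h},2^{\beta_1},\ldots,2^{\beta_k}\bigr)=2^{\sum_i n_i\alpha_i}-2^{\sum_j m_j\beta_j}=0,
\]
so by the Bridge Theorem (Theorem 2.2.9) $\V$ is an $\iota_P$-ultrafilter, which is precisely the injective partition regularity of $P$.

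The only real difficulty is the one already packaged inside Theorem 3.5.4: producing mutually distinct generators of a single ultrafilter that together solve a linear equation of zero coefficient sum. Once that ingredient is granted, the exponential map translates the additive identity $\sum n_i\alpha_i=\sum m_j\beta_j$ into the multiplicative identity $\prod(2^{\alpha_i})^{n_i}=\prod(2^{\beta_j})^{m_j}$ at no cost, and the injectivity of $\exp_2$ is exactly what preserves the distinctness required by $\iota_P$. A more combinatorial variant of the same idea would instead fix a multiplicative Van der Waerden ultrafilter (for instance $2^{\U}$ for $\U$ additively Van der Waerden) and locate inside any sufficiently long monochromatic geometric progression $a,ar,\ldots,ar^N$ an injective exponent pattern $s_1,\ldots,s_h,t_1,\ldots,t_k$ with $\sum n_i s_i=\sum m_j t_j$; the existence of such a pattern is, once more, the injective partition regularity of $L$ supplied by Theorem 3.5.4.
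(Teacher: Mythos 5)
Your argument is correct, but it follows a different route from the one in the paper. You take the full ultrafilter-level output of Theorem 3.5.4 — mutually distinct generators $\alpha_{1},\dots,\alpha_{h},\beta_{1},\dots,\beta_{k}$ of a single ultrafilter satisfying $\sum n_{i}\alpha_{i}=\sum m_{j}\beta_{j}$ — and push it forward along $\exp_{2}$, so that the witnessing ultrafilter is explicitly $2^{\V}$ for the linear-combination-of-idempotents ultrafilter $\V$ of Theorem 3.5.4; the Bridge Theorem, the injectivity of $\exp_{2}$ under transfer, and the exponent laws then do all the work. The paper instead uses only the finitary consequence of Theorem 3.5.4 (the existence of mutually distinct \emph{standard} naturals $a_{1},\dots,a_{h},b_{1},\dots,b_{k}$ with $\sum n_{i}a_{i}=\sum m_{j}b_{j}$), fixes an arbitrary multiplicative Van der Waerden ultrafilter $\U$, finds a geometric progression $\eta,\eta\xi,\dots,\eta\xi^{M}$ among its generators, and sets $\alpha_{i}=\eta\xi^{a_{i}}$, $\beta_{j}=\eta\xi^{b_{j}}$ — this is exactly the ``more combinatorial variant'' you sketch at the end. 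What your route buys is a concrete, constructive description of one $\iota_{P}$-ultrafilter; what the paper's route buys is the stronger statement that \emph{every} multiplicative Van der Waerden ultrafilter is simultaneously a $\iota_{P}$-ultrafilter for all polynomials of this form. Two small points: the ultrafilter produced by Theorem 3.5.4 whose generators solve the linear equation is $\V=\V_{1}\oplus\V_{2}$, not the additively idempotent $\U$ it is built from, so your phrase ``an additively idempotent ultrafilter $\U$ together with mutually distinct generators'' is a slip of labelling (harmless, since your argument only needs distinct generators of \emph{some} ultrafilter); and your observation that the case $h=k=1$ is degenerate is accurate — there the statement has no injective solutions at all, and the paper's own proof carries the same implicit restriction $h+k\geq 3$ through its appeal to Theorem 3.5.4.
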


\begin{proof} As $\sum_{i=1}^{h}n_{i}=\sum_{j=1}^{k}m_{j}$, the polynomial 

\begin{center} $Q(t_{1},...,t_{h},z_{1},...,z_{k}): \sum_{i=1}^{h}n_{i}t_{i}-\sum_{j=1}^{k}m_{j}z_{j}$ \end{center}

is injectively partition regular (as we proved in Theorem 3.5.4); in particular, there are mutually distinct positive natural numbers $a_{1},..,a_{h},b_{1},...,b_{k}$ with 

\begin{center} $Q(a_{1},...,a_{h},b_{1},...,b_{k})=0$. \end{center}

Pose $M=\max\{a_{1},...,a_{h},b_{1},....,b_{k}\}$. Let $\U$ be a multiplicative Van der Waerden ultrafilter. We claim that $\U$ is a $\iota_{P}$-ultrafilter.\\
In fact, let $\eta,\xi$ be hypernatural numbers in $^{\bullet}\N$ such that $\eta,\eta\xi,....,\eta\xi^{M}$ are generators of $\U$, and pose 

\begin{center} $\alpha_{i}=\eta\xi^{a_{i}}$ for $i\leq h$; $\beta_{j}=\eta\xi^{b_{j}}$ for $j\leq k$. \end{center}

By construction, $\alpha_{1},...,\alpha_{h},\beta_{1},...,\beta_{k}$ are mutually distinct generators of $\U$, and $P(\alpha_{1},...,\alpha_{k},\beta_{1},...,\beta_{h})=0$:

\begin{center} $P(\alpha_{1},...,\alpha_{k},\beta_{1},...,\beta_{h})=\prod_{i=1}^{h}(\eta\xi^{a_{i}})^{n_{i}}-\prod_{j=1}^{k}(\eta\xi^{b_{j}})^{m_{j}}=$\\\vspace{0.3cm}$\eta^{\sum_{i=1}^{k}n_{i}}\xi^{\sum_{i=1}^{k}a_{i}n_{i}}-\eta^{\sum_{j=1}^{h}m_{j}}\xi^{\sum_{j=1}^{h}b_{j}m_{j}}=0$. \end{center}

So $\U$ is a $\iota_{P}$-ultrafilter; in particular, $P(x_{1},...,x_{h},y_{1},...,y_{k})$ is an injectively partition regular polynomial. \\ \end{proof}

This result, combined with Theorem 3.6.3, could be a basic tool to prove the partition regularity for equations with partial degree greater than 1, and to try to answer to the following question:\\

{\bfseries Question:} Is there a characterization of partition regular polynomials in $\Z[\mathbf{X}]$?

\subsection{Extension of Deuber's Theorem on Rado's Conjecture}

In 1973, Walter Deuber proved, in his PhD Thesis, Rado's conjecture on large subsets of $\N$:

\begin{defn} $1)$ An $m\times n$ matrix $M$ with integer entries is {\bfseries partition regular} $($on $\N)$ if the existential formula $\exists x_{1},...,x_{n} M(x_{1},...,x_{n})=0$ is partition regular on $\N$.\\
$2)$ A subset $A$ of $\N$ is {\bfseries large} if and only if for every $m\times n$ partition regular matrix $M$, there are elements $a_{1},...,a_{n}$ in $A$ such that $A(a_{1},...,a_{n})=0$. \end{defn}

{\bfseries Rado's Conjecture:} {\itshape The family 

\begin{center}$\mathcal{L}=\{A\subseteq\N\mid A$ is large$\}$ \end{center}

of large subsets of $\N$ is strongly partition regular, i.e. if an element $A$ of $\mathcal{L}$ is partitioned into finitely many sets $A=A_{1}\cup...\cup A_{n}$ then, for at least one index $i\leq n$, $A_{i}\in\mathcal{L}$.}\\

Deuber's result settled the case for the family of large sets (his original proof is in $\cite{rif11}$, see also $\cite{rif12}$); our generalization to non-linear polynomials gives rise to two questions similar to Rado's Conjecture:

\begin{defn} A subset $A$ of $\N$ is a {\bfseries polynomial set} if for every weakly partition regular polynomial $P(x_{1},...,x_{n})$ there are elements $a_{1},...,a_{n}$ in $A$ such that $P(a_{1},...,a_{n})=0$.\\ $A$ is an {\bfseries homogeneous set} if for every homogeneous partition regular polynomial $P(x_{1},...,x_{n})$ there are elements $a_{1},...,a_{n}$ in $A$ such that $P(a_{1},...,a_{n})=0$.\end{defn}

{\bfseries Question 1:} Are the family of polynomial subsets of $\N$ or the family of homogeneous subsets of $\N$ weakly partition regular?\\

We do not know the answer for the polynomial case; as for the homogeneous case, the answer is affirmative:

\begin{thm} There is an ultrafilter $\U$ on $\N$ such that, for every homogeneous partition regular polynomial $P(x_{1},...,x_{n})$, $\U$ is a $\iota_{P}$-ultrafilter. \end{thm}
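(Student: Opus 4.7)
The plan is to exhibit such a $\U$ as an element of the minimal bilateral ideal of $(\bN\setminus\N,\odot)$, exploiting the fact that every homogeneous injectively partition regular polynomial determines a bilateral ideal of $\iota_P$-ultrafilters via Proposition 3.5.11.

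First, I would fix a countable set of variables $\mathbf{X}=\{x_{n}\mid n\in\N\}$ and observe that $\Z[\mathbf{X}]=\bigcup_{F\in\wp_{fin}(\mathbf{X})}\Z[F]$ is countable, so the collection
\[
\mathcal{HI}=\{P\in\Z[\mathbf{X}]\mid P \text{ is homogeneous and injectively partition regular}\}
\]
is countable as well. For each $P\in\mathcal{HI}$, Proposition 3.5.11 tells me that the set $\mathcal{I}_{P}=\{\U\in\bN\mid\U \text{ is a }\iota_{P}\text{-ultrafilter}\}$ is a nonempty compact bilateral ideal in the compact right topological semigroup $(\bN\setminus\N,\odot)$.

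Next, I would invoke the structure theorem for compact right topological semigroups (Theorem 1.1.30) applied to $(\bN\setminus\N,\odot)$: this semigroup possesses a smallest bilateral ideal $K(\bN\setminus\N,\odot)$, which by its minimality is contained in every bilateral ideal of $(\bN\setminus\N,\odot)$. In particular,
\[
K(\bN\setminus\N,\odot)\;\subseteq\;\bigcap_{P\in\mathcal{HI}}\mathcal{I}_{P}.
\]
Since the left-hand side is nonempty, so is the intersection on the right; picking any $\U$ in it yields an ultrafilter which is simultaneously a $\iota_{P}$-ultrafilter for every $P\in\mathcal{HI}$, proving the theorem.

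The only step requiring care is the appeal to Proposition 3.5.11, whose applicability rests on $\iota_{P}$ being multiplicatively invariant (a consequence of the homogeneity of $P$) and on $\mathcal{I}_{P}$ being nonempty (which is exactly the injective partition regularity of $P$, via the Bridge Theorem 2.2.9 combined with Theorem 1.2.6). Once these are in hand, the argument reduces to the purely algebraic observation that a minimal bilateral ideal of a semigroup sits inside every bilateral ideal, so the potential obstacle of having to simultaneously satisfy countably many conditions dissolves into a single structural fact about $(\bN\setminus\N,\odot)$. No enumeration, diagonal argument, or saturation hypothesis on $^{\bullet}\N$ is needed.
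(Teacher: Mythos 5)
Your proposal is correct, but it takes a genuinely different (and in fact slightly stronger) route than the paper. The paper's proof shows that the family $\{\mathcal{S}_{P}\}_{P\in\mathcal{H}}$ has the finite intersection property by hand: given $P_{1},\dots,P_{k}$ and $\U_{i}\in\mathcal{S}_{P_{i}}$, it forms the product $\U_{1}\odot\cdots\odot\U_{k}$ and invokes Corollary 3.3.6 (multiplicative invariance of each $\iota_{P_{i}}$) to conclude that this product lies in every $\mathcal{S}_{P_{i}}$; compactness of $\bN$ then yields a point in the full intersection. You instead observe that each $\mathcal{I}_{P}$ is a nonempty bilateral ideal of $(\bN\setminus\N,\odot)$ (Proposition 3.5.11) and that the smallest bilateral ideal $K(\bN\setminus\N,\odot)$, whose existence is guaranteed by Theorem 1.1.30, is contained in every bilateral ideal, so $K(\bN\setminus\N,\odot)\subseteq\bigcap_{P}\mathcal{I}_{P}$. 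Both arguments ultimately rest on the same fact (multiplicative invariance of $\iota_{P}$ for homogeneous $P$ makes the relevant set an ideal), but yours trades the explicit product-of-ultrafilters computation for a single structural fact about minimal ideals, and it buys a sharper conclusion: \emph{every} ultrafilter in $K(\bN\setminus\N,\odot)$ is simultaneously a $\iota_{P}$-ultrafilter for all such $P$, not merely some ultrafilter. Your restriction to \emph{injectively} partition regular homogeneous polynomials is also the right reading: for a homogeneous $P$ that is partition regular but not injectively so, $\mathcal{I}_{P}$ would be empty by Theorem 1.2.6, and the paper's own proof tacitly needs the same nonemptiness. The countability of $\Z[\mathbf{X}]$ is, as you note, irrelevant to the argument.
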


\begin{proof} For every homogeneous partition regular polynomial $P(x_{1},...,x_{n})$, consider 

\begin{center} $\mathcal{S}_{P}=\{\U\in\bN\mid \U$ is a $\iota_{P}$-ultrafilter$\}$. \end{center}

As we proved in Section 3.4, every set $\mathcal{S}_{P}$ is a closed subset of $\bN$, and a bilateral ideal in $(\bN,\odot)$. \\

{\bfseries Claim:} The family $\{\mathcal{S}_{P}\}_{P\in\mathcal{H}}$ has the finite intersection property.\\

In fact, as a consequence of Corollary 3.3.6, for every natural number $k$, for every polynomials $P_{1}(x_{1,1},...,x_{1,n_{1}}),....,P_{k}(x_{k,1},...,x_{k,n_{k}})$, if, for every index $i\leq k$, the ultrafilter $\U_{i}$ is an element of $\mathcal{S}_{P_{i}}$ then $\U_{1}\odot...\odot\U_{k}$ is a $\iota_{P_{i}}$ ultrafilter for every $i\leq k$, so

\begin{center} $\U_{1}\odot...\odot\U_{k}\in\bigcap_{i=1}^{k}\mathcal{S}_{P_{i}}$. \end{center}

As $\bN$ is compact, and the family $\{\mathcal{S}_{P}\}_{P\in\mathcal{H}}$, where $\mathcal{H}$ is the set of homogeneous partition regular polynomials, has the finite intersection property, the intersection 

\begin{center} $\bigcap_{P\in\mathcal{H}}\mathcal{S}_{P}$ \end{center}

is nonempty, and if $\U$ is an ultrafilter in this intersection, by construction it is a $\iota_{P}$ ultrafilter for every homogeneous weakly partition regular polynomial $P(x_{1},...,x_{n})$.\\\end{proof}

\begin{cor} The family of homogeneous subsets of $\N$ is weakly partition regular. \end{cor}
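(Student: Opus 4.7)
The plan is to derive this corollary as an immediate consequence of Theorem 3.7.5 combined with the characterization of weakly partition regular families in terms of ultrafilters given by Theorem 1.2.3.

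First I would observe that the family $\mathcal{H}_{set}$ of homogeneous subsets of $\N$ is closed under supersets: if $A \subseteq B \subseteq \N$ and $A$ contains a solution $a_1,\ldots,a_n$ for every homogeneous partition regular polynomial $P(x_1,\ldots,x_n)$, then of course $B$ contains those same solutions, so $B$ is homogeneous. This puts us in a position to apply Theorem 1.2.3(1), which states that a family closed under supersets is weakly partition regular if and only if it contains an ultrafilter.

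Next I would invoke Theorem 3.7.5 to produce an ultrafilter $\U$ on $\N$ such that $\U$ is a $\iota_P$-ultrafilter for every homogeneous partition regular polynomial $P(x_1,\ldots,x_n)$. The key step is to verify that $\U \subseteq \mathcal{H}_{set}$: take any $A \in \U$ and any homogeneous partition regular polynomial $P(x_1,\ldots,x_n)$; since $\U$ is a $\iota_P$-ultrafilter, $A$ satisfies $\iota_P$, meaning there exist mutually distinct $a_1,\ldots,a_n \in A$ with $P(a_1,\ldots,a_n)=0$. A fortiori $A$ contains a solution to $P=0$, so $A$ is homogeneous. Thus every element of $\U$ lies in $\mathcal{H}_{set}$.

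Finally, applying Theorem 1.2.3(1) to the inclusion $\U \subseteq \mathcal{H}_{set}$ yields the weak partition regularity of $\mathcal{H}_{set}$. There is no real obstacle here since all the substantive work has been done in Theorem 3.7.5, whose proof relied on the compactness of $\bN$ and the finite intersection property of the family $\{\mathcal{S}_P\}_{P \in \mathcal{H}}$ (established via Corollary 3.3.6, which guarantees that $\odot$-products of $\iota_{P_i}$-ultrafilters are simultaneously $\iota_{P_i}$-ultrafilters for every $i$, exploiting the multiplicative invariance of $\iota_P$ for homogeneous $P$). The corollary is essentially a restatement of Theorem 3.7.5 in the language of partition regular families rather than ultrafilters.
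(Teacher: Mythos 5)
Your proof is correct and follows essentially the same route as the paper: take the ultrafilter $\U$ produced by Theorem 3.7.5, note that every $A\in\U$ contains (injective, hence a fortiori some) solutions to every homogeneous partition regular polynomial and is therefore a homogeneous set, and conclude via Theorem 1.2.3 that the family is weakly partition regular. The only difference is that you make explicit the closure-under-supersets hypothesis of Theorem 1.2.3, which the paper leaves tacit.
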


\begin{proof} Let $\U$ be an ultrafilter such that, for every homogeneous partition regular polynomial $P(x_{1},...,x_{n})$, $\U$ is a $\iota_{P}$-ultrafilter, and $A$ any of its elements; by construction $A$ is an homogeneous set. So $\U$ is a subset of the family of homogeneous sets, and by Theorem 1.2.3 it follows that this family is weakly partition regular.\\\end{proof}

{\bfseries Question 2:} Are the family of polynomial subsets of $\N$ or the family of homogeneous subsets of $\N$ strongly partition regular?\\

We do not know. We conjecture that this is true at least for the homogenous case, while the non-homogeneous case seems particularly challenging.

\subsection{Partition regularity on $\Z,\Q,\R$}

In this chapter we concentrated on the partition regularity on $\N$ for polynomials in $\Z[\mathbf{X}]$; a natural generalization would be to study the partition regularity of the same polynomials on $\Z,\Q,\R$.

\begin{defn} Let $A$ be a subset of $\R$, and $P(x_{1},...,x_{n})$ a polynomial in $\Z[\mathbf{X}]$. $P(x_{1},...,x_{n})$ is {\bfseries partition regular on $A$} if the existential formula:

\begin{center} $\sigma_{P}: \exists a_{1},...,a_{n}\in A\setminus\{0\}$ $P(a_{1},...,a_{n})=0$ \end{center}

is weakly partition regular on $A$.\\
$P(x_{1},...,x_{n})$ is {\bfseries injectively partition regular on $A$} if the existential formula:

\begin{center} $\iota_{P}:\exists a_{1},...,a_{n}\in A\setminus\{0\}$ $(P(a_{1},...,a_{n})=0\wedge \bigwedge_{i\neq j, i,j=1}^{n} a_{i}\neq a_{j})$ \end{center}

is weakly partition regular on $A$.\\
The {\bfseries set of partition regular polynomials on $A$} is denoted by $\mathcal{P}_{A}$. \end{defn}

As $\Z$, $\Q$ and $\N$ have the same cardinality, and since the hyperextension $^{*}\N$ has the $\mathfrak{c}^{+}-$enlarging property, the Bridge Theorem could be proved also for ultrafilters on $\Z$, $\Q$; from now on, in this section, we assume that the hyperextension $^{*}\N$ has the $|\wp(\R)|^{+}$-enlarging property, because with this hypothesis the Bridge Theorem is valid also for ultrafilters on $\R$.\\
First of all, from the definitions it follows that 

\begin{center} $\mathcal{P}_{\N}\subseteq\mathcal{P}_{\Z}\subseteq\mathcal{P}_{Q}\subseteq\mathcal{P}_{R}$ \end{center}

A known fact is that every linear homogeneous polynomial $P(x_{1},...,x_{n})\in\Z[\mathbf{X}]$ is partition regular on $\N$ if and only if it is partition regular on $\R$ (see e.g. $\cite{rif51}$). Actually, the arguments used to prove Theorem 3.6.6 could be arranged to prove the following result:

\begin{thm} Let $P(x_{1},...,x_{n})$ be an homogeneous polynomial in $\Z[\mathbf{X}]$. Then the following conditions are equivalent:

\begin{enumerate}
	\item $P(x_{1},...,x_{n})$ is injectively partition regular on $\N$;
	\item $P(x_{1},...,x_{n})$ is injectively partition regular on $\Z$;
	\item $P(x_{1},...,x_{n})$ is injectively partition regular on $\Q$.
\end{enumerate}
\end{thm}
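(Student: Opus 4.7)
The implications $(1)\Rightarrow(2)\Rightarrow(3)$ are immediate from the inclusions $\N\setminus\{0\}\subseteq\Z\setminus\{0\}\subseteq\Q\setminus\{0\}$: any finite coloring of the larger set restricts to a finite coloring of the smaller one, and an injective monochromatic $P$-solution in the smaller set is again a solution in the larger set. The content of the theorem therefore lies in the reverse implication $(3)\Rightarrow(1)$, which I plan to establish by mirroring the homogeneity argument used in the proof of Theorem 3.6.6.

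Assume $P$ is injectively partition regular on $\Q$ and let $d$ be its degree. Under the $|\wp(\R)|^+$-enlarging assumption, the Bridge Theorem applied to ultrafilters on $\Q$ supplies a $\iota_P$-ultrafilter $\U$ on $\Q$ together with mutually distinct, pairwise $\sim_u$-equivalent hyperrational generators $\beta_1,\ldots,\beta_n\in {}^{*}\Q$ satisfying $P(\beta_1,\ldots,\beta_n)=0$. Since all $\beta_i$ generate the same ultrafilter and $\Q\setminus\{0\}=\Q^+\sqcup\Q^-$, exactly one of $\Q^+,\Q^-$ lies in $\U$, so all $\beta_i$ share a common sign. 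Using the identity $P(-x_1,\ldots,-x_n)=(-1)^d P(x_1,\ldots,x_n)$ valid for homogeneous $P$, I replace each $\beta_i$ by $-\beta_i$ if necessary and assume that all $\beta_i$ are positive hyperrationals, written $\beta_i=a_i/b_i$ with $a_i,b_i\in {}^{*}\N$ positive.

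I now manufacture a positive hypernatural $\gamma$ such that the scaled elements $\xi_i=\gamma\beta_i$ are simultaneously positive hypernaturals, pairwise $\sim_u$-equivalent as elements of ${}^{\bullet}\N$, and mutually distinct; once this is done, homogeneity of $P$ yields $P(\xi_1,\ldots,\xi_n)=\gamma^d\cdot P(\beta_1,\ldots,\beta_n)=0$, and the Bridge Theorem, read in the opposite direction, produces a $\iota_P$-ultrafilter on $\N$ witnessed by $\xi_1,\ldots,\xi_n$. Following the device of Theorem 3.6.6 almost verbatim, I would take an infinite hypernatural $\eta\in {}^{*}\N$ larger than each of the $a_i,b_i$ and divisible by each $b_i$ (for instance $\eta=N!$ for $N$ sufficiently large), and set $\gamma={}^{*}\eta$ inside the $\omega$-hyperextension. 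Divisibility of $\gamma$ by each $b_i$ forces $\xi_i=\beta_i\cdot {}^{*}\eta\in{}^{\bullet}\N$; mutual distinctness of the $\xi_i$ is inherited from that of the $\beta_i$ because $\gamma\neq 0$; and $\sim_u$-equivalence follows because $(\beta_i,{}^{*}\eta)$ is a tensor pair by Theorem 2.5.16, so that $\mathfrak{U}_{\xi_i}=\mathfrak{U}_{\beta_i}\odot\mathfrak{U}_\eta=\U\odot\mathfrak{U}_\eta$ is independent of $i$.

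The principal obstacle is foundational rather than combinatorial: the Bridge Theorem and the tensor-pair machinery of Section 2.5 are phrased for $\N$, and I must verify that they transfer verbatim when the base set is enlarged to $\Q$ (and to $\Z$). The cardinality hypothesis $|\wp(\R)|^+$-enlarging is tailored precisely so that every ultrafilter on $\Q$ admits a generator in ${}^{*}\Q$, and the $\omega$-hyperextensions ${}^{\bullet}\Q,{}^{\bullet}\Z$ inherit the same Puritz-style tensor-pair calculus as ${}^{\bullet}\N$. Once this routine bookkeeping is in place, the sign normalization and factorial-scaling of the preceding paragraph constitute the main step, and the triple equivalence $(1)\Leftrightarrow(2)\Leftrightarrow(3)$ is established.
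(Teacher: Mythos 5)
Your proof is correct and is exactly the argument the paper has in mind: the paper gives no proof of this theorem, remarking only that ``the arguments used to prove Theorem 3.6.6 could be arranged'' to yield it, and your reduction of $(3)\Rightarrow(1)$ via sign normalization (using $P(-x_{1},\dots,-x_{n})=(-1)^{d}P(x_{1},\dots,x_{n})$) followed by factorial scaling and the tensor-pair argument is precisely that arrangement, with the easy implications handled by the inclusions $\N\setminus\{0\}\subseteq\Z\setminus\{0\}\subseteq\Q\setminus\{0\}$ as the paper also notes. The one point to phrase carefully is that the divisibility of $\gamma$ by each $b_{i}$ does not follow by transferring ``$b_{i}$ divides $\eta$'' (which would only give ${}^{*}b_{i}\mid{}^{*}\eta$), but from $\gamma={}^{*}(N!)=({}^{*}N)!$ together with $b_{i}<{}^{*}N$ by end extension --- exactly the step the paper makes explicit in its proof of Theorem 3.6.6.
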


When the polynomial is not homogeneous, the equivalence does not hold. In fact, e.g., the polynomial $x+y^{2}$ is partition regular on $\Z$ (since $x=y=-1$ is a solution) but it has no solutions on $\N$, and the polynomial $2x+1$ is partition regular on $\Q$ but it has no solutions on $\Z$.\\
A result that is useful to study the partition regularity in this more general context is the following:

\begin{thm} Let $^{*}\R$ be an hyperextension of $\R$ with the $|\R|^{+}$-saturation property. Let $A$ be a subset of $\R$, $P(x_{1},...,x_{n})$ a polynomial in $\mathcal{P}_{A}$ and $f\in\mathtt{Fun}(A,A)$ an injective function. The following conditions are equivalent:
\begin{enumerate}
	\item there is a $\iota_{P}$-ultrafilter $\U$ in $\Theta_{f[A]}$;
	\item the existential sentence $\phi(y_{1},...,y_{n})$: "there are mutually distinct $y_{1},...,y_{n}$ with $P(f(y_{1}),...,f(y_{n}))=0$" is weakly partition regular.
\end{enumerate}
\end{thm}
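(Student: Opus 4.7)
The plan is to translate both sides into nonstandard existential assertions via the Bridge Theorem (Theorem 2.2.9, available here because the enlarging property ensures that every ultrafilter on $A$ has a nonempty set of generators), and then to connect the two translations through the push-forward $\overline{f}:\beta A\to\beta A$ and its pre-image under ${}^{*}f$. Two elementary facts will carry the argument: (a) since $f$ is injective on $A$, by transfer ${}^{*}f$ is injective on ${}^{*}A$; (b) the same injectivity promotes to injectivity of $\overline{f}$ on $\beta A$. Fact (b) follows from the observation that, for every $C\subseteq A$, injectivity of $f$ yields $f^{-1}(f[C])=C$, hence $C\in\U\Leftrightarrow f[C]\in\overline{f}(\U)$, and this recovers $\U$ uniquely from $\overline{f}(\U)$.

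For $(2)\Rightarrow(1)$, suppose $\phi$ is weakly partition regular on $A$. By Theorem 1.2.6 pick a $\phi$-ultrafilter $\V$ on $A$; by the Bridge Theorem applied to $\phi$, choose mutually distinct $\beta_{1},\ldots,\beta_{n}\in G_{\V}\cap{}^{*}A$ satisfying $P({}^{*}f(\beta_{1}),\ldots,{}^{*}f(\beta_{n}))=0$. Set $\U=\overline{f}(\V)$ and $\alpha_{i}={}^{*}f(\beta_{i})$. By Theorem 2.3.5(1) each $\alpha_{i}$ belongs to $G_{\U}$; by fact (a) the $\alpha_{i}$ are pairwise distinct; and by construction $P(\alpha_{1},\ldots,\alpha_{n})=0$. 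The converse direction of the Bridge Theorem then certifies that $\U$ is a $\iota_{P}$-ultrafilter. Finally, $\alpha_{i}\in{}^{*}f[A]$ for each $i$ forces $f[A]\in\U$, i.e.\ $\U\in\Theta_{f[A]}$.

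For $(1)\Rightarrow(2)$, let $\U\in\Theta_{f[A]}$ be a $\iota_{P}$-ultrafilter. The Bridge Theorem supplies mutually distinct $\alpha_{1},\ldots,\alpha_{n}\in G_{\U}\cap{}^{*}f[A]$ with $P(\alpha_{1},\ldots,\alpha_{n})=0$. Fact (a) provides unique pre-images $\beta_{i}\in{}^{*}A$ satisfying ${}^{*}f(\beta_{i})=\alpha_{i}$, and these are automatically pairwise distinct. The delicate point is to show that all the $\beta_{i}$ share the same monad, so that they can jointly witness $\phi$ inside a single ultrafilter. Here fact (b) is decisive: from
\[
\overline{f}(\mathfrak{U}_{\beta_{i}})=\mathfrak{U}_{{}^{*}f(\beta_{i})}=\mathfrak{U}_{\alpha_{i}}=\mathfrak{U}_{\alpha_{j}}=\overline{f}(\mathfrak{U}_{\beta_{j}})
\]
and the injectivity of $\overline{f}$ on $\beta A$ one concludes $\mathfrak{U}_{\beta_{i}}=\mathfrak{U}_{\beta_{j}}$. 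Letting $\V$ denote this common ultrafilter, the tuple $(\beta_{1},\ldots,\beta_{n})$ consists of mutually distinct generators of $\V$ satisfying $P({}^{*}f(\beta_{1}),\ldots,{}^{*}f(\beta_{n}))=0$, so by the Bridge Theorem $\V$ is a $\phi$-ultrafilter and $\phi$ is weakly partition regular.

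The main obstacle is precisely fact (b): without injectivity of $\overline{f}$ on $\beta A$ the pull-back generators $\beta_{i}$ might inhabit distinct monads, and no single $\phi$-ultrafilter would emerge from $(1)\Rightarrow(2)$. The saturation hypothesis plays only the supporting role of guaranteeing nonempty sets of generators, which is exactly what the Bridge Theorem requires in both directions.
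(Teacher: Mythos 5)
Your proof is correct, and one half of it follows a genuinely different path from the paper's. The direction from weak partition regularity of $\phi$ to the existence of a $\iota_{P}$-ultrafilter in $\Theta_{f[A]}$ coincides with the paper's argument: push a witnessing tuple of generators forward through $\overline{f}$ and use transfer of injectivity to keep the images distinct. For the converse direction the paper instead writes $\U=\overline{f}(\V)$ for some $\V\in\beta A$ (using that $\overline{f}[\beta A]=\Theta_{f[A]}$) and then invokes the saturation-based Theorem 2.3.6, i.e.\ the surjectivity of $^{*}f$ from $G_{\V}$ onto $G_{\overline{f}(\V)}$, to obtain preimages of the $\alpha_{i}$ that automatically lie in a single monad; injectivity of $f$ plays no role there. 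You instead pull each $\alpha_{i}$ back through $^{*}f$ (unique preimage by transfer of injectivity) and then prove the preimages are $\sim_{u}$-equivalent via your fact (b), the injectivity of $\overline{f}$ on $\beta A$, whose derivation from $f^{-1}(f[C])=C$ is sound. The trade-off: the paper's route generalizes to non-injective $f$ at the cost of genuinely using saturation, whereas your route exploits the injectivity hypothesis to reduce the role of saturation to merely guaranteeing nonempty monads (so the enlarging property would suffice), which is a mild sharpening. One small notational caveat: where you write $G_{\U}\cap{}^{*}f[A]$ you mean $G_{\U}\subseteq{}^{*}(f[A])={}^{*}f[{}^{*}A]$ (the hyperextension of the image, not the image of the standard points); the substance of the step is unaffected.
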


\begin{proof} $(1)\Rightarrow(2)$ Let $\U$ be a $\iota_{P}$-ultrafilter, and $\alpha_{1},...,\alpha_{n}\in$$^{*}\R$ mutually distinct elements in $G_{\U}$ such that $P(\alpha_{1},...,\alpha_{n})=0$. From the hypothesis it follows that there is an ultrafilter $\V$ in $\beta A$ such that $\U=\overline{f}(\V)$. Then, as proved in Theorem 2.3.6 (the result was stated and proved for $A=\N$ but, since the hyperextension $^{*}\R$ satisfies the $|\R|^{+}$-saturation property, the same proof shows that the result holds for $A=\Q,\R,\Z$ as well), there are mutually distinct elements $\beta_{1},...,\beta_{n}\in G_{\V}$ with $^{*}f(\beta_{i})=\alpha_{i}$ for every index $i\leq n$. This proves that $\V$ is a $\phi(y_{1},...,y_{n})$-ultrafilter.\\
$(2)\Rightarrow(1)$ Let $\V$ be a $\phi(y_{1},....,y_{n})$-ultrafilter, $\beta_{1},...,\beta_{n}$ mutually distinct elements in $G_{\V}$ with $P($$^{*}f(\beta_{1}),....,$$^{*}f(\beta_{n}))=0$, and consider $\U=\overline{f}(\V)$. By construction, if for every index $i\leq n$ we pose $\alpha_{i}=$$^{*}f(\beta_{i})$, then $\alpha_{1},...,\alpha_{n}$ are mutually distinct elements in $G_{\U}$ with $P(\alpha_{1},...,\alpha_{n})=0$, so $\U$ is a $\iota_{P}$-ultrafilter in $\Theta_{f[A]}$.\\\end{proof}

We present three corollaries of this result:

\begin{cor} A polynomial $P(x_{1},...,x_{n})$ is in $\mathcal{P}_{\Z}$ if and only if $P(-x_{1},...,-x_{n})$ is in $\mathcal{P}_{\Z}$. \end{cor}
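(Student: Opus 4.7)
The plan is to apply Theorem 3.7.8 with $A=\Z$ and the injective map $f\in\mathtt{Fun}(\Z,\Z)$ defined by $f(x)=-x$. Since $f$ is a bijection of $\Z$, we have $f[\Z]=\Z$ and hence $\Theta_{f[\Z]}=\bZ$, so condition (1) of that theorem simply becomes ``there exists a $\iota_P$-ultrafilter on $\Z$,'' which by Theorem 1.2.6 is equivalent to $P\in\mathcal{P}_\Z$. Meanwhile, condition (2) becomes the assertion that the existential sentence
\begin{center}
$\phi(y_1,\dots,y_n):\ \exists y_1,\dots,y_n\in\Z\setminus\{0\}\ \big(P(-y_1,\dots,-y_n)=0\ \wedge\ \bigwedge_{i\neq j}y_i\neq y_j\big)$
\end{center}
is weakly partition regular on $\Z$, i.e.\ precisely that $Q(y_1,\dots,y_n):=P(-y_1,\dots,-y_n)$ lies in $\mathcal{P}_\Z$.

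For the forward direction, I would assume $P\in\mathcal{P}_\Z$, invoke Theorem 3.7.8 with the data above to extract a $\iota_P$-ultrafilter (automatically in $\Theta_{f[\Z]}=\bZ$), and read off that $\phi$ is weakly partition regular, which gives $Q\in\mathcal{P}_\Z$. For the backward direction I would simply observe the symmetric identity $P(x_1,\dots,x_n)=Q(-x_1,\dots,-x_n)$ and apply the same reasoning with $Q$ in place of $P$.

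The only points that need a moment of care are the compatibility of the zero condition with $f$ and the hypothesis ``$P\in\mathcal{P}_A$'' that appears in Theorem 3.7.8. The former is immediate because $f(x)=-x$ maps $\Z\setminus\{0\}$ bijectively onto itself, so ``mutually distinct nonzero $y_i$'' transports cleanly to ``mutually distinct nonzero $-y_i$''; the latter is exactly what we are assuming in each direction of the corollary, so there is no circularity. I do not anticipate any genuine obstacle: the whole content of the corollary is the observation that the change of variables $x_i\mapsto -x_i$ is realized by the injective self-map $f(x)=-x$ of $\Z$, to which the preceding theorem applies verbatim.
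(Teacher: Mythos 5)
Your proposal is correct and follows essentially the same route as the paper, which simply observes that $x\mapsto -x$ is a bijection of $\Z$ and invokes the general theorem on injective self-maps (numbered 3.7.10 in the paper, the one you cite as 3.7.8); your write-up merely spells out the details the paper calls "trivial." No gaps.
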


\begin{proof} The function $x\rightarrow -x$ is a bijective function in $\mathtt{Fun}(\Z,\Z)$, so the result is a trivial consequence of Theorem 3.7.10.\\ \end{proof}

The previous corollary entails that there are injectively partition regular polynomials $P(x_{1},...,x_{n})$ on $\Z$, with partial degree 1, such that Red($P$) is not partition regular: consider, e.g., the polynomial

\begin{center} $P(x_{1},x_{2},x_{3},x_{4},x_{5},x_{6}): x_{1}x_{2}x_{3}+x_{4}x_{2}x_{3}-x_{5}x_{6}$;\end{center}

as a consequence of Theorem 3.5.13, this polynomial is injectively partition regular on $\N$ so, in particular, it is injectively partition regular on $\Z$. So, the polynomial

\begin{center} $Q(x_{1},x_{2},x_{3},x_{4},x_{5},x_{6})=P(-x_{1},-x_{2},-x_{3},-x_{4},-x_{5},-x_{6}): -x_{1}x_{2}x_{3}-x_{4}x_{2}x_{3}-x_{5}x_{6}$ \end{center}

is injectively weakly partition regular on $\Z$, while its reduct 

\begin{center} Red($Q$)=$-y_{1}-y_{2}-y_{3}$ \end{center}

is not.\\
The second corollary regards partition regular polynomials on $\Q$:

\begin{cor} A polynomial $P(x_{1},...,x_{n})$ is in $\mathcal{P}_{\Q}$ if and only if the polynomial $P(\frac{1}{x_{1}},...,\frac{1}{x_{n}})$ is in $\mathcal{P}_{\Q}$. \end{cor}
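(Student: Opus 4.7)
The plan is to apply Theorem 3.7.10 with $A = \Q \setminus \{0\}$ and with $f \in \mathtt{Fun}(A,A)$ the involution $f(x) = 1/x$. First I would observe that $f$ is a bijection of $\Q\setminus\{0\}$ onto itself, so in particular $f$ is injective and $f[A] = A$; consequently $\Theta_{f[A]} = \beta A$, and condition (1) of Theorem 3.7.10 reduces to the mere existence of a $\iota_P$-ultrafilter on $\Q\setminus\{0\}$, i.e.\ to the injective partition regularity of $P$ on $\Q$. Condition (2), on the other hand, asserts that the existential sentence
\[
\exists\, y_1,\ldots,y_n\ \Bigl(\bigwedge_{i\neq j} y_i \neq y_j\ \wedge\ P\bigl(1/y_1,\ldots,1/y_n\bigr) = 0\Bigr)
\]
is weakly partition regular on $A$, which is precisely the injective partition regularity of $P(1/x_1,\ldots,1/x_n)$ on $\Q$. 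Hence Theorem 3.7.10 immediately yields the equivalence of the two partition regularity statements in the corollary.

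To reconcile this with the wording of the statement, which refers to $\mathcal{P}_{\Q}$ (plain, not injective, partition regularity), I would note two points. First, the definition of $\mathcal{P}_A$ already restricts solutions to $A\setminus\{0\}$, so $\mathcal{P}_{\Q}$ and $\mathcal{P}_{\Q\setminus\{0\}}$ coincide, and a finite coloring of $\Q$ restricts to one of $\Q\setminus\{0\}$ and vice versa. Second, the argument underlying Theorem 3.7.10 is manifestly symmetric: if $b_1,\ldots,b_n$ is a $c$-monochromatic solution of $P(x_1,\ldots,x_n) = 0$ under the coloring $c(x) := c_0(1/x)$, then $a_i := 1/b_i$ is a $c_0$-monochromatic solution of $P(1/x_1,\ldots,1/x_n) = 0$, so the injectivity clause plays no essential role and the equivalence passes to plain partition regularity as well.

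The only real subtlety — and hence the ``main obstacle,'' though it is quite mild — is that $f(x) = 1/x$ is not defined on all of $\Q$, which is why one must work with $A = \Q\setminus\{0\}$ rather than $A = \Q$ directly; but as explained above this is immaterial for the definition of $\mathcal{P}_{\Q}$. Apart from this, the corollary is strictly parallel to Corollary 3.7.11 for $\Z$, which exploits the involution $x\mapsto -x$: both instances fit the general pattern that any bijective self-map $f$ of the ambient set yields, through Theorem 3.7.10, a symmetry between the partition regularity of $P$ and that of the composition $P\circ(f,\ldots,f)$.
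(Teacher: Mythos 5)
Your proposal is correct and follows exactly the paper's route: the paper's own proof is a single sentence observing that $inv:\Q\setminus\{0\}\rightarrow\Q\setminus\{0\}$, $q\mapsto q^{-1}$, is a bijection and invoking Theorem 3.7.10. You are in fact more careful than the paper, since you explicitly address the mismatch between the injective partition regularity appearing in Theorem 3.7.10 and the plain partition regularity in the definition of $\mathcal{P}_{\Q}$, as well as the exclusion of $0$ from the domain — both points the paper glosses over.
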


\begin{proof} Let $inv:\Q\setminus\{0\}\rightarrow\Q\setminus\{0\}$ be the function that maps every rational number $q$ into $q^{-1}$; $inv$ is a bijection, so the thesis is a trivial consequence of Theorem 3.7.10.\\ \end{proof}

Our last corollary involves the partition regularity on $\R$:
 
\begin{cor} Let $z\neq 0$ be an integer, and $\R_{>0}=\{r\in\R\mid r>0\}$. Then a polynomial $P(x_{1},...,x_{n})$ is in $\mathcal{P}_{\R_{>0}}$ if and only if $P(x_{1}^{z},...,x_{n}^{z})$ is in $\mathcal{P}_{\R_{>0}}$. \end{cor}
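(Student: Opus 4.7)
The plan is to apply Theorem 3.7.10 with $A=\R_{>0}$ and the injective function $f\colon\R_{>0}\to\R_{>0}$ defined by $f(x)=x^{z}$. The first thing to check is that $f$ is indeed well-defined and injective on $\R_{>0}$: for any nonzero integer $z$, the map $x\mapsto x^{z}$ is strictly monotone on the positive reals (increasing if $z>0$, decreasing if $z<0$), hence injective; moreover $f$ is in fact a bijection from $\R_{>0}$ onto $\R_{>0}$, so $f[\R_{>0}]=\R_{>0}$ and therefore $\Theta_{f[\R_{>0}]}=\beta\R_{>0}$.

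With this in hand, Theorem 3.7.10 gives the equivalence of the following two statements: (1) there exists a $\iota_{P}$-ultrafilter $\U\in\beta\R_{>0}$, and (2) the sentence $\phi(y_{1},\dots,y_{n})$ stating the existence of mutually distinct positive reals $y_{1},\dots,y_{n}$ with $P(y_{1}^{z},\dots,y_{n}^{z})=0$ is weakly partition regular on $\R_{>0}$. Now I would unpack these two conditions in terms of the language of $\mathcal{P}_{\R_{>0}}$: condition (1) is precisely the statement $P\in\mathcal{P}_{\R_{>0}}$, since every ultrafilter on $\R_{>0}$ automatically belongs to $\Theta_{\R_{>0}}$, while condition (2) is, by Theorem 1.2.6, equivalent to saying that the injective version of the equation $P(x_{1}^{z},\dots,x_{n}^{z})=0$ is partition regular on $\R_{>0}$, i.e.\ $P(x_{1}^{z},\dots,x_{n}^{z})\in\mathcal{P}_{\R_{>0}}$.

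The one delicate point, which I expect to be the main obstacle, is the interpretation of $P(x_{1}^{z},\dots,x_{n}^{z})$ as an element of $\mathcal{P}_{\R_{>0}}$ when $z<0$: in that case the substitution yields a rational function rather than a polynomial in $\Z[\mathbf{X}]$. This is handled exactly as in the proof of Theorem 3.6.6: if $d$ is the maximum of the partial degrees of $P$, one multiplies through by $(x_{1}\cdots x_{n})^{|z|d}$ to obtain a genuine polynomial $\widetilde{P}\in\Z[\mathbf{X}]$ whose positive-real zero set coincides with that of $P(x_{1}^{z},\dots,x_{n}^{z})$, so the partition regularity assertion translates unchanged to $\widetilde{P}\in\mathcal{P}_{\R_{>0}}$. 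Once this rewriting is made explicit, the two-way implication is just a direct application of Theorem 3.7.10, and both directions follow simultaneously from the equivalence (1)$\Leftrightarrow$(2) there.
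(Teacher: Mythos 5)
Your proposal is correct and follows exactly the paper's own route: the paper's proof consists of the single observation that $f_{z}(x)=x^{z}$ is a bijection of $\R_{>0}$ onto itself, after which the statement is a direct application of Theorem 3.7.10. Your additional care about interpreting $P(x_{1}^{z},\dots,x_{n}^{z})$ for $z<0$ (clearing denominators as in Theorem 3.6.6) addresses a point the paper silently glosses over, but it does not change the argument.
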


\begin{proof} For every integer $z\neq 0$, the function $f_{z}:\R_{>0}\rightarrow\R_{>0}$ such that $f_{z}(x)=x^{z}$ is a bijection, so the thesis is a straightforward consequence of Theorem 3.7.10. \\ \end{proof}

E.g., for every natural numbers $n,m$ with $n+m\geq 3$ and for every integer $z\neq 0$, the polynomial $(\sum_{i\leq n} x_{i}^{z})-(\prod_{j\leq m}y_{j}^{z})$ is in $\mathcal{P}_{\R_{>0}}$, as $(\sum_{i\leq n} x_{i})-(\prod_{j\leq m}y_{j})$ is injectively partition regular on $\N$.\\
A particular case of this result are the polynomials

\begin{center} $P_{n}(x,y,z): x^{n}+y^{n}-z^{n}$ \end{center}

that are injectively weakly partition regular on $\R$ while, for $n\geq 3$, as a consequence of Fermat's Last Theorem, they are not weakly partition regular on $\N$ (as they do not admit any solution in $\N$). The problem of the partition regularity of $x^{2}+y^{2}-z^{2}$ on $\N$ is still open; we conclude this chapter observing that this polynomial is in $\mathcal{P}_{\N}$ if and only if there is a Schur ultrafilter in $\Theta_{Sq}$, where 

\begin{center} $Sq=\{n\in\N\mid \exists m\in\N$ $n=m^{2}\}$. \end{center}

\chapter{Finite Embeddability and Functional Relations}

In this chapter we study the notion of finite embeddability and its generalizations, that have some interesting combinatorial properties. The finite embeddability is defined as follows: given subsets $A,B$ of $\N$, $A$ is finitely embeddable in $B$ if for every finite subset $F$ of $A$ there is a natural number $n$ such that $n+F\subseteq B$. This notion can be generalized to ultrafilters: given ultrafilters $\U,\V$ on $\N$, $\U$ is finitely embeddable in $\V$ if for every set $B$ in $\V$ there is a set $A$ in $\U$ such that $A$ is finitely embeddable in $B$.\\
In Section One we present the idea that generated the study of the finite embeddability.\\
Section Two is dedicated to recalling some basic facts about pre-orders, that will be used throughout the chapter. \\
Sections Three, Four and Five are dedicated to the study of the finite embeddability defined on subsets of $\N$ and on $\bN$. In particular, we prove that the finite embeddability is a pre-order with maximal elements, and that these maximal elements have interesting combinatorial properties.\\
The definition of finite embeddability can be reformulated in terms of translations, and it can be generalized by replacing the translations with arbitrary sets of functions. In Sections Six, Seven and Eight we study this generalized notion, called finite mappability, with a particular attention to its combinatorial properties.\\
In Section Nine we consider the particular case of the finite mappability under affinities, proving that it is related with Van der Waerder ultrafilters.\\
Finally, in Section Ten, we present two possible future directions of our research.

\section{The Initial Idea}

In $\cite{rif2}$ Mathias Beiglböck introduced the concept of "ultrafilter-shift":

\begin{defn} Let $\U$ be an ultrafilter on $\Z$ and let $A$ be a subset of $\Z$. The {\bfseries $\U$-shift of $A$} is the set

\begin{center} $A-\U=\{z\in\Z\mid (A-z)\in\U\}$.\end{center}\end{defn}

This concept was used as a tool to give a short proof to the following theorem of Renling Jin (see $\cite{rif25}$):

\begin{thm}[Jin] Let $A, B$ be subsets of $\N$ with positive Banach density. Then $A+B=\{a+b\mid a\in A, b\in B\}$ is piecewise syndetic. \end{thm}

Inspired by Beiglböck's proof, in $\cite{rif14}$ Mauro Di Nasso introduced the notion of finite embeddability for subsets of $\N$, and used it to improve on Jin's Theorem.

\begin{defn} Given subsets $A, B$ of $\N$, $A$ is {\bfseries finitely embeddable in $B$} $($notation $A\leq_{fe} B)$ if and only if for every finite subset $F$ of $A$ there is a natural number $n$ such that $n+A\subseteq B$.\end{defn}

Finite embeddability has a nice nonstandard characterization and turns out to be closely related to the ultrafilter-shifts of Beiglböck on $\N$. In the result below, as in the rest of this chapter, we suppose that the hyperextension $^{*}\N$ satisfies the $\mathfrak{c}^{+}$-enlarging property and we recall that, given an hypernatural number $\alpha$, $\mathfrak{U}_{\alpha}$ is the ultrafilter on $\N$ such that, for every subset $A$ of $\N$, $A\in\mathfrak{U}_{\alpha}\Leftrightarrow \alpha\in$$^{*}A$.

\begin{prop} Let $A,B$ be subsets of $\N$. The following three conditions are equivalent:
\begin{enumerate}
	\item $A$ is finitely embeddable in $B$;
	\item there is an hypernatural number $\alpha$ in $^{*}\N$ such that $\alpha+A\subseteq$$^{*}B$
	\item $A$ is included in some ultrafilter shift of $B$ on $\N$.
\end{enumerate}
\end{prop}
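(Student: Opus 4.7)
My plan is to prove the cycle $(1) \Rightarrow (2) \Rightarrow (3) \Rightarrow (1)$. The whole argument is driven by the observation that, for any $a\in\N$ and $B\subseteq\N$, the set $B-a=\{m\in\N\mid m+a\in B\}$ satisfies $\alpha\in{}^*(B-a)\Leftrightarrow\alpha+a\in{}^*B$, by transfer. So $\alpha+A\subseteq{}^*B$ is equivalent to $(B-a)\in\mathfrak{U}_\alpha$ for every $a\in A$, i.e.\ to $A\subseteq B-\mathfrak{U}_\alpha$. This already gives $(2)\Leftrightarrow(3)$ almost for free (taking $\U=\mathfrak{U}_\alpha$ in one direction, and a generator of $\U$ as the required $\alpha$ in the other, which is legitimate because $^{*}\N$ has the $\mathfrak{c}^+$-enlarging property, so every ultrafilter on $\N$ has a generator).

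The implication $(3)\Rightarrow(1)$ is the easy direction. If $A\subseteq B-\U$, then for every finite $F=\{a_1,\dots,a_k\}\subseteq A$ each $B-a_i$ belongs to $\U$, so the finite intersection $\bigcap_{i\le k}(B-a_i)$ lies in $\U$ and in particular is nonempty. Any $n$ in this intersection satisfies $n+F\subseteq B$, which is exactly $A\le_{fe}B$.

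The main step is $(1)\Rightarrow(2)$, where I will exploit the $\mathfrak{c}^+$-enlarging property. For every finite $F\subseteq A$ define
\[
B_F=\{n\in\N\mid n+F\subseteq B\}.
\]
By hypothesis each $B_F$ is nonempty, and the family $\{B_F\mid F\in\wp_{fin}(A)\}$ has the finite intersection property because $B_{F_1}\cap B_{F_2}=B_{F_1\cup F_2}$. Since $|\wp_{fin}(A)|\le\mathfrak{c}$, the $\mathfrak{c}^+$-enlarging property yields an element $\alpha\in\bigcap_{F}{}^*B_F$. Applying this to the singletons $F=\{a\}$ for $a\in A$, transfer gives $\alpha+a\in{}^*B$ for every $a\in A$, that is $\alpha+A\subseteq{}^*B$.

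The only subtle point is ensuring that the hyperextension is sufficiently rich for (1)$\Rightarrow$(2); this is where the standing $\mathfrak{c}^+$-enlarging assumption on $^{*}\N$ is essential. Once $\alpha$ is produced, the rest of the cycle is a routine translation between the nonstandard statement and the ultrafilter-shift statement via the correspondence $\alpha\leftrightarrow\mathfrak{U}_\alpha$ outlined above.
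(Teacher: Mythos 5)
Your proof is correct, and the core of it --- the enlarging argument producing $\alpha\in\bigcap_{F}{}^{*}B_{F}$ for $(1)\Rightarrow(2)$, and the translation $\alpha+a\in{}^{*}B\Leftrightarrow B-a\in\mathfrak{U}_{\alpha}$ for $(2)\Leftrightarrow(3)$ --- is exactly the paper's argument. The only difference is how you close the equivalence: the paper proves $(2)\Rightarrow(1)$ by transfer and contradiction (if some finite $F$ admitted no standard shift into $B$, then by transfer no hypernatural shift would work either), whereas you prove $(3)\Rightarrow(1)$ by a purely standard ultrafilter argument (a finite intersection of the sets $B-a_{i}\in\U$ is in $\U$, hence nonempty). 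Your route is marginally more elementary for that one implication since it avoids nonstandard methods entirely there, but the substance of the proposition is carried by the same two ideas in both proofs.
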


\begin{proof} $(1)\Rightarrow(2)$ Suppose that $A$ is finitely embeddable in $B$, and let $F$ be a finite subset of $A$. Let $S_{F}$ 

\begin{center} $S_{F}=\{n\in\N\mid n+F\subseteq B\}$. \end{center}

The family $\{S_{F}\}_{F\in\wp_{fin}(A)}$ (where $\wp_{fin}(A)$ denotes the set of finite subsets of $A$) has the finite intersection property, since $S_{F_{1}}\cap S_{F_{2}}=S_{F_{1}\cup F_{2}}$ for every finite subsets $F_{1},F_{2}$ of $A$.\\
By $\mathfrak{c}^{+}-$enlarging property, it follows that

\begin{center} $T=\bigcap_{F\in\wp_{fin}(A)}$$^{*}S_{F}\neq\emptyset$. \end{center}

Observe that, for every finite set $F\subseteq\N$, 

\begin{center} $^{*}S_{F}=\{\alpha\in$$^{*}\N\mid \alpha+$$^{*}F\subseteq$$^{*}B\}=\{\alpha\in$$^{*}\N\mid \alpha+F\subseteq$$^{*}B\}$, \end{center} 

since $F=$$^{*}F$. If $\alpha$ is any hypernatural number in $T$, then $\alpha+A\subseteq$$^{*}B$ as, by construction, $\alpha+F\subseteq$$^{*}B$ for every finite subset $F$ of $A$.\\
$(2)\Rightarrow(1)$ Let $\alpha$ be an hypernatural number in $^{*}\N$ with $\alpha+A\subseteq$$^{*}B$, and suppose that $A$ is not finitely embeddable in $B$. Take a finite subset $F$ of $A$ such that, for every natural number $n$, the translation $n+F$ is not included in $B$. By transfer it follows that for every hypernatural number $\eta$ the set $\eta+F$ is not included in $^{*}B$, and this is absurd since $\alpha+F\subseteq\alpha+A\subseteq$$^{*}B$. So $A$ is finitely embeddable in $B$. \\
$(2)\Rightarrow (3)$ Consider the ultrafilter $\U=\mathfrak{U}_{\alpha}$. As $\alpha+A\subseteq$$^{*}B$, for every $a\in A$ $\alpha\in($$^{*}B-a)=$$^{*}(B-a)$, so

\begin{center} $B-a\in\mathfrak{U}_{\alpha}$ for every $a\in A$. \end{center}

This entails that $A\subseteq B-\mathfrak{U}_{\alpha}$.\\
$(3)\Rightarrow (2)$ Let $\U=\mathfrak{U}_{\alpha}$ be an ultrafilter such that $A\subseteq B-\U$. Observe that $B-\mathfrak{U}_{\alpha}=\{n\in\N\mid B-n\in\mathfrak{U}_{\alpha}\}=\{n\in\N\mid\alpha\in$$^{*}B-n\}=\{n\in\N\mid \alpha+n\in$$^{*}B\}$. Since $A\subseteq \{n\in\N\mid \alpha+n\in$$^{*}B\}$, it follows that $\alpha+A\subseteq$$^{*}B$. \\\end{proof}

Another interesting property of finite embeddability involves the Banach density of subsets of $\N$ (that has been introduced in Section 3.1):

\begin{prop} Let $A, B$ be subsets of $\N$ with $A\leq_{fe} B$. Then $BD(A)\leq_{fe} BD(B)$. \end{prop}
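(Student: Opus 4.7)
The plan is to unpack the definition of Banach density and exploit the combinatorial content of finite embeddability directly on intervals. The statement $BD(A)\leq BD(B)$ will follow from a uniform comparison of $A$-counts and $B$-counts on intervals of a given length.

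Concretely, I would fix a natural number $n\geq 1$ and consider an arbitrary interval $[a,a+n]\subseteq\N$. The set $F=A\cap[a,a+n]$ is finite, and so by the definition of $\leq_{fe}$ there exists $m\in\N$ with $m+F\subseteq B$. Since translation is a bijection, $|F|=|m+F|$, and since $m+F\subseteq B\cap[a+m,a+n+m]$, we obtain the pointwise bound
\[
|A\cap[a,a+n]|\;=\;|F|\;\leq\;|(m+F)|\;\leq\;|B\cap[a+m,a+n+m]|.
\]
Taking the supremum over $a\in\N$ on the left, the right-hand side is bounded above by $\sup_{b-c=n}|B\cap[c,b]|$, since $a+m$ ranges over a subset of $\N$. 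Dividing by $n$ and letting $n\to\infty$ yields $BD(A)\leq BD(B)$ from the definition recalled in Section~3.1.

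Alternatively, one can run the same argument nonstandardly using Proposition~4.1.4: pick $\alpha\in{}^{*}\N$ with $\alpha+A\subseteq{}^{*}B$, so that $A\subseteq\{n\in\N\mid n+\alpha\in{}^{*}B\}$, and then push a hyperfinite witnessing interval for $BD(A)$ through the shift by $\alpha$. The main subtlety there is that the shifted set is external, so one must compare counts on $\N$ rather than internally; the standard argument above sidesteps this issue entirely and is the cleaner route. No real obstacle is expected — the content is essentially that finite embeddability is preserved by finitary density estimates, which matches the "finite" character of both sides.
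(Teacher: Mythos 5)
Your argument is correct, but it takes a genuinely different route from the paper. The paper's proof is entirely nonstandard: it invokes the nonstandard characterization of Banach density ($BD(A)\geq a$ iff $st\bigl(\frac{|{}^{*}A\cap[\alpha,\alpha+\beta)|}{\beta}\bigr)\geq a$ for some $\alpha\in{}^{*}\N$ and infinite $\beta$), takes $\gamma\in{}^{*}\N$ with $\gamma+A\subseteq{}^{*}B$ from Proposition 4.1.4, transfers to get ${}^{*}\gamma+{}^{*}A\subseteq{}^{**}B$, and compares internal cardinalities of the witnessing hyperfinite interval and its shift by ${}^{*}\gamma$. Your main argument is purely standard and finitary: for each interval $I$ of length $n$ you embed the finite set $A\cap I$ into $B$ by a translation, so the $A$-count on $I$ is dominated by the $B$-count on some translate of $I$, hence $\sup_I|A\cap I|\leq\sup_{I'}|B\cap I'|$ for every $n$, and the inequality passes to the limit defining $BD$. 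This is cleaner and more self-contained (it needs neither Proposition 4.1.4 nor the nonstandard description of $BD$), at the cost of not illustrating the transfer technique the chapter is built around. Your remark about the alternative nonstandard route is also accurate: the shift $\gamma+A$ of the standard copy of $A$ is external, which is exactly why the paper transfers first and works with ${}^{*}\gamma+{}^{*}A$ inside ${}^{**}\N$. The only cosmetic point is a normalization mismatch: you count on $[a,a+n]$, which has $n+1$ points, while the paper's definition divides by $n$; this does not affect the limit.
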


\begin{proof} We recall the nonstandard characterization of Banach density introduced in Section 3.1:

\begin{center} $BD(A)\geq a$ if and only if there are $\alpha\in$$^{*}\N$, $\beta\in$$^{*}\N\setminus\N$ such that $st(\frac{|^{*}A\cap [\alpha,\alpha+\beta)|}{\beta})\geq a$. \end{center}
As $A\leq_{fe} B$, by Proposition 4.1 there is an hypernatural number $\gamma$ with $\gamma+A\subseteq$$^{*}B$; by transfer it follows that

\begin{center} $^{*}\gamma+$$^{*}A\subseteq$$^{**}B$. \end{center}

Observe that 

\begin{center} $|$$^{*}A\cap [\alpha,\alpha+\beta)|=|($$^{*}\gamma+$$^{*}A)\cap [$$^{*}\gamma+\alpha,$$^{*}\gamma+\alpha+\beta)|\leq |$$^{**}B\cap[$$^{*}\gamma+\alpha,$$^{*}\gamma+\alpha+\beta)|$ \end{center}

so 

\begin{center} $BD(A)\leq st(\frac{|^{*}A\cap[\alpha,\alpha+\beta)|}{\beta})\leq st(\frac{|^{**}B\cap[^{*}\gamma+\alpha,^{*}\gamma+\alpha+\beta)|}{\beta})\leq BD(B)$. \end{center}
\end{proof}

As suggested by the above results, the combinatorial properties of the relation of finite embeddability deserve to be studied in more detail. This chapter is dedicated to the study of this relation and of its generalizations.\\

{\bfseries Convention:} Throughout this chapter we assume that the hyperextension $^{*}\N$ satisfies the $\mathfrak{c}^{+}$-enlarging property, and we work in $^{\bullet}\N$.\\

This is done since, as usual, we want to have nonempty sets of generators for all ultrafilters $\U$ in $\bN$.

\section{Partial Pre--Orders}

In this chapter we use some general features of partial pre-orders:

\begin{defn} Let $S$ be a set, and $\leq$ a binary relation on $S$. $(S,\leq)$ is a {\bfseries partial pre-ordered set} if the relation $\leq$ is transitive and reflexive on $S$, i.e. if for every $x,y,z$ is $S$ the following two conditions hold:
\begin{enumerate}
	\item $x\leq y$, $y\leq z\Rightarrow x\leq z$;
	\item $x\leq x$.
\end{enumerate}
In this case we also say that $\leq$ is a {\bfseries partial pre-order on $S$}.\\
The pre-order is {\bfseries total} if, for every $x\neq y$ in $S$, $x\leq y$ or $y\leq x$.\\
A pre-order $\leq$ is an {\bfseries order} if $\leq$ is antysimmetric, i.e. if for every $x,y$ in $S$, if $x\leq y$ and $y\leq x$ then $x=y$. In this case, we say that $(S,\leq)$ is a {\bfseries partially ordered set} if $\leq$ is a partial order, and that $(S,\leq)$ is a {\bfseries totally ordered set} if $\leq$ is a total order.\end{defn}

When dealing with partially pre-ordered sets, we are usually interested in particular elements:

\begin{defn} Let $(S,\leq)$ be a partially pre-ordered set, $A$ a subset of $S$, and let $x$ be an element in $S$. Then
\begin{itemize}
	\item $x$ is the {\bfseries greatest element} of $(S,\leq)$ if for every element $y$ in $S$, $y\leq x$;
	\item $x$ is {\bfseries maximal} if for every element $y$ in $S$, if $x\leq y$ then $y\leq x$;
	\item $x$ is an {\bfseries upper bound} of $A$ is $a\leq x$ for every element $a\in A$.
\end{itemize}

The {\bfseries upper cone generated by $x$} is the set $\mathcal{C}(x)=\{y\in S\mid x\leq y\}$. \end{defn}

An important sort of substructures of pre-ordered sets are the chains:

\begin{defn} Let $(S,\leq)$ be a partially pre-ordered set. A sequence \\$\langle x_{i}\mid i\in I\rangle$ of elements in $S$ is a {\bfseries chain} if
\begin{enumerate}
	\item $(I,\preceq)$ is a totally ordered set;
	\item $x_{i}\leq x_{j}$ whenever $i\prec j$ in $(I,\preceq)$.
\end{enumerate}
\end{defn}

Chains are fundamental in relation with Zorn's Lemma:

\begin{lem}[Zorn] Let $(S,\leq)$ be a partially ordered set. Suppose that every chain in $(S,\leq)$ has an upper bound in $S$. Then the set $S$ contains at least one maximal element.\end{lem}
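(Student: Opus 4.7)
The plan is to derive the conclusion by contradiction, using the Axiom of Choice together with transfinite recursion on the ordinals. Suppose, toward a contradiction, that $(S,\leq)$ has no maximal element. Since $\leq$ is antisymmetric (we are in a partially ordered set, not merely a pre-ordered one), the failure of maximality means that for every $x \in S$ there exists $y \in S$ with $x \leq y$ but $y \not\leq x$; write $x < y$ for this strict relation. Combined with the chain hypothesis, this shows that every chain $C \subseteq S$ admits a \emph{strict} upper bound: first take an upper bound $b$ of $C$ via the hypothesis, then take any $b'$ with $b < b'$ via the failure of maximality at $b$; such a $b'$ is strictly above every element of $C$.

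Next, I would invoke the Axiom of Choice to fix a function $\varphi$ that assigns to each chain $C \subseteq S$ a definite strict upper bound $\varphi(C) \in S$. By transfinite recursion I then define, for every ordinal $\alpha$,
\[
\Phi(\alpha) \;=\; \varphi\bigl(\{\Phi(\beta) : \beta < \alpha\}\bigr),
\]
and I would verify by induction on $\alpha$ the two simultaneous claims: (i) the set $\{\Phi(\beta) : \beta < \alpha\}$ is a chain in $S$, and (ii) $\Phi(\alpha) > \Phi(\beta)$ for every $\beta < \alpha$. At successor stages both are immediate from the construction of $\Phi(\alpha)$ as a strict upper bound; at limit stages, (i) follows because a union of an increasing family of chains is again a chain, and then (ii) follows from the same construction.

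From (ii) one deduces that $\Phi$ is injective, so $\Phi$ embeds the entire class $\mathrm{Ord}$ of ordinals into the set $S$. This contradicts the Burali--Forti observation that $\mathrm{Ord}$ is a proper class, completing the proof. The principal obstacle is simply the invocation of the full Axiom of Choice together with a clean formulation of the transfinite recursion; in fact, Zorn's Lemma is logically equivalent to $\mathrm{AC}$, so any proof must smuggle in some non-constructive choice, and the only real subtlety is verifying that the recursively defined $\Phi$ produces chains at limit stages, which I would handle as above.
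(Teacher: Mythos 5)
Your proof is correct, but there is nothing in the paper to compare it against: the author states Zorn's Lemma (Lemma 4.2.4) without proof, treating it as a known foundational fact equivalent to the Axiom of Choice, and only ever uses it as a black box. What you have written is the standard derivation from AC via transfinite recursion and the Burali--Forti argument, and all the essential steps are in order: the passage from non-maximality plus antisymmetry to the existence of strict upper bounds of chains, the choice function $\varphi$ on the set of chains (a subset of $\wp(S)$, so ordinary AC suffices), the simultaneous induction establishing that the initial segments $\{\Phi(\beta):\beta<\alpha\}$ are chains and that $\Phi$ is strictly increasing, and the final contradiction from injecting the proper class of ordinals into the set $S$ (formally via Replacement or Hartogs' number). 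Two small points worth making explicit: the hypothesis applied to the empty chain forces $S\neq\emptyset$, which is needed for $\Phi(0)=\varphi(\emptyset)$ to be defined; and the paper's Definition 4.2.3 phrases chains as indexed sequences over a totally ordered index set rather than as totally ordered subsets, so strictly speaking you should remark that the two formulations are interchangeable here.
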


Let $(S,\leq)$ be a partially pre-ordered set. There is a general technique to construct a partially ordered set associated with this pre-order. First of all, we consider the $\leq$-equivalence classes:

\begin{defn} Two elements $x,y\in S$ are {\bfseries $\leq$-equivalent} $($notation $x\equiv y)$ if $x\leq y$ and $y\leq x$. \end{defn}

Observe that $\equiv$ is actually an equivalence relation: it is symmetrical by definition, and it is transitive and reflexive as $\leq$ is a pre-order.\\
The pre-order $\leq$ can be induced on the set of $\leq$-equivalence classes:

\begin{defn} Let $(S,\leq)$ be a partially ordered set. Given two $\leq$-equivalence classes $[x],[y]$ in $S_{/_{\equiv}}$, we pose $[x]\leq [y]$ if and only if $x\leq y$. \end{defn}

We observe that $(S_{/_{\equiv}},\leq)$ is a partially ordered set: the relation $\leq$ is trivially transitive and reflexive on $S_{/_{\equiv}}$. It is also antisymmetrical: suppose that $[x]\leq [y]$ and $[y]\leq [x]$. Then it follows that $x\leq y$ and $y\leq x$, so $[x]=[y]$.\\

{\bfseries Convention:} Throughout this chapter, whenever we deal with a partial pre-ordered set $(S,\leq)$, we reserve the notation $(S_{/_{\equiv}},\leq)$ to denote the partial ordered set constructed as exposed above.\\

Observe that every chain in $(S,\leq)$ as an upper bound if and only if every chain in $(S_{/_{\equiv}},\leq)$ as an upper bound.\\
In this chapter we deal with pre-orders that are filtered: 

\begin{defn} Let $(S,\leq)$ be a partially pre-ordered set. The pre-order $\leq$ is {\bfseries filtered} if for every elements $a,b$ in $S$ there is an element $c$ in $S$ such that $a\leq c$ and $b\leq c$. \end{defn}

Observe that the pre-order $\leq$ is filtered on $S$ if and only if the order $\leq$ is filtered on $S_{/_{\equiv}}$.\\
The property of filtration is particularly important when dealing with partially ordered sets that satisfy the hypothesis of Zorn's Lemma:

\begin{thm} Let $(S,\leq)$ be a partially ordered set such that every chain in $S$ has an upper bound. The following two conditions are equivalent:
\begin{enumerate}
	\item there is a greatest element in $(S,\leq)$;
	\item the order $\leq$ is filtered.
\end{enumerate}
\end{thm}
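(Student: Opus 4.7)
The plan is to handle the two implications separately, with the nontrivial direction relying on Zorn's Lemma applied to the hypothesis that every chain in $(S,\leq)$ has an upper bound.

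For $(1)\Rightarrow(2)$: this direction is immediate. If $g$ is the greatest element of $(S,\leq)$, then for any $a,b\in S$ we have $a\leq g$ and $b\leq g$ by definition of greatest element, so $g$ itself witnesses filtration of the pair $(a,b)$. Hence $\leq$ is filtered.

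For $(2)\Rightarrow(1)$: this is the substantive direction. First I would apply Zorn's Lemma, which is justified by the standing assumption that every chain in $(S,\leq)$ has an upper bound, to obtain at least one maximal element $m\in S$. Then I would use filtration to promote maximality to being the greatest: given any $x\in S$, the filtration hypothesis produces an element $c\in S$ with $m\leq c$ and $x\leq c$. Because $(S,\leq)$ is a partial order (not merely a pre-order) and $m$ is maximal, the relation $m\leq c$ forces $c=m$. Substituting, $x\leq m$, and since $x$ was arbitrary this shows that $m$ is the greatest element of $(S,\leq)$.

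The only point requiring any care is making sure to invoke maximality in its partial-order form (i.e.\ $m\leq c$ implies $c=m$), which is precisely the strengthening that distinguishes a partially ordered set from a pre-ordered one; without this antisymmetry, a maximal element need not be the greatest element, as one would only conclude $c\leq m$ and hence $x\leq c\leq m$, which is still fine actually---so the argument goes through equally well in the pre-ordered version. Either way, no further obstacle arises, and the proof is complete.
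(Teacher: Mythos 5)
Your proof is correct, and both directions are handled properly: the forward direction is the same trivial observation as in the paper, and the substantive direction uses the same two ingredients (Zorn's Lemma plus filtration). The difference is in how filtration is deployed. The paper applies it to a \emph{pair of maximal elements} $m_1, m_2$, concludes $m_1 = m_2$, and then asserts that the unique maximal element is the greatest element --- a final step that is true in a poset where every chain has an upper bound, but which actually requires a further appeal to Zorn's Lemma (applied to the upper set of an arbitrary $x$) and is left implicit in the paper. You instead filter the maximal element $m$ against an \emph{arbitrary} element $x$, obtaining $c$ with $m\leq c$ and $x\leq c$, and conclude $c=m$ and hence $x\leq m$ directly. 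This is slightly more economical and avoids the unjustified step: it establishes greatestness in one pass rather than via uniqueness of maximal elements. Your closing remark is also accurate: in the pre-ordered version, where maximality of $m$ only gives $c\leq m$, transitivity still yields $x\leq c\leq m$, so the argument survives without antisymmetry --- which is relevant since the paper immediately applies this result to the pre-orders $\trianglelefteq_{fe}$ and $\trianglelefteq_{\mathcal{F}}$ via the quotient construction.
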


\begin{proof} $(1)\Rightarrow (2)$ Let $m$ be the greatest element in $(S,\leq)$. Then $a\leq m$ and $b\leq m$ for every $a,b$ in $S$, so the order $\leq$ is filtered.\\
$(2)\Rightarrow (1)$ Suppose that $\leq$ is filtered. By Zorn's Lemma, since every chain in $S$ has an upper bound, there are maximal elements in $(S,\leq)$. Suppose that $m_{1}$, $m_{2}$ are two maximal elements. Then, as $\leq$ is filtered, there is an element $m$ such that $m_{1}\leq m$ and $m_{2}\leq m$. Since $m_{1}$, $m_{2}$ are maximal, it follows that $m_{1}=m$ and $m_{2}=m$, hence $m_{1}=m_{2}$: there is only one maximal element, that is the greatest element in $(S,\leq)$.\\ \end{proof}

We also observe that, if $(S,\leq)$ is a filtered partially pre-ordered set, then an element $s\in S$ is maximal if and only if its equivalence class $[s]$ is the greatest element in $(S_{/_{\equiv}},\leq)$. The proof of this fact is trivial.

\section{Finite Embeddability for Subsets of $\N$}

In this section we study the relation of finite embeddability.\\

{\bfseries Note:} We assume that $0\in\N$ so, e.g., if $A,B$ are subsets of $\N$ and $A\subseteq B$, then $A\leq_{fe} B.$

\begin{prop} The relation $\leq_{fe}$ is a partial pre-order on $\wp(\N)$.\end{prop}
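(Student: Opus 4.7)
The plan is to verify the two defining properties of a partial pre--order, namely reflexivity and transitivity of $\leq_{fe}$ on $\wp(\N)$. Both should follow directly from the definition, using the convention that $0 \in \N$, so no nonstandard machinery or enlarging property is needed here. There is no real obstacle; this is essentially a warm--up lemma introducing the relation.

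For reflexivity, given any $A \subseteq \N$ and any finite $F \subseteq A$, I would take the natural number $n = 0$. Then $n + F = F \subseteq A$, which shows $A \leq_{fe} A$.

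For transitivity, I would assume $A \leq_{fe} B$ and $B \leq_{fe} C$, and show $A \leq_{fe} C$. Let $F$ be an arbitrary finite subset of $A$. By $A \leq_{fe} B$, there exists $n \in \N$ such that $n + F \subseteq B$. The set $n + F$ is itself a finite subset of $B$, so by $B \leq_{fe} C$ there exists $m \in \N$ such that $m + (n + F) \subseteq C$. Since $m + (n + F) = (m + n) + F$ and $m + n \in \N$, the natural number $m + n$ witnesses that $F$ embeds into $C$. As $F$ was arbitrary, $A \leq_{fe} C$, completing the proof.
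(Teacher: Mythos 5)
Your proof is correct and follows exactly the same route as the paper's: reflexivity via the translation by $0$ (using the convention $0\in\N$), and transitivity by composing the two witnessing translations $n$ and $m$ into the single translation $n+m$. Nothing further is needed.
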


\begin{proof} We have to prove that $\leq_{fe}$ is transitive and reflexive.\\
Transitive: Let $A,B,C$ be subsets of $\N$ with $A\leq_{fe} B$ and $B\leq_{fe} C$, and $F$ a finite subset of $A$. As $A\leq_{fe} B$, there is a natural number $a$ with $a+F\subseteq B$; since $a+F$ is a finite subset of $B$ and $B\leq_{fe} C$, there is a natural number $b$ with $b+a+F\subseteq C$. If $c=a+b$, by construction $c+F\subseteq C$. This proves that $A$ is finitely embeddable in $C$.\\
Reflexive: for every finite subset $F\subseteq A$, $0+F=F\subseteq A$ so $A$ is trivially finitely embeddable in $A$.\\\end{proof}

The relation $\leq_{fe}$ is not antisymmetric, so it is not a partial order. E.g, if $O$ denote the set of odd natural numbers and $E$ the set of even natural numbers then $O\leq_{fe} E$ and $E\leq_{fe} O$, as $1+O\subseteq$$^{*}E$ and $1+E\subseteq$$^{*}O$. Notice also that the relation of finite embeddability is not a total pre-order: e.g., if $A=\{1,3\}$ and $B=\{2,5\}$ then $A$ and $B$ are incomparable. 

\begin{defn} Given $A, B$ subsets of $\N$, $A$ is {\bfseries $fe$-equivalent} to $B$ $($notation $A\equiv_{fe} B)$ if $A\leq_{fe} B$ and $B\leq_{fe} A$. For every set $A$ in $\wp(\N)$, we denote by $[A]$ the equivalence class of $A$ respet to $\equiv_{fe}$:

\begin{center} $[A]=\{B\in\wp(\N)\mid A\equiv_{fe} B\}$. \end{center}
\end{defn}

As a consquence of the general results about partial pre-orders exposed in Section 4.2, the relation $\equiv_{fe}$ is an equivalence relation, and $(\wp(\N)_{/_{\equiv_{fe}}},\leq_{fe})$ is a partially ordered set.\\

{\bfseries Fact:} $[\N]$ is the greatest element in $(\wp(\N)_{/_{\equiv_{fe}}},\leq_{fe})$. \\

This is evident, as every subset $A$ of $\N$ is finitely embeddable in $\N$.

\begin{defn} A subset $A$ of $\N$ is {\bfseries $fe$-maximal} if $B\leq_{fe} A$ for every set $B$ in $\wp(\N)$. \end{defn}

Observe that by the definition it follows that a set $A$ is $fe$-maximal if and only if $[A]$ is the greatest element in $(\wp(\N)_{/_{\equiv_{fe}}},\leq_{fe})$ if and only if $\N\leq_{fe} A$.\\
We recall that a subset $A$ of $\N$ is thick if it contains arbitrarily long intervals.

\begin{prop} Let $A$ be a subset of $\N$. The following two conditions are equivalent:
\begin{enumerate}
	\item $A$ is $fe$-maximal;
	\item $A$ is thick.
\end{enumerate}
\end{prop}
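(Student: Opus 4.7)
The plan is to prove the equivalence by establishing each implication directly from the definitions, exploiting the fact that $fe$-maximality is equivalent to $\N \leq_{fe} A$ (as observed in the paragraph following Definition 4.3.3).

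For the direction $(2) \Rightarrow (1)$, I would assume $A$ is thick and show $\N \leq_{fe} A$. Given a finite set $F \subseteq \N$, let $m = \max F$ and let $\ell = m+1$. Thickness of $A$ provides an interval $[k, k+m] \subseteq A$ of length $\ell$, and then $k + F \subseteq [k, k+m] \subseteq A$, which is the witness needed for finite embeddability.

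For the converse $(1) \Rightarrow (2)$, I would assume $\N \leq_{fe} A$ and show $A$ contains arbitrarily long intervals. For each $k \in \N$, apply the definition of $\leq_{fe}$ to the finite set $F_k = \{0, 1, \ldots, k\} \subseteq \N$: there exists $n_k \in \N$ with $n_k + F_k \subseteq A$, i.e., $[n_k, n_k + k] \subseteq A$. Since $k$ is arbitrary, $A$ contains intervals of every finite length, so $A$ is thick.

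No step here presents a genuine obstacle; both directions are essentially unfoldings of the relevant definitions, with the observation that the ``natural'' test sets witnessing one property (arbitrary finite $F$ for embeddability, or arbitrarily long intervals for thickness) are in fact already of the opposite form on the nose when the ambient set is $\N$. The only mild care needed is to note that $[A]$ being the greatest element in $(\wp(\N)_{/_{\equiv_{fe}}}, \leq_{fe})$ reduces $fe$-maximality to the single condition $\N \leq_{fe} A$, which is the fact already recorded after Definition 4.3.3.
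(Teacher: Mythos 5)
Your proof is correct and follows essentially the same route as the paper: both directions reduce $fe$-maximality to $\N \leq_{fe} A$ and then unfold the definitions using the test sets $\{0,\ldots,k\}$ in one direction and the interval $[k, k+\max F]$ in the other. No comments needed.
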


\begin{proof} $(1)\Rightarrow(2)$ Suppose that $\N\leq_{fe} A$. Let $n$ be a natural number, and consider the finite subset $\{0,...,n\}$ of $\N$. As $\N\leq_{fe} A$, there is a natural number $m$ such that $\{m,...,m+n\}\subseteq A$, so $A$ contains an interval of length $n$. Since this is true for every natural number $n$, it follows that $A$ contains arbitrarily long intervals, so $A$ is thick.\\
$(2)\Rightarrow(1)$ Suppose that $A$ contains arbitrarily long intervals: let $F$ be a finite subset of $\N$, and pose $n=\max F$. As $A$ contains an interval of length $n$, there is a natural number $m$ such that $m+\{0,...,n\}\subseteq A$; in particular, $m+F\subseteq m+\{0,...,n\}\subseteq A$, so $\N$ is finitely embeddable in $A$. \\ \end{proof}

An important feature of finite embeddability is the fact that it is "upward closed under additively invariant formulas" (that have been introduced in Section 3.3). We recall that, given a first order formula $\phi(x_{1},...,x_{n})$, $E(\phi(x_{1},...,x_{n}))$ denotes the existential closure of $\phi(x_{1},...,x_{n})$, which is the sentence $\exists x_{1},...,x_{n} (x_{1},...,x_{n})$.

\begin{prop} Let $\phi(x_{1},...,x_{n})$ be an additively invariant formula, and let $A,B$ be subsets of $\N$. If $A$ satisfies $E(\phi(x_{1},...,x_{n}))$ and $A\leq_{fe} B$ then also $B$ satisfies $E(\phi(x_{1},...,x_{n}))$. \end{prop}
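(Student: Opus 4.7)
The plan is to unwind the three hypotheses and observe that they compose in a single direct step; no induction or auxiliary construction is needed.

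First I would unpack the hypothesis that $A$ satisfies $E(\phi(x_1,\ldots,x_n))$: by definition this gives elements $a_1,\ldots,a_n \in A$ such that $\phi(a_1,\ldots,a_n)$ holds. I would then consider the finite set $F = \{a_1,\ldots,a_n\} \subseteq A$. Since $A \leq_{fe} B$, the definition of finite embeddability supplies a natural number $m$ with $m + F \subseteq B$; in particular $m+a_1,\ldots,m+a_n$ are all elements of $B$.

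Next I would apply the additive invariance of $\phi$. By definition, $\phi(a_1,\ldots,a_n)$ holds if and only if $\phi(a_1+m,\ldots,a_n+m)$ holds. Since the former holds by construction, so does the latter. Therefore $m+a_1,\ldots,m+a_n$ are elements of $B$ witnessing $\phi$, which shows that $B$ satisfies $E(\phi(x_1,\ldots,x_n))$.

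There is essentially no obstacle here: the proof is a bookkeeping argument that directly matches the three definitions (existential closure, finite embeddability, additive invariance) to each other. The only thing to be mildly careful about is the convention on $\N$ including $0$ (already fixed in the paper), so that the translation by $m$ lands inside $\N$ and the additive invariance clause applies verbatim.
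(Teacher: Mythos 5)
Your proof is correct and follows exactly the same route as the paper's: extract witnesses $a_{1},\ldots,a_{n}$, form the finite set $F$, use $A\leq_{fe}B$ to translate $F$ by some $m$ into $B$, and apply additive invariance to transfer $\phi$ to the translated witnesses. Nothing to add.
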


\begin{proof} If $A$ satisfies $E(\phi(x_{1},...,x_{n}))$ then there are $a_{1},...,a_{n}\in A$ such that $\phi(a_{1},...,a_{n})$ holds. Let $F=\{a_{1},...,a_{n}\}$. As $A\leq_{fe} B$ there is a natural number $m$ such that $m+F\subseteq B$. Since $\phi(x_{1},...,x_{n})$ is additively invariant, $\phi(a_{1},...,a_{n})$ implies $\phi(a_{1}+m,...,a_{n}+m)$, where $a_{1}+m,...,a_{n}+m$ are elements in $B$. So $B$ satisfies $E(\phi(x_{1},...,x_{n}))$. \\ \end{proof}

As a corollary, e.g., if $A$ contains a lenght 5 arithmetic progression, and $A\leq_{fe} B$, then $B$ contains a lenght 5 arithmetic progression as well. So, if $A$ satisfies Van der Waerden's property, i.e. if $A$ contains arbitrarily long arithmetic progressions, then $B$ contains arbitrarily long arithmetic progressions as well.

\begin{prop} Let $\Phi$ be the set of additively invariant existential sentences satisfied by $\N$. Then every thick subset $A$ of $\N$ satisfies all the sentences $E(\phi(x_{1},...,x_{n}))\in \Phi$. \end{prop}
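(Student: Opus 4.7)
The plan is to derive this as an immediate combination of the two preceding propositions, so the proof should be very short. The key observation is that Proposition 4.3.4 characterizes thick sets precisely as the $fe$-maximal subsets of $\N$, i.e., those sets $A$ with $\N\leq_{fe} A$, while Proposition 4.3.5 shows that any additively invariant existential sentence that holds in a set is inherited by every set in which it is finitely embedded.

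More concretely, I would proceed as follows. First, fix an arbitrary sentence $\varphi=E(\phi(x_1,\ldots,x_n))\in\Phi$. By the very definition of $\Phi$, the set $\N$ satisfies $\varphi$, so there exist natural numbers $a_1,\ldots,a_n\in\N$ with $\phi(a_1,\ldots,a_n)$ holding. Second, since $A$ is thick, Proposition 4.3.4 yields $\N\leq_{fe} A$. Third, apply Proposition 4.3.5 with the roles of the subsets played by $\N$ (the set satisfying $\varphi$) and $A$ (the target): additive invariance of $\phi(x_1,\ldots,x_n)$ combined with $\N\leq_{fe} A$ forces $A$ to satisfy $\varphi$ as well. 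Since $\varphi\in\Phi$ was arbitrary, $A$ satisfies every sentence in $\Phi$.

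There is no real obstacle: the work has already been done in establishing Propositions 4.3.4 and 4.3.5. The only thing worth spelling out in the final write-up is the trivial verification that the hypothesis ``$A$ satisfies $E(\phi(x_1,\ldots,x_n))$'' of Proposition 4.3.5 holds automatically for $\N$ whenever the sentence belongs to $\Phi$, so that no additional check is needed. One might also remark, as a sanity check on the statement, that Van der Waerden's property is a typical instance: the sentences $AP_k$ of Section 3.2 are additively invariant and existential and are satisfied by $\N$, so every thick set contains arbitrarily long arithmetic progressions, recovering the classical fact as a special case.
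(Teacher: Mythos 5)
Your proposal is correct and follows exactly the paper's own argument: the paper likewise combines Proposition 4.3.4 (thickness of $A$ gives $\N\leq_{fe} A$) with Proposition 4.3.5 (upward transfer of additively invariant existential sentences along $\leq_{fe}$) to conclude immediately. No differences worth noting.
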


\begin{proof} If $E(\phi(x_{1},...,x_{n}))$ is a sentence in $\Phi$, as $\N\leq_{fe} A$, by Proposition 4.3.5 it follows that $A$ satisfies $E(\phi(x_{1},...,x_{n})).$ \\ \end{proof}

Another important feature of finite embeddability is that every set that is $\leq_{fe}$-above a piecewise syndetic set is piecewise syndetic as well (the notion of piecewise syndetic set has been introduced in Section 1.1.5):

\begin{prop} Let $A$ be a piecewise syndetic subset of $\N$, and let $A\leq_{fe} B$. Then $B$ is piecewise syndetic.\end{prop}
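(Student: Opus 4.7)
The plan is to show that the very same integer $n$ which witnesses piecewise syndeticity of $A$ witnesses piecewise syndeticity of $B$. By hypothesis there is a natural number $n$ such that $A-[0,n]$ is thick, and it suffices to prove $B-[0,n]$ is thick. So fix an arbitrary natural number $N$ and I would look for an interval of length $N$ contained in $B-[0,n]$.

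Since $A-[0,n]$ is thick, there is a natural number $a$ with $[a,a+N]\subseteq A-[0,n]$. Unfolding the definition of $A-[0,n]$, for every $j\in\{0,1,\ldots,N\}$ I can choose an index $i_{j}\leq n$ such that $a+j+i_{j}\in A$. This yields a finite set $S=\{a+j+i_{j}\mid 0\leq j\leq N\}\subseteq A$, which is precisely where the embeddability hypothesis enters: because $A\leq_{fe}B$, Definition 4.1.3 supplies a natural number $k$ with $k+S\subseteq B$.

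Finally I would observe that $k+a+j+i_{j}\in B$ translates to $k+a+j\in B-i_{j}\subseteq B-[0,n]$ for each $j\leq N$, so $[k+a,k+a+N]\subseteq B-[0,n]$. Since $N$ was arbitrary, $B-[0,n]$ contains arbitrarily long intervals, hence is thick, and therefore $B$ is piecewise syndetic.

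There is really no substantial obstacle: the argument is a one-step translation trick. The only point that needs a little care is to phrase the ``witness'' of thickness as a \emph{finite} subset $S$ of $A$ (one representative $a+j+i_{j}$ per $j\in[0,N]$), because the relation $\leq_{fe}$ only guarantees that finite subsets of $A$ can be shifted into $B$. Alternatively, one can package this through Proposition 4.1.4: pick $\gamma\in{}^{*}\N$ with $\gamma+A\subseteq{}^{*}B$, and note that if $[\alpha,\beta]\subseteq{}^{*}A-[0,n]$ with $\beta-\alpha$ infinite, then $\gamma+[\alpha,\beta]\subseteq{}^{*}B-[0,n]$, so by transfer $B-[0,n]$ contains arbitrarily long intervals.
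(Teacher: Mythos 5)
Your proof is correct and uses essentially the same argument as the paper: choosing, for each element of an interval in $A-[0,n]$, a representative $a+j+i_{j}\in A$, shifting this finite set into $B$ via $\leq_{fe}$, and observing that the shift lands back in $B-[0,n]$. The paper packages the identical computation slightly differently, first proving $A-[0,n]\leq_{fe}B-[0,n]$ for arbitrary finite subsets and then invoking transitivity together with the characterization of thick sets as the $fe$-maximal ones (Proposition 4.3.4), whereas you instantiate directly at intervals and conclude without that lemma.
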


\begin{proof} Since $A$ is piecewise syndetic, there is a natural number $n$ such that the set $A-[0,n]=\{m\in\N\mid\exists i\leq n$ with $i+m\in A\}$ is thick.\\

{\bfseries Claim:} $B-[0,n]$ is thick.\\

The claim trivially entails that $B$ is piecewise syndetic.\\
Let $F=\{a_{1},...,a_{k}\}$ be a finite subset of $A-[0,n]$. By hypothesis, there are natural numbers $i_{1},...,i_{k}\leq n$ such that $F^{\prime}=\{a_{1}+i_{1},...,a_{k}+i_{k}\}\subseteq A$. Since $A\leq_{fe} B$, there is a natural number $n_{F}$ such that $n_{F}+F^{\prime}\subseteq B$. So, for every index $j\leq k$, 

\begin{center} $n_{F}+a_{j}+i_{j}\in B$. \end{center}

In particular, $n_{F}+a_{j}\in B-[0,n]$ for every index $j\leq k$, hence $n_{F}+F\subseteq B-[0,n]$. This proves that $A-[0,n]\leq_{fe} B-[0,n]$ and, as $A$ is thick, this entails by maximality that $B-[0,n]$ is thick.\\\end{proof}

The finite embeddability for subsets of $\N$ can also be characterized in terms of ultrafilters:

\begin{prop} Given subsets $A, B$ of $\N$, the following two conditions are equivalent:
\begin{enumerate}
	\item $A$ is finitely embeddable in $B$;
	\item there is an ultrafilter $\V$ on $\N$ such that, for every ultrafilter $\U$ on $\N$, if $A\in \U$ then $B\in \U\oplus\V$.
\end{enumerate}\end{prop}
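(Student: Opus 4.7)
The plan is to use Proposition 4.1.4, which already translates finite embeddability into the language of ultrafilter-shifts: $A \leq_{fe} B$ holds if and only if $A \subseteq B - \V$ for some ultrafilter $\V$ on $\N$. Thus the whole proof reduces to showing that the condition ``there exists $\V$ with $A \subseteq B - \V$'' is equivalent to ``there exists $\V$ such that $A \in \U$ implies $B \in \U\oplus\V$ for every $\U \in \bN$.'' The bridge between these two statements is the elementary identity $B \in \U\oplus\V \iff \{n \in \N : B-n \in \V\} \in \U \iff (B-\V) \in \U$, which follows directly from Definition 1.1.19.

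For the implication $(1)\Rightarrow(2)$, I would fix by Proposition 4.1.4 an ultrafilter $\V$ such that $A \subseteq B - \V$. Then for any ultrafilter $\U$ containing $A$, upward closure gives $B - \V \in \U$, and by the identity recalled above this is exactly $B \in \U\oplus\V$.

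For the converse $(2)\Rightarrow(1)$, I would fix a witness $\V$ and show directly that $A \subseteq B - \V$, which by Proposition 4.1.4 yields $A \leq_{fe} B$. The key step is a contrapositive argument: if $A \not\subseteq B - \V$, the set $A \cap (B - \V)^{c}$ would be nonempty, so I could extend it to an ultrafilter $\U$ (even the principal ultrafilter on any of its elements would suffice). This $\U$ contains $A$ but not $B - \V$, hence $B \notin \U\oplus\V$, contradicting (2).

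The argument is essentially bookkeeping once Proposition 4.1.4(3) is in hand, and I do not anticipate a real obstacle; the only point to handle carefully is the translation $B \in \U\oplus\V \iff B-\V \in \U$, which must be spelled out so that both directions of the equivalence become transparent. No use of the $\mathfrak{c}^{+}$-enlarging property is needed here beyond what was already invoked in Proposition 4.1.4.
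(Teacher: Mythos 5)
Your proof is correct. It takes a parallel but not identical route to the paper's: both arguments rest on Proposition 4.1.4 and both test condition (2) against principal ultrafilters in the converse direction, but you work entirely with the standard formulation, using item (3) of that proposition (the ultrafilter-shift characterization $A\subseteq B-\V$) together with the purely combinatorial identity $B\in\U\oplus\V\Leftrightarrow (B-\V)\in\U$, which indeed falls straight out of Definition 1.1.19 since $\{m\in\N\mid m+n\in B\}=B-n$. The paper instead uses item (2): it picks $\alpha\in{}^{*}\N$ with $\alpha+A\subseteq{}^{*}B$, sets $\V=\mathfrak{U}_{\alpha}$, and verifies $(1)\Rightarrow(2)$ by transfer plus the tensor-pair identity $\mathfrak{U}_{\beta+{}^{*}\alpha}=\mathfrak{U}_{\beta}\oplus\mathfrak{U}_{\alpha}$ applied to a generator $\beta$ of $\U$, and $(2)\Rightarrow(1)$ by evaluating at the principal ultrafilters $\mathfrak{U}_{a}$ to get $a+\alpha\in{}^{*}B$ for each $a\in A$. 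Your version is slightly more self-contained in that it needs no transfer and no facts about tensor pairs beyond what is already packaged into Proposition 4.1.4, while the paper's version keeps the argument inside the nonstandard framework that the rest of the chapter runs on; the two witnesses $\V$ are in fact the same ultrafilter, by the proof of the equivalence $(2)\Leftrightarrow(3)$ in Proposition 4.1.4. Your contrapositive step in $(2)\Rightarrow(1)$, taking the principal ultrafilter on a point of $A\cap(B-\V)^{c}$, is sound.
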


\begin{proof} $(1)\Rightarrow(2):$ Suppose that $A$ is finitely embeddable in $B$, and let $\alpha$ be an hypernatural number in $^{*}\N$ with $\alpha+A\subseteq$$^{*}B$. By transfer it follows that 

\begin{center} (i) $^{*}\alpha+$$^{*}A\subseteq$$^{**}B$.\end{center}
Consider the ultrafilter $\V$ generated by $\alpha$, let $\U$ be any ultrafilter in $\bN$ with $A\in\U$, and let $\beta\in$$^{*}\N$ be a generator of $\U$. By (i) it follows that \begin{center} $^{*}\alpha+\beta\in$$^{**}B$, \end{center} so $B\in \mathfrak{U}_{^{*}\alpha+\beta}$, and the conclusion follows because $\mathfrak{U}_{\beta+ ^{*}\alpha}=\mathfrak{U}_{\beta}\oplus\mathfrak{U}_{\alpha}=\U\oplus\V$. \\ 
$(2)\Rightarrow(1):$ Let $\V$ be an ultrafilter as in the hypothesis, and $\alpha\in$$^{*}\N$ a generator of $\V$. For every element $a\in A$, $A$ is an element of the principal ultrafilter $\mathfrak{U}_{a}$ so, by hypothesis, $B\in\mathfrak{U}_{a}\oplus\mathfrak{U}_{\alpha}=\mathfrak{U}_{a+\alpha}$. This entails that $a+\alpha\in$$^{*}B$ for every $a\in A$, so $\alpha+A\subseteq$$^{*}B$, and by Proposition 4.1.4 this entails that $A\leq_{fe} B$. \\ \end{proof}

This result originates a question: can the relation of finite embeddability be extended to ultrafilters? We address this question in next section.

\section{Finite Embeddability for Ultrafilters}

\subsection{Basic properties of the relation of finite embeddability for ultrafilters}

In this section we study the properties of the extension of the finite embeddability to ultrafilters (introduced in $\cite{rif5}$). This extension is denoted by $\trianglelefteq_{fe}$. There are at least three possible ways to extend finite embeddability of sets to ultrafilters. Namely, given ultrafilters $\U,\V$ in $\bN$, we could define

\begin{enumerate}
	\item $\U\trianglelefteq_{fe}\V$ if for every set $A$ in $\U$, for every set $B$ in $\V$, $A\leq_{fe} B$;
	\item $\U\trianglelefteq_{fe}\V$ if for every set $A$ in $\U$ there is a set $B$ in $\V$ with $A\leq_{fe} B$
	\item $\U\trianglelefteq_{fe}\V$ if for every set $B$ in $\V$ there is a set $A$ in $\U$ with $A\leq_{fe} B$. 
\end{enumerate}

Definitions (1) and (2) have undesired properties: since $A=\N$ is an element of every ultrafilter, a consequence of definition (1) is that for every ultrafilters $\U$ and $\V$, $\U\trianglelefteq_{fe}\V$ if and only if every element $B$ in $\V$ is maximal respect to $\leq_{fe}$, i.e. if $B$ is thick. But there are no ultrafilters that only contain thick sets. A consequence of definition (2) is that for every $\U,\V$ ultrafilters, $\U\trianglelefteq_{fe}\V$, as $A\leq_{fe}\N$ for every subset $A$ of $\N$.\\We choose definition number three, which has nice properties.

\begin{defn} Given ultrafilters $\U,\V$ in $\bN$, we say that $\U$ is {\bfseries finitely embeddable} in $\V$ $($notation $\U\trianglelefteq_{fe}\V)$ if for every set $B$ in $\V$ there is a set $A$ in $\U$ with $A\leq_{fe} B$. \end{defn}

\begin{prop} Let $\U,\V$ be ultrafilters on $\N$. Then:
\begin{enumerate}
	\item if $\U$ is principal and $\V$ is nonprincipal, $\U$ is finitely embeddable in $\V$ while $\V$ is not finitely embeddable in $\U$;
	\item if $\U$ is the principal ultrafilter generated by $n$ and $\V$ is the principal ultrafilter generated by $m$ then $\U\trianglelefteq_{fe}\V$ if and only if $n\leq m$.
\end{enumerate}
 \end{prop}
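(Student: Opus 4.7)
The plan is to exploit two very simple observations about finite embeddability into small targets: (a) every principal ultrafilter $\mathfrak{U}_{n}$ contains the singleton $\{n\}$; (b) if $A\leq_{fe}\{m\}$ then $|A|\le 1$, because a finite set $F\subseteq A$ with two distinct elements $a_{1}<a_{2}$ cannot satisfy $k+F\subseteq\{m\}$ for any $k\in\N$ (one would need $k+a_{1}=m=k+a_{2}$). These two facts drive everything.

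For part (1), suppose $\U=\mathfrak{U}_{n}$ is principal and $\V$ is nonprincipal. To show $\U\trianglelefteq_{fe}\V$, fix an arbitrary $B\in\V$; I would take $A=\{n\}\in\U$ and verify $\{n\}\leq_{fe} B$. The only nontrivial finite subset to check is $F=\{n\}$, and since $\V$ is nonprincipal every $B\in\V$ is infinite (by Proposition 1.1.8), so I can pick any $b\in B$ with $b\ge n$ and set $k=b-n$, giving $k+n=b\in B$. For the converse direction, to show $\V\not\trianglelefteq_{fe}\U$ I would exhibit a bad witness $B=\{n\}\in\U$: any $A\in\V$ would have to satisfy $A\leq_{fe}\{n\}$, but $A\in\V$ is infinite and observation (b) above forces $|A|\le 1$, a contradiction.

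For part (2), write $\U=\mathfrak{U}_{n}$ and $\V=\mathfrak{U}_{m}$. If $n\le m$ I would repeat the argument of part (1): given any $B\in\V$, so $m\in B$, take $A=\{n\}\in\U$ and use $k=m-n\ge 0$ to get $k+n=m\in B$, so $\{n\}\leq_{fe} B$. If instead $n>m$, I would use the witness $B=\{m\}\in\V$: any candidate $A\in\U$ satisfying $A\leq_{fe}\{m\}$ must have $|A|\le 1$ by observation (b), and since $n\in A$ this forces $A=\{n\}$; but then $\{n\}\leq_{fe}\{m\}$ would require $k\in\N$ with $k+n=m$, impossible for $n>m$.

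There is no real obstacle here: the whole proof is driven by the two elementary observations stated at the outset, and the main content is just organizing the four small verifications above. The only mild subtlety worth being explicit about in the write-up is that one must quantify correctly over the definition of $\trianglelefteq_{fe}$, remembering that it is a ``$\forall B\in\V\,\exists A\in\U$'' statement, which is why negating it amounts to producing a single bad $B$.
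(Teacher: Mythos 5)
Your proposal is correct and follows essentially the same route as the paper's proof: both use the singleton $\{n\}$ as the witness in $\mathfrak{U}_{n}$, the observation that a set finitely embeddable in a singleton has at most one element, and the fact that every member of a nonprincipal ultrafilter is infinite. The only cosmetic difference is that you argue part (2)'s forward direction by contraposition where the paper argues it directly; the content is identical.
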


\begin{proof} (1) Suppose that $\U$ is generated by the natural number $n$, and let $A$ be an element of $\V$. Since $\V$ is nonprincipal, $A$ is infinite, so in $A$ there is an element $m$ greater than $n$; in particular, $\{n\}\leq_{fe} A$, as $(m-n)+\{n\}\subseteq A$. So $\U\trianglelefteq_{fe}\V$.\\
Conversely, suppose that $\V\trianglelefteq\U$, where $\U$ is the principal ultrafilter generated by $n$. The singleton $\{n\}$ is an element of $\U$ so, by definition, there is an element $A$ in $\V$ with $A\leq\{n\}$ but, as $\V$ is nonprincipal, $A$ is infinite, and an infinite set is not finitely embeddable in a finite set. So we get an absurd, and $\V$ is not finitely embeddable in $\U$.\\
(2) Suppose that $\U\trianglelefteq_{fe}\V$, and consider the element $\{m\}$ in $\V$. By definition, there is an element $A$ in $\U$ and a natural number $a$ with $a+A\subseteq\{m\}$. And this is true if and only if $A$ is a singleton, and the only singleton in $\U$ is $\{n\}$. So $a+n=m$, in particular $n\leq m$.\\
Conversely, suppose that $n\leq m$, and take an element $A$ in $\V$. As $n\leq m$, $m-n$ is a natural number, and $(m-n)+\{n\}=\{m\}$; in particular, $(m-n)+\{n\}\subseteq A$ and, as $\{n\}$ is an element of $\U$, this proves that $\U\trianglelefteq_{fe}\V$. \\ \end{proof}

If we identify as usual each natural number $n$ with the corresponding principal ultrafilter $\mathfrak{U}_{n}$, the above proposition states that:\\

{\bfseries Fact:} $\N$ forms an initial segment of $(\bN, \trianglelefteq_{fe})$.\\

Similarly to the relation of finite embeddability for subsets of $\N$, finite embeddability for ultrafilters is "upward closed with respect to additively invariant existential sentences":

\begin{prop} Let $\phi(x_{1},...,x_{n})$ be an additively invariant formula. If $\U$ is an $E(\phi(x_{1},...,x_{n}))$-ultrafilter, and $\U\trianglelefteq_{fe}\V$, then also $\V$ is an $E(\phi(x_{1},...,x_{n}))$-ultrafilter. \end{prop}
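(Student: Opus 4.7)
The plan is to reduce this to Proposition 4.3.5, which already shows the analogous upward-closure statement at the level of subsets of $\N$. Once that bridge is set up, the argument is a direct unpacking of the definition of $\trianglelefteq_{fe}$.

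First I would fix an arbitrary element $B\in\V$ and aim to show that $B$ satisfies $E(\phi(x_{1},\ldots,x_{n}))$; since $\V$ is an arbitrary witness and $B$ is arbitrary in $\V$, this is all that is required for $\V$ to be an $E(\phi(x_{1},\ldots,x_{n}))$-ultrafilter. By definition of $\U\trianglelefteq_{fe}\V$, there exists $A\in\U$ with $A\leq_{fe} B$. Because $\U$ is an $E(\phi(x_{1},\ldots,x_{n}))$-ultrafilter, $A$ itself satisfies $E(\phi(x_{1},\ldots,x_{n}))$, i.e.\ there are $a_{1},\ldots,a_{n}\in A$ such that $\phi(a_{1},\ldots,a_{n})$ holds.

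Next I would apply Proposition 4.3.5 (or, equivalently, re-run its one-line proof here): since $\phi$ is additively invariant, $A\leq_{fe} B$, and $A$ satisfies $E(\phi(x_{1},\ldots,x_{n}))$, the set $B$ also satisfies $E(\phi(x_{1},\ldots,x_{n}))$. Concretely, apply the finite embeddability to the finite set $F=\{a_{1},\ldots,a_{n}\}\subseteq A$ to obtain $m\in\N$ with $m+F\subseteq B$; then $a_{1}+m,\ldots,a_{n}+m$ all lie in $B$, and additive invariance of $\phi$ promotes $\phi(a_{1},\ldots,a_{n})$ to $\phi(a_{1}+m,\ldots,a_{n}+m)$, witnessing $E(\phi(x_{1},\ldots,x_{n}))$ for $B$.

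Since $B\in\V$ was arbitrary, every element of $\V$ satisfies $E(\phi(x_{1},\ldots,x_{n}))$, which is exactly the definition of $\V$ being an $E(\phi(x_{1},\ldots,x_{n}))$-ultrafilter. There is no real obstacle here: the whole argument is a clean two-step dovetailing of Proposition 4.3.5 with the definition of $\trianglelefteq_{fe}$, and the only point to be careful about is quantifier order, namely to start from a set $B\in\V$ and then extract $A\in\U$, rather than trying to do it the other way around.
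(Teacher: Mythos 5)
Your proof is correct and is essentially identical to the paper's own argument: both fix an arbitrary element of $\V$, extract a $\leq_{fe}$-smaller element of $\U$ via the definition of $\trianglelefteq_{fe}$, and then invoke Proposition 4.3.5 (whose one-line proof you also correctly re-run inline). The only difference is notational, namely which letter names the set in $\U$ versus the set in $\V$.
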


\begin{proof} Let $\U$ be an $E(\phi(x_{1},...,x_{n}))$-ultrafilter and $\V$ an ultrafilter such that $\U\trianglelefteq_{fe}\V$. Let $A$ be any element of $\V$. Since $\U\trianglelefteq_{fe}\V$, there is a set $B$ in $\U$ with $B\leq_{fe} A$. As $\U$ is an $E(\phi(x_{1},...,x_{n}))$-ultrafilter, $B$ satisfies $E(\phi(x_{1},...,x_{n}))$ and, by Proposition 4.3.5, it follows that $A$ satisfies $E(\phi(x_{1},...,x_{n}))$ as well. So every set $A$ in $\V$ satisfies $E(\phi(x_{1},...,x_{n}))$, and $\V$ is an $E(\phi(x_{1},...,x_{n}))$-ultrafilter. \\ \end{proof}

E.g.: if $\U$ is a Van der Waerden ultrafilter, and $\V$ an ultrafilter such that $\U\trianglelefteq_{fe}\V$, then also $\V$ is a Van der Waerden ultrafilter; in fact, for every natural number $n$, $\U$ is an $AP_{n}$-ultrafilter (the sentences $AP_{n}$ have been introduced in Section 3.4, and are additively invariant and existential), so $\V$ is an $AP_{n}$-ultrafilter for every natural number $n$, in particular it is a Van der Waerden's ultrafilter.

\begin{defn} Let $\Delta$ denote the set of ultrafilters $\U$ in $\bN$ such that, for every set $A$ in $\U$, $A$ has positive Banach density:

\begin{center} $\Delta=\{\U\in\bN\mid\forall A\in\U$, $BD(A)>0\}$. \end{center}
\end{defn}

The notation has been chosen keeping the one introduced in $\cite{rif21}$.

\begin{prop} Let $\U,\V$ be ultrafilters in $\bN$ with $\U\in \Delta$. If $\U$ is finitely embeddable in $\V$ then $\V\in \Delta$. \end{prop}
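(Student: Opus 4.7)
The plan is to chain together two earlier results: the definition of $\trianglelefteq_{fe}$ and Proposition 4.1.5 (which states that $A \leq_{fe} B$ implies $BD(A) \leq BD(B)$). The argument is essentially a one-liner once these are in hand.

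First I would fix an arbitrary element $B \in \V$; the goal is to show $BD(B) > 0$, since $\V \in \Delta$ is by definition the statement that every such $B$ has positive Banach density. By the hypothesis $\U \trianglelefteq_{fe} \V$, applied to this $B$, there exists some $A \in \U$ with $A \leq_{fe} B$. Since $\U \in \Delta$, the set $A$ satisfies $BD(A) > 0$.

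Then I would invoke Proposition 4.1.5 to conclude $BD(B) \geq BD(A) > 0$. Since $B \in \V$ was arbitrary, every element of $\V$ has positive Banach density, which is exactly the condition $\V \in \Delta$.

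There is no real obstacle here; the proposition is a direct corollary of Proposition 4.1.5 lifted from the level of subsets of $\N$ to the level of ultrafilters via definition 4.4.1. The only subtlety worth noting is that the definition of $\trianglelefteq_{fe}$ adopted in Definition 4.4.1 is precisely the ``right'' one for this argument to work: it guarantees that each $B \in \V$ is dominated (in the $\leq_{fe}$ sense) by some $A \in \U$, so that the density-monotonicity of $\leq_{fe}$ transfers positivity from $\U$ to $\V$. Had either of the alternative definitions (1) or (2) discussed before Definition 4.4.1 been chosen, this clean transfer would not have gone through.
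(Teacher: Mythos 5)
Your proof is correct and is essentially identical to the paper's: fix an element of $\V$, use the definition of $\trianglelefteq_{fe}$ to find a $\leq_{fe}$-smaller set in $\U$, and apply the density-monotonicity of $\leq_{fe}$ (Proposition 4.1.5) to transfer positivity of Banach density. No gaps; nothing further to add.
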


\begin{proof} Let $\U$ be an element of $\Delta$, $\V$ an ultrafilter such that $\U\trianglelefteq_{fe}\V$, and $A$ a set in $\V$. Since $\U\trianglelefteq_{fe}\V$, there is a set $B$ in $\U$ with $B\trianglelefteq_{fe} A$ and, since $BD(B)>0$, by Proposition 4.1.5 it follows that $BD(A)>0$. Since this holds for every set $A$ in $\V$, $\V\in \Delta$. \\ \end{proof}

\begin{cor} There are nonprincipal ultrafilters $\U, \V$ with $\neg(\U\trianglelefteq_{fe}\V)$.\end{cor}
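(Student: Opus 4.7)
The plan is to derive the corollary directly from Proposition 4.4.5 by contrapositive: if I can exhibit a nonprincipal $\U \in \Delta$ and a nonprincipal $\V \notin \Delta$, then $\U \trianglelefteq_{fe} \V$ would force $\V \in \Delta$, a contradiction, giving $\neg(\U \trianglelefteq_{fe} \V)$.

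For the first ingredient, I would produce a nonprincipal $\U$ all of whose elements have positive Banach density. The cleanest way is to pick any minimal idempotent in $(\bN,\oplus)$, which lies in $K(\bN,\oplus) \subseteq \overline{K(\bN,\oplus)}$. By Corollary 1.1.35, every element of such a $\U$ is piecewise syndetic, and a standard computation shows every piecewise syndetic set has positive Banach density (a thick set has Banach density $1$, and if $A-[0,n]$ is thick then $BD(A) \geq \frac{1}{n+1}$). Hence $\U \in \Delta$, and $\U$ is nonprincipal since principal ultrafilters are not idempotent (except $\mathfrak{U}_0$, which I can anyway exclude by choosing a nonprincipal idempotent, as guaranteed by Corollary 1.1.27).

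For the second ingredient, I would take a nonprincipal $\V \notin \Delta$, which only requires $\V$ to contain some set of Banach density zero. A concrete choice is the set $S = \{2^n : n \in \N\}$, whose gaps grow without bound, so $BD(S) = 0$. The family $\{S\} \cup Fr$ (with $Fr$ the Fréchet filter) has the finite intersection property since $S$ is infinite and all cofinite sets meet $S$ in a cofinite-in-$S$ set; by Tarski's Theorem 1.1.6 this family extends to an ultrafilter $\V$, which is nonprincipal (as it extends $Fr$) and contains $S$, hence $\V \notin \Delta$.

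The conclusion is then immediate: assuming $\U \trianglelefteq_{fe} \V$, Proposition 4.4.5 would yield $\V \in \Delta$, contradicting the construction of $\V$. Therefore $\neg(\U \trianglelefteq_{fe} \V)$. No step here looks like a serious obstacle; the only substantive point to double-check is the observation that piecewise syndetic sets have positive Banach density, which is a brief calculation using the nonstandard characterization of $BD$ from Proposition 4.1.5's proof, together with the definition of piecewise syndeticity.
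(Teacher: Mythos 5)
Your proposal is correct and follows essentially the same route as the paper: the paper's proof is the one-line observation that any $\U\in\Delta$ and any nonprincipal $\V\in\Delta^{c}$ work by Proposition 4.4.5, and you simply supply explicit witnesses (a nonprincipal idempotent in $\overline{K(\bN,\oplus)}$ for $\U$, and an ultrafilter containing $\{2^{n}\mid n\in\N\}$ and extending the Fr\'echet filter for $\V$) together with the standard fact that piecewise syndetic sets have positive Banach density. All of these details check out, so the argument is a correctly fleshed-out version of the paper's proof rather than a different one.
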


\begin{proof} Let $\U$ be an ultrafilter in $BD_{>0}$ and $\V$ a nonprincipal ultrafilter in $\Delta^{c}$. Then by Proposition 4.4.5 it follows that $\U$ is not finitely embeddable in $\V$.\\\end{proof}

In Section 4.3 we proved that $(\wp(\N),\leq_{fe})$ is a partially pre-ordered set. The question is whether this property can be generalized to $(\bN,\trianglelefteq_{fe})$. The answer is affirmative:

\begin{prop} $(\bN,\trianglelefteq_{fe})$ is a partially pre-ordered set. \end{prop}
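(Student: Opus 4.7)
The plan is to verify directly that $\trianglelefteq_{fe}$ satisfies the two defining properties of a partial pre-order on $\bN$, namely reflexivity and transitivity. Both properties will be reduced to the corresponding properties of $\leq_{fe}$ on $\wp(\N)$, which have already been established in Proposition 4.3.1. This is essentially a bookkeeping argument; I do not expect any real obstacle.

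For reflexivity, I would fix an ultrafilter $\U$ and show $\U \trianglelefteq_{fe} \U$ as follows: given any $B \in \U$, I pick $A = B$ itself as the witnessing element of $\U$; then $A \leq_{fe} B$ holds since $0 + A \subseteq B$ (using that $0 \in \N$ per the convention in Section 4.3). This uses only the reflexivity of $\leq_{fe}$ from Proposition 4.3.1.

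For transitivity, assume $\U \trianglelefteq_{fe} \V$ and $\V \trianglelefteq_{fe} \W$; I want to deduce $\U \trianglelefteq_{fe} \W$. Fix an arbitrary $C \in \W$. From $\V \trianglelefteq_{fe} \W$ applied to $C$, I obtain some $B \in \V$ with $B \leq_{fe} C$. From $\U \trianglelefteq_{fe} \V$ applied to $B$, I obtain some $A \in \U$ with $A \leq_{fe} B$. Invoking the transitivity of $\leq_{fe}$ on $\wp(\N)$ (again from Proposition 4.3.1) yields $A \leq_{fe} C$, and since $A \in \U$, this exhibits a witness for the defining clause of $\U \trianglelefteq_{fe} \W$ at the set $C$. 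As $C$ was arbitrary, $\U \trianglelefteq_{fe} \W$ follows.

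The only subtle point worth flagging is the asymmetry in Definition 4.4.1: one chooses $A$ after seeing $B$, not the other way around. This is exactly what makes the reflexivity and transitivity arguments go through cleanly — in the transitive chain, the witness $B \in \V$ produced for $C \in \W$ is used in turn as the set against which $\U \trianglelefteq_{fe} \V$ is applied, so the quantifier order matches up with no extra work. No appeal to the nonstandard characterization from Proposition 4.1.4 or to the $\mathfrak{c}^{+}$-enlarging property is needed for this proposition.
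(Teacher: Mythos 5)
Your proof is correct and follows essentially the same route as the paper: both reduce reflexivity and transitivity of $\trianglelefteq_{fe}$ to the corresponding properties of $\leq_{fe}$ on $\wp(\N)$ established in Proposition 4.3.1, with the same quantifier bookkeeping in the transitive step. No issues.
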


\begin{proof} We have to prove that $\trianglelefteq_{fe}$ is transitive and reflexive.\\
Transitive: Suppose that $\U\trianglelefteq_{fe}\V$ and $\V\trianglelefteq_{fe}\W$. Let $A$ be a set in $\W$. Since $\V\trianglelefteq_{fe}\W$, there exists $B$ in $\V$ with $B\leq_{fe} A$ and, since $B\in\V$, there exists $C$ in $\U$ with $C\leq_{fe} B$. By transitivity of the relation $\leq_{fe}$ on $\wp(\N)$ we get that $C\leq_{fe} A$, so $\U\trianglelefteq_{fe}\W$.\\
Reflexive: For every ultrafilter $\U$ and for every set $A$ in $\U$, $A\leq_{fe} A$, so $\U\trianglelefteq_{fe} \U$.\\\end{proof}

Similarly to $\leq_{fe}$, $\trianglelefteq_{fe}$ is not antisymmetric (a simple proof of this fact is given in Corollary 4.4.30). Following the procedure introduced in Section 4.2, we introduce the following definition:

\begin{defn} Two ultrafilters $\U,\V$ in $\bN$ are {\bfseries equivalent respect to finite embeddability} $($notation $\U\equiv_{fe}\V)$ if $\U\trianglelefteq_{fe}\V$ and $\V\trianglelefteq_{fe}\U$. For every ultrafilter $\U$ we denote its equivalence class respect to finite embeddability by $[\U]$:

\begin{center} $[\U]=\{\V\in\bN\mid \U\equiv_{fe}\V\}$.\end{center} \end{defn}

By the general theory of pre-orders it follows that $\equiv_{fe}$ is an equivalence relation on $\bN$ and that $(\bN_{/_{\equiv_{fe}}},\trianglelefteq_{fe})$ is a partial ordered set.\\
Next subsection is dedicated to the study of this partial ordered set: the most important result is that in $(\bN_{/_{\equiv_{fe}}},\trianglelefteq_{fe})$ there is a greatest element.

\subsection{The partial ordered set $(\bN_{/_{\equiv_{fe}}},\trianglelefteq_{fe})$.}

In our opinion, the property of $\trianglelefteq_{fe}$ with the most important consequences is the following:

\begin{prop} If $\U,\V$ are ultrafilters in $\bN$, then both $\U$ and $\V$ are finitely embeddable in $\U\oplus\V$. \end{prop}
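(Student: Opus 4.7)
The plan is to unpack the definitions directly, using the combinatorial description of $\U\oplus\V$ recalled in Chapter~1: a set $C$ belongs to $\U\oplus\V$ if and only if the set $B_C=\{n\in\N\mid C-n\in\V\}$ belongs to $\U$, where $C-n=\{m\in\N\mid n+m\in C\}$. Both embeddings will drop out of this formula almost immediately; no heavy machinery from Section~4.4 or nonstandard characterizations is needed, although the proof can be checked against the nonstandard description $\mathfrak{U}_{\alpha}\oplus\mathfrak{U}_{\beta}=\mathfrak{U}_{\alpha+{}^{*}\beta}$ for sanity.

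First I would show $\U\trianglelefteq_{fe}\U\oplus\V$. Given $C\in\U\oplus\V$, the set $B_C$ defined above lies in $\U$, so it is a candidate witness set. To verify $B_C\leq_{fe} C$, take an arbitrary finite $F=\{n_1,\dots,n_k\}\subseteq B_C$. By definition of $B_C$, each $C-n_i$ lies in $\V$, and since filters are closed under finite intersection the set $\bigcap_{i=1}^{k}(C-n_i)$ is in $\V$, hence nonempty. Picking any $m$ in this intersection yields $n_i+m\in C$ for every $i\leq k$, i.e.\ $m+F\subseteq C$. This is exactly the definition of $B_C\leq_{fe} C$, so $\U\trianglelefteq_{fe}\U\oplus\V$.

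Next I would show $\V\trianglelefteq_{fe}\U\oplus\V$. Again let $C\in\U\oplus\V$. Since $B_C\in\U$, in particular $B_C\neq\emptyset$, so we may fix some $n_0\in B_C$; by definition of $B_C$ this gives $A:=C-n_0\in\V$. The inclusion $n_0+A\subseteq C$ holds by construction, so \emph{a fortiori} $n_0+F\subseteq C$ for every finite $F\subseteq A$. This witnesses $A\leq_{fe} C$ with $A\in\V$, and hence $\V\trianglelefteq_{fe}\U\oplus\V$.

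I do not foresee a real obstacle here: the whole argument hinges on reading off the iterated definition of $\U\oplus\V$ and using the closure of $\V$ under finite intersection for the first part, and the nonemptiness of $B_C$ (which is itself in $\U$) for the second. The only subtle point worth flagging is that the asymmetry in Definition~4.4.1 (only one set in the $\U$-side needs to be found) is crucial: this is what makes both embeddings hold simultaneously, whereas the alternative definitions discussed after Definition~4.4.1 would trivialize one side or force unreasonable restrictions on the other.
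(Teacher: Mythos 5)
Your proof is correct and follows essentially the same route as the paper's: the set you call $B_C$ is the paper's $B$, your $C-n$ is the paper's $C_n$, and both embeddings are obtained exactly as in the paper (finite intersection closure of $\V$ for $\U\trianglelefteq_{fe}\U\oplus\V$, and a single translate $n_0+(C-n_0)\subseteq C$ for $\V\trianglelefteq_{fe}\U\oplus\V$). No differences worth noting.
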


\begin{proof} Let $A$ be an set in $\U\oplus\V$; by definition,

\begin{center} $A\in\U\oplus\V\Leftrightarrow\{n\in\N\mid\{m\in\N\mid n+m\in A\}\in\V\}\in\U$. \end{center}

Consider the set

\begin{center} $B=\{n\in\N\mid\{m\in\N\mid n+m\in A\}\in\V\}$ \end{center}

and, for every $n\in B$, consider the set

\begin{center} $C_{n}=\{m\in\N\mid n+m\in A\}$. \end{center}

{\bfseries Claim:} $B\leq_{fe} A$ and $C_{n}\leq_{fe} A$ for every $n\in B$.\\

From the claim it follows that $\U\trianglelefteq_{fe}\U\oplus\V$ and $\V\trianglelefteq_{fe}\U\oplus\V$, and this proves the thesis. To prove the first part of the claim, let $F=\{n_{1},...,n_{k}\}$ be a finite subset of $B$; consider

\begin{center} $C_{F}=\bigcap_{i=1}^{k} C_{n_{i}}=\bigcap_{i=1}^{k} \{m\in\N\mid m+n_{i}\in A\}=\{m\in\N\mid m+F\subseteq A\}$. \end{center}

Since $B$ is in $\U$, $C_{n_{i}}\in\V$ for every index $i\leq k$, so $C_{F}$ is in $\V$. In particular, $C_{F}$ is not empty; if $m$ is a natural number in $C_{F}$, by construction

\begin{center} $m+F\subseteq A$; \end{center}

this proves that for every finite subset $F$ of $B$ there is a natural number (in $C_{F}$) such that $n+F\subseteq A$ so, by definition, $B\leq_{fe} A$.\\
To prove the second statement in the claim we observe that, if $n$ is an element of $B$, by definition $n+C_{n}\subseteq A$; in particular, for every finite subset $F$ of $C_{n}$, $n+F\subseteq A$, and this entails that $C_{n}\leq_{fe} A$ for every natural number $n$ in $B$.\\ \end{proof}

Two important consequences of this result in the study of $(\bN,\trianglelefteq_{fe})$ and $(\bN_{/_{\equiv_{fe}}},\trianglelefteq_{fe})$ are:

\begin{cor} For every ultrafilters $\U,\V$ in $\bN$ there is an ultrafilter $\W\in\bN$ such that $\U$ and $\V$ are finitely embeddable in $\W$. \end{cor}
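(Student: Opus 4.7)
The plan is to observe that this corollary is essentially an immediate consequence of the preceding Proposition 4.4.9, which states that both $\U$ and $\V$ are finitely embeddable in their sum $\U\oplus\V$. So the natural witness for $\W$ is the ultrafilter $\U\oplus\V$ itself.

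More precisely, I would simply set $\W = \U\oplus\V$ and then invoke Proposition 4.4.9 twice: once to conclude $\U\trianglelefteq_{fe}\W$, and once to conclude $\V\trianglelefteq_{fe}\W$. Since $(\bN,\oplus)$ is a well-defined (right topological) semigroup, as recalled back in Chapter One, $\W$ is indeed an element of $\bN$, so this choice is legitimate.

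There is no real obstacle here, as the entire combinatorial content has already been packaged in Proposition 4.4.9. The only mildly subtle point worth flagging is that one could be tempted to try a more symmetric construction (for instance, an ultrafilter simultaneously extending suitable filters generated by $\U$ and $\V$), but such an approach is unnecessary because the asymmetric construction via $\oplus$ already delivers a common upper bound for the pair $(\U,\V)$ with respect to $\trianglelefteq_{fe}$. This corollary will subsequently be used to show that the pre-order $\trianglelefteq_{fe}$ on $\bN$ is filtered, which is the hypothesis needed to apply the general results of Section 4.2 (in particular Theorem 4.2.9) to deduce the existence of maximal elements in $(\bN,\trianglelefteq_{fe})$.
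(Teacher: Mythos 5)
Your proof is correct and is exactly the paper's argument: take $\W=\U\oplus\V$ and apply Proposition 4.4.9 to each of $\U$ and $\V$. (Only a trivial slip in your closing remark: the general result on filtered orders you later invoke is Theorem 4.2.8, not 4.2.9.)
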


\begin{proof} This is a straightforward consequence of Proposition 4.4.9: just consider $\W=\U\oplus\V$. \\ \end{proof}

\begin{cor} For every equivalence classes $[\U], [\V]$ in $\bN_{/_{\equiv_{fe}}}$ there is an equivalence class $[\W]$ such that $[\U]$ and $[\V]$ are finitely embeddable in $[\W]$. \end{cor}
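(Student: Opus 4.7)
The plan is to deduce this as a direct consequence of Corollary 4.4.10, using the general machinery for partial pre-orders developed in Section 4.2. First I would recall that, because $(\bN, \trianglelefteq_{fe})$ is a partial pre-ordered set (Proposition 4.4.7), the quotient construction from Section 4.2 yields a well-defined relation on $\bN_{/_{\equiv_{fe}}}$ by posing $[\U] \trianglelefteq_{fe} [\V]$ if and only if $\U \trianglelefteq_{fe} \V$; this definition is independent of the choice of representatives, as a routine consequence of transitivity.

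Next, given two equivalence classes $[\U]$ and $[\V]$, I would pick arbitrary representatives $\U \in [\U]$ and $\V \in [\V]$ and invoke Corollary 4.4.10 to produce an ultrafilter $\W \in \bN$ (concretely, $\W = \U \oplus \V$) with both $\U \trianglelefteq_{fe} \W$ and $\V \trianglelefteq_{fe} \W$. Passing to equivalence classes, this gives $[\U] \trianglelefteq_{fe} [\W]$ and $[\V] \trianglelefteq_{fe} [\W]$, as required.

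There is essentially no main obstacle here, since all the work has already been done in Proposition 4.4.9 and its Corollary 4.4.10; the only point to verify is the well-posedness of $\trianglelefteq_{fe}$ on the quotient, which is the standard descent of a pre-order to its associated partial order and is automatic from the transitivity and reflexivity established in Proposition 4.4.7. Accordingly, the proof will be a one-line application of Corollary 4.4.10 to chosen representatives.
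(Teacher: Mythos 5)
Your proposal is correct and follows essentially the same route as the paper: the paper's proof also takes a $\W$ with $\U,\V\trianglelefteq_{fe}\W$ (furnished by Corollary 4.4.10, concretely $\U\oplus\V$) and passes to equivalence classes, with the well-posedness of $\trianglelefteq_{fe}$ on the quotient already secured by the general pre-order machinery of Section 4.2.
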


\begin{proof} If $\W$ is any ultrafilter in $\bN$ such that $\U,\V\trianglelefteq_{fe}\W$, then $[\U].[\V]\trianglelefteq_{fe}[\W]$. \\ \end{proof}

One other important feature of the finite embeddability is that every chain in $(\bN,\trianglelefteq_{fe})$ has an upper bound. In this context, the chains in $(\bN,\trianglelefteq_{fe})$ are called $fe$-chains and the upper bounds are called $fe$-upper bounds.

\begin{thm} Every $fe$-chain $\langle \U_{i}\mid i\in I\rangle$ of elements in $\bN$ has an $fe$-upper bound $\U$. \end{thm}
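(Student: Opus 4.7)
The strategy is to view the search for an $fe$-upper bound as an intersection problem in the Stone topology on $\bN$ and then invoke compactness (Proposition 1.1.12). For each $i\in I$ define
\begin{center}
$\Theta_{i}=\{\W\in\bN\mid \U_{i}\trianglelefteq_{fe}\W\}.$
\end{center}
An $fe$-upper bound of the chain is exactly an element of $\bigcap_{i\in I}\Theta_{i}$, so it suffices to prove that this family of subsets of $\bN$ is a family of closed sets with the finite intersection property.

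\textbf{Showing each $\Theta_{i}$ is closed.} I would introduce the auxiliary family
\begin{center}
$G_{i}=\{B\subseteq\N\mid\exists A\in\U_{i}\text{ with }A\leq_{fe}B\}$,
\end{center}
so that unwinding the definition of $\trianglelefteq_{fe}$ gives $\Theta_{i}=\{\W\in\bN\mid \W\subseteq G_{i}\}$. Hence
\begin{center}
$\Theta_{i}^{c}=\{\W\in\bN\mid\exists B\in\wp(\N)\setminus G_{i}\text{ with }B\in\W\}=\bigcup_{B\notin G_{i}}\Theta_{B},$
\end{center}
which is a union of basic open sets of the Stone topology, hence open. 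So $\Theta_{i}$ is closed.

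\textbf{Showing the finite intersection property.} Given any finite subset $J=\{i_{1}\prec\cdots\prec i_{k}\}$ of $I$, the restriction of the chain order to $J$ is a finite total order and therefore has a maximum, namely $i_{k}$. Since $\langle\U_{i}\mid i\in I\rangle$ is an $fe$-chain, $\U_{i_{j}}\trianglelefteq_{fe}\U_{i_{k}}$ for every $j\leq k$ (using reflexivity and the chain hypothesis, together with the transitivity of $\trianglelefteq_{fe}$ established in Proposition 4.4.7). Hence $\U_{i_{k}}$ itself lies in $\bigcap_{j=1}^{k}\Theta_{i_{j}}$, which shows that the family $\{\Theta_{i}\}_{i\in I}$ has the finite intersection property.

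\textbf{Conclusion.} Since $\bN$ endowed with the Stone topology is compact (Proposition 1.1.12) and $\{\Theta_{i}\}_{i\in I}$ is a family of closed subsets of $\bN$ with the finite intersection property, the intersection $\bigcap_{i\in I}\Theta_{i}$ is nonempty. Any ultrafilter $\U$ in this intersection satisfies $\U_{i}\trianglelefteq_{fe}\U$ for every $i\in I$ and is therefore the desired $fe$-upper bound. The main technical step is the closedness of $\Theta_{i}$; once this is packaged as ``$\Theta_{i}=\{\W:\W\subseteq G_{i}\}$'' the argument becomes a routine Stone-topology computation, and the finite intersection property is essentially free from the chain hypothesis, so no appeal to Proposition 4.4.9 or to sums of ultrafilters is actually needed.
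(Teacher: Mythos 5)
Your proof is correct, but it takes a genuinely different route from the paper. The paper's argument is constructive in flavour: it fixes an ultrafilter $\V$ on the index set $I$ containing every tail $G_{i}=\{j\in I\mid j\geq i\}$ and exhibits the upper bound explicitly as the limit $\U=\V-\lim_{I}\U_{i}$, then verifies $\U_{i}\trianglelefteq_{fe}\U$ by observing that for $A\in\U$ the set $\{j\in I\mid A\in\U_{j}\}$ lies in $\V$ and hence meets every tail. You instead reduce the problem to compactness of $\bN$ (Proposition 1.1.12) applied to the upper cones $\Theta_{i}=\mathcal{C}_{fe}(\U_{i})$: your direct verification that $\Theta_{i}^{c}=\bigcup_{B\notin G_{i}}\Theta_{B}$ is open is a clean alternative to the paper's own proof that these cones are closed (Proposition 4.4.18, which goes through the limit-ultrafilter characterization of closed sets), and the finite intersection property is correctly supplied by the maximum of a finite subchain together with reflexivity and transitivity of $\trianglelefteq_{fe}$ (Proposition 4.4.7). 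What each approach buys: the paper's limit construction names a specific upper bound and is the same device it reuses elsewhere (e.g.\ Proposition 4.7.10), whereas your compactness argument is slightly more general for free — it yields an upper bound for any family of ultrafilters whose upper cones have the finite intersection property, in particular for directed families, not just chains. Your closing remark is right that no appeal to Proposition 4.4.9 or to $\oplus$ is needed here; that ingredient enters only in proving filteredness, not the existence of upper bounds of chains.
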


\begin{proof} For every element $i\in I$ consider the set 

\begin{center} $G_{i}=\{j\in I\mid j\geq i\}$.\end{center}

The family $\{G_{i}\}_{i\in I}$ has the finite intersection property, so there is an ultrafilter $\V$ on $I$ that extends this family. Observe that the ultrafilter $\V$ is principal and generated by an element $i\in I$ if and only if $i$ is the greatest element of $I$. In this case, the thesis is trivial, since $\U_{i}$ is an $fe$-upper bound for the chain. So we assume that $\V$ is non principal.\\
In this case we observe that for every element $A$ in $\V$ and for every element $i\in I$ there is an element $j$ in $A$ with $j>i$.\\
In fact, suppose that there exists a set $A$ in $\V$ and an element $i$ in $I$ such that $A$ does not contain elements greater than $i$. Then $A\cap G_{i}$ contains at most $i$ and, since $A$ and $G_{i}$ are in $\V$, the intersection $A\cap G_{i}$ is nonempty, so $A\cap G_{i}=\{i\}$, and $\V$ is the principal ultrafilter generated by $i$, while we supposed $\V$ nonprincipal.\\

{\bfseries Claim:} The ultrafilter 

\begin{center} $\U=\V-\lim_{I} \U_{i}\in\bN$ \end{center}

is an $fe$-upper bound for the $fe$-chain $\langle \U_{i}\mid i\in I\rangle$.\\

To prove that $\U_{i}\trianglelefteq_{fe} \U$ for every index $i$, let $A$ be an element of $\U$. By definition,

\begin{center} $A\in \U\Leftrightarrow I_{A}=\{i\in I\mid A\in \U_{i}\} \in \V$. \end{center}

$I_{A}$ is a set in $\V$ so, as we observed, there is an element $j>i$ in $I_{A}$. But 

\begin{center} $j\in I_{A}$ if and only if $A\in \U_{j}$, \end{center}

so, in particular, $A\in \U_{j}$. Now, by definition of $fe$-chain, $\U_{i}\trianglelefteq_{fe} \U_{j}$, and since $A\in \U_{j}$, there exists an element $B$ in $\U_{i}$ with $B\leq_{fe} A$. This proves that, for every index $i\in I$, $\U_{i}\trianglelefteq_{fe}\U$, so $\U$ is an upper bound for the $fe$-chain $\langle \U_{i}\mid i\in I\rangle$.\\ \end{proof}

\begin{cor} For every $fe$-chain $\langle [\U_{i}]\mid i\in I\rangle$ of equivalence classes in $\bN_{/_{\equiv_{fe}}}$ there is an upper bound $[\U]$. \end{cor}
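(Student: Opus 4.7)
The plan is to deduce this corollary directly from Theorem 4.4.12 by lifting the $fe$-chain from the quotient $\bN_{/_{\equiv_{fe}}}$ back up to $\bN$ itself. First I would note that the pre-order $\trianglelefteq_{fe}$ on $\bN$ descends to the well-defined partial order on $\bN_{/_{\equiv_{fe}}}$ exactly by the general construction recalled in Section 4.2, so saying that $[\U_i]\trianglelefteq_{fe}[\U_j]$ is equivalent to saying that $\U_i\trianglelefteq_{fe}\U_j$ for the chosen representatives (the truth value does not depend on the choice of representatives, because if $\U\equiv_{fe}\U'$ and $\V\equiv_{fe}\V'$, transitivity of $\trianglelefteq_{fe}$ on $\bN$ gives $\U\trianglelefteq_{fe}\V\Leftrightarrow \U'\trianglelefteq_{fe}\V'$).

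Next, for each $i\in I$ I would fix a representative $\U_i\in[\U_i]$. Since $\langle [\U_i]\mid i\in I\rangle$ is an $fe$-chain in $\bN_{/_{\equiv_{fe}}}$, the sequence $\langle \U_i\mid i\in I\rangle$ is an $fe$-chain in $\bN$ by the preceding observation. Now I would invoke Theorem 4.4.12 to obtain an ultrafilter $\U\in\bN$ which is an $fe$-upper bound for $\langle\U_i\mid i\in I\rangle$, i.e.\ $\U_i\trianglelefteq_{fe}\U$ for every $i\in I$. Passing back to the quotient, this yields $[\U_i]\trianglelefteq_{fe}[\U]$ for every $i\in I$, so $[\U]$ is the desired upper bound in $(\bN_{/_{\equiv_{fe}}},\trianglelefteq_{fe})$.

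There is no real obstacle here: the only thing to check carefully is the well-definedness of $\trianglelefteq_{fe}$ on equivalence classes, which is an immediate consequence of the transitivity of $\trianglelefteq_{fe}$ proved in Proposition 4.4.7. Combined with Corollary 4.4.11 (filteredness on equivalence classes) and this corollary, the stage will be set for applying Theorem 4.2.9 to conclude that $(\bN_{/_{\equiv_{fe}}},\trianglelefteq_{fe})$ has a greatest element, i.e.\ that $(\bN,\trianglelefteq_{fe})$ has maximal elements.
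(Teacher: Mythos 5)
Your proof is correct and is essentially the paper's own argument: the paper simply cites the general fact from Section 4.2 that a pre-order whose chains have upper bounds passes this property to its quotient, and your proposal just unfolds that fact (choose representatives, apply Theorem 4.4.12 in $\bN$, descend to the quotient, with well-definedness following from transitivity). The only nitpick is the final forward reference, where the greatest-element criterion is Theorem 4.2.8 in the paper rather than 4.2.9; this does not affect the proof of the corollary itself.
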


\begin{proof} This corollary is a particular case of a general fact, observed in Section 4.2: if in a partially ordered set $(S,\leq)$ every chain has an upper bound, the same property holds for the quotient set $(S_{/_{\equiv}},\leq)$.\\ \end{proof}

\begin{thm} In $(\bN_{/_{\equiv_{fe}}} ,\trianglelefteq_{fe})$ there is a greatest element. \end{thm}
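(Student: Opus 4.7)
The plan is to derive this as an immediate consequence of the abstract machinery about pre-orders developed in Section 4.2 together with the two structural results just established about $(\bN_{/_{\equiv_{fe}}}, \trianglelefteq_{fe})$. More precisely, I would invoke Theorem 4.2.9, which says that a partially ordered set in which every chain has an upper bound possesses a greatest element if and only if the order is filtered.

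First I would verify that the hypothesis of Theorem 4.2.9 holds for $(\bN_{/_{\equiv_{fe}}}, \trianglelefteq_{fe})$: every $fe$-chain of equivalence classes has an upper bound. This is exactly the content of Corollary 4.4.13, which was obtained by passing to the quotient from Theorem 4.4.12 (itself proved via a limit-ultrafilter construction along an ultrafilter $\V$ on the index set extending the final-segment filter of the chain).

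Next I would check that $\trianglelefteq_{fe}$ is filtered on $\bN_{/_{\equiv_{fe}}}$: given any two classes $[\U],[\V]$, Corollary 4.4.11 supplies a class $[\W]$ with $[\U],[\V] \trianglelefteq_{fe} [\W]$ (concretely, one may take $\W = \U\oplus\V$, by Proposition 4.4.9). Both hypotheses of Theorem 4.2.9 being met, one concludes that $(\bN_{/_{\equiv_{fe}}}, \trianglelefteq_{fe})$ has a (necessarily unique) greatest element $[\U_{\max}]$.

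There is no serious obstacle here; the entire proof is a two-line application of Theorem 4.2.9. The real work was done earlier, in establishing filtration via the sum of ultrafilters (Proposition 4.4.9) and in the limit-ultrafilter argument giving upper bounds for chains (Theorem 4.4.12). The only thing worth being careful about is that the filtration property and the chain-upper-bound property were stated for $\bN$ with $\trianglelefteq_{fe}$, and both descend to the quotient automatically, as noted in Section 4.2 (a chain in $\bN_{/_{\equiv_{fe}}}$ lifts to a chain in $\bN$, and an upper bound of the lifted chain projects to an upper bound of the original chain; similarly for the filtered property).
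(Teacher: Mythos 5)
Your proof is correct and follows essentially the same route as the paper: the paper likewise deduces the result from Theorem 4.2.8 (your ``4.2.9'') by citing the filtration of $\trianglelefteq_{fe}$ (Corollary 4.4.10/4.4.11, via $\W=\U\oplus\V$) and the existence of upper bounds for $fe$-chains (Corollary 4.4.13). The only discrepancy is the off-by-one in the theorem number, which does not affect the argument.
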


\begin{proof} $(\bN_{/_{\equiv_{fe}}} ,\trianglelefteq_{fe})$ is a filtered ordered set (by Corollary 4.4.10), and every $fe$-chain admits an upper bound (by Corolary 4.4.13). By Theorem 4.2.8 it follows that in $(\bN_{/_{\equiv_{fe}}} ,\trianglelefteq_{fe})$ there is a greatest element.\\ \end{proof}

\begin{defn} An ultrafilter $\U\in\bN$ is $fe$-maximal if $[\U]$ is the greatest element in $(\bN_{/_{\equiv_{fe}}} ,\trianglelefteq_{fe})$, i.e. if for every ultrafilter $\V$, $\V\trianglelefteq_{fe}\U$. \end{defn}

\begin{thm} Let $\U$ be an ultrafilter on $\N$. The following conditions are equivalents:

\begin{enumerate}
	\item $\U$ is $fe$-maximal;
	\item $\forall \V\in\bN$ $\V\oplus\U\trianglelefteq_{fe}\U$ or $\V\oplus\U\trianglelefteq_{fe}\U$;
	\item $\forall \V\in\bN$ $\U\oplus\V\trianglelefteq_{fe}\U$;
	\item $\forall\V\in\bN$ $\V\oplus\U\trianglelefteq_{fe}\U$;
	\item $\forall\V\in\bN$ $\U\oplus\V\trianglelefteq_{fe}\U$ and $\V\oplus\U\trianglelefteq_{fe}\U$.
\end{enumerate}

\end{thm}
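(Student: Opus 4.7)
The plan is to establish the equivalences as a cycle $(1)\Rightarrow(5)\Rightarrow(3)\Rightarrow(2)\Rightarrow(1)$ together with $(5)\Rightarrow(4)\Rightarrow(2)$, which collapses to the full equivalence of all five conditions. Every implication should reduce to a direct application of two tools already available: Proposition 4.4.9, which gives $\V\trianglelefteq_{fe}\V\oplus\U$ and $\V\trianglelefteq_{fe}\U\oplus\V$ for arbitrary $\U,\V\in\bN$, and the transitivity of $\trianglelefteq_{fe}$ established in Proposition 4.4.7.

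The easy implications first: $(1)\Rightarrow(5)$ is immediate, since $fe$-maximality of $\U$ means $\W\trianglelefteq_{fe}\U$ for every $\W\in\bN$, and one simply instantiates $\W$ at $\U\oplus\V$ and at $\V\oplus\U$. The implications $(5)\Rightarrow(3)$ and $(5)\Rightarrow(4)$ are trivial (a conjunction implies each conjunct), and $(3)\Rightarrow(2)$, $(4)\Rightarrow(2)$ are trivial because each of the statements $\U\oplus\V\trianglelefteq_{fe}\U$ and $\V\oplus\U\trianglelefteq_{fe}\U$ implies the disjunction in $(2)$.

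The only substantive implication is $(2)\Rightarrow(1)$. Fix an arbitrary $\V\in\bN$; to prove $fe$-maximality it suffices to deduce $\V\trianglelefteq_{fe}\U$. By hypothesis $(2)$, at least one of the two embeddings $\V\oplus\U\trianglelefteq_{fe}\U$ or $\U\oplus\V\trianglelefteq_{fe}\U$ holds. In the first case, combine $\V\trianglelefteq_{fe}\V\oplus\U$ (from Proposition 4.4.9) with $\V\oplus\U\trianglelefteq_{fe}\U$ via transitivity to conclude $\V\trianglelefteq_{fe}\U$; in the second case, combine $\V\trianglelefteq_{fe}\U\oplus\V$ (again Proposition 4.4.9) with $\U\oplus\V\trianglelefteq_{fe}\U$ in the same way. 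Either branch yields $\V\trianglelefteq_{fe}\U$, and since $\V$ was arbitrary, $\U$ is $fe$-maximal.

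There is no genuine obstacle here: the theorem is essentially a bookkeeping exercise in which one has to be careful only about the direction of the embeddings, noting that Proposition 4.4.9 supplies the ``lower'' embedding $\V\trianglelefteq_{fe}\V\oplus\U$ (and the symmetric one on the left), while conditions $(2)$--$(5)$ supply the matching ``upper'' embedding needed to close the transitivity argument. The cleanest presentation is to prove $(2)\Rightarrow(1)$ in detail and then dispatch $(1)\Rightarrow(5)\Rightarrow(3),(4)\Rightarrow(2)$ with a single line each.
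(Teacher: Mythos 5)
Your proof is correct and follows essentially the same route as the paper: the trivial implications $(1)\Rightarrow(5)\Rightarrow(3),(4)\Rightarrow(2)$ are handled identically, and your direct argument for $(2)\Rightarrow(1)$ (combining Proposition 4.4.9 with transitivity) is just the contrapositive-free version of the paper's own proof, which assumes non-maximality and derives a contradiction using the very same two facts.
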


\begin{proof} Observe that, trivially $(5)\Rightarrow (4) \Rightarrow (2)$ and $(5)\Rightarrow (3)\Rightarrow (2)$.\\
$(1)\Rightarrow (5)$ Since $\U$ is $fe$-maximal, for every ultrafilter $\W\in\bN$ we have $\W\trianglelefteq_{fe}\U$. In particular, taking $W=\U\oplus\V$ and $\W=\V\oplus\U$, we get the thesis.\\
$(2)\Rightarrow (1)$ Suppose that $\U$ is not maximal. Then there is an ultrafilter $\V$ such that $\neg(\V\trianglelefteq\U)$. As $\V\trianglelefteq_{fe}\U\oplus\V$ and $\V\trianglelefteq_{fe}\V\oplus\U$, since the finite embeddability is transitive then $\neg(\U\oplus\V\trianglelefteq_{fe}\U)$ and $\neg(\V\oplus\U\trianglelefteq_{fe}\U)$, and this is a contradiction.\\\end{proof}

Important properties of maximal ultrafilters are exposed in Section 4.4.4: crucial concepts in the study of these ultrafilters are the properties of the $fe$-cones

\begin{center} $\mathcal{C}_{fe}(\U)=\{\V\in\bN\mid \U\trianglelefteq_{fe}\U\}$. \end{center}

Next section is dedicated to the study of these sets.

\subsection{The cones $\mathcal{C}_{fe}(\U)$}

\begin{defn} Given an ultrafilter $\U$ in $\bN$, with $\mathcal{C}_{fe}(\U)$ we denote the upper cone of $\U$ in $(\bN,\trianglelefteq_{fe})$, i.e. the set of ultrafilters in $\bN$ in which $\U$ is finitely embeddable:

\begin{center} $\mathcal{C}_{fe}(\U)=\{\V\in \bN\mid \U\trianglelefteq_{fe}\V\}$. \end{center}

\end{defn}

{\bfseries Fact:} For every ultrafilters $\U,\V$ in $\bN$, if $\U\equiv_{fe}\V$ then $\mathcal{C}_{fe}(\U)=\mathcal{C}_{fe}(\V)$.\\

For every ultrafilter $\U$ in $\bN$, the set $\mathcal{C}_{fe}(\U)$ has interesting topological and algebraical properties:

\begin{prop} For every ultrafilter $\U$ in $\bN$, $\mathcal{C}_{fe}(\U)$ is closed in the Stone topology. \end{prop}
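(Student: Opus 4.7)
The plan is to show that the complement of $\mathcal{C}_{fe}(\U)$ is open in the Stone topology by producing, around every non-member, a basic open neighborhood disjoint from the cone. Recall that a base for the Stone topology consists of the sets $\Theta_B = \{\W \in \beta\N \mid B \in \W\}$ for $B \subseteq \N$, so this reduces to finding, for each $\V \notin \mathcal{C}_{fe}(\U)$, some $B \subseteq \N$ such that $B \in \V$ and every ultrafilter containing $B$ lies outside $\mathcal{C}_{fe}(\U)$.

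First I would unwind the definition: $\V \notin \mathcal{C}_{fe}(\U)$ means $\U \not\trianglelefteq_{fe} \V$, which by Definition 4.4.1 means there exists $B \in \V$ such that no $A \in \U$ satisfies $A \leq_{fe} B$. The key observation is that this witness $B$ depends on $\V$ only through the fact that $B \in \V$; the failure condition "no $A \in \U$ satisfies $A \leq_{fe} B$" is a property of $B$ and $\U$ alone. Consequently, for any $\W \in \Theta_B$ we have $B \in \W$ and the same $B$ witnesses $\U \not\trianglelefteq_{fe} \W$, so $\W \notin \mathcal{C}_{fe}(\U)$. This gives $\V \in \Theta_B \subseteq \mathcal{C}_{fe}(\U)^c$, so the complement is a union of basic open sets and hence open.

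There is essentially no obstacle here beyond correctly parsing the asymmetric definition of $\trianglelefteq_{fe}$: the verification is immediate once one observes that the witness of non-finite-embeddability is a single set $B \in \V$, which automatically propagates to every ultrafilter in the basic clopen set $\Theta_B$. As a sanity check one may alternatively verify closure via Theorem 1.1.37: given any sequence $\langle \W_i \mid i \in I\rangle$ of elements of $\mathcal{C}_{fe}(\U)$ and any ultrafilter $\mathcal{V}$ on $I$, if $B$ lies in the limit ultrafilter $\W = \mathcal{V}\text{-}\lim \W_i$ then $\{i \in I \mid B \in \W_i\} \in \mathcal{V}$ is nonempty, so $B \in \W_i$ for some $i$, and hence some $A \in \U$ satisfies $A \leq_{fe} B$; this confirms $\W \in \mathcal{C}_{fe}(\U)$ and recovers the same conclusion.
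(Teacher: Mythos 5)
Your proof is correct. Your primary argument differs from the paper's: the paper verifies closure via Theorem 1.1.37, showing that any limit $\V\text{-}\lim_I \U_i$ of a family in $\mathcal{C}_{fe}(\U)$ again lies in $\mathcal{C}_{fe}(\U)$, whereas you show directly that the complement is open by exhibiting, around each $\V\notin\mathcal{C}_{fe}(\U)$, a basic clopen neighborhood $\Theta_B$ disjoint from the cone. Your route is the more elementary one: it needs nothing beyond the definition of the Stone topology, and it isolates the structural reason the cone is closed, namely that failure of $\U\trianglelefteq_{fe}\V$ is witnessed by a single set $B\in\V$ and that witness propagates to every ultrafilter containing $B$ (equivalently, $\mathcal{C}_{fe}(\U)=\bigcap\{\Theta_B \mid \exists A\in\U,\ A\leq_{fe}B\}$ is an intersection of basic closed sets). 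The paper's limit-ultrafilter formulation buys uniformity with the other closure arguments in the text (Corollary 3.4.4, Proposition 4.7.18 are proved the same way), and your concluding "sanity check" is in fact essentially verbatim the paper's own proof, so you have both arguments in hand.
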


\begin{proof} To prove that $\mathcal{C}_{fe}(\U)$ is closed we show that, given a sequence $\langle \U_{i}\mid i\in\ I\rangle$ of elements in $\mathcal{C}_{fe}(\U)$ and an ultrafilter $\V$ on $I$, the limit

\begin{center} $\W=\V-\lim_{I}\U_{i}$ \end{center}

belongs to $\mathcal{C}_{fe}(\U)$. This fact, as proved in Section 1.1.6, entails that $\mathcal{C}_{fe}(\U)$ is closed.\\
Let $A$ be an element of $\W$. By definition of limit of ultrafilters, the set of indexes $i\in I$ such that $A\in \U_{i}$ is in $\V$; in particular, it is nonempty. Let $i$ be an index such that $A\in \U_{i}$. Since $\U\trianglelefteq_{fe}\U_{i}$ (because $\U_{i}\in\mathcal{C}_{fe}(\U)$), there is a set $B$ in $\U$ with $B\leq_{fe} A$; this proves that, for every set $A$ in $\W$, there is a set $B$ in $\U$ such that $B\leq_{fe} A$, so $\U\trianglelefteq_{fe}\W$, and this entails that $\W\in\mathcal{C}_{fe}(\U)$.\\ \end{proof}

The above theorem characterizes topologically the set $\mathcal{C}_{fe}(\U)$. The next proposition shows an important algebraical feature of $\mathcal{C}_{fe}(\U)$:

\begin{prop} For every ultrafilter $\U$ in $\bN$, the set $\mathcal{C}_{fe}(\U)$ is a two-sided ideal of $(\bN,\oplus)$. \end{prop}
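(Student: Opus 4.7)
The plan is to reduce this to a direct application of Proposition 4.4.9 together with the transitivity of $\trianglelefteq_{fe}$ established in Proposition 4.4.7. Unpacking the definition of two-sided ideal, I need to show that for every $\V \in \mathcal{C}_{fe}(\U)$ and every $\W \in \bN$, both $\V \oplus \W$ and $\W \oplus \V$ lie in $\mathcal{C}_{fe}(\U)$; equivalently, that $\U \trianglelefteq_{fe} \V \oplus \W$ and $\U \trianglelefteq_{fe} \W \oplus \V$.

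First, I would fix an arbitrary $\V \in \mathcal{C}_{fe}(\U)$, so that by definition $\U \trianglelefteq_{fe} \V$. Then, for an arbitrary $\W \in \bN$, Proposition 4.4.9 (applied with the roles $(\V,\W)$ in place of $(\U,\V)$) tells me that $\V$ is finitely embeddable in $\V \oplus \W$; applied instead to the pair $(\W,\V)$, it tells me that $\V$ is finitely embeddable in $\W \oplus \V$. Combining either of these with the hypothesis $\U \trianglelefteq_{fe} \V$ through the transitivity of $\trianglelefteq_{fe}$ immediately yields $\U \trianglelefteq_{fe} \V \oplus \W$ and $\U \trianglelefteq_{fe} \W \oplus \V$, which is exactly the condition for $\mathcal{C}_{fe}(\U)$ to be closed under both right and left addition by arbitrary elements of $\bN$.

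Since the proof is a two-line consequence of previously established results, there is essentially no obstacle to overcome; the only thing to be careful about is to verify that Proposition 4.4.9 genuinely gives embeddability on \emph{both} sides (which it does, since its statement is symmetric in $\U$ and $\V$), so that the argument covers both left and right ideal properties. The compactness of $\mathcal{C}_{fe}(\U)$, already noted in Proposition 4.4.19, then additionally guarantees that this ideal is closed in the Stone topology, although that is not needed for the present statement.
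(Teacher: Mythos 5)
Your proof is correct and follows exactly the paper's own argument: apply Proposition 4.4.9 to the pairs $(\V,\W)$ and $(\W,\V)$ to get $\V\trianglelefteq_{fe}\V\oplus\W$ and $\V\trianglelefteq_{fe}\W\oplus\V$, then conclude by transitivity of $\trianglelefteq_{fe}$ from $\U\trianglelefteq_{fe}\V$. (Only a trivial slip in your closing aside: the closedness of $\mathcal{C}_{fe}(\U)$ in the Stone topology is Proposition 4.4.18, not 4.4.19.)
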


\begin{proof} Let $\V$ be an ultrafilter in $\mathcal{C}_{fe}(\U)$, and $\W$ an ultrafilter in $\bN$. By Proposition 4.4.9 it follows that $\V\trianglelefteq_{fe}\W\oplus\V$ and $\V\trianglelefteq_{fe}\V\oplus\W$. Then, as $\U\trianglelefteq_{fe}\V$, by transitivity it follows that $\U\trianglelefteq_{fe} \W\oplus \V$ and $\U\trianglelefteq_{fe}\V\oplus\W$; in particular, $\W\oplus\V, \V\oplus\W\in \mathcal{C}_{fe}(\U)$, so $\mathcal{C}_{fe}(\U)$ is a bilateral ideal in $(\bN,\oplus)$.\\ \end{proof}

\begin{cor} For every ultrafilter $\U$, the set $\mathcal{C}_{fe}(\U)$ contains the close sets $\overline{\{\U\oplus\V\mid \V\in\bN\}}$ and $\{\V\oplus\U\mid \V\in\bN\}$. \end{cor}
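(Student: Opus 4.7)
The plan is to combine Proposition 4.4.9 (which gives $\U\trianglelefteq_{fe}\U\oplus\V$ and $\U\trianglelefteq_{fe}\V\oplus\U$ for every $\V\in\bN$) with Proposition 4.4.18 (which says $\mathcal{C}_{fe}(\U)$ is closed in the Stone topology). From Proposition 4.4.9 we immediately obtain the set-theoretic inclusions
\[
\{\U\oplus\V\mid \V\in\bN\}\subseteq\mathcal{C}_{fe}(\U)\quad\text{and}\quad\{\V\oplus\U\mid \V\in\bN\}\subseteq\mathcal{C}_{fe}(\U),
\]
since $\U\trianglelefteq_{fe}\U\oplus\V$ and $\U\trianglelefteq_{fe}\V\oplus\U$ mean exactly that $\U\oplus\V$ and $\V\oplus\U$ lie in $\mathcal{C}_{fe}(\U)$.

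Next, because $\mathcal{C}_{fe}(\U)$ is closed, taking topological closures preserves these inclusions. So $\overline{\{\U\oplus\V\mid \V\in\bN\}}\subseteq\mathcal{C}_{fe}(\U)$, which handles the first of the two sets in the statement.

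For the second set, I would point out that $\{\V\oplus\U\mid \V\in\bN\}$ is already closed, so no closure operation is needed. This is because $(\bN,\oplus)$ is a right topological semigroup (Proposition 1.1.22), which means the map $\varphi_{\U}:\bN\to\bN$ sending $\V$ to $\V\oplus\U$ is continuous. The image $\varphi_{\U}(\bN)=\{\V\oplus\U\mid \V\in\bN\}$ is therefore the continuous image of the compact space $\bN$, hence compact, and thus closed in the Hausdorff space $\bN$.

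There is no real obstacle here; this is essentially a direct packaging of the preceding two propositions together with the right-topological structure of $(\bN,\oplus)$. The only subtle point worth emphasizing is the asymmetry between the two sets in the statement: the closure bar is necessary on the left-sum set because the map $\V\mapsto\U\oplus\V$ is not in general continuous, while on the right-sum set it is redundant by right continuity of $\oplus$.
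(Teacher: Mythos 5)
Your proof is correct and follows essentially the same route as the paper: obtain the two set-theoretic inclusions from the fact that $\U$ is finitely embeddable in $\U\oplus\V$ and $\V\oplus\U$ (the paper routes this through the bilateral-ideal property of $\mathcal{C}_{fe}(\U)$, which is itself just Proposition 4.4.9 plus transitivity), then invoke closedness of $\mathcal{C}_{fe}(\U)$ and observe that $\{\V\oplus\U\mid\V\in\bN\}$ is already closed. Your explicit justification of that last point via right continuity and compactness is a detail the paper leaves unstated.
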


\begin{proof} From Proposition 4.4.19, since $\mathcal{C}_{fe}(\U)$ is a bilateral ideal in $(\bN,\oplus)$ that contains $\U$, it follows that 

\begin{center} $\{\U\oplus\V\mid \V\in\bN\}\subseteq \mathcal{C}_{fe}(\U)$ and $\{\V\oplus\U\mid \V\in\bN\}\subseteq\mathcal{C}_{fe}(\U)$.\end{center}

As $\mathcal{C}_{fe}(\U)$ is closed, the thesis follows taking the closures on both sides of the inclusions, and observing that $\{\V\oplus\U\mid \V\in\bN\}$ is closed. \\ \end{proof}

The result exposed in Corollary 4.4.20 can be improved to characterize $\mathcal{C}_{fe}(\U)$. To this end we need the following two lemmas:

\begin{lem} Let $B$ be a subset of $\N$, and $\U$ an element of $\bN$. The following two conditions are equivalent:
\begin{enumerate}
	\item there is an element $A$ in $\U$ such that $A$ is finitely embeddable in $B$;
	\item there is an ultrafilter $\V$ on $\N$ such that $B$ in $\U\oplus\V$
\end{enumerate}
\end{lem}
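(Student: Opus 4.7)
The plan is to unfold both statements in terms of the standard definition of $\oplus$ and, for the forward direction, to invoke the nonstandard characterization of finite embeddability given by Proposition 4.1.4. In each direction the witnessing object on one side will be constructed explicitly from the witnessing object on the other, so no appeal to saturation or enlarging beyond what is already used in Proposition 4.1.4 will be required.

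For $(1) \Rightarrow (2)$, I would start from a set $A \in \U$ with $A \leq_{fe} B$ and apply Proposition 4.1.4 to obtain an hypernatural number $\alpha \in {}^{*}\N$ such that $\alpha + A \subseteq {}^{*}B$. Then I would take $\V = \mathfrak{U}_{\alpha}$. The verification amounts to observing that, for every $a \in A$, the condition $\alpha + a \in {}^{*}B$ is equivalent to $B - a \in \V$, so that $A$ is contained in the set $\{n \in \N \mid B - n \in \V\}$. Since $A \in \U$ and $\U$ is closed under supersets, this set belongs to $\U$, which by definition of $\oplus$ gives $B \in \U \oplus \V$.

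For $(2) \Rightarrow (1)$, given an ultrafilter $\V$ with $B \in \U \oplus \V$, the natural candidate is $A = \{n \in \N \mid B - n \in \V\}$, which is in $\U$ by the very definition of $\U \oplus \V$. I would then show $A \leq_{fe} B$ directly: for any finite subset $F = \{n_{1}, \ldots, n_{k}\}$ of $A$, each set $B - n_{i}$ is in $\V$, and since $\V$ is closed under finite intersection, $\bigcap_{i=1}^{k}(B - n_{i})$ is nonempty; picking any $m$ in this intersection yields $m + F \subseteq B$, as required. This argument is essentially the one already carried out in the proof of Proposition 4.4.9 (applied to the ``first coordinate'' set $B$ there), so no new idea is needed.

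The only genuine point to be careful about is in the $(1) \Rightarrow (2)$ direction, where one must ensure that the particular ultrafilter $\V = \mathfrak{U}_{\alpha}$ produced from the embedding really forces $B$ into $\U \oplus \V$ for the given $\U$, rather than just for the principal ultrafilters on elements of $A$; but this is immediate once one rewrites $\alpha + A \subseteq {}^{*}B$ as the inclusion $A \subseteq \{n \in \N \mid B - n \in \V\}$ and uses closure of $\U$ under supersets. I do not expect any substantive obstacle beyond this bookkeeping.
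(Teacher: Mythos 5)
Your proof is correct and follows essentially the same route as the paper's, which establishes $(1)\Rightarrow(2)$ by citing Proposition 4.3.8 (whose proof produces the same witness $\V=\mathfrak{U}_{\alpha}$ from $\alpha+A\subseteq{}^{*}B$) and $(2)\Rightarrow(1)$ by citing Proposition 4.4.9, whose proof is exactly your finite-intersection argument on the sets $B-n_{i}$. The only minor difference is that in the forward direction you verify $B\in\U\oplus\V$ directly from the definition of $\oplus$ via the inclusion $A\subseteq\{n\in\N\mid B-n\in\V\}$ and upward closure of $\U$, whereas the paper's Proposition 4.3.8 transfers to ${}^{**}B$ and evaluates on a generator of $\U$; your version is marginally more elementary but amounts to the same thing.
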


\begin{proof} $(1)\Rightarrow (2)$ This follows by Proposition 4.3.8, that states that if $A\leq_{fe} B$ then there is an ultrafilter $\V$ such that, for every ultrafilter $\W$, if $A\in \W$ then $B\in\W\oplus\V$, and the conclusion follows by choosing $\W=\U$.\\
$(2)\Rightarrow(1)$ This is a consequence of Proposition 4.4.9, since if $B\in\U\oplus\V$, as $\U\trianglelefteq_{fe}\U\oplus\V$ there is an element $A$ in $\U$ such that $A\leq_{fe} B$. \\ \end{proof}

\begin{lem} Let $\U,\V$ be ultrafilters in $\bN$. The following two conditions are equivalent:
\begin{enumerate}
	\item $\U$ is finitely embeddable in $\V$;
	\item for every element $B$ of $\V$ there is an ultrafilter $\W$ in $\bN$ such that $B\in\U\oplus\W$.
\end{enumerate}
\end{lem}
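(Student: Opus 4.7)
The plan is to derive this lemma as an immediate pointwise consequence of Lemma 4.4.21. Recall that the definition of $\U \trianglelefteq_{fe} \V$ is precisely: for every set $B \in \V$, there exists a set $A \in \U$ with $A \leq_{fe} B$. Thus, to bridge the definition with condition (2), I only need to reformulate, for each fixed $B \in \V$, the statement ``$\exists A \in \U$ with $A \leq_{fe} B$'' into the statement ``$\exists \W \in \bN$ with $B \in \U \oplus \W$.''

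First I would observe that Lemma 4.4.21 provides exactly this reformulation: for a fixed subset $B$ of $\N$ and a fixed ultrafilter $\U$, the existence of $A \in \U$ with $A \leq_{fe} B$ is equivalent to the existence of $\W \in \bN$ with $B \in \U \oplus \W$. Then, to prove $(1) \Rightarrow (2)$, I would take any $B \in \V$; by (1) there is $A \in \U$ with $A \leq_{fe} B$, and applying Lemma 4.4.21 yields some $\W$ with $B \in \U \oplus \W$. Conversely, for $(2) \Rightarrow (1)$, given $B \in \V$, condition (2) supplies $\W$ with $B \in \U \oplus \W$, and Lemma 4.4.21 in the reverse direction yields $A \in \U$ with $A \leq_{fe} B$; since this works for every $B \in \V$, we conclude $\U \trianglelefteq_{fe} \V$.

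There is no serious obstacle here: the lemma is essentially a matter of unpacking the definition of $\trianglelefteq_{fe}$ on $\bN$ and invoking Lemma 4.4.21 uniformly in $B$. The only thing worth stressing in writing is that the ultrafilter $\W$ in condition (2) is allowed to depend on $B$, which matches the existential quantifier in Lemma 4.4.21 applied for each choice of $B \in \V$ separately.
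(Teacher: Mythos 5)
Your proposal is correct and is essentially the paper's own proof: the forward direction is verbatim an application of Lemma 4.4.21, and for the converse the paper merely inlines the content of Lemma 4.4.21's reverse implication (citing Proposition 4.4.9, i.e. $\U\trianglelefteq_{fe}\U\oplus\W$, directly) rather than citing the lemma itself. Your remark that $\W$ may depend on $B$ is the right point to make explicit.
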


\begin{proof} $(1)\Rightarrow(2)$ Suppose that $\U\trianglelefteq_{fe}\V$, and take any element $B$ of $\V$. By definition of $\trianglelefteq_{fe}$, there is an element $A$ of $\U$ with $A\leq_{fe} B$; by Lemma 4.4.21, this entails that there is an ultrafilter $\W$ in $\bN$ with $B\in\U\oplus\W$.\\
$(1)\Rightarrow(2)$ Let $B$ be an element of $\V$. By hypothesis, there is an ultrafilter $\W$ such that $B\in\U\oplus\W$ and, as by Proposition 4.4.9 $\U\trianglelefteq_{fe}\U\oplus\V$, there exists $A$ in $\U$ with $A\leq_{fe} B$. \\ \end{proof}

Given the above two lemmas, we can characterize the cones $\mathcal{C}_{fe}(\U)$:

\begin{thm} For every ultrafilter $\U$ in $\bN$, $\mathcal{C}_{fe}(\U)$ is the closure in the Stone topology of the set $\{\U\oplus\V\mid \V\in\bN\}$:

\begin{center} $\mathcal{C}_{fe}(\U)=\overline{\{\U\oplus\V\mid \V\in\bN\}}$. \end{center} \end{thm}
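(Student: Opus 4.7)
The plan is to prove the two inclusions separately, both of which will follow almost immediately from results already established earlier in the excerpt.

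First I would dispose of the inclusion $\overline{\{\U\oplus\V\mid \V\in\bN\}}\subseteq \mathcal{C}_{fe}(\U)$. This is essentially Corollary 4.4.20: every ultrafilter of the form $\U\oplus\V$ lies in $\mathcal{C}_{fe}(\U)$ by Proposition 4.4.9, and $\mathcal{C}_{fe}(\U)$ is closed in the Stone topology by Proposition 4.4.18, so it contains the closure of $\{\U\oplus\V\mid \V\in\bN\}$.

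For the reverse inclusion, I would use the standard characterization of topological closure in $\bN$ (recalled in the proof of Corollary 1.1.35): an ultrafilter $\W$ lies in $\overline{S}$ if and only if, for every $B\in\W$, the base open set $\Theta_B$ meets $S$. So, given $\W\in\mathcal{C}_{fe}(\U)$, I need to show that for every $B\in\W$ there is some ultrafilter $\V\in\bN$ with $B\in\U\oplus\V$ (equivalently, with $\U\oplus\V\in\Theta_B$). But this is precisely the content of the implication $(1)\Rightarrow(2)$ of Lemma 4.4.22, applied to the assumption $\U\trianglelefteq_{fe}\W$. Hence $\W\in\overline{\{\U\oplus\V\mid \V\in\bN\}}$.

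There is no real obstacle here: the substantive work has already been done in Lemma 4.4.21 (bridging finite embeddability of sets and membership in a tensor-type sum via Proposition 4.3.8) and in Lemma 4.4.22 (passing from set-level to ultrafilter-level). The theorem itself is then just a packaging of those two lemmas with the Stone-topology closure criterion, together with the fact that $\mathcal{C}_{fe}(\U)$ is closed so that the full closure (not merely the set $\{\U\oplus\V\mid\V\in\bN\}$) is captured.
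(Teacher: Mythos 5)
Your proposal is correct and follows the paper's own argument essentially verbatim: the first inclusion is Corollary 4.4.20, and the reverse inclusion is exactly the implication $(1)\Rightarrow(2)$ of Lemma 4.4.22 combined with the Stone-topology characterization of closure. Nothing is missing.
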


\begin{proof} The inclusion $\overline{\{\U\oplus\V\mid \V\in\bN\}}\subseteq \mathcal{C}_{fe}(\U)$ has been proved in Corollary 4.4.20.\\
For the reverse inclusion, in Lemma 4.4.22 it has been proved that, if an ultrafilter $\W$ is in $\mathcal{C}_{fe}(\U)$, and $A$ is an element of $\W$, then there is an ultrafilter $\V$ with $A\in\U\oplus\V$. Reformulating this observation from a topological point of view, this proves that for every element $A$ in $\V$ there is an ultrafilter $\uZ$ in $\{\U\oplus\V\mid \V\in\bN\}$ with $A\in\uZ$. This property holds, in the Stone topology, if and only if $\W\in\overline{\{\U\oplus\V\mid \V\in\bN\}}$, so 
$\mathcal{C}_{fe}(\U)\subseteq\overline{\{\U\oplus\V\mid \V\in\bN\}}$. \\ \end{proof}

Recall that $\mathcal{C}_{fe}={\{\U\oplus\V\mid\V\in\bN\}}$ is a bilateral ideal in $(\bN,\oplus)$. So it is readily seen that\\

{\bfseries Fact:} $\mathcal{C}_{fe}(\U)$ is the minimal closed right ideal in $(\bN,\oplus)$ containing $\U$.\\

In next section we show how the sets $\mathcal{C}_{fe}(\U)$ can be used to characterize the set $\mathcal{M}_{fe}$ of ultrafilters such that their class of $\equiv_{fe}$-equivalence is the greatest element of $(\bN_{/_{\equiv_{fe}}},\trianglelefteq_{fe})$. Surprisingly, this set is related to the minimal bilateral ideal $K(\bN,\oplus)$ of $(\bN,\oplus)$.

\subsection{$\mathcal{M}_{fe}$ is the closure of the minimal bilater ideal of $(\bN,\oplus)$}

\begin{defn} If $M$ is the greatest element in $(\bN_{/_{\equiv_{fe}}},\trianglelefteq_{fe})$, with $\mathcal{M}_{fe}$ we denote its equivalence class

\begin{center} $\mathcal{M}_{fe}=\{\U\in\bN\mid [\U]=M\}$. \end{center}

The ultrafilters in $\mathcal{M}_{fe}$ are called {\bfseries maximal}.

\end{defn}

Observe that an ultrafilter $\U$ in $\bN$ is maximal if and only if, for every ultrafilter $\V$ in $\bN$, $\V\trianglelefteq_{fe}\U$.\\
In this section we prove that 

\begin{center} $\mathcal{M}_{fe}=\overline{K(\bN,\oplus)}$. \end{center}

The results needed to prove this equality have been already exposed in this chapter, except for these two lemmas:

\begin{lem} For every right ideal $R$ of $(\bN,\oplus)$, $\overline{R}$ is a right ideal of $(\bN,\oplus)$. \end{lem}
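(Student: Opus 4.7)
The plan is to exploit the fact, established in Proposition 1.1.22, that $(\bN,\oplus)$ is a right topological semigroup: for each fixed $\V\in\bN$ the map $\rho_{\V}:\bN\to\bN$ defined by $\rho_{\V}(\W)=\W\oplus\V$ is continuous. This continuity is exactly the ingredient needed to push the defining property of a right ideal from $R$ to its closure.

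First I would fix an arbitrary ultrafilter $\U\in\overline{R}$ and an arbitrary $\V\in\bN$; the goal is to show $\U\oplus\V\in\overline{R}$. Since $R$ is a right ideal, $\rho_{\V}(R)\subseteq R$, and hence $\rho_{\V}(R)\subseteq\overline{R}$. Because $\overline{R}$ is closed and $\rho_{\V}$ is continuous, the preimage $\rho_{\V}^{-1}(\overline{R})$ is a closed subset of $\bN$ which contains $R$. By the definition of closure, $\overline{R}$ is the smallest closed set containing $R$, so $\overline{R}\subseteq\rho_{\V}^{-1}(\overline{R})$. Applying this to $\U$ gives $\rho_{\V}(\U)=\U\oplus\V\in\overline{R}$, which is precisely the condition needed for $\overline{R}$ to be a right ideal.

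As an alternative (and more explicit) phrasing, I could use the characterization of closed sets via $\U$-limits proved in Theorem 1.1.37: any $\U\in\overline{R}$ can be written as $\U=\W-\lim_{i\in I}\U_{i}$ for some family $\langle\U_{i}\mid i\in I\rangle\subseteq R$ and some ultrafilter $\W$ on $I$. Using right continuity of $\oplus$, one then verifies $\U\oplus\V=\W-\lim_{i\in I}(\U_{i}\oplus\V)$; since each $\U_{i}\oplus\V\in R$, Theorem 1.1.37 again gives $\U\oplus\V\in\overline{R}$.

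No serious obstacle is expected: the proof is a direct application of right continuity of $\oplus$ together with the fact that preimages of closed sets under continuous maps are closed. The only subtlety worth flagging is that the analogous statement for \emph{left} ideals would not be immediate, since $\oplus$ is generally not left continuous; the asymmetry between right and left ideals in $(\bN,\oplus)$ is precisely what makes this lemma nontrivial in spirit, even though its proof is short.
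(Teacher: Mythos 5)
Your main argument is correct, and it takes a cleaner, more abstract route than the paper. The paper proves the lemma by hand: it invokes the Stone-topology characterization of closure (namely that $\U\in\overline{R}$ iff every $A\in\U$ belongs to some member of $R$), takes $A\in\U\oplus\V$, forms the set $B=\{n\in\N\mid \{m\in\N\mid n+m\in A\}\in\V\}\in\U$, picks $\W\in R$ with $B\in\W$, and observes that $A\in\W\oplus\V\in R$. That computation is exactly the one used in Proposition 1.1.22 to prove that $\varphi_{\V}^{-1}(\Theta_{A})=\Theta_{B}$, i.e.\ right continuity of $\oplus$ — so in substance the paper re-derives, in this specific instance, the continuity fact that you simply cite. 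Your packaging ($\rho_{\V}^{-1}(\overline{R})$ is closed, contains $R$, hence contains $\overline{R}$) is the standard argument showing that in any right topological semigroup the closure of a right ideal is a right ideal; it buys generality and brevity, while the paper's version buys self-containedness at the cost of repeating a computation. Your alternative via $\U$-limits is also sound (the identity $\U\oplus\V=\W-\lim_{i}(\U_{i}\oplus\V)$ checks out directly from the definitions), with the minor caveat that representing an arbitrary element of $\overline{R}$ as a limit of elements of $R$ requires the construction carried out in the proof of Theorem 1.1.37 rather than just its statement. Your closing remark about the asymmetry with left ideals is apt.
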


\begin{proof} Let $\U$ be an element of $\overline{R}$, and $\V$ an ultrafilter in $\bN$. As a consequence of a well-known characterization of closed sets in the Stone topology, proving that $\U\oplus\V$ is in $\overline{R}$ is equivalent to prove that, for every element $A\in\U\oplus\V$, there is an ultrafilter $\uZ\in R$ with $A\in\uZ$.\\
Let $A$ be an element of $\U\oplus\V$. By definition, the set 

\begin{center} $B=\{n\in\N\mid\{m\in\N\mid n+m\in A\}\in\V\}$ \end{center}

is an element of $\U$. Since $\U$ is in $\overline{R}$ and $B$ is in $\U$, there is an ultrafilter $\W$ in $R$ such that $B\in\W$. In particular, since $B\in\W$, $A\in\W\oplus\V$, and $\W\oplus\V$ is an element of $R$ since $R$ is a right ideal and $\W$ is in $R$. This proves that $\overline{R}$ is a right ideal.\\ \end{proof}

\begin{lem} For every maximal ultrafilter $\U$ in $\bN$, $\mathcal{C}_{fe}(\U)=\mathcal{M}_{fe}$. \end{lem}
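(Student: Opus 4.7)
The plan is to prove the two inclusions directly, using only the definitions of $\mathcal{C}_{fe}(\U)$ and $\mathcal{M}_{fe}$ together with the transitivity of $\trianglelefteq_{fe}$ (Proposition 4.4.7). No deeper topological or algebraic machinery should be needed, since the statement is essentially a formal consequence of the structure of the pre-order.

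For the inclusion $\mathcal{M}_{fe} \subseteq \mathcal{C}_{fe}(\U)$: take any $\V \in \mathcal{M}_{fe}$. Then $[\V]$ is the greatest element in $(\bN_{/_{\equiv_{fe}}}, \trianglelefteq_{fe})$, so in particular $\U \trianglelefteq_{fe} \V$. By the very definition of $\mathcal{C}_{fe}(\U)$ this gives $\V \in \mathcal{C}_{fe}(\U)$. Notice that this direction does not even use the maximality of $\U$.

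For the reverse inclusion $\mathcal{C}_{fe}(\U) \subseteq \mathcal{M}_{fe}$: take any $\V \in \mathcal{C}_{fe}(\U)$, so that $\U \trianglelefteq_{fe} \V$. Let $\W \in \bN$ be arbitrary. Since $\U$ is maximal, $[\U]$ is the greatest element of $(\bN_{/_{\equiv_{fe}}}, \trianglelefteq_{fe})$, so $\W \trianglelefteq_{fe} \U$. Combining with $\U \trianglelefteq_{fe} \V$ and applying transitivity (Proposition 4.4.7) yields $\W \trianglelefteq_{fe} \V$. Since $\W$ was arbitrary, $\V$ is itself maximal, i.e. $\V \in \mathcal{M}_{fe}$.

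There is no real obstacle here: the only substantive ingredient is transitivity of $\trianglelefteq_{fe}$, which is already in hand, and the correct unpacking of the two definitions. The value of the lemma lies not in the difficulty of its proof but in what it will be used for next, namely coupling the characterization $\mathcal{C}_{fe}(\U) = \overline{\{\U \oplus \V \mid \V \in \bN\}}$ from Theorem 4.4.23 (which identifies $\mathcal{C}_{fe}(\U)$ as the minimal closed right ideal of $(\bN,\oplus)$ containing $\U$) with standard facts about $K(\bN,\oplus)$ to derive the equality $\mathcal{M}_{fe} = \overline{K(\bN,\oplus)}$.
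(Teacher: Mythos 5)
Your proof is correct and follows essentially the same two-inclusion argument as the paper: the inclusion $\mathcal{M}_{fe}\subseteq\mathcal{C}_{fe}(\U)$ is immediate from the definitions, and the reverse inclusion uses maximality of $\U$ together with transitivity of $\trianglelefteq_{fe}$ exactly as the paper does. Nothing further is needed.
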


\begin{proof} The inclusion $\mathcal{M}_{fe}\subseteq \mathcal{C}_{fe}(\U)$ holds for every ultrafilter $\U$ in $\bN$.\\ Conversely, if $\U$ is maximal, the reverse inclusion $\mathcal{C}_{fe}(\U)\subseteq\mathcal{M}_{fe}$ holds as well because, if $\V$ is an element in $\mathcal{C}_{fe}(\U)$ and $\W$ is any ultrafilter then, as $\W\trianglelefteq_{fe}\U$ (by maximality of $\U$) and $\U\trianglelefteq_{fe}\V$ (by definition of $\mathcal{C}_{fe}(\U)$), $\W\trianglelefteq_{fe}\V$, so $\V$ is maximal, and $\mathcal{C}_{fe}(\U)\subseteq\mathcal{M}_{fe}$.\\ \end{proof}

\begin{thm} Let $K(\bN,\oplus)$ denote the smallest bilateral ideal of $(\bN,\oplus)$. Then, for every minimal right ideal $R$,

\begin{center} $\mathcal{M}_{fe}=\overline{R}$;\end{center}

in particular,

\begin{center} $\mathcal{M}_{fe}=\overline{K(\bN,\oplus)}.$ \end{center} \end{thm}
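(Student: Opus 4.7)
The plan is to establish the sandwich $\mathcal{M}_{fe} \subseteq \overline{R} \subseteq \overline{K(\bN,\oplus)} \subseteq \mathcal{M}_{fe}$ for every minimal right ideal $R$. The inclusion $\overline{R} \subseteq \overline{K(\bN,\oplus)}$ is immediate from $R \subseteq K(\bN,\oplus)$. Moreover, $\mathcal{M}_{fe} = \bigcap_{\V \in \bN} \mathcal{C}_{fe}(\V)$ is closed by Proposition 4.4.18, so the rightmost inclusion $\overline{K(\bN,\oplus)} \subseteq \mathcal{M}_{fe}$ will follow once we know $K(\bN,\oplus) \subseteq \mathcal{M}_{fe}$.

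The crux is therefore showing that every $\uZ \in K(\bN,\oplus)$ is $fe$-maximal. The argument will run as follows: pick a minimal left ideal $L$ containing $\uZ$. For every $\uZ' \in L$ the set $\bN \oplus \uZ'$ is a nonempty left ideal contained in $L$, so by minimality $L = \bN \oplus \uZ' = \bN \oplus \uZ$. Hence any two elements of $L$ can be written as left-$\oplus$-translates of each other, and two applications of Proposition 4.4.9 yield that all elements of $L$ are $\equiv_{fe}$-equivalent. For an arbitrary $\V \in \bN$, the ultrafilter $\V \oplus \uZ$ lies in $L$, so $\V \oplus \uZ \equiv_{fe} \uZ$; combining this with $\V \trianglelefteq_{fe} \V \oplus \uZ$ (again by Proposition 4.4.9) and the transitivity of $\trianglelefteq_{fe}$, we conclude $\V \trianglelefteq_{fe} \uZ$. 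Since $\V$ was arbitrary, $\uZ$ is $fe$-maximal.

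The remaining inclusion $\mathcal{M}_{fe} \subseteq \overline{R}$ is then a short computation: pick any $\V \in R$; we have $\V \in K(\bN,\oplus) \subseteq \mathcal{M}_{fe}$, so by Lemma 4.4.26 together with Theorem 4.4.23,
\[
\mathcal{M}_{fe} \;=\; \mathcal{C}_{fe}(\V) \;=\; \overline{\V \oplus \bN}.
\]
Since $R$ is a right ideal containing $\V$, we have $\V \oplus \bN \subseteq R$, whence $\mathcal{M}_{fe} = \overline{\V \oplus \bN} \subseteq \overline{R}$, and the sandwich collapses to equalities, yielding simultaneously $\mathcal{M}_{fe} = \overline{R}$ for every minimal right ideal $R$ and $\mathcal{M}_{fe} = \overline{K(\bN,\oplus)}$.

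The main obstacle will be the heart of the argument, namely $K(\bN,\oplus) \subseteq \mathcal{M}_{fe}$: it is the only step that must wed the purely combinatorial notion $\trianglelefteq_{fe}$ to the semigroup algebra of $(\bN,\oplus)$. The key insight making it go through is the observation that a minimal left ideal is left-principal from each of its points, which promotes Proposition 4.4.9 from a one-sided inequality to a full $\equiv_{fe}$-equivalence on each such $L$. Everything else is bookkeeping built on Lemma 4.4.26, Theorem 4.4.23, Proposition 4.4.9, and the closedness of the cones $\mathcal{C}_{fe}(\V)$.
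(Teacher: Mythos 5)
Your proof is correct, but the heart of it --- the inclusion $K(\bN,\oplus)\subseteq\mathcal{M}_{fe}$ --- is argued along a genuinely different line from the paper's. The paper gets this inclusion in one stroke from two already-established facts: each cone $\mathcal{C}_{fe}(\U)$ is a bilateral ideal of $(\bN,\oplus)$ (Proposition 4.4.19), and $K(\bN,\oplus)$ is the \emph{smallest} bilateral ideal, so $K(\bN,\oplus)\subseteq\mathcal{C}_{fe}(\U)$ for every $\U$; taking $\U$ maximal (whose existence the paper has already secured via Zorn's Lemma and filtration in Theorem 4.4.14) and invoking Lemma 4.4.26 gives $K(\bN,\oplus)\subseteq\mathcal{M}_{fe}$. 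You instead open up the structure of a minimal left ideal $L$ containing a given $\uZ\in K(\bN,\oplus)$: since $L=\bN\oplus\uZ'$ for each $\uZ'\in L$, any two points of $L$ are left translates of one another, so Proposition 4.4.9 applied twice makes all of $L$ a single $\equiv_{fe}$-class, and then $\V\trianglelefteq_{fe}\V\oplus\uZ\equiv_{fe}\uZ$ gives maximality of $\uZ$ directly. Both arguments ultimately rest on Proposition 4.4.9, but yours has the pleasant side effect of \emph{constructing} maximal ultrafilters (every element of $K(\bN,\oplus)$ is one) rather than presupposing their existence from the Zorn's Lemma machinery; the paper's is shorter because it outsources the work to the minimality of $K$ among bilateral ideals. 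For the reverse inclusion $\mathcal{M}_{fe}\subseteq\overline{R}$ the two proofs essentially coincide: you use the explicit description $\mathcal{C}_{fe}(\V)=\overline{\{\V\oplus\W\mid\W\in\bN\}}$ from Theorem 4.4.23 together with $\V\oplus\bN\subseteq R$, while the paper phrases the same fact as ``$\mathcal{C}_{fe}(\V)$ is the minimal closed right ideal containing $\V$'' and appeals to Lemma 4.4.25 to know that $\overline{R}$ is a closed right ideal. All the auxiliary facts you cite (that every minimal right ideal lies in $K(\bN,\oplus)$, that $K(\bN,\oplus)$ is the union of the minimal left ideals, and that $\mathcal{M}_{fe}=\bigcap_{\V}\mathcal{C}_{fe}(\V)$ is closed) are available in the paper, so the argument goes through as written.
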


\begin{proof} Let $R$ be a right ideal included in $K(\bN,\oplus$). $K(\bN,\oplus)$ is the smallest bilateral ideal of $(\bN,\oplus)$ and, as for every ultrafilter $\U$ in $\bN$, $\mathcal{C}_{fe}(\U)=\mathcal{M}_{fe}$ is a bilateral ideal in $(\bN,\oplus)$, it follows that

\begin{center} $R\subset K(\bN,\oplus)\subseteq\mathcal{C}_{fe}(\U)$ \end{center}

for every ultrafilter $\U\in\bN$. In particular, if $\U$ is maximal, since by Lemma 4.4.26 $\mathcal{M}_{fe}=\mathcal{C}_{fe}(\U)$ it follows that

\begin{center} $(\dagger)$ $R\subseteq K(\bN,\oplus)\subseteq \mathcal{M}_{fe}$. \end{center}

As $\mathcal{M}_{fe}$ is closed (since $\mathcal{M}_{fe}=\mathcal{C}_{fe}(\U)$ for every maximal ultrafilter $\U$, and $\mathcal{C}_{fe}(\U)$ is closed), if we take the closures in $(\dagger)$ it follows that

\begin{center} $(1)$ $\overline{R}\subseteq\overline{K(\bN,\oplus)}\subseteq \mathcal{M}_{fe}$. \end{center}

For the reverse inclusions, let $\U$ be an ultrafilter in $R$. Since $R$ is a right ideal, by Lemma 4.4.25 it follows that $\overline{R}$ is a closed right ideal containing $\U$.\\
Since we already proved that $\overline{R}$ is included in $\mathcal{M}_{fe}$, it follows that $\U$ is in $\mathcal{M}_{fe}$ and, as we proved in Lemma 4.4.26, $\mathcal{M}_{fe}=\mathcal{C}_{fe}(\U)$, which is the minimal closed right ideal containing $\U$. So

\begin{center} $(2)$ $\mathcal{M}_{fe}\subseteq \overline{R}$. \end{center}

Considering (1) and (2), it follows that

\begin{center} $\mathcal{M}_{fe}=\overline{R}=\overline{K(\bN,\oplus)}$ \end{center}

and this concludes the proof. \\ \end{proof}

This result has three interesting corollaries. The first one is a known fact in the topology of $\bN$:

\begin{cor} If $R$ is a right minimal ideal in $(\bN,\oplus)$ then \begin{center}$\overline{R}=\overline{K(\bN,\oplus)}.$\end{center} \end{cor}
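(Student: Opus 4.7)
The plan is to derive this as an immediate consequence of Theorem 4.4.27, which has already established the stronger chain of equalities $\mathcal{M}_{fe}=\overline{R}=\overline{K(\bN,\oplus)}$ for any minimal right ideal $R$. There is essentially no additional work to do: the corollary simply extracts the second equality from the conclusion of that theorem.

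More explicitly, I would argue as follows. Suppose $R$ is a minimal right ideal in $(\bN,\oplus)$. Applying Theorem 4.4.27 to $R$, we obtain
\begin{center}$\overline{R}=\mathcal{M}_{fe}=\overline{K(\bN,\oplus)},$\end{center}
so by transitivity $\overline{R}=\overline{K(\bN,\oplus)}$, which is the desired conclusion.

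The reason no further step is required is that Theorem 4.4.27 was proven precisely by picking an arbitrary right ideal $R$ contained in $K(\bN,\oplus)$ (which includes, in particular, every minimal right ideal, since each one lies inside $K(\bN,\oplus)$ by the characterization of $K(\bN,\oplus)$ as the union of the minimal right ideals recalled in Chapter One) and then showing the two-sided inclusions that identify $\overline{R}$ with $\mathcal{M}_{fe}$. Therefore the combinatorial content of the corollary — the fact that every minimal right ideal of $(\bN,\oplus)$ has the same closure, namely $\overline{K(\bN,\oplus)}$ — is already packaged inside the theorem. The only reason to state it separately is to highlight that one recovers, via the theory of finite embeddability developed here, a classical topological identity that is well known in the literature on $\bN$. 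Accordingly, there is no main obstacle: the proof is a one-line invocation of the previous result.
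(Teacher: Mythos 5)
Your proof is correct and follows exactly the route taken in the paper: Corollary 4.4.28 is obtained there as a trivial consequence of Theorem 4.4.27, which already gives $\overline{R}=\mathcal{M}_{fe}=\overline{K(\bN,\oplus)}$ for every minimal right ideal $R$. Nothing further is needed.
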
 

\begin{proof} Trivial, since by Theorem 4.4.27 it follows that $\overline{R}=\mathcal{M}_{fe}$ and $\overline{K(\bN,\oplus)}=\mathcal{M}_{fe}$.\\ \end{proof}

\begin{cor} An ultrafilter $\U$ is maximal if and only if every element $A$ of $\U$ is piecewise syndetic. \end{cor}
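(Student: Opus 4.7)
The plan is to combine two results already established in the excerpt: the characterization of $\mathcal{M}_{fe}$ as the closure of the minimal bilateral ideal $K(\bN,\oplus)$ (Theorem 4.4.27) and the characterization of $\overline{K(\bN,\oplus)}$ in terms of piecewise syndetic sets (Corollary 1.1.35). Since the corollary is placed immediately after Theorem 4.4.27, the intended proof is essentially a one-line chaining of these two equivalences; no new combinatorial construction is needed.

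More explicitly, I would argue as follows. By definition, $\U$ is maximal means $[\U]$ is the greatest element of $(\bN_{/_{\equiv_{fe}}},\trianglelefteq_{fe})$, which is exactly the statement $\U \in \mathcal{M}_{fe}$. By Theorem 4.4.27, $\mathcal{M}_{fe} = \overline{K(\bN,\oplus)}$, so maximality is equivalent to $\U \in \overline{K(\bN,\oplus)}$. Finally, Corollary 1.1.35 tells us that $\U \in \overline{K(\bN,\oplus)}$ if and only if every $A \in \U$ is piecewise syndetic. Chaining these three equivalences yields the claim.

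There is essentially no obstacle, since both of the ingredients have been proved earlier in the thesis. The only thing worth doing for clarity is to recall the definition of maximal ultrafilter and to cite Theorem 4.4.27 and Corollary 1.1.35 explicitly, so that the reader sees the two-step chain at a glance. If one wanted an independent combinatorial proof, the harder direction would be showing that every piecewise syndetic ultrafilter is $\trianglelefteq_{fe}$-maximal, which would require producing, for every $\V \in \bN$ and every $A \in \U$, a set $B \in \V$ with $B \leq_{fe} A$; this could be done using the fact that $A - [0,n]$ is thick for some $n$ together with Lemma 4.4.21, but within the framework of the excerpt this detour is unnecessary.
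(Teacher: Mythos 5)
Your proof is correct and matches the paper's: the corollary is obtained exactly by chaining Theorem 4.4.27 ($\mathcal{M}_{fe}=\overline{K(\bN,\oplus)}$) with the Chapter One characterization of $\overline{K(\bN,\oplus)}$ via piecewise syndetic sets (Corollary 1.1.35). The independent combinatorial argument you sketch at the end is also present in the thesis, but as a separate result (Theorem 4.5.1), not as the proof of this corollary.
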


\begin{proof} This follows from this well-known characterization of $K(\bN,\oplus)$ (that we exposed in Chapter One): an ultrafilter $\U$ is in $K(\bN,\oplus)$ if and only if every element $A$ of $\U$ is piecewise syndetic  . \\ \end{proof}

\begin{cor} The relation $\trianglelefteq_{fe}$ is not antysimmetric on $\bN$. \end{cor}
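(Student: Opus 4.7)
The plan is to exhibit two distinct ultrafilters $\U,\V\in\bN$ satisfying both $\U\trianglelefteq_{fe}\V$ and $\V\trianglelefteq_{fe}\U$. The natural place to find such a pair is inside $\mathcal{M}_{fe}$. Indeed, if $\U,\V$ are both maximal, then by definition every ultrafilter (including each of them) is finitely embeddable in $\U$ and in $\V$; in particular, $\U\trianglelefteq_{fe}\V$ and $\V\trianglelefteq_{fe}\U$. Thus, once we verify that $\mathcal{M}_{fe}$ contains more than one ultrafilter, antisymmetry fails.

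First, I would recall from Theorem 4.4.27 that $\mathcal{M}_{fe}=\overline{K(\bN,\oplus)}$. So it suffices to show that $K(\bN,\oplus)$ has at least two distinct elements (then $\mathcal{M}_{fe}$, which contains $K(\bN,\oplus)$, certainly does as well). For this step I would simply invoke Zelenyuk's Theorem (Theorem 1.1.32), which guarantees that $(\bN,\oplus)$ has $2^{2^{\aleph_{0}}}$ minimal right ideals. By Theorem 1.1.30, $K(\bN,\oplus)$ is the union of all minimal right ideals, and any two distinct minimal right ideals are disjoint (as observed in the remark following Corollary 1.1.31). Hence $K(\bN,\oplus)$ has cardinality at least $2^{2^{\aleph_{0}}}$, certainly more than one element.

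To conclude, pick any two distinct ultrafilters $\U,\V\in K(\bN,\oplus)\subseteq \mathcal{M}_{fe}$. By maximality of both, we have $\U\trianglelefteq_{fe}\V$ and $\V\trianglelefteq_{fe}\U$, yet $\U\neq\V$; so $\trianglelefteq_{fe}$ is not antisymmetric on $\bN$. There is essentially no obstacle here: the hard work has been absorbed into Theorem 4.4.27 and the earlier existence results for minimal right ideals. A slight alternative, avoiding Zelenyuk, would be to observe that if $K(\bN,\oplus)$ were a singleton $\{\U\}$ then, being a left ideal, we would have $\V\oplus\U=\U$ for every $\V\in\bN$, forcing $\U$ to be a left identity of $(\bN,\oplus)$—impossible for a nonprincipal ultrafilter—so again $|K(\bN,\oplus)|\geq 2$, and the same conclusion follows.
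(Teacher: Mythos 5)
Your proof is correct and follows essentially the same route as the paper: the paper's own argument likewise takes two distinct ultrafilters in $K(\bN,\oplus)$ and invokes Theorem 4.4.27 to conclude each is maximal, hence each is finitely embeddable in the other. The only difference is that you explicitly justify $|K(\bN,\oplus)|\geq 2$ (which the paper leaves implicit), and that extra care is welcome, though your phrase ``left identity'' should really be ``right zero'' --- the condition $\V\oplus\U=\U$ for all $\V$ fails already for $\V=\mathfrak{U}_{1}$ by considering the set of even numbers.
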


\begin{proof} We just need to observe that, whenever $\U,\V$ are two different ultrafilters in $K(\bN,\oplus)$, by Theorem 4.4.27 it follows that $\U\trianglelefteq_{fe}\V$ and $\V\trianglelefteq_{fe}\U$. \\ \end{proof}

\section{A Direct Nonstandard Proof of $\mathcal{M}_{fe}=\overline{K(\bN,\oplus)}$}

In this section we give a nonstandard proof of the equality $\mathcal{M}_{fe}=\overline{K(\bN,\oplus)}$ by showing, in an alternative way, that there is an intimate connection between maximal ultrafilters and piecewise syndetic sets:

\begin{thm} Let $\U$ be an ultrafilter in $\bN$. The following conditions are equivalent:

\begin{enumerate}
	\item $\U$ is maximal in $(\bN,\trianglelefteq_{fe})$;
	\item $\forall A\in\U$, $A$ is piecewise syndetic.
\end{enumerate}
\end{thm}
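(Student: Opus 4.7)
The plan is to prove the two implications $(2) \Rightarrow (1)$ and $(1) \Rightarrow (2)$ separately, using throughout the nonstandard characterization of finite embeddability from Proposition 4.1.4: $A \leq_{fe} B$ if and only if there exists $\alpha \in {}^{*}\N$ with $\alpha + A \subseteq {}^{*}B$.

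For $(2) \Rightarrow (1)$, I would fix $\V \in \bN$ and $A \in \U$. Unpacking the piecewise syndeticity of $A$ nonstandardly yields $n \in \N$ and $\eta \in {}^{*}\N$ such that for every $m \in \N$ there is some $j \in \{0, 1, \ldots, n\}$ with $\eta + m + j \in {}^{*}A$. This induces a finite partition $\N = C_0 \cup \cdots \cup C_n$ via $C_j = \{m \in \N : \eta + m + j \in {}^{*}A\}$, and since $\V$ is an ultrafilter some class $C_{j^{*}}$ lies in $\V$. The inclusion $(\eta + j^{*}) + C_{j^{*}} \subseteq {}^{*}A$ then witnesses $C_{j^{*}} \leq_{fe} A$, so $\V \trianglelefteq_{fe} \U$; since $\V$ was arbitrary, $\U$ is maximal.

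For $(1) \Rightarrow (2)$, I would fix $A \in \U$ and work in the $\omega$-hyperextension $^{\bullet}\N$. Given arbitrary $\V \in \bN$ with a generator $\beta \in {}^{*}\N$, maximality of $\U$ furnishes $B \in \V$ and $\alpha \in {}^{*}\N$ with $\alpha + B \subseteq {}^{*}A$. Transferring this inclusion (an internal statement in the parameters $\alpha$ and $B$) and using $\beta \in {}^{*}B$ yields ${}^{*}\alpha + \beta \in {}^{**}A$. Because $(\beta, {}^{*}\alpha)$ is a tensor pair by Theorem 2.5.16, Theorem 2.5.27 tells us that $\beta + {}^{*}\alpha = \beta \h \alpha$ generates $\V \oplus \mathfrak{U}_{\alpha}$, and hence $A \in \V \oplus \mathfrak{U}_{\alpha}$. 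Thus $\Theta_A \cap (\V \oplus \bN) \neq \emptyset$ for every $\V \in \bN$. Now pick $\V$ inside $K(\bN, \oplus)$, which is nonempty by Theorem 1.1.31. Since $K(\bN, \oplus)$ is a right ideal, $\V \oplus \bN \subseteq K(\bN, \oplus)$, and so $K(\bN, \oplus) \cap \Theta_A \neq \emptyset$; by Theorem 1.1.34 this is precisely the assertion that $A$ is piecewise syndetic.

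The main obstacle is the translation carried out in $(1) \Rightarrow (2)$: from the bare nonstandard embedding condition $\alpha + B \subseteq {}^{*}A$ with $B \in \V$, one must extract the algebraic statement $A \in \V \oplus \mathfrak{U}_{\alpha}$. This is exactly what the tensor pair machinery in $^{\bullet}\N$ is designed to achieve; once this bridge between nonstandard embeddings and sums of ultrafilters is in place, the conclusion follows quickly from a short topological argument using the minimal bilateral ideal of $(\bN, \oplus)$.
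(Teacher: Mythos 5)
Your proof is correct, and it splits into one half that mirrors the paper and one half that takes a genuinely different route. Your $(2)\Rightarrow(1)$ is essentially the paper's argument: the paper also unpacks piecewise syndeticity to get an infinite interval of $^{*}(A-[0,n])$ starting at some $\alpha$, partitions $\N$ into the finitely many classes $B_{i}=\{m\in\N\mid \alpha+m\in{}^{*}A+i\}$, picks the class lying in $\V$, and reads off $\leq_{fe}$ from the shifted inclusion (the paper shifts by $\alpha-i$ where you shift by $\eta+j^{*}$, a cosmetic difference). For $(1)\Rightarrow(2)$, however, the paper argues much more lightly: it takes an ultrafilter $\V$ all of whose members are piecewise syndetic (which exists because the family of piecewise syndetic sets is partition regular), uses maximality to get $\V\trianglelefteq_{fe}\U$, and then invokes Proposition 4.3.7, the upward closure of piecewise syndeticity under $\leq_{fe}$. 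Your version instead converts the nonstandard witness $\alpha+B\subseteq{}^{*}A$ into the algebraic statement $A\in\V\oplus\mathfrak{U}_{\alpha}$ via transfer and the tensor-pair machinery of $^{\bullet}\N$, and then lands in $\overline{K(\bN,\oplus)}$ by choosing $\V$ in the minimal ideal and applying Theorem 1.1.34. This works, but it essentially re-derives Lemmas 4.4.21--4.4.22 and Theorem 4.4.23 inline, and it reintroduces exactly the algebraic route through $K(\bN,\oplus)$ that Section 4.5 is advertised as avoiding (the theorem is presented as a ``direct nonstandard proof'' alternative to the Section 4.4 argument). What your approach buys is independence from Proposition 4.3.7 and from the partition regularity of the piecewise syndetic family; what the paper's buys is brevity and a proof that stays entirely within the combinatorics of $\leq_{fe}$ on sets.
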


\begin{proof} $(1)\Rightarrow (2)$ Suppose that $\U$ is maximal, and let $\V$ be an ultrafilter such that every set $B$ in $\V$ is piecewise syndetic (the ultrafilter $\V$ exists since the family of piecewise syndetic sets is partition regular). Let $A$ be any set in $\U$. Since $\U$ is maximal, $\V\trianglelefteq_{fe}\U$, so there is a set $B$ in $\V$ with $B\leq_{fe} A$. As $B$ is piecewise syndetic, by Proposition 4.3.7 it follows that $A$ is piecewise syndetic. Since this holds for every set $A$ in $\U$, we have the thesis.\\
$(2)\Rightarrow (1)$ Let $A$ be a set in $\U$, and $\V$ an ultrafilter in $\bN$. Since $A$ is piecewise syndetic, there is a natural number $n$ such that 

\begin{center} $T=\bigcup_{i=1}^{n} A+i$ \end{center} 

is thick. By transfer it follows that there are hypernatural numbers $\alpha\in$$^{*}\N$ and $\eta\in$$^{*}\N\setminus\N$ such that the interval $[\alpha,\alpha+\eta]$ is included in $^{*}T$. In particular, since $\eta$ is infinite, $\alpha+\N\subseteq$$^{*}T$.\\
For every $i\leq n$ consider 

\begin{center} $B_{i}=\{n\in\N\mid \alpha+n\in$$^{*}A+i\}$. \end{center}

Since $\bigcup_{i=1}^{n} B_{i}=\N$, there is an index $i$ such that $B_{i}\in\V$.\\

{\bfseries Claim:} $B_{i}\leq_{fe} A$.\\

In fact, by construction $\alpha+B_{i}\subseteq$$^{*}A+i$, so

\begin{center} $(\alpha-i)+B_{i}\subseteq$$^{*}A$. \end{center} 
By Proposition 4.1.4, this entails that $B_{i}\leq_{fe} A$, and this proves that $\V\trianglelefteq_{fe}\U$ for every ultrafilter $\V$. Hence $\U$ is maximal.\\\end{proof}

\section{Finite Mappability of Subsets of $\N$}

\subsection{The generalization of finite embeddability}

Our aim in this section is to generalize the relation of finite embeddability for subsets of $\N$. Our idea of generalization is grounded on the following observation: the core in the definition of finite embeddability are the translations. In fact, if $\mathbb{T}$ denotes the set of translations in $\N$:

\begin{center} $\mathbb{T}=\{f_{n}:\N\rightarrow\N\mid n\in\N$ and $f_{n}(m)=n+m$ for every $m$ in $\N\}$ \end{center}

then the definition of finite embeddability can be reformulated in this way:\\

{\bfseries Fact:} Given subsets $A,B$ of $\N$, $A$ is finitely embeddable in $B$ if and only if for every finite subset $F$ of $A$ there is a function $f_{n}$ in $\mathbb{T}$ such that $f_{n}(F)\subseteq B$.\\

We generalize finite embeddability by substituting the set of translations $\mathbb{T}$ with other sets of functions $\mathcal{F}\in$ $\mathtt{Fun}$($\N,\N$) (we recall that, whenever $A,B$ are sets, with $\mathtt{Fun}$($A,B$) we denote the set of functions with domain $A$ and range included in $B$):

\begin{defn} Let $\mathcal{F}$ be a subset of $\mathtt{Fun}$$(\N,\N)$, and $A, B$ subsets of $\N$. $A$ is {\bfseries $\mathcal{F}$-finitely mappable in $B$} $($notation $A\leq_{\mathcal{F}} B)$ if and only if for every finite subset $F$ of $A$ there is a function $f$ in $\mathcal{F}$ such that $f(F)\subseteq B$. \end{defn}

From now on, to be coherent with the notation just introduced, we denote the relation of finite embeddability as $\leq_{\mathbb{T}}$, where $\mathbb{T}$ is the above set of translations.\\
We summarize some basic observations in the following proposition:

\begin{prop} Let $A,A_{1},A_{2},B,B_{1},B_{2}$ be subsets of $\N$ and $\mathcal{F}$, $\mathcal{F}_{1}$, $\mathcal{F}_{2}$,...,$\mathcal{F}_{k}$ subsets of $\mathtt{Fun}$$(\N,\N)$. The following properties hold:

\begin{enumerate}
	\item If $\mathcal{F}=\{f\}$ then $A\leq_{\{f\}} B$ if and only if $f(A)\subseteq B$;
	\item If $A\leq_{\mathcal{F}_{1}\cup \mathcal{F}_{2}} B$ then $A\leq_{\mathcal{F}_{1}} B$ or $A\leq_{\mathcal{F}_{2}} B$;
	\item If $A\leq_{\mathcal{F}_{1}\cup....\cup\mathcal{F}_{k}} B$ then there is an index $i\leq k$ such that $A\leq_{\mathcal{F}_{i}} B$;
	\item If $\mathcal{F}=\{f_{1},...,f_{k}\}$ then $A\leq_{\{f_{1},...,f_{k}\}} B$ if and only if there is an index $i\leq k$ such that $f_{i}(A)=B$;
	\item If $\mathcal{F}_{1}\subseteq \mathcal{F}_{2}$ and $A\leq_{\mathcal{F}_{1}} B$ then $A\leq_{\mathcal{F}_{2}} B$;
	\item If $A_{1}\subseteq A_{2}$ and $A_{2}\leq_{\mathcal{F}} B$ then $A_{1}\leq_{\mathcal{F}} B$;
	\item If $B_{1}\subseteq B_{2}$ and $A\leq_{\mathcal{F}} B_{1}$ then $A\leq_{\mathcal{F}} B_{2}$;
	\item If $A_{1}\leq_{\mathcal{F}} B$ and $A_{2}\leq_{\mathcal{F}} B$ then $A_{1}\cap A_{2}\leq_{\mathcal{F}} B$;
	\item If $A\leq_{\mathcal{F}} B_{1}$ and $A\leq_{\mathcal{F}} B_{2}$ then $A\leq_{\mathcal{F}} B_{1}\cup B_{2}$.
\end{enumerate}

\end{prop}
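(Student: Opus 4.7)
The plan is to verify the nine items as a block, noting that the only one requiring genuine thought is item (2); the remaining assertions are formal consequences of the definition or straightforward applications of (2).

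First I would dispatch the monotonicity and closure items (1), (5), (6), (7), (8), (9) directly from the definition. For (1), if $\mathcal{F}=\{f\}$, then $A\leq_{\{f\}}B$ says every finite $F\subseteq A$ satisfies $f(F)\subseteq B$, which is plainly equivalent to $f(A)\subseteq B$ since every element of $A$ lies in a singleton. For (5), any witness $f\in\mathcal{F}_1$ also lies in $\mathcal{F}_2$. For (6) and (7), a witness $f$ for a finite $F\subseteq A_1\subseteq A_2$ works for $A_1$, and $f(F)\subseteq B_1\subseteq B_2$. For (8), any finite $F\subseteq A_1\cap A_2$ is in particular a finite subset of $A_1$ (or $A_2$), and one uses the witness for it. For (9), a witness for $F\subseteq A$ sending it into $B_1$ also sends it into $B_1\cup B_2$.

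The key step is item (2). I would argue by contradiction: assume $A\leq_{\mathcal{F}_1\cup\mathcal{F}_2}B$ but $A\not\leq_{\mathcal{F}_i}B$ for $i=1,2$. Then there exist finite subsets $F_1,F_2\subseteq A$ such that no $f\in\mathcal{F}_i$ maps $F_i$ into $B$. Consider $F=F_1\cup F_2\subseteq A$: by hypothesis some $f\in\mathcal{F}_1\cup\mathcal{F}_2$ satisfies $f(F)\subseteq B$. If $f\in\mathcal{F}_1$ then $f(F_1)\subseteq f(F)\subseteq B$, contradicting the choice of $F_1$; symmetrically for $f\in\mathcal{F}_2$. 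This union-of-witnesses trick is the only non-formal move in the entire proposition, and I expect it to be the main ``obstacle''—though a mild one.

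Finally, (3) follows from (2) by a straightforward induction on $k$: assuming the conclusion holds for a union of $k$ families, write $\mathcal{F}_1\cup\dots\cup\mathcal{F}_{k+1}=(\mathcal{F}_1\cup\dots\cup\mathcal{F}_k)\cup\mathcal{F}_{k+1}$ and apply (2) to split into two cases, then the inductive hypothesis to the first. Item (4) is the specialization of (3) to singleton families $\mathcal{F}_i=\{f_i\}$, combined with (1); note that the statement ``$f_i(A)=B$'' in the text should evidently read $f_i(A)\subseteq B$, which is how I would phrase and prove it.
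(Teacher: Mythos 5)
Your proof is correct, and for the one non-trivial item, (2), it takes a genuinely different route from the paper. The paper fixes the cofinal chain of finite initial segments $A_{n}=A\cap[0,n]$ and splits into two cases: either functions from $\mathcal{F}_{1}$ work for arbitrarily large $n$ (whence $A\leq_{\mathcal{F}_{1}}B$, since every finite $F\subseteq A$ sits inside some such $A_{n}$), or beyond some threshold $N$ no function of $\mathcal{F}_{1}$ ever works, forcing $\mathcal{F}_{2}$ to supply a witness for every $A_{m}$ with $m\geq N$ and hence for every finite $F$. Your argument instead assumes both conclusions fail, extracts ``bad'' finite sets $F_{1},F_{2}$, and derives a contradiction from a witness for $F_{1}\cup F_{2}$. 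The two are logically equivalent case analyses, but yours is shorter, uses only the directedness of the family of finite subsets of $A$ under union (no appeal to the ordering of $\N$ or to a cofinal chain), and would in fact give item (3) in one stroke by taking $F=F_{1}\cup\dots\cup F_{k}$, though the induction you use is equally fine and matches the paper. The remaining items are handled identically in both proofs, including the reading of the statement of (4) as $f_{i}(A)\subseteq B$ rather than $f_{i}(A)=B$, which is also what the paper's own proof of (4) establishes.
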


\begin{proof} 

1) Suppose that $A\leq_{\{f\}} B$; by definition, whenever $F$ is a finite subset of $A$ the set $f(F)$ is included in $B$. This entails that $f(A)\subseteq B$.\\
Conversely, if $f(A)\subseteq B$ then, for every finite subset $F$ of $A$, $f(F)\subseteq B$; in particular, $A\leq_{\{f\}} B$.\\
2) Suppose that $A\leq_{\mathcal{F}_{1}\cup\mathcal{F}_{2}} B$. For every natural number $n$ let $A_{n}$ be the set

\begin{center} $A_{n}=A\cap [0,n]$. \end{center}

There are two possibilities: 
\begin{enumerate}
	\item for arbitrarily large natural numbers $n$ there is a function $f_{n}\in \mathcal{F}_{1}$ such that $f_{n}(A_{n})\subseteq B$;
	\item there is a natural number $n$ such that, for every $m\geq n$, for every $f\in \mathcal{F}_{1}$, $f(A_{m})$ is not included in $B$.
\end{enumerate}

In case (1), we claim that $A\leq_{\mathcal{F}_{1}} B$: in fact, for every finite subset $F$ of $A$ there is a function $f\in \mathcal{F}_{1}$ with $f(F)\subseteq B$ since, if $m=\max F$, and $n$ is a natural number greater than $m$ such that there is a function $f_{n}$ in $\mathcal{F}_{1}$ with $f_{n}(A_{n})\subseteq B$, since $F\subseteq A_{n}$ then $f_{n}(F)\subseteq B$; in particular, $A\leq_{\mathcal{F}_{1}} B$.\\
In case (2), we claim that $A\leq_{\mathcal{F}_{2}} B$: let $N$ be the natural number such that, for every $m\geq N$, for every function $f\in \mathcal{F}_{1}$, $f(A_{m})$ is not included in $B$. Since, by hypothesis, $A\leq_{\mathcal{F}_{1}\cup \mathcal{F}_{2}} B$, for every natural number $m\geq N$ there is a function $g_{m}$ in $\mathcal{F}_{2}$ such that $g_{m}(A_{m})\subseteq B$. Then, for every finite subset $F$ of $A$, if $M=\max\{\max(F),N\}$, as $N\leq M$ there is a function $g$ in $\mathcal{F}_{2}$ with $g(A_{M})\subseteq B$; in particular, as $F\subseteq A_{M}$, $g(F)\subseteq B$, and $A\leq_{\mathcal{F}_{2}} B$.\\
3) It follows by induction from (2).\\
4) By (3), there is an index $i\leq k$ such that $A\leq_{f_{i}} B$ and by (1) it follows that $f_{i}(A)\subseteq B$.\\
5) Suppose that $A\leq_{\mathcal{F}_{1}} B$. By definition of $\mathcal{F}_{1}$-finite mappability, for every finite subset $F$ of $A$ there is a function $f$ in $\mathcal{F}_{1}$ such that $f(F)\subseteq B$. In particular, as $\mathcal{F}_{1}\subseteq \mathcal{F}_{2}$, for every finite subset $F$ of $A$ there is a function $f$ in $\mathcal{F}_{2}$ such that $f(F)\subseteq B$, so $A\leq_{\mathcal{F}_{2}} B$.\\
6) We have only to observe that, as $A_{1}\subseteq A_{2}$, every finite subset of $A_{1}$ is also a finite subset of $A_{2}$.\\
7) We have only to observe that, for every finite subset $F$ of $A$, for every function $f\in \mathcal{F}$, if $f(F)\subseteq B_{1}$ then $f(F)\subseteq B_{2}$.\\
8) This is a particular case of the result (6).\\
9) This is a particular case of the result (7). \\\end{proof}

Similarly to the relation of finite embeddability, the relation of finite mappability can be reformulated in a nonstandard fashion:

\begin{prop} Let $A,B$ be subsets of $\N$, and $\mathcal{F}$ a subset of $\mathtt{Fun}$$(\N,\N)$. The following two conditions are equivalent:
\begin{enumerate}
	\item $A\leq_{\mathcal{F}}B$;
	\item there is a function $\varphi$ in $^{*}\mathcal{F}$ such that $\varphi(A)\subseteq$$^{*}B$.
\end{enumerate}
\end{prop}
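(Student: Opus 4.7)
My plan is to mirror the proof of Proposition 4.1.4, replacing the family of translates $\{n + F \subseteq B\}$ with the family of functions $\{f \in \mathcal{F} : f(F) \subseteq B\}$ indexed over finite subsets $F$ of $A$. The $\mathfrak{c}^+$-enlarging hypothesis that we are working under is exactly what makes this generalization go through, since $|\wp_{fin}(A)| \leq \mathfrak{c}$.

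For the direction $(1) \Rightarrow (2)$, I would proceed as follows. Assume $A \leq_{\mathcal{F}} B$, and for every finite $F \subseteq A$ define
\begin{center}
$\mathcal{G}_F = \{f \in \mathcal{F} \mid f(F) \subseteq B\}$.
\end{center}
By hypothesis each $\mathcal{G}_F$ is nonempty, and $\mathcal{G}_{F_1} \cap \mathcal{G}_{F_2} \supseteq \mathcal{G}_{F_1 \cup F_2}$, so the family $\{\mathcal{G}_F\}_{F \in \wp_{fin}(A)}$ has the finite intersection property. The $\mathfrak{c}^+$-enlarging property then yields $\varphi \in \bigcap_{F \in \wp_{fin}(A)} {}^*\mathcal{G}_F$. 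Since each $F$ is finite, ${}^*F = F$, so by transfer ${}^*\mathcal{G}_F = \{\psi \in {}^*\mathcal{F} \mid \psi(F) \subseteq {}^*B\}$. Consequently $\varphi(F) \subseteq {}^*B$ for every finite $F \subseteq A$, which is the same as saying $\varphi(A) \subseteq {}^*B$, and $\varphi \in {}^*\mathcal{F}$ as required.

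For the converse direction $(2) \Rightarrow (1)$, I would argue by contradiction using transfer, mimicking the second half of Proposition 4.1.4. Suppose $\varphi \in {}^*\mathcal{F}$ satisfies $\varphi(A) \subseteq {}^*B$ but $A$ is not $\mathcal{F}$-finitely mappable in $B$. Then there is a finite $F = \{a_1, \dots, a_k\} \subseteq A$ such that no $f \in \mathcal{F}$ satisfies $f(F) \subseteq B$. The statement ``$\forall f \in \mathcal{F}\ \neg(f(a_1) \in B \wedge \cdots \wedge f(a_k) \in B)$'' transfers to ``$\forall \psi \in {}^*\mathcal{F}\ \neg(\psi(a_1) \in {}^*B \wedge \cdots \wedge \psi(a_k) \in {}^*B)$'', contradicting $\varphi(F) \subseteq \varphi(A) \subseteq {}^*B$.

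No real obstacle is expected here; the only subtlety is checking that, since $F$ is finite, $F = {}^*F$ inside the transferred formula, so the bounded quantifier structure required for transfer is preserved. This is precisely the same point that made the translation-based proof of Proposition 4.1.4 work, and it generalizes verbatim to arbitrary function families $\mathcal{F} \subseteq \mathtt{Fun}(\N,\N)$.
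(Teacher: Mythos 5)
Your proof is correct and follows essentially the same route as the paper: the same family $S_F=\{f\in\mathcal{F}\mid f(F)\subseteq B\}$ with the finite intersection property, the same appeal to the $\mathfrak{c}^{+}$-enlarging property to produce $\varphi\in{}^{*}\mathcal{F}$, and the same transfer-plus-contradiction argument for the converse. No gaps; the observation that ${}^{*}F=F$ for finite $F$ is exactly the point the paper also relies on.
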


\begin{proof} $(1)\Rightarrow(2)$ Suppose that $A\leq_{\mathcal{F}} B$ and, for every finite subset $F$ of $A$, consider the set

\begin{center} $S_{F}=\{f\in \mathcal{F}\mid f(F)\subseteq B\}$ \end{center}

As $A\leq_{\mathcal{F}} B$, for every finite subset $F$ of $A$ $S_{F}\neq\emptyset$, and the family $\{S_{F}\}_{F\in\wp_{fin}(A)}$ has the finite intersection property, since 

\begin{center} $S_{F_{1}}\cap S_{F_{2}}=S_{F_{1}\cup F_{2}}$. \end{center}

By $\mathfrak{c}^{+}$-enlarging property, $\bigcap_{F\in\wp_{fin}(A)}$$^{*}S_{F}\neq\emptyset$; let $\varphi$ be a function in this intersection. By construction and transfer, $\varphi$ has the following two properties:
\begin{enumerate}
	\item $\varphi\in$$^{*}\mathcal{F}$;
	\item $\varphi($$^{*}F)\subseteq$$^{*}B$ for every finite subset $F$ of $A$.
\end{enumerate}
As $^{*}F=F$ for every finite subset of $\N$, by condition (2) it follows that $\varphi(A)\subseteq$$^{*}B$.\\
$(2)\Rightarrow(1)$ Let $\varphi$ be a function in $^{*}\mathcal{F}$ such that $\varphi(A)\subseteq$$^{*}B$, and suppose that $A$ is not $\mathcal{F}$-finitely mappable in $B$. As a consequence, there is a finite subset $F$ of $A$ such that, for every function $g\in\mathcal{F}$, $g(F)$ is not included in $B$. By transfer it follows that, for every function $g\in$$^{*}\mathcal{F}$, $g($$^{*}F)$ is not included in $^{*}B$, and this is absurd since, as we observed before, $^{*}F=F$, and $\varphi(F)\subseteq \varphi(A)\subseteq $$^{*}B$. By assuming that $A$ is not $\mathcal{F}$-finitely mappable in $B$ it follows an absurd, so $A\leq_{\mathcal{F}} B$. \\ \end{proof}

From now on, a central tool in the study of the relations $\leq_{\mathcal{F}}$ are the hyperextensions $^{*}\varphi$ of internal functions $\varphi$ in $\mathtt{Fun}$($^{*}\N,$$^{*}\N$); in particular, in this context we need this particular property:

\begin{prop} Let $\varphi$ be an internal function in $\mathtt{Fun}$$($$^{*}\N,$$^{*}\N)$, and $\alpha,\beta$ be hypernatural numbers in $^{*}\N$. If $\alpha\sim_{u}\beta$ then $($$^{*}\varphi)(\alpha)\sim_{u}($$^{*}\varphi)(\beta)$. \end{prop}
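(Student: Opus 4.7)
The plan is to reduce the claim $^{*}\varphi(\alpha)\sim_{u}{}^{*}\varphi(\beta)$ to comparing membership of $\alpha$ and $\beta$ in the hyperextension of a single standard subset of $\N$. Once that reduction is in place, the hypothesis $\mathfrak{U}_{\alpha}=\mathfrak{U}_{\beta}$ will close the argument immediately, since by definition $\alpha$ and $\beta$ agree on $^{*}C$ for every $C\subseteq\N$.

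To execute this reduction, fix an arbitrary $A\subseteq\N$ and define $D_{A}=\{n\in\N\mid\varphi(n)\in{}^{*}A\}$. The conceptual crux---and the only potentially subtle point---is that $D_{A}$ is a perfectly ordinary subset of $\N$, hence an element of $\mathcal{P}(\N)\subseteq\mathbb{V}(X)$, even though its defining condition uses the nonstandard parameters $\varphi$ and $^{*}A$. In particular $D_{A}$ is a legitimate parameter for the transfer principle. By the very definition of $D_{A}$, the bounded quantifier sentence $\forall n\in\N\,(n\in D_{A}\leftrightarrow\varphi(n)\in{}^{*}A)$ holds in $\mathbb{V}(X)$ with parameters $\N$, $D_{A}$, $\varphi$, and $^{*}A$, all living in $\mathbb{V}(X)$.

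The final step is to apply transfer, replacing each parameter by its $^{*}$-image, which yields $\forall\xi\in{}^{*}\N\,(\xi\in{}^{*}D_{A}\leftrightarrow{}^{*}\varphi(\xi)\in{}^{**}A)$. Specializing to $\xi=\alpha$ and $\xi=\beta$ and using $\mathfrak{U}_{\alpha}=\mathfrak{U}_{\beta}$, I obtain the chain
$$^{*}\varphi(\alpha)\in{}^{**}A \ \Leftrightarrow\ \alpha\in{}^{*}D_{A}\ \Leftrightarrow\ D_{A}\in\mathfrak{U}_{\alpha}\ \Leftrightarrow\ D_{A}\in\mathfrak{U}_{\beta}\ \Leftrightarrow\ \beta\in{}^{*}D_{A}\ \Leftrightarrow\ {}^{*}\varphi(\beta)\in{}^{**}A.$$
Since $A\subseteq\N$ was arbitrary, this gives $\mathfrak{U}_{^{*}\varphi(\alpha)}=\mathfrak{U}_{^{*}\varphi(\beta)}$, which is exactly $^{*}\varphi(\alpha)\sim_{u}{}^{*}\varphi(\beta)$. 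The main difficulty is therefore not computational but conceptual: recognizing that $D_{A}$, despite being defined through nonstandard quantities, is a bona fide standard set which one is entitled to use as a transfer parameter. Note that this also explains why the analogous statement with $\varphi$ in place of $^{*}\varphi$ may fail---there is no reason for the internal preimage $\varphi^{-1}({}^{*}A)\subseteq{}^{*}\N$ itself to be distinguishable by $\mathfrak{U}_{\alpha}$, and transfer is exactly what ``pushes'' the argument one level up to a standard set.
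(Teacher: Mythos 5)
Your proof is correct and is essentially identical to the paper's: both arguments form the standard set $D_{A}=\{n\in\N\mid\varphi(n)\in{}^{*}A\}$, transfer to get $^{*}D_{A}=\{\xi\in{}^{*}\N\mid{}^{*}\varphi(\xi)\in{}^{**}A\}$, and then run the same chain of equivalences through $\mathfrak{U}_{\alpha}=\mathfrak{U}_{\beta}$. The only difference is expository, namely that you make explicit the (correct) observation that $D_{A}$ is a legitimate standard parameter for transfer despite being defined via nonstandard objects.
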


\begin{proof} To prove the thesis we have to show that, if $\alpha\sim_{u}\beta$, then for every subset $A$ of $\N$ $($$^{*}\varphi)(\alpha)\in$$^{**}A$ if and only if $($$^{*}\varphi)(\beta)\in$$^{**}A$.\\
Observe that, by transfer, $^{*}\{n\in\N\mid \varphi(n)\in$$^{*}A\}=\{\eta\in$$^{*}\N\mid($$^{*}\varphi)(\eta)\in$$^{**}A\}$. As $\{n\in\N\mid \varphi(n)\in$$^{*}A\}$ is a subset of $\N$, and $\alpha\sim_{u}\beta$, the following equivalence holds:

\begin{center} $\alpha\in$$^{*}\{n\in\N\mid \varphi(n)\in$$^{*}A\}$ if and only if $\beta\in$$^{*}\{n\in\N\mid \varphi(n)\in$$^{*}A\}$. \end{center}

From this observation, we obtain the chain of equivalences:

\begin{center} ($^{*}\varphi)(\alpha)\in$$^{**}A\Leftrightarrow\alpha\in\{\eta\in$$^{*}\N\mid($$^{*}\varphi)(\eta)\in$$^{**}A\}=$$^{*}\{n\in\N\mid \varphi(n)\in$$^{*}A\}\Leftrightarrow$\\\vspace{0.3cm}$\Leftrightarrow\beta\in$$^{*}\{n\in\N\mid \varphi(n)\in$$^{*}A\}=\{\eta\in$$^{*}\N\mid($$^{*}\varphi)(\eta)\in$$^{**}A\}\Leftrightarrow($$^{*}\varphi)(\beta)\in$$^{**}A$; \end{center}

and hence the thesis. \\ \end{proof}

The above proposition can be restated in this way: if $\alpha\sim_{u}\beta$ are hypernatural numbers in $^{*}\N$, and $\varphi:$$^{*}\N\rightarrow$$^{*}\N$ is an internal function, then

\begin{center} ($\dagger$) if $\mathfrak{U}_{\alpha}=\mathfrak{U}_{\beta}$ then $\mathfrak{U}_{(^{*}\varphi)(\alpha)}=\mathfrak{U}_{(^{*}\varphi)(\beta)}$. \end{center}

With this observation, we can prove the following important result:

\begin{prop} Given subsets $A,B$ of $\N$ and a subset $\mathcal{F}$ of $\mathtt{Fun}$$(\N,\N)$, the following two conditions are equivalent:
\begin{enumerate}
	\item $A\leq_{\mathcal{F}}B$;
	\item there is a function $\varphi\in$$^{*}\mathcal{F}$ such that, for every ultrafilter $\U$ in $\bN$ with $A\in\U$, for every generator $\alpha\in$$^{*}\N$ of $\U$, $B\in\mathfrak{U}_{( ^{*}\varphi)(\alpha)}$.
\end{enumerate}
\end{prop}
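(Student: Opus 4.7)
The plan is to establish both directions by reducing to Proposition 4.6.3, which characterises $A\leq_{\mathcal{F}} B$ by the existence of $\varphi\in{}^{*}\mathcal{F}$ with $\varphi(A)\subseteq{}^{*}B$.

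For $(1)\Rightarrow(2)$, I would apply Proposition 4.6.3 to obtain an internal function $\varphi\in{}^{*}\mathcal{F}$ such that $\varphi(A)\subseteq{}^{*}B$. The core move is then to form the internal set
\[
I=\{\eta\in{}^{*}\N\mid \varphi(\eta)\in{}^{*}B\},
\]
which is internal by the Internal Definition Principle since $\varphi$ and ${}^{*}B$ are internal. Since $\varphi(A)\subseteq{}^{*}B$ we have $A\subseteq I$ and, as $A\subseteq\N$ is standard while $I$ is definable, transfer gives ${}^{*}A\subseteq{}^{*}I=\{\xi\in{}^{**}\N\mid({}^{*}\varphi)(\xi)\in{}^{**}B\}$. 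Now if $\U$ is any ultrafilter containing $A$ and $\alpha\in{}^{*}\N$ is a generator of $\U$, then by definition $\alpha\in{}^{*}A$, hence $\alpha\in{}^{*}I$, which unravels to $({}^{*}\varphi)(\alpha)\in{}^{**}B$, i.e.\ $B\in\mathfrak{U}_{({}^{*}\varphi)(\alpha)}$.

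For $(2)\Rightarrow(1)$, I would take the $\varphi\in{}^{*}\mathcal{F}$ provided by the hypothesis and show directly that $\varphi(A)\subseteq{}^{*}B$, from which the conclusion follows again by Proposition 4.6.3. The idea is to test the hypothesis against principal ultrafilters: for every $a\in A$, the principal ultrafilter $\mathfrak{U}_{a}$ contains $A$ and its unique generator in ${}^{*}\N$ is $a$ itself (since $G_{\mathfrak{U}_{a}}={}^{*}\{a\}=\{a\}$). The hypothesis thus yields $B\in\mathfrak{U}_{({}^{*}\varphi)(a)}$, i.e.\ $({}^{*}\varphi)(a)\in{}^{**}B$. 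Using that ${}^{*}$ commutes with application on internal objects and that ${}^{*}a=a$ for $a\in\N$, we obtain $({}^{*}\varphi)(a)={}^{*}(\varphi(a))$, so ${}^{*}(\varphi(a))\in{}^{**}B$, which by the defining property of the star map on subsets of $\N$ is equivalent to $\varphi(a)\in{}^{*}B$. Since this holds for every $a\in A$, we conclude $\varphi(A)\subseteq{}^{*}B$.

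The main delicate point, and the only genuine obstacle, is the bookkeeping across levels of the star hierarchy: one must be careful that $\varphi\in{}^{*}\mathcal{F}$ is internal on ${}^{*}\N$ so that ${}^{*}\varphi$ naturally lives on ${}^{**}\N$, and that the ultrafilter $\mathfrak{U}_{({}^{*}\varphi)(\alpha)}$ is defined via the second-level hyperextension $A\mapsto{}^{**}A$. Provided this is handled cleanly—in particular using $({}^{*}\varphi)(a)={}^{*}(\varphi(a))$ for standard $a$ together with the equivalence ${}^{*}(\varphi(a))\in{}^{**}B\iff\varphi(a)\in{}^{*}B$—the argument above is essentially a formal consequence of Proposition 4.6.3 combined with the Internal Definition Principle and transfer.
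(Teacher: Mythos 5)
Your proof is correct and follows essentially the same route as the paper's: both directions reduce to Proposition 4.6.3, with transfer converting $\varphi(A)\subseteq{}^{*}B$ into the statement ${}^{*}\varphi({}^{*}A)\subseteq{}^{**}B$ and then unfolding membership of generators into ultrafilter statements (the paper just writes this as a single chain of equivalences). The only departure is in $(2)\Rightarrow(1)$, where you specialise the hypothesis to the principal ultrafilters $\mathfrak{U}_{a}$ with $a\in A$ and use $({}^{*}\varphi)(a)={}^{*}(\varphi(a))$ instead of reversing the transfer step --- a harmless and slightly more economical variant, which in passing shows that the principal instances of condition (2) already suffice.
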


\begin{proof} We already proved that $A\leq_{\mathcal{F}} B$ if and only if there is a function $\varphi\in$$^{*}\mathcal{F}$ such that $\varphi(A)\subseteq$$^{*}B$. By transfer, $\varphi(A)\subseteq$$^{*}B$ if and only if $^{*}\varphi($$^{*}A)\subseteq$$^{**}B$. So we have the following chain of equivalences:

\begin{center} $A\leq_{\mathcal{F}}B\Leftrightarrow(\exists\varphi\in$$^{*}\mathcal{F})($$^{*}\varphi($$^{*}A)\subseteq$$^{**}B)$\\\vspace{0.3cm}$\Leftrightarrow(\exists\varphi\in$$^{*}\mathcal{F})(\forall\alpha\in$$^{*}\N)(\alpha\in$$^{*}A\Rightarrow$$^{*}\varphi(\alpha)\in$$^{**}B)\Leftrightarrow$\\\vspace{0.3cm}$\Leftrightarrow (\exists \varphi\in$$^{*}\mathcal{F})(\forall\alpha\in$$^{*}\N)$$(A\in\mathfrak{U}_{\alpha}\Rightarrow B\in\mathfrak{U}_{^{*}\varphi(\alpha)})$,\end{center}

and this proves the thesis.\\ \end{proof}

\subsection{Well-structured sets of functions} 

The question that we want to answer is wheter the relation $\leq_{\mathcal{F}}$ is an order or not. We already know that, in general, $\leq_{\mathcal{F}}$ is not an order, as we have proved in Section 4.3 that $\leq_{\mathbb{T}}$ is not antisymmetric.\\
The problem is that, in general, $\leq_{\mathcal{F}}$ is not even transitive nor reflexive. In fact:

\begin{prop} For every subset $\mathcal{F}$ of $\mathtt{Fun}$$(\N,\N)$, the relation $\leq_{\mathcal{F}}$ is 
\begin{enumerate}
	\item transitive if and only if for every finite subset $F$ of $\N$, for every functions $f,g$ in $\mathcal{F}$ there is a function $h$ in $\mathcal{F}$ such that $h(F)\subseteq (g\circ f) (F)$;
	\item reflexive if and only if for every finite subset $F$ of $\N$ there is a function $f$ in $\mathcal{F}$ such that $f(F)\subseteq F$. 
\end{enumerate}
 \end{prop}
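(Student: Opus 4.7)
The plan is to handle the two biconditionals separately, each by a direct forward and converse argument. For both, the key observation is that the defining clause of $A \leq_{\mathcal{F}} B$ is indexed over finite subsets $F$ of $A$, so instantiating $A$ (or $B$) at a particular finite set gives statements that are essentially just about single finite sets.

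For (1), the forward direction I would prove as follows. Assume $\leq_{\mathcal{F}}$ is transitive, pick a finite $F \subseteq \N$ and $f,g \in \mathcal{F}$. Set $A = F$, $B = f(F)$, $C = (g \circ f)(F)$. Then $A \leq_{\mathcal{F}} B$ is witnessed by $f$ (the only finite subset of $A$ to consider is $F$ itself, and $f(F) = B \subseteq B$), and $B \leq_{\mathcal{F}} C$ is witnessed by $g$ on the finite set $B$ (any finite subset of $B$ is mapped by $g$ into $g(B) = C$). By transitivity $A \leq_{\mathcal{F}} C$, and applying this to the finite subset $F$ of $A$ yields the required $h \in \mathcal{F}$ with $h(F) \subseteq C = (g \circ f)(F)$. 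For the converse, assume the composition-mimicking hypothesis and suppose $A \leq_{\mathcal{F}} B$ and $B \leq_{\mathcal{F}} C$. Given finite $F \subseteq A$, find $f \in \mathcal{F}$ with $f(F) \subseteq B$; then $f(F)$ is a finite subset of $B$, so there is $g \in \mathcal{F}$ with $g(f(F)) \subseteq C$; the hypothesis gives $h \in \mathcal{F}$ with $h(F) \subseteq (g \circ f)(F) \subseteq C$, proving $A \leq_{\mathcal{F}} C$.

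For (2), the forward direction is almost immediate: if $\leq_{\mathcal{F}}$ is reflexive then for any finite $F \subseteq \N$ we have $F \leq_{\mathcal{F}} F$, and applying the definition to the finite subset $F$ of $F$ itself produces $f \in \mathcal{F}$ with $f(F) \subseteq F$. Conversely, if the hypothesis holds, then for any $A \subseteq \N$ and any finite $F \subseteq A$, the hypothesis applied to $F$ yields $f \in \mathcal{F}$ with $f(F) \subseteq F \subseteq A$, showing $A \leq_{\mathcal{F}} A$.

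There is no serious obstacle here; the argument is purely combinatorial and relies only on unpacking the definition of $\leq_{\mathcal{F}}$. The only subtlety worth flagging in writing is to be careful in the forward direction of (1) that the auxiliary sets $B = f(F)$ and $C = (g \circ f)(F)$ are themselves finite, so that when checking $A \leq_{\mathcal{F}} B$ and $B \leq_{\mathcal{F}} C$ we really can take $F' = A$ and $F' = B$ respectively as the relevant finite subsets; once that is noted, the proof is short and self-contained and needs none of the nonstandard machinery developed earlier in the chapter.
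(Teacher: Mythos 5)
Your proof is correct and follows essentially the same route as the paper: instantiate the relation at the finite sets $F$, $f(F)$, $(g\circ f)(F)$ for the forward directions, and unpack the definition directly for the converses. The one point you flag (finiteness of the auxiliary sets) is handled the same way in the paper, so nothing further is needed.
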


\begin{proof} (1) Suppose that $\leq_{\mathcal{F}}$ is transitive, and let $F$ be a finite subset of $\N$ and $f,g$ functions in $\mathcal{F}$. By definition of $\mathcal{F}$-finite mappability: 

\begin{center} $F\leq_{\mathcal{F}} f(F)$ and $f(F)\leq_{\mathcal{F}} g(f(F))$ \end{center}

and, since $\leq_{\mathcal{F}}$ is transitive by hypothesis, $F\leq_{\mathcal{F}} (g\circ f) (F)$.  As $F$ is finite, this happens if and only if there is a function $h$ in $\mathcal{F}$ with $h(F)\subseteq (g\circ f)(F)$.\\
Conversely, let $A,B,C$ be subsets of $\N$ with $A\leq_{\mathcal{F}} B$ and $B\leq_{\mathcal{F}} C$, and let $F$ be a finite subset of $A$. Since $A\leq_{\mathcal{F}} B$, there is a function $f$ in $\mathcal{F}$ such that $f(F)\subseteq B$ and, since $B\leq_{\mathcal{F}} C$, there is a function $g$ in $\mathcal{F}$ with $g(f(F))\subseteq C$. By hypothesis, there is a function $h$ in $\mathcal{F}$ such that $h(F)\subseteq g(f(F))\subseteq C$. This proves that, for every finite subset $F$ of $A$, there is a function $h$ in $\mathcal{F}$ such that $h(F)\subseteq C$ and so, by definition, $A\leq_{\mathcal{F}} C$.\\
(2) Suppose that $\leq_{\mathcal{F}}$ is reflexive; for every $F$ finite subset of $\N$, as $F\leq_{\mathcal{F}} F$, there is a function $f$ in $\mathcal{F}$ with $f(F)\subseteq F$.\\
Conversely, let $A$ be a subset of $\N$, and let $F$ be a finite subset of $A$. By hypothesis, there is a function $f$ in $\mathcal{F}$ such that $f(F)\subseteq F\subseteq A$: by definition of $\mathcal{F}$-finite mappability, this proves that $A\leq_{\mathcal{F}} A$.\\ \end{proof}

We present two trivial corollaries of Proposition 4.6.6:

\begin{cor} If $\mathcal{F}$ is a set of functions closed under composition then $\leq_{\mathcal{F}}$ is transitive; and if the identity map $i$ is in $\mathcal{F}$, then $\leq_{\mathcal{F}}$ is reflexive. \end{cor}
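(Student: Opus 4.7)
The plan is to derive both statements directly from Proposition 4.6.6, which gives a necessary and sufficient condition for transitivity and reflexivity of $\leq_{\mathcal{F}}$ in terms of the ``witnessing'' behaviour of $\mathcal{F}$ on finite subsets of $\N$. Since the corollary is flagged as trivial, the only work is to verify that closure under composition (respectively, containment of the identity) is strong enough to supply those witnesses.

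For the transitivity claim, I will apply part (1) of Proposition 4.6.6. Given any finite subset $F \subseteq \N$ and any $f, g \in \mathcal{F}$, the hypothesis that $\mathcal{F}$ is closed under composition gives that $h := g \circ f$ belongs to $\mathcal{F}$. Then trivially $h(F) = (g \circ f)(F) \subseteq (g \circ f)(F)$, so the equivalent condition in Proposition 4.6.6(1) is satisfied, and $\leq_{\mathcal{F}}$ is transitive.

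For the reflexivity claim, I will apply part (2) of Proposition 4.6.6. Given any finite subset $F \subseteq \N$, choose $f := i \in \mathcal{F}$: then $f(F) = i(F) = F \subseteq F$, which is exactly the condition required by Proposition 4.6.6(2). Hence $\leq_{\mathcal{F}}$ is reflexive.

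There is essentially no obstacle here; the statement is a one-line unpacking of the equivalences in Proposition 4.6.6. The only mild point to note is that closure of $\mathcal{F}$ under composition yields \emph{equality} $h(F) = (g \circ f)(F)$ rather than just inclusion, which is of course sufficient. Likewise, taking the identity in $\mathcal{F}$ gives the stronger fact $f(F) = F$, again more than enough. No nonstandard machinery, no enlarging assumptions, and no properties of $\bN$ are invoked in the argument: the corollary is a purely combinatorial consequence of the equivalent conditions already established in Proposition 4.6.6.
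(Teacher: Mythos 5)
Your proof is correct and matches the paper's intent exactly: the paper states this as a trivial corollary of Proposition 4.6.6 and omits the argument, which is precisely the one-line verification you give (take $h = g\circ f$ for transitivity, $f = i$ for reflexivity). Nothing further is needed.
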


\begin{cor} If $\leq_{\mathcal{F}_{1}}$ is reflexive, and $\mathcal{F}_{1}\subseteq\mathcal{F}_{2}$, then also $\leq_{\mathcal{F}_{2}}$ is reflexive. \end{cor}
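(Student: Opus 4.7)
The plan is to derive this corollary almost immediately from results already established in the excerpt, so I will not introduce new constructions. Given that Proposition 4.6.3(5) already records the monotonicity principle
\[
\mathcal{F}_{1}\subseteq\mathcal{F}_{2}\ \text{and}\ A\leq_{\mathcal{F}_{1}}B\ \Longrightarrow\ A\leq_{\mathcal{F}_{2}}B,
\]
the argument reduces to unpacking the word \emph{reflexive} on both sides.

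First I would fix an arbitrary subset $A\subseteq\N$. By the hypothesis that $\leq_{\mathcal{F}_{1}}$ is reflexive, we have $A\leq_{\mathcal{F}_{1}}A$. Then I would invoke Proposition 4.6.3(5) with the inclusion $\mathcal{F}_{1}\subseteq\mathcal{F}_{2}$ to conclude $A\leq_{\mathcal{F}_{2}}A$. Since $A$ was arbitrary, $\leq_{\mathcal{F}_{2}}$ is reflexive, which is the statement to prove.

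Alternatively, and equivalently, I could use Proposition 4.6.6(2): reflexivity of $\leq_{\mathcal{F}_{1}}$ means that for every finite $F\subseteq\N$ there is $f\in\mathcal{F}_{1}$ with $f(F)\subseteq F$; since $\mathcal{F}_{1}\subseteq\mathcal{F}_{2}$, the same $f$ lies in $\mathcal{F}_{2}$, and the criterion of 4.6.6(2) applied to $\mathcal{F}_{2}$ yields the reflexivity of $\leq_{\mathcal{F}_{2}}$. There is no genuine obstacle to this proof; the only ``step'' is recognizing that the relation $\leq_{\mathcal{F}}$ is monotone in the parameter $\mathcal{F}$, and this has already been made explicit in Proposition 4.6.3(5).
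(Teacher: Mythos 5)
Your proof is correct and matches the paper's approach: the paper states this result as one of "two trivial corollaries of Proposition 4.6.6" without writing out an argument, and your second route (reflexivity of $\leq_{\mathcal{F}_{1}}$ gives, for each finite $F$, some $f\in\mathcal{F}_{1}\subseteq\mathcal{F}_{2}$ with $f(F)\subseteq F$, so the criterion of Proposition 4.6.6(2) applies to $\mathcal{F}_{2}$) is exactly the intended one. Your first route via the monotonicity of $\leq_{\mathcal{F}}$ in the parameter $\mathcal{F}$ is an equally immediate alternative; just note that in the paper this monotonicity is item (5) of Proposition 4.6.2, not 4.6.3.
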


Since we are mainly interested in relations of $\mathcal{F}$-finite mappability that are pre-orders, i.e. satisfying transitivity and reflexivity, we introduce the following notion:

\begin{defn} A set of functions $\mathcal{F}\subseteq \mathtt{Fun}(\N,\N)$ is {\bfseries well-stuctured} if the relation of $\mathcal{F}$-finite mappability is a pre-order, i.e. if it satisfies both the transitivity and reflexivity properties. \end{defn}

Observe that, by definition, when $\mathcal{F}$ is well-structured the pair $(\mathcal{F},\leq_{\mathcal{F}})$ is a partially pre-ordered set.

\begin{defn} Given $A, B$ subsets of $\N$, and a set of functions $\mathcal{F}\subseteq$$\mathtt{Fun}$$(\N,\N)$, $A$ is {\bfseries $\mathcal{F}-$equivalent} to $B$ $($notation $A\equiv_{\mathcal{F}} B)$ if and only if  $A\leq_{\mathcal{F}} B$ and $B\leq_{\mathcal{F}} A$. \end{defn}

By the general properties of pre-orders it follows that, when $\mathcal{F}$ is well-structured, the relation $\equiv_{\mathcal{F}}$ is an equivalence relation and that $(\wp(\N)_{/_{\equiv_{\mathcal{F}}}},\leq_{\mathcal{F}})$ is a partial ordered set.\\
Observe that, for every nonempty subset $\mathcal{F}$ of $\mathtt{Fun}$($\N,\N$) and for every subset $A$ of $\N$, $A$ is $\mathcal{F}$-finitely mappable in $\N$. When $\mathcal{F}$ is well structured, the $\mathcal{F}$-equivalence class of $\N$ is the maximal element in $\wp(\N)_{/_{\equiv_{\mathcal{F}}}}$.

\begin{defn} Given a nonempty subset $\mathcal{F}$ of $\mathtt{Fun}$$(\N,\N)$, and a subset $A$ on $\N$, $A$ is {\bfseries $\mathcal{F}$-maximal} if, for every subset $B$ of $\N$, $B\leq_{\mathcal{F}} A$. The set of $\mathcal{F}$-maximal subsets of $\N$ is denoted by $\mathcal{S}_{\mathcal{F}}$:

\begin{center} $\mathcal{S}_{\mathcal{F}}=\{A\subseteq\N\mid$ $A$ is $\mathcal{F}-$maximal$\}$. \end{center}

\end{defn}

{\bfseries Fact:} If $\leq_{\mathcal{F}}$ is transitive, $\mathcal{S}_{\mathcal{F}}$ consists of the subsets $A$ of $\N$ that are $\mathcal{F}$-equivalent to $\N$:

\begin{center} $\mathcal{S}_{\mathcal{F}}=\{A\in\N\mid A\equiv_{\mathcal{F}}\N\}=\{A\in\N\mid \N\leq_{\mathcal{F}} A\}$. \end{center}

In fact, if $A$ is $\mathcal{F}$-maximal, in particular $\N\leq_{\mathcal{F}} A$, so 

\begin{center}$\mathcal{S}_{\mathcal{F}}\subseteq\{A\in\N\mid \N\leq_{\mathcal{F}} A\}$;\end{center} conversely, if $\leq_{\mathcal{F}}$ is transitive, and $\N\leq_{\mathcal{F}} A$, since for every subset $B$ of $\N$ $B\leq_{\mathcal{F}}\N$, by transitivity $B\leq_{\mathcal{F}}A$, so 

\begin{center} $\{A\in\N\mid \N\leq_{\mathcal{F}} A\}\subseteq \mathcal{S}_{\mathcal{F}}$, \end{center}

in particular $\mathcal{S}_{\mathcal{F}}=\{A\in\N\mid A\equiv_{\mathcal{F}}\N\}=\{A\in\N\mid \N\leq_{\mathcal{F}} A\}$

\begin{prop} Let $\mathcal{F}$ be a well-structured subset of $\mathtt{Fun}$$(\N,\N)$, and $A$ a subset of $\N$. Then $A$ is $\mathcal{F}$-maximal if and only if for every natural number $n$ there is a function $f_{n}$ in $\mathcal{F}$ such that $f_{n}(m)\in A$ for every $m\leq n$. \end{prop}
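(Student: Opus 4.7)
The plan is to observe that, since $\mathcal{F}$ is well-structured (in particular $\leq_{\mathcal{F}}$ is transitive), the ``Fact'' stated just before the proposition tells us that $A$ is $\mathcal{F}$-maximal if and only if $\N\leq_{\mathcal{F}}A$. So the proof reduces to showing that $\N\leq_{\mathcal{F}}A$ is equivalent to the hypothesis ``for every $n\in\N$ there is $f_n\in\mathcal{F}$ with $f_n(m)\in A$ for every $m\leq n$''. Both directions are then essentially an unpacking of the definition of $\mathcal{F}$-finite mappability.

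For the direction ($\Rightarrow$), I would take any $n\in\N$ and apply the definition of $\N\leq_{\mathcal{F}}A$ to the particular finite set $F=\{0,1,\dots,n\}\subseteq\N$: this yields a function $f_n\in\mathcal{F}$ with $f_n(F)\subseteq A$, which is exactly the condition $f_n(m)\in A$ for every $m\leq n$.

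For the direction ($\Leftarrow$), I would take an arbitrary finite subset $F\subseteq\N$, set $n=\max F$, and use the hypothesis to produce $f_n\in\mathcal{F}$ with $f_n(m)\in A$ for every $m\leq n$. Since $F\subseteq\{0,1,\dots,n\}$, this gives $f_n(F)\subseteq A$, proving $\N\leq_{\mathcal{F}}A$; by the Fact, $A$ is then $\mathcal{F}$-maximal.

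There is no real obstacle here: the statement is essentially a reformulation of $\mathcal{F}$-maximality, obtained by the elementary observation that every finite subset of $\N$ is contained in an initial segment $\{0,1,\dots,n\}$, so testing the finite mappability condition on all finite subsets of $\N$ is equivalent to testing it on these initial segments. The only point worth mentioning explicitly is the use of transitivity of $\leq_{\mathcal{F}}$ (guaranteed by well-structuredness) in identifying $\mathcal{F}$-maximality with the condition $\N\leq_{\mathcal{F}}A$, as discussed in the Fact preceding the statement.
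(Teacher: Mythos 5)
Your proof is correct and follows essentially the same route as the paper: the paper also reduces $\mathcal{F}$-maximality to $\N\leq_{\mathcal{F}}A$ via the preceding Fact and then observes that testing finite mappability on all finite subsets of $\N$ is equivalent to testing it on the initial segments $[0,n]$. You merely spell out the two directions that the paper compresses into a one-line chain of equivalences.
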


\begin{proof} Just observe that $A\in\mathcal{S}_{\mathcal{F}}$ if and only if $\N\leq_{\mathcal{F}} A$ if and only if, for every natural number $n$, there is a function $f_{n}$ such that $f_{n}([0,n])\subseteq A$. \\\end{proof}

Similarly to the relation of finite embeddability, the relations of $\mathcal{F}$-finite mappability can be generalized to ultrafilters. In next section we expose some important properties of these generalized relations.

\section{Finite Mappability of Ultrafilters on $\N$}

In this section, we define the relations of $\mathcal{F}$-finite mappability for ultrafilters, and we study their properties; the idea is to follow the same steps as we made with the relation of finite embeddability for ultrafilters. In particular, we shall study under what hypotheses the results obtained for the finite embeddability can be generalized in this more general context.

\begin{defn} Given ultrafilters $\U,\V$ on $\N$ and a set of functions $\mathcal{F}\subseteq$$\mathtt{Fun}$$(\N,\N)$, we say that $\U$ is {\bfseries $\mathcal{F}$-finitely mappable} in $\V$ $($notation $\U\trianglelefteq_{\mathcal{F}}\V)$ if and only if for every set $B$ in $\V$ there is a set $A$ in $\U$ such that $A\leq_{\mathcal{F}} B$. \end{defn}

In this proposition we present some basic properties of the relation $\trianglelefteq_{\mathcal{F}}$:

\begin{prop} If $\U,\V$ are ultrafilters on $\N$ and $\mathcal{F}, \mathcal{F}_{1}, \mathcal{F}_{2},...,\mathcal{F}_{k}$ are subsets of $\mathtt{Fun}$$(\N,\N)$, then the following properties hold:

\begin{enumerate}
\item if $\mathcal{F}=\{f\}$ then $\U\trianglelefteq_{\mathcal{F}}\V$ if and only if $\V=\overline{f}(\U)$;
\item if $\mathcal{F}=\mathcal{F}_{1}\cup \mathcal{F}_{2}$ then $\U\trianglelefteq_{\mathcal{F}}\V$ if and only if $\U\trianglelefteq_{\mathcal{F}_{1}}\V$ or $\U\trianglelefteq_{\mathcal{F}_{2}}\V$;
\item if $\mathcal{F}=\mathcal{F}_{1}\cup...\cup\mathcal{F}_{k}$ then $\U\trianglelefteq_{\mathcal{F}}\V$ if and only if there is an index $i\leq k$ such that $\U\trianglelefteq_{\mathcal{F}_{i}}\V$;
\item if $\mathcal{F}=\{f_{1},...,f_{k}\}$ then $\U\trianglelefteq_{\mathcal{F}}\V$ if and only if there is an index $i\leq k$ such that $\V=f_{i}(\U)$;
\item if $\mathcal{F}_{1}\subseteq \mathcal{F}_{2}$ and $\U\trianglelefteq_{\mathcal{F}_{1}}\V$, then $\U\trianglelefteq_{\mathcal{F}_{2}}\V$.
\end{enumerate}

\end{prop}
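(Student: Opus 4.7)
The plan is to prove the five properties essentially in the order (5), (1), (2), (3), (4), since each later item leans on an earlier one together with the corresponding pointwise statement from Proposition 4.6.2. Throughout I would repeatedly exploit the fact that $\V$ is closed under finite intersection, so that any two "bad" witnesses against a disjunctive conclusion can be amalgamated into one.

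First I would dispatch (5). If $A\leq_{\mathcal{F}_1}B$ and $\mathcal{F}_1\subseteq\mathcal{F}_2$, then by Proposition 4.6.2(5) we also have $A\leq_{\mathcal{F}_2}B$. Thus for every $B\in\V$, the same $A\in\U$ witnessing $\U\trianglelefteq_{\mathcal{F}_1}\V$ also witnesses $\U\trianglelefteq_{\mathcal{F}_2}\V$. Next I would prove (1). By Proposition 4.6.2(1), $A\leq_{\{f\}}B$ is the same as $f(A)\subseteq B$, i.e.\ $A\subseteq f^{-1}(B)$. Hence $\U\trianglelefteq_{\{f\}}\V$ says exactly that for every $B\in\V$ there is $A\in\U$ with $A\subseteq f^{-1}(B)$, equivalently $f^{-1}(B)\in\U$, equivalently $B\in\overline{f}(\U)$ (by the defining property of the continuous extension given in Section 2.3). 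Since $\V$ is an ultrafilter and $\overline{f}(\U)$ is an ultrafilter, containment in one direction forces equality, so $\V=\overline{f}(\U)$; the converse direction is immediate by choosing $A=f^{-1}(B)$.

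For (2), the $(\Leftarrow)$ direction is (5) applied to $\mathcal{F}_i\subseteq\mathcal{F}_1\cup\mathcal{F}_2$. For $(\Rightarrow)$, I would argue by contradiction: if neither $\U\trianglelefteq_{\mathcal{F}_1}\V$ nor $\U\trianglelefteq_{\mathcal{F}_2}\V$, pick $B_1,B_2\in\V$ such that no $A\in\U$ satisfies $A\leq_{\mathcal{F}_i}B_i$. Set $B=B_1\cap B_2\in\V$. By hypothesis, some $A\in\U$ satisfies $A\leq_{\mathcal{F}_1\cup\mathcal{F}_2}B$; by Proposition 4.6.2(2) we get $A\leq_{\mathcal{F}_i}B$ for some $i$, and by Proposition 4.6.2(7) (monotonicity in the target set), $A\leq_{\mathcal{F}_i}B_i$, contradicting the choice of $B_i$. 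The amalgamation $B_1\cap B_2\in\V$ is the only subtle point here; without it the two failures could pertain to different sets. Statement (3) then follows by a straightforward induction on $k$ using (2), and (4) is obtained by writing $\{f_1,\ldots,f_k\}=\{f_1\}\cup\cdots\cup\{f_k\}$, applying (3), and invoking (1) on each singleton.

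No step presents real difficulty: the machinery is the elementary definition of $\overline{f}$, the pointwise facts collected in Proposition 4.6.2, and the fact that $\V$ is closed under intersections. The one place requiring mild care is the intersection trick in $(\Rightarrow)$ of (2), which is the substantive content of the whole proposition and is reused implicitly in (3) and (4).
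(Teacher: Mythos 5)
Your proof is correct and follows essentially the same route as the paper: item (1) via the characterization $A\leq_{\{f\}}B\Leftrightarrow f(A)\subseteq B$ and upward closure of $\U$, item (5) directly from Proposition 4.6.2(5), item (3) by induction on (2), and item (4) by combining (3) with (1). For item (2) the paper argues by cases (either $\mathcal{F}_1$-witnesses exist for every $B\in\V$, or a single bad $B$ is intersected with each $Y\in\V$ to extract $\mathcal{F}_2$-witnesses), whereas you take the contrapositive and intersect two bad witnesses $B_1\cap B_2$; this is the same combination of Proposition 4.6.2(2), closure of $\V$ under intersection, and monotonicity in the target set, merely reorganized.
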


\begin{proof} (1) Suppose that $\U\trianglelefteq_{\{f\}}\V$. By definition, for every set $B$ in $\V$ there is a set $A$ in $\U$ such that $A\leq_{\{f\}}B$; as we proved in Proposition 4.6.2, this entails that $f(A)\subseteq B$ so, in particular, $f^{-1}(B)\in\U$ for every set $B\in\V$: by definition, this entails that $\V=\overline{f}(\U)$.\\
Conversely, if $\V=\overline{f}(\U)$, then for every set $B$ in $\V$ the set $f^{-1}(B)$ is in $\U$, and $f^{-1}(B)\leq_{f} B$. This proves that $\U\trianglelefteq_{\{f\}} \overline{f}(\U)=\V$.\\ 
2) Suppose that $\U\trianglelefteq_{\mathcal{F}_{1}\cup \mathcal{F}_{2}}\V$. There are only two possibilities:
\begin{enumerate}
	\item For every set $B$ in $\V$ there is a set $A$ in $\U$ such that $A\leq_{\mathcal{F}_{1}} B$;
	\item There is a set $B$ in $\V$ such that, for every set $A$ in $\U$, $A$ is not $\mathcal{F}_{1}$-finitely mappable in $B$.
\end{enumerate}

In case (1), by definition $\U\trianglelefteq_{\mathcal{F}_{1}}\V$.\\
In case (2), every subset of $B$, and in particular every subset $S\subseteq B$ with $S\in\V$, satisfies this property: 

\begin{center} For every set $A$ in $\U$, $A$ is not $\mathcal{F}_{1}$-finitely mappable in $S$. \end{center}

{\bfseries Claim: $\U\trianglelefteq_{\mathcal{F}_{2}}\V$}.\\

Let $Y$ be a set in $\V$. Then the intersection $Y\cap B$ has the following two properties:
\begin{enumerate}
	\item $Y\cap B$ is in $\V$;
	\item $Y\cap B$ is a subset of $B$.
\end{enumerate}

By property $(1)$, since by hypothesis $\U\trianglelefteq_{\mathcal{F}_{1}\cup\mathcal{F}_{2}}\V$, it follows that there is an element $A$ in $\U$ such that $A$ is $\mathcal{F}_{1}\cup \mathcal{F}_{2}$-finitely mappable in $Y\cap B$. By property (2) it follows that $A$ is not $\mathcal{F}_{1}$-finitely mappable in $Y\cap B$; and by Proposition 4.6.2, these two conditions together entails that $A\leq_{\mathcal{F}_{2}} Y\cap B$.\\
Since $Y\cap B\subseteq Y$, and $A\leq_{\mathcal{F}_{2}}Y\cap B$, it follows that $A\leq_{\mathcal{F}_{2}} Y$; this proves that $\U\leq_{\mathcal{F}_{2}}\V$.\\
3) This follows, by induction, from point (2).\\
4) This is an immediate consequence of points (3) and (1).\\
5) By hypothesis, for every set $B$ in $\V$ there is a set $A$ in $\U$ such that $A\leq_{\mathcal{F}_{1}} B$. As $\mathcal{F}_{1}\subseteq\mathcal{F}_{2}$, by Proposition 4.6.2 it follows that $A\leq_{\mathcal{F}_{2}} B$; in particular, $\U\trianglelefteq_{\mathcal{F}_{2}}\V$.\\ \end{proof}

A question that arises naturally is how principal and nonprincipal ultrafilters are related with respect to $\mathcal{F}$-finite mappability.

\begin{prop} Let $\U,\V$ be the principal ultrafilters generated by $n$ and $m$ respectively, and let $\mathcal{F}$ be a subset of $\mathtt{Fun}$$(\N,\N)$. The following two conditions are equivalent:
\begin{enumerate}
	\item $\U\trianglelefteq_{\mathcal{F}} \V$;
	\item there is a function $f$ in $\mathcal{F}$ such that $f(n)=m$.
\end{enumerate}
\end{prop}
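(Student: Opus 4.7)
The plan is to establish the equivalence by a direct ping-pong argument, exploiting the fact that principal ultrafilters are entirely controlled by a single natural number, which collapses all the interesting combinatorial content of $\trianglelefteq_{\mathcal{F}}$ into a single pointwise condition on $\mathcal{F}$.

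For the implication $(2)\Rightarrow(1)$, I would fix $f\in\mathcal{F}$ with $f(n)=m$, and given an arbitrary $B\in\V$, use the fact that $\V=\mathfrak{U}_m$ forces $m\in B$. The natural candidate for the witness in $\U$ is the singleton $A=\{n\}$, which belongs to $\mathfrak{U}_n$. To verify $A\leq_{\mathcal{F}}B$ it suffices to check the single finite subset $F=\{n\}$, and the function $f$ itself works, since $f(F)=\{m\}\subseteq B$.

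For $(1)\Rightarrow(2)$, the strategy is to probe $\V$ with its smallest element: take the singleton $B=\{m\}\in\V$. By hypothesis there exists $A\in\U$ with $A\leq_{\mathcal{F}}\{m\}$, and since $\U=\mathfrak{U}_n$ we must have $n\in A$. Applying the definition of $\leq_{\mathcal{F}}$ to the finite subset $\{n\}\subseteq A$ produces an $f\in\mathcal{F}$ with $f(\{n\})\subseteq\{m\}$, which forces $f(n)=m$.

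There is no real obstacle here; the argument is a routine unpacking of the definitions in the degenerate principal case. The only mild subtlety, if any, is to be careful that the definition of $\leq_{\mathcal{F}}$ quantifies over \emph{all} finite subsets of $A$, not only $A$ itself, so that one is free to pick the convenient singleton $\{n\}$ in both directions. The proposition can therefore be viewed as a sanity check showing that, restricted to the copy of $\N$ inside $\bN$, the pre-order $\trianglelefteq_{\mathcal{F}}$ reduces exactly to the orbit structure of $\mathcal{F}$ on $\N$.
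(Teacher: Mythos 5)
Your proof is correct and follows essentially the same route as the paper's: both directions probe with the singletons $\{n\}\in\U$ and $\{m\}\in\V$ and unpack the definition of $\leq_{\mathcal{F}}$ on the single finite subset $\{n\}$. No gaps; this matches the paper's argument.
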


\begin{proof} $(1)\Rightarrow(2)$ Consider the set $\{m\}$ in $\V$. Since $\U\trianglelefteq_{\mathcal{F}}\V$, there is a set $A\in\U_{n}$ such that $A\leq_{\mathcal{F}}\{m\}$. Since $\U$ is the principal ultrafilter generated by $n$, then $n\in A$ and by definition there is a function $f$ in $\mathcal{F}$ such that $f(\{n\})\subseteq \{m\}$, and this happens if and only if $f(n)=m$\\
$(2)\Rightarrow(1)$ Let $B$ be a set in $\V$, and $f$ a function in $\mathcal{F}$ such that $f(n)=m$. In particular, as $m\in B$ (since $\V$ is the principal ultrafilter generated by $m$), $f(\{n\})\subseteq B$. So $\{n\}\leq_{\mathcal{F}} B$ for every set $B$ in $\V$, and $\U\trianglelefteq_{\mathcal{F}}\V$.\\\end{proof}

\begin{prop} Given a subset $\mathcal{F}$ of $\mathtt{Fun}$$(\N,\N)$, the following two conditions are equivalent:
\begin{enumerate}
	\item Every principal ultrafilter $\U$ is $\mathcal{F}$-finitely embeddable in every nonprincipal ultrafilter $\V$;
	\item For every natural number $n$, for every infinite subset $A$ of $\N$ there is a function $f$ in $\mathcal{F}$ such that $f(n)\in A$.
\end{enumerate} \end{prop}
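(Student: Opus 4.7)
The plan is to prove both implications directly from the definitions, using two elementary observations: every set in the principal ultrafilter generated by $n$ must contain $n$, and every set in a nonprincipal ultrafilter is infinite.

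For the implication $(1) \Rightarrow (2)$, I would fix a natural number $n$ and an infinite subset $A$ of $\N$, then let $\U = \mathfrak{U}_n$. Since $A$ is infinite, the family $\{A\} \cup \mathrm{Fr}$ (where $\mathrm{Fr}$ is the Fréchet filter) has the finite intersection property, so by Tarski's Theorem it can be extended to a nonprincipal ultrafilter $\V$ with $A \in \V$. Hypothesis (1) gives $\U \trianglelefteq_{\mathcal{F}} \V$, so there exists $B \in \U$ with $B \leq_{\mathcal{F}} A$. Since $B \in \mathfrak{U}_n$, we have $n \in B$; applying the definition of $\leq_{\mathcal{F}}$ to the finite subset $\{n\} \subseteq B$ yields a function $f \in \mathcal{F}$ with $f(n) \in A$.

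For the implication $(2) \Rightarrow (1)$, I would take any principal ultrafilter $\U = \mathfrak{U}_n$ and any nonprincipal ultrafilter $\V$, and show $\U \trianglelefteq_{\mathcal{F}} \V$. Given any $B \in \V$, the ultrafilter $\V$ is nonprincipal so $B$ is infinite; hypothesis (2) then supplies a function $f \in \mathcal{F}$ with $f(n) \in B$. Taking the set $\{n\} \in \U$, for its unique nonempty finite subset $\{n\}$ we have $f(\{n\}) = \{f(n)\} \subseteq B$, so $\{n\} \leq_{\mathcal{F}} B$. Since this holds for every $B \in \V$, the definition gives $\U \trianglelefteq_{\mathcal{F}} \V$.

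Neither direction presents a real obstacle: the proof is essentially a bookkeeping exercise that unfolds the definitions. The only nontrivial ingredient is the use of Tarski's Theorem in $(1) \Rightarrow (2)$ to guarantee the existence of a nonprincipal ultrafilter containing a prescribed infinite set, but even this is just the standard extension argument. No assumption on $\mathcal{F}$ being well-structured is needed for either direction.
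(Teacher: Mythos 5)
Your proof is correct and follows essentially the same route as the paper's: in both directions you simply unwind the definitions, using that every set in $\mathfrak{U}_{n}$ contains $n$ and every set in a nonprincipal ultrafilter is infinite. The only difference is cosmetic — you make explicit the extension of $\{A\}\cup Fr$ to a nonprincipal ultrafilter containing $A$, a step the paper takes for granted, and you apply the definition of $\leq_{\mathcal{F}}$ to the finite set $\{n\}$ slightly more carefully than the paper does.
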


\begin{proof} $(1)\Rightarrow (2)$: Let $A$ be an infinite subset of $\N$, $n$ a natural number and $\V$ a nonprincipal ultrafilter such that $A\in\V$. By hypothesis, as $\U_{n}\trianglelefteq_{\mathcal{F}}\V$, there is a set $B$ in the principal ultrafilter $\U_{n}$ and a function $f$ in $\mathcal{F}$ such that $f(B)\subseteq A$. In particular, since $n\in B$, this proves that $f(n)\in A$.\\
$(2)\Rightarrow (1)$: Let $\V$ be a nonprincipal ultrafilter and $\U_{n}$ the principal ultrafilter generated by the natural number $n$. For every set $A$ in $\V$, by hypothesis $\{n\}\leq_{\mathcal{F}} A$, as $A$ is infinite; so $\U_{n}\trianglelefteq_{\mathcal{F}}\V$ for every natural number $n$, and this proves the thesis.\\ \end{proof}

Note that, in general, it is possible that a nonprincipal ultrafilter $\U$ is $\mathcal{F}$-finitely mappable in a principal ultrafilter $\mathfrak{U}_{n}$. For example, let 

\begin{center} $\mathcal{F}=\{ f_{n} \}$,\end{center} 

where $f_{n}$ is the constant function with value $n$. Then every ultrafilter $\U$ in $\bN$ is $\mathcal{F}$-finitely mappable in $\mathfrak{U}_{n}$.\\
The question that arises is if $\trianglelefteq_{\mathcal{F}}$ is, or is not, an order. We know that this in general is false as $\trianglelefteq_{\mathbb{T}}$ is not an order on $\bN$. There is also one other problem: $\trianglelefteq_{\mathcal{F}}$, similarly to $\leq_{\mathcal{F}}$, is not, in general, reflexive or transitive.

\begin{prop} If $\leq_{\mathcal{F}}$ is a pre-order then $\trianglelefteq_{\mathcal{F}}$ is a pre-order.\end{prop}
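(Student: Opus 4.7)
The plan is to transfer the pre-order properties from the setwise relation $\leq_{\mathcal{F}}$ to the ultrafilter relation $\trianglelefteq_{\mathcal{F}}$ by a direct unpacking of the definitions, essentially copying the pattern used in Proposition 4.4.7 for the finite embeddability case. I would treat reflexivity and transitivity separately, and I do not foresee any real obstacle: the statement is a structural transfer that needs only the definition of $\trianglelefteq_{\mathcal{F}}$ together with the hypothesis that $\leq_{\mathcal{F}}$ satisfies the same two axioms on $\wp(\N)$.

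For \emph{reflexivity}, I would take an arbitrary ultrafilter $\U$ in $\bN$ and an arbitrary $B\in\U$. Since $\leq_{\mathcal{F}}$ is reflexive on $\wp(\N)$ by hypothesis, the set $B$ itself satisfies $B\leq_{\mathcal{F}}B$, so choosing $A=B\in\U$ gives witness to $\U\trianglelefteq_{\mathcal{F}}\U$. This is literally one line and does not require any genuine use of ultrafilter structure.

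For \emph{transitivity}, I would assume $\U\trianglelefteq_{\mathcal{F}}\V$ and $\V\trianglelefteq_{\mathcal{F}}\W$ and show $\U\trianglelefteq_{\mathcal{F}}\W$. Picking an arbitrary $C\in\W$, the second hypothesis yields $B\in\V$ with $B\leq_{\mathcal{F}}C$; the first hypothesis then yields $A\in\U$ with $A\leq_{\mathcal{F}}B$. The transitivity of $\leq_{\mathcal{F}}$, granted by the assumption that it is a pre-order, gives $A\leq_{\mathcal{F}}C$, and since $A\in\U$ this is exactly the clause needed to conclude $\U\trianglelefteq_{\mathcal{F}}\W$.

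There is no hard step here: the whole proof is a two-stage diagram chase through the definition, and the main conceptual point worth flagging is that the hypothesis ``$\leq_{\mathcal{F}}$ is a pre-order'' (equivalently, by Proposition 4.6.6, that $\mathcal{F}$ is well-structured) is used exactly twice, once for each axiom, in a manner completely parallel to the earlier proof of Proposition 4.4.7 for the case $\mathcal{F}=\mathbb{T}$.
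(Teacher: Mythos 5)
Your proof is correct and is essentially identical to the paper's own argument: the same one-line reflexivity check and the same two-stage unwinding for transitivity, using the transitivity of $\leq_{\mathcal{F}}$ at exactly the same point. Nothing to add.
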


\begin{proof} We have to prove that $\trianglelefteq_{\mathcal{F}}$ is transitive and reflexive.\\
Transitive: suppose that $\U,\V,\W$ are ultrafilters in $\bN$ such that $\U\trianglelefteq_{\mathcal{F}} \V$ and $\V\trianglelefteq_{\mathcal{F}} \W$, and let $C$ be a set in $\W$. Since $\V\trianglelefteq_{\mathcal{F}}\W$, there is a set $B$ in $\V$ such that $B\leq_{\mathcal{F}} C$ and, since $\U\trianglelefteq_{\mathcal{F}}\V$, there is a set $A$ in $\U$ such that $A\leq_{\mathcal{F}}B$. Since $\leq_{\mathcal{F}}$ is transitive (as we supposed $\mathcal{F}$ well-structured), $A\leq_{\mathcal{F}} C$, so $\U\trianglelefteq_{\mathcal{F}} \W$.\\
Reflexive: for every ultrafilter $\U$ in $\bN$, for every set $A$ in $\U$, $A\leq_{\mathcal{F}} A$ (as $\mathcal{F}$ is well-structured), so $\U\trianglelefteq_{\mathcal{F}}\U$.\\ \end{proof}

To obtain an antysimmetric relation, we follow the general procedure for pre-orders exposed in Section 4.2:

\begin{defn} Let $\mathcal{F}$ be a well-structured subset of $\mathtt{Fun}$$(\N,\N)$. Given ultrafilters $\U, \V$ on $\N$, $\U$ is {\bfseries $\mathcal{F}$-equivalent} to $\V$ $($notation $\U\equiv_{\mathcal{F}} \V)$ if and only if $\U\trianglelefteq_{\mathcal{F}} \V$ and $\V\trianglelefteq_{\mathcal{F}} \U$. \end{defn}

Observe that $\equiv_{\mathcal{F}}$ is an equivalence relation on $\bN$.

\begin{defn} If $\mathcal{F}$ is a well-structured subset of $\mathtt{Fun}$$(\N,\N)$, for every ultrafilter $\U$ we denote by $[\U]_{\mathcal{F}}$ the $\mathcal{F}$-equivalence class of $\U$:

\begin{center} $[\U]_{\mathcal{F}}=\{\V\in\bN\mid \U\equiv_{\mathcal{F}}\V\}$. \end{center}

When there is no danger of confusion, we simply denote $[\U]_{\mathcal{F}}$ as $[\U]$. \end{defn}

By the general facts about pre-orders it follows that:

\begin{thm} $(\bN_{/_{\equiv_{\mathcal{F}}}},\trianglelefteq_{\mathcal{F}})$ is a partially ordered set whenever $\mathcal{F}$ is a well-structured subset of $\mathtt{Fun}$$(\N,\N)$. \end{thm}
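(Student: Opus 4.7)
The plan is to assemble Theorem 4.7.9 as an immediate instance of the general pre-order to partial-order machinery from Section 4.2, applied to the pre-order $\trianglelefteq_{\mathcal{F}}$ on $\bN$. All the substantive work has already been done earlier in the chapter, so the proof is really a matter of checking that the hypotheses of the abstract machinery are satisfied and then quoting it.

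First I would note that, since $\mathcal{F}$ is well-structured by hypothesis, the relation $\leq_{\mathcal{F}}$ on $\wp(\N)$ is a pre-order by Definition 4.6.9. Then Proposition 4.7.5 gives that $\trianglelefteq_{\mathcal{F}}$ is a pre-order on $\bN$, i.e. it is reflexive and transitive. At this point I would invoke the general construction of Section 4.2: starting from the pre-ordered set $(\bN,\trianglelefteq_{\mathcal{F}})$, the relation $\equiv_{\mathcal{F}}$ defined by
\[
\U \equiv_{\mathcal{F}} \V \iff \U \trianglelefteq_{\mathcal{F}} \V \text{ and } \V \trianglelefteq_{\mathcal{F}} \U
\]
is automatically an equivalence relation on $\bN$ (symmetry is built in, while reflexivity and transitivity come from those of $\trianglelefteq_{\mathcal{F}}$).

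Next, I would define the induced relation on the quotient by $[\U]\trianglelefteq_{\mathcal{F}}[\V] \iff \U\trianglelefteq_{\mathcal{F}}\V$, and verify that this is well-posed: if $\U'\equiv_{\mathcal{F}} \U$ and $\V'\equiv_{\mathcal{F}}\V$ with $\U\trianglelefteq_{\mathcal{F}}\V$, then transitivity yields $\U'\trianglelefteq_{\mathcal{F}}\U\trianglelefteq_{\mathcal{F}}\V\trianglelefteq_{\mathcal{F}}\V'$, so $\U'\trianglelefteq_{\mathcal{F}}\V'$. Reflexivity and transitivity of the quotient relation are inherited from $\trianglelefteq_{\mathcal{F}}$. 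Antisymmetry follows by construction: if $[\U]\trianglelefteq_{\mathcal{F}}[\V]$ and $[\V]\trianglelefteq_{\mathcal{F}}[\U]$, then $\U\trianglelefteq_{\mathcal{F}}\V$ and $\V\trianglelefteq_{\mathcal{F}}\U$, so by definition $\U\equiv_{\mathcal{F}}\V$, i.e. $[\U]=[\V]$.

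Since essentially every step is a citation of Proposition 4.7.5 or of the abstract facts explicitly written out in Section 4.2 for a generic pre-ordered set $(S,\leq)$, there is no real obstacle here; the only thing one must be a little careful about is the well-posedness of the quotient order, but this is immediate from transitivity. The proof in the paper is therefore likely to be a one-line pointer to Proposition 4.7.5 combined with the general quotient construction.
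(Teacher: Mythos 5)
Your proposal is correct and matches the paper's approach exactly: the paper derives the theorem as an immediate consequence of Proposition 4.7.5 (that $\trianglelefteq_{\mathcal{F}}$ is a pre-order when $\mathcal{F}$ is well-structured) together with the general quotient construction for pre-orders set out in Section 4.2. Your explicit check of well-posedness of the induced order on equivalence classes is a detail the paper leaves implicit, but it is the right thing to verify.
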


This theorem shows that, at least when $\mathcal{F}$ is well-structured, by considering the quotient space we obtain a partial ordered set, similarly to the case of finite embeddability. In next section we study more closely the structure of ($\bN_{/_{\equiv_{\mathcal{F}}}},\trianglelefteq_{\mathcal{F}}$).

\subsection{The partially ordered set $(\bN_{/_{\equiv_{\mathcal{F}}}},\trianglelefteq_{\mathcal{F}})$}

In this section we study the partially ordered set $(\bN_{/_{\equiv_{\mathcal{F}}}},\trianglelefteq_{\mathcal{F}})$; in particular, the question we want to answer is the following: is there a greatest element in $(\bN_{/_{\equiv_{\mathcal{F}}}},\trianglelefteq_{\mathcal{F}})$?

\begin{defn} The chains in $(\bN_{/_{\equiv_{\mathcal{F}}}},\trianglelefteq_{\mathcal{F}})$ and in $(\bN,\trianglelefteq_{\mathcal{F}})$ are called {\bfseries $\mathcal{F}$-chains}. Similarly, the upper bounds of subsets in $(\bN_{/_{\equiv_{\mathcal{F}}}},\trianglelefteq_{\mathcal{F}})$ and in $(\bN,\trianglelefteq_{\mathcal{F}})$ are called {\bfseries $\mathcal{F}$-upper bounds}.\end{defn}

\begin{prop} Every $\mathcal{F}$-chain $\langle \U_{i}\mid i\in I \rangle$ of ultrafilters has an $\mathcal{F}$-upper bound $\U$. \end{prop}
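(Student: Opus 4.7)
The plan is to mimic, essentially verbatim, the argument of Theorem 4.4.12 (the analogous statement for $\trianglelefteq_{fe}$), since the proof there never used anything specific about translations, only the fact that $\trianglelefteq_{fe}$ is a pre-order defined pointwise via $\leq_{fe}$. Since $\mathcal{F}$ is implicitly assumed well-structured (otherwise $\trianglelefteq_{\mathcal{F}}$ is not a pre-order and the notion of $\mathcal{F}$-chain is not the standard one), Proposition 4.7.6 guarantees that $\trianglelefteq_{\mathcal{F}}$ is transitive, which is the only algebraic ingredient I will need.

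First I would fix an $\mathcal{F}$-chain $\langle \U_i \mid i \in I\rangle$, where $(I,\preceq)$ is the totally ordered set indexing it. For each $i \in I$ set $G_i = \{j \in I \mid i \preceq j\}$. The family $\{G_i\}_{i \in I}$ has the finite intersection property because the chain $G_{i_1} \cap \cdots \cap G_{i_n} = G_{\max\{i_1,\ldots,i_n\}}$ is nonempty; hence this family extends to an ultrafilter $\V$ on $I$. There are two cases to handle. If $\V$ is principal, generated by some $j \in I$, then $j$ is the maximum of $I$ and $\U_j$ itself is trivially an $\mathcal{F}$-upper bound. Otherwise, exactly as in the proof of Theorem 4.4.12, I would observe that every $A \in \V$ is cofinal in $(I,\preceq)$: if some $A \in \V$ had an element $i$ as an upper bound, then $A \cap G_i$ would be a finite (in fact at most $\{i\}$) element of $\V$, forcing $\V$ to be principal.

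The candidate $\mathcal{F}$-upper bound is then
\[
\U \;=\; \V\text{-}\lim_{i \in I} \U_i.
\]
The claim to verify is that $\U_i \trianglelefteq_{\mathcal{F}} \U$ for every $i \in I$. Fix $i$ and take any $A \in \U$. By definition of the $\V$-limit, the index set $I_A = \{k \in I \mid A \in \U_k\}$ lies in $\V$, so by the cofinality observation there exists $k \in I_A$ with $i \preceq k$. Then $A \in \U_k$ and, because $\U_i \trianglelefteq_{\mathcal{F}} \U_k$ by the chain hypothesis, there is a set $B \in \U_i$ with $B \leq_{\mathcal{F}} A$. This is precisely what is needed to conclude $\U_i \trianglelefteq_{\mathcal{F}} \U$, and since $i$ was arbitrary, $\U$ is the desired $\mathcal{F}$-upper bound.

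The only potential subtlety, which I do not expect to be a real obstacle but which is worth pointing out, is the well-structuredness assumption on $\mathcal{F}$: transitivity of $\leq_{\mathcal{F}}$ is needed implicitly in the very definition of $\mathcal{F}$-chain (so that the relation $\trianglelefteq_{\mathcal{F}}$ is a pre-order on $\bN$ and chains make sense), and it is used one more time at the end, when we concluded $\U_i \trianglelefteq_{\mathcal{F}} \U$ — the step that uses $\U_i \trianglelefteq_{\mathcal{F}} \U_k$ is direct, but transitivity is what guarantees that this notion of being an upper bound in the pre-order agrees with the chain's ordering. No hypothesis beyond well-structuredness is required, and in particular no closure of $\mathcal{F}$ under composition, no saturation stronger than $\mathfrak{c}^{+}$-enlarging, and no extra structure on $I$ beyond being totally ordered.
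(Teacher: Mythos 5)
Your proof is correct and follows essentially the same route as the paper's: the sets $G_i$, the ultrafilter $\V$ extending them, the case split on whether $I$ has a greatest element, and the verification that the $\V$-limit is an upper bound are all exactly the paper's argument for Theorem 4.4.12 transplanted to $\trianglelefteq_{\mathcal{F}}$. One small correction to your closing remark: the paper explicitly observes that well-structuredness of $\mathcal{F}$ is \emph{not} assumed here — the definitions of $\mathcal{F}$-chain and of $\mathcal{F}$-upper bound make sense for an arbitrary relation, and your own argument (like the paper's) never invokes transitivity of $\trianglelefteq_{\mathcal{F}}$, since the final step passes directly from $\U_i \trianglelefteq_{\mathcal{F}} \U_j$ and $B \in \U_j$ to the existence of $A \in \U_i$ with $A \leq_{\mathcal{F}} B$.
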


\begin{proof} The proof is analogue to that of Theorem 4.4.12.\\
If $I$ has a greatest element $i$, then the ultrafilter $\U_{i}$ is trivially an upper bound for the chain.\\
Otherwise, suppose that $I$ has not a greatest element, an let $\V$ be an ultrafilter on $I$ such that, for every $i\in I$, the set 

\begin{center} $G_{i}=\{j\in I\mid j\geq i\}$ \end{center}

is in $\V$ (as already proved in Theorem 4.4.12, the family $G_{i}$ has the finite intersection property, so such an ultrafilter $\V$ exists, and it is nonprincipal since $I$ has not a greatest element).\\

{\bfseries Claim:} The ultrafilter $\U=\V-\lim_{i\in I}\U_{i}$ is an upper bound for the chain.\\

In fact, let $B$ be an element of $\U$, and consider the ultrafilter $\U_{i}$. Since $B\in\U$, by definition of limit of ultrafilters the set 

\begin{center} $I_{B}=\{i\in I\mid B\in\U_{i}\}$ \end{center}

is in $\V$; as $\V$ contains the family $\{G_{i}\}_{i\in I}$, in $I_{B}$ there is an element $j$ with $i\leq j$. In particular, $B\in\U_{j}$. As $\langle\U_{i}\mid i\in I\rangle$ is an $\mathcal{F}-$chain, $\U_{i}\trianglelefteq_{\mathcal{F}}\U_{j}$, so in $\U_{i}$ there is a set $A$ such that $A\leq_{\mathcal{F}} B$; this proves that $\U_{i}\trianglelefteq_{\mathcal{F}}\U$ for every index $i\in I$, so $\U$ is an upper bound for the chain.\\ \end{proof}

Observe that, in the previous proposition, we did not assume that $\mathcal{F}$ is well-structured. When $\mathcal{F}$ is well-structured the above result can be extended to the partially ordered set $(\bN_{/_{\equiv_{\mathcal{F}}}},\trianglelefteq_{\mathcal{F}})$:

\begin{cor} Let $\mathcal{F}$ be a well-structured subset of $\mathtt{Fun}$$(\N,\N)$, and let $\langle [\U_{i}]\mid i\in I \rangle$ be an $\mathcal{F}$-chain in $\bN_{/_{\equiv_{\mathcal{F}}}}$. Then there is an upper bound $[\U]$ for the chain. \end{cor}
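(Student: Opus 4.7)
The plan is to deduce this directly from Proposition 4.7.11 by lifting representatives from equivalence classes and then projecting back. This is the standard procedure for transferring a ``chains have upper bounds'' statement from a partial pre-order to its associated quotient partial order, as discussed abstractly in Section 4.2; we just have to verify that it goes through cleanly in our specific setting.

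First, I would invoke the hypothesis that $\mathcal{F}$ is well-structured, which by Proposition 4.7.6 guarantees that $\trianglelefteq_{\mathcal{F}}$ is a genuine partial pre-order on $\bN$, and hence that $\equiv_{\mathcal{F}}$ is an equivalence relation and $(\bN_{/_{\equiv_{\mathcal{F}}}},\trianglelefteq_{\mathcal{F}})$ is well-defined as a partially ordered set. Next, given the $\mathcal{F}$-chain $\langle [\U_{i}]\mid i\in I\rangle$ in $\bN_{/_{\equiv_{\mathcal{F}}}}$, I would choose, for each $i\in I$, a representative $\U_{i}\in[\U_{i}]$, using the Axiom of Choice on the family of nonempty equivalence classes indexed by $I$.

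The key observation is then that $\langle \U_{i}\mid i\in I\rangle$ is an $\mathcal{F}$-chain in $(\bN,\trianglelefteq_{\mathcal{F}})$: by the very definition of the induced order on the quotient, $[\U_{i}]\trianglelefteq_{\mathcal{F}}[\U_{j}]$ in $\bN_{/_{\equiv_{\mathcal{F}}}}$ means $\U_{i}\trianglelefteq_{\mathcal{F}}\U_{j}$ in $\bN$, so the total order structure on the chain in the quotient pulls back verbatim to the chosen representatives. Applying Proposition 4.7.11 to this chain (the proposition, crucially, does not require $\mathcal{F}$ to be well-structured), I obtain an $\mathcal{F}$-upper bound $\U\in\bN$, i.e.\ $\U_{i}\trianglelefteq_{\mathcal{F}}\U$ for every $i\in I$.

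Finally, projecting back, $[\U_{i}]\trianglelefteq_{\mathcal{F}}[\U]$ in $\bN_{/_{\equiv_{\mathcal{F}}}}$ for every $i\in I$, so $[\U]$ is the desired upper bound. There is no real obstacle here: the proof is essentially a one-line reduction to Proposition 4.7.11 together with the compatibility of the induced quotient order with chains. The only subtle point worth flagging is that well-structuredness of $\mathcal{F}$ is used not in constructing the upper bound itself, but in ensuring that the quotient order is well-defined and that ``chain in the quotient'' translates faithfully into ``chain of representatives'' in $\bN$.
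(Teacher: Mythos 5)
Your proposal is correct and follows essentially the same route as the paper: the paper's own proof is a one-line appeal to the general fact from Section 4.2 that upper bounds for chains transfer between a pre-order and its quotient order, and your argument simply unwinds that reduction explicitly by choosing representatives and applying Proposition 4.7.11. The details you supply (well-definedness of the quotient order via well-structuredness, and the faithful pull-back of the chain to representatives) are exactly the content the paper leaves implicit.
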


\begin{proof} When $\mathcal{F}$ is well-structured, $(\bN,\trianglelefteq_{\mathcal{F}})$ is a partially pre-ordered set, so this result follows by the analogue general property of pre-orders.\\ \end{proof}

\begin{cor} If $\mathcal{F}$ is well-structured there are maximal elements in $(\bN_{/_{\equiv_{\mathcal{F}}}},\trianglelefteq_{\mathcal{F}})$. \end{cor}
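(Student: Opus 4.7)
The plan is to invoke Zorn's Lemma (Lemma 4.2.5) applied to the partially ordered set $(\bN_{/_{\equiv_{\mathcal{F}}}},\trianglelefteq_{\mathcal{F}})$. Since $\mathcal{F}$ is assumed well-structured, Theorem 4.7.9 ensures that $\trianglelefteq_{\mathcal{F}}$ descends to a genuine partial order on the quotient, so Zorn's Lemma applies in its standard formulation.

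The only hypothesis to verify is that every chain in $(\bN_{/_{\equiv_{\mathcal{F}}}},\trianglelefteq_{\mathcal{F}})$ admits an upper bound. This is exactly the content of Corollary 4.7.11, which was established immediately above as a consequence of Proposition 4.7.11 (the analogous statement at the level of the pre-order on $\bN$) together with the general fact recorded in Section 4.2 that upper bounds of chains lift from a pre-ordered set to its antisymmetrization. Combined with the trivial nonemptiness of $\bN_{/_{\equiv_{\mathcal{F}}}}$ (which contains, for instance, the equivalence class of any principal ultrafilter), Zorn's Lemma delivers maximal elements.

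There is no real obstacle to overcome: the statement is a direct corollary of the preceding result. The only subtle issue, namely that without well-structuredness $\trianglelefteq_{\mathcal{F}}$ need not be transitive or reflexive so that the quotient construction of Section 4.2 would not even produce a partially ordered set, has already been neutralized by the standing hypothesis on $\mathcal{F}$. Once this is granted, the argument is a one-line invocation of Zorn's Lemma.
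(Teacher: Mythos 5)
Your proof is correct and is essentially identical to the paper's: both apply Zorn's Lemma to $(\bN_{/_{\equiv_{\mathcal{F}}}},\trianglelefteq_{\mathcal{F}})$, using the immediately preceding corollary that every $\mathcal{F}$-chain in the quotient has an upper bound. Your additional remarks on nonemptiness and on why well-structuredness is needed for the quotient to be a genuine partial order are fine but not a different argument.
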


\begin{proof} This follows by Zorn's Lemma because every $\mathcal{F}$-chain in $\bN_{/_{\equiv_{\mathcal{F}}}}$ has an upper bound. \\ \end{proof}

In Section 4.4.2 we used a result analogous to Corollary 4.7.12 to prove the existence of a greatest element in $\bN_{/_{\equiv_{\mathbb{T}}}}$. The proof used an important property of $\trianglelefteq_{\mathbb{T}}$, namely the fact that $\trianglelefteq_{\mathbb{T}}$ is filtered (see definition 4.2.7).

\begin{prop} If $\mathcal{F}$ is a well-structured subset of $\mathtt{Fun}$$(\N,\N)$, the following two conditions are equivalent:
\begin{enumerate}
	\item the relation $\trianglelefteq_{\mathcal{F}}$ is filtered on $\bN$;
	\item the relation $\trianglelefteq_{\mathcal{F}}$ is filtered on $\bN/_{\equiv_{\mathcal{F}}}$.
\end{enumerate}\end{prop}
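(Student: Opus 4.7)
The plan is to reduce this statement to the general fact, recalled at the end of Section 4.2, that for any partial pre-order $(S,\leq)$ the pre-order $\leq$ on $S$ is filtered if and only if the induced partial order $\leq$ on $S_{/_{\equiv}}$ is filtered. Since $\mathcal{F}$ is well-structured, $\trianglelefteq_{\mathcal{F}}$ is a pre-order on $\bN$ (Proposition 4.7.6), so $(\bN_{/_{\equiv_{\mathcal{F}}}}, \trianglelefteq_{\mathcal{F}})$ is the associated partially ordered set as defined in Section 4.2, and the abstract result applies verbatim. Nevertheless, I would spell out both implications in the concrete setting, since the book-keeping is what makes the argument rigorous.

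For $(1)\Rightarrow(2)$, I would start from two classes $[\U], [\V] \in \bN_{/_{\equiv_{\mathcal{F}}}}$ and pick representatives $\U, \V \in \bN$. By hypothesis there exists $\W \in \bN$ with $\U \trianglelefteq_{\mathcal{F}} \W$ and $\V \trianglelefteq_{\mathcal{F}} \W$. By the very definition of $\trianglelefteq_{\mathcal{F}}$ on the quotient (i.e.\ $[\U'] \trianglelefteq_{\mathcal{F}} [\W']$ iff $\U' \trianglelefteq_{\mathcal{F}} \W'$ for chosen representatives, which is well-posed precisely because $\equiv_{\mathcal{F}}$ is an equivalence relation compatible with $\trianglelefteq_{\mathcal{F}}$), we get $[\U] \trianglelefteq_{\mathcal{F}} [\W]$ and $[\V] \trianglelefteq_{\mathcal{F}} [\W]$, so $[\W]$ is the desired upper bound. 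The converse $(2)\Rightarrow(1)$ is symmetric: given $\U, \V \in \bN$, apply (2) to the classes $[\U], [\V]$, obtain $[\W]$ dominating both, pick any representative $\W \in [\W]$ and observe that $\U \trianglelefteq_{\mathcal{F}} \W$ and $\V \trianglelefteq_{\mathcal{F}} \W$ hold by unfolding the definition of the quotient order.

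The only non-routine point (and the reason the statement is worth recording separately) is the well-posedness of $\trianglelefteq_{\mathcal{F}}$ on equivalence classes, which must be used in both directions; it rests on the transitivity of $\trianglelefteq_{\mathcal{F}}$, which in turn requires the well-structuredness of $\mathcal{F}$ (Proposition 4.7.6). I do not expect any genuine obstacle: once the pre-order is in place, the argument is an entirely formal translation between $\bN$ and its quotient, and the proof should fit in a few lines.
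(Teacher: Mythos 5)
Your proposal is correct and follows exactly the paper's route: the paper proves this proposition simply by appealing to the general observation in Section 4.2 that a pre-order is filtered on $S$ if and only if the induced order is filtered on $S_{/_{\equiv}}$, which applies because well-structuredness makes $\trianglelefteq_{\mathcal{F}}$ a pre-order on $\bN$. Your explicit unfolding of the two implications and of the well-posedness of the quotient order is just a more detailed write-up of the same argument.
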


As observed in Section 4.2, the above property holds for all pre-orders.

\begin{prop} If $\mathcal{F}_{1}, \mathcal{F}_{2}$ are subsets of $\mathtt{Fun}$$(\N,\N)$ such that $\mathcal{F}_{1}\subseteq \mathcal{F}_{2}$ and $\trianglelefteq_{\mathcal{F}_{1}}$ is filtered, then also $\trianglelefteq_{\mathcal{F}_{2}}$ is filtered. \end{prop}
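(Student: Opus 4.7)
The plan is to reduce this to the monotonicity result already established in Proposition 4.7.2(5), which states that if $\mathcal{F}_1 \subseteq \mathcal{F}_2$ and $\U \trianglelefteq_{\mathcal{F}_1} \V$ then $\U \trianglelefteq_{\mathcal{F}_2} \V$. This monotonicity is itself the ultrafilter-level counterpart of the set-level monotonicity in Proposition 4.6.2(5), so it requires no further argument.

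Given this, the proof is essentially a one-liner. I would fix arbitrary ultrafilters $\U, \V \in \bN$. Since $\trianglelefteq_{\mathcal{F}_1}$ is filtered by hypothesis, there exists an ultrafilter $\W \in \bN$ such that $\U \trianglelefteq_{\mathcal{F}_1} \W$ and $\V \trianglelefteq_{\mathcal{F}_1} \W$. Applying Proposition 4.7.2(5) to each of these two relations, I obtain $\U \trianglelefteq_{\mathcal{F}_2} \W$ and $\V \trianglelefteq_{\mathcal{F}_2} \W$. Hence $\W$ witnesses the filtration property for the pair $(\U, \V)$ with respect to $\trianglelefteq_{\mathcal{F}_2}$, and since $\U, \V$ were arbitrary, $\trianglelefteq_{\mathcal{F}_2}$ is filtered.

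There is no genuine obstacle here: the entire content is packaged into the earlier monotonicity result, and the filtration property transfers upward along any enlargement of the function family that preserves the monotonicity of $\trianglelefteq$. I would not even need to invoke well-structuredness of $\mathcal{F}_1$ or $\mathcal{F}_2$ in the argument, since the definition of ``filtered'' does not require the pre-order to be transitive or reflexive for the quantifier formulation to make sense.
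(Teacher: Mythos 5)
Your proof is correct and is essentially identical to the paper's: the paper also fixes $\U,\V$, takes a common $\trianglelefteq_{\mathcal{F}_1}$-upper bound $\W$, and transfers both relations to $\trianglelefteq_{\mathcal{F}_2}$ via the monotonicity observation (which the paper restates inline rather than citing Proposition 4.7.2(5)). Your remark that well-structuredness is not needed is also consistent with the paper, which imposes no such hypothesis in this proposition.
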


\begin{proof} Observe that, for every ultrafilters $\U,\V$ in $\bN$, if $\U\trianglelefteq_{\mathcal{F}_{1}}\V$ then, as $\mathcal{F}_{1}\subseteq\mathcal{F}_{2}$, $\U\trianglelefteq_{\mathcal{F}_{2}}\V$.\\
In particular, let $\W$ be an ultrafilter such that $\U\trianglelefteq_{\mathcal{F}_{1}}\W$ and $\V\trianglelefteq_{\mathcal{F}_{1}}\W$. From the observation it follows that $\U\trianglelefteq_{\mathcal{F}_{2}}\W$ and $\V\trianglelefteq_{\mathcal{F}_{2}}\W$. This proves that for every ultrafilters $\U,\V$ there is an ultrafilter $\W$ with $\U\trianglelefteq_{\mathcal{F}_{2}}\W$ and $\V\trianglelefteq_{\mathcal{F}_{2}}\W$, so $\trianglelefteq_{\mathcal{F}_{2}}$ is filtered.\\ \end{proof}

\begin{defn} A subset $\mathcal{F}$ of $\mathtt{Fun}$$(\N,\N)$ is {\bfseries filtered} if $\trianglelefteq_{\mathcal{F}}$ is filtered on $\bN$ $($equivalently, if $\trianglelefteq_{\mathcal{F}}$ is filtered on $\bN_{/_{\equiv_{\mathcal{F}}}})$. \end{defn}

\begin{thm} Let $\mathcal{F}$ be a well-structured subset of $\mathtt{Fun}$$(\N,\N)$. The following two conditions are equivalent:
\begin{enumerate}
	\item there is a greatest element in $(\bN_{/_{\equiv_{\mathcal{F}}}},\trianglelefteq_{\mathcal{F}})$;
	\item the order $\trianglelefteq_{\mathcal{F}}$ is filtered.
\end{enumerate}
\end{thm}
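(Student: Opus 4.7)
The plan is to recognize that this theorem is an immediate application of the general Theorem 4.2.8 on pre-orders to the specific partially ordered set $(\bN_{/_{\equiv_{\mathcal{F}}}},\trianglelefteq_{\mathcal{F}})$. The prerequisites of Theorem 4.2.8 have already been verified in the preceding material: by Theorem 4.7.9, when $\mathcal{F}$ is well-structured $(\bN_{/_{\equiv_{\mathcal{F}}}},\trianglelefteq_{\mathcal{F}})$ is a genuine partially ordered set, and Corollary 4.7.11 guarantees that every $\mathcal{F}$-chain of equivalence classes admits an upper bound.

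For the direction $(1)\Rightarrow(2)$, I would simply observe that if $M$ is the greatest element in $(\bN_{/_{\equiv_{\mathcal{F}}}},\trianglelefteq_{\mathcal{F}})$, then for every pair $[\U],[\V]$ one has $[\U]\trianglelefteq_{\mathcal{F}} M$ and $[\V]\trianglelefteq_{\mathcal{F}} M$, which is exactly filteredness. This direction requires no use of well-structuredness beyond the fact that it gives us an actual partial order.

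The substantive direction is $(2)\Rightarrow(1)$. Here I would argue as in Theorem 4.2.8: Zorn's Lemma applied to the partially ordered set, together with Corollary 4.7.11, produces at least one maximal element $[\U]$. Suppose $[\U_1]$ and $[\U_2]$ are both maximal. By the filteredness hypothesis there exists $[\W]$ with $[\U_1]\trianglelefteq_{\mathcal{F}}[\W]$ and $[\U_2]\trianglelefteq_{\mathcal{F}}[\W]$. The maximality of $[\U_1]$ forces $[\W]\trianglelefteq_{\mathcal{F}}[\U_1]$, and since $\trianglelefteq_{\mathcal{F}}$ is antisymmetric on $\bN_{/_{\equiv_{\mathcal{F}}}}$ we conclude $[\U_1]=[\W]$. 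Symmetrically $[\U_2]=[\W]$, hence $[\U_1]=[\U_2]$, so there is a unique maximal element, which is then the greatest element of the order.

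There is no real obstacle here: the difficult work has been done earlier, in establishing that well-structuredness of $\mathcal{F}$ actually yields a partially ordered quotient (Theorem 4.7.9) and in the chain-bound construction via limit ultrafilters (Proposition 4.7.10 and Corollary 4.7.11). The only mild subtlety is making sure to pass to the quotient $\bN_{/_{\equiv_{\mathcal{F}}}}$ before invoking Zorn, since on $\bN$ itself $\trianglelefteq_{\mathcal{F}}$ is only a pre-order and antisymmetry, which is what collapses the set of maximal elements to a single one, fails there.
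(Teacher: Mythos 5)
Your proposal is correct and follows essentially the same route as the paper, which simply observes that the theorem is an instance of the general pre-order fact (Theorem 4.2.8) applied to the quotient $(\bN_{/_{\equiv_{\mathcal{F}}}},\trianglelefteq_{\mathcal{F}})$, whose poset structure and chain-bound property were established earlier. You merely spell out the Zorn's Lemma argument that the paper leaves implicit by citation.
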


The above result is a consequence of the analogue general property of pre-orders proved in Section 4.2 (Theorem 4.2.8).\\
Filtered sets of functions can be characterize in nonstandard terms. This characterization follows by the properties of the cones $\mathcal{C}_{\mathcal{F}}(\U)$ in $(\bN,\trianglelefteq_{\mathcal{F}})$, and these are the structure that we study in next section.

\subsection{The cones $\mathcal{C}_{\mathcal{F}}(\U)$}

\begin{defn} Given an ultrafilter $\U$ in $\bN$, we denote by $\mathcal{C}_{\mathcal{F}}(\U)$ the upper cone of $\U$ in $(\bN,\trianglelefteq_{\mathcal{F}})$, i.e. the set of ultrafilters in $\bN$ in which $\U$ is $\mathcal{F}$-finitely mappable:

\begin{center} $\mathcal{C}_{\mathcal{F}}(\U)=\{\V\in\bN\mid \U\trianglelefteq_{\mathcal{F}}\V\}$. \end{center} \end{defn}

In Section 4.4.3 we have studied and characterized the sets $\mathcal{C}_{fe}(\U)$ that, following the definition 4.7.17, we denote from now on as $\mathcal{C}_{\mathbb{T}}(\U)$. In this section we try to generalize the results obtained for the sets $\mathcal{C}_{\mathbb{T}}(\U)$ in this more general context:\\

{\bfseries Fact:} If $\mathcal{F}$ is well-structured, and $\U$, $\V$ are $\mathcal{F}$-equivalent ultrafilters, then $\mathcal{C}_{\mathcal{F}}(\U)=\mathcal{C}_{\mathcal{F}}(\V)$.\\

This fact follows by the transitivity of $\trianglelefteq_{\mathcal{F}}$.

\begin{prop} For every ultrafilter $\U$ in $\bN$ and for every nonempty subset $\mathcal{F}$ of $\mathtt{Fun}$$(\N,\N)$ the set $\mathcal{C}_{\mathcal{F}}(\U)$ is closed in the Stone topology. \end{prop}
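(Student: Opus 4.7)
The plan is to mimic the proof of Proposition 4.4.19 (the case $\mathcal{F}=\mathbb{T}$), since the argument given there never used any specific property of the translations beyond the definition of $\trianglelefteq_{fe}$. The idea is to invoke the characterization of closed subsets of $\bN$ in terms of limits of ultrafilters (Theorem 1.1.37): a subset $X\subseteq\bN$ is closed in the Stone topology if and only if it is closed under $\V$-limits of sequences of its own elements, for every ultrafilter $\V$ on the index set.

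First I would fix an arbitrary index set $I$, an ultrafilter $\V$ on $I$, and a sequence $\langle \U_{i}\mid i\in I\rangle$ of ultrafilters in $\mathcal{C}_{\mathcal{F}}(\U)$, and set $\W=\V-\lim_{i\in I}\U_{i}$. The goal is to verify that $\U\trianglelefteq_{\mathcal{F}}\W$, i.e.\ that for every $A\in\W$ there exists $B\in\U$ with $B\leq_{\mathcal{F}} A$. Given such an $A$, the definition of $\V$-limit yields that the set $I_{A}=\{i\in I\mid A\in\U_{i}\}$ is in $\V$, hence nonempty; pick any $i\in I_{A}$. Then $A\in\U_{i}$, and since by hypothesis $\U\trianglelefteq_{\mathcal{F}}\U_{i}$, there is indeed some $B\in\U$ with $B\leq_{\mathcal{F}} A$. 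This shows $\W\in\mathcal{C}_{\mathcal{F}}(\U)$.

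Having established closure under $\V$-limits for every $\V$ on every $I$, Theorem 1.1.37 concludes that $\mathcal{C}_{\mathcal{F}}(\U)$ is closed in the Stone topology. I do not foresee any genuine obstacle here: the proof is essentially formal, because the definition of $\trianglelefteq_{\mathcal{F}}$ has exactly the same shape as the one of $\trianglelefteq_{fe}$ (a $\forall B\in\V\,\exists A\in\U$ quantifier pattern on sets), and this is the only feature used. The only minor point to be careful about is that no assumption (such as $\mathcal{F}$ being well-structured, filtered, or containing specific functions) is needed: the argument works for every nonempty $\mathcal{F}\subseteq\mathtt{Fun}(\N,\N)$, exactly as stated.
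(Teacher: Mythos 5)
Your proof is correct and follows essentially the same route as the paper: both arguments invoke the characterization of closed subsets of $\bN$ via $\V$-limits, take $A\in\W=\V-\lim_{I}\U_{i}$, use that $I_{A}=\{i\in I\mid A\in\U_{i}\}$ is in $\V$ (hence nonempty) to pick some $i$ with $A\in\U_{i}$, and then pull back a set $B\in\U$ with $B\leq_{\mathcal{F}}A$ from the hypothesis $\U\trianglelefteq_{\mathcal{F}}\U_{i}$. Your remark that no structural hypothesis on $\mathcal{F}$ is needed is also accurate and consistent with the paper's statement.
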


\begin{proof} We use the characterization of closed subsets of $\bN$ given in terms of limit ultrafilters: to prove that $\mathcal{C}_{\mathcal{F}}(\U)$ is closed we show that, given a family $\langle \U_{i}\mid i\in\ I\rangle$ of elements in $\mathcal{C}_{\mathcal{F}}(\U)$ and an ultrafilter $\V$ on $I$, the ultrafilter 

\begin{center} $\W=\V-\lim_{I}\U_{i}$ \end{center}

is in $\mathcal{C}_{\mathcal{F}}(\U)$.\\
To prove this, let $A$ be an element of $\W$. By definition, this means that the set 

\begin{center} $A_{I}=\{i\in I\mid A\in\U_{i}\}$ \end{center}

is nonempty, as it is in $\V$. Let $i$ be an element of $A_{I}$; this entails that $A\in \U_{i}$. Since $\U_{i}$ is an element of $\mathcal{C}_{\mathcal{F}}(\U)$, $\U\trianglelefteq_{\mathcal{F}}\U_{i}$, so there is a set $B$ in $\U$ with $B\leq_{\mathcal{F}} A$. This proves that for every set $A$ in $\W$ there is a set $B$ in $\U$ with $B\leq_{\mathcal{F}} A$, so $\U\trianglelefteq_{\mathcal{F}}\W$ and $\W\in\mathcal{C}_{\mathcal{F}}(\U)$.\\ \end{proof}

When $\mathcal{F}=\mathbb{T}$ we proved that $\mathcal{C}_{\mathbb{T}}(\U)=\overline{\{\U\oplus\V\mid\V\in\bN\}}$; is there a similar characterization for a generic cone $\mathcal{C}_{\mathcal{F}}(\U)$?

\begin{lem} Let $\U$ be an ultrafilter in $\bN$, $\alpha\in$$^{*}\N$ a generator of $\U$, $\mathcal{F}$ a subset of $\mathtt{Fun}$$(\N,\N)$ and $B$ a subset of $\N$. The following two conditions are equivalent:
\begin{enumerate}
	\item there is a set $A$ in $\U$ such that $A\leq_{\mathcal{F}} B$;
	\item there is a function $\varphi$ in $^{*}\mathcal{F}$ such that $B\in \mathfrak{U}_{( ^{*}\varphi)(\alpha)}$.
\end{enumerate}
\end{lem}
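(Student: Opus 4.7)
The plan is to deduce both implications from the previously established Proposition 4.6.6 (which already characterizes $A\leq_{\mathcal{F}} B$ via the action of some $\varphi\in{}^{*}\mathcal{F}$ on generators of arbitrary ultrafilters containing $A$) together with the basic nonstandard reformulation of $\mathcal{F}$-finite mappability given in Proposition 4.6.3.

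For the direction $(1)\Rightarrow(2)$, suppose $A\in\U$ satisfies $A\leq_{\mathcal{F}} B$. Proposition 4.6.6 yields a single $\varphi\in{}^{*}\mathcal{F}$ such that $B\in\mathfrak{U}_{{}^{*}\varphi(\alpha')}$ for \emph{every} generator $\alpha'$ of every ultrafilter containing $A$. Since $A\in\U=\mathfrak{U}_{\alpha}$ by hypothesis, specializing the quantifier to our fixed $\alpha$ gives (2) at once.

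For the direction $(2)\Rightarrow(1)$, take a witness $\varphi\in{}^{*}\mathcal{F}$ with $B\in\mathfrak{U}_{{}^{*}\varphi(\alpha)}$, i.e.\ ${}^{*}\varphi(\alpha)\in{}^{**}B$, and define
\[
A \;=\; \{\,n\in\N \mid \varphi(n)\in{}^{*}B\,\}.
\]
Note $A$ is a subset of $\N$ (possibly external-looking, but still a set in $\mathbb{V}(\N)\subseteq\mathbb{V}(X)$, hence available for transfer). I would verify the two required properties separately. First, $\varphi(A)\subseteq{}^{*}B$ by construction, and $\varphi\in{}^{*}\mathcal{F}$, so Proposition 4.6.3 gives $A\leq_{\mathcal{F}} B$. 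Second, to see $A\in\U$, i.e.\ $\alpha\in{}^{*}A$, apply transfer to the (tautological) bounded-quantifier sentence
\[
\forall n\in\N\bigl(\,n\in A \;\Longleftrightarrow\; \varphi(n)\in{}^{*}B\,\bigr),
\]
whose parameters $A,\varphi,{}^{*}B$ all live in $\mathbb{V}(X)$. Transfer produces
\[
\forall \beta\in{}^{*}\N\bigl(\,\beta\in{}^{*}A \;\Longleftrightarrow\; {}^{*}\varphi(\beta)\in{}^{**}B\,\bigr),
\]
and specializing at $\beta=\alpha$ together with the hypothesis ${}^{*}\varphi(\alpha)\in{}^{**}B$ yields $\alpha\in{}^{*}A$, as desired.

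The main delicate point is the transfer step in $(2)\Rightarrow(1)$: one is applying transfer to a formula whose parameter $\varphi$ is internal rather than standard. This is justified precisely because the whole chapter works in the single-superstructure setup $\langle\mathbb{V}(X),\mathbb{V}(X),*\rangle$, where every internal object is a legitimate element of $\mathbb{V}(X)$ and hence an admissible parameter. A parallel pedantic remark: it is essential to distinguish $\varphi\colon{}^{*}\N\to{}^{*}\N$, which witnesses the standard-level statement $A\leq_{\mathcal{F}} B$ via Proposition 4.6.3, from its further star ${}^{*}\varphi\colon{}^{**}\N\to{}^{**}\N$, which is the object actually appearing in hypothesis (2); the transfer step is exactly what bridges $\varphi$ and ${}^{*}\varphi$ and makes these two pieces of information fit together.
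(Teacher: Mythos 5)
Your proof is correct and follows essentially the same route as the paper's: the forward direction specializes the earlier ultrafilter characterization of $\leq_{\mathcal{F}}$ (which is Proposition 4.6.5 in the paper's numbering, not 4.6.6 --- you describe the right statement but cite the wrong label) to the fixed generator $\alpha$, and the converse defines $A=\{n\in\N\mid\varphi(n)\in{}^{*}B\}$, gets $A\leq_{\mathcal{F}}B$ from the nonstandard reformulation, and uses the identical transfer step to conclude $\alpha\in{}^{*}A$, hence $A\in\U$. Your remarks on the legitimacy of transferring with the internal parameter $\varphi$ and on distinguishing $\varphi$ from ${}^{*}\varphi$ are sound and only make explicit what the paper leaves implicit.
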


\begin{proof} $(1)\Rightarrow (2)$: Since $A\leq_{\mathcal{F}} B$, $A\in \U$, and $\alpha\in$$^{*}\N$ is a generator of $\U$, then by Proposition 4.6.5 it follows that there is a function $\varphi$ in $^{*}\mathcal{F}$ such that $B\in\mathfrak{U}_{( ^{*}\varphi)(\alpha)}$.\\
$(2)\Rightarrow (1)$: Let $\varphi$ be a function in $^{*}\mathcal{F}$ such that $($$^{*}\varphi)(\alpha)\in$$^{**}B$. By transfer it follows that 

\begin{center} $\{\mu\in$$^{*}\N\mid ($$^{*}\varphi)(\mu)\in$$^{**}B\}=$$^{*}\{n\in\N\mid \varphi(n)\in$$^{*}B\}$, \end{center}

and, as $\alpha\in\{\mu\in$$^{*}\N\mid ($$^{*}\varphi)(\mu)\in$$^{**}B\}$, it follows that $A=\{n\in\N\mid \varphi(n)\in$$^{*}B\}$ is in $\U$, since $\U=\mathfrak{U}_{\alpha}$ and $\alpha\in$$^{*}A$. By construction, $\varphi(A)\subseteq$$^{*}B$ and, as $\varphi$ is a function in $\mathcal{F}$, by Proposition 4.6.3 this entails that $A\leq_{\mathcal{F}} B$. As $A\in\U$, this proves the thesis.\\ \end{proof}

{\bfseries Note 1:} As a consequence of Proposition 4.6.4, the above lemma do not depend on the choice of $\alpha$ in $G_{\U}$, since whenever $\alpha,\beta\in$$^{*}\N$ are generators of $\U$ then $\mathfrak{U}_{(^{*}\varphi)(\alpha)}=\mathfrak{U}_{(^{*}\varphi)(\beta)}$.\\ 

{\bfseries Note 2:} Given a function $\varphi$ in $\mathtt{Fun}$($^{*}\N,$$^{*}\N$), let $\overline{\varphi}$ denote this function in $\mathtt{Fun}$($\bN,\bN$): for every ultrafilter $\U$, if $\alpha$ is any generator of $\U$ with $h(\alpha)\leq 1$, define

\begin{center} $\overline{\varphi}(\mathfrak{U}_{\alpha})=\mathfrak{U}_{(^{*}\varphi)(\alpha)}$. \end{center}

This definition not only is similar to the definition of $\overline{f}$ for a function $f\in\mathtt{Fun}(\N,\N)$, but it can be seen as its extention to nonstandard functions. In fact we have the following property:

\begin{prop} Let $g$ be a function in $\mathtt{Fun}$$(\N,\N)$, and $\varphi$ the function $\varphi=$$^{*}g$ in $\mathtt{Fun}($$^{*}\N,$$^{*}\N)$. Then

\begin{center} $\overline{\varphi}=\overline{g}$. \end{center}

\end{prop}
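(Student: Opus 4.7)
The plan is to verify directly, for an arbitrary ultrafilter $\U\in\bN$, the equality $\overline{\varphi}(\U)=\overline{g}(\U)$ by evaluating both maps on a suitably chosen generator of $\U$. First I would pick any generator $\alpha\in{}^{*}\N$ of $\U$; this is legitimate because the hyperextension ${}^{*}\N$ is assumed to satisfy the $\mathfrak{c}^{+}$-enlarging property, so by Proposition 2.2.1 the set $G_{\U}^{1}=G_{\U}\cap{}^{*}\N$ is nonempty. Since $\alpha$ has height at most $1$, the definition preceding the statement gives $\overline{\varphi}(\U)=\overline{\varphi}(\mathfrak{U}_{\alpha})=\mathfrak{U}_{({}^{*}\varphi)(\alpha)}$. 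On the other hand, Theorem 2.3.5(1) states that ${}^{*}g(\alpha)\in G_{\overline{g}(\U)}$, so $\overline{g}(\U)=\mathfrak{U}_{{}^{*}g(\alpha)}$.

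The single nontrivial step is then the identification $({}^{*}\varphi)(\alpha)={}^{*}g(\alpha)$ for $\alpha\in{}^{*}\N$. Since $\varphi={}^{*}g$, we have ${}^{*}\varphi={}^{**}g$, so what must be shown is that ${}^{**}g$ and ${}^{*}g$ agree on ${}^{*}\N$. To see this, I would apply the hyperextension machinery to the graph: by the convention at the end of Section 2.1, the graph of ${}^{*}g$ is ${}^{*}\Gamma_{g}$, and the graph of ${}^{**}g$ is ${}^{**}\Gamma_{g}$. Invoking Proposition 2.5.6(4) with $A=\Gamma_{g}\subseteq\N^{2}$ (or directly transferring $\forall n\in\N\,(n,g(n))\in\Gamma_{g}$ twice) yields ${}^{*}\Gamma_{g}={}^{**}\Gamma_{g}\cap({}^{*}\N)^{2}$, which is exactly the statement that ${}^{**}g\upharpoonright_{{}^{*}\N}={}^{*}g$. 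Equivalently, this is the special case of Proposition 2.5.11 observed in the proof of Proposition 2.5.18 that the function ${}^{\bullet}g$ satisfies ${}^{\bullet}g(S_{n}(\alpha))=S_{n}({}^{\bullet}g(\alpha))$ for every $\alpha$ and $n$.

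Combining the three points, I obtain $\overline{\varphi}(\U)=\mathfrak{U}_{({}^{*}\varphi)(\alpha)}=\mathfrak{U}_{{}^{*}g(\alpha)}=\overline{g}(\U)$, and since $\U$ was arbitrary this yields $\overline{\varphi}=\overline{g}$. I do not expect any genuine obstacle here: the statement is essentially a coherence check, showing that the newly introduced notation $\overline{\cdot}$ for internal functions restricts to the classical Stone–Čech extension when applied to hyper-images of standard functions. The only care needed is to recognize that the height-$\leq 1$ requirement in the definition of $\overline{\varphi}$ is precisely what lets transfer collapse ${}^{**}g$ to ${}^{*}g$ on the argument $\alpha$, ensuring the definition is compatible with Theorem 2.3.5(1).
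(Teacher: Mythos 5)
Your proposal is correct and follows essentially the same route as the paper: reduce everything to showing $({}^{*}\varphi)(\alpha)={}^{*}g(\alpha)$ for $\alpha\in{}^{*}\N$, and obtain that identity by transferring the fact that $\varphi={}^{*}g$ agrees with $g$ on $\N$ (the paper transfers the pointwise identity directly, you transfer it via the graph $\Gamma_{g}$, which is the same computation). The appeal to Theorem 2.3.5(1) for $\overline{g}(\U)=\mathfrak{U}_{{}^{*}g(\alpha)}$ is also exactly the step the paper takes.
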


\begin{proof} For every ultrafilter $\U$ in $\bN$, for every generator $\alpha$ of $\U$ with $h(\alpha)\leq 1$, as we proved in Chapter Two we have 

\begin{center} $\overline{g}(\mathfrak{U}_{\alpha})=\mathfrak{U}_{(^{*}g)(\alpha)}$, \end{center}

and $($$^{*}g)(\alpha)=\varphi(\alpha)=($$^{*}\varphi)(\alpha)$ since we have this property

\begin{center} For every natural number $n\in\N$, $\varphi(n)=($$^{*}g)(n)=g(n)$, \end{center}

so, by transfer, we have

\begin{center} For every hypernatural number $\eta\in$$^{*}\N$, $($$^{*}\varphi)(\eta)=($$^{**}g)(\eta)=($$^{*}g)(\eta)$, \end{center}

and this shows that $\overline{^{*}g}(\mathfrak{U}_{\alpha})=\overline{g}(\mathfrak{U}_{\alpha})$ for every function $g$ in $\mathtt{Fun}$($\N,\N$). \\\end{proof}

\begin{defn} For every function $\varphi$ in $\mathtt{Fun}$$(^{*}\N,^{*}\N)$, we denote by $\overline{\varphi}$ the function in $\mathtt{Fun}$$(\bN,\bN)$ such that, for every ultrafilter $\U$ in $\bN$, if $\alpha\in$$^{*}\N$ is a generator of $\U$ then $\overline{\varphi}(\U)=\mathfrak{U}_{( ^{*}\varphi)(\alpha)}$. \end{defn}

We can now characterize the sets $\mathcal{C}_{\mathcal{F}}(\U)$:

\begin{thm} Let $\mathcal{F}$ be a subset of $\mathtt{Fun}$$(\N,\N)$, $\U$ an ultrafilter in $\bN$, and $\alpha\in$$^{*}\N$ a generator of $\U$. Then

\begin{center} $\mathcal{C}_{\mathcal{F}}(\U)=\overline{\{\mathfrak{U}_{( ^{*}\varphi)(\alpha)}\mid \varphi\in}$$\overline{^{*}\mathcal{F}\}}=\overline{\{\overline{\varphi}(\U)\mid \varphi\in ^{*}\mathcal{F}\}}$. \end{center} 
\end{thm}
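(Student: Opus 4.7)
The plan is to mirror, in this more general setting, the argument used for Theorem 2.5.23 (the characterization of $\mathcal{C}_{fe}(\U)$), with Lemma 4.7.19 playing the role that Lemma 4.4.21 played there. The equality of the two sets on the right is immediate from Definition 4.7.21: for every $\varphi\in{}^{*}\mathcal{F}$ and every generator $\alpha$ of $\U$, $\overline{\varphi}(\U)=\mathfrak{U}_{({}^{*}\varphi)(\alpha)}$, so the two bracketed sets coincide and hence have the same closure. So the real work is the two inclusions for the cone.

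For the inclusion $\supseteq$, I would first show that each ultrafilter of the form $\overline{\varphi}(\U)$ with $\varphi\in{}^{*}\mathcal{F}$ actually lies in $\mathcal{C}_{\mathcal{F}}(\U)$. Let such a $\varphi$ be fixed and set $\mathcal{W}=\mathfrak{U}_{({}^{*}\varphi)(\alpha)}$. For any $B\in\mathcal{W}$ we have $B\in\mathfrak{U}_{({}^{*}\varphi)(\alpha)}$, so Lemma 4.7.19 (in the direction $(2)\Rightarrow(1)$) produces a set $A\in\U$ with $A\leq_{\mathcal{F}}B$. Thus $\U\trianglelefteq_{\mathcal{F}}\mathcal{W}$, i.e. $\mathcal{W}\in\mathcal{C}_{\mathcal{F}}(\U)$. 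Since by Proposition 4.7.18 the cone $\mathcal{C}_{\mathcal{F}}(\U)$ is closed in the Stone topology, the closure $\overline{\{\overline{\varphi}(\U)\mid \varphi\in{}^{*}\mathcal{F}\}}$ is contained in it as well.

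For the reverse inclusion $\subseteq$, I would use the standard characterization of topological closure in $\bN$: an ultrafilter $\mathcal{W}$ lies in $\overline{S}$ iff every $B\in\mathcal{W}$ belongs to some ultrafilter of $S$. So pick any $\mathcal{W}\in\mathcal{C}_{\mathcal{F}}(\U)$ and any $B\in\mathcal{W}$. By definition of $\trianglelefteq_{\mathcal{F}}$ there exists $A\in\U$ with $A\leq_{\mathcal{F}}B$, and then Lemma 4.7.19 (this time in the direction $(1)\Rightarrow(2)$) gives a $\varphi\in{}^{*}\mathcal{F}$ such that $B\in\mathfrak{U}_{({}^{*}\varphi)(\alpha)}=\overline{\varphi}(\U)$. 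Hence $B$ is witnessed by some element of $\{\overline{\varphi}(\U)\mid\varphi\in{}^{*}\mathcal{F}\}$, and by the Stone-topology criterion $\mathcal{W}$ lies in the closure of that set.

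The only delicate point, and the one I would spell out carefully, is the application of Lemma 4.7.19: that lemma is stated for a fixed generator $\alpha$, and I want to know the conclusion is independent of the choice of $\alpha$ in $G_{\U}$. This is exactly the content of Note 1 following Lemma 4.7.19, which in turn rests on Proposition 4.6.4 ($\sim_{u}$-equivalent hypernatural numbers have $\sim_{u}$-equivalent images under any internal function). Once this point is secured, the rest of the argument is a routine bookkeeping exercise and the two inclusions close up to give the stated equality.
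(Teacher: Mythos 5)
Your proof is correct and follows essentially the same route as the paper: both inclusions rest on the two directions of Lemma 4.7.19 together with the Stone-topology characterization of closure, and your extra care about independence of the choice of generator is exactly the content of the paper's Note 1. The only cosmetic difference is that for the inclusion $\supseteq$ you first place each $\overline{\varphi}(\U)$ in the cone and then invoke Proposition 4.7.18 (closedness of the cone), whereas the paper argues directly from the closure criterion; the two are interchangeable.
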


\begin{proof} We repeatedly use the result of Lemma 4.7.19.\\
Let $\V$ be an element in $\mathcal{C}_{\mathcal{F}}(\U)$; by definition, for every set $B$ in $\V$ there is a set $A$ in $\U$ such that $A\leq_{\mathcal{F}} B$. As $\alpha\in$$^{*}\N$ is a generator of $\U$, Lemma 4.7.19 entails that there is a function $\varphi$ in $ ^{*}\mathcal{F}$ such that $B\in\mathfrak{U}_{( ^{*}\varphi)(\alpha)}$; in the Stone topology, this is equivalent to say that $\V$ is in the closure of $\{\mathfrak{U}_{( ^{*}\varphi)(\alpha)}\mid \varphi\in$$^{*}\mathcal{F}\}$, so

\begin{center} $\mathcal{C}_{\mathcal{F}}(\U)\subseteq\overline{\{\mathfrak{U}_{( ^{*}\varphi)(\alpha)}\mid \varphi\in}$$\overline{^{*}\mathcal{F}\}}$ \end{center}

Conversely, let $\V$ be an element in $\overline{\{\mathfrak{U}_{( ^{*}\varphi)(\alpha)}\mid \varphi\in}$$\overline{^{*}\mathcal{F}\}}$. In the Stone topology, this is equivalent to say that for every set $B$ in $\V$ there is a function $\varphi$ in $^{*}\mathcal{F}$ such that $B\in\mathfrak{U}_{( ^{*}\varphi)(\alpha)}$; by Lemma 4.7.19, this entails that there is a set $A$ in $\U$ such that $A\leq_{\mathcal{F}} B$; in particular, $\U\trianglelefteq_{\mathcal{F}} \V$, so $\V\in\mathcal{C}_{\mathcal{F}}(\U)$ and

\begin{center} $\overline{\{\mathfrak{U}_{( ^{*}\varphi)(\alpha)}\mid \varphi\in}$$\overline{^{*}\mathcal{F}\}}\subseteq \mathcal{C}_{\mathcal{F}}(\U)$. \end{center}
Since we proved both inclusions, the two sets are equal, and this proves the thesis.\\ \end{proof}

To give an example of application of this theorem, we consider the case $\mathcal{F}=\mathbb{T}$.\\
First of all, since

\begin{center} $\mathbb{T}=\{t_{n}\in$ $\mathtt{Fun}$($\N,\N$)$\mid (n\in\N) \wedge(\forall m\in\N$ $t_{n}(m)=m+n)\}$,\end{center}

then, by transfer,

\begin{center} $^{*}\mathbb{T}=\{t_{\mu}\in$ $\mathtt{Fun}$($^{*}\N,$$^{*}\N)\mid$ ($\mu\in$$^{*}\N)\wedge(\forall\eta\in$$^{*}\N$ $t_{\mu}(\eta)=\mu+\eta)\}$. \end{center}

Observe that, for every function $t_{\mu}$ in $^{*}\mathbb{T}$, for every hypernatural number $\alpha$ in $^{*}\N$,

\begin{center} $($$^{*}t_{\mu})(\alpha)=$$^{*}\mu+\alpha$. \end{center}

So, by Theorem 4.7.22, for every hypernatural number $\alpha\in$$^{*}\N$, if $\U=\mathfrak{U}_{\alpha}$ then

\begin{center} $\mathcal{C}_{\mathbb{T}}(\U)=\overline{\{\mathfrak{U}_{^{*}\mu+\alpha}\mid \mu\in}$$\overline{^{*}\N\}}$. \end{center}

As we proved in Chapter Two, $\mathfrak{U}_{^{*}\mu+\alpha}=\mathfrak{U}_{\alpha}\oplus\mathfrak{U}_{\mu}$ for every hypernatural number $\mu$; so

\begin{center} $\mathcal{C}_{\mathbb{T}}(\U)=\overline{\{\mathfrak{U}_{\alpha}\oplus\mathfrak{U}_{\mu}\mid \mathfrak{U}_{\mu}\in\bN\}}=\overline{\{\U\oplus\V\mid\V\in\bN\}}$, \end{center}

as expected.

\subsection{Characterizations of filtered functional pre--orders}

In this section we give a nonstandard and a standard characterization of filtered sets of functions. By Theorem 4.7.2 we know that, given an ultrafilter $\U=\mathfrak{U}_{\alpha}$ and a generical ultrafilter $\V$, $\U\trianglelefteq_{\mathcal{F}}\V$ if and only if $\V\in\overline{\{\mathfrak{U}_{^{*}\varphi(\alpha)}\mid\varphi\in}$$\overline{^{*}\mathcal{F}\}}$.\\
This can be equivalently restated in this way:

\begin{center} $(\dagger) \mathfrak{U}_{\alpha}\trianglelefteq_{\mathcal{F}}\V\Leftrightarrow \bigcap_{\varphi\in^{*}\mathcal{F}}\mathfrak{U}_{^{*}\varphi(\alpha)}\subseteq\V$. \end{center}

This is a particular case of the following general property of the Stone Topology:

\begin{prop} Let $S$ be a subset of $\bN$, and $\U$ an ultrafilter on $\N$. The following two conditions are equivalent;

\begin{enumerate}
	\item $\U\in\overline{S}$;
	\item $\bigcap_{\V\in S}\V \subseteq\U$.
\end{enumerate}
\end{prop}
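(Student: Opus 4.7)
The plan is to unwind both conditions through the definition of the Stone topology on $\bN$ and use the fact that the basic open sets $\Theta_A = \{\W \in \bN : A \in \W\}$ are clopen, so $\U \in \overline{S}$ iff every basic neighborhood $\Theta_A$ with $A \in \U$ satisfies $\Theta_A \cap S \neq \emptyset$, i.e.\ there exists $\V \in S$ with $A \in \V$. Equivalently, $\U \notin \overline{S}$ iff there exists $A \in \U$ such that $A \notin \V$ for every $\V \in S$, i.e.\ such that $A^{c} \in \V$ for every $\V \in S$. This reformulation makes both implications essentially routine: the key duality at play is that an ultrafilter contains $A$ iff it does not contain $A^{c}$.

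For $(1) \Rightarrow (2)$, I would argue by contrapositive. Suppose $\bigcap_{\V \in S}\V \not\subseteq \U$, and let $A$ be a set in $\bigcap_{\V \in S}\V$ but not in $\U$. Then $A^{c} \in \U$, so $\Theta_{A^{c}}$ is an open neighborhood of $\U$ in the Stone topology. For every $\V \in S$ we have $A \in \V$, hence $A^{c} \notin \V$, i.e.\ $\V \notin \Theta_{A^{c}}$. Thus $\Theta_{A^{c}} \cap S = \emptyset$, so $\U \notin \overline{S}$.

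For $(2) \Rightarrow (1)$, again by contrapositive: suppose $\U \notin \overline{S}$. By the characterization above, there exists some $A \in \U$ such that $\Theta_A \cap S = \emptyset$, i.e.\ $A \notin \V$ for every $\V \in S$. Since each $\V$ is an ultrafilter, this means $A^{c} \in \V$ for every $\V \in S$, so $A^{c} \in \bigcap_{\V \in S}\V$. But $A^{c} \notin \U$ (since $A \in \U$), so $\bigcap_{\V \in S}\V \not\subseteq \U$, as required.

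There is no real obstacle here: the result is a general fact about the Stone topology, and the only ingredient beyond unpacking definitions is the symmetry $A \in \W \Leftrightarrow A^{c} \notin \W$ valid for every ultrafilter $\W$. The proof should fit in a few lines once the equivalent reformulation of $\U \in \overline{S}$ in terms of basic clopen neighborhoods is stated.
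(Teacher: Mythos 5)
Your proof is correct and follows essentially the same route as the paper's: both arguments rest on the characterization of $\U\in\overline{S}$ via the basic clopen sets $\Theta_{A}$ and the duality $A\in\W\Leftrightarrow A^{c}\notin\W$, with each implication handled by passing to the complement of a witnessing set. The only cosmetic difference is that you phrase both directions as contrapositives while the paper phrases them as arguments by contradiction.
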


\begin{proof} In the Stone-Topology, $\U\in\overline{S}$ if and only for every set $A$ in $\U$ there is an ultrafilter $\V$ in $S$ such that $A\in S$. We use this property to prove the equivalence of (1) and (2).\\
$(1)\Rightarrow(2)$ Suppose that there is a set $A$ in $\bigcap_{\V\in S}\V$ with $A\notin\U$. Then $A^{c}\in\U$ and, since $\U\in\overline{S}$, this entails that $A^{c}\in\V$ for some ultrafilter $\V\in S$, and this is absurd since $A\in\V$ for every ultrafilter $\V\in S$. So $\bigcap_{\V\in S}\V\subseteq\U$.\\
$(2)\Rightarrow(1)$ Suppose that $\U\notin\overline{S}$. This entails that there is a set $A$ in $\U$ such that, for every ultrafilter $\V$ in $S$, $A\notin\V$. So $A^{c}\in\bigcap_{\V\in S}\V$, hence $A^{c}\in\U$, which provides a contradiction. So $\U\in\overline{S}$.\\\end{proof}

\begin{defn} For every hypernatural number $\alpha$, let $\mathfrak{F}_{\alpha}$ denote the filter

\begin{center}$\mathfrak{F}_{\alpha}=\bigcap_{\varphi\in^{*}\mathcal{F}}\mathfrak{U}_{^{*}\varphi(\alpha)}$. \end{center}

\end{defn}

By definition it follows that $\mathfrak{F}_{\alpha}$ is closed under superset, and that a set $A$ is in the filter $\mathfrak{F}_{\alpha}$ if and only if, for every function $\varphi$ in $^{*}\mathcal{F}$, $^{*}\varphi(\alpha)\in$$^{**}A$.\\
Also, as a consequence of $(\dagger)$, it follows that an ultrafilter $\V$ is in the cone $\mathcal{C}_{\mathcal{F}}(\mathfrak{U}_{\alpha})$ if and only if $\mathfrak{F}_{\alpha}\subseteq\V$.\\
We are now ready to characterize the filtered sets of functions:

\begin{defn} We say that a set $\mathcal{F}$ of functions satisfies the condition $(F)$ if the following condition holds:

\begin{center} $(F)$: $\neg(\exists \alpha,\beta\in$$^{*}\N,\exists A\subseteq\N$ such that $A\in \mathfrak{F}_{\alpha}, A^{c}\in \mathfrak{F}_{\beta})$. \end{center}

\end{defn}

\begin{thm} Let $\mathcal{F}\subseteq\mathtt{Fun}(\N,\N)$ be a set of functions. The two following conditions are equivalent:
\begin{enumerate}
	\item $\mathcal{F}$ satisfies condition $(F)$;
	\item $\mathcal{F}$ is filtered.
\end{enumerate}
\end{thm}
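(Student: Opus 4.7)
The plan is to exploit the reformulation $(\dagger)$ already observed in the paper, which says that $\mathfrak{U}_{\alpha}\trianglelefteq_{\mathcal{F}}\W$ iff $\mathfrak{F}_{\alpha}\subseteq\W$. So the task of finding a common $\mathcal{F}$-upper bound $\W$ for two ultrafilters $\mathfrak{U}_{\alpha},\mathfrak{U}_{\beta}$ translates to the task of finding an ultrafilter $\W$ containing both filters $\mathfrak{F}_{\alpha}$ and $\mathfrak{F}_{\beta}$. By the standard extension theorem (Theorem 1.1.6 in the paper), such $\W$ exists iff the union $\mathfrak{F}_{\alpha}\cup\mathfrak{F}_{\beta}$ has the finite intersection property.

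For the implication $(1)\Rightarrow(2)$, I would pick arbitrary $\U=\mathfrak{U}_{\alpha},\V=\mathfrak{U}_{\beta}\in\bN$ (generators exist by the $\mathfrak{c}^{+}$-enlarging hypothesis) and argue that condition $(F)$ forces $\mathfrak{F}_{\alpha}\cup\mathfrak{F}_{\beta}$ to have the finite intersection property. Indeed, pick $A_{1},\dots,A_{n}\in\mathfrak{F}_{\alpha}$ and $B_{1},\dots,B_{m}\in\mathfrak{F}_{\beta}$; since each $\mathfrak{F}_{\gamma}$ is a filter (being an intersection of ultrafilters, hence closed under finite intersections and supersets), the sets $A=\bigcap A_{i}$ and $B=\bigcap B_{j}$ lie in $\mathfrak{F}_{\alpha}$ and $\mathfrak{F}_{\beta}$ respectively. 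If $A\cap B=\emptyset$, then $B\subseteq A^{c}$ forces $A^{c}\in\mathfrak{F}_{\beta}$, contradicting $(F)$. So $\mathfrak{F}_{\alpha}\cup\mathfrak{F}_{\beta}$ extends to some ultrafilter $\W$, and by $(\dagger)$ this $\W$ is a common $\mathcal{F}$-upper bound of $\U$ and $\V$.

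For the converse $(2)\Rightarrow(1)$, I would argue contrapositively: assuming $(F)$ fails, witnesses $\alpha,\beta\in{}^{*}\N$ and $A\subseteq\N$ with $A\in\mathfrak{F}_{\alpha}$ and $A^{c}\in\mathfrak{F}_{\beta}$ directly refute filtration. Any supposed common $\mathcal{F}$-upper bound $\W$ of $\mathfrak{U}_{\alpha}$ and $\mathfrak{U}_{\beta}$ would, via $(\dagger)$, have to contain both $\mathfrak{F}_{\alpha}$ and $\mathfrak{F}_{\beta}$, hence contain both $A$ and $A^{c}$, which is impossible.

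The proof is essentially routine once $(\dagger)$ is in hand; the only subtlety to keep in mind is to verify that the $\mathfrak{F}_{\gamma}$ are genuine (proper) filters, which follows because each ultrafilter $\mathfrak{U}_{{}^{*}\varphi(\alpha)}$ in the defining intersection omits $\emptyset$ and is closed under finite intersections and supersets, properties that pass to the intersection. There is no real obstacle here; the main conceptual content of the theorem is already packaged into the equivalence $(\dagger)$ derived from Theorem 4.7.22.
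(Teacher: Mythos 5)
Your proposal is correct and follows essentially the same route as the paper: both directions hinge on the equivalence $(\dagger)$ (that $\mathfrak{U}_{\alpha}\trianglelefteq_{\mathcal{F}}\W$ iff $\mathfrak{F}_{\alpha}\subseteq\W$), reducing filtration to the compatibility of the filters $\mathfrak{F}_{\alpha}$ and $\mathfrak{F}_{\beta}$ and then extending their union to an ultrafilter. Your version is in fact slightly more careful than the paper's, since you explicitly reduce the finite intersection property of $\mathfrak{F}_{\alpha}\cup\mathfrak{F}_{\beta}$ to the pairwise case using closure of each $\mathfrak{F}_{\gamma}$ under finite intersections, where the paper only checks pairwise intersections and asserts the union "is a filter."
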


\begin{proof} $(1)\Rightarrow (2)$ Let $\mathfrak{U}_{\alpha},\mathfrak{U}_{\beta}$ be ultrafilters on $\N$. By condition (F) it follows that for all sets $A\in \mathfrak{F}_{\alpha}$ and $B\in \mathfrak{F}_{\beta}$ the intersection $A\cap B$ is non empty. In fact, suppose by contrast that there are sets $A\in\mathfrak{F}_{\alpha}$, $B\in\mathfrak{F}_{\beta}$ such that $A\cap B=\emptyset$. Then consider the set $A^{c}$. $A^{c}$ is a superset of $B$, so $A^{c}\in\mathfrak{F}_{\beta}$, while $A\in\mathfrak{F}_{\alpha}$, and this is absurd.\\
So $\mathfrak{F}_{\alpha}\cup \mathfrak{F}_{\beta}$ is a filter, and every ultrafilter $\W$ that extends $\mathfrak{F}_{\alpha}\cup \mathfrak{F}_{\beta}$ is, by construction, greater (respect to $\trianglelefteq_{\mathcal{F}})$ than $\mathfrak{U}_{\alpha}$ and $\mathfrak{U}_{\beta}$. So $\mathcal{F}$ is filtered.\\ 
$(2)\Rightarrow (1)$ Let $\trianglelefteq_{\mathcal{F}}$ be filtered, and suppose by contrast that $\mathcal{F}$ does not satisfy condition (F). Then there are $\alpha,\beta\in$$^{*}\N$, and a subset $A$ of $\N$ such that $A\in \mathfrak{F}_{\alpha}$ and $A^{c}\in \mathfrak{F}_{\beta}$. Consider $\mathfrak{U}_{\alpha}$ and $\mathfrak{U}_{\beta}$, and let $\V$ be an ultrafilter such that $\mathfrak{U}_{\alpha}\trianglelefteq_{\mathcal{F}}\V$, $\mathfrak{U}_{\beta}\trianglelefteq_{\mathcal{F}}\V$. Then $\mathfrak{F}_{\alpha}\subseteq\V$ and $\mathfrak{F}_{\beta}\subseteq\V$, and this is absurd, because it follows that $A\in\V$ and $A^{c}\in\V$.\\\end{proof}

With the above characterization we can reprove that $\trianglelefteq_{\mathbb{T}}$ is filtered. In fact, $\mathbb{T}$ satisfies condition $(F)$: suppose, by contrast, that there are $\alpha,\beta\in$$^{*}\N$ and a subsets $A$ of $\N$ such that, for every function $\varphi\in$$^{*}\mathbb{T}$, $^{*}\varphi(\alpha)\in$$^{**}A$ and $^{*}\varphi(\beta)\in$$^{**}A^{c}$.\\
As $\mathbb{T}\subseteq$$^{*}\mathbb{T}$, for every natural number $n$ the translation by $n$ is in $^{*}\mathbb{T}$. Since $A\in \mathfrak{F}_{\alpha}$, this entails that:

\begin{center} $\forall n\in \N$, $n+\alpha\in$$^{**}A$. \end{center}

Since $n,\alpha\in$$^{*}\N$, the above property can be reformulate as follows:

\begin{center} $\forall n\in\N$, $n+\alpha\in$$^{*}A$. \end{center}

By transfer we have:

\begin{center} $\forall \eta\in$$^{*}\N$, $\eta+$$^{*}\alpha\in$$^{**}A$. \end{center}

Hence, choosing $\eta=\beta$, it follows that $\beta+$$^{*}\alpha\in$$^{**}A$. Observe that $\beta+$$^{*}\alpha=$$^{*}t_{\alpha}(\beta)$, where $t_{\alpha}$ is the translation such that, for every hypernatural number $\eta$, $t_{\alpha}(\eta)=\eta+\alpha$. Since $t_{\alpha}\in\mathbb{T}$, as $A^{c}\in \mathfrak{F}_{\beta}$ it follows that $^{*}t_{\alpha}(\beta)\in$$^{**}A^{c}$, so 

\begin{center} $^{*}t_{\alpha}(\beta)\in$$^{**}A^{c}$ and $^{*}t_{\alpha}(\beta)\in$$^{**}A$, \end{center}

and this is absurd.\\
This proves that $\mathbb{T}$ satisfies the condition $(F)$, and this is one other proof of the filtration of $\mathbb{T}$.\\

We end this section by translating Theorem 4.7.26 in standard terms. First of all, we introduce the following definition:

\begin{defn} A subset $A$ of $\N$ is {\bfseries $\mathcal{F}$-uniformly maximal} if there is an ultrafilter $\U$ such that, for every ultrafilter $\V\in\mathcal{C}_{\mathcal{F}}(\U)$, $A\in\V$. \end{defn}

Observe that a set $A$ is $\mathcal{F}$-uniformaly maximal if and only if there is an hypernatural number $\alpha$ such that $A\in\mathfrak{F}_{\alpha}$, and that if $A$ is $\mathcal{F}$-uniformly maximal and $A\subseteq B$ then $B$ is $\mathcal{F}$-uniformly maximal as well. 

\begin{prop} Given a set $\mathcal{F}\subseteq\mathtt{Fun}(\N,\N)$ of functions, the following two conditions are equivalent:
\begin{enumerate}
	\item $\mathcal{F}$ satisfies condition (F);
	\item for every $\mathcal{F}$-uniformly maximal set $A$ the set $A^{c}$ is not $\mathcal{F}$-uniformly maximal. 
\end{enumerate}
\end{prop}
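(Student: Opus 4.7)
The plan is to reduce both conditions to purely set-theoretic assertions about the filters $\mathfrak{F}_\alpha$, after which their equivalence becomes a straightforward manipulation of quantifiers. First I would pin down the precise translation between the notions: a subset $A$ of $\N$ is $\mathcal{F}$-uniformly maximal if and only if there exists $\alpha\in{}^{*}\N$ with $A\in\mathfrak{F}_\alpha$. This observation is stated in passing after Definition 4.7.27, but I would prove it explicitly because everything else hinges on it.

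The key intermediate lemma is therefore: for every $\alpha\in{}^{*}\N$,
\begin{center}
$\{B\subseteq\N \mid B\in\V \text{ for every }\V\in\mathcal{C}_{\mathcal{F}}(\mathfrak{U}_\alpha)\} \;=\; \mathfrak{F}_\alpha.$
\end{center}
To prove this, I would combine Theorem 4.7.22 (which says $\mathcal{C}_{\mathcal{F}}(\mathfrak{U}_\alpha)=\overline{\{\mathfrak{U}_{^{*}\varphi(\alpha)}\mid \varphi\in{}^{*}\mathcal{F}\}}$) with Proposition 4.7.23 to conclude that $\mathcal{C}_{\mathcal{F}}(\mathfrak{U}_\alpha)$ is precisely the set of ultrafilters extending $\mathfrak{F}_\alpha$. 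Then I would invoke the standard consequence of Tarski's theorem: the intersection of all ultrafilters extending a proper filter $\mathfrak{F}$ equals $\mathfrak{F}$ itself (for any $B\notin\mathfrak{F}$, no element of $\mathfrak{F}$ lies inside $B$ since $\mathfrak{F}$ is closed under supersets, so $\mathfrak{F}\cup\{B^c\}$ has the finite intersection property and extends to an ultrafilter avoiding $B$). Since the $\mathfrak{c}^+$-enlarging assumption guarantees that every ultrafilter on $\N$ is of the form $\mathfrak{U}_\alpha$, this yields the desired biconditional: $A$ is $\mathcal{F}$-uniformly maximal iff $A\in\mathfrak{F}_\alpha$ for some $\alpha\in{}^{*}\N$.

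With this reformulation in hand, the equivalence $(1)\Leftrightarrow(2)$ becomes essentially tautological. Condition (F) asserts
\[
\neg\bigl(\exists \alpha,\beta\in{}^{*}\N,\;\exists A\subseteq\N \;:\; A\in\mathfrak{F}_\alpha \;\wedge\; A^c\in\mathfrak{F}_\beta\bigr),
\]
which is logically equivalent to
\[
\forall A\subseteq\N \;:\; \bigl(\exists\alpha\; A\in\mathfrak{F}_\alpha\bigr) \;\Longrightarrow\; \neg\bigl(\exists\beta\; A^c\in\mathfrak{F}_\beta\bigr).
\]
Substituting the lemma, the antecedent is ``$A$ is $\mathcal{F}$-uniformly maximal'' and the consequent is ``$A^c$ is not $\mathcal{F}$-uniformly maximal'', which is precisely condition (2).

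The main (and really the only) obstacle is the intermediate lemma identifying the intersection of $\mathcal{C}_{\mathcal{F}}(\mathfrak{U}_\alpha)$ with $\mathfrak{F}_\alpha$; once that is in place the rest is formal. A minor point to verify along the way is that $\mathfrak{F}_\alpha$ is indeed a proper filter (it is nonempty since $\N\in\mathfrak{F}_\alpha$, closed under supersets and intersections by definition, and omits $\emptyset$ because every ultrafilter $\mathfrak{U}_{^{*}\varphi(\alpha)}$ is proper), so that Tarski's extension argument applies.
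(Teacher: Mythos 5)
Your proposal is correct and follows essentially the same route as the paper: the paper's proof is the same one-line chain of equivalences, resting on the observation (stated without proof right after Definition 4.7.27) that $A$ is $\mathcal{F}$-uniformly maximal iff $A\in\mathfrak{F}_{\alpha}$ for some $\alpha\in{}^{*}\N$. Your only addition is to actually justify that observation via Theorem 4.7.22, the $(\dagger)$ identification of $\mathcal{C}_{\mathcal{F}}(\mathfrak{U}_{\alpha})$ with the ultrafilters extending $\mathfrak{F}_{\alpha}$, and the standard fact that a proper filter equals the intersection of the ultrafilters extending it — a detail the paper leaves implicit.
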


\begin{proof} We have just to observe that condition (F) does not hold if and only if there are hypernatural numbers $\alpha,\beta$ and a subset $A$ of $\N$ such that $A\in\mathfrak{F}_{\alpha}$, $A^{c}\in\mathfrak{F}_{\beta}$ if and only if there is an $\mathcal{F}$-uniformly maximal set $A$ such that $A^{c}$ is $\mathcal{F}$-uniformly maximal.\end{proof}

By considering Theorem 4.7.26 and Proposition 4.7.28, we get this standard characterization of filtered families of functions:

\begin{thm} Let $\mathcal{F}\subseteq\mathtt{Fun}(\N,\N)$ be a set of functions. The following two conditions are equivalent:
\begin{enumerate}
	\item $\mathcal{F}$ is filtered;
	\item for every subset $A$ of $\N$, if $A$ is $\mathcal{F}$-uniformly maximal then $A^{c}$ is not $\mathcal{F}$-uniformly maximal.
\end{enumerate}

\end{thm}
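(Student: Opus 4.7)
The plan is to observe that this theorem follows by composing the two preceding results, Theorem 4.7.26 and Proposition 4.7.28, both of which have already been proved. Specifically, Theorem 4.7.26 establishes the equivalence between $\mathcal{F}$ being filtered and $\mathcal{F}$ satisfying the nonstandard condition $(F)$, while Proposition 4.7.28 shows that condition $(F)$ is itself equivalent to the standard statement that no $\mathcal{F}$-uniformly maximal set has an $\mathcal{F}$-uniformly maximal complement. Chaining these two biconditionals yields precisely the desired equivalence.

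Concretely, I would proceed in two short steps. First, suppose $\mathcal{F}$ is filtered; by Theorem 4.7.26, $\mathcal{F}$ satisfies condition $(F)$, and by Proposition 4.7.28 this means that for every $\mathcal{F}$-uniformly maximal subset $A$ of $\N$, the complement $A^{c}$ fails to be $\mathcal{F}$-uniformly maximal, giving $(1)\Rightarrow(2)$. Conversely, if condition $(2)$ holds, then by Proposition 4.7.28 $\mathcal{F}$ satisfies $(F)$, and by Theorem 4.7.26 $\mathcal{F}$ is filtered, giving $(2)\Rightarrow(1)$.

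There is no real obstacle here: the theorem is stated as the standard reformulation of what the previous two results have already established in nonstandard and semi-nonstandard language. The only thing worth making explicit in the write-up is the role of the filter $\mathfrak{F}_{\alpha}=\bigcap_{\varphi\in {}^{*}\mathcal{F}}\mathfrak{U}_{{}^{*}\varphi(\alpha)}$ as the bridge between the two characterizations, since the notion of $\mathcal{F}$-uniformly maximal set is precisely the condition that $A\in\mathfrak{F}_{\alpha}$ for some $\alpha\in{}^{*}\N$. This makes the passage from the nonstandard statement of $(F)$ to the standard statement in $(2)$ entirely transparent, and the final proof can simply cite the two earlier results in sequence.
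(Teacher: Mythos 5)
Your proposal is correct and is exactly how the paper handles this statement: the theorem is presented as an immediate consequence of chaining Theorem 4.7.26 (filtered $\Leftrightarrow$ condition $(F)$) with Proposition 4.7.28 (condition $(F)$ $\Leftrightarrow$ no $\mathcal{F}$-uniformly maximal set has an $\mathcal{F}$-uniformly maximal complement), with no further argument given. Your additional remark about the role of the filters $\mathfrak{F}_{\alpha}$ as the bridge between the nonstandard and standard formulations is accurate and consistent with the definitions in the paper.
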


\subsection{Generating functions}

For particular subsets $\mathcal{F}$ of $\mathtt{Fun}$($\N,\N$) the cones $\mathcal{C}_{\mathcal{F}}(\U)$ have simple algebraical characterizations:

\begin{defn} Let $G$ be a function in $\mathtt{Fun}$$(\N\times\N^{k},\N)$, and let $S$ be a subset of $\N^{k}$. The {\bfseries set of functions generated by $(G,S)$} is the set

\begin{center} $\mathcal{F}(G,S)=\{f_{a_{1},...,a_{k}}(n)\in$ $\mathtt{Fun}$$(\N,\N)\mid$ $(a_{1},...,a_{k}\in S)\wedge(\forall n\in\N$ $f_{a_{1},...,a_{k}}(n)=G(n,(a_{1},...,a_{k})))\}$. \end{center}

The function $G$ is called {\bfseries generating function} of $\mathcal{F}(G,S)$, and $S$ is called {\bfseries set of parameters} of $\mathcal{F}(G,S)$.

\end{defn}

When $S=\N^{k}$, the family $\mathcal{F}(G,\N^{k})$ is simply denoted by $\mathcal{F}(G)$. In the following table is shown that some important sets of functions $\mathcal{F}$ are generated by an appropriate pair ($G,S$):\\

\begin{tabular}{l|l}

{\bfseries Sets of Functions} & {\bfseries Generating Functions, Sets of Parameters}\\
 & \\
Translations & $G(n,m)=n+m$, $S=\N$ \\
 & \\
Proper Translations & $G(n,m)=n+m$, $S=\N\setminus\{0\}$ \\
& \\
			Non-zero Homoteties & $G(n,m)=n\cdot m$, $S=\N\setminus\{0\}$\\
 & \\			
			$\{f_{m}(n)=n^{m}\mid m>0\}$ & $G(n,m)=n^{m}$, $S=\N\setminus\{0\}$\\
			 & \\
			$\{f_{m}(n)=m^{n}\mid m>1\}$ & $G(n,m)=m^{n}$, $S=\N\setminus\{0,1\}$\\
			 & \\
			Non-constant Affinities & $G(n,(a,b))=an+b$, $S=\N^{2}\setminus \{(0,b)\mid b\in\N\}$\\ 
			 & \\
			Non-constant Polynomials & $G(n,(a_{0},...,a_{m}))=\sum_{i=0}^{m}a_{i}n^{i}$, \\
			with degree $m$ & $S=\N^{m+1}\setminus\{(a_{0},0,0,...,0)\mid a_{0}\in\N\}$

\end{tabular}
\\ \\

When the set $\mathcal{F}$ of functions is generated by a pair $(G,S)$, we can algebraically characterize the sets $\mathcal{C}_{\mathcal{F}}(\U)$:

\begin{thm} Let $G$ be a function in $\mathtt{Fun}$$(\N\times\N^{k},\N)$, $S$ a nonempty subset of $\N^{k}$, $\U$ an ultrafilter on $\N$ and consider $\mathcal{F}=\mathcal{F}(G,S)$. Then

\begin{center} $\mathcal{C}_{\mathcal{F}}(\U)=\overline{\{\overline{G}(\U\otimes\V)\mid \V\in\beta S\}}$. \end{center}

\end{thm}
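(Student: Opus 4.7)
My plan is to derive the claim by combining Theorem 4.7.22, which already gives a nonstandard description of $\mathcal{C}_{\mathcal{F}}(\U)$, with the generator-level characterization of tensor products developed in Chapter 2. Fix once and for all a generator $\alpha \in {^*\N}$ of $\U$, so that Theorem 4.7.22 reads
\[
\mathcal{C}_{\mathcal{F}}(\U) \;=\; \overline{\{\mathfrak{U}_{({^*\varphi})(\alpha)} \mid \varphi \in {^*\mathcal{F}}\}}.
\]
The whole task is then to show that the set inside the closure coincides with $\{\overline{G}(\U \otimes \V) \mid \V \in \beta S\}$.

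The first step is to parametrize $^*\mathcal{F}$ by transfer. By the definition of $\mathcal{F}(G,S)$, an internal function $\varphi$ lies in $^*\mathcal{F}$ precisely when there is some $\beta = (\beta_1,\ldots,\beta_k) \in {^*S}$ such that $\varphi(\eta) = {^*G}(\eta,\beta)$ for every $\eta \in {^*\N}$. A second application of the star map then yields $({^*\varphi})(\mu) = {^{**}G}(\mu,{^*\beta})$ for every $\mu \in {^{**}\N}$, and in particular $({^*\varphi})(\alpha) = {^{**}G}(\alpha,{^*\beta})$.

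The main step---and, I expect, the only genuinely delicate one---is to recognize this value as a generator of $\overline{G}(\U \otimes \V)$, where $\V := \mathfrak{U}_\beta \in \beta S$. For this I would invoke the $(k{+}1)$-tuple version of Theorem 2.5.16 (equivalently, Corollary 2.4.12 adapted to ultrafilters on $\N^k$), which tells us that if $\alpha$ generates $\U$ and $\beta$ generates $\V$, then $(\alpha,{^*\beta}) \in {^{**}\N^{k+1}}$ is a generator of $\U \otimes \V$. Applying Theorem 2.3.5(1) at the $^{**}$-level to the continuous extension $\overline{G}$ then gives
\[
\overline{G}(\U \otimes \V) \;=\; \mathfrak{U}_{{^{**}G}(\alpha,{^*\beta})} \;=\; \mathfrak{U}_{({^*\varphi})(\alpha)}.
\]

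To conclude, I would note that by the $\mathfrak{c}^+$-enlarging assumption every $\V \in \beta S$ has some generator in $^*S$, so the assignment $\beta \mapsto \mathfrak{U}_\beta$ is a surjection of $^*S$ onto $\beta S$. Combining this with the previous step shows that the two indexing sets $\{\mathfrak{U}_{({^*\varphi})(\alpha)} \mid \varphi \in {^*\mathcal{F}}\}$ and $\{\overline{G}(\U \otimes \V) \mid \V \in \beta S\}$ coincide, and taking closures yields the statement of the theorem. The obstacle to watch is the bookkeeping of iterated star maps: it is crucial to compute $({^*\varphi})(\alpha)$ as ${^{**}G}(\alpha,{^*\beta})$ rather than as ${^*G}(\alpha,\beta)$, since it is only at the higher hyperextension level that the pair $(\alpha,{^*\beta})$ forms a genuine tensor-generator of $\U \otimes \V$, allowing the invocation of Theorem 2.3.5(1) to identify the resulting ultrafilter with $\overline{G}(\U \otimes \V)$.
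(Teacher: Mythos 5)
Your argument is correct and follows the same skeleton as the paper's proof: both start from Theorem 4.7.22, parametrize $^{*}\mathcal{F}$ by transfer as $\{\varphi_{\beta}\mid\beta\in{}^{*}S\}$, and then reduce everything to the single identity $\mathfrak{U}_{({}^{*}\varphi_{\beta})(\alpha)}=\overline{G}(\U\otimes\mathfrak{U}_{\beta})$ together with the surjectivity of $\beta\mapsto\mathfrak{U}_{\beta}$ from $^{*}S$ onto $\beta S$. Where you diverge is in how that identity is established. The paper proves it by a direct chain of equivalences, unwinding the definitions of $\overline{G}$ and of the tensor product until it reaches ${}^{**}G(\alpha,{}^{*}\beta)\in{}^{**}A$; this is self-contained and uses nothing beyond transfer. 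You instead package the same computation into two citations: the tensorized-pair result (that $(\alpha,{}^{*}\beta)$ generates $\U\otimes\V$) and Theorem 2.3.5(1) (continuous extensions send generators to generators). This is more conceptual and makes visible why the formula must be $\overline{G}(\U\otimes\V)$ rather than some other combination, but note that Theorem 2.5.16 and Corollary 2.4.12 are stated in the paper only for tuples of elements of $^{\bullet}\N$, i.e.\ for $\V\in\bN$, whereas here $\V=\mathfrak{U}_{\beta}$ lives in $\beta(\N^{k})$; the extension of the tensor-pair machinery to this mixed case $\U\otimes\V$ with $\V$ on $\N^{k}$ is routine (the same proof goes through using $h((\beta_{1},\ldots,\beta_{k}))=\max_{i}h(\beta_{i})$), but it is not literally on record in the text, so you would need to say a word about it. Your warning about computing $({}^{*}\varphi)(\alpha)$ as ${}^{**}G(\alpha,{}^{*}\beta)$ rather than ${}^{*}G(\alpha,\beta)$ is exactly the right point of care, and matches the final steps of the paper's equivalence chain.
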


\begin{proof} As a consequence of Theorem 4.7.22, we know that  

\begin{center} $\mathcal{C}_{\mathcal{F}}(\U)=\overline{\{\mathfrak{U}_{(^{*}\varphi)(\alpha)}\mid \varphi\in ^{*}\mathcal{F}\}}$, \end{center}

where $\alpha$ is a generator of $\U$ with $h(\alpha)=1$. By definitions and transfer,

\begin{center} $^{*}\mathcal{F}=$$^{*}\mathcal{F}(G,S)=$$^{*}\{f_{a_{1},...,a_{k}}\in$ $\mathtt{Fun}$($\N,\N$)$\mid$ $a_{1},...,a_{k}\in S$ and, for every natural number $n$, $f_{a_{1},...,a_{k}}(n)=G(n,(a_{1},...,a_{k}))\}=$\\\vspace{0.3cm}=$\{\varphi_{\alpha_{1},...,\alpha_{k}}\in$ $\mathtt{Fun}$$($$^{*}\N,$$^{*}\N)\mid$ $\alpha_{1},...,\alpha_{k}\in$$^{*}S$ and, for every hypernatural number $\alpha\in$$^{*}\N$, $\varphi_{\alpha_{1},...,\alpha_{k}}(\alpha)=$$^{*}G(\alpha,(\alpha_{1},...\alpha_{k}))\}$. \end{center}

Observe that, for every ultrafilter $\V$ in $\beta (\N^{k})$, we have

\begin{center} $\V\in\beta S$ if and only if there is a $k$-tuple $(\alpha_{1},...,\alpha_{k})$ in $^{*}S^{k}$ such that $\V=\mathfrak{U}_{(\alpha_{1},...,\alpha_{k})}$. \end{center}

{\bfseries Claim:} For every $k$-tuple $(\alpha_{1},...,\alpha_{k})$ in $^{*}S$

\begin{center} $\mathfrak{U}_{(^{*}\varphi_{\alpha_{1},...,\alpha_{k}})(\alpha)}=\overline{G}(\mathfrak{U}_{\alpha}\otimes\mathfrak{U}_{(\alpha_{1},...,\alpha_{k})})$. \end{center}

Suppose that the claim has been proved. Then 

\begin{center} $\{\mathfrak{U}_{(^{*}\varphi)(\alpha)}\mid \varphi\in$$^{*}\mathcal{F}\}=\{\overline{G}(\U\otimes\V)\mid \V\in\beta S\}$,\end{center}

and, since $\mathcal{C}_{\mathcal{F}}(\U)=\overline{\{\mathfrak{U}_{(^{*}\varphi)(\alpha)}\mid \varphi\in ^{*}\mathcal{F}\}}$, the thesis follows.\\
To prove the claim, let $A$ be a subset of $\N$. We have this chain of equivalences:

\begin{center} $A\in\overline{G}(\mathfrak{U}_{\alpha}\otimes\mathfrak{U}_{(\alpha_{1},...,\alpha_{k})})$ if and only if \\\vspace{0.3cm}

$\{n\in\N\mid \{(a_{1},...,a_{k})\in \N^{k}\mid G(n,(a_{1},...,a_{k}))\in A\}\in\mathfrak{U}_{(\alpha_{1},...,\alpha_{k})}\}\in\mathfrak{U}_{\alpha}$ if and only if\\
\vspace{0.3cm}
$\{n\in\N\mid (\alpha_{1},...,\alpha_{k})\in$$^{*}\{(a_{1},...,a_{k})\in \N^{k}\mid G(n,(a_{1},...,a_{k}))\in A\}\}\in\mathfrak{U}_{\alpha}$ if and only if\\
\vspace{0.3cm}
$\{n\in\N\mid $$^{*}G(n,(\alpha_{1},...,\alpha_{k}))\in$$^{*}A\}\in\mathfrak{U}_{\alpha}$ if and only if\\
\vspace{0.3cm}
$\alpha\in$$^{*}\{n\in\N\mid$$^{*}G(n,(\alpha_{1},...,\alpha_{k}))\in$$^{*}A\}$ if and only if\\
\vspace{0.3cm}
$^{**}G(\alpha,$$^{*}(\alpha_{1},...,\alpha_{k}))\in$$^{**}A$ if and only if\\
\vspace{0.3cm}
$($$^{*}\varphi_{\alpha_{1},...,\alpha_{k}})(\alpha)\in$$^{**}A$ if and only if\\
\vspace{0.3cm}
$A\in\mathfrak{U}_{(^{*}\varphi_{\alpha_{1},...,\alpha_{k}})(\alpha)}$. \end{center} \end{proof}

\section{Maximal Ultrafilters in $(\bN,\trianglelefteq_{\mathcal{F}})$}

In this section we study the set $\mathcal{M}_{\mathcal{F}}$ of maximal ultrafilters in ($\bN,\trianglelefteq_{\mathcal{F}})$, with particular attention to the connections between $\mathcal{M}_{\mathcal{F}}$ and the set $\mathcal{S}_{\mathcal{F}}$ of $\mathcal{F}$-maximal subsets of $\N$.

\begin{defn} Given a subset $\mathcal{F}$ of $\mathtt{Fun}$$(\N,\N)$ and an ultrafilter $\U$ in $\bN$, $\U$ is {\bfseries $\mathcal{F}$-maximal} if, for every ultrafilter $\V$ in $\bN$, $\V$ is $\mathcal{F}$-finitely mappable in $\U$. The set of $\mathcal{F}$-maximal ultrafilters is denoted by $\mathcal{M}_{\mathcal{F}}$:

\begin{center} $\mathcal{M}_{\mathcal{F}}=\{\U\in\bN\mid \U$ is $\mathcal{F}$-maximal$\}$. \end{center} 

\end{defn}

\begin{prop} If $\mathcal{F}$ is a filtered and well-ordered subset of $\mathtt{Fun}$$(\N,\N)$, an ultrafilter $\U$ is $\mathcal{F}$-maximal if and only if $[\U]$ is the greatest element in $(\bN_{/_{\equiv_{\mathcal{F}}}},\trianglelefteq_{\mathcal{F}})$, and

\begin{center} $\mathcal{M}_{\mathcal{F}}=\{\U\in\bN\mid [\U]$ is the greatest element $(\bN_{/_{\equiv_{\mathcal{F}}}},\trianglelefteq_{\mathcal{F}})\}$. \end{center}
\end{prop}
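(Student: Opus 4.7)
The plan is to verify that the statement reduces, under the hypotheses on $\mathcal{F}$, to a direct unfolding of the definitions of ``$\mathcal{F}$-maximal ultrafilter'', of the induced order $\trianglelefteq_{\mathcal{F}}$ on $\bN_{/_{\equiv_{\mathcal{F}}}}$, and of ``greatest element''. The only non-trivial ingredient is that the greatest element actually exists, which is where the hypotheses are needed.

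First I would invoke Theorem 4.7.16: since $\mathcal{F}$ is well-structured, $(\bN,\trianglelefteq_{\mathcal{F}})$ is a partially pre-ordered set (Proposition 4.7.6), so $(\bN_{/_{\equiv_{\mathcal{F}}}},\trianglelefteq_{\mathcal{F}})$ is a partially ordered set (Theorem 4.7.9); and since $\mathcal{F}$ is filtered, Theorem 4.7.16 guarantees that $(\bN_{/_{\equiv_{\mathcal{F}}}},\trianglelefteq_{\mathcal{F}})$ admits a greatest element. This makes the right-hand side of the claimed equality well-defined.

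Next I would prove the equivalence by a two-way chain of unfoldings. For the forward direction, assume $\U\in\mathcal{M}_{\mathcal{F}}$, i.e.\ $\V\trianglelefteq_{\mathcal{F}}\U$ for every $\V\in\bN$. By the definition of the order on the quotient (Section 4.2 applied to the pre-order $\trianglelefteq_{\mathcal{F}}$), $[\V]\trianglelefteq_{\mathcal{F}}[\U]$ holds iff $\V\trianglelefteq_{\mathcal{F}}\U$; hence $[\V]\trianglelefteq_{\mathcal{F}}[\U]$ for every $\V$, so $[\U]$ is a greatest element of $(\bN_{/_{\equiv_{\mathcal{F}}}},\trianglelefteq_{\mathcal{F}})$, and greatest elements in an ordered set are unique. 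Conversely, if $[\U]$ is the greatest element of $(\bN_{/_{\equiv_{\mathcal{F}}}},\trianglelefteq_{\mathcal{F}})$, then for every $\V\in\bN$ we have $[\V]\trianglelefteq_{\mathcal{F}}[\U]$, and by the same definition of the induced order this means $\V\trianglelefteq_{\mathcal{F}}\U$, which is exactly $\mathcal{F}$-maximality of $\U$. The displayed equality then follows because both sides consist of the same ultrafilters.

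The ``main obstacle'' is essentially nil: the proposition is a clean consequence of general facts about pre-orders and their quotients, already established in Section 4.2 and Theorem 4.7.16. The only conceptual point worth highlighting explicitly is that, without the filtration hypothesis, Zorn's Lemma combined with Corollary 4.7.12 only yields maximal elements in the quotient, not a greatest one, so the equivalence between ``$\mathcal{F}$-maximal'' and ``representative of the greatest class'' could fail; it is precisely filtration (via Theorem 4.2.8) that collapses maximal elements to a unique greatest one and makes the characterization meaningful.
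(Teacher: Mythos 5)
Your proof is correct and follows essentially the same route as the paper, which simply observes that the hypotheses make $\trianglelefteq_{\mathcal{F}}$ a filtered pre--order and then appeals to the general facts about pre--orders from Section 4.2 (existence of the greatest class via Theorem 4.7.16, and the identification of elements that are above everything with representatives of the greatest class). You merely spell out the definitional unfolding that the paper leaves implicit.
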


\begin{proof} The hypotheses on $\mathcal{F}$ ensures that $\trianglelefteq$ is a filtered pre-order. Then the result follows since it is a particular case of a property that holds for every pre-order.\\\end{proof}

\begin{prop} If $\mathcal{F}_{1}$, $\mathcal{F}_{2}$ are subsets of $\mathtt{Fun}$$(\N,\N)$ and $\mathcal{F}_{1}\subseteq \mathcal{F}_{2}$, then $\mathcal{M}_{\mathcal{F}_{1}}\subseteq \mathcal{M}_{\mathcal{F}_{2}}$. \end{prop}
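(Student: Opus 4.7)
The plan is to reduce the statement directly to the monotonicity property for finite mappability of ultrafilters that was already established in Proposition 4.7.2(5), namely: if $\mathcal{F}_1 \subseteq \mathcal{F}_2$ and $\U \trianglelefteq_{\mathcal{F}_1} \V$, then $\U \trianglelefteq_{\mathcal{F}_2} \V$. Given this, no further analysis of generators, monads, or cones should be needed — the inclusion is essentially formal from the definition of $\mathcal{M}_{\mathcal{F}}$.

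Concretely, I would proceed in two short steps. First, unfold the definition: fix $\U \in \mathcal{M}_{\mathcal{F}_1}$, so by Definition 4.8.1, for every ultrafilter $\V \in \bN$ we have $\V \trianglelefteq_{\mathcal{F}_1} \U$. Second, apply Proposition 4.7.2(5) with the roles played by $\V$ (source) and $\U$ (target), and with $\mathcal{F}_1 \subseteq \mathcal{F}_2$ by hypothesis, to upgrade each instance $\V \trianglelefteq_{\mathcal{F}_1} \U$ to $\V \trianglelefteq_{\mathcal{F}_2} \U$. Since $\V$ was arbitrary, $\U$ satisfies the defining condition for $\mathcal{F}_2$-maximality, hence $\U \in \mathcal{M}_{\mathcal{F}_2}$.

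There is no real obstacle: the only subtlety worth flagging explicitly in the write-up is that we do not need any assumption that $\mathcal{F}_1$ or $\mathcal{F}_2$ be well-structured or filtered — the notion of $\mathcal{F}$-maximality in Definition 4.8.1 is stated directly in terms of the raw relation $\trianglelefteq_{\mathcal{F}}$ and makes sense even when $\trianglelefteq_{\mathcal{F}}$ fails to be a pre-order, so Proposition 4.7.2(5), which is proved for arbitrary subsets of $\mathtt{Fun}(\N,\N)$, applies without additional hypotheses. The proof therefore reduces to a clean two-line argument citing Definition 4.8.1 and Proposition 4.7.2(5).
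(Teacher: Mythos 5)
Your proof is correct and is essentially identical to the paper's: the paper likewise observes that $\U\trianglelefteq_{\mathcal{F}_{1}}\V$ implies $\U\trianglelefteq_{\mathcal{F}_{2}}\V$ when $\mathcal{F}_{1}\subseteq\mathcal{F}_{2}$ (Proposition 4.7.2(5)) and concludes directly from the definition of $\mathcal{F}$-maximality. Your remark that no well-structuredness or filtration hypothesis is needed is accurate and consistent with the paper's statement.
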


\begin{proof} For every ultrafilters $\U,\V$ in $\bN$, if $\U\trianglelefteq_{\mathcal{F}_{1}}\V$ then $\U\trianglelefteq_{\mathcal{F}_{2}}\V$, so every $\mathcal{F}_{1}$-maximal ultrafilter is also a $\mathcal{F}_{2}$-maximal ultrafilter.\\\end{proof}

\begin{prop} Let $S$ be a weakly partition regular family of subsets of $\N$, and suppose that $S$ is $\leq_{\mathcal{F}}$-upward closed $($i.e., whenever $A\in S$ and $A\leq_{\mathcal{F}} B$, $B\in S)$. Then $\mathcal{S}_{\mathcal{F}}\subseteq S$. \end{prop}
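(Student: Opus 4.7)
The statement is essentially a direct consequence of combining the $\mathcal{F}$-maximality of elements of $\mathcal{S}_{\mathcal{F}}$ with the nonemptiness of $S$. The plan is as follows.

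First I would invoke the fact that $S$ is weakly partition regular to guarantee that $S$ is nonempty. Concretely, by Theorem 1.2.3 there exists an ultrafilter $\mathcal{U}\subseteq S$; in particular any element $B\in\mathcal{U}$ is an element of $S$. (Alternatively, applying the definition of weak partition regularity to the trivial one-block partition $\N=\N$ yields $\N\in S$ directly; either way, we produce a witness $B\in S$.)

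Next, fix an arbitrary $A\in\mathcal{S}_{\mathcal{F}}$. By the definition of $\mathcal{F}$-maximality for subsets of $\N$ (Definition 4.6.11), every subset of $\N$ is $\mathcal{F}$-finitely mappable into $A$; in particular, the set $B\in S$ chosen above satisfies $B\leq_{\mathcal{F}} A$. Applying the hypothesis that $S$ is $\leq_{\mathcal{F}}$-upward closed, we conclude $A\in S$. Since $A\in\mathcal{S}_{\mathcal{F}}$ was arbitrary, this gives $\mathcal{S}_{\mathcal{F}}\subseteq S$, as desired.

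There is essentially no obstacle here: the proposition is a bookkeeping consequence of the definitions. The only minor subtlety is verifying that the $\leq_{\mathcal{F}}$-upward closure hypothesis is consistent with the closure-under-superset condition implicit in the definition of a weakly partition regular family (Definition 1.2.2); this is automatic when $\mathcal{F}$ is well-structured, because reflexivity of $\leq_{\mathcal{F}}$ combined with item (7) of Proposition 4.6.2 yields $A\leq_{\mathcal{F}} B$ whenever $A\subseteq B$. Thus the proof can be written in a few lines without requiring any nonstandard machinery or the apparatus of cones developed in the preceding sections.
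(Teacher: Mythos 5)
Your proof is correct and follows essentially the same route as the paper's: pick any $B\in S$ (nonemptiness from weak partition regularity), use the definition of $\mathcal{F}$-maximality to get $B\leq_{\mathcal{F}} A$ for each $A\in\mathcal{S}_{\mathcal{F}}$, and conclude by upward closure. The extra remarks about Theorem 1.2.3 and superset closure are harmless but not needed.
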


\begin{proof} We just have to observe that, if $A$ is a set in $S$ and $M$ is a set in $\mathcal{S}_{\mathcal{F}}$, since $A\leq_{\mathcal{F}} M$ then $M\in S$. So $\mathcal{S}_{\mathcal{F}}\subseteq S$.\\\end{proof}

A first correlation between $\mathcal{S}_{\mathcal{F}}$ and $\mathcal{M}_{\mathcal{F}}$ is the following:

\begin{prop} Let $\mathcal{F}$ be a subset of $\mathtt{Fun}$$(\N,\N)$ such that $\leq_{\mathcal{F}}$ is transitive, and suppose that the family $\mathcal{S}_{\mathcal{F}}$ of $\mathcal{F}$-maximal elements in $(\wp(\N),\leq_{\mathcal{F}})$ is weakly partition regular. Then $\mathcal{M}_{\mathcal{F}}\neq\emptyset$, and 

\begin{center} $\mathcal{M}_{\mathcal{F}}=\{\U\in\bN\mid \U\subseteq\mathcal{S}_{\mathcal{F}}\}$. \end{center}

\end{prop}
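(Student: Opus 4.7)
The argument will have three short steps, and the transitivity of $\leq_{\mathcal{F}}$ will be used precisely to transfer $\mathcal{F}$-maximality across the $\leq_{\mathcal{F}}$-relation. Throughout the plan I will use the observation (stated as a Fact after Definition 4.6.11) that when $\leq_{\mathcal{F}}$ is transitive one has $\mathcal{S}_{\mathcal{F}}=\{A\subseteq\N\mid \N\leq_{\mathcal{F}} A\}$, so $\mathcal{S}_{\mathcal{F}}$ is automatically closed under supersets.

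First I will prove the easy inclusion $\{\U\in\bN\mid \U\subseteq\mathcal{S}_{\mathcal{F}}\}\subseteq\mathcal{M}_{\mathcal{F}}$. Let $\U\subseteq\mathcal{S}_{\mathcal{F}}$ and let $\V$ be an arbitrary ultrafilter. For any $A\in\U$ one has $A\in\mathcal{S}_{\mathcal{F}}$, so every subset of $\N$ (in particular every $B\in\V$) satisfies $B\leq_{\mathcal{F}} A$. Thus $\V\trianglelefteq_{\mathcal{F}}\U$, i.e.\ $\U\in\mathcal{M}_{\mathcal{F}}$.

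Next I will establish nonemptiness. Since $\mathcal{S}_{\mathcal{F}}$ is closed under supersets and, by hypothesis, weakly partition regular, Theorem 1.2.3(1) yields an ultrafilter $\W$ with $\W\subseteq\mathcal{S}_{\mathcal{F}}$. By the previous step $\W\in\mathcal{M}_{\mathcal{F}}$, so $\mathcal{M}_{\mathcal{F}}\neq\emptyset$.

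Finally I will prove the converse inclusion $\mathcal{M}_{\mathcal{F}}\subseteq\{\U\in\bN\mid \U\subseteq\mathcal{S}_{\mathcal{F}}\}$, and this is where transitivity plays its role. Fix $\U\in\mathcal{M}_{\mathcal{F}}$ and fix the ultrafilter $\W\subseteq\mathcal{S}_{\mathcal{F}}$ produced in the previous step. Given $A\in\U$, maximality of $\U$ gives $\W\trianglelefteq_{\mathcal{F}}\U$, hence there exists $B\in\W$ with $B\leq_{\mathcal{F}} A$. But $B\in\mathcal{S}_{\mathcal{F}}$ means $\N\leq_{\mathcal{F}} B$, and by transitivity of $\leq_{\mathcal{F}}$ we deduce $\N\leq_{\mathcal{F}} A$, that is $A\in\mathcal{S}_{\mathcal{F}}$. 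Since $A\in\U$ was arbitrary, $\U\subseteq\mathcal{S}_{\mathcal{F}}$, which concludes the proof.

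There is no serious obstacle here: once one records the reformulation $\mathcal{S}_{\mathcal{F}}=\{A\mid\N\leq_{\mathcal{F}} A\}$, the only non-bookkeeping step is the transitivity argument in the third paragraph, which packages the set-level maximality of some $B\in\W$ into set-level maximality of an arbitrary $A\in\U$.
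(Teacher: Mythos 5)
Your proof is correct and follows essentially the same route as the paper: the inclusion $\{\U\mid\U\subseteq\mathcal{S}_{\mathcal{F}}\}\subseteq\mathcal{M}_{\mathcal{F}}$ directly from maximality of the members, nonemptiness via Theorem 1.2.3, and the reverse inclusion by pulling a maximal set $B$ below an arbitrary $A\in\U$ and applying transitivity of $\leq_{\mathcal{F}}$. If anything, your third paragraph states the quantifiers of $\V\trianglelefteq_{\mathcal{F}}\U$ more carefully than the paper's own wording, which swaps the roles of $\U$ and $\V$ in that step; the intended argument is the same.
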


\begin{proof} In Section 1.2 we proved that every weakly partition regular family contains an ultrafilter. Since $\mathcal{S}_{\mathcal{F}}$ is weakly partition regular by hypothesis, let $\U$ be an ultrafilter contained in $\mathcal{S}_{\mathcal{F}}$; $\U$ is $\mathcal{F}$-maximal since, for every set $A\in \U$, as $\U\subseteq\mathcal{S}_{\mathcal{F}}$ $A\in\mathcal{S}_{\mathcal{F}}$, so $\N\leq_{\mathcal{F}} A$; in particular, if $\V$ is an ultrafilter in $\bN$, this proves that $\V\trianglelefteq_{\mathcal{F}}\U$. This shows that 

\begin{center} (1) $\{\U\in\bN\mid \U\subseteq\mathcal{S}_{\mathcal{F}}\}\subseteq \mathcal{M}_{\mathcal{F}}$.\end{center} 

To prove the reverse inclusion, let $\U$ be a maximal ultrafilter, and $\V$ an ultrafilter included in $\mathcal{S}_{\mathcal{F}}$. Since, my maximality, $\U\trianglelefteq_{\mathcal{F}} \V$, for every set $A$ in $\V$ there is a set $B$ in $\U$ such that $B\leq_{\mathcal{F}}A$. But, as $B$ is maximal and $B\leq_{\mathcal{F}} A$, by transitivity it follows that $A$ is in $\mathcal{S}_{\mathcal{F}}$: this proves that every set $A$ in $\V$ is included in $\mathcal{S}_{\mathcal{F}}$, so: 

\begin{center} (2) $\mathcal{M}_{\mathcal{F}}\subseteq\{\U\in\bN\mid \U\subseteq\mathcal{S}_{\mathcal{F}}\}$.\end{center} 

Putting togheter (1) and (2), we obtain $\mathcal{M}_{\mathcal{F}}=\{\U\in\bN\mid \U\subseteq\mathcal{S}_{\mathcal{F}}\}$.\\ \end{proof}

\begin{cor} If $\mathcal{F}$ is a subset of $\mathtt{Fun}$$(\N,\N)$ such that $\leq_{\mathcal{F}}$ is transitive and $\mathcal{S}_{\mathcal{F}}$ is strongly partition regular then for every maximal set $A$ in $\mathcal{S}_{\mathcal{F}}$ there is a $\mathcal{F}$-maximal ultrafilter $\U$ such that $A\in\U$. In particular, \begin{center}$\mathcal{S}_{\mathcal{F}}=\bigcup \mathcal{M}_{\mathcal{F}}$.\end{center} \end{cor}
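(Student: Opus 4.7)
The plan is to derive this corollary directly from Theorem 1.2.3 (characterizing strongly partition regular families as unions of ultrafilters) combined with Proposition 4.8.5 (which, under the transitivity hypothesis and weak partition regularity of $\mathcal{S}_{\mathcal{F}}$, identifies $\mathcal{M}_{\mathcal{F}}$ with $\{\U \in \bN \mid \U \subseteq \mathcal{S}_{\mathcal{F}}\}$). Since strong partition regularity implies weak partition regularity, Proposition 4.8.5 is available, so the whole argument reduces to book-keeping.

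First I would unpack the strong partition regularity hypothesis: by Theorem 1.2.3(2), $\mathcal{S}_{\mathcal{F}}$ is an union of ultrafilters, which means that for every $A \in \mathcal{S}_{\mathcal{F}}$ there exists an ultrafilter $\U$ on $\N$ with $A \in \U$ and $\U \subseteq \mathcal{S}_{\mathcal{F}}$. Next I would invoke Proposition 4.8.5: the hypotheses needed there (transitivity of $\leq_{\mathcal{F}}$ and weak partition regularity of $\mathcal{S}_{\mathcal{F}}$) are both available, so $\mathcal{M}_{\mathcal{F}} = \{\V \in \bN \mid \V \subseteq \mathcal{S}_{\mathcal{F}}\}$. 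In particular, the ultrafilter $\U$ produced above satisfies $\U \in \mathcal{M}_{\mathcal{F}}$, which is the first assertion of the corollary.

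For the identity $\mathcal{S}_{\mathcal{F}} = \bigcup \mathcal{M}_{\mathcal{F}}$ I would prove the two inclusions separately. The inclusion $\mathcal{S}_{\mathcal{F}} \subseteq \bigcup \mathcal{M}_{\mathcal{F}}$ is immediate from what was just established: every $A \in \mathcal{S}_{\mathcal{F}}$ belongs to some $\U \in \mathcal{M}_{\mathcal{F}}$, hence to $\bigcup \mathcal{M}_{\mathcal{F}}$. For the reverse inclusion $\bigcup \mathcal{M}_{\mathcal{F}} \subseteq \mathcal{S}_{\mathcal{F}}$, if $A \in \bigcup \mathcal{M}_{\mathcal{F}}$ then $A \in \U$ for some $\mathcal{F}$-maximal $\U$; by Proposition 4.8.5 again, $\U \subseteq \mathcal{S}_{\mathcal{F}}$, and so $A \in \mathcal{S}_{\mathcal{F}}$.

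There is really no main obstacle here; the result is essentially a combination of two earlier theorems, and the only subtlety worth mentioning explicitly is to remark that strong partition regularity is strictly stronger than weak partition regularity, so all hypotheses required by Proposition 4.8.5 are in place. I would therefore keep the proof short, signposting precisely where Theorem 1.2.3(2) and Proposition 4.8.5 are being used.
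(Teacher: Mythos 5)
Your proposal is correct and follows exactly the paper's own argument: the paper likewise combines Theorem 1.2.3 (strong partition regularity means $\mathcal{S}_{\mathcal{F}}$ is a union of ultrafilters) with Proposition 4.8.5 (which, since strong implies weak partition regularity, gives $\mathcal{M}_{\mathcal{F}}=\{\U\in\bN\mid\U\subseteq\mathcal{S}_{\mathcal{F}}\}$) to conclude $\mathcal{S}_{\mathcal{F}}=\bigcup\mathcal{M}_{\mathcal{F}}$. No gaps; your explicit remark that strong partition regularity supplies the weak partition regularity hypothesis of Proposition 4.8.5 is exactly the one point the paper also flags.
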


\begin{proof} By Theorem 1.2.3 it follows that, since $\mathcal{S}_{\mathcal{F}}$ is strongly partition regular, then it is an union of ultrafilters, so

\begin{center} (1) $\mathcal{S}_{\mathcal{F}}= \bigcup \{\U\in \bN\mid \U\subseteq\mathcal{S}_{\mathcal{F}}\}$. \end{center}

Since every strong partition regular family of subsets of $\N$ is, in particular, weakly partition regular, by Proposition 4.8.5 it follows that

\begin{center} (2) $\{\U\in\bN\mid \U\subseteq\mathcal{S}_{\mathcal{F}}\}= \mathcal{M}_{\mathcal{F}}$.\end{center}

As a consequence, $\mathcal{S}_{\mathcal{F}}=\bigcup \mathcal{M}_{\mathcal{F}}$.\\ \end{proof}

\begin{cor} If the family $\mathcal{S}_{\mathcal{F}}$ is weakly partition regular and $\mathcal{F}$ is well-structured then the order $\trianglelefteq_{\mathcal{F}}$ is filtered. \end{cor}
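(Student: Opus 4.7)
The plan is to reduce the statement to Proposition 4.8.5 applied to the hypotheses at hand. Since $\mathcal{F}$ is well-structured, the relation $\leq_{\mathcal{F}}$ is a pre-order on $\wp(\N)$, and in particular it is transitive; together with the hypothesis that $\mathcal{S}_{\mathcal{F}}$ is weakly partition regular, this places us exactly in the setting of Proposition 4.8.5.

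First, I would invoke Proposition 4.8.5 to deduce that $\mathcal{M}_{\mathcal{F}} \neq \emptyset$, so one can pick an $\mathcal{F}$-maximal ultrafilter $\W_{0} \in \mathcal{M}_{\mathcal{F}}$. By Definition 4.8.1 of $\mathcal{F}$-maximality, this means that for every ultrafilter $\V \in \bN$ one has $\V \trianglelefteq_{\mathcal{F}} \W_{0}$.

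Given this, the filtration of $\trianglelefteq_{\mathcal{F}}$ on $\bN$ is immediate: for any two ultrafilters $\U, \V \in \bN$, the ultrafilter $\W_{0}$ itself witnesses $\U \trianglelefteq_{\mathcal{F}} \W_{0}$ and $\V \trianglelefteq_{\mathcal{F}} \W_{0}$, which by Definition 4.2.7 is precisely the statement that $\trianglelefteq_{\mathcal{F}}$ is filtered. There is no genuine obstacle here: once Proposition 4.8.5 is available, the existence of even a single $\mathcal{F}$-maximal ultrafilter automatically provides a common upper bound for every pair, since such an ultrafilter lies above \emph{all} elements of $(\bN, \trianglelefteq_{\mathcal{F}})$, not merely above two prescribed ones. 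Thus the proof is a one-line corollary of the existence half of Proposition 4.8.5.
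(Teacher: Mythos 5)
Your proof is correct and takes essentially the same route as the paper: both obtain an $\mathcal{F}$-maximal ultrafilter from Proposition 4.8.5 (using that well-structuredness gives transitivity of $\leq_{\mathcal{F}}$) and then observe that such an ultrafilter, being $\trianglelefteq_{\mathcal{F}}$-above every element of $\bN$, forces filtration. The only cosmetic difference is that the paper detours through the greatest element of the quotient $(\bN_{/_{\equiv_{\mathcal{F}}}},\trianglelefteq_{\mathcal{F}})$ and cites Proposition 4.2.8, whereas you conclude directly that the maximal ultrafilter is a common upper bound for any pair.
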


\begin{proof} If $\mathcal{S}_{\mathcal{F}}$ is weakly partition regular, then there is some maximal ultrafilter $\U$ in $\bN$, so there is a greatest element in $(\bN/_{\equiv_{\mathcal{F}}},\trianglelefteq_{\mathcal{F}})$; in particular, as a consequence of Proposition 4.2.8, the relation $\trianglelefteq_{\mathcal{F}}$ is filtered. \\ \end{proof}

\begin{thm} If $\mathcal{F}$ is filtered and well-structured then $\mathcal{S}_{\mathcal{F}}\subseteq\bigcup\mathcal{M}_{\mathcal{F}}$. \end{thm}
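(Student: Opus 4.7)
The plan is to produce an $\mathcal{F}$-maximal ultrafilter containing $A$ by a compactness argument in the Stone space $\bN$, applied to the family of closed sets $\{\Theta_A\}\cup\{\mathcal{C}_{\mathcal{F}}(\U):\U\in\bN\}$. Recall that each $\mathcal{C}_{\mathcal{F}}(\U)$ is closed by Proposition 4.7.18, that $\Theta_A$ is clopen, and that an ultrafilter is $\mathcal{F}$-maximal precisely when it lies in every cone, so $\mathcal{M}_{\mathcal{F}}=\bigcap_{\U\in\bN}\mathcal{C}_{\mathcal{F}}(\U)$. Moreover, since $\mathcal{F}$ is well-structured, the Fact following Definition 4.6.11 identifies $\mathcal{S}_{\mathcal{F}}$ with $\{B\subseteq\N:\N\leq_{\mathcal{F}} B\}$, so the hypothesis $A\in\mathcal{S}_{\mathcal{F}}$ simply reads $\N\leq_{\mathcal{F}} A$.

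The crucial step is to verify $\mathcal{C}_{\mathcal{F}}(\U)\cap\Theta_A\neq\emptyset$ for each single $\U\in\bN$. Fix a generator $\alpha$ of $\U$. Since $\N\in\U$ and $\N\leq_{\mathcal{F}} A$, the implication $(1)\Rightarrow(2)$ of Lemma 4.7.19 produces a $\varphi\in{}^{*}\mathcal{F}$ with $A\in\mathfrak{U}_{{}^{*}\varphi(\alpha)}$; by Theorem 4.7.22 this very ultrafilter lies in $\mathcal{C}_{\mathcal{F}}(\U)$, and by construction it lies in $\Theta_A$. The filteredness of $\mathcal{F}$ then upgrades this to finite intersections: given $\U_{1},\ldots,\U_{k}\in\bN$, an easy induction produces $\U_{0}$ with $\U_{i}\trianglelefteq_{\mathcal{F}}\U_{0}$ for every $i$, and transitivity of $\trianglelefteq_{\mathcal{F}}$ forces $\mathcal{C}_{\mathcal{F}}(\U_{0})\subseteq\bigcap_{i\leq k}\mathcal{C}_{\mathcal{F}}(\U_{i})$. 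Combining this with the single-cone case applied to $\U_{0}$ yields $\Theta_A\cap\mathcal{C}_{\mathcal{F}}(\U_{1})\cap\cdots\cap\mathcal{C}_{\mathcal{F}}(\U_{k})\neq\emptyset$.

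Hence $\{\Theta_A\}\cup\{\mathcal{C}_{\mathcal{F}}(\U):\U\in\bN\}$ is a family of closed sets in the compact space $\bN$ with the finite intersection property, so its total intersection $\Theta_A\cap\bigcap_{\U\in\bN}\mathcal{C}_{\mathcal{F}}(\U)=\Theta_A\cap\mathcal{M}_{\mathcal{F}}$ is nonempty; any element of it is an $\mathcal{F}$-maximal ultrafilter containing $A$, witnessing $A\in\bigcup\mathcal{M}_{\mathcal{F}}$. The only substantive step is the single-cone case, and I anticipate no real obstacle there: the content is precisely that the $\mathcal{F}$-maximality of $A$ as a subset of $\N$ is exactly what is needed to feed the ``trivial'' element $\N$ of $\U$ into Lemma 4.7.19; filteredness and compactness then do the rest in an essentially formal way.
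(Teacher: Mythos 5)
Your proof is correct, but it takes a genuinely different route from the paper's. The paper argues by contradiction: it first invokes the existence of a maximal ultrafilter $\U$ (obtained earlier from Zorn's Lemma on $\trianglelefteq_{\mathcal{F}}$-chains together with filteredness), assumes $A$ belongs to no maximal ultrafilter so that $A^{c}\in\U$, takes a generator $\alpha\in{}^{*}(A^{c})$, and uses $A^{c}\leq_{\mathcal{F}}A$ (which holds because $A\in\mathcal{S}_{\mathcal{F}}$) to produce $\varphi\in{}^{*}\mathcal{F}$ with $({}^{*}\varphi)(\alpha)\in{}^{**}A$; since $\mathfrak{U}_{({}^{*}\varphi)(\alpha)}$ sits $\trianglelefteq_{\mathcal{F}}$-above the maximal $\U$ it is itself maximal, contradicting $A\in\mathfrak{U}_{({}^{*}\varphi)(\alpha)}$. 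You instead run a direct compactness argument on the closed family $\{\Theta_{A}\}\cup\{\mathcal{C}_{\mathcal{F}}(\U)\mid\U\in\bN\}$, using $\mathcal{M}_{\mathcal{F}}=\bigcap_{\U}\mathcal{C}_{\mathcal{F}}(\U)$, reducing finite intersections of cones to a single cone via filteredness and transitivity, and hitting that single cone inside $\Theta_{A}$ by feeding $\N\leq_{\mathcal{F}}A$ into Lemma 4.7.19. Both proofs rest on the same nonstandard kernel (Lemma 4.7.19 together with Theorem 4.7.22), but applied to different instances: the paper to $A^{c}\leq_{\mathcal{F}}A$ at a maximal ultrafilter, you to $\N\leq_{\mathcal{F}}A$ at an arbitrary one. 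What your version buys is self-containedness: it does not presuppose that maximal ultrafilters exist, since compactness simultaneously delivers their existence and the fact that one of them contains $A$; what the paper's version buys is brevity, given that the existence of maximal ultrafilters has already been established.
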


\begin{proof} First of all we observe that, by hypothesis, there are $\mathcal{F}$-maximal ultrafilters.\\
Let $A$ be a set in $\mathcal{S}_{\mathcal{F}}$, and suppose by contradiction that for every maximal ultrafilter $\U$ the set $A$ is not in $\U$, i.e. that the complement $A^{c}$ is in $\U$. Let $\U$ be a maximal ultrafilter, and let $\alpha\in$$^{*}\N$ be a generator of $\U$. In particular, $\alpha\in$$^{*}(A^{c})$.\\
Since $A$ is in $\mathcal{S}_{\mathcal{F}}$, $A^{c}\leq_{\mathcal{F}} A$ so, by Proposition 4.6.5, there is a function $\varphi$ in $^{*}\mathcal{F}$ with $\varphi(A^{c})\subseteq $$^{*}A$. By transfer, this implies that 

\begin{center} $( ^{*}\varphi)($$^{*}A^{c})\subseteq ^{**}A$. \end{center}

As $\alpha\in$$^{*}A^{c}$, this entails that $($$^{*}\varphi)(\alpha)\in$$^{**}A$, so $A\in\mathfrak{U}_{( ^{*}\varphi)(\alpha)}$. But, by Theorem 4.7.22, $\mathfrak{U}_{\alpha}\trianglelefteq_{\mathcal{F}} \mathfrak{U}_{( ^{*}\varphi)(\alpha)}$ for every function $\varphi$ in $^{*}\mathcal{F}$ and, since $\mathfrak{U}_{\alpha}$ is maximal, this entails that $\mathfrak{U}_{( ^{*}\varphi)(\alpha)}$ is maximal. This is absurd, since $A\in\mathfrak{U}_{( ^{*}\varphi)(\alpha)}$.\\ \end{proof}

By combining the results proved in this section, we obtain the following theoremt:

\begin{thm} For every well-structured and filtered set of functions $\mathcal{F}\subseteq\fN$ the following two conditions are equivalent:
\begin{enumerate}
	\item $\mathcal{S}_{\mathcal{F}}$ is weakly partition regular;
	\item $\mathcal{S}_{\mathcal{F}}$ is strongly partition regular.
\end{enumerate}
\end{thm}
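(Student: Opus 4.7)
The plan is to prove this essentially as a corollary of the two preceding results (Proposition 4.8.5 and Theorem 4.8.8), by invoking Theorem 1.2.3 which characterizes strongly partition regular families as unions of ultrafilters. The implication (2)$\Rightarrow$(1) is immediate: every strongly partition regular family is weakly partition regular by the elementary observation noted right after Definition 1.2.2. So the substantive task is (1)$\Rightarrow$(2).

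For the forward direction, I would first note that $\mathcal{S}_{\mathcal{F}}$ is closed under supersets. This is straightforward from transitivity of $\leq_{\mathcal{F}}$ (which holds because $\mathcal{F}$ is well-structured): if $A \in \mathcal{S}_{\mathcal{F}}$ and $A \subseteq B$, then $\N \leq_{\mathcal{F}} A \leq_{\mathcal{F}} B$, so $B \in \mathcal{S}_{\mathcal{F}}$. By Theorem 1.2.3 part (2), showing $\mathcal{S}_{\mathcal{F}}$ is strongly partition regular is thus equivalent to showing $\mathcal{S}_{\mathcal{F}}$ is a union of ultrafilters.

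To establish this, I would combine the two main results of the section. Fix an arbitrary $A \in \mathcal{S}_{\mathcal{F}}$. Since $\mathcal{F}$ is well-structured and filtered, Theorem 4.8.8 yields $\mathcal{S}_{\mathcal{F}} \subseteq \bigcup \mathcal{M}_{\mathcal{F}}$, so there exists an $\mathcal{F}$-maximal ultrafilter $\U \in \mathcal{M}_{\mathcal{F}}$ with $A \in \U$. Now apply Proposition 4.8.5: since $\leq_{\mathcal{F}}$ is transitive and (by hypothesis (1)) $\mathcal{S}_{\mathcal{F}}$ is weakly partition regular, we have the characterization $\mathcal{M}_{\mathcal{F}} = \{\V \in \bN \mid \V \subseteq \mathcal{S}_{\mathcal{F}}\}$. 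In particular $\U \subseteq \mathcal{S}_{\mathcal{F}}$. This shows that every $A \in \mathcal{S}_{\mathcal{F}}$ is contained in some ultrafilter entirely included in $\mathcal{S}_{\mathcal{F}}$, whence $\mathcal{S}_{\mathcal{F}} = \bigcup \mathcal{M}_{\mathcal{F}}$ is a union of ultrafilters, completing the proof.

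There is no serious obstacle here beyond recognizing that the two previously proved statements dovetail exactly: Theorem 4.8.8 provides the ``upward'' direction ($\mathcal{F}$-maximal sets sit inside $\mathcal{F}$-maximal ultrafilters), while Proposition 4.8.5 provides the ``downward'' direction ($\mathcal{F}$-maximal ultrafilters consist only of $\mathcal{F}$-maximal sets), and the combination forces the union-of-ultrafilters structure that Theorem 1.2.3 identifies with strong partition regularity. Care should be taken to verify that the hypotheses of both cited results are genuinely available: transitivity of $\leq_{\mathcal{F}}$ from well-structuredness, and existence of $\mathcal{F}$-maximal ultrafilters (equivalently, existence of the greatest element in $(\bN_{/_{\equiv_{\mathcal{F}}}},\trianglelefteq_{\mathcal{F}})$) from the filtered assumption combined with Corollary 4.7.13 and Theorem 4.7.16.
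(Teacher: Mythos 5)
Your proof is correct and follows essentially the same route as the paper: combine Theorem 4.8.8 ($\mathcal{S}_{\mathcal{F}}\subseteq\bigcup\mathcal{M}_{\mathcal{F}}$) with Proposition 4.8.5 ($\bigcup\mathcal{M}_{\mathcal{F}}\subseteq\mathcal{S}_{\mathcal{F}}$) to get $\mathcal{S}_{\mathcal{F}}=\bigcup\mathcal{M}_{\mathcal{F}}$, then invoke Theorem 1.2.3 to identify union-of-ultrafilters with strong partition regularity. Your extra check that $\mathcal{S}_{\mathcal{F}}$ is closed under supersets is a welcome bit of care the paper leaves implicit.
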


\begin{proof} $(1)\Rightarrow (2)$ Suppose that $\mathcal{S}_{\mathcal{F}}$ is weakly partition regular. Since $\trianglelefteq_{\mathcal{F}}$ is filtered and well-structured, the previous proposition ensures that $\mathcal{S}_{\mathcal{F}}\subseteq \bigcup \mathcal{M}_{\mathcal{F}}$, while Proposition 4.8.5 ensures that $\bigcup \mathcal{M}_{\mathcal{F}}\subseteq\mathcal{S}_{\mathcal{F}}$. So 

\begin{center} $\mathcal{S}_{\mathcal{F}}=\bigcup \mathcal{M}_{\mathcal{F}}$\end{center}

and this shows that the family $\mathcal{S}_{\mathcal{F}}$ is an union of ultrafilters, and this is a condition equivalent to state that $\mathcal{S}_{\mathcal{F}}$ is strongly partition regular. \\
$(2)\Rightarrow (1)$ Every strongly partition regular family is, in particular, weakly partition regular.\\ \end{proof}

A first corollary is that:

\begin{cor} If $\mathcal{F}\subseteq\fN$ is a well-structured and filtered set of functions and $\mathcal{S}_{\mathcal{F}}$ is weakly partition regular, then an ultrafilter $\U$ is in $\mathcal{M}_{\mathcal{F}}$ if and only if $\U\subseteq\mathcal{S}_{\mathcal{F}}$. \end{cor}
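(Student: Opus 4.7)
The plan is to observe that this corollary is essentially a direct application of Proposition 4.8.5, which already establishes the equality $\mathcal{M}_{\mathcal{F}}=\{\U\in\bN\mid\U\subseteq\mathcal{S}_{\mathcal{F}}\}$ under the weaker hypotheses that $\leq_{\mathcal{F}}$ is transitive on $\wp(\N)$ and $\mathcal{S}_{\mathcal{F}}$ is weakly partition regular. In the corollary, $\mathcal{F}$ is assumed to be well-structured, so $\leq_{\mathcal{F}}$ is a pre-order and in particular transitive; weak partition regularity of $\mathcal{S}_{\mathcal{F}}$ is a standing hypothesis. Thus Proposition 4.8.5 applies verbatim and yields the stated equivalence. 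The filtered hypothesis is not strictly needed for this derivation; its role is to permit the strengthening provided by Theorem 4.8.9 (upgrading weak to strong partition regularity of $\mathcal{S}_{\mathcal{F}}$) and by Theorem 4.8.8 (giving $\mathcal{S}_{\mathcal{F}}\subseteq\bigcup\mathcal{M}_{\mathcal{F}}$), so that in fact under the full hypotheses one obtains the sharper identity $\mathcal{S}_{\mathcal{F}}=\bigcup\mathcal{M}_{\mathcal{F}}$ of Corollary 4.8.6, which trivially implies the corollary.

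For a self-contained proof, one would reprove the two inclusions directly. For the easier direction, I would take $\U$ with $\U\subseteq\mathcal{S}_{\mathcal{F}}$ and an arbitrary $\V\in\bN$; for any $B\in\U$, maximality gives $\N\leq_{\mathcal{F}}B$, and since any $A\in\V$ satisfies $A\leq_{\mathcal{F}}\N$, transitivity of $\leq_{\mathcal{F}}$ (from well-structuredness) yields $A\leq_{\mathcal{F}}B$, witnessing $\V\trianglelefteq_{\mathcal{F}}\U$; hence $\U\in\mathcal{M}_{\mathcal{F}}$. For the reverse direction, I would use weak partition regularity of $\mathcal{S}_{\mathcal{F}}$ to select an ultrafilter $\V\subseteq\mathcal{S}_{\mathcal{F}}$; if $\U$ is $\mathcal{F}$-maximal then $\V\trianglelefteq_{\mathcal{F}}\U$, so for every $B\in\U$ there exists $A\in\V$ with $A\leq_{\mathcal{F}}B$. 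Since $A\in\mathcal{S}_{\mathcal{F}}$, one has $\N\leq_{\mathcal{F}}A\leq_{\mathcal{F}}B$, and transitivity gives $\N\leq_{\mathcal{F}}B$, i.e.\ $B\in\mathcal{S}_{\mathcal{F}}$.

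There is no substantive obstacle in the argument; the only non-trivial ingredient is the existence of some ultrafilter contained in $\mathcal{S}_{\mathcal{F}}$, which is exactly the content of weak partition regularity (Theorem 1.2.3). The remainder is formal manipulation of the definitions of $\leq_{\mathcal{F}}$, $\trianglelefteq_{\mathcal{F}}$, $\mathcal{S}_{\mathcal{F}}$, and $\mathcal{M}_{\mathcal{F}}$, together with transitivity of $\leq_{\mathcal{F}}$ guaranteed by well-structuredness. In short, I expect to write a single short paragraph citing Proposition 4.8.5, noting that its hypotheses are subsumed by those of the corollary.
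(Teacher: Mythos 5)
Your proof is correct, but it takes a slightly different (and arguably cleaner) route than the paper. The paper's own proof of this corollary reads: observe that Theorem 4.8.9 upgrades $\mathcal{S}_{\mathcal{F}}$ from weakly to strongly partition regular, then apply Theorem 1.2.3 to write $\mathcal{S}_{\mathcal{F}}$ as a union of ultrafilters; the identification of those ultrafilters with $\mathcal{M}_{\mathcal{F}}$ is then left implicit (it still rests on Proposition 4.8.5). You instead point out that Proposition 4.8.5 already states the desired equality $\mathcal{M}_{\mathcal{F}}=\{\U\in\bN\mid \U\subseteq\mathcal{S}_{\mathcal{F}}\}$ under the weaker hypotheses of transitivity of $\leq_{\mathcal{F}}$ and weak partition regularity of $\mathcal{S}_{\mathcal{F}}$, both of which are subsumed by the corollary's hypotheses, so the detour through strong partition regularity is unnecessary and the filtered hypothesis plays no role. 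Your self-contained two-inclusion argument is also sound: the forward direction uses $A\leq_{\mathcal{F}}\N\leq_{\mathcal{F}}B$ and transitivity exactly as in the paper's proof of Proposition 4.8.5, and the reverse direction correctly uses weak partition regularity only to produce one ultrafilter inside $\mathcal{S}_{\mathcal{F}}$ and then transitivity via the characterization $\mathcal{S}_{\mathcal{F}}=\{A\mid\N\leq_{\mathcal{F}}A\}$. What your approach buys is a sharper accounting of which hypotheses are actually used; what the paper's approach buys is consistency with the surrounding narrative, in which the strong partition regularity of $\mathcal{S}_{\mathcal{F}}$ (Theorem 4.8.9) is the headline result and the corollary is presented as one of its consequences.
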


\begin{proof} Simply observe that the previous theorem tells that $\mathcal{S}_{\mathcal{F}}$ is strongly partition regular, and apply Theorem 1.2.3.\\  \end{proof}

One other way to formulate the previous corollary is to say that whenever the class $\mathcal{S}_{\mathcal{F}}$ is weakly partition regular, with $\mathcal{F}$ well-structured and filtered, then an ultrafilter $\U$ is $\mathcal{F}$-maximal if and only if it is made of maximal sets with respect $\leq_{\mathcal{F}}$ or, equivalently, that a set is maximal in $(\wp(\N),\leq_{\mathcal{F}})$ if and only if it is contained in some maximal ultrafilter $\U$ in $(\bN,\trianglelefteq_{\mathcal{F}})$.\\
Since we gave in Proposition 4.6.12 a condition, for a set $A$, to be in $\mathcal{S}_{\mathcal{F}}$, we get:

\begin{prop} Let $\mathcal{F}\subseteq\fN$ be a well-structured and filtered set of functions and let $\mathcal{S}_{\mathcal{F}}$ be weakly partition regular. Then an ultrafilter $\U$ is maximal if and only if for every $A\in \U$, for every natural number $n\in\N$, there is a function $f_{n}$ in $\mathcal{F}$ with $f_{n}([0,n])\subseteq A$. \end{prop}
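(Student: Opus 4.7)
The plan is to chain together two results already established in the chapter. The first, Corollary 4.8.11, characterizes $\mathcal{F}$-maximal ultrafilters as exactly those $\U$ entirely contained in $\mathcal{S}_{\mathcal{F}}$, provided $\mathcal{F}$ is well-structured and filtered and $\mathcal{S}_{\mathcal{F}}$ is weakly partition regular. The second, Proposition 4.6.12, gives an explicit combinatorial description of when a single subset $A$ of $\N$ lies in $\mathcal{S}_{\mathcal{F}}$, namely that for every $n \in \N$ there exists $f_n \in \mathcal{F}$ with $f_n([0,n]) \subseteq A$. Since the proposition's hypotheses are precisely the union of the hypotheses required by these two results, the plan is simply to compose them.

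First I would invoke Corollary 4.8.11 to rewrite ``$\U$ is $\mathcal{F}$-maximal'' equivalently as ``$A \in \mathcal{S}_{\mathcal{F}}$ for every $A \in \U$''. Then, applying Proposition 4.6.12 to each such $A$, this latter condition unfolds into: for every $A \in \U$ and every $n \in \N$ there is a function $f_n \in \mathcal{F}$ with $f_n([0,n]) \subseteq A$. The converse direction works by running the same two equivalences backwards: any $\U$ satisfying the combinatorial condition has every element in $\mathcal{S}_{\mathcal{F}}$ by Proposition 4.6.12, hence $\U \subseteq \mathcal{S}_{\mathcal{F}}$, hence $\U$ is $\mathcal{F}$-maximal by Corollary 4.8.11.

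There is no genuine obstacle here; the statement is essentially a pointwise unpacking, at the ultrafilter level, of the combinatorial characterization of $\mathcal{S}_{\mathcal{F}}$. The only thing to verify in passing is the bookkeeping of hypotheses: Corollary 4.8.11 requires $\mathcal{F}$ well-structured and filtered together with weak partition regularity of $\mathcal{S}_{\mathcal{F}}$, while Proposition 4.6.12 only requires well-structuredness, and all three conditions are explicitly assumed in the statement of the proposition.
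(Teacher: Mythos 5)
Your proof is correct and is exactly the paper's argument: the paper derives this proposition immediately by combining Corollary 4.8.11 (maximal ultrafilters are precisely those contained in $\mathcal{S}_{\mathcal{F}}$) with the combinatorial description of membership in $\mathcal{S}_{\mathcal{F}}$ from Proposition 4.6.12. Your hypothesis bookkeeping also matches what the paper requires.
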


In next section we will give an example of application of Theorem 4.8.9 by proving that the class of subsets of $\N$ that contains arbitrarily long arithmetical progressions in strongly partition regular.

\section{Finite Mappability Under Affinities}

In this section we consider the set of affinities $\mathcal{A}$:

\begin{center} $\mathcal{A}=\{f_{(a,b)}\in \mathtt{Fun}(\N,\N)\mid ((a,b)\in \N^{2})\wedge (a\neq 0)\wedge(\forall n\in\N$ $f_{(a,b)}(n)=an+b)\}$. \end{center}

We show that, as consequences of results already proved in this chapter, the relation $\trianglelefteq_{\mathcal{A}}$ has some important properties.

\begin{prop} The set of affinities is well-structured and filtered. \end{prop}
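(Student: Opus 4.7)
My plan is to split the claim into its two components and handle each by invoking earlier machinery rather than working from scratch.

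First, for well-structuredness, I would verify reflexivity and transitivity of $\leq_{\mathcal{A}}$ by applying Corollary 4.6.7: it suffices to show that $\mathcal{A}$ contains the identity and is closed under composition. The identity map equals $f_{(1,0)}$, which belongs to $\mathcal{A}$ since $1 \neq 0$. For composition, given $f_{(a,b)}, f_{(c,d)} \in \mathcal{A}$, a direct computation gives $(f_{(c,d)} \circ f_{(a,b)})(n) = c(an+b)+d = (ca)n + (cb+d) = f_{(ca, cb+d)}(n)$, and since $a, c \neq 0$ we have $ca \neq 0$, so the composite is in $\mathcal{A}$. By Corollary 4.6.7, $\leq_{\mathcal{A}}$ is both reflexive and transitive, so $\mathcal{A}$ is well-structured.

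For the filtration property, the key observation is that the set $\mathbb{T}$ of translations is contained in $\mathcal{A}$: indeed, each translation $t_n(m) = n+m$ coincides with $f_{(1,n)}$, which lies in $\mathcal{A}$. I have already shown (in the discussion immediately following Theorem 4.7.26, and equivalently via Theorem 4.4.14) that $\trianglelefteq_{\mathbb{T}}$ is filtered on $\bN$. Therefore Proposition 4.7.14 applies with $\mathcal{F}_1 = \mathbb{T}$ and $\mathcal{F}_2 = \mathcal{A}$, and it yields that $\trianglelefteq_{\mathcal{A}}$ is filtered as well.

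Neither step presents a real obstacle: the well-structured part is a routine verification that $\mathcal{A}$ forms a submonoid of $(\fN, \circ)$, and the filtration part is an immediate application of the inclusion $\mathbb{T} \subseteq \mathcal{A}$ combined with the monotonicity of the ``filtered'' property proved in Proposition 4.7.14. The only subtlety worth flagging is ensuring the composition formula remains in $\mathcal{A}$ (i.e., that the leading coefficient stays nonzero), but this is immediate from $\N$ being a domain.
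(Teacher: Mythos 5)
Your proof is correct and follows essentially the same route as the paper: verifying that $\mathcal{A}$ contains the identity $f_{(1,0)}$ and is closed under composition, then invoking Corollary 4.6.7 for well-structuredness, and deducing filtration from the inclusion $\mathbb{T}\subseteq\mathcal{A}$ together with the monotonicity of the filtered property (Proposition 4.7.14, which is in fact the correct reference --- the paper mis-cites it as 4.6.8).
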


\begin{proof} $\mathcal{A}$ is closed under composition since, for every $(a,b), (c,d)$ in $\N^{2}$, 

\begin{center} $f_{(a,b)}\circ f_{(c,d)}=f_{(ac,ad+b)}$.\end{center}

Moreover, the function $f_{(1,0)}\in\mathcal{A}$ is the identity function. So by Corollary 4.6.7 $\mathcal{A}$ is well-structured.\\
$\mathcal{A}$ is filtered since it contains the family $\mathbb{T}$ of traslations, which is filtered, and in 4.6.8 we proved that if a set of functions $\mathcal{F}$ is a superset of a filtered set of functions then $\mathcal{F}$ is filtered as well.\\\end{proof}

In consequence there are maximal $\trianglelefteq_{\mathcal{A}}$-ultrafilters.\\
As a corollary of Proposition 4.6.12, we get that:

\begin{prop} A subset $A$ of $\N$ is $\mathcal{A}$-maximal if and only if it contains arbitrarily long arithmetic progressions. \end{prop}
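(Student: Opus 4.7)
The plan is to apply Proposition 4.6.12 directly: since $\mathcal{A}$ is well-structured (as shown in Proposition 4.9.1), a subset $A \subseteq \N$ is $\mathcal{A}$-maximal if and only if for every $n \in \N$ there exists $f_n \in \mathcal{A}$ such that $f_n([0,n]) \subseteq A$. The whole statement then reduces to translating this characterization into the language of arithmetic progressions by unfolding the definition of $\mathcal{A}$.

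Concretely, I would first fix $n \in \N$ and observe that every $f \in \mathcal{A}$ has the form $f_{(a,b)}(m) = am + b$ with $a \neq 0$. The image of the initial segment $[0,n]$ under $f_{(a,b)}$ is therefore $\{b, a+b, 2a+b, \ldots, na+b\}$, which is exactly an arithmetic progression of length $n+1$ with nonzero common difference $a$. So the condition ``there exists $f_n \in \mathcal{A}$ with $f_n([0,n]) \subseteq A$'' is equivalent to ``$A$ contains an arithmetic progression of length $n+1$.''

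For the forward direction, if $A$ is $\mathcal{A}$-maximal, then Proposition 4.6.12 gives for every $n$ an affinity $f_n$ mapping $[0,n]$ into $A$, and by the observation above this yields an arithmetic progression of length $n+1$ in $A$; since $n$ is arbitrary, $A$ contains arbitrarily long arithmetic progressions. For the converse, given any $n$, pick an arithmetic progression $\{b, a+b, \ldots, na+b\} \subseteq A$ of length $n+1$ (with $a \neq 0$, which is automatic since the progression must have $n+1 \geq 2$ distinct terms for $n \geq 1$, and the case $n = 0$ is trivial using any $f_{(1,b)}$ with $b \in A$); then $f_{(a,b)} \in \mathcal{A}$ and $f_{(a,b)}([0,n]) \subseteq A$, so Proposition 4.6.12 again gives $\mathcal{A}$-maximality.

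There is essentially no obstacle here: the proof is a one-line unpacking of definitions once Proposition 4.6.12 is available. The only small subtlety is handling the requirement $a \neq 0$ built into $\mathcal{A}$ in the trivial case $n = 0$ (a ``progression of length $1$''), which is dispatched by choosing any affinity $f_{(1,b)}$ sending $0$ to an element $b \in A$; for $n \geq 1$ the nonzero common difference is forced by the distinctness of the terms of the progression.
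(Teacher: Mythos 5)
Your proof is correct and follows essentially the same route as the paper: both apply Proposition 4.6.12 and observe that the image of an initial segment under an affinity $f_{(a,b)}$ with $a\neq 0$ is precisely an arithmetic progression, so the two conditions coincide. Your treatment of the edge case $n=0$ and the nonzero common difference is a minor extra care the paper omits, but the argument is the same.
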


\begin{proof} As a consequence of Proposition 4.6.12, a set $A$ is $\mathcal{A}$-maximal if and only if for every natural number $n$ there are $a,b$ in $\N$ such that $f_{(a,b)}([1,...,n])\subseteq A$. As $f_{(a,b)}([1,...,n])$ is an arithmetic progression of lenght $n$, it follows that a subset $A$ of $\N$ is $\mathcal{A}$-maximal if and only if it contains arbitrarily long arithmetical progressions.\\\end{proof}

Van der Werden's Theorem states that the family $\mathcal{S}_{\mathcal{A}}$ of subsets of $\N$ that contains arbitrarily long arithmetical progressions is weakly partition regular; by Theorem 4.8.9 it follows that:

\begin{prop} The family of subsets of $\N$ that contains arbitrarily long arithmetical progressions is strongly partition regular. \end{prop}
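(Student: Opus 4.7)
The plan is to apply Theorem 4.8.9 directly, with $\mathcal{F}=\mathcal{A}$ the set of affinities. Recall that this theorem asserts the equivalence, for any well-structured and filtered family $\mathcal{F}\subseteq\fN$, between weak and strong partition regularity of the class $\mathcal{S}_{\mathcal{F}}$ of $\mathcal{F}$-maximal subsets of $\N$. Thus the entire argument reduces to checking that the three hypotheses required to invoke the theorem are in place for $\mathcal{A}$.

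First, I would note that $\mathcal{A}$ is well-structured and filtered: this is exactly the content of Proposition 4.9.1, so it can be cited without further work. Second, I would identify the family $\mathcal{S}_{\mathcal{A}}$ with the family of subsets of $\N$ that contain arbitrarily long arithmetic progressions; this identification is Proposition 4.9.2. Third, I would invoke Van der Waerden's Theorem (Theorem 1.3.5) to deduce that $\mathcal{S}_{\mathcal{A}}$ is weakly partition regular: indeed, in any finite coloration of $\N$, one of the color classes contains arbitrarily long arithmetic progressions, hence lies in $\mathcal{S}_{\mathcal{A}}$.

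With all three pieces in place, Theorem 4.8.9 immediately yields that $\mathcal{S}_{\mathcal{A}}$ is strongly partition regular, which is the desired conclusion. There is essentially no obstacle here: the substantive work has already been carried out in the preceding sections (the development of the general theory of finite mappability under a well-structured filtered family, the characterization of maximal ultrafilters via Theorem 4.8.9, and the verification of the structural properties of $\mathcal{A}$). The proof is thus little more than an assembly of cited results, and can be presented in a few lines.
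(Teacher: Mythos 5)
Your proposal is correct and follows exactly the paper's own route: the paper likewise cites Proposition 4.9.1 for the well-structured and filtered properties of $\mathcal{A}$, Proposition 4.9.2 for the identification of $\mathcal{S}_{\mathcal{A}}$ with the family of sets containing arbitrarily long arithmetic progressions, Van der Waerden's Theorem for weak partition regularity, and then Theorem 4.8.9 to conclude. Nothing is missing.
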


Also, from Proposition 4.8.5 it follows that 

\begin{prop} An ultrafilter $\U$ is $\mathcal{A}$-maximal if and only if it is a Van der Waerden's ultrafilter. \end{prop}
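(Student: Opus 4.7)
The plan is to apply Proposition 4.8.5 directly, with $\mathcal{F} = \mathcal{A}$, and then translate the conclusion through the characterization of $\mathcal{S}_{\mathcal{A}}$ given by Proposition 4.9.2. The hypotheses of Proposition 4.8.5 are that $\leq_{\mathcal{F}}$ is transitive and that $\mathcal{S}_{\mathcal{F}}$ is weakly partition regular. The first hypothesis is immediate from Proposition 4.9.1, since being well-structured means precisely that $\leq_{\mathcal{A}}$ is a pre-order (in particular transitive). The second hypothesis is where the combinatorial content enters: by Proposition 4.9.2, $\mathcal{S}_{\mathcal{A}}$ is exactly the family of subsets of $\N$ containing arbitrarily long arithmetic progressions, and this family is weakly partition regular by Van der Waerden's Theorem (Theorem 1.3.4).

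With the hypotheses verified, Proposition 4.8.5 yields
\begin{center}
$\mathcal{M}_{\mathcal{A}} = \{\U \in \bN \mid \U \subseteq \mathcal{S}_{\mathcal{A}}\}$.
\end{center}
Then I would conclude by unwinding definitions: an ultrafilter $\U$ is in $\mathcal{M}_{\mathcal{A}}$ iff every $A \in \U$ is $\mathcal{A}$-maximal, iff (by Proposition 4.9.2) every $A \in \U$ contains arbitrarily long arithmetic progressions, iff (by Definition 3.2.4) $\U$ is a Van der Waerden ultrafilter.

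There is really no hard step here, since all the substantive work has been done earlier in the chapter; the only thing to be careful about is making sure the hypotheses of Proposition 4.8.5 are indeed met. The slightly delicate point — if any — is recognizing that the weak partition regularity of $\mathcal{S}_{\mathcal{A}}$ is exactly Van der Waerden's Theorem rephrased via Proposition 4.9.2: a finite coloring of $\N$ is a finite partition, and the existence of a monochromatic class containing arbitrarily long arithmetic progressions is the statement that some piece lies in $\mathcal{S}_{\mathcal{A}}$. Once this identification is made, the proposition is a three-line corollary of Proposition 4.8.5.
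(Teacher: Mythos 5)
Your proof is correct and follows exactly the route the paper intends: the paper itself states this proposition with no more justification than "from Proposition 4.8.5 it follows that...", and your write-up simply fills in the hypotheses check (transitivity via Proposition 4.9.1, weak partition regularity of $\mathcal{S}_{\mathcal{A}}$ via Proposition 4.9.2 and Van der Waerden's Theorem) that the paper leaves implicit. No gaps; this matches the intended argument.
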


Moreover, since $\mathbb{T}$ is a subset of $\mathcal{A}$, every $\trianglelefteq_{\mathbb{T}}$-maximal ultrafilter is also $\mathcal{A}$-maximal: since the set of $\trianglelefteq_{\mathbb{T}}$-maximal ultrafilters is $\overline{K(\bN,\oplus)}$, from this it follows that:

\begin{prop} Every ultrafilter $\U$ in $\overline{K(\bN,\oplus)}$ is a Van der Waerden's ultrafilter. \end{prop}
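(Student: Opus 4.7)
The plan is to chain together three results already established in the text. First I would observe that by Theorem 4.4.27, the closure $\overline{K(\bN,\oplus)}$ coincides with $\mathcal{M}_{fe} = \mathcal{M}_{\mathbb{T}}$, the set of maximal ultrafilters with respect to the pre-order $\trianglelefteq_{\mathbb{T}}$ induced by the class $\mathbb{T}$ of translations. So it is enough to show that every $\trianglelefteq_{\mathbb{T}}$-maximal ultrafilter is a Van der Waerden ultrafilter.

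Next I would invoke Proposition 4.8.3, which says that if $\mathcal{F}_1 \subseteq \mathcal{F}_2 \subseteq \mathtt{Fun}(\N,\N)$ then $\mathcal{M}_{\mathcal{F}_1} \subseteq \mathcal{M}_{\mathcal{F}_2}$. Since every translation $t_n(m) = m+n$ is a (trivial) affinity $f_{(1,n)}$, we have $\mathbb{T} \subseteq \mathcal{A}$, and hence $\mathcal{M}_{\mathbb{T}} \subseteq \mathcal{M}_{\mathcal{A}}$. Combined with the first step, this gives $\overline{K(\bN,\oplus)} \subseteq \mathcal{M}_{\mathcal{A}}$.

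Finally I would apply Proposition 4.9.4, which identifies $\mathcal{M}_{\mathcal{A}}$ exactly with the set of Van der Waerden ultrafilters. Chaining the three inclusions yields that every $\U \in \overline{K(\bN,\oplus)}$ is a Van der Waerden ultrafilter, which is the desired conclusion.

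There is essentially no obstacle here: the statement is a formal corollary obtained by splicing together Theorem 4.4.27, Proposition 4.8.3, and Proposition 4.9.4, together with the trivial set-theoretic inclusion $\mathbb{T} \subseteq \mathcal{A}$. The only point worth double-checking is that the definitions of $\mathbb{T}$ and $\mathcal{A}$ used in Section 4.9 do make $\mathbb{T}$ a subset of $\mathcal{A}$ (they do, since the affinity $f_{(1,n)}(m) = m + n$ is precisely the translation $t_n$, and the condition $a \neq 0$ in the definition of $\mathcal{A}$ is satisfied by $a=1$).
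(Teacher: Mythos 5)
Your proof is correct and is essentially identical to the paper's own argument, which likewise chains $\overline{K(\bN,\oplus)}=\mathcal{M}_{\mathbb{T}}$ (Theorem 4.4.27), the monotonicity $\mathcal{M}_{\mathbb{T}}\subseteq\mathcal{M}_{\mathcal{A}}$ coming from $\mathbb{T}\subseteq\mathcal{A}$ (Proposition 4.8.3), and the identification of $\mathcal{M}_{\mathcal{A}}$ with the Van der Waerden ultrafilters (Proposition 4.9.4). No issues.
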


Observe that the multiplicative analogue of $\mathbb{T}$ is the set of homoteties $\mathbb{H}$:

\begin{center}$\mathbb{H}=\{f_{a}\in \mathtt{Fun}(\N,\N)\mid (a\in\N\setminus\{0\})\wedge (\forall n\in\N$ $f_{a}(n)=an)\}$, \end{center}

and, by essentially the same argumetns we used for the relation $\trianglelefteq_{\mathbb{T}}$, we could prove that 

\begin{center} $\mathcal{M}_{\mathbb{H}}=\overline{K(\bN,\odot)}$. \end{center}

So an ultrafilter is $\mathbb{H}$-maximal if and only if it is in the closure of the minimal bilater ideal of $(\bN,\odot)$. Since $\mathbb{H}\subseteq\mathcal{A}$, from this it follows that

\begin{prop} Every ultrafilter $\U$ in $\overline{K(\bN,\odot)}$ is a Van der Waerden's ultrafilter. \end{prop}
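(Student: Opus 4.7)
The plan is to derive this as an immediate consequence of the inclusion $\mathbb{H} \subseteq \mathcal{A}$ combined with the characterizations of $\mathcal{A}$-maximal and $\mathbb{H}$-maximal ultrafilters established just before the statement.

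First I would invoke Proposition 4.8.3, which says that whenever $\mathcal{F}_1 \subseteq \mathcal{F}_2$, one has $\mathcal{M}_{\mathcal{F}_1} \subseteq \mathcal{M}_{\mathcal{F}_2}$. Since every homothety $f_a(n) = an$ is the affinity $f_{(a,0)}$, we have $\mathbb{H} \subseteq \mathcal{A}$, and hence $\mathcal{M}_{\mathbb{H}} \subseteq \mathcal{M}_{\mathcal{A}}$.

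Next, I would use the identification $\mathcal{M}_{\mathbb{H}} = \overline{K(\bN,\odot)}$, asserted in the paragraph preceding the statement as the multiplicative analogue of the equality $\mathcal{M}_{fe} = \overline{K(\bN,\oplus)}$ proved in Theorem 4.4.27 (the argument being essentially the same with $\odot$ in place of $\oplus$ and $\mathbb{H}$ in place of $\mathbb{T}$). On the other side, Proposition 4.9.4 identifies $\mathcal{M}_{\mathcal{A}}$ with the set of Van der Waerden ultrafilters. Chaining these three facts gives
\[
\overline{K(\bN,\odot)} \;=\; \mathcal{M}_{\mathbb{H}} \;\subseteq\; \mathcal{M}_{\mathcal{A}} \;=\; \{\U \in \bN \mid \U \text{ is a Van der Waerden ultrafilter}\},
\]
which is exactly the claim.

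There is really no obstacle here, as the statement is a formal corollary; the only thing one must be careful about is verifying that the chain of inclusions $\mathbb{H} \subseteq \mathcal{A}$ is literal (i.e. the homothety $n \mapsto an$ genuinely sits inside $\mathcal{A}$ as $f_{(a,0)}$, which holds since the definition of $\mathcal{A}$ only requires $a \neq 0$ and does not forbid $b = 0$). All the substantive content—partition regularity of arithmetic progressions via Van der Waerden's theorem, the characterization of $\mathcal{S}_{\mathcal{A}}$, filtration of $\mathcal{A}$, and the algebraic identification of the cones of maximal ultrafilters with closures of minimal bilateral ideals—has already been done in the preceding results.
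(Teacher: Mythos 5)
Your proof is correct and is essentially identical to the paper's argument: the paper likewise deduces the claim from $\mathbb{H}\subseteq\mathcal{A}$, the identification $\mathcal{M}_{\mathbb{H}}=\overline{K(\bN,\odot)}$ (the multiplicative analogue of Theorem 4.4.27), and Proposition 4.9.4. Your extra check that the homothety $n\mapsto an$ really is the affinity $f_{(a,0)}\in\mathcal{A}$ is a sensible verification but raises no issue.
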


Now we consider the cones $\mathcal{C}_{\mathcal{A}}(\U)$. Recall that $\mathcal{A}$ is the set of functions generated by $G:\N\times\N^{2}\rightarrow\N$, where

\begin{center} $G(n,(a,b))=an+b$ for every natural number $n$ in $\N$, \end{center}

and with set of parameters $S=\N^{2}\setminus \{(0,n)\mid n\in\N\}$. By Theorem 4.7.28 it follows that, for every ultrafilter $\U$ in $\bN$, 

\begin{center} $\mathcal{M}(\U,\mathcal{A})=\overline{\{\overline{G}(\U\otimes\V)\mid \V\in\beta S\}}$. \end{center}

Observe that, for every ultrafilter $\V$ in $\beta S$, $\overline{G}(\U\otimes\V)$ is the ultrafilter on $\N$ defined by the following condition: if $A$ is a subset of $\N$, then

\begin{center} $A\in\overline{G}(\U\otimes\V)$ if and only if $\{n\in\N\mid \{(a,b)\in\N^{2}\mid an+b\in A\}\in\V\}\in\U$. \end{center}

We conclude by observing that, if $\U$ is any ultrafilter in $K(\bN,\oplus)$ (e.g., if $\U$ is a minimal additive idempotent) then, as $\mathcal{C_{\mathcal{A}}}(\U)=\mathcal{M}_{\mathcal{A}}$ by $\mathcal{A}$-maximality of $\U$, it follows that

\begin{center} $\mathcal{M}_{\mathcal{A}}=\overline{\{\overline{G}(\U\otimes\V)\mid\V\in\bN\}}$. \end{center}

\section{Further Studies}

In this section we indicate two possible directions for further studies on finite mappability.

\subsection{Relations between different sets of maximal ultrafilters}

In Section 4.9 we proved that the Van der Waerden's ultrafilters are exactly the maximal ultrafilters respect the relation of $\mathcal{A}$-finite mappability, where $\mathcal{A}$ denotes the set of affinities, and we showed that as a consequence the minimal bilateral ideal of both $(\bN,\oplus)$ and $(\bN,\odot)$ consist of Van der Waerden's ultrafilters. \\

{\bfseries Question:} Is it possible to characterize other important subsets of $\bN$ as sets of $\mathcal{F}$-maximal ultrafilters for appropriate sets of functions $\mathcal{F}$?\\

A particularly interesting case, in our opinion, is that of polynomials: if

\begin{center} $\mathcal{P}_{d}=\{f_{(a_{0},...,a_{d})}\in \mathtt{Fun}(\N,\N)\mid (a_{0},...,a_{d}\in \N)\wedge(a_{d}\neq 0)\wedge(\forall n\in\N$ $f_{(a_{0},...,a_{d})}(n)=a_{d}n^{d}+a_{d-1}n^{d-1}+...+a_{1}n+a_{0})\}$,\end{center}

then $\mathcal{P}=\bigcup_{d\geq 1}\mathcal{P}_{d}$

Observe that $\mathcal{A}=\mathcal{P}_{1}$.\\

{\bfseries Question:} Is there a correlation between $\mathcal{M}_{\mathcal{P}}$ and the partition regularity of equations?

\subsection{Orderings on the hyperextension $^{*}\N$}

Two important and well-known relations on a generical hyperextension $^{*}\N$ are the Puritz and the Rudin-Keisler pre-orders, that we shortly recall.

\begin{defn} Let $\U,\V$ be two ultrafilters on $\N$. $\U$ is {\bfseries Rudin-Keisler above $\V$} $($notation $\V\leq_{RK}\U)$ if there is a function $f$ in $\mathtt{Fun}(\N,\N)$ such that $\overline{f}(\U)=\V$ $($i.e. if $A\in\V\Leftrightarrow f^{-1}(A)\in\U)$.\\
The relation $\leq_{RK}$ is called {\bfseries Rudin-Keisler preorder}.\end{defn}

Observe that, since for every functions $f,g$ in $\mathtt{Fun}(\N,\N)$, $\overline{f}\circ\overline{g}=\overline{f\circ g}$, the Rudin-Keisler preorder is transitive (and the reflexivity is immediate), so it is actually a preorder. A well-known fact about this relation is:

\begin{prop} For every ultrafilters $\U,\V$ on $\N$, the following two properties are equivalent:
\begin{enumerate}
	\item $\U\leq_{RK}\V$ and $\V\leq_{RK}\U$;
	\item there is a bijection $f$ in $\mathtt{Fun}(\N,\N)$ with $\overline{f}(\U)=\V$.
\end{enumerate}\end{prop}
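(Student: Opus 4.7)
The implication (2) $\Rightarrow$ (1) is immediate: if $f:\N\to\N$ is a bijection with $\overline{f}(\U)=\V$, then $f^{-1}\in\mathtt{Fun}(\N,\N)$ satisfies $\overline{f^{-1}}(\V)=\U$, giving both $\U\leq_{RK}\V$ and $\V\leq_{RK}\U$. So the substance is in the converse.

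For (1) $\Rightarrow$ (2), assume $f,g\in\mathtt{Fun}(\N,\N)$ satisfy $\overline{f}(\U)=\V$ and $\overline{g}(\V)=\U$. The key tool will be the second clause of Theorem 2.3.5, which via Theorem 2.3.4 (the three-coloring fact) yields the following ``fixed point'' principle: if $h\in\mathtt{Fun}(\N,\N)$ and $\overline{h}(\W)=\W$, then $\{n\in\N\mid h(n)=n\}\in\W$ (indeed, take any generator $\alpha\in G_{\W}$; then $^{*}h(\alpha)\in G_{\W}$ too, so $\alpha=$$^{*}h(\alpha)$, i.e. $\alpha\in$$^{*}\{n\mid h(n)=n\}$). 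Applying this to $g\circ f$ (for $\U$) and $f\circ g$ (for $\V$) produces sets $B=\{n\mid g(f(n))=n\}\in\U$ and $C=\{m\mid f(g(m))=m\}\in\V$.

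First I would refine these. Set $B_{1}=B\cap f^{-1}(C)$, which lies in $\U$ since $C\in\V=\overline{f}(\U)$. For $n\in B_{1}$ we have $f(n)\in C$ and also $g(f(n))=n\in B\subseteq g^{-1}(B)$ (automatic), so in fact $f(n)\in C_{1}:=C\cap g^{-1}(B)\in\V$. A symmetric check shows $g$ sends $C_{1}$ into $B_{1}$, and the two compositions being the identity on $B_{1}$ and $C_{1}$ respectively means $f|_{B_{1}}:B_{1}\to C_{1}$ is a bijection with inverse $g|_{C_{1}}$. Thus on a ``large'' piece the maps $f,g$ already witness a mutual bijection.

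The main obstacle is upgrading this partial bijection to a global bijection of $\N$: this requires $|\N\setminus B_{1}|=|\N\setminus C_{1}|$, which has no reason to hold. The principal case is trivial (both $\U,\V$ are principal, and any bijection sending the generator of $\U$ to that of $\V$ works), so assume $\U,\V$ are nonprincipal. Using nonprincipality, I can shrink $B_{1}$ to some $B_{2}\in\U$ with $B_{1}\setminus B_{2}$ infinite (e.g. intersect with whichever of the even/odd numbers lies in $\U$, then remove finitely many points if needed to force infinite loss). Set $C_{2}=f(B_{2})$; since $f^{-1}(C_{2})\supseteq B_{2}\in\U$ we have $C_{2}\in\V$, and $C_{1}\setminus C_{2}=f(B_{1}\setminus B_{2})$ is infinite because $f|_{B_{1}}$ is injective. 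Hence both $\N\setminus B_{2}$ and $\N\setminus C_{2}$ are countably infinite. Picking any bijection $\psi:\N\setminus B_{2}\to\N\setminus C_{2}$, I define
\[ h(n)=\begin{cases} f(n) & \text{if } n\in B_{2},\\ \psi(n) & \text{if } n\notin B_{2}.\end{cases}\]
Then $h:\N\to\N$ is a bijection, and $\{n\mid h(n)=f(n)\}\supseteq B_{2}\in\U$; by standard properties of the operator $\overline{\cdot}$ (any two functions agreeing on a set of $\U$ induce the same ultrafilter image), $\overline{h}(\U)=\overline{f}(\U)=\V$, completing the proof.
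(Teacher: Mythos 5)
The paper states this proposition without proof, citing it as a well-known fact about the Rudin--Keisler preorder, so there is no in-paper argument to compare against; judged on its own, your proof is the standard one and is essentially correct. The reduction to the fixed-point principle via Theorem 2.3.5 is valid (a generator $\alpha\in G_{\W}$ exists by the $\mathfrak{c}^{+}$-enlarging property assumed throughout the paper, $^{*}h(\alpha)\in G_{\overline{h}(\W)}=G_{\W}$ by part (1), and $\alpha=$$^{*}h(\alpha)$ by part (2)), the construction of the mutually inverse partial bijections $f|_{B_{1}}:B_{1}\to C_{1}$ and $g|_{C_{1}}:C_{1}\to B_{1}$ is correct, and the final patching argument is sound.

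The one defect is the parenthetical justification of the shrinking step. Intersecting $B_{1}$ with whichever of the evens or odds lies in $\U$ does \emph{not} guarantee that $B_{1}\setminus B_{2}$ is infinite: if $B_{1}$ happens to be contained in the even numbers, the loss is empty, and ``removing finitely many points'' from $B_{2}$ can only ever make $B_{1}\setminus B_{2}$ finitely larger, so it cannot repair this. The claim itself is true and the repair is immediate: since $\U$ is nonprincipal, $B_{1}$ is infinite, so partition $B_{1}$ itself into two infinite pieces $P\cup Q$; exactly one of them belongs to $\U$ (because $\U$ is an ultrafilter and $B_{1}\in\U$), and taking $B_{2}$ to be that piece makes $B_{1}\setminus B_{2}$ the other, infinite, piece. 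With that substitution the rest of your argument (injectivity of $f|_{B_{1}}$ forcing $C_{1}\setminus C_{2}=f(B_{1}\setminus B_{2})$ to be infinite, hence both complements countably infinite, hence the global bijection $h$ with $\overline{h}(\U)=\overline{f}(\U)=\V$) goes through verbatim.
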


If we denote with $\equiv_{RK}$ the equivalence relation such that $\U\equiv_{RK}\V$ if and only if $\U\leq_{RK}\V$ and $\V\leq_{RK}\U$, the induced order $\leq_{RK}$ on the quotient space is the Rudin-Keisler order. This has been extensively studied in literature (see e.g. $\cite[\mbox{Cap 11}]{rif21}$). Here we are not interested in the specifical properties of this order; we just want to show that, if we translate it in the context of sets of generators of ultrafilters, one obtains a well-known and studied relation that refines it, namely the Puritz pre-order.

\begin{defn} Let $^{*}\N$ be a hyperextension of $\N$, and $\alpha,\beta$ two hypernatural numbers in $^{*}\N$. We say that $\alpha$ is {\bfseries Puritz above} $\beta$ $($notation $\beta\leq_{P}\alpha)$ if there is a function $f$ in $\mathtt{Fun}(\N,\N)$ with $^{*}f(\alpha)=\beta$. \end{defn}

Since $^{*}f\circ$$^{*}g=$$^{*}(f\circ g)$ for every functions $f,g\in\mathtt{Fun}(\N,\N)$, the relation $\beta\leq_{P}\alpha$ is a pre-order; similarly to the Rudin-Keisler case, it can be proved that

\begin{prop} For every $\alpha,\beta$ in $^{*}\N$, the following two conditions are equivalent:
\begin{enumerate}
	\item $\alpha\leq_{P}\beta$ and $\beta\leq_{P}\alpha$;
	\item there is a bijection $f$ in $\mathtt{Fun}(\N,\N)$ with $^{*}f(\alpha)=\beta$.
	
\end{enumerate}\end{prop}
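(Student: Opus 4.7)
The implication $(2)\Rightarrow(1)$ is immediate: if $f$ is a bijection with ${}^{*}f(\alpha)=\beta$, then its inverse $f^{-1}$ satisfies ${}^{*}(f^{-1})(\beta)=\alpha$ (applying the star map to the identity $f\circ f^{-1}=\mathrm{id}_{\N}$), and this witnesses both Puritz inequalities.

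For $(1)\Rightarrow(2)$, I pick witnesses $f,g\in\mathtt{Fun}(\N,\N)$ so that ${}^{*}f(\alpha)=\beta$ and ${}^{*}g(\beta)=\alpha$. Setting $h=g\circ f$, by transfer ${}^{*}h={}^{*}g\circ{}^{*}f$ and hence ${}^{*}h(\alpha)=\alpha$. The set $A=\{n\in\N\mid h(n)=n\}$ therefore satisfies $\alpha\in{}^{*}A$, i.e.\ $A\in\mathfrak{U}_{\alpha}$, and moreover $f$ is injective on $A$: if $n,n'\in A$ satisfy $f(n)=f(n')$, then $n=g(f(n))=g(f(n'))=n'$. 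It will thus suffice to extend $f|_{A'}$, for some $A'\subseteq A$ with $A'\in\mathfrak{U}_{\alpha}$, to a bijection $\tilde{f}:\N\to\N$; since such $\tilde{f}$ agrees with $f$ on $A'\in\mathfrak{U}_{\alpha}$, transfer gives ${}^{*}\tilde{f}(\alpha)={}^{*}f(\alpha)=\beta$, as required.

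The construction of $\tilde{f}$ is straightforward when $\alpha\in\N$ (any permutation of $\N$ sending $\alpha$ to $\beta=f(\alpha)$ works), so assume $\mathfrak{U}_{\alpha}$ is nonprincipal. I partition $A$ into two disjoint infinite sets $A_{0}\sqcup A_{1}$: since $\mathfrak{U}_{\alpha}$ is an ultrafilter containing $A$, exactly one of $A_{0},A_{1}$ — call it $A'$ — lies in $\mathfrak{U}_{\alpha}$. Then $A\setminus A'$ is infinite, so $\N\setminus A'$ is countably infinite; moreover, by injectivity of $f|_{A}$, the set $f(A\setminus A')$ is an infinite subset of $\N\setminus f(A')$, so $\N\setminus f(A')$ is also countably infinite. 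Fixing any bijection $b:\N\setminus A'\to\N\setminus f(A')$ and defining $\tilde{f}$ to equal $f$ on $A'$ and $b$ on its complement yields the desired bijection.

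The main technical obstacle is arranging that both $\N\setminus A'$ and $\N\setminus f(A')$ have matching cardinalities so that a bijection between them exists. This is where the nonprincipality of $\mathfrak{U}_{\alpha}$ enters essentially: naive choices such as $A'=A$ can leave the two complements with different finite sizes (as, e.g., with $f(n)=2n$), but splitting $A$ as above forces both complements to be countably infinite simultaneously, making the extension of $f|_{A'}$ to a bijection of $\N$ automatic.
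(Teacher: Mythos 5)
Your proof is correct. Note that the paper states this proposition without proof (it is introduced with ``similarly to the Rudin-Keisler case, it can be proved that\ldots''), so there is no proof in the text to compare against; your argument is the standard one that the author is alluding to. Both directions are handled properly: the composition $h=g\circ f$ fixing $\alpha$, the injectivity of $f$ on the fixed-point set $A\in\mathfrak{U}_{\alpha}$, and --- the one genuinely delicate point --- the splitting of $A$ into two infinite pieces so that the ultrafilter selects an $A'$ whose complement and whose image's complement are both countably infinite, which is exactly what is needed to extend $f|_{A'}$ to a bijection of $\N$. The only cosmetic slip is that in $(2)\Rightarrow(1)$ the relevant identity is $f^{-1}\circ f=\mathrm{id}_{\N}$ rather than $f\circ f^{-1}=\mathrm{id}_{\N}$; for a bijection both hold, so nothing is affected.
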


Denote by $\sim_{P}$ (and call $P$-equivalence) the equivalence relation such that, for every $\alpha,\beta\in$$^{*}\N$, $\alpha\sim_{P}\beta$ if and only if $\alpha\leq_{P}\beta$ and $\beta\leq_{P}\alpha$. The induced relation $\leq_{P}$ on the set of equivalence classes of $\sim_{P}$ (that are called constellations in the literature) is an order, which is called the Puritz order for constellations. This order, and its relation with the Rudin-Keisler order, are studied, e.g., in $\cite{rif38}$.\\
In a precise sense, the Puritz pre-ordering is a refinement of the Rudin-Keisler pre-ordering: in fact, $\mathfrak{U}_{\alpha}\leq_{RK}\mathfrak{U}_{\beta}$ whenever $\alpha\leq_{P}\beta$, while the converse is false, since $\mathfrak{U}_{\alpha}\leq_{RK}\mathfrak{U}_{\beta}$ implies only $\alpha\leq_{P}\beta^{\prime}$ for some $\beta^{\prime}\sim_{u}\beta$.\\
From the characterization of $\mathcal{C}_{\mathcal{F}}(\U)$ it follows that there is a similarity between the relations $\trianglelefteq_{\mathcal{F}}$ and the Rudin-Keisler pre-order. This similarity seem to be particularly interesting from the point of view of the hyperextension $^{\bullet}\N$ (more generally, from the point of any hyperextension with at least the $\mathfrak{c}^{+}$-enlarging property). As for the $\mathcal{F}$-finite embeddability, when $\mathcal{F}$ is well-structured, by the characterization of $\mathcal{C}_{\mathcal{F}}(\U)$ for a generical ultrafilter $\U$ in $\bN$ it follows that the relation $\trianglelefteq_{\mathcal{F}}$ can be translated in nonstandard terms by posing, for generical $\alpha,\beta$ in $^{*}\N$,

\begin{center}$\alpha\trianglelefteq_{\mathcal{F}}\beta\Leftrightarrow\beta\in\overline{\{}$$\overline{^{*}\varphi(\alpha)\mid \varphi\in}$$\overline{^{*}\mathcal{F}\}}\cap$$^{*}\N$, \end{center}

where the closure in $^{**}\N$ is taken in the S-Topology.\\

{\bfseries Question:} Are there connections between the Rudin-Keisler pre-order, the Puritz pre-order and the $\mathcal{F}$-finite embeddability in the nonstandard context for suitable families $\mathcal{F}$?

\addcontentsline{toc}{section}{Bibliography}

\end{document}